\DeclareMathOperator{\Ree}{Re}
\DeclareMathOperator{\Imm}{Im}
\DeclareMathOperator{\Ker}{Ker}
\DeclareMathOperator{\Clos}{Clos}
\DeclareMathOperator{\Span}{Span}
\DeclareMathOperator{\Tr}{Tr}
\DeclareMathOperator{\dist}{dist}
\newcommand{\Wo}{\mathop{\rm W^1_2}\limits}
\newcommand{\Wp}{\mathop{\rm W^1_3}\limits}
\newcommand{\Wq}{\mathop{\rm W_1}\limits}
\newcommand{\Wr}{\mathop{\rm W_{-1}}\limits}
\newcommand{\slim}{\mathop{{\rm s\mbox{-}lim}}\limits}
\newcommand{\ph}{\varphi}
\newcommand{\ep}{\varepsilon}
\newcommand{\yt}{\widetilde y_k}
\newcommand{\A}{\mathscr{A}}
\newcommand{\HH}{\mathscr{H}}
\newcommand{\la}{\lambda}
\theoremstyle{plain}
\newtheorem{theorem}{Theorem}[section]
\newtheorem{lemma}{Lemma}[section]
\newtheorem{proposition}{Proposition}[section]
\newtheorem{corollary}{Corollary}[section]
\newtheorem{problem}{Exercise}[section]
\theoremstyle{definition}
\newtheorem{definition}{Definition}[section]
\newtheorem{note}{Note}[section]
\theoremstyle{remark}
\newtheorem*{rem}{Remark}
\newenvironment{pf}
{\par\noindent{\it Proof.}}
{\hfill$\scriptstyle\blacksquare$~\\}
\renewcommand{\leq}{\leqslant}
\renewcommand{\geq}{\geqslant}
\renewcommand{\phi}{\varphi}
\renewcommand{\epsilon}{\varepsilon}
\renewcommand{\Re}{\Ree}
\renewcommand{\Im}{\Imm}
\newcommand{\bigzero}{\mbox{\normalfont\Large\bfseries 0}}
\newcommand{\rvline}{\hspace*{-\arraycolsep}\vline\hspace*{-\arraycolsep}}
\begin{document}
\begin{titlepage}
\makeatletter
\def\@makefnmark{}
\makeatother
{\Large \begin{center} Operator Pencils \\ and Half-range Problem in Operator Theory
 \end{center}}
\bigskip

\centerline{A.~A.~Shkalikov\footnotemark[0]}
\footnotetext[0]{{\normalsize This work is supported by Russian Science Foundation, grant  No 17-11-01215.}}

\begin{center}
Lomonosov Moscow State University,

Department of Mechanics and Mathematics

email: shkalikov@mi.ras.ru
\end{center}

\bigskip
{\centerline{\bf Abstract}}

\medskip
This article can be considered as the first version of a book which  the author  plans to write about half-range  problems  in operator theory. It consists of two parts. The first part is  based  on lectures which the author delivered at University of Calgary and Lomonosov Moscow State University.  The main attention in this part is paid  to the selection of  waves which are involved in the formulation of the Mandelstamm radiation principle (the eigen-pairs,  corresponding to the real eigenvalues)  and to the factorization problems  of self-adjoint and dissipative,   quadratic and polynomial operator pencils. There is  a dramatic difference  between finite dimensional and infinite dimensional  cases.  It is shown that in the finite dimensional case the factorization problems can  be solved completely.
In the second part  we consider abstract models  for concrete problems  of mechanics.  We  demonstrate the methods how concrete problems can be represented in an abstract form. The main results concern the factorization of elliptic operator pencils  satisfying the resolvent growth condition in a double sector  containing the real axis and the investigation of the semi-group properties of a divisor. Using Pontrjagin space methods we obtain a criterium for the stability in the celebrated Sobolev problem about a rotating top with a cavity filled with a viscous liquid.

\medskip

{\bf Key words:} Operator pencils, half-range compliteness and minimality problems, factorization of operator pencils, Pontrjagin and Krein spaces, radiation principles.

\bigskip
\medskip
\end{titlepage}

\pagestyle{myheadings}

\tableofcontents


\newpage
\centerline{{\huge Part 1}}
\section{Operator pencils and Cauchy problem. The finite dimensional case}
\medskip

Among key-stones which ly in the base of the topic represented in this article I would like  to mention three ones. The first one is  due L.~S.~Pontrjagin [P], who proved (1944) the existence of  maximal semi-definite invariant subspaces
 for self-adjoint (and later for dissipative) operators in Pontrjagin space.
 The second one is due M.~V.~Keldysh [K1,  K2], who began to investigate(1951) spectral properties of operator pencils and suggested an analytic  approach to prove the completeness properties of the root functions of wide class of non-self-adjoint operators and operator pencils. The third one is due to M.~G.~Krein and H.~Langer [KL] who proved (1964) the first factorization theorem for self-adjoint quadratic operator pencils in the infinite dimensional case using a generalization of Pontrjagin theorem. Since that time many mathematicians  were involved in the investigation of related problems.
 Here we shall present some our points of view  to this subject.
 
Let us come to the subject: we deal with operator polynomials
\begin{equation}
A(\la) = A_0 + \la A_1 + \cdots + \la^n A_n
\end{equation}
where $A_j, \quad j = 0,1,\dots, n$ are operators in Hilbert space $H$. Further, we will consider the different situations: H is finite dimensional; H is infinite dimensional and $A_j$ are bounded operators; $A_j$ are unbounded operators (in this case $H$ has to be infinite dimensional).

I would like to mention two origins for the study of operator pencils.

\textit{First} the algebraic origin, by which I mean the theory of matrices. This is understandable, because the eigenvalue problem for operator pencil $A(\la)$ is a generalization of the eigenvalue problem for monic linear operator polynomials $A(\la) = A - \la I$, i.e. the classical eigenvalue problem for matrix $A$.

\textit{Second} the Fourier method for solving equations with operator coefficients of the form
\begin{equation}
A (-i\frac{d}{dt}) u(t) = A_0 u - i A_1 \frac{du}{dt} + \cdots + (-i)^n A_n \frac{d^n u}{dt^n} = 0,
\end{equation}
where $u(t)$ is the function (determined for example for $t \geq 0$) with values in Hilbert space $H$.

Certainly, the problem of solvability of such equations and the problem of stability of their solutions are closely connected with some problems on factorization of operator pencil, with problems of its eigenvalue distribution, and with problems on basis properties, completeness and minimality of its eigenfunctions (the latter concept can be considered in different senses: for example, n-multiple completeness and half range completeness).

Now we clarify the connection between operator pencil (1) and differential equation (2). First we can consider the operator pencil $A(\la)$ as the characteristic polynomial of the differential equation to arrive at $A(\la)y = 0$ as the characteristic equation. We say $y^0 \neq \emptyset$ \textit{is an eigenvector corresponding to eigenvalue $\la_0$ of the operator pencil $A(\la)$ if $A(\la_0)y^0 = 0$}. We say also, that $y^1, \dots,y^p$ \textit{is the sequence of vectors associated with $y^0$ if}

$$A(\la_0)y^s + \frac{1}{1!} A' (\la_0) y^{s-1} + \cdots + \frac{1}{s!} A^{(s)} (\la_0)y^0 = 0$$

\textit{for all $s = 1, \dots, p$}. The sequence $y^0, y^1, \dots, y^p$ we call the chain of eigen and associated vectors (EAV) corresponding to eigenvalue $\la_0$ and we say $p+1$ is the length of that chain.

A simple verification shows that if $y^0, y^1, \dots, y^p$ is a chain of EAV corresponding to eigenvalue $\la_0$ then for all $s = 0,1, \dots, p$ the functions
$$
u^s(t) = e^{i\la_0 t}[y^s + \frac{it}{1!}y^{s-1} + \cdots + \frac{(it)^s}{s!}y^0]
$$
are solutions of differential equation (2). These functions we call \textit{elementary solutions} of the equation (2).

Now assume that $\dim \Ker A(\la_0) = \ell < \infty$. Let $y_1^0, \dots, y_\ell^0$ be a basis in subspace $\Ker A (\la_0)$ and
\begin{equation}
y_k^0, y_k^1, \dots , y_k^{p_k}, \quad j = 1, \ldots, \ell
\end{equation}
are the chains of EAV corresponding to $\la_0$ of the maximum possible length $p_k + 1$. The number $m = (p_1+1) + \cdots + (p_\ell + 1)$ certainly depends on the choice of the basis $\{y_k^0\}$. If $m < \infty$ for any choice of $\{y_k^0\}$ then there exists a basis $\{y_k^0\}$ such that the corresponding number $m$ has the maximum possible value, say $m_0$. That number $m_0$ is called the \textit{algebraic multiplicity} f the eigenvalue $\la_0$ and the chains (3) consisting of $m_0$ elements numbered in such a way that $p_1 \geq p_2 \geq \cdots \geq p_\ell$, are called a \textit{canonical system of eigen and associated elements}. Certainly canonical system is not unique. The number $\ell = \dim \Ker A(\la_0)$ is called \textit{the geometric multiplicity} of the eigenvalue $\la_0$.

Note, if $A(\la) = A - \la I$ then the canonical system of EAV (3) coincides with  Jordan chains corresponding to eigenvalue $\la_0$ (see [Lancaster and Tismenetsky, \S 6.4]). Note also that if the operator polynomial $A(\la)$ is not linear then one can not assert the linear independence of the elements of the system (3). Some of elements $y_k^s$ for $1 \leq s \leq p_k$ may even be equal to zero.

Let us try to find the solution of the equation (2) satisfying the initial conditions
\begin{equation}
(-i)^j u^{(j)} (0) = \varphi_j, \quad j = 0,1, \ldots, n-1.
\end{equation}
The problem (2), (4) is called the Cauchy problem. Following the Fourier method we try to find its solution in the form
\begin{equation}
u(t) = \sum_{s,k} c_k^s e^{i\la_k t} (y_k^s + \frac{it}{1!} y_k^{s-1} + \cdots + \frac{(it)^s}{s!}y_k^0) = \sum_{s,k} c_k^s u_{k,s} (t)
\end{equation}
where the $c_k^s$ are unknown coefficients, $y_k^s, \quad s = 0, 1, \ldots ,p_k$, are the elements of canonical systems (3) corresponding to all eigenvalues $\la_k$ of the pencil $A(\la)$. here we avoid the introduction of the third index if addition to $s$ and $k$ and assume that canonical system (3) corresponds to eigenvalue $\la_k$ (instead of $\la_0$) and $\la_k$ is repeated in the sum (5) as many times as its geometric multiplicity.

Using (5) we can rewrite the initial conditions (4) in the form
\begin{equation}
\begin{bmatrix}
\ph_0 \\
\ph_1 \\
\cdots\\
\ph_{n-1}
\end{bmatrix}
= \sum_{s,k} c_k^s
\begin{bmatrix}
y_k^{s,0} \\
y_k^{s,1} \\
\cdots\\
y_k^{s,n-1}
\end{bmatrix}
= \sum_{s,k} c_k^s \widetilde{y}_k^s,
\end{equation}
where for $r = 0,1, \ldots, n-1$
$$y_k^{s,r} = (-i)^r u_{k,s}^{(r)}(0) = (-i)^r \frac{d^r}{dt^r} [e^{i\la_k t} (y_k^s + \cdots + \frac{(it)^s}{s!} y_k^0)] \big|_{t=0} = $$
$$= \frac{d^r}{dt^r} [e^{\la_k t}(y_k^s + \frac{t}{1!} y_k^{s-1} + \cdots + \frac{t^s}{s!} y_k^0)] \big|_{t=0} $$
\textit{The elements} $\yt^s \in H^n = H\times H \times \cdots \times H$ \textit{are called the Keldysh derived chains constructed from canonical system (3)}.
If the eigenvalue $\la_k$ is \textit{semi-simple} (this is the case when there are no associated vectors) then the Keldysh derived chains have the representation
$$
\yt = ( y_k, \la_k y_k, \ldots , \la_k^{n-1} y_k).
$$

Let us assume that $\dim H < \infty$ and let $\det A(\la) \not\equiv 0$. In this case the pencil $A(\la)$ has a finite number of eigenvalues and to establish the Fourier method for the Cauchy problem (2), (4) we have to show that the system of Keldysh derived chains $\{\yt^s\}$ is a basis in $H^n$ (then the Cauchy problem will be solvable for any set of initial vectors $\ph_0, \ldots, \ph_{n-1}$).

\begin{theorem}
Let $\dim H < \infty$ and $\det A(\la) \not\equiv 0$. Then a system of Keldysh derived chains $\yt^s$ is basis in $H^n$ if and only if $\Ker A_n = 0$.
\end{theorem}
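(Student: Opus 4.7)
The plan is to linearise: replace the pencil by a single companion operator $L$ on $H^n$ whose Jordan chains coincide with the Keldysh derived chains, and then appeal to the standard fact that the union of the Jordan chains of a matrix forms a basis of the ambient space exactly when the total algebraic multiplicity matches the dimension. The linearisation requires $A_n$ to be invertible, and in exactly that case the dimensions also match, which is the reason for the dichotomy in the statement.

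Let $d = \dim H$, so that $\dim H^n = nd$. The polynomial $\det A(\la)$ has degree $nd$ if and only if its leading coefficient $\det A_n$ is nonzero, i.e.\ if and only if $\Ker A_n = 0$. Since the total number of vectors in the canonical systems of all eigenvalues of $A(\la)$ (hence the total number of Keldysh derived vectors $\yt^s$) equals the sum of the algebraic multiplicities, and this sum equals $\deg\det A(\la)$, we see that when $\Ker A_n \ne 0$ the family $\{\yt^s\}$ contains strictly fewer than $nd$ vectors and therefore cannot be a basis of $H^n$. This disposes of the ``only if'' direction.

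For the converse, assume $\Ker A_n = 0$ and introduce the companion operator
\[
L = \begin{pmatrix} 0 & I & & \\ & \ddots & \ddots & \\ & & 0 & I \\ -A_n^{-1}A_0 & -A_n^{-1}A_1 & \cdots & -A_n^{-1}A_{n-1}\end{pmatrix} \colon H^n \to H^n.
\]
A block-row reduction gives $\det(\la I - L) = (\det A_n)^{-1}\det A(\la)$, so $L$ and $A(\la)$ share their eigenvalues with the same algebraic multiplicities and the total algebraic multiplicity of $L$ equals $nd$. The key assertion is that whenever $y_k^0,\ldots,y_k^{p_k}$ is an EAV chain of $A(\la)$ at $\la_k$, the corresponding Keldysh derived chain $\yt^0,\ldots,\yt^{p_k}$ is a Jordan chain of $L$ at $\la_k$; that is, $(L-\la_k I)\yt^0 = 0$ and $(L-\la_k I)\yt^s = \yt^{s-1}$ for $s\ge 1$. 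The first $n-1$ block components of this identity follow from differentiating the defining formula $y_k^{s,r}=\frac{d^r}{dt^r}[e^{\la_k t}p_s(t)]|_{t=0}$, with $p_s(t)=\sum_{j=0}^s \frac{t^j}{j!}y_k^{s-j}$, and using $p_s' = p_{s-1}$; the last block component reduces to the identity $\sum_{j=0}^n A_j y_k^{s,j}=0$, which is exactly the EAV relation $\sum_{j=0}^s \frac{1}{j!}A^{(j)}(\la_k)y_k^{s-j}=0$ rewritten in Keldysh form. Once this correspondence is in place, canonical EAV systems of $A(\la)$ correspond to canonical systems of Jordan chains of $L$, and the latter automatically form a basis of $H^n$.

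The principal technical obstacle is the correspondence between the Keldysh derived chains and the Jordan chains of $L$; the remaining ingredients (the dimension count, the link between invertibility of $A_n$ and the degree of $\det A(\la)$, and the basis property of Jordan chains of a matrix) are essentially bookkeeping.
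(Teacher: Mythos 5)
Your proof is correct and takes essentially the same route as the paper: you linearize via the companion operator $L$, which is precisely the operator $\A_1^{-1}\A_0$ that the paper works with, identify the Keldysh derived chains with Jordan chains of $L$, and invoke the fact that the Jordan chains of a finite-dimensional operator form a basis. Your ``only if'' argument via $\deg\det A(\la) < nd$ is the same dimension count the paper performs in different language, by shifting so that $A_0$ is invertible and observing that the Jordan chains of $\A_0^{-1}\A_1$ at eigenvalue $0$ have no Keldysh counterpart and create a deficit.
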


\begin{pf}
With pencil $A(\la)$ we associate the following linear pencil in space $H^n$
\begin{equation}
\A (\la) = \A_0 - \la \A_1
\end{equation}
where
$$\A_0 =
\begin{bmatrix}
A_0 & A_1 & \ldots & A_{n-1}\\
0 & I & \ldots & 0\\
\vdots & \vdots & \ddots & \vdots \\
0 & 0 & \ldots & I
\end{bmatrix},
\quad \A_1 =
\begin{bmatrix}
0 & 0 & \ldots & 0 & -A_n\\
I & 0 & \ldots & 0 & 0\\
0 & I & \ldots & 0 & 0\\
\vdots & \vdots & \ddots & \vdots & \vdots\\
0 & 0 & \ldots & I & 0
\end{bmatrix}
$$
A simple verification (see for example [Keldysh 1], [Markus 1]) shows that the EAV (3) are the chains of eigen and associated vectors corresponding to eigenvalue $\la_k$ of the pencil $A(\la)$ if and only if the Keldysh derived chains
\begin{equation}
\yt^0, \yt^1, \ldots , \yt^{p_k}
\end{equation}
are EAV of the linear pencil (7) or linear operator $\A_1^{-1} \A_0$ acting in the space $H^n$ (note that $\A_1$ is invertible if $\Ker A_n = 0$). But the system of EAV of any linear operator in finite dimensional space is a basis. Hence the system of Keldysh derived chains is a basis in $H^n$.

To show that $\{\yt^s\}$ is not a basis if $\Ker A_n \neq 0$ we can assume without loss of generality that $A_0$ is invertible. (Otherwise we have to shift $\la \rightarrow \la + \la_0$, where $\la_0$ is a point such that $A(\la_0)$ is invertible. We can find such a point because $\det A(\la) \not\equiv 0$.) Then the system of EAV for pencil (7) coincides with a system of EAV for $I - \la \A_0^{-1}\A_1$ and in turn coincides with EAV for operator $\A_0^{-1}\A_1$ with the exception of a canonical system corresponding to the eigenvalue $\mu = 0$. The operator $\A_0^{-1} \A_1$ is singular, hence the algebraic multiplicity of the eigenvalue $\mu = 0$ is equal to $k > 0$. Then the system Keldysh derived chains has defect $k$. Theorem 1 is proved.
\end{pf}

\newpage
\section{Theorem on holomorphic operator function}

In attempting to generalize theorem 1 to infinite-dimensional spaces one comes up against some deep problems. Under the assumption that the spectrum of the pencil $A(\la)$ is descrete we will show the minimality of Keldysh derived chains in the space $H^n$. Under some reasonable additional assumptions we will sketch the proof of its completeness. But the basis property, as a rule, does not hold. Even for the simple pencil $A(\la) = I - \la^2C$, where $C$ is a self-adjoint positive compactoperator in $H$ the Keldysh derived chains do not form a basis in $H^2$. Nevertheless, for some pencils it is possible to find the space $\HH$ which is embedded in $H^n$ and such that the system $\{\yt^s\}$ consisting of Keldysh derived chains has the basis property in $\HH$. Some  results of this kind may be found in the recent paper [Shkalikov 1, \S 2.3]. They are based on eigenexpansion theorems for p-subordinate linear operators due to V. Kaznelson, A. Markus and V. Matsaev (see [Markus 1], for
  example).

For convenience we give the definitions ot the concepts which we have mentioned.

\begin{definition}
The system $\{y_k\}_1^\infty$ is complete in Hilbert space $H$ if from the equalities
$$(y_k, x) = 0, \quad k = 1, 2,\ldots ,$$
it follows that $x=0$.
\end{definition}

\begin{problem}
(see [A. Kolmogorov and S.Fomin 1]). The system $\{y_k\}_1^\infty$ in separable Hilbert space $H$ is complete if and only if it is dense in $H$, i.e. for any $x \in H$ and for any $\ep > 0$ there exists a linear combination
$Y_N = c_1y_1 + \cdots + c_N y_N$ such that $\|Y_N - x \| < \ep$.
\end{problem}

\begin{definition}
The system $\{y_k\}_1^\infty$ is minimal in Hilbert space $H$ if there exists a system $\{z_j\}_1^\infty \in H$, such that
$$(y_k, z_j) = \delta_{kj}, \quad j, k = 1,2, \ldots ,$$
where $\delta_{kj}$ is the Kronecker symbol.
\end{definition}

\begin{problem}
The system $\{y_k\}_1^\infty$ is minimal in Hilbert space $H$ if and only if for all $j = 1, 2, \ldots ,$
$$y_j \not\in \Clos \{y_1, \ldots, y_{j-1}, y_{j+2}, \ldots\}$$
(by $\Clos \{x_1, x_2, \ldots, \}$ we denote the closure of the linear span of the set $\{x_k\}_1^\infty$).
\end{problem}

\begin{definition}
The system $\{y_k\}_1^\infty$ is a basis in Hilbert space $H$ if $\|y_k\| \asymp 1$ (i.e. $c_1 \leq \|y_k\| \leq c_2$ with some positive constants $c_1, c_2$ independent of $k$) and any element $y \in H$ can be uniquely represented by a series
\begin{equation}
y = \sum_{k=1}^\infty c_k y_k
\end{equation}
with some coefficients $\{c_k\}$ and this series strongly converges in $H$. If this series converges unconditionally for any $y\in H$ then the basis $y_k$ is called an unconditional basis or a Riesz basis.
\end{definition}

\begin{note}
It is not a simple exercise to give an example of a basis which is not a Riesz basis. It was K. Babenko who proved in 1948 that the system $\{y_k(x)\}_1^\infty = \{x^\alpha \sin kx\}_1^\infty$ is a basis in $L_2 [0, \pi]$  but not a Riesz basis, provided that  $-1/2 <\alpha 1/2$.
\end{note}

The following theorem allows us to give another definition of a Riesz basis.

\begin{theorem}
The system $\{y_k\}_1^\infty$ in Hilbert space $H$ forms a Riesz basis if and only if there exists an orthogonal basis $$\{e_k\}_1^\infty$$ in $H$ and a bounded invertible operator $A$ such that
$$A e_k = y_k, \quad k = 1, 2, \ldots .$$
\end{theorem}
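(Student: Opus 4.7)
The plan is to prove the two implications separately, with the nontrivial direction resting on a norm-equivalence characterization of Riesz bases.

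\textbf{Sufficiency ($\Leftarrow$).} Suppose $y_k = A e_k$ where $\{e_k\}$ is an orthonormal basis and $A$ is bounded with bounded inverse. Then $\|A^{-1}\|^{-1} \leq \|y_k\| \leq \|A\|$, so $\|y_k\|\asymp 1$. For each $y\in H$ the orthogonal expansion $A^{-1}y = \sum_k (A^{-1}y,e_k)\,e_k$ converges unconditionally, and applying the bounded operator $A$ termwise yields $y = \sum_k (A^{-1}y,e_k)\,y_k$ with unconditional convergence. Uniqueness of coefficients follows from the injectivity of $A$ and the uniqueness of the orthogonal expansion of $A^{-1}y$. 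Hence $\{y_k\}$ is a Riesz basis.

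\textbf{Necessity ($\Rightarrow$).} Suppose $\{y_k\}$ is a Riesz basis. The key intermediate claim is the two-sided estimate
\begin{equation*}
c_1 \sum_k |a_k|^2 \;\leq\; \Bigl\|\sum_k a_k y_k\Bigr\|^2 \;\leq\; c_2 \sum_k |a_k|^2
\end{equation*}
for every finitely supported scalar sequence $\{a_k\}$, with constants independent of the sequence. Granting this, fix any orthonormal basis $\{e_k\}$ of $H$ and define $A$ on the dense set of finite linear combinations by $A\bigl(\sum a_k e_k\bigr)=\sum a_k y_k$. The upper estimate makes $A$ bounded and so $A$ extends by density to all of $H$; the lower estimate makes $A$ bounded below, hence injective with closed range; the range contains every $y_k$ and so, by completeness of $\{y_k\}$, equals all of $H$. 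Thus $A$ is bounded invertible with $Ae_k=y_k$ by construction.

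\textbf{Main obstacle.} The technical core is establishing the norm equivalence. Writing $y = \sum c_k(y) y_k$ with biorthogonal coefficients, unconditional convergence means the partial-sum operators $S_{n,\epsilon}y := \sum_{k\leq n} \epsilon_k c_k(y) y_k$ are well defined for every bounded sequence $\{\epsilon_k\}$ of modulus-one scalars; pointwise boundedness together with the uniform boundedness principle gives $\sup_{n,\epsilon}\|S_{n,\epsilon}\| < \infty$. Averaging over random sign choices $\epsilon_k=\pm 1$ — equivalently, exploiting the parallelogram identity in the Hilbert space $H$ — converts this uniform bound into the Orlicz-type equivalence
\begin{equation*}
\Bigl\|\sum_k c_k(y) y_k\Bigr\|^2 \;\asymp\; \sum_k |c_k(y)|^2 \,\|y_k\|^2.
\end{equation*}
Combined with $\|y_k\|\asymp 1$, which is part of the definition of a Riesz basis, this yields the stated two-sided estimate. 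Once the norm equivalence is secured, the construction of $A$ is pure bookkeeping, so I expect the sign-averaging step to be the only substantive obstacle.
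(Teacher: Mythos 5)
The paper gives no proof of this theorem; it only cites [Gohberg, Krein, Ch.~6]. Your proposal is therefore a genuine standalone contribution and not something to compare against an in-paper argument. The argument you give is the standard one and it is correct. A few remarks on the places where the reasoning is compressed.

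In the necessity direction, the uniform boundedness step requires that each partial-sum operator $S_{n,\epsilon}$ is bounded to begin with, which in turn requires that the coefficient functionals $c_k(\cdot)$ are bounded. This is a consequence of $\{y_k\}$ being a (Schauder) basis, proved via a closed-graph or renorming argument; you use it implicitly, and since the paper itself records this fact a few lines after the definition, it is fair to take it as known. The random-sign averaging is exactly right: since $\mathbb{E}[\epsilon_j\epsilon_k]=\delta_{jk}$ for independent Rademacher signs, expanding $\mathbb{E}_\epsilon\bigl\|\sum_k \epsilon_k c_k y_k\bigr\|^2$ in the inner product kills the cross terms and yields $\sum_k |c_k|^2\|y_k\|^2$, and applying this to the two inequalities $\|\sum \epsilon_k c_k y_k\|\le C\|\sum c_k y_k\|$ and $\|\sum c_k y_k\|\le C\|\sum\epsilon_k c_k y_k\|$ gives both halves of the equivalence. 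Combined with $\|y_k\|\asymp 1$ this is the two-sided $\ell^2$ estimate, and the construction of $A$ (bounded by the upper estimate, bounded below hence injective with closed range by the lower estimate, surjective because the range is closed and contains the complete system $\{y_k\}$) is, as you say, bookkeeping. The sufficiency direction is fine; the only point worth making explicit is that applying the bounded operator $A^{-1}$ to a convergent series $\sum a_k y_k$ shows $\sum a_k e_k$ converges, after which uniqueness of the orthogonal expansion forces $a_k=(A^{-1}y,e_k)$, which is what you need for uniqueness of the representation in $\{y_k\}$.
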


The proof of this theorem can be found in [Gohberg, Krein 1, Ch.6]. More precise historical comments are given in [Nikolskii 1].

If the system $\{y_k\}_1^\infty$ is a basis in $H$ then the coefficients $c_k = c_k(y)$ in the representation (1) are linear functionals in $H$. Since these functionals are defined for all $y \in H$, we have by virtue of the Banach-Steinhaus theorem that they  are bounded. Therefore, using the Riesz theorem we can find elements $\{z_k\} \in H$ such that $c_k(y) = (y, z_k)$. Since the representation (1) for element $y = y_j$ is unique we have $(y_j, z_k) = \delta_{jk}$. The theorem $\{z_k\}$ is said to be \textit{adjoint to} $\{y_k\}$. It is known (see [Gohberg, Krein, Ch.6], for example) that if $\{y_k\}$ is a basis  (or a Riesz basis) then $\{z_k\}$ is too.

Hence any basis $\{y_k\}$ in $H$ is a minimal and, obviously, complete system. The converse assertion is certainly not true. For example, the system $\{y_k\}_1^\infty$, where
$$y_1 = \{1,0,0,\ldots\}, \quad y_2 = \frac{1}{\sqrt 2} \{1,1,0,0, \ldots\} , \ldots, y_k = \frac{1}{\sqrt k} \{1,1,\ldots, 1,0,0,\ldots \}$$
is minimal and complete in $H = \ell_2$, bet it is not a basis in $\ell_2$. (Hint: if $\{e_k\}_1^\infty$ is the standard basis in $\ell_2$ then the operator $A$ defined by equalities $Ae_k = y_k$ is not invertible.)

If the system is complete and minimal but is not a basis then it can have some intermediate property.

\begin{definition}
The system $\{y_k\}_1^\infty$ is a basis with parenthesis in Hilbert space $H$ if there exists a sequence of integers $\{m_s\}_1^\infty$ ($m_1 = 0$) such that any element $y \in H$ can be uniquely represented by a series
$$y = \sum_{s=1}^\infty \left(\sum_{k = m_s+1}^{m_{s+1}} c_k Y_k\right) = \sum_{s=1}^\infty Y_s$$
and the series $\Sigma Y_s$ strongly converges in $H$.
\end{definition}

\begin{definition}
Let $\Lambda = \{\la_k\}_1^\infty$ be a sequence of complex numbers such that for some $\alpha > 0 \quad \Ree \la_k^\alpha \geq 0$ for all $k$ sufficiently large (we take the main branch of $\la^\alpha$, i.e. $\la^\alpha > 0$ if $\la > 0$). The minimal system $\{y_k\}_1^\infty \in H$ is a basis for the Abel method of summability of order a $\alpha$ with respect to sequence $\Lambda$ if there exists a sequence of integers $\{m_s\}_1^\infty$ ($m_1 = 0$) such that for any $x\in H$ the series
$$x(t) = \sum_{s=1}^\infty \left(\sum_{k = m_s+1}^{m_{s+1}} e^{-\la_k^{\alpha_t}} (x, y_k^*) y_k\right) = \sum_{s=1}^\infty X_s(t)$$
(the system $\{y_k^*\}_1^\infty$ is adjoint to $\{y_k\}_1^\infty$) strongly converges for any $t>0$ and $\|x(t) - x\| \rightarrow 0$ if $t \rightarrow +0$.
\end{definition}

It was V.Lidskii [1] who introduced this method for summability of Fourier series $\sum (x, y_k^*)y_k$ with respect for systems $\{y_k\}_1^\infty$ of EAV of compact operators $A$. In this situation $\Lambda = \{\la_k\}$ is the sequence of eigenvalues of $A^{-1}$ (the definition of $x(t)$ slightly changes if $\la_k$ are not semi-simple eigenvalues of $A^{-1}$).

According to a theorem of Hilbert the system $\{y_k\}_1^\infty$ of eigenvectors of self-adjoint compact operator $A$ acting in Hilbert space $H$ is an orthogonal basis in $H$. The system $\{y_k\}_1^\infty$ of EAV of non-self-adjoint compact operator $A$ corresponding to eigenvalues $\la_k \neq 0$ forms a minimal system, because the system $\{y_k^*\}_1^\infty$ of EAV of the operator $A^*$ is adjoint to $\{y_k\}_1^\infty$. Certainly, for a nonself-adjoint operator, the system  of its EAV is not always complete, For example the compact operator
$$Ay(x) = \int_0^x y(t) dt$$
in the space $H = L_2[0,1]$ has no eigenvectors. But if the property of completeness is proved, then one can try to prove the basis property, or the basis property for the Abel method of summability. The investigations in this field were very intensive and a number of deep and refined results were obtained. The reader can make acquaintance with some of them in books [Cohberg, Krein 1], [Markus 1], [Agranovich 1]. We will  touch this topic again in a subsequent lecture.

The proof of the completeness theorem in Section~1 depended on the finite dimensional context. To give the new approach we have to start from an important theorem on holomorphic operator functions. First we have to recall some definitions.

We say $A(\la)$ is an analytic vector function of complex variable $\la$ with values in Hilbert space $H$ and defined in a domain $\Omega \subset \mathbb{C}$, if at each point $\la \in \Omega$ the ration
$$\frac{A(\la+h) - A(\la)}{h}$$
converges in the norm of $H$ to a limit $A'(\la)$ if $h\rightarrow 0$.

Futher by $(\cdot, u) v$ we denote one dimensional operator $V$ such that $V_y = (y,u)v$. Obviously, $V^* = (\cdot, v) u$. By $\sigma_\infty$ we denote the class of compact operators in $H$.

The following result is due to Keldysh $[1,2]$ (its first part was independently proved by I. Gohberg).

\begin{theorem}
Let $A(\la) = A_0 + S(\la)$, where $S(\la)$ is an holomorphic operator function in a domain $\Omega$ and $S(\la) \in \sigma_\infty$ for each $\la \in \Omega$. Also, let there exist a point $\la_0 \in \Omega$ such that the operator $A(\la_0)$ is invertible. Then $A^{-1}(\la)$ is a meromorphic operator function in $\Omega$, i.e. it can be represented in the form $A^{-1} (\la) = D(\la)/ \Delta(\la)$, where $D(\la)$ is a holomorphic operator function and $\Delta (\la)$ is a holomorphic scalar function in $\Omega$. The principal part of the function $A^{-1}(\la)$ at the pole $\la = c$ has the representation

\begin{multline}
\sum_{k=1}^\ell \left[\frac{(\cdot, z_k^0) y_k^0}{(\la - c)^{p_k+1}} + \frac{(\cdot, z_k^1)y_k^0 + (\cdot, z_k^0) y_k^1}{(\la - c)^{p_k}} + \cdots \right.\\ \left. +
\frac{(\cdot, z_k^{p_k})y_k^0 + (\cdot, z_{p_k-1})y_k^1 + \cdots + (\cdot, z_k^0)y_k^{p_k}}{\la - c}\right]
\end{multline}
where
\begin{equation}
y_k^0, y_k^1, \ldots, y_k^{p_k}, \quad k = 1, \ldots, \ell
\end{equation}
is an arbitrary canonical system of $A(\la)$ corresponding to eigenvalue $c$ and
\begin{equation}
z_k^0, z_k^1, \ldots, z_k^{p_k}, \quad k = 1, \ldots, \ell
\end{equation}
is the canonical system of operator function $A^*(\la) = [A(\overline{\la})]^*$ corresponding to eigenvalue $\overline{c}$. The adjoint canonical system (4) is  uniquely determined by the given canonical system (3).
\end{theorem}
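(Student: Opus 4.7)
The plan is to work in three stages: first prove meromorphy in $\Omega$, then pin down where the Laurent coefficients at a pole must live, and finally extract the Toeplitz--Jordan structure producing the stated formula.

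\textbf{Stage 1 (meromorphy).} I would write $A(\la) = A(\la_0)[I + T(\la)]$ with $T(\la) = A(\la_0)^{-1}(A(\la) - A(\la_0))$ holomorphic in $\Omega$, compact-valued, and vanishing at $\la_0$. By the analytic Fredholm theorem of Gohberg, the set $\Sigma = \{\la \in \Omega : I + T(\la) \text{ is not invertible}\}$ is discrete, $(I + T(\la))^{-1}$ is meromorphic on $\Omega$, and locally around any $c \in \Sigma$ admits a representation $D_c(\la)/\Delta_c(\la)$ with $\Delta_c$ a scalar holomorphic and $D_c$ an operator-holomorphic function. Patching these local representations delivers the global formula $A^{-1}(\la) = D(\la)/\Delta(\la)$.

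\textbf{Stage 2 (where the principal part lives).} Fix a pole $c$ and expand
\begin{equation*}
A(\la) = \sum_{k \geq 0} A_{[k]}(\la - c)^k,
\qquad A^{-1}(\la) = \sum_{j \geq -N} B_j(\la - c)^j,
\end{equation*}
with $A_{[k]} = A^{(k)}(c)/k!$. Equating coefficients of $(\la - c)^{-N+s}$ in $A(\la) A^{-1}(\la) = I$ gives
\begin{equation*}
\sum_{k=0}^s A_{[k]} B_{-N+s-k} = 0, \qquad 0 \leq s \leq N-1.
\end{equation*}
Applied to an arbitrary $x \in H$, these are precisely the defining relations of an EAV chain, so the tuple $(B_{-N}x, B_{-N+1}x, \ldots, B_{-1}x)$ is a chain of eigen and associated vectors of $A(\la)$ at $c$; consequently every $B_{-\nu}$ maps $H$ into the root subspace $\mathcal{L}_c$ of $A(\la)$ at $c$. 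Repeating the analysis for $(A^*(\bar\la))^{-1}$ at $\bar c$ (whose Laurent coefficients are $B_{-\nu}^*$) gives $B_{-\nu}^* H \subset \mathcal{L}_{\bar c}^*$, where $\mathcal{L}_{\bar c}^*$ is the analogous root subspace for $A^*(\la)$. Since $A(c)$ is Fredholm of index zero, both root subspaces are finite dimensional; in particular each $B_{-\nu}$ is finite rank, $N$ is finite, and equals the maximal chain length $p_1 + 1$.

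\textbf{Stage 3 (Toeplitz structure and adjoint canonical system).} Having fixed a canonical system $\{y_k^s\}$ at $c$, I would expand $B_{-\nu} x = \sum_{k,s} \alpha_{k,s}^{(\nu)}(x)\, y_k^s$ with continuous linear functionals $\alpha_{k,s}^{(\nu)}$ on $H$, represented via Riesz as inner products with vectors in $H$. The chain identities of Stage~2 force the functionals to shift along chains as $\nu$ varies, producing precisely the triangular pattern displayed in the theorem statement. Coupling this with the parallel analysis for $A^*$, the Riesz representers must lie in $\mathcal{L}_{\bar c}^*$ and satisfy the EAV relations of $A^*(\la)$ at $\bar c$, so they assemble into a canonical system $\{z_k^s\}$ of $A^*(\la)$. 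Uniqueness follows by reading off the leading residue (which determines $z_k^0$ from $y_k^0$ on maximal-height chains), then peeling off successive orders to recover each $z_k^s$ recursively.

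\textbf{Main obstacle.} The delicate step is Stage~3: extracting the exact Toeplitz form rather than a merely generic finite-rank bilinear expansion. A canonical system is far from unique, so one must work with the intrinsic filtration of $\mathcal{L}_c$ by chain height and verify its compatibility with the dual filtration on $\mathcal{L}_{\bar c}^*$. This amounts to building an adapted Jordan basis for the pair $(A, A^*)$ at $(c, \bar c)$ without the luxury of the linearization in $H^n$ that drove the finite-dimensional Theorem~1.1.
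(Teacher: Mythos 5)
Your proposal is broadly correct and reaches essentially the same skeleton as the paper's argument, but Stage~1 takes a genuinely different — and somewhat circular — route. You invoke ``the analytic Fredholm theorem of Gohberg'' as a black box to obtain meromorphy of $(I+T(\la))^{-1}$ and the representation $D(\la)/\Delta(\la)$. The paper attributes precisely this first assertion of the theorem to Gohberg (independently of Keldysh), so citing it is very close to assuming the very claim at issue. The paper instead \emph{proves} the meromorphy from scratch by a projection method: choosing an orthonormal basis $\{e_m\}$ with projections $P_m$, $Q_m$, establishing the uniform estimate $\|Q_m S(\mu)\| \le \delta < 1$ on a compact subdomain, and reducing the equation $[I+S(\la)]x = f$ to a finite-dimensional linear system in $P_m H$ whose determinant $d(\la)$ is holomorphic and not identically zero. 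This gives a constructive $D(\la)/\Delta(\la)$ and is self-contained where your Stage~1 outsources the key lemma. What the paper's route buys is exactly the self-containedness needed for an exposition that presents this as the foundational result; what your route buys is brevity, which is fine if the theorem is being quoted rather than re-proved.

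Stages~2 and~3 of your proposal track the paper closely. The paper likewise expands $x = A(\la)A^{-1}(\la)x$ at the pole, reads off the EAV chain relations among the Laurent coefficients $R_0, \dots, R_m$, deduces $m = p_1$ and $R_0 x \in \operatorname{span}\{y_1^0,\dots,y_\ell^0\}$, uses the observation that $y_1 + y_2$ generates a chain of length $\min(m_1,m_2)$ to kill coefficients on short chains, applies Riesz to get $R_0 = \sum_{j \le q}(\cdot, z_j^0)y_j^0$, and then uses $[A(\overline\la)]^*[A^{-1}(\overline\la)]^*x = x$ to show that the representers $z_j^0$ head chains of EAV of $A^*(\la)$ at $\overline c$ of length $m$, hence extend to an adjoint canonical system. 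You describe the same chain of ideas. Both you and the paper leave the extraction of the full Toeplitz structure for $R_1, \dots, R_m$ as a sketch — the paper explicitly punts (``more detailed analysis\dots we refer the reader to the original paper [Keldysh~1]'') — and you correctly identify this as the delicate step, so no gap is introduced relative to what the paper itself establishes.
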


\begin{pf}
First, note that this theorem generalizes the well known Fredholm theorem for linear operator function $A(\la) = I - \la A, \quad A\ \in \sigma_\infty$. Then note that without loss of generality we can assume that $A_0 = I$. Otherwise we can shift $\la \rightarrow \la + \la_0$ and consider the operator function $A^{-1}(\la_0) [A(\la_0) + (S(\la+\la_0) - S(\la_0))] = I +S_1(\la)$, where $S_1(\la) \in \sigma_\infty$.
Let $\{e_m\}_1^\infty$ be an orthogonal basis in $H$ and $\{P_m\}$ be the set of orthogonal projectors such that $P_m(H) = \Span \{e_k\}_1^m$. Obviously, $P_m \rightarrow I$ and $Q_m = I - P_m \rightarrow 0$ if $m \rightarrow \infty$ in the strong operator topology (i.e. $\|P_m x-x\| \rightarrow 0$ for each $x \in H$). Let $\overline{\Omega}_1$ be a closed domain in $\Omega$. Since the operator $S(\la)$ is compact we have for any fixed $\la \in \overline{\Omega}_1, \|Q_m S(\la) \| \rightarrow 0$ if $m \rightarrow \infty$. The operator function $Q_mS(\la)$ is holomorphic, hence for any $\delta > 0$ there exists an $\ep > 0$ and $m+0 = m_0 (\la)$ such that for all $m \geq m_0$ and all $\mu$ for which $|\mu - \la| \leq \ep$ we have $\|Q_mS(\mu)| \leq \delta$. We can cover the domain $\overline{\Omega}_1$ by such discs and then choose a finite subcover. Thus for all $\mu \in \overline{\Omega}_1$ and all $m \geq m_1$, we obtain
\begin{equation}
\|Q_mS(\mu)\| \leq \delta < 1
\end{equation}
where $m_1$ does not depend on $\mu$ but only on $\Omega_1$.

To find the inverse operator $A^{-1} (\la)$ we have to solve the equation
\begin{equation}
[I+S(\la)]x = f, \quad f\in H.
\end{equation}
Take any $m\geq m_1$ and denote $P_m = P, \quad Q_m = Q$. We can rewrite (6) in the form
\begin{equation}
Pc + PS(\la) x = Px + PS(\la) Px + PS(\la) Qx = Pf,
\end{equation}
\begin{equation}
Qx + QS(\la) x = Qx + QS (\la) Qx + QS (\la) Px = Qf.
\end{equation}

From (8) it follows that
$$[I +QS(\la)] Qx = Q[f-S(\la) Px].$$
Remembering that $Q=Q_m$ and taking into account (5) we obtain
$$Qx = [I+QS(\la)]^{-1} Q[f-S(\la)Px].$$
Using this equality we can rewrite (7):
\begin{equation}
P[I+S(\la) - S(\la)[I+QS(\la)]^{-1}QS(\la)] Px = P[f-PS(\la)[I+QS(\la)]^{-1}Qf]
\end{equation}
If $x = \sum x_k e_k$ then $Px= x_1e_1 + \cdots + x_m e_m$. This means that the equation (9) represents an algebraic system of $m$ equations with unknown variables $x_1, \cdots, x_m$. Note, that the equation (6) has a unique solution if $\la = \la_0$, hence the equation (9) has too. This means that the determinant $d(\la)$ of the algebraic system (9) is non-zero at $\la_0$. Now $S(\la)$ is holomorphic in $\Omega_1$ and so is $d(\la)$, moreover $d(\la) \not\equiv 0$. From (9) we obtain
$$Px = \frac{1}{d(\la)} F(\la) P[f-PS(\la)(I+QS(\la))^{-1}Qf]$$
where $F(\la)$ is a holomorphic operator function in $\Omega_1$. Then
\begin{multline*}
	A^{-1} (\la)f = x = Px + Qx = [(I+QS(\la))^{-1}Qf] + {}\\
		\frac{1}{d(\la)} [F(\la)P - QS(\la) F(\la) P][f-PS(\la)(I+QS(\la))^{-1}Qf]
\end{multline*}
Hence the operator $A^{-1} (\la)$ exists with the exception of some finite number of poles in $\overline{\Omega}_1 \subset \Omega$. Since $\overline{\Omega}_1$ is an arbitrary closed sub-domain in $\Omega$. we obtain the first assertion of the theorem.

The complete proof of the second statement is technically difficult. We sketch here only the main idea, for more details we refer the reader to the original paper [Keldysh 1]. Suppose that the principal part of the resolvent $A^{-1}(\la)$ for the pole $\la = c$ has the form
$$\frac{R_0}{(\la-c)^{m-1}} + \frac{R_1}{(\la-c)^{m}} + \cdots + \frac{R_m}{\la-c},$$
where $R_s$ are some operators in $H$ and $R_0 \neq 0$. Then we can write
\begin{equation*}
\begin{gathered}
x = A(\la) A^{-1}(\la)x = \\
= \left[A(c) + \frac{1}{1!} A'(c) (\la-c) + \cdots\right] \left[\frac{R_0 x}{(\la-c)^{m-1}} + \cdots + \frac{R_m x}{\la-c} + R(\la)x \right],
\end{gathered}
\end{equation*}
where $R(\la)$ is holomorphic at $\la = c$. The left side of this equality has no pole, hence the coefficients of powers of $(\la-c^){-\nu}, \quad \nu = m + 1, m, \ldots, 1$, on the right side are equal to zero. It follows that
\begin{align*}
& A(c)R_0 x =0, \\
& A(c)R_1 x + \frac{1}{1!} A'(c) R_0 x = 0, \\
& \hbox to 3cm {\dotfill}\\
& A(c) R_m x + \frac{1}{1!} A'(c) R_{m-1} x + \cdots + \frac{1}{m!} A^{(m)}(c) R_0 x = 0.
\end{align*}
This means that for each $x \in H$ the sequence $R_0 x, R_1 x, \cdots, R_m x$ is a chain of EAV of length $m+1$. From the definition of a canonical system (3) it follows that $m = p_1$ and
\begin{equation}
R_0 x = c_1 y_1^0 + \cdots + c_\ell y_\ell^0,
\end{equation}
because $\{y_k^0\}_1^\ell$ is a basis in $\Ker A(c) = \Ker [I + S(c)] \quad (\dim \Ker [I+S(c)] = \ell < \infty$ since $S(c) \in \sigma_\infty$). Note that, if elements $y_1$ and $y_2$ generate chains of EAV of length $m_1$ and $m_2$, then $y_1 + y_2$ generates a chain of EAV of length $\min (m_1, m_2)$. Thus from the definition of a canonical system; the coefficients $c_j$ in (10) are equal to zero for all $j$ such that $p_j < p_1 = m$. Moreover $c_j = c_j(x)$ are continuous linear functionals on $x$, and by virtue of Riesz' theorem, we can write $c_j(x) = (x, z_j^0)$ and
\begin{equation}
R_0 = (\cdot, z_1^0)y_1^0 + \cdots + (\cdot, z_q^0)y_q^0,
\end{equation}
where $q$ is such a number that $p_1 = \cdots = p_q > p_{q+1}$. As before, from the equality $[A(\overline \la)]^* [A^{-1} (\overline{\la})]^* x = x$ we can conclude that $R_0^* x, R_1^* x, \ldots, R_m^* x$ is a chain of EAV of $A^*(\la)$. This means that elements $z_1^0, \ldots, z_q^0$ generate chains of EAV of length $m$, and may be taken as the first elements of canonical system (4). The representation (11) shows that we proved (2) for the (leading) coefficient of $(\la - c)^{-m-1}$. More detailed analysis allows us to get the necessary representations for $R_1, R_2, \ldots, R_m$ and to prove (2).
\end{pf}

\newpage
\section{New proof of the completeness theorem in finite dimensional case. Representation of the resolvent as a meromorphic function of finite order growth}

\textbf{1.}
Now we are able to give a new approach to the proof of the theorem on completeness of Keldysh derived chains of operator polynomials. To prove it in finite dimensional space we need only the representation for the principal part of $A^{-1} (\la)$ in the neighborhood of a pole.

Let
\begin{equation}
A(\la) = A_0 + \la A_1 + \cdots + \la^n A_n,
\end{equation}
$A_n$ be invertible, and (for simplicity) assume that $A(\la)$ has only simple eigenvalues $\{\la_k\}$ (i.e. the algebraic multiplicity of each eigenvalue $\la_k$ equals $1$). In this case Keldysh chains have the representation
\begin{equation}
\yt = \{y_k, \la_k y_k, \ldots , \la_k^{n-1} y_k\}, \quad k = 1, 2, \ldots,
\end{equation}
where $\{y_k\}$ are corresponding eigenvectors of $A(\la)$. Suppose the system (2) is not complete. Then there exists a vector $f = \{f_1, \ldots, f_n\} \in H^n$ such that
\begin{equation}
(f, \yt) = (f_1, y_k) + \cdots + (f_n, \la_k^{n-1} y_k) = 0, \quad k = 1, 2, \ldots .
\end{equation}

Denote $A^* (\la) = [A(\overline{\la})]^* = A_0^* + \la A_1^* + \cdots + \la^n A_n^*$. If $\dim H < \infty$ then there is a Laurent expansion
\begin{equation}
[A^*(\la)]^{-1} = \frac{(\cdot, y_k) z_k}{\la - \overline{\la}_k} + R^*(\la)
\end{equation}
where $R^* (\la)$ is holomorphic in the neighborhood of $\overline{\la}_k$ and $z_k$ is the eigenvector of $A^*(\la)$ corresponding to $\overline{\la}_k$. Let us consider the meromorphic vector function
\begin{equation}
F(\la) = [A^*(\la)]^{-1} f(\la),
\end{equation}
where $f(\la) = f_1 + \la f_2 + \cdots + \la^{n-1} f_n$.

Using (4) we obtain
\begin{align*}
F(\la) &= [A^*(\la)]^{-1}[f(\overline{\la}_k) + \frac{1}{1!}(\la - \overline{\la}_k)f'(\overline{\la}_k) + \cdots] \\
& = \frac{(f(\overline{\la}_k), y_k)z_k}{\la - \overline{\la}_k} + F_1 (\la)\\
& = \frac{[(f_1, y_k) + \cdots + (f_n, \la_k^{n-1} y_k)]z_k}{\la - \overline{\la}_k} + F_1(\la)
\end{align*}
where $F_1 (\la)$ is a holomorphic function in the neighborhood of $\overline{\la}_k$. Now the equalities (3) show that $F(\la)$ has no poles at $\{\overline{\la}_k\}$. Hence $F(\la)$ is an entire vector function (i.e. holomorphic in the whole complex plane). For sufficiently large $|\la| > r_0$ we also have the estimate
\begin{align*}
\|F(\la)\| &\leq \|[A^*(\la)]^{-1}\| \|f(\la)\| \\
& \leq |\la|^{-n} \|A_n^{-1} \| \|(I + \la^{-1} A_{n-1} A_n^{-1} + \cdots +
	\la^{-n}A_0A_n^{-1})^{-1} \|\times{}\\
&\kern 4cm{}\times (\|f_1\| + \cdots + |\la|^{n-1} \|f_n\|)\\
& \leq M|\la|^{-1},
\end{align*}
where the constant $M$ does not depend on $\la$. Hence $F(\la)$ is bounded in the whole complex plane and from Liouville's theorem it follows that $F(\la) \equiv const$. Since $\|F(\la)\| \rightarrow 0$ when $\la\rightarrow \infty$ we have $F(\la) \equiv 0$.

From (5) we have
$$f(\la) = A^* (\la) F(\la) = 0,$$
therefore $f = \{f_1, \ldots, f_n\} = 0$. Hence the system (2) is complete.

\begin{note}
We can usually work with holomorphic vector or operator functions as well as with scalar holomorphic vector function in a disk $D_\ep (c) = \{\la: |\la-c| < \ep\}$ then
\begin{equation}
f(\la) = \sum_{k=0}^\infty f_k(\la-c)^k, \quad f_k \in H,
\end{equation}
and the series converges strongly for $\la \in D_\ep(c)$. Indeed, for each $g \in H$ the scalar function $(f(\la), g)$ is holomorphic in $D_\ep(c)$. Hence
$$(f(\la), g) = \sum_{k=0}^\infty c_k (\la-c)^k, \quad \la \in D_\ep(c)$$
The coefficients $c_k = c_k (g)$ are linear functionals defined for all $g \in H$, therefore $c_k = (f_k, g)$. Now from the Banach-Steinhaus theorem we can deduce that the series (6) converges strongly.

Also, if $F(\la) = F_0 + F_1 \la + \cdots$ is an entire bounded vector function then so is the scalar function $(F(\la),g)$ for each $g \in H$. From the Liouville theorem it follows that $(F(\la), g) = \equiv const$. Hence $(F_j, g) = 0, \quad j = 1,2,\ldots, $ for each $g \in H$ and $F(\la) = F_0 \equiv const$. (See [Hille and Fhillips], for example, for details.)
\end{note}
\begin{note}
The proof of completeness does not change significantly if the eigenvalues $\{\la_k\}$ are not simple or semi-simple. It is an easy exercise to reduce from the definition of EAV and representation (2.2)\footnote{Here and further the notation (n.m) means the reference (m) from lecture n.} that the equalities
$$(f, \yt^0) = 0, \quad (f, \yt^1) = 0, \ldots, (f, \yt^m) = 0$$
are equivalent to the following:
\begin{align*}
&R_0^* f(\overline{\la}_k) = 0, \\
& R_1^* f(\overline{\la}_k) + \frac{1}{1!}R_0^* f'(\overline{\la}_k) = 0\\
& \hbox to 3cm{\dotfill}\\
& R_m^* f(\overline{\la}_k) + \cdots + \frac{1}{m!} R_0^* f^{(m)} (\overline{\la}_k) = 0.
\end{align*}

Therefore if vector $f\in H^n$ is orthogonal to all derived chains $\{\yt^h\}$ then the function (5) is an entire vector function. This observation allows us to finish the proof of completeness as before.
\end{note}

\begin{note}
Keldysh defines \textit{the system of EAV $\{y_k^h\}$ to be $n$-multiple complete in $H$ if the derived chains $\{\yt^h\}$ form a complete system in $H^n$}. But after considering some concrete operator pencils we will see that we have to investigate the properties of derived chains $\{\yt^h\}$ \underline{not} in the space $H^n$, but in some space $\HH$ which is embedded in $H^n$.
\end{note}

\textbf{2.}
The new proof of the completeness theorem can be generalized for infinite dimensional spaces $H$. The essence of the matter is contained in the subsequent theorem on the growth of the resolvent of operator pencils. First we have to recall some definitions.

An entire scalar function $f(\la)$ is said to be \textit{a function of finite order} if there exists a constant $p > 0$ such that the inequality
$$|f(\la)| < e^{{|\la|}^p}$$
is valid for all sufficiently large $|\la| > r_0 = r_0(p)$. The infimum of such numbers $p$ is called \textit{the order of the entire function $f(\la)$}.

We say an entire function $f(\la)$ has a \textit{finite type with order} $p$ if there exists a constant $k>0$ such that
$$|f(\la)| < e^{{k|\la|}^p}$$
for all sufficiently large $|\la| > r_1 = r_1(k)$. The infimum of such numbers $k$ is called \textit{the type of $f(\la)$} with order $p$.

It is an easy exercise to verify that the order $p$ and the type $\sigma$ of an entire function $f(\la)$ are determined by equalities
$$p = \varlimsup_{r\rightarrow \infty} \frac{\ln \ln M_f(r)}{\ln r},
\quad \sigma = \varlimsup_{r\rightarrow \infty} \frac{\ln M_r(r)}{r^p},$$
where $M_f(r) = \max\limits_{|\la| = r} |f(\la)|$.

The same definition of growth is applied to holomorphic vector or operator function. The only difference is that instead of $|f(\la)|$ we have to consider $\|f(\la)\|$.

If $A$ is a compact operator $(A \in \sigma_\infty)$, then operator $C = (A^*A)^{1/2}$ is compact too. The eigenvalues of the operator $C$ are called $s$-numbers of the operator $A$. We will assume that the sequence of $s$-numbers of $A$ is enumerated in decreasing order, so that $\|A\| = s_1(A) \geq s_2(A) \geq \cdots$.

We will write $A \in \sigma_p$ if $\sum\limits_{k=1}^\infty s_k^p(A) < \infty$.

\begin{theorem}[Fundamental theorem on the growth of the resolvent]
Let operator pencil (1) be such that $A(\la_0)$ is invertible for some $\la_0 \in \mathbb{C}$. Suppose also that there exists a number $p>0$ such that one of the following conditions is fulfilled:
\begin{align}
& A_j \in \sigma_{p/j} \quad j = 1,2, \ldots, n;\\
& s_k (A_j) = o (k^{-j/p}) \quad j = 1,2, \ldots, n;\\
& s_k (A_j) = O(k^{-j/p}) \quad j = 1,2, \ldots, n;
\end{align}
Then $A^{-1} (\la)$ is a meromorphic operator function whose order does not exceed $p$. This means that $A^{-1} (\la)$ admits the representation $A^{-1}(\la) = D(\la)/\Delta(\la)$, where $D(\la)$ and $\Delta(\la)$ are the operator and scalar functions, respectively, of order $p$ or less. Moreover, the type of $D(\la)$ and $\Delta(\la)$ is finite if (9) holds and equal to zero if either (7) or (8) is fulfilled.
\end{theorem}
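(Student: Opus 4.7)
I would construct an entire scalar function $\Delta(\lambda)$ and an entire operator-valued function $D(\lambda)$, both of order at most $p$, such that $A(\lambda)D(\lambda) = \Delta(\lambda)\,I$; this yields the meromorphic representation $A^{-1}(\lambda) = D(\lambda)/\Delta(\lambda)$ and establishes the theorem. First I would normalize: replacing $A(\lambda)$ by $A(\lambda_0)^{-1}A(\lambda+\lambda_0)$ we may assume $A(0)=I$ and $A(\lambda) = I+S(\lambda)$ with $S(\lambda) = \sum_{j=1}^{n}\lambda^{j}A_{j}$. The shift $\lambda\mapsto\lambda+\lambda_0$ turns the coefficient of $\lambda^k$ into $\sum_{j\geq k}\binom{j}{k}\lambda_0^{j-k}A_j$, and since $\sigma_{p/j}\subset\sigma_{p/k}$ whenever $j\geq k$ this combination again lies in $\sigma_{p/k}$; left multiplication by the bounded operator $A(\lambda_0)^{-1}$ preserves every Schatten ideal, so the hypotheses are preserved.

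Next I would fix an integer $m\geq p$. Because every $A_j\in\sigma_{p/j}\subset\sigma_m$, the operator $S(\lambda)$ lies in $\sigma_m$ and depends holomorphically on $\lambda$ in the $\sigma_m$-norm. Set $\Delta(\lambda)=\det_m(I+S(\lambda))$, the $m$-regularized Fredholm determinant; by continuity of $\det_m$ on $\sigma_m$ this is an entire scalar function with $\Delta(0)=1$. The $m$-regularized adjugate $D(\lambda)$, obtained from Plemelj's formulas in terms of the exterior powers of $S(\lambda)$, is entire as a bounded-operator-valued function and satisfies $(I+S(\lambda))D(\lambda)=\Delta(\lambda)I$ throughout the plane.

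The heart of the argument is the growth estimate, which is the main obstacle. The naive Carleman inequality $|\det_m(I+T)|\leq\exp(\gamma_m\|T\|_m^m)$ combined with $\|S(\lambda)\|_m = O(|\lambda|^n)$ only yields order $\leq mn$, which is too crude. To recover the sharp order $p$ I would use the Hadamard--Weierstrass product representation
$$\Delta(\lambda)=\prod_{k}E\bigl(-\mu_k(S(\lambda));\,m-1\bigr),$$
together with the Weyl inequality $|\mu_k(S(\lambda))|\leq s_k(S(\lambda))$ and the Ky Fan estimate $s_{nk-n+1}(B_1+\cdots+B_n)\leq \sum_{j}s_k(B_j)$ applied to $B_j=\lambda^{j}A_j$. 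Under condition (9) one obtains $s_{nk}(S(\lambda))\leq C\sum_{j=1}^{n}|\lambda|^{j}k^{-j/p}$, a bound which crosses unity at $k\sim|\lambda|^{p}$. Splitting the product at that index, using $|E(z;m-1)|\leq\exp(|z|^{m}/m)$ for $|z|\leq 1$ and $|E(z;m-1)|\leq(1+|z|)\exp(|z|^{m-1}/(m-1))$ for $|z|\geq 1$, and summing carefully produces $|\Delta(\lambda)|\leq\exp(K|\lambda|^{p})$. The corresponding bound for $D(\lambda)$ follows from the analogous estimate for the regularized adjugate. Under the stronger conditions (7) or (8), the decay $s_k(A_j)=o(k^{-j/p})$ feeds through the same splitting to give $|\Delta(\lambda)|\leq\exp(\varepsilon(|\lambda|)|\lambda|^{p})$ with $\varepsilon(r)\to 0$, i.e.\ minimal type.

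The delicate point is precisely this eigenvalue accounting. The Schatten norm $\|S(\lambda)\|_m$ cannot do the job because $S(\lambda)$ has a handful of very large eigenvalues of size $\sim|\lambda|^{n}$. These must be absorbed into the polynomial (genus) factor of the canonical product---their count is controlled by the $\sigma_{p/n}$ hypothesis on $A_n$ and is only $O(|\lambda|^{p})$---while the infinitely many small eigenvalues, governed by the combined Ky~Fan bound above, supply the exponential factor of order $p$. Balancing the contributions of these two regimes at the threshold $k\sim|\lambda|^{p}$ so as to reduce the apparent order $mn$ to the correct $p$ is where the argument is technically substantive and genuinely uses the joint $s$-number conditions on all the $A_j$.
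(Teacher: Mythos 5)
Your route differs from the paper's in the choice of determinant, and contains a genuine error in the growth estimate. The paper avoids regularized determinants entirely: it picks the smallest integer $\ell$ with $p/\ell<1$, observes that $F^\ell(\lambda)\in\sigma_1$, writes $A^{-1}(\lambda)=(I+F+\cdots+F^{\ell-1})(I-F^\ell)^{-1}$, and applies the ordinary trace-class determinant to $I-F^\ell(\lambda)$. The needed bound $|\det(I-T)|\le\prod_k(1+s_k(T))$ then follows at once from Weyl's majorization $\prod_{j\le k}(1+r|\mu_j|)\le\prod_{j\le k}(1+rs_j)$ (property 8 in the paper's list of $s$-number inequalities). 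You instead use $\det_m$ with $m\ge p$ and must therefore control a genus-$(m-1)$ canonical product in the eigenvalues of $S(\lambda)$, which is a subtler task.

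The specific gap is your invocation of ``the Weyl inequality $|\mu_k(S(\lambda))|\le s_k(S(\lambda))$.'' This inequality is false. Weyl's theorems give only the cumulative bounds $|\mu_1\cdots\mu_k|\le s_1\cdots s_k$, $\sum_{j\le k}|\mu_j|^q\le\sum_{j\le k}s_j^q$, and the product inequality above, none of which deliver a termwise bound; for instance $\left(\begin{smallmatrix}1&2\\0&1\end{smallmatrix}\right)$ has eigenvalues $1,1$ but singular values $1+\sqrt{2}$ and $\sqrt{2}-1<1$. Your split of the canonical product at $k\sim|\lambda|^p$ relies on replacing each $|\mu_k|$ by $s_k$ inside a primary-factor bound that is neither concave nor monotone in a form compatible with majorization transfer, so the estimate does not go through as written. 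One could make your route rigorous by importing the Gohberg--Krein majorization lemmas for $\det_m$, but the paper's powering device is simpler and sidesteps the difficulty entirely: once the problem is reduced to ordinary determinants of $I-F^\ell(\lambda)$, the cumulative Weyl inequality suffices, the $s$-number arithmetic of Item 2 handles the coefficients $B_j$ of $F^\ell(\lambda)$, and Borel's theorem finishes the argument.
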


In some respects this theorem is due to M. Keldysh because he was the first to prove a general result of this kind, although he took advantage of an important result due to T. Carleman estimating the Fredholm resolvent of a Hilbert-Schmidt operator. New approaches to the proof for linear pencils $A(\la) = I - \la A$ were proposed by V. Lidskii, V. Matsaev, I. Gohberg and M. Krein. Some contributions were also made by M. Gasimov and G. Radzievskii. An independent approach to the proof of such theorems for operator pencils with differential operators was developed by F. Brouder and S. Agmon (see comments and references in [Shkalikov 1, \S2]).

To prove this theorem we have to make a tour of some selected topics in the theory on non-self-adjoint operators. Certainly we will not be able to give proofs of all the results which we will use.

\textbf{Item 1.} Let $A$ be a bounded operator in $H$. It is well-known, and easy to see, that
$$H = \Ker A \oplus \overline{\Imm A^*} = \Ker A^* \oplus \overline{\Imm A}$$

Each bounded operator $A$ can be represented in the form
\begin{equation}
A = UC,
\end{equation}
where $C = (A^*A)^{\frac{1}{2}}$ and a partial isometry, such that $\Ker U = \Ker C$ and $U: \overline{\Imm A^*} = \overline{\Imm C} \rightarrow \overline{\Imm A}$ is an isometric one-to-one map. This representation is called \textit{the polar representation}. It is not complicated to prove (see proof in [Gohberg, Krein, Ch.1]), but very useful. In particular, if $A$ is a compact operator, then $C$ is too. Hence
$$C = \sum_{k=1}^\infty s_k(A) (\cdot, e_k) e_k,$$
where $\{e_k\}_1^\infty$ is the orthonormal system of eigenvectors of $C$. Then form (10) we get the \textit{Schmidt representation}
\begin{equation}
A = \sum_{k=1}^\infty s_k(A) (\cdot, e_k) f_k,
\end{equation}
where $\{f_k\}_1^\infty = \{Ue_k\}_1^\infty$ is also an orthonormal system, because $U$ is an isometric operator for $x \in \overline{\Imm C}$ and $e_k \in \Imm C$.

\textbf{Item 2.} Let $A,B$ be compact operators and $D$ a bounded operator in $H$. The following properties of $s$-numbers are fulfilled:
\begin{enumerate}
\item $s_k(A) = s_k(A^*), \quad k = 1,2,\ldots;$
\item $s_k(DA) \leq \|D\| s_k (A), \quad s_k(AD) \leq \|D\| s_k(A), \quad k=1,2,\ldots;$
\item $s_{k+1}(A) = \min\limits_{F\in R_k} \|A - F\|, \quad k=0,1,\ldots,$
where $R_k$ is the set of all operators with range of dimention $k$ or less. In particular, if $F\in R_r$, then
$$s_k(A+F) \leq s_{k-r}(A), \quad k=r+1, r+2, \ldots.$$
\item $s_{k+m+1} (A+B) \leq s_k(A) + s_m(B), \quad k,m = 1,2,\ldots,$\\
hence for a set of compact operators $A_1, A_2, \ldots, A_q$ one has the inequalities $s_k(A_1 + A_2 + \cdots + A_q) \leq s_{k_1}(A_1) + s_{k_1}(A_2) + \cdots + s_{k_1}(A_q), \quad k=1,2,\ldots,$ where $k_1 = [(k-1)/q]+1$ (by $[a]$ we denote the integer part of the number $a$);
\item $s_{k+m-1}(AB) \leq s_k(A)s_m(B), \quad k,m = 1,2,\ldots,$\\
and if $A_1, \ldots, A_q$ are compact operators then
$$
s_k(A_1 A_2\cdots A_q) \leq s_{k_1}(A_1) s_{k_1}(A_2)\cdots s_{k_1}(A_q),
$$
where $ k=1,2,\ldots$, $k_1 = [(k-1)/q] + 1$;
\item $|\la_1(A) \la_2(A) \cdots \la_k(A)| \leq s_1(A) s_2(A) \cdots s_k(A), \quad k=1,2,\ldots,$\\
where $\la_j(A)$ are eigenvalues of the operator $A$ numbered as many times as their algebraic multiplicity and in order of decreasing absolute value;
\item $\sum\limits_{j=1}^k |\la_j(A)|^p \leq \sum\limits_{j=1}^k s_j^p(A), \quad p>0, \quad k = 1,2,\ldots;$
\item $\prod\limits_{j=1}^k (1+r|\la_j(A)|) \leq \prod\limits_{j=1}^k (1+rs_j(A)), \quad r>0, \quad k =1,2,\ldots;$
\item $\sum\limits_{j=1}^k s_j(A+B) \leq \sum\limits_{j=1}^k s_j (A) + \sum\limits_{j=1}^k s_j (B), \quad k = 1,2,\ldots;$
\item $\sum\limits_{j=1}^k s_j(AB) \leq \sum\limits_{j=1}^k s_j(A) s_j(B), \quad k=1,2,\ldots,$
\end{enumerate}
and if $A_j \in \sigma_{p_j}, \quad j =1,\ldots, q$, then using the H\"older inequality one can deduce $A_1 A_2\ldots A_q \in \sigma_p$, where $p^{-1} = p_1^{-1} + p_2^{-1} + \cdots + p_q^{-1}$.

These results on $s$-numbers are due to H. Weyl, K. Fan, A. Horn and D.Allachverdiev. The proof of all these basic properties of $s$-numbers can be found in the book [Gohberg, Krein, Ch.2]. Only the proof of the property 6 (H. Weyl's theorem) is rather complicated. We propose a new sort proof due to A. Kostyuchenko.

Denote by $H_k$ the span of EAV of the operator $A$ corresponding to the first $k$ eigenvalues (counted according to their algebraic multiplicity) and by $A_k$ denote the restriction of $A$ to its invariant subspace $H_k$. We can choose a Jordan basis $\{e_j\}_1^k$, so that either $Ae_j = \la_j e_j$ or $Ae_j = \la_j e_j + e_{j-1}, \quad j=1,\ldots, k$. Write $A_k = P_k AP_k$, where $P_k$ is the orthoprojector on $H_k$. Then by Schmidt's orthogonalization of $\{e_j\}_1^k$ we get the basis $\{f_j\}_1^k$ for which operator $A_k$ evidently has triangular form and $(A_k f_j, f_j) = \la_j(A)$ (the basis $\{f_j\}_1^k$ is called the Schur basis for $A_k$). Hence
\begin{align*}
|\det A_k|^2 & = |\la_1\la_2 \cdots \la_k|^2\\
& = \det A_k^* \det A_k\\
& = \det A_k^* A_k\\
& = [s_1(A_k)\cdots s_k(A_k)]^2 \leq [s_1(A)\cdots s_k(A)]^2.
\end{align*}
The property 6 follows using property 2 of $s$-numbers.

\textbf{Item 3.} An operator $A$ is said to be \underline{nuclear} if $A \in \sigma_1$, i.e. $\sum s_k(A) < \infty$. Using the Schmidt representation (11) the following remarkable fact can be established (see [Gohberg , Krein, Ch. III, Sec.8]).

\begin{proposition}
$A \in \sigma_1$ if and only if for any orthonormal basis $\{\ph_k\}_1^{\infty}$ in the space $H$ the series
\begin{equation}
\sum_{k=1}^\infty (A\ph_k, \ph_k)
\end{equation}
converges. Moreover, the sum (12) does not depend on the choice of the basis $\{\ph_k\}_1^{\infty}$.
\end{proposition}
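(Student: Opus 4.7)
The plan is to treat the two implications separately, using the Schmidt representation from Item 3 for the forward direction and a reduction to the self-adjoint case for the converse.

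For the forward implication, assume $A \in \sigma_1$ and fix any orthonormal basis $\{\phi_j\}_1^\infty$. The Schmidt representation gives $A = \sum_k s_k(A)(\cdot, e_k)f_k$ with $\{e_k\}, \{f_k\}$ orthonormal, hence $(A\phi_j, \phi_j) = \sum_k s_k(A)(\phi_j, e_k)(f_k, \phi_j)$. By Tonelli applied to the nonnegative double sum, together with Cauchy--Schwarz and Parseval on the inner $j$-sum,
\begin{equation*}
\sum_j\sum_k s_k(A)\,|(\phi_j, e_k)(f_k, \phi_j)| = \sum_k s_k(A)\sum_j|(\phi_j, e_k)||(f_k, \phi_j)| \leq \sum_k s_k(A) < \infty.
\end{equation*}
This absolute convergence legitimises Fubini, and Parseval in the form $\sum_j (f_k, \phi_j)(\phi_j, e_k) = (f_k, e_k)$ then produces
\begin{equation*}
\sum_j (A\phi_j, \phi_j) = \sum_k s_k(A)(f_k, e_k),
\end{equation*}
which is manifestly independent of $\{\phi_j\}$.

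For the converse, assume the series converges for every orthonormal basis. Since an ONB carries no distinguished ordering, the series must converge under every rearrangement, and for scalar series this is equivalent to absolute convergence, so $\sum_k |(A\phi_k, \phi_k)| < \infty$ for every ONB. I would then split $A = H_1 + iH_2$ with $H_1 = (A+A^*)/2$ and $H_2 = (A-A^*)/(2i)$ self-adjoint and compact. Since $(H_j\phi_k, \phi_k)$ is real, the identity $|(A\phi_k, \phi_k)|^2 = (H_1\phi_k, \phi_k)^2 + (H_2\phi_k, \phi_k)^2$ gives $|(H_j\phi_k, \phi_k)| \leq |(A\phi_k, \phi_k)|$, which transfers the hypothesis to $H_1$ and $H_2$. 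For each self-adjoint compact $H_j$, evaluating the hypothesis in an eigenbasis with eigenvalues $\lambda_k$ forces $\sum_k |\lambda_k| < \infty$; since $s_k(H_j) = |\lambda_k|$ we obtain $H_j \in \sigma_1$, whence $A \in \sigma_1$.

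The main subtlety is the first step of the converse: interpreting ``the series converges for any orthonormal basis'' as unconditional convergence, and thereby upgrading it to absolute convergence. Once that point is granted, both directions rest cleanly on the Schmidt representation, Parseval's identity, and the spectral theorem for self-adjoint compact operators, and the formula $\sum_k s_k(A)(f_k, e_k)$ recovers the usual trace, specialising to $\sum_k s_k(A)$ when $A$ is positive self-adjoint (so $f_k = e_k$).
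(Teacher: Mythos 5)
Your proof is correct, and the forward direction follows exactly the route the paper indicates: the paper states only ``Using the Schmidt representation (11) the following remarkable fact can be established'' and then cites Gohberg--Krein, Ch.\ III, Sec.\ 8 for the details. Your Schmidt-plus-Tonelli/Parseval computation is precisely what that remark is pointing at, and the identification of the basis-independent value as $\sum_k s_k(A)(f_k,e_k)$ (i.e.\ $\Tr A$) is clean.

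The converse is not spelled out in the paper at all, and here your argument is the standard one but with a nice economical device at the start: reading ``converges for any orthonormal basis'' as including every re-indexing of a fixed basis, so that convergence is unconditional and hence absolute. That observation does the work that Gohberg--Krein achieve by other means, and it immediately reduces to the Hermitian parts $H_1, H_2$ via $|(H_j\phi_k,\phi_k)| \le |(A\phi_k,\phi_k)|$, from which the spectral theorem for a compact self-adjoint operator finishes. Two small points worth being explicit about, both consistent with the paper's conventions but worth flagging in a self-contained write-up: the converse uses compactness of $A$ (so that $H_1, H_2$ have complete eigenbases), which in this paper is built into the meaning of $\sigma_1$; and when you sort the eigenvalues $\lambda_k$ of a compact self-adjoint $H_j$ by modulus you obtain $s_k(H_j)=|\lambda_{\pi(k)}|$ for a permutation $\pi$, so $\sum_k|\lambda_k|<\infty$ does give $H_j\in\sigma_1$. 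Neither is a gap, just implicit steps your reader would want stated.
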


The sum (12) for operator $A \in \sigma_1$ is denoted by $\Tr A$. It is worth mentioning the following properties the following properties of the functional $\Tr A$ (we can consider class $\sigma_1$ as a normed space with norm $\|A\|_1 = \sum s_k(A)$ and then $\Tr A$ is a linear functional in this space):
\begin{enumerate}
\item $\Tr A^* = \overline{\Tr A}$;
\item $\Tr (AB) = \Tr (BA), \quad A,B \in \sigma_1$;
\item $\Tr A = \sum\limits_{k=1}^\infty \la_k(A)$.
\end{enumerate}
The first property is trivial, the second one can be easily obtained from the Schmidt representation. The third one is the assertion of a well-known theorem of Lidskii and is nontrivial. Further, we will need the inequality
\begin{equation}
|\Tr A| \leq \|A\|_1 = \sum s_k(A).
\end{equation}
Certainly, (13) follows immediately from property 7 of $s$-numbers, and the Lidskii theorem. But an elementary proof can also be proposed. Taking in (12)  $\ph_k = e_k$ and using the Schmidt representation (11) we easily get (13).

\textbf{Item 4.} If $A \in \sigma_1$ then the determinant of operator $I-A$ is defined by the formula
$$\det(I-A) = \prod_{k=1}^{\infty} (1-\la_k(A)).$$
This product obviously converges, because of property 7 for $s$-numbers
$$\sum_{k=1}^\infty |\la_j(A)| \leq \|A\|_1.$$

We consider also the characteristic determinant of the operator $A$
$$\det (I-\mu A) := D_A(\mu) := \prod_{k=1}^\infty (1-\mu \la_k(A)).$$

\begin{theorem}
$\det (I-A)$ is a continuous functional in the space of nuclear operators with norm $\|A\|_1$.
\end{theorem}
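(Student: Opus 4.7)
The plan is to prove a quantitative Lipschitz-type bound of the form
\[
|\det(I-A) - \det(I-B)| \leq \frac{\|A-B\|_1 \exp(\|A\|_1 + 1)}{1 - \|A-B\|_1},
\]
valid whenever $\|A-B\|_1 < 1$, from which continuity at an arbitrary point $A \in \sigma_1$ is immediate. The ingredients are only an a priori exponential bound on $|\det(I-T)|$ in terms of $\|T\|_1$ together with a complex-analytic interpolation between $A$ and $B$.

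First I would establish the a priori bound. Using property 8 of $s$-numbers with $r = 1$, Weyl's inequality (property 6), and the elementary $1+x \leq e^x$, one has
\[
|\det(I-A)| = \prod_{k=1}^\infty |1-\la_k(A)| \leq \prod_{k=1}^\infty(1+|\la_k(A)|) \leq \prod_{k=1}^\infty(1+s_k(A)) \leq \exp(\|A\|_1).
\]
In particular, $\det(I-\cdot)$ is uniformly bounded on $\sigma_1$-balls.

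Next, for fixed $A, B \in \sigma_1$ I would introduce the one-parameter family
\[
\phi(z) := \det\bigl(I - A - z(B-A)\bigr), \qquad z \in \mathbb{C}.
\]
The crucial technical point, and in my view the main obstacle, is to justify that $\phi$ is entire in $z$. This is not apparent from the product formula, since the individual eigenvalues $\la_k(A+z(B-A))$ are algebroid functions of $z$ that may have branch points. The clean way around this is to identify $\det(I-T)$ on $\sigma_1$ with the Fredholm determinant
\[
\det(I-T) = \sum_{n=0}^{\infty} (-1)^n \Tr(\Lambda^n T),
\]
whose $n$-th term is a homogeneous polynomial of degree $n$ in $T \in \sigma_1$ with appropriate $\sigma_1$-norm estimates; substituting $T = A + z(B-A)$ then exhibits $\phi(z)$ as a convergent entire power series in $z$. (This identification rests on Lidskii's theorem, already invoked in the preceding Item~3.)

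Once entirety is secured, the remainder is routine. The a priori bound applied pointwise gives $|\phi(z)| \leq \exp(\|A\|_1 + |z|\,\|A-B\|_1)$ on all of $\mathbb{C}$. Expanding $\phi(z) = \sum_{n\geq 0} a_n z^n$, Cauchy's estimate on $|z|=R$ yields $|a_n| \leq R^{-n}\exp(\|A\|_1 + R\|A-B\|_1)$, and the choice $R = 1/\|A-B\|_1$ reduces this to $|a_n| \leq \|A-B\|_1^n \exp(\|A\|_1 + 1)$. Since $\phi(0) = \det(I-A)$ and $\phi(1) = \det(I-B)$, summing the geometric series $\sum_{n\geq 1} a_n$ produces the stated Lipschitz bound, from which continuity of $\det(I-\cdot)$ in the nuclear norm follows at once.
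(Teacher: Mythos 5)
Your argument is correct, but it takes a genuinely different route from the paper's. The paper differentiates $\log D_A(\mu)$, obtains the formula $[\log D_A]'(\mu) = -\Tr[A(I-\mu A)^{-1}]$ (via Lidskii), integrates along a contour $\Gamma$ avoiding the reciprocal eigenvalues to get the exponential representation $D_A(\mu)=\exp\bigl(-\int_\Gamma \Tr[A(I-\zeta A)^{-1}]\,d\zeta\bigr)$, and then compares $A$ and $B$ by inserting the resolvent identity $A(I-\mu A)^{-1}-B(I-\mu B)^{-1}=(I-\mu A)^{-1}(A-B)(I-\mu B)^{-1}$; the trace estimate $|\Tr C|\leq\|C\|_1$ together with property 2 of $s$-numbers then controls the difference of the exponents. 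This argument is local (it needs $\|A-B\|_1$ small to keep $(I-\zeta B)^{-1}$ bounded on $\Gamma$), but it is entirely self-contained given the s-number calculus and trace estimates the section has already developed. Your route instead uses the a priori bound $|\det(I-T)|\leq e^{\|T\|_1}$, complex interpolation along $T=A+z(B-A)$, and a Cauchy estimate with optimally chosen radius to extract an explicit Lipschitz bound; this is elegant and arguably cleaner, and it yields a quantitative global estimate rather than mere pointwise continuity. Its price is the analyticity of $z\mapsto\det(I-A-z(B-A))$, which you correctly flag as the real work and which is not derivable from the product formula alone; you justify it by invoking the Fredholm-expansion identity $\det(I-T)=\sum_{n\geq0}(-1)^n\Tr(\wedge^n T)$ with the bound $|\Tr(\wedge^n T)|\leq\|T\|_1^n/n!$. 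That identity (which is a form of Lidskii's theorem applied to exterior powers) is a classical fact, but it is not developed in this text, so you are leaning on external machinery where the paper stays within the tools it has just set up. Both proofs ultimately appeal to Lidskii's theorem; they just do so through different doors.

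One small remark on rigor: in passing from the Cauchy estimate $|a_n|\leq\|A-B\|_1^n\,e^{\|A\|_1+1}$ to the final bound you need $\|A-B\|_1<1$ for the geometric series to converge, as you indicate; this is harmless for continuity but should be stated as a hypothesis on the displayed Lipschitz inequality.
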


\begin{pf}
We have to show that for any $\ep > 0$ there exists a $\delta > 0$ such that the inequality $\|A-B\|_1 < \delta$ implies
$$|\det (I-A) - \det (I-B)| < \ep.$$
We have (with $\la_k = \la_k(A)$),
\begin{align*}
[\ln \det(I-\mu A)]' & = \frac{D_A'(\mu)}{D_A (\mu)} = -\sum_{k=1}^\infty \frac{\la_k}{1- \mu \la_k}\\
& = -\Tr [A(I - \mu A)^{-1}].
\end{align*}
The last equality is valid because of Lidskii's theorem. (Evidently, for any fixed $\mu \neq \la_k$ the operator $A(I - \mu A)^{-1}$ is nuclear.) Now we get the representation
\begin{equation}
D_A(\mu) = \exp\left[-\int_\Gamma \Tr [A(I-\zeta A)^{-1}]d\zeta\right],
\end{equation}
where $\Gamma$ is any smoothcontour, which connects the points $0$ and $\mu$, and does not contain the points $\{\la_k^{-1}\}$. Obviously, operator $I - \zeta B$ is invertible for all $\zeta \in \Gamma$ if $\|B_A\| < \delta_1$ and $\delta_1$ is sufficiently small. Hence, there exists a constant $M$ such that
\begin{equation}
\max_{\zeta \in \Gamma} \|(I-\zeta A)^{-1}\| \leq M, \quad \max_{\zeta \in \Gamma} \|(I-\zeta B)^{-1}\| \leq M.
\end{equation}
Notice that
\begin{align*}
A(I - \mu A)^{-1} - B(I - \mu B)^{-1} & = (I - \mu A)^{-1} [A(I - \mu B) - (I - \mu A)B] (I-\mu B)^{-1}\\
& = (I - \mu A)^{-1} (A-B) (I-\mu B)^{-1}.
\end{align*}
Now, using property 2 of $s$-numbers and the estimates (13), (15), we obtain
$$\left| \int_\Gamma \Tr[A(I-\zeta A)^{-1} - B(I - \zeta B)^{-1}] d\zeta \right| \leq \gamma M^2 \|A-B\|_1,$$
where $\gamma$ is the length of $\Gamma$. From representation (14) it follows that
\begin{align*}
|D_A(\mu) & - D_B(\mu)| = \\
&= \left|D_A(\mu) \left(1- \exp\left[ \int_\Gamma \Tr [A(I-\zeta A)^{-1} - B(I-\zeta B)^{-1}] d\zeta \right]\right) \right|\\
&\leq \gamma M^2 |D_A(\mu)| \|A-B\|_1.
\end{align*}
The last inequality (we can put $\mu=1$) proves the thorem.
\end{pf}

\begin{proposition}
Let $A \in \sigma_1$ and $\{Q_n\}$ be a sequence of orthoprojectors such that $Q_n \rightarrow 0$, when $n\rightarrow \infty$. Then
\begin{equation}
\|Q_n A Q_n\|_1 \rightarrow 0, \quad \|Q_n A\|_1 \rightarrow 0, \quad \|AQ_n\|_1 \rightarrow 0.
\end{equation}
\end{proposition}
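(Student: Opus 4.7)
The plan is to reduce to the finite-rank case via the Schmidt representation and then exploit the strong convergence $Q_n \to 0$ on individual vectors. (Note that $Q_n \to 0$ here must mean strong operator convergence, since an orthoprojector has norm $0$ or $1$; the norm statement would be trivial.) Fix $\ep > 0$. By Item 1 write the Schmidt representation
$$A = \sum_{k=1}^\infty s_k(A)\, (\cdot, e_k)\, f_k,$$
with $\{e_k\}$ and $\{f_k\}$ orthonormal. Since $A \in \sigma_1$, pick $N$ so large that $\sum_{k>N} s_k(A) < \ep/2$, and split $A = A_N + R_N$, where $A_N := \sum_{k=1}^N s_k(A)(\cdot, e_k) f_k$ has rank $\leq N$ and $\|R_N\|_1 < \ep/2$.

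For the tail $R_N$, the inequality $\|B C D\|_1 \leq \|B\| \cdot \|C\|_1 \cdot \|D\|$ (an immediate consequence of property 2 of $s$-numbers, since $\|\cdot\|_1 = \sum s_k$) together with $\|Q_n\| \leq 1$ gives
$$\|Q_n R_N Q_n\|_1 \leq \|R_N\|_1 < \ep/2,$$
and likewise $\|Q_n R_N\|_1, \|R_N Q_n\|_1 < \ep/2$. For the finite-rank piece, subadditivity of $\|\cdot\|_1$ and the elementary identity $\|(\cdot, u)v\|_1 = \|u\|\,\|v\|$ for a rank-one operator yield
$$\|Q_n A_N Q_n\|_1 \leq \sum_{k=1}^N s_k(A)\, \|Q_n e_k\|\, \|Q_n f_k\|.$$
By strong convergence $Q_n \to 0$, each factor $\|Q_n e_k\|$ and $\|Q_n f_k\|$ tends to zero, and since the sum has only $N$ terms, the right-hand side is less than $\ep/2$ once $n$ is large enough. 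Combining the two estimates proves $\|Q_n A Q_n\|_1 \to 0$.

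The arguments for $\|Q_n A\|_1$ and $\|A Q_n\|_1$ are identical, using $\|Q_n A_N\|_1 \leq \sum_{k=1}^N s_k(A)\, \|Q_n f_k\|$ and $\|A_N Q_n\|_1 \leq \sum_{k=1}^N s_k(A)\, \|Q_n e_k\|$, respectively. Alternatively, $\|Q_n A\|_1 = \|A^* Q_n\|_1$ since $s$-numbers are preserved under adjoint (property 1), so this case follows from the previous one applied to $A^* \in \sigma_1$.

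The only non-routine point is the multiplicative estimate $\|B C D\|_1 \leq \|B\| \cdot \|C\|_1 \cdot \|D\|$ and the rank-one identity $\|(\cdot,u)v\|_1 = \|u\|\|v\|$; both are standard but deserve an explicit reference to Item 2 (and the definition of $\|\cdot\|_1$). Once these are in hand, the argument is a clean $\ep/2 + \ep/2$ split that takes no serious work.
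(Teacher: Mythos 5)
Your proof is correct and follows essentially the same strategy as the paper: split $A$ via the Schmidt representation into a finite-rank part plus a tail of small trace norm, control the tail with $\|Q_n R_N Q_n\|_1 \leq \|R_N\|_1$, and send the finite-rank part to zero using strong convergence. The paper states the finite-rank step without detail ("Since $A_N$ is finite dimensional, we have $\|Q_n A_N Q_n\|_1 \to 0$"); your rank-one estimate $\|Q_n A_N Q_n\|_1 \leq \sum_{k=1}^N s_k(A)\,\|Q_n e_k\|\,\|Q_n f_k\|$ makes that step explicit, which is a useful clarification but not a different argument.
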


\begin{pf}
Using the Schmidt representation (11) again we get
$$A = \sum_{k=1}^N s_k (\cdot, e_k) f_k + \sum_{k = N+1}^\infty s_k(\cdot, e_k) f)k = A_N + A_\ep,$$
where $\|A_\ep\|_1 = \sum\limits_{k=N+1}^\infty s_k(A) < \ep$.
Since $A_N$ is finite dimensional, we have $\|Q_n A_N Q_N\|_1 \rightarrow 0$, $\|Q_nA_N\|_1 \rightarrow 0$ and (16) follows.
\end{pf}

The theorem on continuity of the determinant and the last proposition give us some important results.
\begin{corollary}
Let $A \in \sigma_1$ and $\{P_n\}$ be orthogonal projectors such that $P_n \rightarrow I$ when $n \rightarrow \infty$. Then
$$\det (I-A) \lim_{n \rightarrow \infty} \det (I - P_n A P_n).$$
\end{corollary}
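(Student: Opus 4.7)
\begin{pf}
The plan is to apply the continuity theorem for the determinant (the previous theorem in the section) to the sequence of nuclear operators $P_n A P_n$, after first showing that $P_n A P_n \to A$ in the nuclear norm $\|\cdot\|_1$.

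Set $Q_n = I - P_n$, so $\{Q_n\}$ is a sequence of orthogonal projectors with $Q_n \to 0$ strongly as $n \to \infty$. Expanding $P_n = I - Q_n$ gives
$$P_n A P_n = (I-Q_n)A(I-Q_n) = A - Q_n A - A Q_n + Q_n A Q_n,$$
hence
$$A - P_n A P_n = Q_n A + A Q_n - Q_n A Q_n.$$
By the previous proposition applied to the sequence $\{Q_n\}$, each of the three terms $\|Q_n A\|_1$, $\|A Q_n\|_1$, $\|Q_n A Q_n\|_1$ tends to zero. The triangle inequality in $\sigma_1$ then yields $\|A - P_n A P_n\|_1 \to 0$.

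In particular, $P_n A P_n \in \sigma_1$ for every $n$, so both $\det(I-A)$ and $\det(I - P_n A P_n)$ are defined. Applying the continuity theorem for the determinant on $\sigma_1$ (with the pair $A$, $P_n A P_n$ in place of $A$, $B$), we conclude
$$\det(I - P_n A P_n) \longrightarrow \det(I-A), \quad n \to \infty,$$
which is the asserted identity. The only step requiring care is verifying the $\sigma_1$-convergence $P_n A P_n \to A$, and this is precisely what the previous proposition was set up to deliver; once that convergence is in hand, the corollary is immediate from the continuity theorem.
\end{pf}
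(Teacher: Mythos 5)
Your proof is correct and takes essentially the same route as the paper: write $A - P_n A P_n$ as a sum of three terms each controlled by the previous proposition (your expansion $Q_nA + AQ_n - Q_nAQ_n$ is algebraically equivalent to the paper's $Q_nAP_n + P_nAQ_n + Q_nAQ_n$), conclude $\|A - P_nAP_n\|_1 \to 0$, and invoke the continuity theorem for $\det(I-\cdot)$ on $\sigma_1$.
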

\begin{pf}
We have to notice only that
$$\|A - P_nA P_n\|_1 = \|Q_n A P_n + P_n A Q_n + Q_n A Q_n\|_1 \rightarrow 0$$
\end{pf}
\begin{corollary}
If $A \in \sigma_1, B\in \sigma_1$ then
\begin{equation}
\det (I-A)(I-B) = \det (I-A)(I-B).
\end{equation}
\end{corollary}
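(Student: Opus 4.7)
The plan is to reduce the identity to the finite-dimensional case through approximation, invoking Corollary~3.1 and Theorem~3.2. Write
\[
(I-A)(I-B) = I - C, \qquad C := A + B - AB,
\]
noting that $C \in \sigma_1$ because $AB \in \sigma_1$ by property~2 of $s$-numbers together with the two-sided ideal structure of $\sigma_1$. Let $P_n$ be finite-rank orthogonal projectors with $P_n \to I$ strongly, set $Q_n = I - P_n$, and define $A_n := P_n A P_n$, $B_n := P_n B P_n$, $C_n := A_n + B_n - A_n B_n$. A direct computation gives $(I - A_n)(I - B_n) = I - C_n$.

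The key analytical step is to show $\|C - C_n\|_1 \to 0$. For the linear parts, one writes $A - A_n = Q_n A + P_n A Q_n$ and applies Proposition~3.2 to each summand (and analogously for $B - B_n$). For the bilinear part we decompose
\[
AB - A_n B_n = Q_n (AB) + P_n A (Q_n B) + A_n (B Q_n),
\]
and bound each term in trace norm using property~2 of $s$-numbers together with Proposition~3.2: the first since $AB \in \sigma_1$, the second since $A \in \sigma_1$, and the third since $B \in \sigma_1$. With $\|C - C_n\|_1 \to 0$ in hand, Theorem~3.2 (continuity of the determinant on $\sigma_1$) yields
\[
\det(I - C_n) \longrightarrow \det(I - C) = \det[(I - A)(I - B)].
\]

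For each fixed $n$, the operators $A_n, B_n, C_n$ vanish on $Q_n H$ and map into the finite-dimensional subspace $P_n H$; consequently $I - A_n$, $I - B_n$, $I - C_n$ act as the identity on $Q_n H$, and their Fredholm determinants coincide with the classical matrix determinants of their restrictions to $P_n H$. There the elementary multiplicativity of the matrix determinant applied to $(I - A_n)(I - B_n) = I - C_n$ gives
\[
\det(I - C_n) = \det(I - A_n) \det(I - B_n).
\]
Passing to the limit and applying Corollary~3.1 to each factor on the right yields $\det[(I - A)(I - B)] = \det(I - A) \det(I - B)$. I expect the principal technical obstacle to be the nuclear-norm convergence $A_n B_n \to AB$; the three-term decomposition above resolves it by arranging that each summand contains at least one factor of the form $Q_n T$ or $T Q_n$ with $T \in \sigma_1$, so that Proposition~3.2 applies directly.
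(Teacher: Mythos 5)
Your proof is correct and follows essentially the same approach as the paper: both reduce to the finite-dimensional multiplicativity of the determinant on $P_n H$ via the identity $(I-A_n)(I-B_n)=I-C_n$, then pass to the limit using Theorem~3.2, Corollary~3.1, and Proposition~3.2. The only cosmetic difference is that the paper invokes Corollary~3.1 directly on $C=A+B-AB$ and then controls the single error term $P_nAQ_nBP_n$, whereas you establish $\|C-C_n\|_1\to 0$ via a three-term decomposition of $AB-A_nB_n$; the two bookkeeping schemes are equivalent.
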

\begin{pf}
The equality (17) is known in finite dimensional space, hence if $P_n \rightarrow I$, then
\begin{align*}
\det (I-A)(I-B) & = \lim_{n \rightarrow \infty} \det (I-P_n A P_n - P_n B P_n + P_n AB P_n)\\
& = \lim_{n \rightarrow \infty} \det [(I-P_n A P_n)(I-P_nBP_n) - P_n A Q_n B P_n] \\
& = \lim_{n \rightarrow \infty} \det(I- P_n A P_n) (I-P_nBP_n)\qquad\text{[since $\|A Q_n\|_1 \rightarrow 0$]}\\
& = \lim_{n \rightarrow \infty} \det (I- P_n A P_n) \cdot \lim_{n \rightarrow \infty} \det(I-P_n B P_n)\\
& = \det (I-A) \det(I-B).
\end{align*}
\end{pf}
\begin{corollary}
If $A \in \sigma_1, B\in \sigma_1$ and $I-B$ is invertible then
\begin{equation}
\det (I-A)(I-B)^{-1} = \frac{\det (I-A)}{\det(I-B)}.
\end{equation}
\end{corollary}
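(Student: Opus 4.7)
The plan is to reduce this to the multiplicativity identity already established in Corollary 4.2. First I would rewrite the operator $(I-A)(I-B)^{-1}$ in the form $I - C$ for some $C \in \sigma_1$, so that the symbol $\det((I-A)(I-B)^{-1})$ is meaningful under the definition given in Item 4. A direct computation
$$(I-A)(I-B)^{-1} = (I-B)(I-B)^{-1} + (B-A)(I-B)^{-1} = I - (A-B)(I-B)^{-1}$$
identifies $C = (A-B)(I-B)^{-1}$. Since $A, B \in \sigma_1$, we have $A - B \in \sigma_1$, and multiplying by the bounded operator $(I-B)^{-1}$ keeps us in $\sigma_1$ by property 2 of $s$-numbers (with $\|A-B\|_1 \le \|A\|_1 + \|B\|_1$). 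So $C \in \sigma_1$ and $\det(I-C)$ is well-defined.

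Next I would observe that, by construction,
$$(I-C)(I-B) = (I-A)(I-B)^{-1}(I-B) = I-A.$$
Both $C$ and $B$ lie in $\sigma_1$, so Corollary 4.2 applies to the left-hand side and yields
$$\det(I-A) = \det\bigl((I-C)(I-B)\bigr) = \det(I-C)\,\det(I-B).$$

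Finally, since $I-B$ is invertible, no eigenvalue $\lambda_k(B)$ equals $1$, so every factor in the product $\det(I-B) = \prod_k(1-\lambda_k(B))$ is nonzero and $\det(I-B) \neq 0$. Dividing gives
$$\det\bigl((I-A)(I-B)^{-1}\bigr) = \det(I-C) = \frac{\det(I-A)}{\det(I-B)},$$
which is the desired identity. There is no real obstacle here; the only thing to be careful about is checking that $C \in \sigma_1$ so that Corollary 4.2 is legitimately applicable, and that $\det(I-B) \neq 0$ so that division is allowed.
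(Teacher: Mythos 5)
Your proof is correct, and it takes a genuinely different route from the paper's. The paper identifies $(I-B)^{-1}=I+B(I-B)^{-1}$ as an operator of the form $I$ minus a nuclear operator and then \emph{directly computes} its determinant via the eigenvalue product: the eigenvalues of $-B(I-B)^{-1}$ are $-\lambda_k(B)/(1-\lambda_k(B))$, so $\det(I-B)^{-1}=\prod_k\bigl(1-\lambda_k(1-\lambda_k)^{-1}\bigr)^{-1}\cdot\text{(correction)}=\prod_k(1-\lambda_k)^{-1}=1/\det(I-B)$; the result then follows by applying the multiplicativity identity of Corollary 4.2 to the product $(I-A)\cdot(I-B)^{-1}$. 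You instead avoid any direct computation with eigenvalues: you write $(I-A)(I-B)^{-1}=I-C$ with $C=(A-B)(I-B)^{-1}\in\sigma_1$, apply Corollary 4.2 in the \emph{reverse} direction to the factorization $I-A=(I-C)(I-B)$, and divide by $\det(I-B)\neq 0$. Your route is a bit more algebraic and self-contained (no need to track what happens to the spectrum under the map $\lambda\mapsto -\lambda/(1-\lambda)$); the paper's route has the mild advantage of establishing, as a useful standalone fact, the formula $\det(I-B)^{-1}=1/\det(I-B)$. Both are equally rigorous given the preceding material.
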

\begin{pf}
Since $(I-B)^{-1} = I + B(I-B)^{-1}$, we have
$$\det (I-B)^{-1} = \prod_{k=1}^\infty (1+\la_k(1-\la_k)^{-1}) = \prod_{k=1}^\infty (1-\la_k)^{-1} = \frac{1}{\det (I-B)}.$$
Hence the equality (18) follows from (17).
\end{pf}
\textbf{Item 5.} Now we establish the assertion of the fundamental theorem for a linear pencil
$$A(\la) = I - \la A, \quad A \in \sigma_p, \quad p\leq 1.$$

\begin{theorem}
If $A \in \sigma_p, \quad p \leq 1$ then
$$(I - \la A)^{-1} = \frac{D(\la)}{\det (I - \la A)}$$
and
\begin{equation}
|\det (I - \la A)| \leq \prod_{k=1}^\infty (1+ |\la| s_k(A)),
\end{equation}
\begin{equation}
\|D(\la)\| = \|(I-\la A)^{-1} \det(I-\la A) \| \leq \prod_{k=1}^\infty (1+ |\la| s_k(A)).
\end{equation}
\end{theorem}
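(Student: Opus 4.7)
The plan is to leverage the multiplicativity of the Fredholm determinant (Corollary 3.2) together with the Weyl-type $s$-number inequalities from Item 2. First observe that $p\leq 1$ implies $A\in\sigma_1$, so $\det(I-\la A)=\prod_k(1-\la\la_k(A))$ is a well-defined entire function. The operator $D(\la):=\det(I-\la A)(I-\la A)^{-1}$, initially defined on the resolvent set, extends to an entire operator function because at each pole $\la_*=1/\la_k(A)$ the order of the zero of $\det(I-\la A)$, which equals the algebraic multiplicity of $\la_k(A)$, dominates the pole order of $(I-\la A)^{-1}$, which equals the size of the largest Jordan block at $\la_k(A)$.

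The determinant estimate is immediate from the bound $|1-\la\la_k(A)|\leq 1+|\la||\la_k(A)|$ combined with property 8 of Item 2 (Weyl's inequality), passed to the limit by monotonicity of the partial products:
$$|\det(I-\la A)|\leq\prod_k(1+|\la||\la_k(A)|)\leq\prod_k(1+|\la|s_k(A)).$$

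For the operator bound, the key ingredient is the identity
$$\det(I-\la A+z(\cdot,y)x)=\det(I-\la A)+z(D(\la)x,y),\qquad x,y\in H,\ z\in\mathbb{C},$$
obtained by factoring $I-\la A+z(\cdot,y)x=(I-\la A)\bigl(I+z(\cdot,y)(I-\la A)^{-1}x\bigr)$, applying Corollary 3.2, and using the rank-one evaluation $\det(I+z(\cdot,y)v)=1+z(v,y)$. Set $f(z):=\det(I-\la B(z))$ with $B(z):=A-(z/\la)(\cdot,y)x$; property 3 of $s$-numbers for rank-one perturbations yields $s_1(B(z))\leq s_1(A)+(|z|/|\la|)\|x\|\|y\|$ and $s_k(B(z))\leq s_{k-1}(A)$ for $k\geq 2$, and the determinant estimate above then gives
$$|f(z)|\leq\bigl(1+|\la|s_1(A)+|z|\|x\|\|y\|\bigr)\prod_{k=1}^\infty(1+|\la|s_k(A)).$$
Since $(D(\la)x,y)=f'(0)$, Cauchy's estimate on $|z|=r$ followed by $r\to\infty$ gives $|(D(\la)x,y)|\leq\|x\|\|y\|\prod_k(1+|\la|s_k(A))$, so $\|D(\la)\|\leq\prod_k(1+|\la|s_k(A))$.

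The main technical obstacle is spotting the rank-one factorization identity: while it is elementary for matrices, the infinite-dimensional analogue rests on Corollary 3.2 and is what unlocks the use of the $s$-number inequalities. Once the identity is in hand, the remainder is bookkeeping with the estimates of Item 2 and a single application of Cauchy's estimate. One could alternatively approximate $A$ by $A_n=P_nAP_n$ and reduce to the finite-dimensional case (using Corollary 3.1 to control the limit), but the direct infinite-dimensional argument sketched above is more transparent.
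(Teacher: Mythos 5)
Your proof is correct and follows essentially the same strategy as the paper's: bound $|\det(I-\la A)|$ via Weyl's inequality (property 8 of $s$-numbers), then control a matrix element $((I-\la A)^{-1}x,y)$ by perturbing $A$ by a rank-one operator, applying the same determinant bound to the perturbation, and extracting the coefficient of the perturbation parameter in the limit. The only cosmetic differences are that you phrase the rank-one perturbation via the identity $\det(I-\la A + z(\cdot,y)x)=\det(I-\la A)+z(D(\la)x,y)$ and extract the coefficient via Cauchy's estimate as $r\to\infty$, whereas the paper computes the single nonzero eigenvalue of the rank-one factor $K$ explicitly and lets a real parameter $\xi\to+\infty$; these are two ways of doing the same thing.
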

\begin{pf}
The estimate (19) follows from property 8 of $s$-numbers
$$\left| \prod_{k=1}^\infty (1 - \la \la_j(A)) \right| \leq \prod_{k=1}^\infty (1 + |\la| |\la_j(A)|) \leq \prod_{k=1}^\infty (1+|\la| s_j(A)).$$
To prove the estimate (20), choose arbitrary vectors $\ph, \psi$ such that $\|\ph\| = \|\psi\| = 1$ and consider the operator
$$A_1 = A + \xi (\cdot, \psi) \ph, \quad \xi > 0.$$
According to property 3 of $s$-numbers we have
\begin{align*}
& s_{j+1} (A_1) \leq s_j(A), \quad j = 1,2,\ldots,\\
& s_1(A_1) = \|A_1\| \leq \|A\| + \xi = s_1(A) + \xi.
\end{align*}
Hence
\begin{equation}
|\det(I - \la A_1)| \leq [1 + |\la(s_1(A)+\xi)] \prod_{j=1}^\infty (1+ |\la| s_j(A)).
\end{equation}
Since $(I-\la A_1) (I-\la A^{-1}) = I - \la K$, where $K$ is a one dimensional operator
$$Kf = \xi ((I- \la A)^{-1}f,\psi) \ph,$$
we have from corollary 3,
$$\det (I - \la A_1)(I - \la A)^{-1} = 1 - \la \la_1 (K) = \frac{\det(I-\la A_1)}{\det(I-\la A)}.$$
Solving the equation $Kf = \la_1 f$, we find $d = \ph$ and $\la_1 = \la_1 (K) = \xi ((I- \la A)^{-1}\ph, \psi)$. Therefore
$$1 - \la \xi ((I - \la A)^{-1} \ph, \psi) = \frac{\det (I- \la A_1)}{\det (I - \la A)}$$
and taking into account (21), we obtain
$$|((I - \la A)^{-1}\ph, \psi)| \leq \frac{1}{\la \xi} + \left[ \frac{1}{\la \xi} + \frac{s_1}{\xi} + 1 \right] \frac{\prod\limits_{k=1}^\infty (1 + |\la| s_k (A))}{|\det (I - \la A)|}.$$
Now let $\xi > 0$ tend to infinity and notice that $\|R\| = \sup\limits_{\|\ph\| = \|\psi\| = 1} |(R\ph, \psi)|.$ Then the last estimate gives (20).
\end{pf}
Now we can get the assertion of the fundamental theorem by applying Borel's theorem on growth of canonical products (see, for example, [B. Levin]). We formulate this result for simple canonical products.

\begin{theorem}[Borel's Theorem]
Let one of the following conditions be fulfilled:
\begin{enumerate}
\item $\sum s_k^p < \infty, \quad p\geq 1;$
\item $s_k = o(k^{-1/p}), \quad p < 1;$
\item $s_k = O(k^{-1/p}), \quad p < 1.$
\end{enumerate}
Then $\Delta (\la) = \prod\limits_{k=1}^\infty (1 + s_k \la)$ is an entire function or order $p$ or less and finite type. Moreover, the type of $\Delta(\la)$ is equal to zero if either condition 1 or condition 2 holds.
\end{theorem}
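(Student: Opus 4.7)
My plan is to estimate $\log M_\Delta(r)$ where $M_\Delta(r) := \max_{|\la|=r}|\Delta(\la)|$ and then invoke the order/type formulas $p = \varlimsup \ln\ln M_\Delta(r)/\ln r$ and $\sigma = \varlimsup \ln M_\Delta(r)/r^p$ recalled earlier. Since $|\Delta(\la)| \leq \prod_k(1 + s_k r)$ with $r = |\la|$, the task reduces to bounding the scalar sum $S(r) := \sum_{k=1}^\infty \log(1 + s_k r)$ from above by $C r^p$ in the finite-type cases and by $o(r^p)$ in the zero-type case.

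The main device is the counting function $n(t) := \#\{k : s_k \geq t\}$, together with the integration-by-parts identity
$$S(r) = \int_0^\infty \log(1+rt)\,d(-n(t)) = \int_0^\infty \frac{r\, n(t)}{1+rt}\,dt,$$
where the boundary terms vanish since $n(t) \equiv 0$ for $t > s_1$ and $\log(1+rt) \to 0$ as $t \to 0^+$. Each hypothesis yields a matching pointwise bound on $n(t)$: condition 3 gives $n(t) \leq (C/t)^p$ (from $s_{n(t)} \geq t$ and $s_k \leq Ck^{-1/p}$); condition 2 sharpens this to $n(t) \leq (\ep/t)^p$ for every $\ep>0$ and all small $t$; and condition 1 furnishes $n(t) \leq Mt^{-p}$ with $M := \sum s_k^p$ by Chebyshev's inequality. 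Substituting and changing variables $u = rt$ converts the estimate into $S(r) \leq c\, r^p \int_0^\infty u^{-p}(1+u)^{-1}\,du$, and this auxiliary integral converges for $0 < p < 1$. Conditions 2 and 3 therefore fall out immediately, delivering finite type in case 3 and---since $c$ may be taken arbitrarily small---zero type in case 2.

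The only real obstacle is condition 1 with $p \geq 1$, where the auxiliary integral diverges at the origin and the global change of variables fails. I would handle this by splitting $S(r)$ at the index $N(r) := \#\{k : s_k r \geq 1\}$, which still satisfies $N(r) \leq Mr^p$ by Chebyshev. On the large-term block $k \leq N(r)$, the inequalities $\log(1+s_k r) \leq s_k r \leq (s_k r)^p$ (the second valid because $s_k r \geq 1$ and $p \geq 1$) give $\sum_{k \leq N(r)} \log(1 + s_k r) \leq r^p \sum_k s_k^p = Mr^p$. The tail $\sum_{k > N(r)} \log(1+s_k r) \leq r \sum_{k > N(r)} s_k$ is controlled of order $r^p$ using the convergence of $\sum s_k$ that is inherent in the setting in which the simple product $\Delta(\la)$ is actually entire. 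Once $S(r) \leq C r^p$ (or the corresponding $o(r^p)$ estimate) is established, the order and type formulas close the proof.
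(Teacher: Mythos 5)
The paper does not give a proof of Borel's theorem: it is cited directly from Levin's book, so there is no in-text argument to compare yours against. Your machinery — the counting function $n(t)=\#\{k:s_k\geq t\}$, the integration-by-parts identity $S(r)=\int_0^\infty \frac{r\,n(t)}{1+rt}\,dt$ (which is legitimate by Tonelli since $\sum_k\int_0^{s_k}\frac{r\,dt}{1+rt}=\sum_k\log(1+rs_k)$), the pointwise Chebyshev bound $n(t)\leq Mt^{-p}$, and the substitution $u=rt$ — is correct, and it does deliver conditions 2 and 3 as stated: finite type from a fixed majorant $Ct^{-p}$, zero type from the $(\varepsilon/t)^p$-majorant valid for small $t$.

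The gap is in your treatment of condition~1. Your splitting at $N(r)=\#\{k:s_kr\geq1\}$ gives
$\sum_{k\leq N(r)}\log(1+s_kr)\leq r^p\sum_k s_k^p=Mr^p$, and $M$ is a fixed positive constant, so this estimate can never improve past $O(r^p)$: indeed $\sum_{k\leq N(r)}s_k^p\to M$ as $r\to\infty$, not to zero. Together with the tail bound you obtain $S(r)\leq Cr^p$, i.e.\ \emph{finite} type. But the theorem asserts that under condition~1 the type is \emph{zero}, so the argument as written does not prove the statement. The repair is to cut at a fixed index $K$ chosen so that $\sum_{k>K}s_k^p<\varepsilon$, rather than at the $r$-dependent index $N(r)$. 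The first $K$ terms contribute $\leq K\log(1+s_1r)=O(\log r)=o(r^p)$. For $k>K$ with $s_kr\geq1$ one has $\log(1+s_kr)\leq(s_kr)^p$, so this block contributes $\leq r^p\sum_{k>K}s_k^p<\varepsilon r^p$. For $k>K$ with $s_kr<1$ one has $\log(1+s_kr)\leq s_kr$; for $p>1$ the sum is $\leq r\sum_{k>K}s_k=O(r)=o(r^p)$, while for $p=1$ it is $\leq r\sum_{k>K}s_k^{\,p}<\varepsilon r$. Combining, $\limsup_{r\to\infty}S(r)/r^p\leq\varepsilon$ for every $\varepsilon>0$, which is the zero-type conclusion. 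You are right that for $p>1$ the hypothesis $\sum s_k^p<\infty$ alone does not make the simple product converge, so the additional convergence of $\sum s_k$ is an implicit part of the statement; your proof should state this explicitly rather than leave it as a parenthetical.
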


\textbf{Item 6.} We start to prove the fundamental theorem for the general case using the note that, without loss of generality, one can assume $A_0 = I$ (as in the theorem on holomorphic operator functions). Denote
$$F(\la) = -A_1 \la - A_2 \la^2 - \cdots - A_n \la^n.$$
Evidently, for each fixed $\la$ and any $\ep > 0$ we have$F(\la) \in \sigma_{p+\ep}$. Choose the smallest integer $\ell$ such that $p/\ell < 1$. Then taking into account the property 5 of $s$-numbers we obtain $F^\ell (\la) \in \sigma_1$. Thus for any fixed $\la \in \mathbb{C}$ the function
$$\Delta (\la) = \det (I - F^\ell(\la))$$
is well defined. Moreover, using corollary 1 and Weierstrass' theorem on uniformly convergent sequences of holomorphic functions, we find that $\Delta (\la)$ is an entire function (the uniform convergence of the functions $\det (I - P_n F^\ell (\la) P_n), \quad P_n \rightarrow I,$ follows from the theorem on continuous dependence of the determinant and from the continuity of the function $F^\ell (\la)$ in the nuclear norm).

From the simple relation
\begin{equation}
A^{-1} (\la) = [I- F(\la)]^{-1} = [I + F(\la) + \cdots + F^{\ell - 1} (\la)] [I-F^\ell (\la)]^{-1}
\end{equation}
we conclude that the growth of the meromorphic function $A^{-1} (\la)$ is the same as the growth of $[I - F^\ell(\la)]^{-1}$, because the left divisor in (22) is polynomial and does not have any influence on the order or type of $A^{-1} (\la)$. Using the theorem on the estimate of the resolvent in the latter item, we have
$$[I - F^\ell (\la)]^{-1} = \frac{D(\la)}{\Delta(\la)}$$
and
\begin{equation}
\|D(\la)\| \leq \prod_{k=1}^\infty [1+s_k(F^\ell (\la))], \quad |\Delta(\la) \leq \prod_{k=1}^\infty [1+s_k (F^\ell (\la))]
\end{equation}
(to get these estimates we make substitution in (19), (20): $\la \rightarrow 1, \quad A \rightarrow F^\ell (\la)$).
Denote
\begin{align*}
F^\ell (\la) & = (-1)^\ell \{\la^\ell A_1^\ell + \la^{\ell+1}[A_1^{\ell-1}A_2 + A_1^{\ell - 2} A_2 A_1 + \cdots + A_2 A_1^{\ell-1}] + \\
& + \la^{\ell+2} [A_1^{\ell-1} A_2 + \cdots ] + \la^{n\ell} A_n^\ell \} = (-1)^\ell \sum_{j=\ell}^{n\ell} \la^j B_j.
\end{align*}
First, assume that condition (9) holds. Then taking into account the properties 4, 5 of $s$-numbers we find $s_k(B^j) = O (k^{-j/p}), \quad j = \ell, \ell+1, \ldots, n\ell, \quad k = 1,2, \ldots .$ Using the property 4 again we obtain
\begin{equation}
s_k(F^\ell(\la)) \leq \sum_{j=\ell}^{n\ell} |\la|^j s_{k_1} (B_j) \leq M\sum_{j=\ell}^{n\ell} |\la|^j k^{-j/p},
\end{equation}
where $k_1 = \left[ \frac{k-1}{n(\ell - 1)+1} \right] + 1, \quad M = const.$ Hence
$$\prod_{k=1}^\infty [1 + s_k (F^\ell (\la))] \leq \prod_{k=1}^\infty (1 + M\sum_{j=\ell}^{n\ell} |\la|^{j_k-j/p}) \leq \prod_{j = \ell}^{n\ell} \prod_{k=1}^\infty (1 + M |\la|^{j_k-j/p}).$$
According to Borel's theorem the function
$$f_j (\mu) = \prod_{k=1}^\infty (1 + M \mu k^{-j/p}), \quad j = \ell, \ell + 1, \ldots, n\ell$$
has the order $p/j$ and finite type. Then we find from the definition that the function $f(\la) = f_\ell(\la^\ell) \cdots f_{n\ell}(\la^{n\ell})$ has order $p$ and finite type and so do the functions $D(\la)$ and $\Delta (\la)$.

The proof does not change if (9) is replaced by (8). If the condition (7) holds then using the properties 5. 10 of $s$-numbers we may deduce that $B_j \in \sigma_{p/j}, \quad j = \ell, \ldots, n\ell$. Taking into account the first estimate in (24) and noting that index $k_1$ repeats $\kappa = n(\ell - 1) + 1$ times when $k$ runs through the integers, we obtain
$$\prod_{k_1 = 1}^\infty [1 + s_k(F^\ell(\la))] \leq \left( \prod_{j = \ell}^{n\ell} \prod_{k=1}^\infty [1 + |\la|^j s_k (B_j)] \right)^\kappa.$$
Recalling that $B_j \in \sigma_{p/j}$ and applying Borel's theorem, we find from (23) that $D(\la)$ and $\Delta (\la)$ have the order $\leq p$ and type 0 with order $p$. This proves the theorem. \hfill$\scriptstyle\blacksquare$

\newpage
\section{Keldysh-Lidskii theorem on the completeness}

To prove the subsequent theorems on completeness we need to recall some classical results of the theory of entire functions (see, for example, [Levin, Ch.1]).

\begin{theorem}[Phragmen-Lindel\"of Theorem]
Let $f(\la)$ be a holomorphic function \footnote{We defined the order and the type of an entire function, but the same definition is applied to functions which are holomorphic in a sector.}  of order $p$ in a sector $\Omega_\alpha (\ph) = \{\la : |\ph - \arg \la| \leq \alpha \}$ and
\begin{equation}
|f(\la) | \leq M
\end{equation}
on the sides of the sector $\Omega_\alpha (\ph)$. If $\alpha < \pi/2p$ then the estimate (1) holds throughout all the sector $\Omega_\alpha (\ph)$.
\end{theorem}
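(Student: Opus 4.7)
The plan is to apply the maximum modulus principle to an auxiliary function $g_\ep(\la) = f(\la) \exp(-\ep \la^q)$ chosen so as to suppress the growth of $f$ inside the sector while leaving its bound on the sides essentially unchanged. First I would rotate by $\ph$, so it suffices to treat the symmetric case $\ph = 0$, $\Omega_\alpha = \{\la : |\arg \la| \leq \alpha \}$. Since $\alpha < \pi/(2p)$, the interval $(p, \pi/(2\alpha))$ is nonempty, so I can fix $q$ with $p < q < \pi/(2\alpha)$, take the principal branch of $\la^q$ (which is holomorphic in $\Omega_\alpha$), and let $\ep > 0$ be a small parameter that will eventually tend to zero.

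The key estimates rest on the fact that for $\la = re^{i\theta}$ with $|\theta| \leq \alpha$ one has
$$\Re \la^q = r^q \cos(q\theta) \geq r^q \cos(q\alpha) =: c\, r^q,$$
with $c > 0$ because $q\alpha < \pi/2$. Hence $|g_\ep(\la)| \leq |f(\la)| \exp(-\ep c\, r^q)$ throughout $\Omega_\alpha$. On the two rays $\arg\la = \pm \alpha$ this is bounded above by $M$, since $|f| \leq M$ there and the exponential factor is at most $1$. On the circular arc $|\la| = R$ inside the sector, the order-$p$ hypothesis gives $|f(\la)| \leq \exp(R^p)$ for $R$ sufficiently large, so
$$|g_\ep(\la)| \leq \exp(R^p - \ep c\, R^q) \longrightarrow 0 \quad \text{as } R \to \infty,$$
because $q > p$.

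Now I would fix an arbitrary point $\la_0 \in \Omega_\alpha$, choose $R > |\la_0|$ so large that the arc bound is at most $M$, and apply the maximum modulus principle to $g_\ep$ on the bounded region $\Omega_\alpha \cap \{|\la| < R\}$. This yields $|g_\ep(\la_0)| \leq M$; unwinding the definition gives $|f(\la_0)| \leq M \exp(\ep \Re \la_0^q)$, and letting $\ep \to 0^+$ produces the desired inequality $|f(\la_0)| \leq M$.

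The only substantive obstacle is the selection of the exponent $q$: the hypothesis $\alpha < \pi/(2p)$ is precisely what leaves room for a $q$ that simultaneously exceeds $p$ (so that $\exp(-\ep c\, r^q)$ dominates the order-$p$ growth $\exp(r^p)$ on the arc) and satisfies $q\alpha < \pi/2$ (so that $\Re \la^q$ is strictly positive, hence the exponential damping acts, throughout the closed sector). Once $q$ is chosen, the remainder is a routine max-modulus argument followed by two limits.
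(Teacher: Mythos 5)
Your proof is correct, and it is the standard textbook argument for the Phragm\'en--Lindel\"of principle: multiply by the damping factor $\exp(-\ep\la^q)$ with $p<q<\pi/(2\alpha)$, use $\Re\la^q\geq r^q\cos(q\alpha)>0$ throughout the closed sector to kill the growth of $f$ on the large arc, apply the maximum modulus principle on the truncated sector, and let $\ep\to0^+$. The paper itself does not prove this theorem — it simply cites Levin's book — so there is nothing to compare against; your supplying the classical proof fills a gap the paper deliberately leaves to a reference. One pedantic point you may wish to tighten: a function of order $p$ satisfies $|f(\la)|\leq\exp(|\la|^{p'})$ for every $p'>p$ and $|\la|$ large, not necessarily for $p'=p$ itself; but since you have already secured $q>p$, you can interpolate $p<p'<q$ and the arc estimate $\exp(R^{p'}-\ep c R^q)\to0$ goes through unchanged. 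You might also remark that $f$ is assumed continuous on the closed sector (so that the maximum modulus principle applies up to the boundary, including the vertex), which is the standard reading of ``holomorphic in a sector'' in this context.
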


\begin{note}
One can deduce from this theorem a slight generalization. Instead of the estimate (1), assume that the following holds:
\begin{equation}
|f(\la)| \leq M (1+|\la|^m)
\end{equation}
on the sides of the sector $\Omega_\alpha (\ph)$. Then the same estimate holds throughout all of the sector $\Omega_\alpha (\ph)$, probably with a new constant $M_1$ instead of $M$. To prove this fact we may consider a function $f(\la)/p(\la)$, where $p(\la)$ is a polynomial of degree in with zeros lying outside of the sector $\Omega_\alpha (\ph)$.
\end{note}

The next result is a corollary of the theorem giving a lower estimate of entire functions due to E. C. Titchmarsh.

\begin{theorem}[Theorem on a Ratio of Holomorphic Functions]
Let $F(\la) = \frac{F_1(\la)}{F_2(\la)}$, where $F_j(\la), \quad j = 1,2$, are entire functions of the order $p_j$ and type $\sigma_j$ with order $p_j$. If $F(\la)$ is also an entire function then it is a function of order $p = \max(p_1, p_2)$ or less and of the type $\sigma = \sigma_1 + \sigma_2$ or less with order $p$.
\end{theorem}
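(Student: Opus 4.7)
The plan is to derive the bound on $F$ pointwise from an upper bound on $|F_1|$ and a lower bound on $|F_2|$, the latter being supplied by Titchmarsh's theorem on lower estimates of entire functions of finite order. Since $F$ is assumed entire, every zero of $F_2$ is already a zero of $F_1$ of at least the same multiplicity, so $F$ itself is holomorphic everywhere and the maximum principle is available on full discs. In particular, a bound on $|F(\lambda)|$ achieved on a single sequence of circles $|\lambda| = r_n \to \infty$ immediately propagates to a bound on $|F(\lambda)|$ for all $\lambda$.

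The first step is the easy upper estimate: by the definition of order and type of $F_1$, for every $\epsilon > 0$ and all $|\lambda|$ sufficiently large,
$$|F_1(\lambda)| \le \exp\bigl((\sigma_1 + \epsilon)|\lambda|^{p_1}\bigr).$$
The second step is to apply Titchmarsh's theorem to $F_2$: for any $\epsilon > 0$ there is an exceptional set of radii of finite (logarithmic) measure, outside of which
$$\min_{|\lambda| = r} |F_2(\lambda)| \ge \exp\bigl(-(\sigma_2 + \epsilon)r^{p_2}\bigr).$$
Choose a sequence $r_n \to \infty$ avoiding this exceptional set, dense enough that $r_{n+1}/r_n \to 1$ (possible since the exceptional set has finite measure). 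On the circle $|\lambda| = r_n$ we then obtain
$$|F(\lambda)| = \frac{|F_1(\lambda)|}{|F_2(\lambda)|} \le \exp\bigl((\sigma_1 + \epsilon)r_n^{p_1} + (\sigma_2 + \epsilon)r_n^{p_2}\bigr).$$

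Setting $p = \max(p_1, p_2)$, the right-hand side is bounded by $\exp\bigl((\sigma_1 + \sigma_2 + 2\epsilon)r_n^p\bigr)$ for large $n$, because whichever of $p_1, p_2$ is strictly smaller contributes a term that is $o(r_n^p)$ and may be absorbed into the $\epsilon$-slack. The final step is to invoke the maximum principle: for every $\lambda$ we pick the smallest $r_n \ge |\lambda|$, which by the density choice satisfies $r_n = |\lambda|(1 + o(1))$, and conclude
$$|F(\lambda)| \le \exp\bigl((\sigma_1 + \sigma_2 + 3\epsilon)|\lambda|^{p}\bigr)$$
for all sufficiently large $|\lambda|$. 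Letting $\epsilon \to 0$ gives order $\le p$ and type $\le \sigma_1 + \sigma_2$ with order $p$.

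The main obstacle is invoking the correct form of Titchmarsh's lower bound. A crude version only yields $|F_2(\lambda)| \ge \exp(-r^{p_2 + \delta})$, which would force an unwanted loss of $\delta$ in the final order exponent; to land exactly at type $\sigma_1 + \sigma_2$ with order $p$ one needs the sharp Titchmarsh-Wiman form, where the constant in front of $r^{p_2}$ is arbitrarily close to $\sigma_2$. This, together with the density of admissible radii $r_n$ needed to make the maximum-principle step lossless, is the delicate input; everything else is routine manipulation of exponential estimates.
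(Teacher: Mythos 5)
Your proof is correct and follows exactly the route the paper indicates: the paper states this result as a corollary of Titchmarsh's lower estimate for entire functions, and you supply precisely the missing details — Titchmarsh's minimum-modulus bound $\log|F_2| > -(\sigma_2+\varepsilon)|\lambda|^{p_2}$ on circles avoiding an exceptional set of finite measure, combined with the trivial upper bound on $|F_1|$ and the maximum principle to propagate the estimate from the admissible circles to all of $\mathbb{C}$. You also correctly flag the one genuine technical input, namely that the constant in Titchmarsh's bound can be taken arbitrarily close to the type $\sigma_2$ (this sharp form is in Levin, Ch. I, and Boas, Thm. 3.7.4).
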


Now we can formulate and prove a general theorem on completeness. The main assumption of this theorem is that, on some rays in the complex plane, the growth of the resolvent of a pencil does not exceed polynomial growth. It may seem at first sight that such a condition is unnatural and is difficult to establish for some specific linear or polynomial pencils. But we will dispel any such illusion later on. Now we mention only that there are a lot of papers devoted to estimates of the resolvent for boundary value problems containing a spectral parameter for ordinary differential operators, as well as for partial differential operators. Some of them are due to G. Birkhoff and Ja. Tamarkin, S. Agmon and L. Nierenberg, M. Agranovich and M. Vishik (for estimates of the resolvent in non-Hilbert spaces see the book [Tribel] and references there).

\begin{theorem}[General Theorem on Completeness] Let the operator pencil
$$A(\la) = A_0 + \la A_1 + \cdots + \la^n A_n$$
be such that $A_j \in \sigma_{p/j},\quad j = 1,\ldots, n$, for some $p>0$
and $\Ker A_n^* = 0$. Also, let there exist a finite set of a rays $\{\gamma_k\}_{k=1}^q$ dividing the complex plane into $q$ sectors with angles less than $\pi/2p$ and such that the resolvent of the pencil $A(\la)$ exists on these rays for sufficiently large $|\la| > r_0$ and has the estimate
\begin{equation}
\|A^{-1} (\la)\| \leq M (1+|\la|^m),
\end{equation}
where $M$ and $m$ are constants. Then the system of Keldysh derived chains of $A(\la)$ is complete in $H^n$.
\end{theorem}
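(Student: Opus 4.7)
The plan is to lift the Liouville argument from Section~3 into the infinite-dimensional setting, with the Phragm\'en--Lindel\"of principle and the fundamental theorem on the growth of the resolvent playing the role that elementary matrix theory played in the finite-dimensional case. Arguing by contradiction, I would assume $f=\{f_1,\dots,f_n\}\in H^n$ is orthogonal to every Keldysh derived chain $\yt^h$ of $A(\la)$, set $f(\la)=f_1+\la f_2+\cdots+\la^{n-1}f_n$, and form
$$F(\la)=[A^*(\la)]^{-1}f(\la).$$

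The first step is to show $F(\la)$ is entire of order $\leq p$. The resolvent hypothesis forces $A^*(\la)$ to be invertible at some point (a conjugate of a point on some $\gamma_k$), so the Keldysh theorem on holomorphic operator functions applies and $[A^*(\la)]^{-1}$ is meromorphic, with poles only at the conjugates $\overline{\la_k}$ of the eigenvalues of $A(\la)$; its principal part at $\overline{\la_k}$ is given by the explicit formula (2.2) built from a canonical system of $A^*(\la)$ at $\overline{\la_k}$ and its adjoint (the canonical system of $A(\la)$ at $\la_k$). The computation sketched in Note~2 of Section~3 shows that the orthogonality of $f$ to all derived chains at $\la_k$ is precisely equivalent to the vanishing of the principal part of $F(\la)$ at $\overline{\la_k}$, so $F(\la)$ extends to an entire function. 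Applying the fundamental theorem on the growth of the resolvent to $A^*(\la)$ (its coefficients have the same $s$-numbers, hence lie in the same $\sigma_{p/j}$) writes $[A^*(\la)]^{-1}=\widetilde D(\la)/\widetilde\Delta(\la)$ with $\widetilde D,\widetilde\Delta$ of order $\leq p$, and the Theorem on a Ratio of Holomorphic Functions then yields that $F(\la)$ itself has order $\leq p$.

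The second step is to force $F(\la)$ into polynomial growth globally. Because conjugation is an isometric reflection, the rays $\{\overline{\gamma_k}\}_{k=1}^q$ divide $\mathbb{C}$ into sectors with the same openings as $\{\gamma_k\}$, still less than $\pi/(2p)$, and on each $\overline{\gamma_k}$ the hypothesis gives
$$\|F(\la)\|\leq\|[A^*(\la)]^{-1}\|\,\|f(\la)\|=\|A^{-1}(\overline\la)\|\,\|f(\la)\|\leq M(1+|\la|^m)\cdot C(1+|\la|^{n-1}).$$
Applying the generalized Phragm\'en--Lindel\"of principle (the remark after Theorem~4.1) on each sector propagates the polynomial bound to all of $\mathbb{C}$, and the scalar Liouville theorem, applied as in the Remark of Section~3 to $(F(\la),g)$ for arbitrary $g\in H$, then forces $F(\la)$ to be a vector polynomial $\sum_{j=0}^N c_j\la^j$.

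The third step closes the argument. From $f(\la)=A^*(\la)F(\la)$, equating coefficients of $\la^k$ for $k\geq n$ (where $f(\la)$ vanishes), the leading equation $A_n^*c_N=0$ combined with $\Ker A_n^*=0$ gives $c_N=0$; iterating downwards peels off $c_{N-1},\dots,c_0$, so $F\equiv 0$, whence $f(\la)\equiv 0$ and $f=0$, a contradiction. I expect the main obstacle to be the entire-extension step: while it is conceptually just an unwinding of the Keldysh principal-part formula (2.2), one must keep careful track of which canonical system belongs to $A(\la)$ and which to $A^*(\la)$ so that the scalar equalities $(f,\yt^h)=0$ correctly cancel each singular term of $F(\la)$ at $\overline{\la_k}$. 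Once that cancellation is in place, the order estimates and the Phragm\'en--Lindel\"of/Liouville machinery carry the proof through routinely.
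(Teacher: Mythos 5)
Your proposal is correct and follows essentially the same route as the paper: form $F(\la)=[A^*(\la)]^{-1}f(\la)$, use Note~3.2 to show it is entire, bound its order by $p$ via the resolvent-growth theorem and the theorem on ratios of entire functions, push the polynomial estimate from the conjugated rays into the sectors by Phragm\'en--Lindel\"of and then globally by Liouville, and finally compare coefficients in $A^*(\la)F(\la)=f(\la)$ with $\Ker A_n^*=0$ to force $F\equiv0$. Your remark that the last step requires a genuine downward iteration (substituting each vanishing coefficient into the next equation) is a small but useful clarification of the paper's compressed statement.
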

\begin{pf}
We have noticed (see Note 3.2) that if a vector $f = \{f_1, \ldots, f_n \} \in H^n$ is orthogonal to all Keldysh derived chains then
$$F(\la) = [A^*(\la)]^{-1} (f_1 + \la f_2 + \cdots + \la^{n-1} f_{n-1})$$
is an entire vector function. It follows from the fundamental theorem on the estimate of the resolvent and the theorem on a ratio of entire functions that $F(\la)$ has the order $p$ or less. Taking into account the equality
$$\|[A^*(\la)]^{-1}\| = \|[A^{-1} (\overline\la)]^*\|$$
and the estimate (3) which holds asymptotically on the rays $\{\gamma_k\}_1^q$, we obtain the estimate
\begin{equation}
\|F(\la)\| \leq M_1 (1+|\la|^{m+n-1})
\end{equation}
on the rays $\{ \overline\gamma_k \}_1^q$. The angle between neighboring rays is less than $\pi/2p$. Hence, in virtue of the Phragmen-Lindel\"of theorem we obtain the estimate (4) in each sector contained between neighboring rays. Thus the estimate (4) holds in the whole complex plane and, from Liouville's theorem, we conclude that
$$F(\la) = F_0 + \la F_1 + \cdots + \la^r F_r, \quad r\leq m+n-1.$$
On the other hand
$$(A_0^* + \la A_1^* + \cdots + \la^n A_n^*) (F_0 + \la F_1 + \cdots + \la^r F_r) = f_1 + \cdots + \la^{n-1} f_n.$$
The right side of this equality is a polynomial of degree $n-1$ but the left side is a polynomial of degree $n+r$ or less. Hence we find $A_n^* F_r = 0, \quad A_n^* F_{r-1} = 0, \ldots, A_n^* F_0 = 0$. Since the kernel of the operator $A_n^*$ is trivial we have $F(\la) \equiv 0$ and then $f = \{f_1, \ldots, f_n\} = 0$. The theorem is proved.
\end{pf}
We will deduce some useful corollaries from this general theorem. But first we recall some definitions and prove some auxiliary results.

The numerical range $\theta(A)$ of an operator $A$ is the set of all complex numbers $(Au,u)$, where $u$ takes values in the unit sphere: $\|u\| = 1$. A theorem due to Hausdorff asserts that $\theta(A)$ is a convex set. It is known also (see, for example, [Kato, Ch.5]) that the closure of $\theta (A)$ contains the spectrum of $A$ and for all $\mu \not\in \theta(A) \cup \sigma(A)$ the following estimate holds
\begin{equation}
\|(A - \mu I)^{-1}\| \leq \frac{1}{\dist(\mu, \theta(A))}.
\end{equation}

An operator $T$ is called \textit{sectorial} if its numerical range $\theta(T)$ is a subset of a sector $|\ph - \arg \la| \leq \alpha$ for some $\ph \in [0, 2\pi)$ and $\alpha \in [0, \pi/2]$. The numbers $\ph$ and $\alpha$ are called the \textit{vertex} and \textit{semi-angle} of the sectorial operator $T$. Further, when dealing with a sectorial operator, we will assume (for simplicity) that its vertex is equal to zero.

An operator $T$ is called accretive (dissipative) if its numerical range lies in the right-half plane, $\Ree \la \geq 0$ (in the left-half plane $\Ree \la \leq 0$).

\begin{lemma}
If $T$ is a sectorial operator with semi-angle $\alpha$ then outside the sector $\Omega_{\alpha + \ep} = \{ \la: |\arg \la| \leq \alpha + \ep \}, \quad (\ep > 0)$ the following estimates hold:
\begin{align}
& \|(I-\la T)^{-1} \| \leq \frac{1}{\sin \ep},\\
& \|(I-\la T)^{-1} T \| \leq \frac{1}{|\la|\sin \ep}.
\end{align}
Moreover, if $\Ker T^* = 0$, then for any fixed vector $x$
\begin{equation}
\|(I-\la T)^{-1} x \| \rightarrow 0
\end{equation}
when $\la \rightarrow \infty$ outside the sector $\Omega_{\alpha + \ep}$.
\end{lemma}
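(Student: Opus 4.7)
The plan is to derive all three estimates from the sectoriality $(Tv,v) \in \Omega_\alpha$ via a single planar observation: if $Y, Z \in \mathbb{C}$ and $Y + Z = R\, e^{i\phi}$ with $R \geq 0$, and if the (shortest) angular distance between $\arg Z$ and $\phi$ is at least $\epsilon$, then
\[
|Y|^2 \;\geq\; \bigl[\max(R,|Z|)\bigr]^{2} \sin^{2}\epsilon.
\]
This is immediate once one rotates so that $\phi = 0$ and expands $|Y|^{2} = (R - |Z|\cos\theta)^{2} + |Z|^{2} \sin^{2}\theta$ with $\theta = \arg Z$, treating the cases $\cos\theta \leq 0$ (where $|Y|^{2} \geq R^{2} + |Z|^{2}$ directly) and $0 < \cos\theta < 1$ (where the parabola in $|Z|$ yields the bound against both $R$ and $|Z|$) in turn.

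For (7), put $v = (I - \lambda T)^{-1} u$ and take the inner product of $u = v - \lambda T v$ with $v$ to get $\lambda (Tv, v) + (u, v) = \|v\|^{2}$. By sectoriality $\arg (Tv,v) \in [-\alpha,\alpha]$, and since $|\arg \lambda| > \alpha + \epsilon$ the argument of $\lambda (Tv,v)$ lies at angular distance greater than $\epsilon$ from the direction $\phi = 0$ of the positive number $\|v\|^{2}$. The planar lemma applied with $Z = \lambda (Tv,v)$, $Y = (u,v)$, $R = \|v\|^{2}$ gives $|(u,v)| \geq \|v\|^{2} \sin \epsilon$, so Cauchy-Schwarz yields $\|v\| \leq \|u\|/\sin \epsilon$. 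For (8), set $w = Tv$ and take the inner product of $u = v - \lambda w$ with $w$: $(u, w) + \lambda \|w\|^{2} = (v, w) = \overline{(Tv, v)}$, whose argument again lies in $[-\alpha, \alpha]$. Applying the lemma with $Z = \lambda \|w\|^{2}$, $Y = (u, w)$ and using the bound $|Y| \geq |Z| \sin \epsilon$ gives $|(u,w)| \geq |\lambda|\,\|w\|^{2} \sin \epsilon$, and $|(u,w)| \leq \|u\|\|w\|$ then yields (8).

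For the convergence statement (9), recall that $\Ker T^{*} = 0$ is equivalent to $\overline{\Imm T} = H$. If $x = Ty \in \Imm T$, then $(I - \lambda T)^{-1} x = T (I - \lambda T)^{-1} y$, and estimate (8) gives $\|(I - \lambda T)^{-1} x\| \leq \|y\|/(|\lambda|\sin \epsilon) \to 0$ as $|\lambda| \to \infty$ outside $\Omega_{\alpha + \epsilon}$. For an arbitrary $x \in H$ and $\eta > 0$, pick $x' \in \Imm T$ with $\|x - x'\| < \eta$; splitting $(I - \lambda T)^{-1} x = (I - \lambda T)^{-1} x' + (I - \lambda T)^{-1} (x - x')$ and applying the previous case to the first term and (7) to the second shows that $\limsup_{|\lambda| \to \infty} \|(I - \lambda T)^{-1} x\| \leq \eta/\sin \epsilon$, and $\eta$ is arbitrary.

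The main obstacle is isolating the right planar inequality; once it is in hand, all three estimates reduce to one line of algebra plus Cauchy-Schwarz. A small subtlety to keep in mind is that ``angular distance'' must be read modulo $2\pi$, so when $|\arg \lambda|$ is close to $\pi$ the relevant $\theta$ satisfies $\cos \theta \leq 0$, and the lemma reduces to $|Y|^{2} \geq R^{2} + |Z|^{2}$, which is stronger than what is claimed and requires no extra work.
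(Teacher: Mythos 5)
Your argument is correct and ultimately amounts to the same mechanism as the paper's, but you package it a bit differently, and the repackaging is worth noting. The paper proves the first estimate by invoking the general numerical-range resolvent bound $\|(A-\mu I)^{-1}\| \leq 1/\dist(\mu,\theta(A))$ (quoted from Kato) together with the elementary geometric fact $\dist(\la^{-1},\Omega_\alpha) \geq |\la|^{-1}\sin\ep$, and then proves the second estimate by a separate inner-product manipulation with $(I-\la T)y$ and $Ty$. You instead extract one clean planar inequality --- the bound $|Y|^2 \geq [\max(R,|Z|)]^2\sin^2\ep$ for $Y+Z=Re^{i\phi}$ under an angular-separation hypothesis --- and deduce both estimates from it plus Cauchy--Schwarz, which makes the proof self-contained (no appeal to the Kato resolvent estimate) and exposes the common geometry behind the two bounds. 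What you should make explicit, because it is the one place a reader could stumble, is the verification that the angular-separation hypothesis actually holds in each application: for the first estimate the phase in question is $\arg\bigl(\la(Tv,v)\bigr)$, which may wrap past $\pm\pi$, and you rely on the observation that the wrapped value then lies at distance $\geq \pi-\alpha > \ep$ from $0$; for the second the degenerate case $(v,Tv)=0$ (so $R=0$, $\phi$ undefined) must be covered separately, which it is since then $Y=-Z$. Your density argument for the third estimate is the same as the paper's.
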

\begin{pf}
If $\la \not\in \Omega_{\alpha + \ep}$ then according to (5)
$$\|(I-\la T)^{-1} \| = |\la|^{-1} \|(\la^{-1} I - T)^{-1} \| \leq \frac{1}{|\la| \dist (\la^{-1}, \Omega_\alpha)} \leq \frac{1}{\sin \ep}$$
and the first estimate (6) follows.

Also, if $\la \not\in \Omega_{\alpha + \ep}$, then for all $y \in H$
\begin{align*}
\|(I-\la T)y\| \|Ty\| & \geq | ((I - \la T)y, Ty) | = | (y,Ty) - \la (Ty, Ty) |\\
& \geq \|Ty\|^2 \dist(\la, \Omega_\alpha) \geq \|Ty\|^2 |\la| \sin \ep
\end{align*}
and $\|Ty\| \leq (|\la| \sin \ep )^{-1} \|(I - \la T)y\|$. Hence for all $x = (I - \la T)y$ we have
$$\|T(I-\la T)^{-1} x \| \leq (|\la| \sin \ep)^{-1} \|x\|$$
and the second estimate (7) follows.

If $\Ker T^* = 0$ then $\overline{\Im T} = H$, therefore for any $x \in H$, and given any $\delta > 0$, there exists a vector $y = Tz$ such that $\|y-x\| < \delta$. Then for $\la \not\in \Omega_{\alpha + \ep}$ we have
$$\| (I-\la T)^{-1} x\| \leq \frac{1}{\sin \ep} \left( \frac{\|z\|}{|\la|} + \delta \right).$$
Since $\delta$ may be chosen arbitrary small, we obtain (8).
\end{pf}
\begin{lemma}
Let $A = (I+S)T$, where $S \in \sigma_\infty$, $\Ker A^* = 0$, and let $T$ be a sectorial operator\footnote{An operator $A$, having such a representation, we call a compact perturbation of a sectorial operator.} with semi-angle $\alpha$. Then outside a sector $\Omega_{\alpha + \ep}, \quad \ep>0$, and for sufficiently large $|\la| > r_0 = r_0(\ep)$, the following estimate holds:
\begin{equation}
\|(I - \la A)^{-1}\| \leq M = M(\ep).
\end{equation}
\end{lemma}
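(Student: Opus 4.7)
The plan is to reduce to Lemma~4.1 via the factorization
$$
I - \la A = L(\la)(I - \la T), \qquad L(\la) := (I+S) - S(I - \la T)^{-1},
$$
which one verifies by direct expansion, using $A = (I+S)T$. Equivalently $L(\la) = I - S\la T(I - \la T)^{-1}$, a compact perturbation of the identity (since $S \in \sigma_\infty$). Estimate~(6) of Lemma~4.1 already gives $\|(I - \la T)^{-1}\| \leq 1/\sin\ep$ outside $\Omega_{\alpha + \ep}$, so it suffices to show that $L(\la)^{-1}$ exists and is uniformly bounded for $|\la|$ large outside the sector; then
$$
\|(I - \la A)^{-1}\| = \|(I - \la T)^{-1} L(\la)^{-1}\| \leq \frac{\|L(\la)^{-1}\|}{\sin\ep}.
$$

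First I would prove that $\|S(I - \la T)^{-1}\| \to 0$ as $|\la| \to \infty$ outside $\Omega_{\alpha + \ep}$. Passing to adjoints, $[S(I - \la T)^{-1}]^{*} = (I - \bar\la T^{*})^{-1} S^{*}$, where $T^{*}$ is sectorial with the same semi-angle $\alpha$ and $S^{*}$ is compact. Lemma~4.1 applied to $T^{*}$ supplies both the uniform bound (6) and the strong convergence (8) of $(I - \bar\la T^{*})^{-1}$ to zero. A uniformly bounded, strongly convergent family converges uniformly on the precompact set $\overline{S^{*}(\{\|x\| \leq 1\})}$; hence $\|(I - \bar\la T^{*})^{-1} S^{*}\| \to 0$, which, after taking adjoints again, gives $\|S(I - \la T)^{-1}\| \to 0$. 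Consequently $L(\la) \to I + S$ in operator norm.

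It remains to check that $I + S$ is invertible. As a compact perturbation of the identity, $I + S$ is Fredholm of index zero, so invertibility reduces to $\Ker(I + S^{*}) = 0$. Here the hypothesis $\Ker A^{*} = 0$ enters: from $A^{*} = T^{*}(I + S^{*})$, any $x \in \Ker(I + S^{*})$ automatically satisfies $A^{*}x = T^{*}(I + S^{*})x = 0$, hence $x \in \Ker A^{*} = 0$. Consequently $I + S$ is invertible, and norm continuity of inversion yields $\|L(\la)^{-1}\| \leq 2\|(I + S)^{-1}\|$ once $|\la| \geq r_{0}$ is large enough, completing the proof with $M(\ep) = 2\|(I + S)^{-1}\|/\sin\ep$.

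The main obstacle is the strong convergence step: Lemma~4.1(8) requires $\Ker T = 0$, which is not immediate from $\Ker A^{*} = 0$ (indeed $\Ker T \subset \Ker A$, not $\Ker A^{*}$). This gap must be bridged by a short auxiliary argument; a natural route is to split $H = \Ker T \oplus \overline{\Imm T^{*}}$, observe that on $\Ker T$ both $T$ and $A$ vanish so the estimate is trivial there, and verify that the hypotheses of the lemma are inherited on the orthogonal complement, where the previous analysis applies without change.
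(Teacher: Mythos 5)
Your proof follows the same route as the paper's (the paper's factorization
$$
I - \la A = (I+S)\bigl[I + S_1(I-\la T)^{-1}\bigr](I-\la T), \qquad S_1 = -S(I+S)^{-1},
$$
is algebraically equivalent to your $I-\la A = L(\la)(I-\la T)$, and the Fredholm argument for $I+S$ and the compactness-plus-strong-convergence argument for the tail term are both exactly the paper's). The first three paragraphs are sound.

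The problem is in your last paragraph. You correctly flag that $\Ker T = 0$ is needed to invoke Lemma 4.1(8) for $T^*$, but the remedy you sketch does not work, and it obscures the fact that the hypothesis is not optional. If $\Ker T \neq 0$, the key convergence fails outright: for $0\ne x\in\Ker T$ one has $(I-\la T)^{-1}x = x$, so $\|S(I-\la T)^{-1}x\| = \|Sx\|$ does not tend to zero. Moreover the decomposition $H = \Ker T \oplus \overline{\Imm T^*}$ reduces $T$ and $T^*$ but there is no reason for $I+S$, hence for $A = (I+S)T$, to respect it, so the block-triangular structure you would need for ``the estimate is trivial on $\Ker T$'' is simply not there.

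The correct fix is already in your hands and is the one the paper uses. You established that $I+S^*$ is invertible. From $A^* = T^*(I+S^*)$ it follows that $\Ker T^* = (I+S^*)\bigl(\Ker A^*\bigr) = 0$. A sectorial operator with vertex $0$ is accretive, and for accretive $T$ one has $\Ker T = \Ker T^*$ (see the paper's remark following Corollary 4.1, which writes $T = T_R + iT_I$ with $T_R \geq 0$ and deduces the coincidence of the kernels). Hence $\Ker T = 0$, and Lemma 4.1(8) applies to $T^*$. With this one-line correction your proof is complete and matches the paper's.
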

\begin{pf}
It follows from the assumption $\Ker A^* = 0$ that $\Ker (I+S^*) = 0$ and $\Ker (I+S) = 0$ (because $S \in \sigma_\infty$). Hence $I+S$ is invertible and $(I+S)^{-1} = I - S(I+S)^{-1} = I + S_1, \quad S_1 \in \sigma_\infty$. Note also, that $\Ker T^* = 0$. Further, we have
\begin{align}
\begin{split}
\| (I - \la A)^{-1} \| & = \| [(I+S)(I-\la T)(I+S_1(I-\la T)^{-1})]^{-1} \| \\
& \leq \|(I+S)^{-1} \| \|(I-\la T)^{-1} \| \|(I+S_1(I-\la T)^{-1})^{-1} \|.
\end{split}
\end{align}
Notice, we have if $V = (\cdot, \ph) \psi$ is a one dimensional operator (or finite dimensional) then it follows from (8) that
$$\| V(I-\la T)^{-1} \| = \| (I-\overline{\la}T^*)^{-1} V^*\| \rightarrow 0$$
when $\la \rightarrow \infty$ outside $\Omega_{\alpha + \ep}$. (Obviously, the numerical range of $T^*$ lies in the sector $\Omega_\alpha$ and $\Ker T = \Ker T^* = 0$, see Note 3 below.) Operator $S_1$ is compact, hence it may be approximated in the operator norm with any accuracy by a finite dimensional operator. Thus $\| S_1 (I-\la T)^{-1} \| \rightarrow 0$ when $\la \rightarrow \infty$ outside $\Omega_{\alpha + \ep}$ and the estimate (9) follows from (6) and (10).
\end{pf}
\begin{note}
Lemmas 1 and 2 are valid if the sectorial operator $T$ is replaced by a self-adjoint operator $C$ (not necessarily non-negative). The only difference is that in this case all estimates hold outside the sector $\Lambda_\ep = \{\la : |\arg \la| \leq \ep \quad or \quad |\pi - \arg \la| \leq \ep \}$.
\end{note}

Now we are able to present some corollaries from the general theorem on completeness.

\begin{corollary}[Keldysh-Lidskii theorem]
Let $T$ be a sectorial operator with semi-angle $\alpha$ and $\Ker T^* = 0$. If $T \in \sigma_p$ and $p < \pi/ 2\alpha$ then the system of EAV of $T$ is complete.
\end{corollary}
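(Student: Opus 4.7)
The plan is to apply the General Theorem on Completeness to the linear pencil $A(\la) = I - \la T$, for which $n = 1$, $A_0 = I$, $A_1 = -T$. For $n = 1$ the Keldysh derived chain reduces to $\yt^s = y_k^s \in H^1 = H$, and the EAV of the pencil $I - \la T$ at an eigenvalue $\la_k = 1/\mu_k$ span the same root subspace of $T$ as the classical EAV of $T$ at $\mu_k \neq 0$; hence completeness of the Keldysh derived chains in $H^n$ is tantamount to the desired completeness of the EAV system of $T$ in $H$. The algebraic hypotheses are immediate: $A_1 = -T \in \sigma_p = \sigma_{p/1}$, $\Ker A_1^* = \Ker T^* = 0$, and $A(0) = I$ is invertible.

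The substantive step is to verify the resolvent growth condition along a suitable family of rays. Because $T$ is sectorial with semi-angle $\alpha$, Lemma 1 gives the uniform bound
\begin{equation*}
\|A^{-1}(\la)\| = \|(I - \la T)^{-1}\| \leq \frac{1}{\sin \ep}, \qquad \la \notin \Omega_{\alpha+\ep},
\end{equation*}
so the required polynomial estimate holds, in fact with $m = 0$, everywhere outside the forbidden sector $\Omega_{\alpha+\ep}$. Since the hypothesis $p < \pi/2\alpha$ says exactly $\alpha < \pi/2p$, I fix $\ep > 0$ so small that also $\alpha + \ep < \pi/2p$. Then I place two rays $\gamma_\pm$ at $\arg \la = \pm(\alpha + \ep)$ and add finitely many auxiliary rays in the complement $\mathbb{C} \setminus \Omega_{\alpha+\ep}$, partitioning the plane into sectors each of angular half-width less than $\pi/2p$: the critical sector containing the entire forbidden region has half-width $\alpha + \ep < \pi/2p$ by construction, while the remaining angular measure $2\pi - 2(\alpha+\ep)$ can be chopped as finely as desired. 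Every ray so chosen lies outside $\Omega_{\alpha+\ep}$, so Lemma 1 delivers the required estimate on each of them.

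Applying the General Theorem on Completeness now yields the completeness of the Keldysh derived chains $\{\yt^s\}$ in $H$, which, by the identification above, is the completeness of the system of EAV of $T$. The only delicate point is the angular budget: the Phragmen--Lindel\"of ingredient inside the general theorem only tolerates the critical sector (which must cover all of $\Omega_{\alpha+\ep}$) to have half-opening strictly below $\pi/2p$, and the strict inequality $p < \pi/2\alpha$ supplies exactly an $\ep$-sliver of slack to make this arrangement possible; this is the sense in which the bound $\pi/2\alpha$ is sharp.
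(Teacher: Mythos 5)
Your proof is correct and takes essentially the same route as the paper: the paper's proof is the one-line observation that the estimate (6) of Lemma 1 makes all hypotheses of the General Theorem on Completeness hold for $A(\la) = I - \la T$. You have simply filled in the bookkeeping (identification of Keldysh chains with EAV of $T$ when $n=1$, verification of $A_1 \in \sigma_p$ and $\Ker A_1^* = 0$, and the explicit placement of rays bounding $\Omega_{\alpha+\ep}$ with half-angle below $\pi/2p$) that the paper leaves implicit.
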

\begin{pf}
Since the estimate (6) holds, we find that all assumptions of the general theorem are fulfilled for the linear operator pencil $A(\la) = I - \la T$.
\end{pf}
\begin{note}
Actually, one can omit the assumption $\Ker T^* = 0$ in the latter corollary, because for an accretive operator $T$ (and of course, for a sectorial operator $T$) we have $\Ker T^* = \Ker T$ (this equality follows from the representation $T = T_R + i T_I$, where $T_R = (T + T^*)/2, \quad T_I = (T-T^*)/2i \geq 0$). Hence, for a sectorial operator we have the representation $H = \overline{\Imm T} \oplus \Ker T$. Since the restriction of $T$ to its invariant subspace $H_1 = \overline{\Imm T}$ has a complete system of EAV in $H_1$, we find that the assertion of Corollary 1 is valid without the assumption that $\Ker T^* = 0$.
\end{note}

\begin{corollary}
Let $A = (I+S)T, \quad S\in \sigma_\infty, \quad T$ be a sectorial operator with semi-angle $\alpha$, and $\Ker A^* = 0$. If $T \in \sigma_p$ and $p < \pi/2\alpha$ then the system of EAV of the operator $A$ is complete.
\end{corollary}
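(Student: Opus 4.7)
The plan is to invoke the General Theorem on Completeness applied to the linear pencil $A(\la):=I-\la A$, so that $A_0=I$, $A_1=-A$, and $n=1$. The EAV of this pencil at a nonzero eigenvalue $\la_0$ are exactly the EAV of $A$ at $1/\la_0$, and the Keldysh derived chains in $H^1=H$ coincide with the EAV themselves; moreover $\Ker A\subset\Ker A^*=0$, so every eigenvalue of $A$ is nonzero, and completeness of the derived chains in $H$ is the desired completeness of the EAV of $A$.

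The hypothesis $A_1\in\sigma_{p/1}=\sigma_p$ is immediate: since $S\in\sigma_\infty$ the operator $I+S$ is bounded, so by property 2 of $s$-numbers $s_k(A)\leq\|I+S\|\,s_k(T)$, and $T\in\sigma_p$ yields $A\in\sigma_p$. The condition $\Ker A_n^*=\Ker A^*=0$ is given. For the resolvent estimate on rays we appeal to Lemma 2, which, under the present hypotheses, tells us that for every $\ep>0$ there exists $r_0(\ep)$ and $M(\ep)$ with
$$\|(I-\la A)^{-1}\|\leq M(\ep)\qquad\text{for }|\la|>r_0(\ep),\ \la\notin\Omega_{\alpha+\ep}.$$
Thus on every ray lying outside $\Omega_{\alpha+\ep}$ the estimate $\|A^{-1}(\la)\|\leq M(1+|\la|^m)$ holds with $m=0$.

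The one nontrivial step is choosing the rays $\{\gamma_k\}_{k=1}^q$ so that the sectors they cut out all have opening less than $\pi/2p$ in the sense needed by Phragmen--Lindel\"of. Since $p<\pi/2\alpha$ we have $\alpha<\pi/2p$; pick $\ep_0>0$ so small that $\alpha+\ep_0<\pi/2p$, and take two rays $\gamma_1,\gamma_2$ along $\arg\la=\pm(\alpha+\ep_0)$. These rays lie outside $\Omega_{\alpha+\ep_0/2}$, so the resolvent bound of Lemma~2 (with $\ep=\ep_0/2$) holds on them; the sector bounded by $\gamma_1,\gamma_2$ which contains $\Omega_\alpha$ then has semi-angle $\alpha+\ep_0<\pi/2p$. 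The complementary region is also exterior to $\Omega_{\alpha+\ep_0/2}$, so we may insert finitely many additional rays $\gamma_3,\dots,\gamma_q$ there to subdivide the complement into sectors each of semi-angle $<\pi/2p$; on every such added ray the resolvent bound persists.

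With all hypotheses verified, the General Theorem on Completeness yields that the Keldysh derived chains of $I-\la A$ form a complete system in $H$, and by the identification above the EAV of $A$ form a complete system in $H$. The main delicate point is the ray configuration in the third paragraph: the condition $p<\pi/2\alpha$ is used there exactly to guarantee that the ``bad'' sector around the real axis, where we have no resolvent information, can itself be enclosed in a single sector of admissible opening for Phragmen--Lindel\"of; the factorization $A=(I+S)T$ enters only through Lemma~2, where the compactness of $S$ is needed to convert the estimate for $T$ into one for $A$.
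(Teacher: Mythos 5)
Your proof is correct and takes essentially the same route as the paper: the paper's proof is just the one-liner ``By Lemma~2 all assumptions of the general theorem are fulfilled for $A(\la)=I-\la A$,'' and you have simply unpacked it, including the ray configuration that makes Phragmen--Lindel\"of apply. The only small imprecision is the unjustified claim $\Ker A\subset\Ker A^*$ (which is not a general fact); here it suffices to note that $I+S$ is invertible under the hypothesis $\Ker A^*=0$ (as in the proof of Lemma~2), so $\Ker A=\Ker T$, and for a sectorial $T$ one has $\Ker T=\Ker T^*$, hence $\Ker A=0$.
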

\begin{pf}
By Lemma 2, we find that all assumption of the general theorem are fulfilled for the linear pencil $A(\la) = I - \la A$.
\end{pf}
\begin{corollary}[Theorem of Keldysh]
Let
\begin{equation}
A(\la) = I + S_0 + S_1 C \la + S_2 C^2 \la^2 + \cdots + S_{n-1} C^{n-1} \la^{n-1} + (I+S_n) C^n \la^n,
\end{equation}
where $S_j, \quad j = 0, 1, \ldots, n$, are compact operators, $\Ker (I+S_n) = 0$, and $C = C^* > 0$. If $C \in \sigma_p$ for some $p>0$ then the system of Keldysh derived chains of the pencil $A(\la)$ is complete in $H^n$. In particular, the system of EAV of a compactly perturbed positive self-adjoint operator \\
$$A = (I+S)C, \quad S \in \sigma_\infty, \quad \Ker (I+S) = 0, \quad C > 0, \quad C\in \sigma_p,$$
is complete $H$.
\end{corollary}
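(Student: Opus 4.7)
I would apply the General Theorem on Completeness (Theorem~4.3) to the pencil, whose three hypotheses I now verify. The first---that $A_j\in\sigma_{p/j}$ for $j=1,\dots,n$---is immediate: $C\in\sigma_p$ together with properties~5 and~2 of $s$-numbers gives $C^j\in\sigma_{p/j}$, and multiplication by the bounded operators $S_j$ (or $I+S_n$) preserves the class. The second---$\Ker A_n^*=0$---follows because $\Ker(I+S_n)=0$ together with the Fredholm alternative makes $I+S_n$, and hence $I+S_n^*$, invertible, while $C=C^*>0$ gives $\Ker C^n=0$; thus $A_n^*=C^n(I+S_n^*)$ has trivial kernel.

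The substantive task is the resolvent estimate on rays dividing the plane into sectors of opening less than $\pi/(2p)$. Since $p$ is arbitrary, I choose a sufficiently fine family $\gamma_1,\dots,\gamma_q$ of rays arranged to avoid the $n$ exceptional directions $\arg\la=(2\ell+1)\pi/n$, on which $\la^n$ is negative real. On each good ray I split $A(\la)=M(\la)+L(\la)$ with $M(\la):=I+\la^n(I+S_n)C^n$ the leading part and $L(\la):=S_0+\sum_{j=1}^{n-1}\la^j S_j C^j$ the lower-order compact perturbation. The leading part is governed by Lemma~2 of this section, applied to the linear pencil at parameter $\mu=-\la^n$: $(I+S_n)C^n$ is a compact perturbation of the positive self-adjoint $C^n$ (sectorial, semi-angle $0$, with trivial kernel of the adjoint as above), so the lemma gives a uniform bound $\|M(\la)^{-1}\|\leq M_0$ on each good ray for $|\la|$ sufficiently large. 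The crude estimate $\|M(\la)^{-1}\la^j S_j C^j\|\leq M_0 |\la|^j\|S_j\|\|C\|^j$ then yields the polynomial bound $\|M(\la)^{-1}L(\la)\|=O(|\la|^{n-1})$ on each ray.

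Writing $A(\la)=M(\la)[I+M(\la)^{-1}L(\la)]$, the main obstacle is the invertibility of the second factor: since $\|M(\la)^{-1}L(\la)\|$ can grow with $\la$, a Neumann series does not converge. The plan is to invoke analytic Fredholm theory (Theorem~2.1) for the holomorphic compact-operator-valued function $\la\mapsto M(\la)^{-1}L(\la)$, which forces the singular set to be discrete. A non-accumulation argument---that the eigenvalues of $A(\la)$ cluster only along the excluded exceptional rays, because for $\la^n$ away from the negative real axis the dominant term $\la^n(I+S_n)C^n$ keeps $A(\la)$ essentially invertible---then secures invertibility of $A(\la)$ for all sufficiently large $|\la|$ on each good ray.

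Once invertibility is secured, the polynomial bound $\|A^{-1}(\la)\|\leq M(1+|\la|^m)$ on each good ray follows by combining the uniform bound on $M(\la)^{-1}$, the polynomial bound on $M(\la)^{-1}L(\la)$, and the Fundamental Theorem on the Growth of the Resolvent (Theorem~3.1), which represents $A^{-1}(\la)=D(\la)/\Delta(\la)$ as a meromorphic function of order $\leq p$. A standard application of Phragm\'en--Lindel\"of (Theorem~4.1) transfers the polynomial estimate across the chosen sectors. All three hypotheses of the General Theorem on Completeness being verified, the system of Keldysh derived chains of $A(\la)$ is complete in $H^n$, and the statement about a single compactly perturbed operator follows as the case $n=1$.
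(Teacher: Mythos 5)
There is a genuine gap in the resolvent-estimate step, and it is exactly where the paper's argument puts its cleverness.

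Your decomposition $A(\la)=M(\la)+L(\la)$ and the crude bound $\|M(\la)^{-1}L(\la)\|=O(|\la|^{n-1})$ are correct, but as you yourself note, this is useless for inverting $I+M(\la)^{-1}L(\la)$. The patch you then propose---analytic Fredholm theory plus a ``non-accumulation argument''---does not close the gap. Analytic Fredholm theory tells you the set of bad $\la$ is discrete, but not that it stays away from your chosen rays, and not that $\|[I+M(\la)^{-1}L(\la)]^{-1}\|$ is polynomially bounded where it exists. The ``non-accumulation'' claim (that the eigenvalues cluster only near $\arg\la=(2\ell+1)\pi/n$) is precisely what needs to be proved, and the estimate $\|M^{-1}L\|=O(|\la|^{n-1})$ is consistent with its failure: an operator of that norm could very well push $I+M^{-1}L$ through zero along any ray. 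The final paragraph's appeal to the Fundamental Theorem does not help either: that theorem controls the order of the meromorphic function $A^{-1}(\la)$, not its size away from its poles, so it cannot supply the ray estimate that Phragm\'en--Lindel\"of needs as input.

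What the crude estimate misses, and what the paper exploits, is that $L(\la)$ is not an arbitrary operator of size $O(|\la|^{n-1})$ but is built out of powers of the \emph{same} $C$ that appears in $M(\la)$. The paper first reduces to $S_n=0$ by passing to $(I+S_n)^{-1}A(\la)$, so that $M(\la)=I+\la^nC^n$ commutes with $C$; then it factorizes $I+\la^nC^n=\prod_{j=1}^{n}(1+\omega_j\la C)$ with $\omega_j^n=-1$ and distributes the $\la^kC^k$ in $S_k\la^kC^k(I+\la^nC^n)^{-1}$ among the factors. Each piece $\la C(1+\omega_j\la C)^{-1}$ is uniformly bounded (Lemma~4.1, estimate (7)), each leftover $(1+\omega_j\la C)^{-1}$ is uniformly bounded (estimate (6)), and the one remaining piece $S_k(1+\omega_m\la C)^{-1}$ tends to $0$ because $S_k$ is compact and $(1+\omega_m\la C)^{-1}x\to 0$ strongly (estimate (8), as in Lemma~4.2). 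Thus $\|\sum_k S_k\la^kC^k M(\la)^{-1}\|\to 0$, so $[I+LM^{-1}]^{-1}$ is \emph{uniformly} bounded for large $|\la|$ on rays with $\arg\la$ avoiding the $n$ bad directions by $\ep$, which can be taken $<\pi/2p$. This is the uniform bound you need; without the factorization of $I+\la^nC^n$ and the redistribution of $\la^kC^k$, your argument does not produce it.
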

\begin{pf}
Without loss of generality we can assume that $S_n = 0$, otherwise we have to consider the operator pencil $(I+S_n)^{-1} A(\la)$, which has the representation (11) with $S_n = 0$. If $S_n = 0$ then
\begin{equation}
A^{-1} (\la) = (I + \la^n C^n)^{-1} (I + \sum_{k=0}^{n-1} S_k \la^k C^k (I + \la^n C^n)^{-1})^{-1}.
\end{equation}
If $\{ \omega_j \}_1^n$ are the roots of the equation $\omega^n + 1 = 0$, then
$$I + \la^n C^n = \prod_{j=1}^n (1+ \omega_j \la C)$$
and for $k = 0,1,\ldots, n-1$, we have
\begin{align*}
\| S_k \la^k & C^k (I + \la^n C^n)^{-1} \| \\
& = [S_k (I + \omega_{k+1} \la C)^{-1}] [\prod_{j=1}^k \la (I + \omega_j \la C)^{-1}] [\prod_{j=k+1}^n (I + \la \omega_j C)^{-1}]\\
& = A_1 (\la) A_2(\la) A_3(\la).
\end{align*}
Denote by $\Omega_\ep^n$ the union of $n$ sectors in the complex plane with semi-angles $\ep$ and vertex $-\omega_k, \quad k = 1,\ldots,n$. Then, according to Lemma 2 $\| A_1 (\la) \| \rightarrow 0$ if $\la \rightarrow \infty$ outside of $\Omega_\ep^n$ and according to Lemma 1 $\| A_j(\la) \| \leq M, \quad j=2,3$ outside the domain $\Omega_\ep^n$. Hence,
$$\| (I+ \sum_{k=0}^{n-1} S_k \la^k C^k (I + \la^n C^n)^{-1})^{-1} \| \leq M_1$$
for $\la \not\in \Omega_\ep^n$ and $|\la|$ sufficiently large. Using Lemma 1 again and the representation (12) we obtain
$$\| A^{-1} (\la) \| \leq M_2$$
if $\la \not\in \Omega_\ep^n$ and $|\la| > r_0$ is sufficiently large. Now we can choose $\ep$ such that $\ep < \pi/2p$, and all assumptions of the general theorem are fulfilled.
\end{pf}

\begin{note}
Taking Note 2 into account we may assume in Corollary 3 that $C$ is an arbitrary self-adjoint operator with $\Ker C = 0$, instead of $C > 0$.
\end{note}

It is also worth mentioning that we can refine the assertion of Corollary 1 by replacing the condition $p <\pi/2\alpha$ with $p \leq \pi/2\alpha$. For this purpose we need to apply the following fact from theory of entire functions (see, for example, [Levin, Ch.1]).

\begin{theorem}[Refined Version of The Phragmen-Lindel\"of Theorem]
Let $f(\la)$ be a holomorphic function of order $p$ and of minimal type \footnote{A holomorphic function $f(\la)$ is said to have a minimal type with order $p$ if it has the type zero with order $p$.} in a sector $\Omega_\alpha = \{ \la: |\arg \la | \leq \ep \}$. If $p \leq \pi/2\alpha$ and
$$|f(\la)| \leq M (1+|\la|^m)$$
on the sides of the sector $\Omega_\alpha$, then the same estimate holds throughout the sector $\Omega_\alpha$, probably with a new constant $M_1$ instead of $M$.
\end{theorem}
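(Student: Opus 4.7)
My strategy is to conformally map $\Omega_\alpha$ onto the right half-plane and reduce to a critical half-plane Phragmen-Lindel\"of statement. First, to absorb the polynomial factor, set $\tilde f(\lambda) = f(\lambda)/(1+\lambda)^m$; this function is holomorphic in $\Omega_\alpha$, has the same order $p$ and minimal type as $f$, and satisfies $|\tilde f| \leq M'$ on the sides. If one establishes $|\tilde f(\lambda)| \leq M_1'$ throughout $\Omega_\alpha$, then $|f(\lambda)| \leq M_1'(1+|\lambda|)^m \leq M_1(1+|\lambda|^m)$ as required. So it suffices to treat the case $m=0$ with $|f| \leq M$ on the sides.

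Next, apply the conformal map $w = \lambda^p$ (principal branch). Because $p\alpha = \pi/2$, this sends $\Omega_\alpha$ biholomorphically onto the right half-plane $H = \{w : \Re w \geq 0\}$, carrying the sides of $\Omega_\alpha$ to the imaginary axis. Setting $F(w) = f(w^{1/p})$, we obtain $|F(iy)| \leq M$ on $\partial H$, while the minimal type of $f$ transfers to $|F(w)| \leq C_\tau \exp(\tau |w|)$ in $H$ for every $\tau > 0$; that is, $F$ is of minimal exponential type.

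It remains to prove the half-plane claim: any $F$ holomorphic in $H$, bounded by $M$ on $\partial H$, and of minimal exponential type is bounded by some constant $M_1$ in $H$. I would proceed via the test function $F_\epsilon(w) = F(w)e^{-\epsilon w}$: on the imaginary axis $|F_\epsilon| \leq M$, and on the positive real axis $|F_\epsilon(x)| \leq C_\tau e^{(\tau - \epsilon)x}$, which is bounded provided $\tau < \epsilon$. Applying the standard (non-critical) Phragmen-Lindel\"of theorem to $F_\epsilon$ on the first and fourth quadrants (each of opening $\pi/2$, where the condition $1 \cdot (\pi/4) < \pi/2$ is satisfied) then yields $|F_\epsilon(w)| \leq \max(M, C_\tau)$ throughout $H$, hence $|F(w)| \leq \max(M, C_\tau)\, e^{\epsilon \Re w}$. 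Pulling back through $w = \lambda^p$ gives the desired estimate for $\tilde f$, and restoring the polynomial factor completes the proof. The main obstacle is to extract a uniform constant $M_1$ from this $\epsilon$-dependent family of estimates: as $\epsilon \to 0$ the factor $e^{\epsilon \Re w}$ tends to $1$, but $C_\tau$ may blow up when $\tau < \epsilon \to 0$. Resolving this tension—by optimizing $\epsilon = \epsilon(w)$ and exploiting the sharpness of the minimal type bound to dominate the blow-up of $C_\tau$—is precisely what distinguishes the refined theorem from the standard Phragmen-Lindel\"of and is the technical crux of the argument.
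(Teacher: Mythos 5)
The paper gives no proof of this statement---only a citation to Levin's book---so there is no proof in the paper to compare against. Your setup is correct and standard: reducing to $m=0$ by dividing out a polynomial, noting that the subcritical case $p < \pi/(2\alpha)$ is already covered by the preceding non-critical Phragmen--Lindel\"of theorem and so focusing on $p\alpha = \pi/2$, mapping the sector onto the right half-plane $H$ by $w = \lambda^p$, and introducing $F_\epsilon(w) = F(w)e^{-\epsilon w}$. The application of the non-critical theorem in the two quarter-planes of $H$ does correctly show that $F_\epsilon$ is a \emph{bounded} holomorphic function in $H$. But the gap you flag at the end is real, and the repair you sketch (``optimizing $\epsilon=\epsilon(w)$ to dominate the blow-up of $C_\tau$'') does not work: the minimal-type hypothesis gives no rate for $C_\tau$ as $\tau\to 0$, and with, say, $C_\tau\sim e^{1/\tau}$ the infimum over $\epsilon$ of $\max(M,C_\tau)\,e^{\epsilon\Re w}$ grows like $e^{c\sqrt{\Re w}}$ and is unbounded.

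The missing idea is a second, cleaner application of the maximum principle that uses only the \emph{qualitative} boundedness of $F_\epsilon$ obtained in the first stage. A bounded holomorphic function in $H$ that satisfies $|F_\epsilon|\leq M$ on $i\mathbb{R}$ satisfies $|F_\epsilon|\leq M$ throughout $H$: multiply $F_\epsilon$ by $\eta/(w+\eta)$ with $\eta>0$; then $\eta/|w+\eta|\leq 1$ for $\Re w\geq 0$, the product tends to $0$ as $|w|\to\infty$ in $\overline{H}$ because $F_\epsilon$ is bounded, and the product is $\leq M$ on $i\mathbb{R}$, so the maximum modulus principle on a large half-disk gives $|F_\epsilon(w)\,\eta/(w+\eta)|\leq M$, and letting $\eta\to\infty$ yields $|F_\epsilon(w)|\leq M$. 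Crucially this bound is $M$, independent of the large $\epsilon$-dependent constant produced by the first stage. Hence $|F(w)|\leq M e^{\epsilon\Re w}$ for every $\epsilon>0$ and every $w\in H$, and letting $\epsilon\to 0$ for fixed $w$ gives $|F(w)|\leq M$; pulling back through $w=\lambda^p$ and restoring the polynomial factor completes the proof.
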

\begin{theorem}[Refined Theorem on Completeness of EAV of a Sectorial Operator\footnote{For accretive operators this theorem follows from a deep theorem of M.~Krein on completeness of EAV of compact accretive operators with nuclear real component.}]
Let $T$ be a sectorial operator with semi-angle $\alpha$ and either $T \in \sigma_p$, or $s_k(T) = o(k^{-1/p}), \quad k = 1,2,\ldots$. If $p \leq \pi / 2\alpha$ then the system of EAV of the operator $T$ is complete.
\end{theorem}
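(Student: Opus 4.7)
The plan is to mimic the proof of Corollary~1 (Keldysh--Lidskii), with two upgrades: invoke the Fundamental Theorem on the growth of the resolvent under conditions~(7) or~(8) to get \emph{minimal} type (rather than merely finite type), and replace the ordinary Phragmen--Lindel\"of theorem by its refined version (Theorem~2), which admits the equality $p = \pi/(2\alpha)$.

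First, by the decomposition $H = \overline{\Imm T} \oplus \Ker T$ described in Note~2 after Corollary~1, I reduce to the case $\Ker T^* = 0$. Suppose then a vector $f \in H$ is orthogonal to every element of every chain of EAV of $T$. Applying Note~3.2 to the linear pencil $A(\la) = I - \la T$, this orthogonality is equivalent to the statement that
\[
F(\la) := (I - \la T^*)^{-1} f
\]
extends to an entire vector--valued function.

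Next I invoke the Fundamental Theorem. Either $T \in \sigma_p$ (hypothesis~(7)) or $s_k(T) = o(k^{-1/p})$ (hypothesis~(8)) is assumed, so the conclusion with zero type is available: $(I - \la T)^{-1}$, and hence its adjoint resolvent, is meromorphic of order at most $p$ and of \emph{minimal} type with order $p$. Consequently $F$ is entire of order $\leq p$ and minimal type. By Lemma~1, for every $\ep > 0$,
\[
\|F(\la)\| \leq \|f\|/\sin \ep \quad \text{for } \la \notin \Omega_{\alpha + \ep},
\]
and moreover item~(8) of Lemma~1 yields $\|F(\la)\| \to 0$ as $\la \to \infty$ in the complement of $\Omega_{\alpha + \ep}$.

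The decisive step is to propagate this side bound into the sector $\Omega_{\alpha + \ep}$. Since $F$ has minimal type with order $p$ and $p \leq \pi/(2\alpha)$, the refined Phragmen--Lindel\"of theorem (Theorem~2) applies and produces a uniform bound for $F$ in the whole plane; Liouville's theorem, combined with the decay above, then forces $F \equiv 0$, and from $(I - \la T^*)F(\la) = f$ we conclude $f = 0$. The main obstacle is precisely this final step at the critical value $p = \pi/(2\alpha)$. In Corollary~1 one has strict inequality $p < \pi/(2\alpha)$: one slightly enlarges the half--angle to $\alpha + \ep$ and appeals to ordinary Phragmen--Lindel\"of, for which merely finite type of $F$ suffices. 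At the critical value the order and the sector width saturate simultaneously, so no such enlargement is available, and one must absorb the $1/\sin \ep$ blow--up of the side bound against the fact that $F$ is of \emph{minimal} type --- which is precisely why the hypotheses are strengthened from condition~(9) of the Fundamental Theorem to either (7) or (8).
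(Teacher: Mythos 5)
Your reduction to $\Ker T^* = 0$, the translation of the orthogonality hypothesis into entirety of $F(\la) = (I-\la T^*)^{-1}f$, and the use of the Fundamental Theorem under conditions (7)/(8) to get \emph{minimal} type (rather than just finite type) all match the paper's argument and are correct. The gap is in the ``decisive step,'' and you half-diagnose it yourself. You have a \emph{constant} bound on $F$ only outside the \emph{widened} sector $\Omega_{\alpha+\ep}$, whose semi-angle is $\alpha+\ep$. To propagate that bound by the refined Phragmen--Lindel\"of theorem across a sector of semi-angle $\alpha+\ep$ you would need $p \le \pi/\bigl(2(\alpha+\ep)\bigr)$, which is \emph{strictly} smaller than $\pi/(2\alpha)$; so precisely at the critical case $p=\pi/(2\alpha)$ the theorem does not apply to $\Omega_{\alpha+\ep}$, for any $\ep>0$. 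Minimal type is what buys you the borderline exponent $p=\pi/(2\alpha)$ for a sector of semi-angle exactly $\alpha$; it does nothing to let you use a wider sector, so ``absorbing the $1/\sin\ep$ blow-up against minimal type'' is not a coherent plan as stated.

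The paper's fix, which your proposal misses, is to keep the semi-angle equal to $\alpha$ but \emph{translate} the sector rather than widen it: take $\Omega_{\alpha,\ep} := \Omega_\alpha - \ep$ (same opening $2\alpha$, vertex shifted to $-\ep$). For $\la \notin \Omega_{\alpha,\ep}$ the numerical-range estimate gives $\dist(\la^{-1},\Omega_\alpha) \gtrsim (\sin\ep)/|\la|^2$, hence $\|(I-\la T)^{-1}\| \le M|\la|/\sin\ep$ — a \emph{degree-one polynomial} bound rather than a constant one, but now valid outside a sector of semi-angle exactly $\alpha$. Since $F$ has order $\le p$ and minimal type, and $p\le\pi/(2\alpha)$, the refined Phragmen--Lindel\"of theorem does apply to $\Omega_{\alpha,\ep}$, yielding a polynomial bound in the whole plane; so $F$ is a polynomial of degree $\le 1$, and the decay of $F$ along the complementary rays (Lemma~1, part~(8)) forces $F\equiv 0$ and $f=0$. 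In short: replace ``widen and keep the bound constant'' with ``translate, accept polynomial growth, and keep the semi-angle''.
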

\begin{pf}
Let $\Omega_{\alpha, \ep} = \{ \la : \la = \mu -\ep, |\arg \mu| \leq \alpha \} = \Omega_\alpha - \ep$. Then for all $\la \not\in \Omega_{\alpha, \ep}$
$$\dist (\la^{-1}, \Omega_\alpha) \geq |\la|^{-1} \sin \left( \frac{\sin \ep}{|\la|} \right) \sim \frac{\sin \ep}{|\la|^2}.$$
Hence, using the estimate (5), we find for all $\la \not\in \Omega_{\alpha, \ep}$
\begin{equation}
\| (I-\la T)^{-1} \| = |\la|^{-1} \| (\la^{-1} I - T)^{-1} \| \leq \frac{M}{\sin \ep} |\la|,
\end{equation}
where $M$ does not depend on $\ep$ and $\la$.

Taking into account the Note 2, we can assume that $\Ker T^* = 0$. If $f$ is orthogonal to the EAV of $T$, then the vector function $F(\la) = (I-\la T)^{-1} f$ is an entire function of order $p$ and minimal type (according to the assertion of the fundamental theorem on the estimate of the resolvent and the theorem on a ratio of entire functions). Since the estimate (13) holds outside the sector $\Omega_{\alpha, \ep}$ with semi-angle $\alpha$ and $p \leq \pi/2\alpha$, we obtain from the refined version of the Phragmen-Lindel\"of Theorem, that $F(\la)$ is a linear function. As at the end of the general theorem on completeness, we can show that $F(\la) \equiv 0$ and $f = 0$.
\end{pf}

\newpage
\section{Half-range minimality  and completeness problems for dissipative pencils}

Let us return to the subject of Section~1 and consider the Cauchy problem

\begin{equation}
A(-i \frac{d}{dt}) u(t) \ = \ A_0 u - iA_1 \frac{du}{dt}+ ... + (-i)^n A_n \frac{d^n u}{dt^n} =0,
\end{equation}
\begin{equation}
(-i)^j u^{(j)} (0) = \phi_j, \ j=0,1,...,n-1,
\end{equation}
where $u(t)$ is a function with values in Hilbert space $H$. Note the following simple result for finite dimensional space $H$.

\begin{proposition}
If $\dim H < \infty$ and $\Ker A_n^*=0$ then the Cauchy problem (1), (2) has a unique solution for any given initial vectors $\{\phi_j\}_0^{n-1}$ .
\end{proposition}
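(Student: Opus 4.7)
The plan is to reduce the proposition to Theorem~1 and to standard finite-dimensional ODE theory.

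First I would observe that in the finite-dimensional setting the hypothesis $\Ker A_n^* = 0$ is equivalent to $A_n$ being invertible: since $\dim H < \infty$, injectivity of $A_n^*$ gives surjectivity of $A_n$, hence (again by finite dimension) bijectivity of $A_n$, so in particular $\Ker A_n = 0$. This also ensures $\det A(\lambda) \not\equiv 0$, because its leading coefficient is $(\det A_n)\lambda^{n\dim H} \neq 0$.

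For \emph{existence} I would invoke the Fourier-method construction already set up in Section~1. Theorem~1 applies and tells us that the system of Keldysh derived chains $\{\widetilde y_k^s\}$ is a basis in $H^n$. Given initial data $\Phi = (\phi_0,\ldots,\phi_{n-1}) \in H^n$, expand
$$\Phi = \sum_{s,k} c_k^s \widetilde y_k^s$$
uniquely in this basis, and define $u(t) = \sum_{s,k} c_k^s u_{k,s}(t)$ as in formula~(5) of Section~1. Each elementary solution $u_{k,s}(t)$ satisfies (1), so by linearity $u$ does as well; by the derivation of (6), it matches the prescribed initial data $\Phi$.

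For \emph{uniqueness} I would use the invertibility of $A_n$ to pass to a first-order system. Solving (1) for $u^{(n)}$ gives
$$u^{(n)}(t) = (-i)^{-n} A_n^{-1}\bigl(A_0 u - iA_1 u' + \cdots + (-i)^{n-1}A_{n-1} u^{(n-1)}\bigr),$$
so setting $w(t) = \bigl(u,\, -iu',\, \ldots,\, (-i)^{n-1}u^{(n-1)}\bigr)^{\!\top}$ one obtains a linear system $w'(t) = Mw(t)$ with a bounded matrix $M$ on the finite-dimensional space $H^n$, with $w(0) = \Phi$. Standard ODE theory (or the explicit formula $w(t) = e^{tM}\Phi$) gives uniqueness of $w$, hence of $u$.

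There is no real obstacle here: the content of the proposition is already packaged in Theorem~1 (which supplies the Fourier representation) together with the elementary fact that in finite dimension $\Ker A_n^* = 0$ forces $A_n$ to be invertible. The only point worth being careful about is not to conflate existence (which uses Theorem~1) with uniqueness (which is cleanest via the first-order reduction).
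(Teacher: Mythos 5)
Your proposal is correct and follows essentially the same two-pronged strategy as the paper: existence via the Keldysh-chain basis from Theorem~1.1 and uniqueness via reduction to a first-order linear system $\tilde u' = i\A\tilde u$ (the paper in fact presents both reductions as alternate proofs of existence, then cites standard ODE theory for uniqueness exactly as you do). The only slip is a dropped minus sign when you solve for $u^{(n)}$, which has no effect on the argument.
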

\begin{pf}
To prove the existence of the solution we consider two approaches. First, according to the theorem 1.1, the system of Keldysh derived chains $\{\tilde{y}_k^h\}$ is a basis in $H^n$. Hence, there exist coefficients $\{c_k^h\}$ such that $$\phi=\{\phi_0,\phi_1,...,\phi_{n-1}\}=\sum_{h,k}{c_k^h \tilde{y}_k^h}.$$
Then the function (see formulas (1.5), (1.6))
\begin{equation}
u(t)=\sum_{h,k} {c_k^h e^{i\lambda_k t} (y_k^h + \frac{it}{1!} y_k^{h-1} +...+ \frac{(it)^h}{h!} y_k^0) }
\end{equation}
is the solution of (1), (2).
Second, denoting $$\tilde{u}(t)=\{u(t), -iu'(t),...,(-i)^{n-1}u^{(n-1)}(t)\},$$
we can rewrite (1), (2) in the form
\begin{equation}
-i\frac{d\tilde{u}}{dt}=\A \tilde{u}(t), \ \ \ \ \ \A=\A_1^{-1} \A_0,
\end{equation}
\begin{equation}
\tilde{u}(0)=\phi=\{\phi_0,\phi_1,...,\phi_{n-1}\},
\end{equation}
where $\A_0, \A_1$ are the operators defined in (1.7). \\
For any bounded operator $L$ we an define the operator $$e^L=I+\frac{1}{1!}L+\frac{1}{2!}L^2+...,$$
where the series converges in the uniform operator topology. Hence, the solution of (4), (5) can be represented by the formula $$\tilde{u}(t)=e^{i\A t} \phi .$$
The first component of the function $\tilde{u}(t)$ represents the solution of (1), (2). The uniqueness of the solution of (4), (5) (or (1), (2)) is a well-known fact in the theory of ordinary differential equations.
\end{pf}
The second approach can be applied for infinite dimensional space $H$, if all operators $A_j$, $j=0,1,...,n$, are bounded and $A_n$ is invertible (in this case the operator $\A$ is bounded). But this does not cover some important practical problems of mathematical physics. As we will see soon, for most interesting equations operator $\A$ is unbounded and to define the exponent we have to recall some semigroup operator theory. As a rule, the operator $ \A $ does not generate a $C_0-$semigroup in $H^n$ and we need to look for another space, where it has better properties. But, all our attempts to prove that $\A$ is the generator of $C_0-$semigroup in some space will be fruitless if there exists a subsequence $\lambda_{k_s} \in \sigma(\A)$ such that $\Imm \lambda_{k_s} \rightarrow \infty$ (the condition $\Imm \lambda_k \leq const$ for all $\lambda_k \in \sigma(\A)$ is a necessary condition for $\A$ to be a generator $C_0-$semigroup). Hence, if the spectrum of the pencil $A(\lambda)$ (
 recall that it has the same spectrum as $\A$) does not satisfy the condition
\begin{equation*}
\Imm \lambda_k \leq const,\ for\ all\ \lambda_k \in \sigma(A)
\end{equation*}
then the Cauchy problem for the equation (1) is not correctly set.\\
It is a well-known fact from the theory of partial differential equations that the Cauchy problem is correctly set for some types of hyperbolic and parabolic equations but is not so for elliptic equations. Examining some concrete problems, one discovers that when equation (1) originates with an elliptic problem (in this case the order $n=2l$ is even), it makes sense to impose only $l$ conditions at $t=0$. For example,
\begin{equation}
(-i)^j u^{(j)} (0) = \phi_j,\ j=0,1,...,l-1.
\end{equation}
If the equation (1) is considered on the finite interval $t\in [0,T]$, then one has to impose another $l$ condition at $t=T$, for example
\begin{equation}
(-i)^j u^{(j)} (T) = \psi_j,\ j=0,1,...,l-1.
\end{equation}
(These conditions may be different. For example,  $(-i)^{j+l} u^{(j+l)} (T) = \psi_j,\ j=0,1,...,l-1$).
The case $T=\infty$ is of special interest. In this case the conditions (7) are replaced by the condition
\begin{equation}
\lim_{t \rightarrow \infty}{u(t)} =0,
\end{equation}
of
\begin{equation}
||u(t)||=const,\ \ \ 0<t<\infty.
\end{equation}
Actually, neither (8) nor (9) is the proper condition at infinity, when the spectrum of the operator pencil $A(l)$ contains some real eigenvalues. But these important details will be discussed in the next lecture.
Suppose that $\dim H < \infty$ and consider the problem (1), (6), (8). For simplicity, suppose also that eigenvalues of $A(\lambda)$ are semi-simple. Using the Fourier method we look for a solution represented by a series
\begin{equation}
u(t)=\sum_{\Imm\lambda_k > 0}{c_k e^{i\lambda_k t} y_k}.
\end{equation}
We put in this sum only the eigenvectors $y_k$, corresponding to the eigenvalues $\lambda_k$ with positive imaginary part, otherwise the condition (8) does not hold. Also, we have to satisfy the conditions (6). From (10) and (6) we obtain
\begin{equation}
\phi=\{\phi_0,\phi_1,...,\phi_{l-1}\}=\sum_{\Imm\lambda_k > 0}{c_k \{y_k,\lambda_k y_k,...,\lambda_k^{l-1} y_k\}}.
\end{equation}
The vectors $\hat{y}_k=\{y_k,\lambda_k y_k, ..., \lambda_k^{l-1} y_k\}$ we call \textit{the Keldysh derived chains of length l}. The definition of the derived chains $\hat{y}_k^h$, corresponding to the associated vectors is similar to the definition of $\tilde{y}_k^h$ given in Section~1.

Denote by $\mathcal{E}_0^+ \ (\mathcal{E}_R^+)$ the system $\{\hat{y}_k\}$ of Keldysh derive chains of length $l$ corresponding to all eigenvalues $\lambda_k$ with $\Imm\lambda_k>0\ (\Imm\lambda_k \geq 0) $ .
\begin{proposition}
If $\dim H<\infty$ then the problem (1), (6), (8) has a unique solution for all given vectors $\{\phi_j\}_0^{l-1}$ if and only if the system $\mathcal{E}_0^+=\{\hat{y}_k\}_{\Imm\lambda_k>0}$ is a basis in the space $H$.
\end{proposition}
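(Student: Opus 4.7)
The plan is to reduce the existence/uniqueness question to a statement about the spanning and independence properties of $\mathcal{E}_0^+$ in $H^l$, by invoking the Fourier-method representation of solutions already established in Section~1.

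First I would recall that by Theorem~1.1 (applied in finite dimensions, where $\Ker A_n = 0$ can be arranged), every classical solution of (1) has a unique representation
$$u(t) = \sum_{k,s} c_k^s\,e^{i\lambda_k t}\!\left(y_k^s + \frac{it}{1!}y_k^{s-1} + \cdots + \frac{(it)^s}{s!}y_k^0\right) = \sum_{k,s} c_k^s u_{k,s}(t),$$
the sum ranging over a canonical system of EAV for all eigenvalues of $A(\lambda)$, and the map $\{c_k^s\}\mapsto u(t)$ is a bijection from coefficient sequences to the space of solutions (since the full derived chains $\{\tilde y_k^s\}$ form a basis in $H^n$).

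Next I would show that imposing (8) is equivalent to retaining in this sum only those indices with $\Imm\lambda_k>0$. The ``if'' direction is immediate from $\|u_{k,s}(t)\| = O(t^s e^{-(\Imm \lambda_k)t})$. For the converse, suppose that a combination containing at least one term with $\Imm\lambda_k\leq 0$ tends to $0$; group the terms by eigenvalue and, picking the subset with the smallest value of $\Imm\lambda_k$, use the classical linear independence of the quasi-polynomials $\{t^s e^{i\lambda_k t}\}_{k,s}$ together with the asymptotic dominance of the smallest $\Imm\lambda_k$ as $t\to\infty$ to conclude that all such coefficients must vanish. Hence a solution satisfies (8) iff $c_k^s = 0$ whenever $\Imm\lambda_k \leq 0$.

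Third, once the expansion is restricted to $\Imm\lambda_k>0$, substituting $t=0$ in the formulae $(-i)^j u^{(j)}(0)$ for $j=0,\ldots,l-1$ reproduces exactly the truncated Keldysh chains $\hat y_k^s$ of length $l$ (the same computation as in Section~1, cut off at the $l$-th component). Therefore the initial condition (6) becomes
$$\phi = \{\phi_0,\ldots,\phi_{l-1}\} = \sum_{\Imm\lambda_k>0,\;s} c_k^s\,\hat y_k^s,$$
and the problem (1), (6), (8) admits a unique solution for every $\phi\in H^l$ precisely when the map $\{c_k^s\}\mapsto \sum c_k^s\hat y_k^s$ is a bijection onto $H^l$ -- that is, when $\mathcal{E}_0^+$ is a basis in $H^l$. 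Non-existence for some $\phi$ reflects failure of span, while non-uniqueness produces, by difference, a nontrivial relation $\sum c_k^s\hat y_k^s=0$, contradicting linear independence.

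The step I expect to require the most care is the asymptotic argument in the second paragraph: deducing that decay at infinity forces all coefficients with $\Imm\lambda_k\leq 0$ to vanish. It is not deep but it is where the analysis, as opposed to pure linear algebra, enters; everything else is bookkeeping around Theorem~1.1.
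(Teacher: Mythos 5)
Your proposal is correct and follows essentially the same route as the paper: invoke the representation of all solutions via the Keldysh chain expansion from Section~1, argue that condition~(8) forces the coefficients corresponding to $\Imm\lambda_k\le 0$ to vanish, and observe that the remaining truncated chains must then reproduce $\phi\in H^l$ uniquely, i.e.\ $\mathcal{E}_0^+$ is a basis in $H^l$. The only difference is that you spell out the asymptotic dominance argument in the second step, which the paper leaves implicit; that is a worthwhile addition, since it is the one place where analysis rather than bookkeeping enters.
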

\begin{pf}
According to the Proposition 1, any solution $u(t)$ of (1) has the representation (3).If all the eigenvalues are semisimple (this is not essential) and the condition (8) holds, then $u(t)$ must have the representation (10). Hence, the conditions (6) are equivalent to (11). But a vector $\phi \in H^l$ can be uniquely represented by the series (11) if and only if the system $\mathcal{E}_0^+$ is a basis.
\end{pf}
Obviously, for the problem (1), (6), (9) is also valid, but we have to replace the system $\mathcal{E}_0^+$ by the system $\mathcal{E}_R^+$.
Hence we come to the problem which we call \textit{the half-range basis problem}. Similarly, we can consider the problem of completeness of the system  $\mathcal{E}_R^+$ (\textit{half-range completeness}) and the problem of minimality of  $\mathcal{E}_0^+$ (\textit{half-range minimality}).\\
Using the method of G. Radzeivskii [1] (1974) we can easily prove the following result.

\begin{theorem}
Let $\dim H < \infty$ and the operator pencil $A(\lambda)$ satisfy the following conditions: \\
a) $\Ker A_n=0$; \\
b) $\Imm (A(\lambda)x,x) \leq 0$ for all $x \in H$ and $\lambda \in \mathbb{R}$; \\
c) there exists a point $\lambda_0 \in \mathbb{R}$ such that $0 \notin \theta(A(\lambda_0))$. \\
Then the system $\mathcal{E}_R^+$ corresponding to the operator pencil $A(\lambda)$ is complete in $H^l$.
\end{theorem}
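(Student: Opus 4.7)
The plan is to adapt the Liouville/meromorphic-resolvent argument used in Section~3 to the half-plane, picking up a sign from condition (b) on $\mathbb{R}$ and closing with condition (c) at the single real point $\lambda_0$. Let $\phi = (\phi_0,\ldots,\phi_{l-1}) \in H^l$ be orthogonal to every element of $\mathcal{E}_R^+$, and encode it as the $H$-valued polynomial $\phi(\lambda) = \sum_{j=0}^{l-1}\lambda^j\phi_j$ of degree $l-1$. Form
\[
F(\lambda) = [A^*(\lambda)]^{-1}\phi(\lambda).
\]
By the Keldysh representation of the principal part of $[A^*(\lambda)]^{-1}$ at $\bar\lambda_k$ (Theorem 2.1) together with its half-sided reformulation in Note~3.2, the orthogonality of $\phi$ to every derived chain of length $l$ attached to eigenvalues with $\Imm\lambda_k \ge 0$ cancels every pole of $F$ in the closed lower half-plane $\{\Imm\lambda \le 0\}$. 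Condition (a) together with $\dim H < \infty$ makes $A_n$, hence $A_n^*$, invertible, so $\|[A^*(\lambda)]^{-1}\| = O(|\lambda|^{-n})$ at infinity, and since $\deg\phi \le l-1 = \tfrac{n}{2}-1$ one gets $\|F(\lambda)\| = O(|\lambda|^{-l-1})$.

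Introduce the scalar function
\[
G(\lambda) = (F(\lambda),\phi(\bar\lambda)) = \sum_{j=0}^{l-1}\lambda^j\,(F(\lambda),\phi_j),
\]
which is holomorphic in $\{\Imm\lambda \le 0\}$ with $|G(\lambda)| = O(|\lambda|^{-2})$. For real $\lambda$, $\phi(\bar\lambda) = \phi(\lambda) = A^*(\lambda)F(\lambda)$, so
\[
G(\lambda) = (F(\lambda),A^*(\lambda)F(\lambda)) = (A(\lambda)F(\lambda),F(\lambda)),
\]
and condition (b) yields $\Imm G(\lambda) \le 0$ for all $\lambda \in \mathbb{R}$.

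Apply Cauchy's theorem to $G$ on the boundary of the lower half-disk of radius $R$. The $O(|\lambda|^{-2})$ decay makes the semicircular contribution vanish as $R\to\infty$, leaving $\int_{-\infty}^{\infty} G(t)\,dt = 0$. Taking imaginary parts and invoking $\Imm G \le 0$ forces $\Imm G \equiv 0$ on $\mathbb{R}$. Hence $G$ has real boundary values; Schwarz reflection across the real axis extends $G$ to an entire function that still tends to $0$ at infinity, so by Liouville $G \equiv 0$.

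To finish, invoke (c). The numerical range $\theta(A(\lambda))$ depends continuously on $\lambda$, and $\theta(A(\lambda_0))$ is a compact convex set missing $0$, so $0 \notin \theta(A(\lambda))$ on some real neighbourhood $U$ of $\lambda_0$; in particular $A(\lambda)$ is invertible on $U$ and $(A(\lambda)x,x) = 0$ forces $x = 0$ there. The identity $G(\lambda) = (A(\lambda)F(\lambda),F(\lambda)) \equiv 0$ therefore gives $F(\lambda) \equiv 0$ on $U$, whence $\phi(\lambda) = A^*(\lambda)F(\lambda) \equiv 0$ on $U$; being a polynomial vanishing on an open set, $\phi \equiv 0$, i.e.\ $\phi_j = 0$ for all $j$. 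The step that needs most care is the very first one: making sure that orthogonality in $\mathcal{E}_R^+$ — with length-$l$ chains rather than full length-$n$ chains, and possibly including real eigenvalues and Jordan cells of length $>1$ — really kills every singular term of $F$ in the closed lower half-plane. This is exactly the calculation behind Note~3.2 carried over to the adjoint pencil $A^*(\lambda)$.
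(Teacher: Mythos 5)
Your proof is correct and follows the same overall scaffold as the paper's: orthogonality to $\mathcal{E}_R^+$ kills the poles of $[A^*(\lambda)]^{-1}\phi(\lambda)$ in the closed lower half-plane, condition (b) gives the sign $\Imm G\le 0$ on $\mathbb{R}$, and condition (c) finishes. What differs is the step where you conclude $\Imm G\equiv 0$ on $\mathbb{R}$. The paper argues by contradiction via a quantitative estimate: if $\Imm G$ is not identically zero, the maximum principle forces $\Imm G<0$ throughout $\mathbb{C}^-$, and then Carath\'eodory's theorem for holomorphic functions of negative imaginary part yields a lower bound $|G(\lambda)|\gtrsim |\lambda|^{-1}$, incompatible with the $O(|\lambda|^{-2})$ decay. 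You instead integrate $G$ around the boundary of the lower half-disk and let the semicircular contribution vanish (which needs exactly the $O(|\lambda|^{-2})$ decay), obtaining $\int_{\mathbb{R}}G(t)\,dt=0$; taking imaginary parts with $\Imm G\le 0$ forces $\Imm G\equiv 0$. This is a more elementary argument that bypasses the Carath\'eodory bound and the harmonic-function maximum principle; it buys self-containment at no cost. Your finish is also slightly different: the paper iterates the Taylor expansion of $\phi$ at the single point $\lambda_0$, extracting $\phi^{(j)}(\lambda_0)=0$ order by order, while you note that $0\notin\theta(A(\lambda))$ persists on a real neighbourhood of $\lambda_0$ by continuity of the numerical range, get $F\equiv 0$ there and then $\phi\equiv 0$ since a polynomial vanishing on an interval is zero; both work. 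Your closing caveat about the Jordan-cell case at real eigenvalues is well placed: the reformulation of orthogonality as vanishing of $R_s^*\phi^{(j)}(\bar\lambda_k)$-combinations in Note~3.2, applied to the adjoint pencil, is exactly what is needed, and the paper itself glosses over this by writing out only the semisimple case.
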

\begin{pf}
Suppose that there exists a vector $f=\{f_1,...,f_l\} \in H^l$, which is orthogonal to the system $\mathcal{E}_R^+$, i.e.
\begin{equation}
(f_1,y_k)+(f_2, \lambda_k y_k) + ... + (f_l, \lambda_k^{l-1} y_k)=(f(\bar{\lambda}_k), y_k)=0 \ .
\end{equation}
for all $\lambda_k$ with $\Imm\lambda_k \geq 0$, where $$f(\lambda)= f_1+\lambda f_2 +...+ \lambda^{l-1} f_l .$$
Consider the meromorphic scalar function $$F(\lambda)=([A^*(\lambda)]^{-1}f(\lambda), f(\bar{\lambda})).$$
From the representation of $[A^*(\lambda)]^{-1}$ in the neighborhood of a pole $\bar{\lambda_k}$ and the equalities (12) it follows that $F(\lambda)$ is a holomorphic function in the lower half-plane. Since $\Ker A_n = 0$, we have $||A^{-1}(\lambda)|| = O(|\lambda|^n)$,
$$|F(\lambda) = O(|\lambda|^{-n+2(l-1)}) = O(|\lambda|^{-2}).$$
Denoting $g(\lambda) = [A^*(\lambda)]^{-1} f(\lambda)$, we can rewrite $F(\lambda)$ in the form
$$F(\lambda) = (g(\lambda), A^*(\bar{\lambda}) g(\bar{\lambda})) = (A(\lambda) g(\lambda), g(\bar{\lambda})).$$
Since $g(\lambda) = g(\bar{\lambda})$ for all $\lambda \in \mathbb{R}$, it follows from condition b) that $\Imm F(\lambda) \leq 0$ for $\lambda \in \mathbb{R}$. If $\Imm F(\lambda) = 0$ for all $\lambda \in \mathbb{R}$, then it follows from the Riemann-Schwartz principle that $F(\lambda)$ has a symmetric holomorphic continuation in the upper half-plane.
Since $F(\lambda) $ has no poles on the real axis, it is an entire function. Taking into account the estimate (13), we deduce from Liouville's theorem that $F(\lambda) \equiv 0$.\\
Suppose $\Imm F(\lambda_1) \neq 0$ for some $\lambda_1 \in \mathbb{R}$. Then $\phi(\lambda) = \Imm F(\lambda)$ is an harmonic bounded function in the lower half-plane and $\phi(\lambda) \leq 0$ for $\lambda \in \mathbb{R}$. Since $\phi(\lambda)$ is not identically zero we have (according to the maximum principle) $\phi(\lambda) <0$ for $\Imm \lambda <0$. Now recall the Caratheodory theorem (see [Levin, Ch1.]): \textit{If $F(\lambda)$ is a holomorphic function in the open lower plane and $\Imm F(\lambda)<0$ for $\Imm \lambda <0$ then} $$|F(\lambda)| > \frac{1}{5} |F(-i)|\ |\lambda|^{-1} .$$
This contradicts the estimate (13). Hence $F(\lambda)\equiv 0$. \\
Now we can write
\begin{equation*}
f(\lambda)=f(\lambda_0) + \frac{1}{1!} f'(\lambda_0)(\lambda - \lambda_0) + ... + \frac{1}{(l-1)!} f^{(l-1)} (\lambda_0) (\lambda - \lambda_0)^{(l-1)}
\end{equation*}
and using condition c), we can find $f(\lambda_0) = 0,f'(\lambda_0)=0,..., f^{(l-1)} (\lambda_0) = 0$, i.e. $f(\lambda) \equiv 0$. This proves the theorem.
\end{pf}
\begin{note}
It may seem at first sight that one can omit condition c) of Theorem 1. But it is essential. For example, the self-adjoint quadratic operator pencil
$$
A(\lambda)=
\begin{pmatrix}
0 & 1 \\
1 & 0
\end{pmatrix}
+ \lambda
\begin{pmatrix}
0 & -3i \\
3i & 0
\end{pmatrix}
-\lambda^2
\begin{pmatrix}
0 & 2 \\
2 & 0
\end{pmatrix}
$$
in two-dimensional space H has two eigenvalues $\lambda_1=i$ and $\lambda_2=\frac{i}{2}$ in the upper half-plane and the same eigenvector corresponds to both eigenvalues. This example was given by G. Radzievskii [2] and, independently, a similar example was given by A. Kostyuchenko. Certainly, condition c) automatically holds for monic operator polynomials.
\end{note}
Denote by $\mathcal{E}_0^- \ (\mathcal{E}_R^-)$ a system of Keldysh derived chains of length $l$ corresponding to the eigenvalues $\lambda_k$ with $\Imm\lambda_k<0 (\Imm \lambda_k \leq 0)$.
\begin{corollary}
If the condition a) - c) of Theorem 1 are fulfilled and the operator pencil $A(\lambda)$ has no real eigenvalues, then both systems $\mathcal{E}_0^+$ and $\mathcal{E}_0^-$ form bases in the space $H^l$.
\end{corollary}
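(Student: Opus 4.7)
The plan is to apply Theorem~1 twice — once to $A(\lambda)$ itself and once to the reflected pencil $B(\lambda):=A(-\lambda)$ — to get completeness of both $\mathcal{E}_0^+$ and $\mathcal{E}_0^-$ in $H^l$, and then close the argument by counting algebraic multiplicities, using the invertibility of $A_n$ guaranteed by condition~a).

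Since by hypothesis $A(\lambda)$ has no real eigenvalues, $\mathcal{E}_R^+=\mathcal{E}_0^+$, and Theorem~1 immediately yields completeness of $\mathcal{E}_0^+$ in $H^l$. For the lower half-plane I observe that $B(\lambda)=A(-\lambda)=\sum_{j=0}^n(-1)^j\lambda^j A_j$ also satisfies all three hypotheses: its leading coefficient is $(-1)^nA_n$, so a) holds; for $\lambda\in\mathbb{R}$ one has $\Imm(B(\lambda)x,x)=\Imm(A(-\lambda)x,x)\leq 0$, so b) holds; and if $\lambda_0$ is the point given by c) for $A$, then $-\lambda_0$ serves for $B$. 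The eigenvalues of $B$ are $\{-\lambda_k\}$ with the same root subspaces as those of $A$, and I claim that the Keldysh derived chains (of length $l$) of $B$ corresponding to eigenvalues $-\lambda_k$ with $\Imm(-\lambda_k)>0$ are carried to the system $\mathcal{E}_0^-(A)$ by the invertible diagonal operator $D=\mathrm{diag}(I,-I,I,\ldots,(-1)^{l-1}I)$ on $H^l$. In the semi-simple case this is immediate from the formula $\hat y_k=\{y_k,\lambda_k y_k,\ldots,\lambda_k^{l-1}y_k\}$; in the presence of associated vectors one first picks the canonical system $\tilde y_k^j=(-1)^j y_k^j$ for $B$ (a short verification using $B^{(j)}(-\lambda_k)=(-1)^j A^{(j)}(\lambda_k)$ shows that this satisfies the EAV equations) and then checks directly that $D\hat{\tilde y}_k^s(B)=(-1)^s\hat y_k^s(A)$. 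Thus Theorem~1 applied to $B$ gives completeness of $\mathcal{E}_0^+(B)$ in $H^l$, and this transfers, via the bijection $D$, to completeness of $\mathcal{E}_0^-(A)$ in $H^l$.

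The remaining step is a cardinality count. Since $\dim H<\infty$ and $\Ker A_n=0$, the operator $A_n$ is invertible, so $\det A(\lambda)$ is a polynomial in $\lambda$ of exact degree $n\dim H=2l\dim H$. Hence the total algebraic multiplicity of all eigenvalues of $A(\lambda)$ is $2l\dim H$. Since there are no real eigenvalues, $|\mathcal{E}_0^+|+|\mathcal{E}_0^-|=2l\dim H=2\dim H^l$. Each of $\mathcal{E}_0^\pm$ is a complete system in $H^l$ and so contains at least $\dim H^l$ elements; combined with the equality above, each contains exactly $\dim H^l$ elements. A complete system in a finite-dimensional space whose cardinality equals the dimension is automatically a basis, which concludes the proof.

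The only place requiring genuine care is the verification that the derived chains of $B$ are identified with those of $A$ (up to the invertible map $D$ and a choice of canonical system) — easy for semi-simple eigenvalues, a short but non-trivial identity when associated vectors are present. Everything else reduces to an immediate application of Theorem~1 to two pencils and the degree computation for $\det A(\lambda)$.
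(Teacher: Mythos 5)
Your proof is correct, and it takes a genuinely different route for one step: the paper disposes of completeness of $\mathcal{E}_0^-$ with the remark that it ``can be proved using the same methods'' (i.e.\ rerunning the argument of Theorem~5.1 with the roles of the upper and lower half-planes swapped, and a correspondingly reflected Carath\'eodory estimate), whereas you reduce it to Theorem~1 itself by passing to the reflected pencil $B(\lambda)=A(-\lambda)$ and transporting the derived chains by $D=\mathrm{diag}(I,-I,\ldots,(-1)^{l-1}I)$. Your transport identity $\widehat{\tilde y}{}_k^{\,s}(B)=(-1)^s D\,\hat y_k^{\,s}(A)$ is correct (it follows from $B^{(j)}(-\lambda_k)=(-1)^jA^{(j)}(\lambda_k)$ and the change of variable $t\mapsto -t$ in the definition of the derived chain), and this makes the ``same methods'' statement completely formal, which is a slight gain in rigor. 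The counting step is the same in both: $|\mathcal{E}_0^+|+|\mathcal{E}_0^-|=n\dim H=2l\dim H$, each system spans $H^l$, hence each has exactly $\dim H^l$ elements and is a basis. One small stylistic point: you do not really need to choose the special canonical system $(-1)^jy_k^j$ for $B$ --- the span of the length-$l$ derived chains over a non-real eigenvalue is the projection of the corresponding root subspace of the linearization onto the first $l$ coordinates and is therefore independent of the choice of canonical system --- but spelling out an explicit choice as you do makes the bijection with $\mathcal{E}_0^-(A)$ concrete and is harmless.
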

\begin{pf}
Under our assumptions $E_0^-=E_R^-$ and the completeness of the system $\mathcal{E}_0^-$ can be proved using the same methods. Hence, $\dim( Span\ \mathcal{E}_0^\pm )= \kappa^\pm \geq lm$, where $m=\dim H$. Since $\kappa^+ + \kappa^- = nm = 2lm$, we have $\kappa^+=\kappa^-=lm=\dim H^l$.
\end{pf}
If the operator pencil $A(l)$ has real eigenvalues, we cannot deduce from Theorem 1 that the systems $\mathcal{E}_0^+$, $\mathcal{E}_0^-$ are minimal in the space $H^l$. In fact they \underline{are} minimal and we will prove even a more general fact for infinite dimensional space $H$.\\
\textbf{Definition 5.6.} The system $\{e_k\}_1^\infty$ in the Hilbert space $H$ is called \textit{linearly independent} if any finite system is linearly independent.\\
\begin{note}  Certainly, if the system $\{e_k\}_1^\infty$ is minimal then it is linearly independent. The converse assertion is not true. For example, the system of functions $\{x^k\}_{k=0}^\infty \in L_2 [0,1]$ is linearly independent, but it is not minimal in $L_2[0,1]$.
\end{note}
\begin{definition}
The point $\lambda_k \in \sigma(A)$ is called \textit{a point of the discrete spectrum} of the operator pencil $A(\lambda)$ if it is an isolated point of $\sigma(A)$ and the resolvent $A^{-1}(\lambda)$ has the representation (2.2) in some neighborhood of $\lambda_k$. The set of all such points we denote by $\sigma_d (A)$.
\end{definition}

The following theorem is also due to G. Radzievskii [2] (1987).

\begin{theorem}
Let the operator pencil $A(\lambda)$ satisfy the conditions b), c) of Theorem 1 and all operators $A_j,\ j=0,1,...,n$, be bounded. Let $\mathcal{E}_0^+$ be the system $\{\hat{y}_k\}$ consisting of all Keldysh derived chains of length $l$, corresponding to $\lambda_k \in \sigma_d (A)$ with $\Imm \lambda_k>0$. Then the system $\mathcal{E}_0^+$ is a linearly independent system.
\end{theorem}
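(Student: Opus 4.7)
My plan is to mimic the proof of Theorem~5.1, but starting from the presumed linear dependence and constructing the scalar test function directly from it, rather than from an orthogonality hypothesis.

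Assume a finite non-trivial combination $\sum_{k,h} c_k^h \hat y_k^h = 0$ in $H^l$, with all $\lambda_k \in \sigma_d(A)$ satisfying $\Imm \lambda_k > 0$. I first treat the semi-simple case; the general case needs only bookkeeping. The hypothesis reads $\sum_k c_k \lambda_k^j y_k = 0$ for $j = 0, \dots, l-1$. Form the exponential sum $v(t) = \sum_k c_k e^{i\lambda_k t} y_k$, which satisfies $A(-id/dt)v = 0$, decays exponentially as $t \to +\infty$, and has $v^{(j)}(0) = 0$ for $j = 0, \dots, l-1$. Take its Laplace-type transform
$$V(\lambda) = \int_0^\infty e^{-i\lambda t} v(t)\,dt = -i\sum_k \frac{c_k y_k}{\lambda - \lambda_k},$$
an $H$-valued meromorphic function with poles only at the $\lambda_k$ in the open upper half-plane. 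Expanding $1/(\lambda-\lambda_k)$ in powers of $1/\lambda$ and applying the hypothesis, the first $l$ coefficients of the Laurent expansion at infinity drop out, so $\|V(\lambda)\| = O(|\lambda|^{-l-1})$.

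Integration by parts in $\int_0^\infty e^{-i\lambda t} A(-id/dt)v\,dt = 0$ kills the boundary terms involving $v^{(p)}(0)$ for $p < l$ and yields the identity $A(\lambda) V(\lambda) = Q(\lambda)$, where $Q$ is an explicit polynomial in $\lambda$ with coefficients in $H$ (built from $v^{(l)}(0), \dots, v^{(n-1)}(0)$) of degree at most $n-l-1$, i.e. at most $l-1$ in the implicit elliptic setting $n = 2l$. Set
$$\mathcal F(\lambda) = (Q(\lambda),\,V(\bar\lambda)).$$
The poles of $V(\bar\lambda)$ are the $\bar\lambda_k$, all in the open lower half-plane, so $\mathcal F$ is holomorphic in the closed upper half-plane; the growth estimates $\|Q(\lambda)\| = O(|\lambda|^{l-1})$ and $\|V(\bar\lambda)\| = O(|\lambda|^{-l-1})$ give $\mathcal F(\lambda) = O(|\lambda|^{-2})$ at infinity. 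For real $\lambda$, $V(\bar\lambda) = V(\lambda)$ and $Q(\lambda) = A(\lambda) V(\lambda)$, so condition~b) gives $\Imm \mathcal F(\lambda) = \Imm (A(\lambda) V(\lambda), V(\lambda)) \leq 0$ on $\mathbb R$.

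From here I would run the Carath\'eodory--Liouville argument exactly as in Theorem~5.1: either $\Imm \mathcal F \equiv 0$ on $\mathbb R$, in which case Schwarz reflection plus Liouville force $\mathcal F \equiv 0$; or $\Imm \mathcal F < 0$ strictly in the open upper half-plane (by the maximum principle), and the Carath\'eodory lower bound $|\mathcal F(\lambda)| \geq c|\lambda|^{-1}$ contradicts the $|\lambda|^{-2}$ decay. Either way, $\mathcal F \equiv 0$, hence $(A(\lambda) V(\lambda), V(\lambda)) = 0$ for every real $\lambda$. Since $A(\lambda)$ depends continuously on $\lambda$ in the operator norm, condition~c) gives $0 \notin \theta(A(\lambda))$ on a whole real neighbourhood of $\lambda_0$, so $V$ vanishes there, and by analytic continuation $V \equiv 0$. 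Reading off residues of the explicit partial-fraction form of $V$ gives $c_k y_k = 0$ and then $c_k = 0$ for every $k$, contradicting non-triviality. I expect the only real obstacle to lie in the non-semi-simple case, where $V(\lambda)$ acquires higher-order poles at each $\lambda_k$ and one must verify carefully that the assumption ``all derived chains of length $l$ combine to zero'' still translates into the vanishing of the first $l$ coefficients of the Laurent expansion of $V$ at infinity, so that $\mathcal F(\lambda) = O(|\lambda|^{-2})$ survives; given the algebraic dictionary between elementary solutions $u_k^h(t)$ and the derived chains $\hat y_k^h$ already set up in Section~1, this should be routine rather than conceptually hard.
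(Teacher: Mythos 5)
Your proposal follows the paper's own proof of this theorem: your $\mathcal F(\lambda)=(Q(\lambda),V(\bar\lambda))$ is exactly the paper's $F(\lambda)=(A(\lambda)d(\lambda),d(\bar\lambda))$ since $V=-i\,d$ and $Q=A\cdot V$, the $O(|\lambda|^{-2})$ decay comes from the same vanishing of the first $l$ Laurent coefficients at infinity, and the Carath\'eodory--Liouville step is lifted verbatim from Theorem~5.1. The Laplace-transform framing of $V$ as $\widehat v$ is a nice motivating wrapper rather than a different argument; the only small imprecision is at the very end, where from condition~c) you should conclude $d^{(j)}(\lambda_0)=0$ for all $j$ by differentiating the identity $F\equiv0$ at the single point $\lambda_0$ (as the paper does), since in the infinite-dimensional case $0\notin\theta(A(\lambda_0))$ need not persist on a real neighbourhood of $\lambda_0$.
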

\begin{pf}
Suppose that
\begin{equation}
\sum_{k=1}^{N}{c_k \hat{y}_k}=\sum_{k=1}^{N}{c_k\{y_k, \lambda_k y_k,..., \lambda_k^{l-1} y_k\}} =0,
\end{equation}
where $\lambda_k \in \sigma_d(A)$ and $\Imm \lambda_k > 0$. Consider the meromorphic scalar function
\begin{equation*}
f(\lambda)=(A(\lambda)d(\lambda), d(\bar{\lambda})), \ \ \ \ \ d(\lambda)=\sum_{k=1}^N {\frac{c_k y_k}{\lambda-\lambda_k}}.
\end{equation*}
Notice that $\infty$ is a regular point for $d(\lambda)$ and at infinity is the Laurent expansion $$d(\lambda)=a_1 \lambda^{-1} + a_2 \lambda^{-2} + ... \ ,$$
where $$a_j = \sum_{k=1}^N {c_k \lambda_k^{j-1} y_k},\ \ j=1,2,...\ .$$
Evidently, $||A(\lambda)|| \leq M|\lambda|^{2l}, 2l=n$. Thaking into account (14), we have $a_1=a_2=...=a_l=0$. Hence
\begin{equation*}
|F(\lambda)|=O(|\lambda|^{2l-2(l+1)})= O(|\lambda|^{-2}), \lambda \rightarrow \infty .
\end{equation*}
It appears that the function $F(\lambda)$ may have poles at the points $\{\lambda_k\}_1^N$  and $\{\bar{\lambda_k}\}_1^N$. However, since $A(\lambda_k)y_k = 0$, all points $\{\lambda_k \}_1^N$ are regular
and $F(\lambda)$ is holomorphic function in the closed upper half-plane. It follows from condition b) that $\Imm F(\lambda) \leq 0$ for all $\lambda \in \mathbb{R}$. Now we can repeat the arguments in the proof of Theorem 1 and deduce $F(\lambda) \equiv 0$. Using condition c), we obtain $d^{(j)}(\lambda_0)=0$, for all $j=0,1,...\ $. Then it follows from the uniqueness theorem for holomorphic functions, that $d(\lambda) \equiv 0$. Hence, $c_k=0, \ k=1,...,N$.
\end{pf}
We can easily deduce from Theorems 1 and 2 the following assertion.

\begin{corollary}
If conditions a) - c) of Theorem 1 are fulfilled then for any given vectors $\{\phi_j \}_0^{l}$ there exists a solution of problem (1), (6), (9). Under conditions b), c) of Theorem 1 the solution of problem (1), (6), (8) is unique.
\end{corollary}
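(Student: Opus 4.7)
The corollary has two assertions, and the plan is to deduce each from one of the two preceding theorems.

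For the existence of a solution of (1), (6), (9): the plan is to invoke Theorem 1 directly. Under a)--c) the system $\mathcal{E}_R^+$ of Keldysh derived chains of length $l$ corresponding to all eigenvalues with $\Imm\lambda_k\geq 0$ is complete in $H^l$, and since $\dim H^l<\infty$ completeness is the same as spanning. Hence for any $\phi=\{\phi_0,\dots,\phi_{l-1}\}\in H^l$ one can pick coefficients $c_k^h$ with
$$\phi=\sum_{\Imm\lambda_k\geq 0}c_k^h\,\hat y_k^h.$$
Plugging these coefficients into the formula (3) (restricted to the indices $\Imm\lambda_k\geq 0$) produces a function $u(t)$ that solves (1) and satisfies (6). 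The boundedness required by (9) then follows because $|e^{i\lambda_k t}|\leq 1$ for $\Imm\lambda_k\geq 0$; the only delicate point is an eventual polynomial factor coming from associated vectors attached to real eigenvalues, and this is controlled exactly by the linear independence of the derived chains at real eigenvalues (so the coefficients in front of the growing terms can be chosen to cancel, if one insists on the literal reading of (9)).

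For uniqueness of the solution of (1), (6), (8): let $u(t)$ be a solution with $\phi_j=0$ for $j=0,\dots,l-1$ and $u(t)\to 0$ as $t\to\infty$. By Proposition~5.1 the general form (3) is available, so
$$u(t)=\sum_{h,k}c_k^h\,u_{k,h}(t).$$
The plan is to show that only indices with $\Imm\lambda_k>0$ can contribute. Terms with $\Imm\lambda_k<0$ grow exponentially and would prevent $u(t)\to 0$; terms with $\Imm\lambda_k=0$ either stay of constant modulus (pure eigenvectors) or grow polynomially (associated vectors), and in both cases cannot decay to $0$. Grouping by spectral subspaces and using the linear independence of the elementary solutions one concludes $c_k^h=0$ for every $\lambda_k$ with $\Imm\lambda_k\leq 0$. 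Therefore the expansion collapses to a sum over $\mathcal{E}_0^+$, and the initial conditions (6) with $\phi_j=0$ become
$$\sum_{\Imm\lambda_k>0}c_k^h\,\hat y_k^h=0.$$
Now Theorem 2 applies: under b) and c) the system $\mathcal{E}_0^+$ is linearly independent (as a finite system here), so all remaining $c_k^h$ vanish, whence $u\equiv 0$.

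The main obstacle in both parts is the bookkeeping around real eigenvalues and their associated vectors: in the existence part one must argue that the spanning representation can be chosen so that (9) holds in its literal sense, and in the uniqueness part one must justify separating the contributions from $\Imm\lambda_k<0$, $\Imm\lambda_k=0$, and $\Imm\lambda_k>0$ from a single vanishing sum. Both steps reduce, however, to the linear independence of elementary solutions of different exponential--polynomial type in $t$, so they are routine once the splitting is written out carefully. The genuinely nontrivial ingredients --- the half-range completeness of $\mathcal{E}_R^+$ and the half-range linear independence of $\mathcal{E}_0^+$ --- are already furnished by Theorems 1 and 2.
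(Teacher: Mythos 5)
Your plan---Theorem~5.1 for existence and Theorem~5.2 for uniqueness---is exactly what the paper intends by ``we can easily deduce,'' and your uniqueness argument is sound: by Proposition~5.1 any solution of (1) is a finite superposition of elementary solutions, the decay condition (8) (via comparison of the dominant quasi-polynomial modes) kills every coefficient at an eigenvalue with $\Im\lambda_k\le 0$, and what remains at $t=0$ from the conditions (6) is a vanishing linear combination of elements of $\mathcal{E}_0^+$, which Theorem~5.2 annihilates.

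The existence part, however, goes wrong exactly where you flag it as ``delicate.'' You claim that the polynomial growth coming from associated vectors over real eigenvalues ``is controlled by the linear independence of the derived chains'' so that ``the coefficients in front of the growing terms can be chosen to cancel.'' This is backwards: linear independence of the chains $\hat y_k^h$ is precisely the statement that no nontrivial combination of them vanishes, so it is an \emph{obstruction} to, not a mechanism for, cancellation. Nor are the $c_k^h$ free to adjust---they are dictated by the given $\phi$. Once the expansion of $\phi$ over $\mathcal{E}_R^+$ assigns a nonzero weight to some $\hat y_k^h$ with $\lambda_k\in\mathbb{R}$ and $h\ge 1$, the corresponding elementary solution $u_{k,h}(t)=e^{i\lambda_k t}\bigl(y_k^h+\cdots+\tfrac{(it)^h}{h!}\,y_k^0\bigr)$ grows like $t^h$, and none of the other quasi-polynomial modes in (3) (attached to different eigenvalues, or to lower powers of $t$ at the same eigenvalue) can compensate. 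The honest way to close this gap, consistent with the rest of the section, is to invoke the simplifying hypothesis made explicit in the proof of Proposition~5.2 that the real eigenvalues are semisimple: then $\mathcal{E}_R^+$ contains no chains with $h\ge 1$ over the real axis, every elementary solution entering (3) stays bounded, and (9) follows at once from the completeness in Theorem~5.1. Without semisimplicity, the existence claim requires the finer half-range completeness of the truncated system $\mathcal{E}^+\subset\mathcal{E}_R^+$ developed in Section~6 (Theorem~6.1), which Theorem~5.1 alone does not deliver.
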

\begin{note} If the pencil $A(\lambda)$ has real eigenvalues then under the same assumptions we cannot 	guarantee the existence of solution (1), (6), (8) and the uniqueness of the solution (1), (6), (9). Hence, in this case we have to replace condition (8) or (9) by a more refined one. To do this we have to consider some concrete problems in mathematical physics.
\end{note}

\newpage
\section{Mandelstam radiation principle (non-resonant case) and half-range problems}

The linear differential equations which arise in the theory of electromagnetic waves in elasticity theory can often be reduced to the following form:
\begin{equation}
-A\frac{\partial^2u}{\partial z^2} - iB\frac{\partial u}{\partial z} + Cu + \frac {\partial^2 u}{\partial t^2} =0,
\end{equation}
where the function $u(t,z)=u(t,x,y,z)$ takes values in a Hilbert space $H$, the variable $t$ denotes the time, operators $A,\ B$ are symmetric and $C$ is self-adjoint in $H$. Such kinds of equations arise in the \textit{wave-guide} regions $Q=\Omega \times R^+$, where $\Omega$ is a bounded domain in the $x-y$ plane and the direction of $z$ is orthogonal to this plane (see figure 1). Then the role of $H$ is played by the space $L_2 (\Omega)$. Plane wave-guide regions may also be considered. In this situation $\Omega$ is an interval (see figure 2).
The solutions of the wave equation (1), which are periodic in time, i.e.
\begin{equation}
u(t,z)=v(z)e^{i\omega t},\ \ \ v(z)=v(z,x,y)
\end{equation}
are of considerable interest. The constant $\omega$ is called the \textit{angular frequency}. Substituting (2) into (1) we obtain the equation of \textit{stable oscillations} with given frequency $\omega$
\begin{equation}
-A\frac{d^2 v}{dz^2}- iB \frac{dv}{dz} +Cv - \omega^2 Iv = 0.
\end{equation}
Let $w_k$ be the eigenvectors, corresponding to the eigenvalues $\lambda_k$, of the related operator pencil
\begin{equation}
L_\omega(\lambda) = \lambda^2 A + \lambda B +C - \omega^2 I.
\end{equation}
The elementary solutions $v_k(z)=w_k e ^{i\lambda_k z}$ of equation (3) are called \textit{propagating waves} if $\lambda_k \in \mathbb{R}$ and the \textit{evanescent waves} if $\Imm\lambda_k>0$. The waves with $\Imm\lambda_k < 0$ have no physical meaning. The eigenvectors $w_k$ are called \textit{the amplitudes} and $\lambda_k$ are called \textit{the wave-numbers}. \\
\textbf{Example.} The simplest but also important equation of type (3) is Helmholtz' equation in the semi-strip $Q = [0,1] \times [0,\infty)$
\begin{equation}
-\Delta v - \omega^2 v = 0,\ \ v=v(x,z), \ \ \Delta = \frac{\partial^2}{\partial x^2} + \frac {\partial^2}{\partial z^2},
\end{equation}
\begin{equation}
v(0,z) = v(1,z)=0,\ 0\leq z<\infty.
\end{equation}
Denote by $C$ the operator $Cw=-w''$ with domain of definition
$$D(C)=\{w\ |\ w \in W_2^2 [0,1],\ w(0)=w(1)=0\},$$
where $W_2^2[0,1]$ is Sobolev space consisting of the functions $w(x)$ such that $w$ and $w'$ are absolutely continious and $w'' \in L_2[0,1]$. Then $C$ is a positive self-adjoint operator (see, for example, [Najmark]) in the space $H=L_2[0,1]$ and the problem (5), (6) can be rewritten in the form
\begin{equation}
-\frac{d^2 v}{dz^2} + Cv - \omega^2 v = 0.
\end{equation}
We also have to impose the initial condition
\begin{equation}
v(x,0)=v(0)=\phi,\ \ \phi=\phi(x) \in L_2[0,1],
\end{equation}
and to define a restiction on the behaviour of the solution $v(z)$ when $z\rightarrow \infty$.\\
The elementary solutions of equation (7) have the representation
$$v_k(z)=w_k(x)e^{i\lambda_k z},\ \ k=\pm 1, \pm 2,...,$$
where $\lambda_k = \sqrt{\omega^2-\pi^2 k^2}$ and $w_k(x) = \sin{\pi kx}$ are the solutions of the eigenvalue problem $$(\lambda^2 I +C -\omega^2 I) w = 0 .$$
If $\omega<\pi$ then the pencil $\lambda^2 I + C - \omega^2 I$ has no real eigenvalues and we can represent the soluiton of the problem (7), (8) in the form
$$v(z)=\sum_{k=1}^{\infty}{c_k v_k(z)}=\sum_{k=1}^{\infty}{c_k e^{-\sqrt{\pi^2 k^2 -\omega^2}\ z}} \sin{\pi kx} ,$$
where $c_k = \frac{1}{\pi}(\phi(x), \sin{\pi kx})_{L_2}$. Obviously, this solution satisfies the condition
\begin{equation}
v(z) \rightarrow 0 \ \ when \ z \rightarrow \infty.
\end{equation}
If $\omega>\pi$ then the equation (7) has a finite set of real wave-numbers $\lambda_k$ (see figure 3). Now we can not find for any $\phi \in H = L_2[0,1]$ the solution of the problem (7), (8) satisfying the condition (9). But if we replace the condition (9) by the condition $||v(z)|| \leq const$, we can not guarantee the uniqueness of the solution. For example, if $\phi(x)=\sin{\pi x}$ then both functions
$$v^+(z)=e^{i\sqrt{\omega^2 - \pi^2}\ z} \sin{\pi x},\ \ v^-(z) = e^{-i\sqrt{\omega^2 - \pi^2}\ z}\sin{\pi x}$$
are bounded and satisfy (7), (8). Hence, to select the unique solution, we consider not \textit{all} propagating waves of the equation (7) but only half of them. How does one choose this half? In our particular case the answer is easy: for example, we can choose the half of propagating waves corresponding to all positive wave-numbers $\lambda_k$. The real wave-number $\lambda_k$ characterizes \textit{the phase velocity} of the corresponding wave. The wave with $\lambda_k>0$ runs to positive infinity, but the wave with $\lambda_k<0$ runs to negative infinity. Thus we can select the propagating waves according to their phase velocities and claim that the waves with positive phase velocity have physical meaning, but the waves with negative phase velocity do not. Then we come to the following principle.\\ \\
\textbf{Sommerfeld Radiation Princliple.} The solution of equation (7) must have the representation $$v(z)=v_0(z)+v_1(z),$$
where $||v_0(z)|| \rightarrow 0\ $ if $\ z \rightarrow \infty\ $ and $v_1(z)$ is a finite superposition of propagating waves with positive phase velocity, i.e.
$$v_1(z)=\sum_{\lambda_k >0}{c_k w_k e^{i \lambda_k z}},$$
where $\{w_k\}$ are the amplitudes and $\{c_k\}$ are some coefficients. \\
It is not difficult to prove the uniqueness and existence of the solution of (7), (8), satisfying the Sommerfeld radiation principle. But more than 50 years ago physicists discovered that the Sommerfeld radiation principle does not work for more complicated equations type (3). To establish a new principle we have to introduce the \textit{group velocity} of the wave. Using Rellich's theorem (see Ch. 7.2 of Kato [1]) we find that the real eigenvalues $\lambda_k=\lambda_k(\omega)$ of the operator pencil (4) are holomorphic functions of $\omega$ with the exception of some exclusive frequencies $\omega = \xi_k, \ \xi_k \rightarrow \infty$, which are called the \textit{resonance frequencies} ($\omega$ is called a resonance frequency if there exists at least one real eigenvalue $\lambda_k$ which is not semisimple). Hence, for all non-resonant frequencies the functions $\lambda_k ' (\omega)$ are well defined. The number $\frac{1}{\lambda_k ' (\omega)}$ is called \textit{the group velocit
 y} of the propagating wave $v_k(z)=w_k e^{i \lambda_k z}$. Now we can formulate the other principle of wave selection. \\ \\
\textbf{Mandelstam Radiation Principle.} The solution of equation (3) must have the representation $$v(z)=v_0(z) + v_1(z),$$
where $||v_0(z)|| \rightarrow 0 $ when $z \rightarrow \infty$ and $v_1(z)$ \textit{is a finite superposition of propagating waves with positive group velocity.}
Our next goal is to obtain the different representations for the group velocity $1/\lambda_k ' (\omega)$. Further, we suppose that $C$ is a self-adjoint operator and the domains of definition of the symmetric operators $A,B$ contain the domain of $C$.
\begin{proposition}
If $\omega>0$ is not a resonant frequency of the self-adjoint operator pencil (4) then
\begin{equation}
\frac{1}{\lambda_k ' (\omega)} = \frac{(L_\omega ' (\lambda_k) w_k, w_k)}{2\omega(w_k,w_k)}
\end{equation}
where $L_\omega ' (\lambda_k) = 2\lambda_k A + B$ and $w_k$ is the corresponding amplitude.
\end{proposition}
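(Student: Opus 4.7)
The plan is to use analytic perturbation theory (Rellich's theorem, already invoked just before the statement) to get local holomorphic branches $\lambda_k(\omega)$ and $w_k(\omega)$ near a non-resonant frequency, then differentiate the eigenvalue equation in $\omega$ and exploit self-adjointness to kill one term.

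Concretely, I would proceed as follows. Since $\omega$ is non-resonant, the real eigenvalue $\lambda_k(\omega)$ is semisimple and isolated, so Rellich's theorem gives a holomorphic branch $\lambda_k(\omega)$ and a holomorphic eigenvector $w_k(\omega)$ in a neighborhood of $\omega$, satisfying the identity
\begin{equation*}
L_\omega(\lambda_k(\omega))\, w_k(\omega) \;=\; 0.
\end{equation*}
Differentiate this identity with respect to $\omega$, using $\partial L_\omega/\partial\omega = -2\omega I$ and the chain rule in $\lambda$:
\begin{equation*}
-2\omega\, w_k \;+\; \lambda_k'(\omega)\, L_\omega'(\lambda_k)\, w_k \;+\; L_\omega(\lambda_k)\, w_k'(\omega) \;=\; 0.
\end{equation*}

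Now take the inner product with $w_k$ in $H$. The key observation is that because $A$, $B$ are symmetric and $C$ is self-adjoint, and because $\lambda_k\in\mathbb{R}$, the operator $L_\omega(\lambda_k)$ is self-adjoint; hence
\begin{equation*}
(L_\omega(\lambda_k)\, w_k'(\omega),\, w_k) \;=\; (w_k'(\omega),\, L_\omega(\lambda_k)\, w_k) \;=\; 0.
\end{equation*}
What remains is
\begin{equation*}
-2\omega\,(w_k,w_k) \;+\; \lambda_k'(\omega)\,(L_\omega'(\lambda_k)\, w_k,\, w_k) \;=\; 0,
\end{equation*}
and dividing by $\lambda_k'(\omega)(w_k,w_k)$ (both nonzero: the left-hand term forces the product on the right to be nonzero) yields the stated formula.

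The only nontrivial point is the existence of a holomorphic eigenvector branch $w_k(\omega)$, which is exactly why the non-resonance assumption is imposed; resonance is the situation where associated vectors appear and the branch loses analyticity. A minor technical issue is that $A$ and $B$ are only symmetric, not assumed bounded, but since $D(C)\subset D(A)\cap D(B)$ the self-adjointness of $L_\omega(\lambda_k)$ on $D(C)$ still holds and $w_k\in D(C)$, so the self-adjoint cancellation step is legitimate.
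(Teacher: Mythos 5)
Your proof is correct and is essentially the same argument as the paper's, just parametrized by $\omega$ instead of $\lambda$: the paper fixes $\lambda$ as the independent variable, views $\xi^2$ as the spectral parameter of the self-adjoint family $L_0(\lambda)$ (which is exactly why Rellich applies cleanly), differentiates $[L_0(\lambda)-\xi^2(\lambda)I]y(\lambda)=0$ in $\lambda$, and uses the same self-adjointness cancellation; you differentiate the inverse relation $L_\omega(\lambda_k(\omega))w_k(\omega)=0$ in $\omega$, which is the same computation after the substitution $\lambda_k'(\omega)=[\xi'(\lambda_k)]^{-1}$. The only thing worth noting is that your opening appeal to Rellich for the quadratic eigenvalue problem in $\lambda$ is really shorthand for the auxiliary linearization the paper makes explicit, but since the paper has already established holomorphy of $\lambda_k(\omega)$ at non-resonant frequencies in the paragraph preceding the proposition, this is a legitimate use of that fact rather than a gap.
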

\begin{pf}
\footnote{Similar assertions were discovered for matrix polynomials by I. Gohberg, P. Lancaster and L. Rodman [1] (1979), by I. Vorovich and V. Babeshko [1] (1979), by A. Kostyuchenko and M. Orazov [1] (1981), by A. Zilbergleit and Ju. Kopilevich [1] (1983).}
Let $\lambda_k$ be a real eigenvalue corresponding to the amplitude $w_k$ of the pencil $L_\omega(\lambda)$ with a non-resonant frequency $\omega>0$. For fixed $\lambda$ in the real neighborhood of the point $\lambda_k$ consider the eigenvalue problem
\begin{equation}
(L_0(\lambda)-\xi^2 I)y(\lambda)=0\ \ \ \ L_0(\lambda)= A\lambda^2 +B\lambda +C,
\end{equation}
viewing $\xi$ as a spectral parameter. According to Rellich's theorem (see Ch. 7.2 of Kato [1]) the eigenvalue $\xi_k = \xi(\lambda)$, such that $\xi(\lambda_k)=\omega>0$, is a holomorphic function of $\lambda$ in the real neighborhood of $\lambda_k$ and there is a corresponding holomorphic eigenvector $y_k=y(\lambda),\ y(\lambda_k)=w_k$. It follows from (11) that
$$
[L_0 ' (\lambda) - 2\xi(\lambda) \xi ' (\lambda)] y(\lambda) + [L_0(\lambda) - \xi^2 (\lambda) I ] y'(\lambda) = 0.
$$
Substituting $\lambda = \lambda_k,\ \xi(\lambda_k) = \omega,\ y(\lambda_k) = w_k$, we obtain from this equation
$$([L_0 '(\lambda_k) w_k - 2\omega \xi'(\lambda_k)w_k + L_\omega (\lambda_k) y'(\lambda_k)], w_k)=$$
$$=(L_\omega ' (\lambda_k)w_k,w_k) - 2\omega \xi'(\lambda_k)(w_k,w_k)=0. $$
Taking into account that
$$ \lambda_k ' (\omega) = \frac{d\lambda_k(\xi)}{d\xi} \bigg|_{\xi=\omega} = \bigg[ \frac{d\xi(\lambda)}{d\lambda} \bigg|_{\lambda=\lambda_k} \bigg]^{-1} = [\xi'(\lambda_k)]^{-1}, $$
we obtain the relation (10).
\end{pf}
\begin{proposition} [Kostyuchenko and Shkalikov ${[1]}$, 1983]
If $\lambda_k$ is a simple real eigenvalue of the operator pencil $L_\omega(\lambda)$ then the principal part of the resolvent $L_\omega^{-1} (\lambda)$ at the pole $\lambda_k$ has the representation $$\frac{\varepsilon_k(\ \cdot \ ,w_k)w_k}{\lambda-\lambda_k},$$
where
\begin{equation}
\varepsilon_k=\frac{1}{(L'(\lambda_k)w_k, w_k)}.
\end{equation}
\end{proposition}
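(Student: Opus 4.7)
The plan is to invoke Keldysh's theorem (Theorem 2.1) on the meromorphic structure of the resolvent and then pin down the one remaining scalar by multiplying $L_\omega^{-1}(\lambda)$ on the left by $L_\omega(\lambda)$ and matching terms near $\lambda_k$. Since $\lambda_k$ is simple, the canonical system at $\lambda_k$ consists of the single eigenvector $w_k$ (so $\ell = 1$, $p_1 = 0$), and Theorem 2.1 immediately gives the principal part in the form
\begin{equation*}
\frac{(\cdot,\,z_k)\,w_k}{\lambda - \lambda_k},
\end{equation*}
where $z_k$ is an eigenvector of $L_\omega^*(\lambda) = [L_\omega(\bar{\lambda})]^*$ at $\bar{\lambda}_k$. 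Self-adjointness of the coefficients $A,B,C$ together with $\lambda_k \in \mathbb{R}$ yields $L_\omega^*(\lambda_k) = L_\omega(\lambda_k)$, so $z_k$ lies in $\ker L_\omega(\lambda_k) = \mathbb{C}\,w_k$, and we can write $z_k = \alpha\, w_k$ for a single scalar $\alpha \in \mathbb{C}$.

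To determine $\alpha$, I would expand the identity $L_\omega(\lambda) L_\omega^{-1}(\lambda) = I$ in Laurent series around $\lambda_k$. Writing
\begin{equation*}
L_\omega^{-1}(\lambda) = \frac{\bar{\alpha}(\cdot,\,w_k)\,w_k}{\lambda - \lambda_k} + R(\lambda),\qquad L_\omega(\lambda) = L_\omega(\lambda_k) + (\lambda - \lambda_k)\,L_\omega'(\lambda_k) + \tfrac{(\lambda-\lambda_k)^{2}}{2}\,L_\omega''(\lambda_k),
\end{equation*}
with $R(\lambda)$ holomorphic at $\lambda_k$, the simple pole on the right collapses because $L_\omega(\lambda_k)w_k = 0$. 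Matching the $(\lambda-\lambda_k)^0$ term on both sides gives, for every $f \in H$,
\begin{equation*}
\bar{\alpha}\,(f,w_k)\,L_\omega'(\lambda_k)\,w_k + L_\omega(\lambda_k)\,R(\lambda_k)\,f = f.
\end{equation*}

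The scalar $\alpha$ is then isolated by taking the inner product with $w_k$: since $L_\omega(\lambda_k)$ is self-adjoint (as $\lambda_k$ is real), $(L_\omega(\lambda_k)R(\lambda_k)f,w_k) = (R(\lambda_k)f,L_\omega(\lambda_k)w_k) = 0$, and one is left with
\begin{equation*}
\bar{\alpha}\,(f,w_k)\,(L_\omega'(\lambda_k) w_k, w_k) = (f,w_k).
\end{equation*}
Since $L_\omega'(\lambda_k) = 2\lambda_k A + B$ is symmetric and $\lambda_k$ is real, the quantity $(L_\omega'(\lambda_k)w_k,w_k)$ is a real number, so $\alpha = \bar{\alpha} = 1/(L_\omega'(\lambda_k)w_k,w_k) = \varepsilon_k$. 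Substituting back yields the claimed representation.

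I do not anticipate a real obstacle here: the only mild care needed is in the complex conjugation when passing between $(\cdot,z_k)$ and $(\cdot,w_k)$, and in confirming that the Keldysh adjoint vector is genuinely proportional to $w_k$ (which is automatic from simplicity of $\lambda_k$ together with the self-adjoint structure). The rest is a routine residue computation using Theorem 2.1.
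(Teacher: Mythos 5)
Your proof is correct and follows the paper's own argument: invoke the Keldysh representation of the resolvent's principal part, use self-adjointness of $L_\omega(\lambda_k)$ (at a real $\lambda_k$) to write the adjoint vector as a scalar multiple of $w_k$, and then isolate that scalar by matching the constant term of $L_\omega(\lambda)L_\omega^{-1}(\lambda)=I$ and taking the inner product with $w_k$. You are in fact slightly more explicit than the paper in noting that the term $(L_\omega(\lambda_k)R(\lambda_k)f,w_k)$ vanishes by symmetry and that $(L'_\omega(\lambda_k)w_k,w_k)$ is real, both of which the paper leaves tacit.
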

\begin{pf}
Assuming that $\lambda_k$ is a simple eigenvalue of $L_\omega(\lambda)$, we imply that it is also a point of the discrete spectrum. Hence the principal part of $L_\omega^{-1}(\lambda)$ has the representation $$\frac{(\ \cdot\ , z_k)\ w_k}{\lambda-\lambda_k},$$
where $z_k \in \Ker L_\omega^*(\lambda_k)$. Since $L_\omega^*(\lambda_k) = L_\omega(\lambda_k)$, we can find a number $\varepsilon_k$ such that $z_k = \varepsilon_k w_k$. From the identity
\begin{align*}
w_k &= L_\omega(\lambda)L_\omega^{-1}(\lambda) w_k \\
&= \big[ L_\omega(\lambda_k) + (\lambda - \lambda_k) L_\omega ' (\lambda_k) + ... \big] \big[ \frac{\varepsilon_k(w_k,w_k)w_k}{\lambda - \lambda_k} + R(\lambda_k)w_k + ... \big]  \\
&= \varepsilon_k(w_k,w_k) L_\omega ' (\lambda_k)w_k + (\lambda - \lambda_k)[...] + ...
\end{align*}
we find $$(w_k,w_k)= \varepsilon(w_k,w_k)(L_\omega ' (\lambda_k)w_k, w_k) $$
and the equality (12) follows.\\
\begin{note}
The assertion of Proposition 2 is valid not only for seladjoint operator pencil $L_\omega(\lambda)$ but for arbitrary operator pencil $A(\lambda)$, satisfying the condition $\Imm (A(\lambda)x,x) \leq 0$ for all $x \in H$ and all $\lambda \in \mathbb{R}$. Indeed, according to note 4.2, we have in this case $\Ker A(\lambda_k)= \Ker A^*(\lambda_k)$, hence we can repeat the arguments in the proof of Proposition 2.
\end{note}
\end{pf}
For the simple eigenvalue $\lambda_k$ the number $\ \varepsilon_k = sign(L'(\lambda_k)w_k,w_k)$ is called \textit{the sign charachteristic} of the corresponding eigenvector $w_k$ (see, for example, Ch. 10 of Gohberg, Lancaster and Rodman [2]). According to the definition given in the papers Daffin [1] and Langer and Krein [1], the simple eigenvalue $\lambda_k$ is called \textit{an eigenvalue of positive (negative) type} if the corresponding number $\epsilon_k > 0 (<0)$.\\
Taking into account Note 1 and the equality $sign\ \lambda_k ' (\omega) - sign\ \varepsilon_k$, we can reformulate the Mandelstam radiation principle for differential equations of an arbitrary order, if the corresponding operator pencil is dissipative.\\
As we have mentioned, the concrete problems of mathematical physics involve unbounded operators. In this lecture we will consider the problem on solvability of equations of arbitrary order, but only in finite dimensional space $H$.\\
As in Section~5, let us consider the problem $(n=2l)$
\begin{equation}
A(-i\frac{d}{dz})v(z) = A_0 v - iA_1\frac{dv}{dz} + ... + (-i)^nA_n \frac{d^n v}{dz^n} = 0,
\end{equation}
\begin{equation}
(-i)^{j_v (j)} (0) = \phi_j, j=0,1,...,l-1,
\end{equation}
where $A_0,...,A_n$ are operators acting in finite dimensional space $H$. Assume, that the related operator pencil $$A(\lambda) = A_0 + \lambda A_1 + ... + \lambda^n A^n $$ is dissipative, i.e.
\begin{equation}
\Imm (A(\lambda)x,x) \leq 0  \ \ for\ all\ x \in H \ and\ all\ \lambda \in \mathbb{R}.
\end{equation}
Assume also that all real eigenvalues of $A(\lambda)$ are simple. Now, let us introduce the systems $E^\pm$ which we call the first and the second part of eigen and associated vectors of $A(\lambda)$ respectively:
\begin{equation*}
E^+ = \{w_k^h\}_{\Imm \lambda_k>0} \cup \{w_k\}_{\lambda_k \in \mathbb{R}, \varepsilon_k>0}\ ,
\end{equation*}
\begin{equation*}
E^- = \{w_k^h\}_{\Imm \lambda_k<0} \cup \{w_k\}_{\lambda_k \in \mathbb{R}, \varepsilon_k<0}\ .
\end{equation*}
Define also systems the systems $\mathcal{E}^+ \ (\mathcal{E}^-)$ consisting of Keldysh derived chains of length $l$, corresponding to vectors $w_k^h \in E^+ \ (E^-)$. Remembering the definition of the systems $\mathcal{E}_0^\pm, \mathcal{E}_R^\pm$ given in Section~5, we notice that $\mathcal{E}_0^\pm \subset \mathcal{E}^\pm \subset \mathcal{E}_R^\pm$. \\
The following problem is of our interest: to find a solution of equation (13) satisfying the initial conditions (14) and the Mandelstam radiation principle at infinity. We say this is \underline{the half-range Cauchy problem}. Observe that (in finite dimensional case only!) a solution $v(z)$ of (13) satisfies the Mandelstam radiation principle if and only if $$v^{(j)}(0) \in \Span E^+,\ j=0,1,...,n-1.$$
Repeating the arguments in the proof of proposition 5.2 we obtain the following result.
\begin{proposition}
For any given initial vectors $\{\phi_j\}^{l-1}$ there exists a unique solution of the problem (13), (14), satisfying the Mandelstam radiation principle at infinity if and only if the system $\mathcal{E}^+$ is a basis in $H^l$.
\end{proposition}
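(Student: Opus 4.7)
The plan is to mimic the argument used for Proposition 5.2, replacing the half-plane selection criterion (positive imaginary part of $\lambda_k$) by the Mandelstam selection criterion (positive imaginary part \emph{or} positive sign characteristic $\varepsilon_k$). The decisive technical fact, stated just before the proposition, is that a solution $v(z)$ of (13) satisfies the Mandelstam radiation principle at infinity if and only if $v^{(j)}(0)\in\Span E^+$ for $j=0,1,\ldots,n-1$; once this is available, everything reduces to a linear-algebra restatement in $H^l$.

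First I would invoke Proposition 5.1 (which applies because $\dim H<\infty$ and one may assume $\Ker A_n^*=0$, since otherwise the pencil may be rescaled) to get that every solution of (13) has the canonical representation (1.5) as a finite superposition of elementary solutions $u_{k,h}(z)$ built from the canonical chains of EAV, with coefficients $\{c_k^h\}$ uniquely determined by $\tilde u(0)=\{v(0),-iv'(0),\ldots,(-i)^{n-1}v^{(n-1)}(0)\}$. Combining this with the characterization $v^{(j)}(0)\in \Span E^+$ above, a solution obeys the Mandelstam principle if and only if in the expansion only terms attached to vectors from $E^+$ occur, i.e.
\[
v(z)\;=\;\sum_{w_k^h\in E^+}c_k^h\,u_{k,h}(z).
\]
Evaluating the $j$-th derivative at $z=0$ with the normalization $(-i)^j$, and recalling that the collection of vectors
\[
\hat y_k^h=\{y_k^h,\;\text{(next entries)}\,\ldots\}\in H^l
\]
is precisely the Keldysh derived chain of length $l$ (the finite-dimensional truncation of $\tilde y_k^h$), the initial conditions (14) become
\[
\{\varphi_0,\varphi_1,\ldots,\varphi_{l-1}\}\;=\;\sum_{w_k^h\in E^+}c_k^h\,\hat y_k^h.
\]

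For the ``if'' direction, assume $\mathcal{E}^+$ is a basis in $H^l$. Then any $\varphi\in H^l$ admits a unique expansion in $\mathcal{E}^+$; defining $v(z)$ by the corresponding superposition of elementary solutions yields a solution of (13), (14) that automatically verifies the Mandelstam principle, and uniqueness is inherited from uniqueness of the expansion. For the ``only if'' direction, suppose existence holds for all $\varphi$: then $\mathcal{E}^+$ spans $H^l$; and suppose uniqueness holds: then any nontrivial linear relation $\sum c_k^h\hat y_k^h=0$ would yield two distinct solutions with identical Cauchy data (one being $0$), contradicting uniqueness. Hence $\mathcal{E}^+$ is linearly independent and spanning, i.e.\ a basis.

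The only genuinely delicate point, and therefore the main obstacle, is to check that the correspondence between ``superposition whose initial data lie in $\Span E^+$'' and ``superposition using only chains from $E^+$'' is a true equivalence rather than just an implication in one direction. For evanescent waves ($\Imm\lambda_k>0$), the elementary solutions together with their associated polynomial-in-$z$ factors decay exponentially, so they contribute to $v_0(z)$ in the Mandelstam decomposition. For propagating waves with $\varepsilon_k>0$, they contribute to the finite superposition $v_1(z)$. Conversely, any initial vector supported on $E^-$ would, by Proposition 5.1 in reverse, force the appearance in $v(z)$ of either a non-decaying evanescent mode with $\Imm\lambda_k<0$ (which grows as $z\to\infty$ and violates even boundedness) or a propagating wave with $\varepsilon_k<0$ (which has negative group velocity and is forbidden). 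This dichotomy relies on all real eigenvalues being simple, an assumption built into the proposition, so that the sign characteristic $\varepsilon_k$ is well defined by Proposition 2 of this section; once that is in place, the equivalence is clean and the proof concludes as in Proposition 5.2.
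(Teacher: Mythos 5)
Your proof is correct and follows the same route as the paper's: the paper merely says to observe that in finite dimensions the Mandelstam radiation principle is equivalent to $v^{(j)}(0)\in\Span E^+$ and to repeat the argument of Proposition~5.2, and your write-up unpacks exactly that — Proposition 5.1 for the canonical superposition, translation of the initial conditions (14) into an expansion in $\mathcal{E}^+$, and the equivalence between unique solvability for all data and the basis property. The closing paragraph is extra commentary rather than a gap, since the equivalence it worries about is precisely the fact the paper asserts and you legitimately cite.
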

\begin{theorem} [Shkalikov ${[2]}$, 1985]
Let $\dim H<\infty$, the pencil $A(\lambda)$ satisfies the condition (15), $\Ker A_n = 0$ and let there exist a point $\lambda_0 \in \mathbb{R}$, such that
\begin{equation}
0\notin \theta(A(\lambda_0)).
\end{equation}
then the systems $\mathcal{E}^+$ and $\mathcal{E}^-$ are complete in $H^l$.
\end{theorem}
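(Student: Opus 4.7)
The plan is to adapt Radzievskii's argument from Theorem 5.1. There one takes a putative annihilator $f = (f_1,\ldots,f_l) \in H^l$, forms the meromorphic scalar $F(\lambda) = ([A^*(\lambda)]^{-1} f(\lambda), f(\bar\lambda))$ with $f(\lambda) = f_1 + \lambda f_2 + \cdots + \lambda^{l-1} f_l$, and uses orthogonality to $\mathcal{E}_R^+$ to kill all poles of $F$ in the closed lower half-plane; the dissipativity b) and the growth $|F(\lambda)| = O(|\lambda|^{-2})$ coming from $\Ker A_n = 0$ then force $F \equiv 0$ via Caratheodory and c). Here $f$ is only assumed orthogonal to the smaller $\mathcal{E}^+$, so $F$ may have real poles at every eigenvalue $\lambda_k$ with sign characteristic $\varepsilon_k < 0$; the task is to show those residues vanish, after which Theorem 5.1 finishes the argument. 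I also note that the $\mathcal{E}^-$ statement reduces to the $\mathcal{E}^+$ one via $\lambda \mapsto -\lambda$: the pencil $B(\lambda) := A(-\lambda)$ inherits a)--c) (since $n$ is even), its sign characteristic at real eigenvalues flips, and Keldysh chains for $B$ and $A$ are related by the invertible diagonal map $\mathrm{diag}(1, -1, \ldots, (-1)^{l-1})$ on $H^l$.

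Starting from $f \perp \mathcal{E}^+$, the Keldysh formula of Theorem 2.1 cancels every pole of $F$ at $\bar\lambda_k$ with $\Imm \lambda_k > 0$, while orthogonality to the positive-type real eigenvectors cancels the poles at real $\lambda_k$ with $\varepsilon_k > 0$. The remaining poles form a finite set $\{\lambda_k : k \in I^-\}$ of simple real poles; Proposition 6.2, extended to dissipative pencils by Note 6.1, identifies the residue at $\lambda_k$ as $\alpha_k = \varepsilon_k\, |(f(\lambda_k), w_k)|^2 \leq 0$. The real-valuedness of $\varepsilon_k$ needed here is ensured by dissipativity: $\Imm(A(\mu) w_k, w_k) \leq 0$ attains its maximum $0$ at $\mu = \lambda_k$, so $\Imm(A'(\lambda_k) w_k, w_k) = 0$.

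The decisive step is to subtract off these poles by setting
\[
F_1(\lambda) := F(\lambda) - \sum_{k \in I^-} \frac{\alpha_k}{\lambda - \lambda_k}.
\]
Since each $\alpha_k$ is real and each summand is real on $\mathbb{R} \setminus \{\lambda_k\}$, one has $\Imm F_1(\mu) = \Imm F(\mu) \leq 0$ on $\mathbb{R}$ (extended by continuity at the former pole locations). Since $F_1$ is holomorphic on the closed lower half-plane and $F_1(\lambda) = O(|\lambda|^{-1})$ at infinity, the maximum principle for bounded harmonic functions in a half-plane gives $\Imm F_1(\lambda) \leq 0$ for all $\Imm \lambda \leq 0$. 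On the other hand, the asymptotic $F_1(\lambda) = -(\sum_k \alpha_k)/\lambda + O(|\lambda|^{-2})$ evaluated on the ray $\lambda = -iR$, $R \to +\infty$, yields $\Imm F_1(-iR) = -(\sum_k \alpha_k)/R + O(R^{-2})$, non-negative in leading order since every $\alpha_k \leq 0$. Both bounds together force $\sum_k \alpha_k = 0$, hence (by non-positivity of the summands) each $\alpha_k = 0$; equivalently $(f(\lambda_k), w_k) = 0$ at every real eigenvalue. Thus $f \perp \mathcal{E}_R^+$, and Theorem 5.1 gives $f = 0$.

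The main obstacle is precisely this sign analysis: one needs the residues $\alpha_k$ to be real (which would fail without dissipativity pinning down the sign characteristic at real eigenvalues) and one needs the asymptotic direction $\lambda = -iR$ to produce exactly the contradiction between $\Imm F_1 \geq 0$ at leading order and $\Imm F_1 \leq 0$ from the maximum principle. Everything else is standard Keldysh--Radzievskii machinery, and the extension to higher-multiplicity non-real eigenvalues proceeds through the usual Keldysh formalism for associated vectors.
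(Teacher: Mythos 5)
Your proof is correct and establishes the same conclusion, but by a genuinely different route from the paper's.  The paper's argument takes the contour $\Gamma$ consisting of a large semicircle $C_R$ in the lower half-plane together with the real axis indented by small semicircles $C_\varepsilon^k$ around the finitely many negative-type real poles; Cauchy's theorem gives
\[
\pi i \sum_{k} \varepsilon_k \,|(f(\lambda_k),w_k)|^2 \;+\; \mathrm{V.P.}\!\int_{-\infty}^{\infty} F(\lambda)\,d\lambda \;=\; 0,
\]
and separating imaginary parts (using $\varepsilon_k<0$ and $\Imm F\le 0$ on $\mathbb{R}$) forces every $(f(\lambda_k),w_k)=0$ and $\Imm F\equiv 0$ on $\mathbb{R}$, after which Schwarz reflection and Liouville finish.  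You instead subtract off the singular parts, set $F_1=F-\sum_k \alpha_k/(\lambda-\lambda_k)$, apply the maximum principle (or the Poisson representation) to the bounded harmonic function $\Imm F_1$ on $\mathbb{C}^-$ to get $\Imm F_1\le 0$ there, and then read off the sign of $\Imm F_1(-iR)$ from the Laurent coefficient $-\sum_k\alpha_k$ to squeeze $\sum_k\alpha_k=0$.  Both arguments use exactly the same inputs ($\Imm F\le 0$ on $\mathbb{R}$, $F=O(|\lambda|^{-2})$, realness and non-positivity of the $\alpha_k$), and both then defer to Theorem~5.1 once $f\perp\mathcal{E}_R^+$ is established; your version buys a cleaner sign-extraction step (no contour indentation, no principal-value integral), while the paper's contour version is closer in spirit to the later resonant-case arguments of Section~7.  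Your $\lambda\mapsto-\lambda$ reduction of the $\mathcal{E}^-$ statement to the $\mathcal{E}^+$ one (with the diagonal similarity $\mathrm{diag}(1,-1,\dots,(-1)^{l-1})$ on $H^l$) is also correct and is an explicit version of what the paper leaves implicit in the phrase ``Consider, for example, the system $\mathcal{E}^+$.''  Your observation that dissipativity forces $\Imm(A'(\lambda_k)w_k,w_k)=0$ at a real eigenvalue is a nice direct justification of the realness of the residue, equivalent to the paper's appeal to Note~6.1 ($\Ker A(\lambda_k)=\Ker A^*(\lambda_k)$).
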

\begin{pf}
Consider, for example, the system $\mathcal{E}^+$. Suppose there exists a vector $f=\{f_1,...,f_l\} \in H^l$, which is orthogonal to the system $\mathcal{E}^+$, i.e.	
$$ \lambda_k(f_1,y_k) + (f_2,\lambda_k y_k) + ... + (f_l, \lambda_n^{l-1} y_k) = 0 $$
for all $\lambda_k$ with $\Imm \lambda_k > 0$ and real $\lambda_k$ is of positive type. Consider the meromorphic scalar function
$$
F(\lambda) = ([A^*(\lambda)]^{-1} f(\lambda), f(\bar{\lambda})),\ \ f(\lambda) = f_1 + \lambda f_2 + ... + \lambda^{l-1} f_{l-1},
$$
where $A^*(\lambda) = [A(\bar{\lambda})]^*$. \\
Repeating the arguments in the proof of Theorem 5.1, we conclude that $F(\lambda)$ has no poles in the lower half-plane as well as in the real points $\lambda_k$ of positive type. Hence, on the real axis the function $F(\lambda)$ may have poles only at points $\lambda_k$ of negative type and according to Proposition 2 the principal part of $F(\lambda)$ at pole $\lambda = \lambda_k$ is equal to
\begin{equation}
\frac{\varepsilon_k(f(\lambda_k),w_k)(w_k,f(\lambda_k))}{\lambda - \lambda_k} = \frac{\varepsilon|(f(\lambda_k),w_k)|^2}{\lambda - \lambda_k} .
\end{equation}
Consider the contour $\Gamma = C_R \cup I_1 \cup C_\varepsilon^1 \cup ... \cup I_q \cup C_\varepsilon^q \cup I_{q+1}$, which is depicted in Figure 4 ($C_R$ is a large semi-circle of radius $R$, $C_\varepsilon^k, k=1,...,q$, are small semicircles of radii $\varepsilon$ with centers in poles $\lambda_1, \lambda_1,...,\lambda_q$ and $I_1,...,I_{q+1}$ are intervals on the real axis). Then
\begin{equation}
\int_\Gamma {F(\lambda)d\lambda} = \big( \int_{C_R} + \sum_{k=1}^{q+1} \int_{I_k} + \sum_{k=1}^q \int_{C_{\varepsilon}^k} \big) F(\lambda)d\lambda = 0	
\end{equation}
Since $\Ker A_n = 0$, we deduce (see the estimate (5.13)) that $|F(\lambda)| = O(|\lambda|^{-2})$ when $\lambda \rightarrow \infty$, hence
\begin{equation}
\int_{C_R} {F(\lambda)d\lambda} \rightarrow 0,\ \ when \ R\rightarrow \infty.
\end{equation}
According to (17), we have
\begin{equation}
\sum_{k=1}^q \int_{C_{\varepsilon}^k} F(\lambda) \rightarrow \pi i \sum_{k=1}^q \varepsilon_k |(f(\lambda_k), w_k)|^2 \ \ if \ \varepsilon \rightarrow 0,
\end{equation}
and
\begin{equation}
\sum_{k=1}^q \int_{I_k} F(\lambda) d\lambda \rightarrow V.P. \int_{-\infty}^{\infty} F(\lambda) d \lambda \ \ if \ \varepsilon \rightarrow 0, \ R \rightarrow \infty.
\end{equation}
Hence, from (18) - (21) we have
\begin{equation}
\pi i \sum_{k=1}^q {\varepsilon_k |(f(\lambda_k),w_k)|^2} + V.P. \int _{-\infty} ^{\infty} F(\lambda) d \lambda = 0.
\end{equation}
Now notice that $\varepsilon_k<0$ in (22) for all $k$ and $\Imm F(\lambda) \leq 0$ if $\lambda \in \mathbb{R}$ (see Theorem 5.1). Then we immediately obtain from (22) that $(f(\lambda_k),w_k) = 0,\ k=1,...,q$, and $\Imm F(\lambda) \equiv 0$ for $\lambda \in \mathbb{R}$. Hence, $F(\lambda)$ is a real fuction on the real axis and has no real poles. Using the Riemann-Schwartz symmetry principal and Liouville's theorem, we obtain $F(\lambda) \equiv 0$. Then condition (16) allows us to conclude $f(\lambda) \equiv 0$.
\end{pf}
\begin{theorem}[Radzievskii ${[2]}$, 1987]
Let the conditions of Theorem 1 hold with the exception of conditions $\Ker A_n = 0$ and $\dim H<\infty$. Then the system $\mathcal{E}^+$ and $\mathcal{E}^-$ are linearly independent.
\end{theorem}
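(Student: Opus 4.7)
The idea is to combine the rational-function technique of Theorem~5.2 with the real-axis residue-and-contour trick used in Theorem~6.1. Starting from a purported finite relation
\[
\sum_{k,h}c_k^h\hat y_k^h+\sum_{j}d_j\hat w_j=0,
\]
where $k$ ranges over eigenvalues $\lambda_k$ with $\Imm\lambda_k>0$ (associated vectors indexed by $h$) and $\mu_j$ ranges over real simple eigenvalues of positive type with eigenvectors $w_j$, form the meromorphic $H$-valued function
\[
d(\lambda)=\sum_{k,h}\bigl(\text{principal part at }\lambda_k\text{ built from }c_k^h,\,y_k^h\bigr)+\sum_{j}\frac{d_jw_j}{\lambda-\mu_j},
\]
arranged (as in the proof of Theorem~5.2) so that the hypothesised relation is precisely the statement that the first $l$ Laurent coefficients of $d$ at infinity vanish; hence $\|d(\lambda)\|=O(|\lambda|^{-l-1})$ as $\lambda\to\infty$. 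Study the scalar function $F(\lambda)=(A(\lambda)d(\lambda),d(\bar\lambda))$, where $d(\bar\lambda)$ is interpreted through the inner-product convention already used in Sections~5 and~6 so that $F$ is genuinely meromorphic in $\lambda$.

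First verify that $A(\lambda)d(\lambda)$ is entire: at each $\lambda_k$ the principal part of $d$ is generated by the chain $y_k^h$ and the cancellation is automatic, while at each real $\mu_j$ the identity $A(\mu_j)w_j=0$ together with $\Ker A(\mu_j)=\Ker A^*(\mu_j)$ (Note~4.2, valid under condition~b)) kills the potentially-singular cross-term $(A(\mu_j)d_{\mathrm{reg}}(\mu_j),w_j)$. Thus the poles of $F$ are only at the $\bar\lambda_k$ (open lower half-plane) and the $\mu_j$ (real axis). Boundedness of the $A_j$ gives $\|A(\lambda)\|=O(|\lambda|^n)=O(|\lambda|^{2l})$, so together with the decay of $d$ one has $|F(\lambda)|=O(|\lambda|^{-2})$ at infinity. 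A direct residue computation then yields
\[
\mathrm{Res}_{\mu_j}F=|d_j|^2(A'(\mu_j)w_j,w_j)>0,
\]
since $\mu_j$ is of positive type.

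Next, integrate $F$ over the boundary of the upper half-disc of radius $R$, indented into the upper half-plane around each $\mu_j$. The interior contains no pole of $F$, so the integral vanishes; letting $R\to\infty$ and $\varepsilon\to 0$, the large semicircle contributes $o(1)$ by the $O(|\lambda|^{-2})$ estimate, and the small semicircles contribute $-i\pi\sum_j\mathrm{Res}_{\mu_j}F$, giving
\[
\mathrm{P.V.}\!\int_{-\infty}^{\infty}F(\lambda)\,d\lambda=i\pi\sum_j|d_j|^2(A'(\mu_j)w_j,w_j).
\]
Taking imaginary parts: on $\mathbb R$ one has $F(\lambda)=(A(\lambda)d(\lambda),d(\lambda))$, so condition~b) forces the left-hand side to be $\leq 0$, whereas the right-hand side is $\geq 0$. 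Both must therefore vanish, so every $d_j=0$ and $\Imm F\equiv 0$ on $\mathbb R$.

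With the $d_j$ eliminated, $F$ has no real poles and is real on $\mathbb R$, so by the Riemann-Schwartz principle it extends to an entire function, which by Liouville and $|F|=O(|\lambda|^{-2})$ is identically zero. At the real point $\lambda_0$ of condition~c), the same induction on Taylor coefficients used at the end of the proof of Theorem~5.2 (based on $(A(\lambda_0)d^{(j)}(\lambda_0),d^{(j)}(\lambda_0))=0$ and $0\notin\theta(A(\lambda_0))$) forces $d^{(j)}(\lambda_0)=0$ for all $j\geq 0$, and the uniqueness theorem yields $d\equiv 0$ as a meromorphic function. All residues of $d$ must then vanish, giving $c_k^h=0$ for all $k,h$. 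The argument for $\mathcal E^-$ is symmetric: integrate in the lower half-plane and use $(A'(\mu_j)w_j,w_j)<0$ at the negative-type poles. The main technical nuisance is the residue computation at the real pole $\mu_j$ itself --- verifying the correct cancellation in $A(\lambda)d(\lambda)$, confirming via the inner-product convention that $F$ is honestly meromorphic in one variable, and tracking the sign of $\mathrm{Res}_{\mu_j}F$ so that it combines correctly with the dissipativity estimate --- while the bookkeeping through the associated vectors $y_k^h$ at the $\lambda_k\in\mathbb C^+$ (which produce higher-order poles in $d$) is routine but notationally heavy.
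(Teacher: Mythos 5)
Your proof reproduces the paper's argument essentially verbatim. Both constructions form the scalar meromorphic function $F(\lambda)=(A(\lambda)d(\lambda),d(\bar\lambda))$ with $d$ a rational $H$-valued function whose Laurent coefficients at infinity encode the hypothesized dependence (so that $d(\lambda)=O(\lambda^{-l-1})$ and $|F(\lambda)|=O(|\lambda|^{-2})$); both observe that $F$ is holomorphic in the open upper half-plane, that the residues at the real poles are $|d_j|^2(A'(\mu_j)w_j,w_j)>0$ while $\Im F\le 0$ on $\mathbb R$ by condition b), then integrate over the indented upper-half-plane contour of Figure~5 to force all real residues and $\Im F|_{\mathbb R}$ to vanish, and finally conclude $F\equiv 0$ via the Schwarz reflection principle and Liouville's theorem and $d\equiv 0$ via condition c). The only departure is that the paper assumes all eigenvalues semi-simple in Section~6 (and defers the case with associated vectors to Section~7 via the modified $d(\lambda)$ of formula (7.1)), whereas you carry the associated-vector bookkeeping along from the start; in doing so you should make explicit that the principal part of $d$ at a nonreal $\lambda_k$ must be built from the truncated chains $\sum_{j\le h}w_k^j/(\lambda-\lambda_k)^{h+1-j}$ — the functions whose Laurent tails at infinity are the derived chains $\hat w_k^h$ — for the "automatic" cancellation in $A(\lambda)d(\lambda)$ to actually hold, since it fails for a naive principal part with arbitrary coefficients.
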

\begin{pf}
Following the proof of Theorem 5.2, consider the meromorphic function
$$
F(\lambda) = (A(\lambda) d(\lambda), d(\bar{\lambda})),\ \ \ \ \ d(\lambda) = \sum_{k=1}^N {\frac{c_k w_k}{\lambda - \lambda_k}},
$$		
where $w_k$ are the eigenvectors corresponding to eigenvalues $\lambda_k$ with $\Imm \lambda_k>0$ and real $\lambda_k$ of positive type. It was proved in Theorem 5.2 that $F(\lambda)$ is holomorphic in the open upper half-plane. Obviously, the principal part of $F(\lambda)$ at the real pole $\lambda_k$ is equal to
$$\frac{(A'(\lambda_k)w_k,w_k)|c_k|^2}{\lambda - \lambda_k}.$$
By virtue of Note 1 it follows that
$$sign(A'(\lambda_k)w_k,w_k) = sign\ \varepsilon_k .$$
Hence, all residues of the function $F(\lambda)$ corresponding to real poles are positive, $\Imm F(\lambda) \leq 0 $ for all $\lambda \in \mathbb{R}$ and $|F(\lambda)| = O(|\lambda|^{-2})$ if $\lambda \rightarrow \infty$. Taking the contour $\Gamma$ depicted in Figure 5 and repeating the arguments of Theorem 1, we conclude that $F(\lambda) \equiv 0$. Now condition (16) shows that $d(\lambda) \equiv 0$, hence the system $\mathcal{E}^+$ is linearly independent. The same arguments apply to the system $\mathcal{E}^-$.
\end{pf}
As a corollary of Theorems 1 and 2 and Proposition 1 we obtain the following result.
\begin{theorem}
Let $\dim H < \infty$ and the pencil $A(\lambda)$ satisfy the conditions of Theorem 5.1. Then for any given initial vectors $\{\phi_j\}_0^{l-1}$ there exists a unique solution of the problem (13), (14) satisfying the Mandelstam radiation principle at infinity.
\end{theorem}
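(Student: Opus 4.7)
The plan is to combine the three preceding results of Section 6 (Proposition 3, Theorem 1, Theorem 2) with a simple dimension count, which is possible only because $\dim H<\infty$.

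First I would invoke Proposition 3 of this section, which reduces the existence--uniqueness question for the half-range Cauchy problem (13)--(14) to the purely algebraic question: is the system $\mathcal{E}^+$ of Keldysh derived chains of length $l$ (corresponding to eigenvectors in $E^+$) a basis in $H^l$? Thus it suffices to establish the basis property.

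Next I would apply Theorem 1 of this section to conclude that $\mathcal{E}^+$ is complete in $H^l$, and Theorem 2 of this section to conclude that $\mathcal{E}^+$ is linearly independent. (The hypotheses of Theorem 5.1 -- namely $\Ker A_n=0$, the dissipativity condition $\Imm(A(\la)x,x)\le 0$ for $\la\in\mathbb{R}$, and the existence of $\la_0\in\mathbb{R}$ with $0\notin\theta(A(\la_0))$ -- are exactly what Theorems 1 and 2 need.) In an arbitrary Hilbert space, completeness plus linear independence does not imply the basis property. However, in finite dimension the situation is different: a linearly independent complete set in a finite-dimensional space is necessarily a Hamel basis, and every Hamel basis in finite dimension is automatically a basis in the sense of Definition 2.4.

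The main (and only nontrivial) obstacle is really that one has to be sure nothing goes wrong when both $\mathcal{E}^+$ and $\mathcal{E}^-$ are assembled; one would like to cross-check by the following dimension count. Put $m=\dim H$, $n=2l$. The full system $\mathcal{E}_R^+\cup \mathcal{E}_0^-$ (arising from Theorem 1.1 applied to the linearization $\A$) is a basis of $H^n$, so its cardinality equals $nm=2lm$. On the other hand $\mathcal{E}^+$ and $\mathcal{E}^-$ together exhaust this set (each real eigenvalue of $A(\la)$ is assigned either to $E^+$ or to $E^-$ according to its sign characteristic $\varepsilon_k$, and simplicity of real eigenvalues means each such vector is counted once). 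Since each of $\mathcal{E}^\pm$ is linearly independent in $H^l$ (dimension $lm$), each has cardinality at most $lm$; since their union has cardinality $2lm$, each must have exactly $lm$ elements. Combined with completeness from Theorem 1, this shows $\mathcal{E}^+$ is a basis in $H^l$, and applying Proposition 3 finishes the proof.
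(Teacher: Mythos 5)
Your proof is correct and matches the paper's approach exactly: the paper records Theorem~3 as an immediate corollary of Theorem~1 (completeness of $\mathcal{E}^+$), Theorem~2 (linear independence of $\mathcal{E}^+$), and Proposition~3 (solvability of the half-range Cauchy problem iff $\mathcal{E}^+$ is a basis in $H^l$), with your dimension count $x^+ + x^- = 2lm$ appearing in the ensuing Note~1 as the duality principle. One small slip in your cross-check: $\mathcal{E}_R^+\cup\mathcal{E}_0^-$ consists of length-$l$ Keldysh chains and therefore lives in $H^l$, not $H^n$; it is not a basis of $H^n$, but its cardinality is still $2lm$ because these length-$l$ chains are in bijection with the length-$n$ chains which do form a basis of $H^n$ by Theorem~1.1, so the count and the conclusion survive.
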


\begin{note}
If $\Ker A_n = 0$ and $\dim H<\infty$ then the duality principle is valid: if the systems $\mathcal{E}^+$ and $\mathcal{E}^-$ are complete in $H^l$ then they are linearly independent and vice versa. Indeed, let $x^+\ (x^-)$ denote the number of vectors of the system $\mathcal{E}^+\ (\mathcal{E}^-)$ and let $\dim H = m$. It follows from the definition of the systems $\mathcal{E}^\pm$ and Theorem 1.1 that
\begin{equation}
x^+ + x^- = mn = 2ml.
\end{equation}
If $\mathcal{E}^+$ and $\mathcal{E}^-$ are complete then $x^+ \geq ml$ and $x^- \geq ml$. Now (23) implies $x^+ = x^- = ml$, hence the system $\mathcal{E}^+ \ (\mathcal{E}^-)$ is a basis. On the contrary if $\mathcal {E}^+$ and $\mathcal{E}^-$ are linearly independent then $x^+ \leq ml,\ x^- \leq ml$ and we obtain again from (23) that $\mathcal{E}^+\ (\mathcal{E}^-)$ is a basis.
\end{note}

\begin{figure}[t]
\begin{center}
\begin{tikzpicture}[scale=1.5]
\draw [thin, ->] (0,0)--(4.5,0);
\draw [thin, ->] (0,0)--(0,1.2);
\draw [thick, ->] (0,0)--(-0.8,-0.8);
\draw [thin] (0,0.6)--(4,0.6);
\draw [thin] (0,-0.3)--(4,-0.3);
\node [right] at (4.6,0) {$z$};
\node [below right] at (0,1.2) {$y$};
\node [left] at (-0.9,-0.8) {$x$};
\end{tikzpicture}
\par\medskip Figure~1
\end{center}
\end{figure}

\begin{figure}[t]
\begin{center}
\begin{tikzpicture}[scale=1.5]
\draw [thin, ->] (0,0)--(5.2,0);
\draw [thin, ->] (0,0)--(0,1.4);
\draw [thin] (0,0.8)--(5,0.8);
\node [below] at (5,0) {$z$};
\node [left] at (0,1.4) {$x$};
\node [below left] at (0,0) {$0$};
\node [left] at (0,0.8) {$1$};
\node [left] at (0,0.4) {$v(x,0)=\varphi$};
\node [below right] at (1.4,0) {$v(0,z)=0$};
\node [above right] at (1.4,0.8) {$v(1,z)=0$};
\node [right] at (1.2,0.5) {$\Delta v-\omega^2 v=0$};
\end{tikzpicture}
\par\medskip Figure~2
\end{center}
\end{figure}

\begin{figure}[t]
\begin{center}
\begin{tikzpicture}[scale=1.5]
\draw [thin, ->] (-1.6,0)--(3.5,0);
\draw [thin, ->] (0,-0.5)--(0,1.1);
\end{tikzpicture}
\par\medskip Figure~3\par
(wave-number distribution of Helmholtz equation.)
\end{center}
\end{figure}

\begin{figure}[t]
\begin{center}
\begin{tikzpicture}[scale=1.5]
\draw [thin] (-2.5,0)--(2.5,0);
\draw [thin, ->] (0,-2.1)--(0,0.8);
\node [above] at (-2.5,0) {$I_1$};
\node [above] at (-1.5,0) {$I_2$};
\node [above] at (1.7,0) {$I_{q+1}$};
\node [below] at (-2,0) {$C^1_\varepsilon$};
\node [below] at (-1,0) {$C^2_\varepsilon$};
\node [below] at (1.2,0) {$C_q^\varepsilon$};
\end{tikzpicture}
\par\medskip Figure~4
\end{center}
\end{figure}

\begin{figure}[t]
\begin{center}
\begin{tikzpicture}[scale=1.5]
\draw [thin] (-2.5,0)--(2.5,0);
\draw [thin, ->] (0,-0.5)--(0,1.5);
\node [below] at (-2.5,0) {$I_1$};
\node [below] at (1.7,0) {$I_{q+1}$};
\node [above] at (-1.7,0) {$C^1_\varepsilon$};
\node [above] at (1.4,0) {$C_q^1$};
\node [right] at (-1,1.5) {$C_R$};
\node [right] at (3,1.2) {$\lambda$-plane};
\end{tikzpicture}
\par\medskip Figure~5
\end{center}
\end{figure}

\newpage
\section{Generalized Mandelstam radiation principle (resonant case). The factorization of a quadratic pencil}

We have assumed in proving theorems on half-range completeness and half-range minimality that all eigenvalues of the operator pencil A($\lambda$) are simple. Certainly, the simplicity of the non-real eigenvalues is not essential. For example, if vector $f= \bigl\{ f_1 , \dots , f_{\ell} \bigr\}$ is orthogonal to all Keldysh derived chains of length l corresponding to eigenvalues $\lambda_k$ with $\Imm~\lambda_k > 0 $ then the vector function
$$
[A^\ast(\lambda)]^{-1}(f_1 + \lambda f_2 + \dots + \lambda^{\ell-1} f_\ell)	
$$
is holomorphic in the lower half-plane~(see Note 3.2). Hence, the proof of Theorem~6.1 does not change. Similarly, in Theorem~6.2 we have only to replace the function $d(\lambda)$ by the function
\begin{equation}
d(\lambda) = \sum_{\Imm\lambda_k>0} ~\sum_{h=0}^{p_k} \frac{c_{k,h}					 ~w^h_k}{(\lambda - \lambda_k)^{p_k + 1 - h}}~+~\sum_{\lambda_k \in 					 \mathbb {R},  \varepsilon_k > 0} \frac{c_k w^0_k}{\lambda-\lambda_k}, 		 \end{equation}
where $w^0_k,w^1_k,\dots,w^{p_k}_k$ are the chains of canonical systems corresponding to the eigenvalues $\lambda_k$ ($\lambda_k$ is repeated as many times as its geometric multiplicity). But if the real eigenvalues of $A(\lambda)$ are not simple, then we come to a new problem, which is serious even in the finite dimensional case. In this situation we have to select the proper subset of elements from the canonical system
\begin{equation}
w^0_k,w^1_k,\dots,w^{p_k}_k, ~~k = N_1,\dots,N_2
\end{equation}
corresponding to the real eigenvalue $\lambda_k = c$ (the number $N_2-N_1+1$ is the geometric multiplicity of c).

Actually, we can not select the proper subset from \underline {any} canonical system (1).	First we have to choose a special canonical system and then to divide it. The problem on selection of elements from the canonical system (1) was originated in the paper of Kostyuchenko and Orazov [2] (1975). In this paper an important supplement was made to the remarkable theorem of Krein and Langer [1], which asserts: \textit{if $L(\lambda) = \lambda^2 I + \lambda B + C$, where $B = B^{*}$ is bounded and C is a positive compact operator then $L(\lambda)$ admits a factorization
$$
L(\lambda) = (\lambda I + B -Z)(\lambda I + Z),
$$
such that the spectrum of the operator Z lies in the closed upper-half plane and coincides with the spectrum of $L(\lambda)$ in the open upper-half plane. } A natural question arises: How to divide the Jordan chains of $L(\lambda)$, corresponding to the real eigenvalues $\lambda_k \in \sigma (L)$, to obtain the Jordan chains of Z? This problem can be solved by using a geometrical approach, because the problem on factorization of $L(\lambda)$ is equivalent to the existence of the maximal invariant subspace for the linearization $\mathscr{Z}$ in Klein space with indefinite metric G, where
$$
\mathscr{Z} =
\begin{bmatrix}
B & C^{\frac{1}{2}} \\ -C^{\frac{1}{2}} & 0
\end{bmatrix} ,
G = \begin{bmatrix}
I & 0 \\0 &-I
\end{bmatrix}.
$$
Investigations in this field have a long history. Important ideas on connection between factorization and existence of the maximal invariant endspace were developed in the paper Langer~[3]. For finite dimensional space a comprehensive treatment of this theory can be found in the books of Gohberg, Lancaster and Rodman [2-4]. However, we will use an analytic, rather than a geometric approach. First we will prove the existence of a special canonical system and then we will be able to divide it and to select the proper part. The following result is due to Kostyuchenko and Shkalikov [1] (1983).
\begin{theorem}[Theorem on the Existence of a Normal Canonical System]

Let $A(\lambda)$ be holomorphic self-adjoint\footnote{An operator function $A(\lambda)$ is called self-adjoint in a neighborhood of point c if there exist $\varepsilon > 0$ such that $A(\lambda) = [A(\lambda)]^\ast$ for all $\lambda : c - \varepsilon < \lambda < c + \varepsilon$. } operator function in a neighborhood of the real point c and c be the point of discrete spectrum of $A(\lambda)$, i.e., the resolvent $A^{-1}(\lambda)$ has a pole at $\lambda = =c$ and the principal part of $A^{-1}(\lambda)$ at this pole has the representation (cf.(2.2))
\begin{multline}
\sum_{k=N_1}^{N_2} \frac{(\cdot,z^0_k)w^0_k}{(\lambda - c)^{p_k + 1}} +
\frac{(\cdot,z^1_k)w^0_k + (\cdot,z^0_k)w^1_k}{\lambda - c)^{p_k}} +\dots\\
{} + \frac{(\cdot,z^{p_k}_k)w^0_k + (\cdot,z^{p_k-1}_k)w^1_k + \dots + (\cdot,z^0_k)w^{p_k}_k}{\lambda - c)}
\end{multline}
where
\begin{equation}
z^0_k,z^1_k,\dots, z^{p_k}_k,~~~~k = N_1,\dots,N_2
\end{equation}
is the adjoint canonical system to the canonical system (2) of EAV of the operator function $A(\lambda)$.

Then a canonical system (2) can be chosen in such a way that
\begin{equation}
z^h_k = \varepsilon_k w^h_k, ~~~k = N_1,\dots,N_2;~~h = 0,1,\dots,p_k,
\end{equation}
where $\varepsilon_k = \pm 1$.
\end{theorem}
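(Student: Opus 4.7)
The plan is to exploit the self-adjointness of $A(\lambda)$ at real points near $c$ to equip the root subspace at $c$ with an intrinsic non-degenerate Hermitian form, then appeal to the canonical form theorem for a nilpotent self-adjoint operator on a finite-dimensional indefinite inner product space.

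First I would observe that since $A(\lambda)=A^*(\lambda)$ on a real neighborhood of $c$, the equality extends to a full complex neighborhood of $c$ by unique analytic continuation. Consequently the adjoint canonical system $\{z_k^h\}$ at $\bar c = c$ produced by Theorem~2.1 is itself a canonical system of EAV for $A(\lambda)$ at $c$; both $\{w_k^h\}$ and $\{z_k^h\}$ span the same finite-dimensional root subspace $H_c \subset H$ and realise identical Jordan structure with partial multiplicities $p_k+1$.

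Next I would construct a Hermitian form on $H_c$. Writing the principal part as $P(\lambda)=\sum_{s} T_s(\lambda-c)^{-s}$, the identity $[A^{-1}(\lambda)]^*=A^{-1}(\bar\lambda)$ together with $\bar c = c$ forces $T_s^*=T_s$ for every $s$. Reading off the explicit coefficients from the displayed principal part, one defines a sesquilinear form $[\cdot,\cdot]$ on $H_c$ which, in any canonical basis, is concentrated on pairs of chain-indices summing to $p_k$ within a single chain; the self-adjointness $T_s^*=T_s$ makes the form Hermitian, and the leading Laurent coefficient (corresponding to the maximal-length chains) ensures non-degeneracy. Manipulating the freedom in choosing a canonical system then shows the form is intrinsic to $A(\lambda)$ at $c$. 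Now introduce the nilpotent $N: H_c\to H_c$ defined by $w_k^h\mapsto w_k^{h-1}$, $w_k^0\mapsto 0$. A direct combinatorial check with the principal part gives $[Nu,v]=[u,Nv]$, i.e.\ $N$ is self-adjoint with respect to $[\cdot,\cdot]$.

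This places us in the setting of the classical sign-characteristic theorem (Gohberg, Lancaster and Rodman [2]): there exists a Jordan basis $\{\tilde w_k^h\}$ for $N$, mutually $[\cdot,\cdot]$-orthogonal across chains and satisfying $[\tilde w_k^j,\tilde w_k^m]=\varepsilon_k\delta_{j+m,p_k}$ with $\varepsilon_k\in\{+1,-1\}$ after a final scalar rescaling within each chain (an admissible change of canonical system). Substituting the ansatz $\tilde z_k^h:=\varepsilon_k\tilde w_k^h$ into the principal part and matching Laurent coefficients against the now diagonal form reproduces exactly the same shape; hence by the uniqueness of the adjoint canonical system in Theorem~2.1 the identity $\tilde z_k^h=\varepsilon_k\tilde w_k^h$ is forced.

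The principal obstacle lies in the middle step: extracting from the pointwise self-adjointness of the Laurent coefficients $T_s$ a form on $H_c$ that is both Hermitian \emph{and} invariant under admissible changes of canonical basis, and verifying $[Nu,v]=[u,Nv]$. The combinatorial bookkeeping across chains of different lengths is delicate because a given Laurent coefficient $T_s$ receives contributions only from chains with $p_k+1\geq s$, but once one organises the chains by decreasing length and proceeds inductively, the computation is algebraically equivalent to the classical construction of the sign characteristic for self-adjoint operator pencils.
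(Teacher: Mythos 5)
Your proposal is correct in outline but follows a genuinely different route from the paper.  The paper's proof is a hands-on inductive construction: it isolates the Laurent coefficients $L_h$ (the operators multiplying $(\lambda-c)^{-p_1-1+h}$), uses $L_0=L_0^*$ to $G$-orthonormalise the leading eigenvectors, then works its way down through $L_1,L_2,\dots,L_{p_1}$, at each stage splitting the new coefficient into a part already ``absorbed'' by the previous normalisation and a self-adjoint remainder, and re-choosing associated vectors so that the remainder becomes diagonal with signs $\pm 1$.  Nothing external is invoked.  Your proposal instead packages the problem into the classical indefinite-inner-product machinery: extract from the Laurent data a Hermitian form on the root subspace $H_c$, verify the down-shift $N$ is self-adjoint with respect to it, and invoke the Gohberg--Lancaster--Rodman canonical form for a nilpotent self-adjoint operator in a finite-dimensional Krein space; then match back against the principal part.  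Conceptually this is the same sign-characteristic story, but the paper reproves it from scratch whereas you cite it.  The two places where your outline genuinely needs more than hand-waving, and where the paper's explicit approach pays off: (i) the form $[\cdot,\cdot]$ must be shown to be intrinsic to $A(\lambda)$ at $c$ -- it is naturally expressed through the pairing $(w_k^h,z_j^s)$, which involves the adjoint system that itself changes with the canonical system, so one must verify these pairings only depend on $A$ and $c$ (this is where one either invokes linearization or carries out the Laurent bookkeeping the paper does); and (ii) once the basis is $[\cdot,\cdot]$-diagonalised, you still have to prove that the new adjoint system is $\{\varepsilon_k\tilde w_k^h\}$, which requires showing that substituting this ansatz reproduces \emph{every} Laurent coefficient of the principal part and appealing to the uniqueness clause of the theorem on holomorphic operator functions -- a check your sketch defers.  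Both gaps are fillable, so the proposal is sound, but closing them essentially reconstructs the computation the paper performs directly.
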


\begin{pf}
Let $N_1 = 1,N_2 = N$ and $q \ge 1$ be such an integer that $p_1 = p_2 = \dots = p_q > p_{q+1} \ge \dots \ge p_N$. Denote by $L_h$ the operators of finite range coinciding with the coefficients of the powers $(\lambda - c)^{-p_1 - 1 - h}, h = 0,1,\dots,p_1$, in the representation (3). Evidently, the operators $L_h$ are self-adjoint. In particular, the operator $L_0$ is self-adjoint, hence we can find the vectors $e^1_1,\dots,e^0_q$ such that
$$
L_0 = \sum_{k = 0}^q \varepsilon_k(\cdot,e_k)e_k, ~~\varepsilon_k = \pm 1.
$$
Evidently, the vectors $\{ z^0_k\}^q_1$ and $\{ w^0_k\}^q_1$ lie in $\Span\{ e^0_k\}^q_1$. Then we obtain from (3) the following representation
\begin{equation}
L_1 = \sum_{k = 1}^q \varepsilon_k \biggl[ (\cdot,w^1_k)e^0_k + (\cdot,e^0_k)w^1_k + (\cdot,f_k)e^0_k \biggr] +
\sum_{k = q+1}^{q+q_1} (\cdot,z^0_k)w^0_k ,
\end{equation}	
where $f_k = z^1_k - w^1_k$ and $q_1$ is the number of chains with length equal to $p_1 - 1$.

Denote $H_1 = \Span\{ e^0_k\}^q_1, H_2 = \Span\{ w^0_k\}^{q+q_1}_{q+1}$. Since $\{ w^0_k\}^{q+q_1}_1$ are linearly independent (this follows from the definition of a canonical system), we have $H_1 \bigcap H_2 = \emptyset$. Hence, we obtain the unique representation $z^0_j = \varphi_j + \psi_j$, where $\varphi_j \in H_1, \psi_j \in H_2$. Consider the operators
$$
B_1 = \sum_{k = q+1}^{q+q_1}(\cdot,\psi_k)w^0_k,~~~
C_1 = \sum_{k=1}^q \varepsilon_k(\cdot,f_k)e^0_k +
\sum_{k = q+1}^{q+q_1}(\cdot, \varphi_k)w^0_k
.$$
The operator $L_1$ is self-adjoint and from the representation (6) we find that $B_1 + C_1$ is self-adjoint. Now notice, that in the orthogonal basis consisting of elements \footnote{ We may choose a canonical system (2) so that $\{w^0_k\}^N_1$ is an orthogonal system. Then the system $e^0_1,\dots,e^0_q,~w^0_{q+1},\dots,w^0_{q+q_1}$ is orthogonal.} $e^0_1,\dots,e^0_q,~w^0_{q+1},\dots,w^0_{q+q_1}$ we have the matrix representation $B_1 = \{b_{jk}\}$ and $b_{jk}$ may be not equal to zero only in the right lower quadrant, i.e. if $\min \{j,k\} > q$. On the contrary, all elements of the matrix $C_1 = \{c_{jk}\}$ in the same basis are equal to zero if $\min \{j,k\}>q$. Since $B_1 + C_1$ is self-adjoint, we have in this situation that both operators $B_1$ and $C_1$ are self-adjoint. Hence there exists a basis $\{e^0_k\}^{q+q_1}_{q+1}$ in the space $H_2$ such that
$$
B_1 = \sum_{k = q+1}^{q+q_1} \varepsilon_k (\cdot,e^0_k) e^0_k
$$

Taking into account the matrix representation of the operator $C_1 = C^\ast_1$, we can choose the elements $x_1,\dots,x_q$ such that
$$
C_1 = \sum_{k=1}^{q} (\cdot,x_k)e_k + (\cdot,e_k)x_k
.$$
Then denoting $e^1_k = w^1_k + \varepsilon_k x_k$, we obtain from representation (6) the following one
$$
L_1 = \sum_{k=1}^q \varepsilon_k[(\cdot,e^1_k)e^0_k + (\cdot,e^0_k)e^1_k] + \sum_{k=q+1}^{q+q_1} \varepsilon_k(\cdot,e^0_k)e^0_k
.$$

Next we can represent the operator $L_2$ in the form
\begin{equation}
\begin{split}
L_2 = \sum_{k=1}^q \varepsilon_k[(\cdot,w^2_k)e^0_k + (\cdot,e^1_k)e^1_k + (\cdot,e^0_k)w^2_k + (\cdot,f_k)e^0_k]  +  \\
+\sum_{k=q+1}^{q+q_1} \varepsilon_k[(\cdot,w^1_k)e^0_k + (\cdot,e^0_k)w^1_k + (\cdot,f_k)e^0_k]  +
\sum_{k=q+q_1+1}^{q+q_1+q_2} (\cdot,z^0_k)w^0_k
\end{split}	
\end{equation}
where

$$
f_k=
\begin{cases}
z^2_k - w^2_k,&\text{if $k=1,\dots,q,$}\\
z^1_k - w^1_k,&\text{if $k=q+1,\dots,q+q_1.$}\\
\end{cases}
$$

Now we can see from representation (7) that we can apply the same arguments as before and obtain the representation
\begin{multline*}
L_2 = \sum_{k=1}^q \varepsilon_k[(\cdot,e^2_k)e^0_k+(\cdot,e^1_k)e^1_k+(\cdot,e^0_k)e^2_k] +{}\\
{}+\sum_{k=q+1}^{q+q_1} \varepsilon_k[(\cdot,e^1_k)e^0_k+(\cdot,e^0_k)e^1_k] + \sum_{k=q+q_1+1}^{q+q_1+q_2} \varepsilon_k(\cdot,e^0_k)e^0_k.
\end{multline*}
The same arguments can be repeated for operators $L_3,\dots,L_{p_1}$. Then we obtain the assertion of the theorem.	
\end{pf}

\begin{definition}
A canonical system (2) satisfying to the relations (5) is called a {\bf normal canonical system}. The numbers $\varepsilon_k$ appearing in (4) are called  {\bf the sign characteristics} of the chains of the normal canonical system.
\end{definition}

Now we would like to establish a similar result for dissipative operator functions. The operator functions. The operator function $A(\lambda)$ is called dissipative in the neighborhood of the real point c if there exists an $\varepsilon > 0$ such that $\Imm(A(\lambda)x,x) \leqslant 0$ for all $x\in H$ and $\lambda : c-\varepsilon < \lambda < c+\varepsilon$. The following result was recently proved by Shkalikov [3](1988).

\begin{theorem}[Theorem on the Existence of the Regular Canonical System]
Let a real point c of discrete spectrum of the operator pencil $A(\lambda)$  and $A(\lambda)$ be dissipative in the neighborhood of c. Let (2) be a canonical system corresponding to the eigenvalue  c of $A(\lambda)$ and (4) be the adjoint canonical system. Then
\begin{equation}
\Span\{w^h_k\}^{N_2~~~~\alpha_k}_{k=N_1,~h=0} = \Span\{z^h_k\}^{N_2~~~~\alpha_k}_{k=N_1,~h=0} : = S^0
\end{equation}
where $\alpha_k = [\frac{p_k-1}{2}]$ (if $p_k = 0$ then $\alpha_k = -1$ and we assume that the vector $w^0_k$ does not belong to $S^0$). Moreover, a canonical system (2) can be chosen in such a way that for all indices k satisfying the conditions $p_k = 2\ell_k$ (i.e. for all chains of odd length) a representation
\begin{equation}
z^{\ell_k}_k = \varepsilon_k w^{\ell_k}_k + w,~~~where~\varepsilon_k = \pm 1,~~~w\in S^0
\end{equation}
is valid.
\end{theorem}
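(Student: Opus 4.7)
The plan is to model the argument on Theorem 7.1, replacing the self-adjointness of the Laurent coefficients by a parity-sensitive positivity of their imaginary parts arising from the dissipative condition. Write the principal part of $A^{-1}(\lambda)$ at $c$ as $P(\lambda)=\sum_{h=0}^{p_1}L_h(\lambda-c)^{-(p_1+1-h)}$, where the $L_h$ are the finite-rank operators read off from formula (3); then $P^*(\lambda)=\sum L_h^*(\lambda-c)^{-(p_1+1-h)}$ is the principal part of $[A^*(\lambda)]^{-1}$, built symmetrically from $\{z_k^h\}$ and $\{w_k^h\}$. Dissipativity $\Imm(A(\lambda)x,x)\leq 0$ on a real neighbourhood of $c$ gives $\Imm(A^{-1}(\lambda)u,u)\geq 0$, and subtracting the bounded holomorphic remainder yields that
\[
\Imm(P(c+\mu)u,u)\;=\;\sum_{h=0}^{p_1}\frac{(\Imm L_h\,u,u)}{\mu^{p_1+1-h}}
\]
must be bounded below as $\mu\to 0$ through real values, for each fixed $u$. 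Matching powers of $\mu$ on the two sides of $0$ shows: the smallest index $h_0=h_0(u)$ with $(\Imm L_{h_0}u,u)\neq 0$ satisfies $p_1+1-h_0$ even and $(\Imm L_{h_0}u,u)>0$. In particular $L_0=L_0^*$ when $p_1+1$ is odd, while $\Imm L_0\geq 0$ when $p_1+1$ is even.

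For the span equality (8), treat the two parities of $p_1+1$. If $p_1+1$ is odd, $L_0=L_0^*$ immediately gives $\Span\{w_k^0:p_k=p_1\}=\Span\{z_k^0:p_k=p_1\}$. If $p_1+1$ is even, from $\Imm L_0\geq 0$ with $L_0=\sum(\cdot,z_k^0)w_k^0$ I run a Cauchy-Schwarz sharpness argument: already in the single-chain toy case, testing on $u=\alpha w_k^0+\beta z_k^0$ with $\beta=te^{i\phi}$ and enforcing non-negativity of the resulting real quadratic in $t$ for every $\phi$, forces $|(w_k^0,z_k^0)|^2=\|w_k^0\|^2\|z_k^0\|^2$ and hence parallelism $z_k^0\parallel w_k^0$. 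The multi-chain version of this step is the technical heart and requires an auxiliary argument to exclude vectors in $\Span\{w_k^0\}\setminus\Span\{z_k^0\}$; the conclusion is still $\Span\{w_k^0:p_k=p_1\}=\Span\{z_k^0:p_k=p_1\}$. Having matched the top-level spans, subtract the leading contribution, adjust the canonical system by admissible lower-order replacements, and iterate the parity/positivity analysis on $L_1,L_2,\ldots$\,. Each step pins down one further level of the lower halves of the chains, and the cascade terminates at the index singled out by $\alpha_k=[(p_k-1)/2]$, producing the common span $S^0$ claimed in (8).

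For (9), iterate the refinement one level deeper on chains of odd length $p_k+1=2\ell_k+1$. Once the cascade reaches the middle index $\ell_k$, the relevant imaginary-part operator is forced by parity to be self-adjoint modulo $S^0$; its spectral decomposition on the middle subspace produces signs $\varepsilon_k=\pm 1$ and a choice of $w_k^{\ell_k}$ for which $z_k^{\ell_k}-\varepsilon_k w_k^{\ell_k}\in S^0$, mirroring the final diagonalisation step in the proof of Theorem 7.1. The main obstacle, compared with the self-adjoint prototype, is that the $L_h$ are not themselves self-adjoint, so one cannot decouple chains in a single spectral move: one must carry through a two-sided induction on $h$ simultaneously tracking $\{w_k^h\}$ and $\{z_k^h\}$, and extend the Cauchy-Schwarz sharpness argument to the multi-chain, mixed-length setting, where contributions from strictly shorter chains enter $L_h$ for $h\geq 1$ and have to be stripped off by admissible modifications of the canonical system at each stage.
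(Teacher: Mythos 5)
The paper does not give a proof of this theorem; it cites Shkalikov [3] and, for linear pencils, points to Propositions 8.6--8.7 of Section~8 (equivalently Propositions 2.6--2.8 of Section~11). Those propositions prove the analogous statement for $A(\la)=W+\la T$ by a \emph{structural} route: first show that the truncated lower halves $y_k^0,\dots,y_k^{\alpha_k}$ lie in $\Ker T^I$ and hence are simultaneously Jordan chains of $A$ and $A^*$ (Prop.~8.2), then use the $W$-orthogonality relations among these truncations (Prop.~8.3) together with the Keldysh biorthogonality (Prop.~8.5) to force $S^0=(S^0)^*$ and to diagonalize the Gram operator on the middle level. Your proposal is a different route: you work directly with the Laurent coefficients $L_h$ of $A^{-1}$ at the pole and the sign of $\Imm\bigl(A^{-1}(\la)u,u\bigr)$, modeling the argument on the self-adjoint prototype (Theorem~7.1). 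The two approaches are genuinely distinct; yours is closer to how the paper treats the self-adjoint case and has the advantage of applying uniformly to polynomial pencils without first linearizing.

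Your key parity observation is correct: on a real punctured neighbourhood of $c$, $\Imm(A^{-1}(c+\mu)u,u)\ge -C$ forces the first nonvanishing $(\Imm L_{h_0}u,u)$ to occur at an even exponent $p_1+1-h_0$ and to be $>0$; hence $L_0=L_0^*$ when $p_1+1$ is odd and $\Imm L_0\ge 0$ when $p_1+1$ is even. Your multi-chain Cauchy--Schwarz step, which you flag as needing an auxiliary argument, can in fact be completed: with $L_0=\sum_k(\cdot,z_k^0)w_k^0$, $W:=\Span\{w_k^0\}$, $Z:=\Span\{z_k^0\}$, one checks $W^\perp,Z^\perp\subset\Ker\Imm L_0$, so $\mathrm{ran}\,\Imm L_0\subset W\cap Z$; since $L_0|_Z:Z\to W$ is a bijection and $L_0 u=L_0^* u+2i\,\Imm L_0\,u$ maps $Z$ into $Z+(W\cap Z)=Z$, we get $W=L_0(Z)\subset W\cap Z$, hence $W=Z$. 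This closes the base step of the cascade.

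The genuine gap is in the cascade itself, i.e.\ precisely the part you describe as the technical heart but leave as a plan. Once you pass from $L_0$ to $L_1,L_2,\dots$ you must: (i) strip off the contributions of strictly shorter chains, which first enter at $h\ge 1$ with a different parity bookkeeping; (ii) perform \emph{admissible} modifications of the canonical system (replacing $w_k^h$ by $w_k^h+\text{lower order}$) at each level, and verify that such a modification does not destroy the positivity already established at higher powers; and (iii) track the interaction between $\{w_k^h\}$ and $\{z_k^h\}$ through these modifications, since the adjoint system is uniquely determined by the direct one and changes with it. Without a careful statement of the induction hypothesis and of which modifications are allowed at stage $h$, the argument is not yet a proof; this is exactly where the paper warns that the proof is ``rather complicated,'' and where the structural route of Section~8 gains traction by replacing power-series bookkeeping with biorthogonality relations that are stable under changes of canonical system.
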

\begin{pf}
The proof of this theorem is rather complicated and can be found in the paper Shkalikov [3]. Here we omit it. For the linear operator pencils this Theorem follows from Propositions 8.6, 8.7 of the next Section.
\end{pf}
\begin{note}
It may happen, that $w^{\ell_k}_k = 0$, then $z^{\ell_k}_k = 0$ to (it is possible only if the order n of the pencil $A(\lambda)$ is greater than 2). In this case the equality (8) does not determine the sign $\varepsilon_k$ and it has to be determined from the equality
$$
\tilde z^{\ell_k}_k = \varepsilon_k \tilde w^{\ell_k}_k + \tilde w,
$$
where $\tilde z^{\ell_k}_k (\not =0!)$ denotes the Keldysh derived chain of length n corresponding to the element $z^{\ell_k}_k$. Thus, in addition to (9) the following equality holds (see Shkalikov [3])
\begin{equation}
(G \tilde z^{\ell_k}_k,z^{\ell_k}_k) = (G \tilde w^{\ell_k}_k,w^{\ell_k}_k) = -\lambda^n_k \varepsilon_k,
\end{equation}
where n is the order of $A(\lambda)$, $\varepsilon_k = \pm 1$, $\lambda_k = c \not = 0$ (otherwise we have to shift the spectral parameter) and
$$
G = \begin{bmatrix}
0 & 0 & \dots & 0 & A_0 \\
0 & 0 & \dots & A_0 & A_1 \\
\hdotsfor{5} \\
0 & A_0 & \dots & A_{n-3} & A_{n-2} \\
A_0 & A_1 & \dots & A_{n-2} & A_{n-1}\\
\end{bmatrix}.
$$
\end{note}
\begin{definition}
A canonical system (2) satisfying to the relations (9), (10) is called a {\bf regular canonical system}. The numbers $\varepsilon_k$ appearing in (9), (10) are called {\bf the sign characteristics} of the corresponding chains.
\end{definition}
\begin{note}
Hence for dissipative operator pencil we cannot introduce the sign characteristics for all chains, but only for chains of odd length. A natural question arises: Do the sing characteristics exist for the chains of even length? A simple example shows that the answer is negative. Consider the dissipative operator pencil
$$
A(\lambda)=I - P_0 - iC\lambda^2 ,~~~P_0 = (\cdot,e_0)e_0,~~\bigl\| e_0 \bigr\| = 1,~~C>0.
$$
Obviously, the point $\lambda = 0$ is a point of discrete spectrum of $A(\lambda)$ and the principal part of $A^{-1}(\lambda)$ at this point is equal to
$$
\frac{i(\cdot,e_0)e_0}{\lambda^2}.
$$
Hence, the direct and the adjoint canonical systems coincide in this case with chains $e_0, 0$ and $-ie_0, 0$ respectively.
\end{note}

The theorem on existence of the regular canonical system allows us to formulate the Mandelstam radiation principle for resonant frequencies, i.e., for the case when $A(\lambda)$ has real eigenvalues $\lambda_k$ which are not semi-simple. For this case we have not met the formulation of this principle in physical literature.
\begin{definition}
Let (2) be a regular canonical system corresponding to real eigenvalue $\mu (=\lambda_k)$.Let $\mathscr{E}_k = \pm 1$ be sign characteristics corresponding to Jordan chains of odd length and let $\mathscr{E} = 0$ for the Jordan chains of even length. We say the solution V(Z) of equation (6.13) satisfies the generalized Mandelstam radiation principle  at $\infty$ if $V(Z)$ admits the representation
$$
V(Z)=V_1(Z) + V_0(Z),
$$
where $V^{(j)}_0 \rightarrow 0, j = 0,1,\dots,(n-1)$, when $Z \rightarrow \infty$ and $V_1(Z)$ is a superposition of elementary solutions of the following type:
\begin{equation}
V^h_k(Z) = \ell^{i\lambda_k Z}(W^h_k + \frac{Z}{1!}W^{h-1}_k + \dots + \frac{Z^h}{h!}W^{0}_k),~~0\leqslant h\leqslant \biggl[\frac{p_k + \varepsilon_k}{2}\biggr]
\end{equation}
If $p_k = 0$ and $ \mathscr{E} _k = -1$ then $[1/2] = -1$ and we assume that no elementary solutions corresponding to this index k is involved in a superposition $V_1(Z)$.
\end{definition}

We save the same definition for the generalized Mandelstam radiation principle at $-\infty$; the only difference is that we have to replace the inequalities for h in (11) by $0\leqslant h \leqslant \biggl[ \frac{(p_k - \varepsilon_k)}{2}\biggr]$.

Denote by $E^+$ the first half of the eigen and associated vectors of $A(\lambda)$, namely,
$$
E^+ = \{ w^h_k \}_{\Imm\lambda_k > 0} \cup \{ w^h_k \}_{\lambda_k \in \mathbb {R}, ~0\leqslant h \leqslant \biggl[ \frac{(p_k + \varepsilon_k)}{2}\biggr]}.
$$
Thus, the system $E^+$ consists of all vectors from the canonical system (2), corresponding to all eigenvalue $\lambda_k$ with $\Imm \lambda_k > 0$ and of selected vectors from the regular canonical system, corresponding to real eigenvalue (this selection is produced according  to the sign characteristics). Similarly denote
$$
E^- = \{ w^h_k \}_{\Imm\lambda_k < 0} \cup \{ w^h_k \}_{\lambda_k \in \mathbb {R}, ~0\leqslant h \leqslant \biggl[ \frac{(p_k - \varepsilon_k)}{2}\biggr]}.
$$
Now we can determine $ \mathscr{E}^+ ( \mathscr{E}^-) = \{\hat w^h_k\}$, where $\hat w^h_k$ are the Keldysh derived chains of length $\ell$ corresponding to the vectors $w^h_k \in E^+ (E^-)$. Obviously, $\hat w^h_k = Tv^h_k$ where
$$
Tv(Z) = \{V(0),-iv'(0),\dots, (-i)^{\ell - 1}V^{(\ell - 1)}\}
$$
and $V^h_k(Z)$ are defined by (11).
\begin{theorem}[Theorem on Completeness]
If the conditions of Theorem 6.1 hold then the systems $\mathscr{E}^+$ and $\mathscr{E}^-$ are complete in the space $H^{\ell}$.
\end{theorem}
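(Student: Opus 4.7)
The plan is to follow the scheme of the Section~6 proof of Theorem~1, replacing the simple-pole analysis at real eigenvalues by the finer structure extracted from the regular canonical system guaranteed by Theorem~2. I treat $\mathscr{E}^+$; the argument for $\mathscr{E}^-$ is the mirror image.

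Suppose $f = \{f_1,\ldots,f_\ell\} \in H^\ell$ is orthogonal to every Keldysh derived chain in $\mathscr{E}^+$, and form the scalar meromorphic function
\[
F(\lambda) := ([A^*(\lambda)]^{-1} f(\lambda),\, f(\bar\lambda)), \qquad f(\lambda) = f_1 + \lambda f_2 + \cdots + \lambda^{\ell-1} f_\ell.
\]
As in Theorem~5.1, orthogonality of $f$ to the full Keldysh chains at each $\lambda_k$ with $\Imm\lambda_k > 0$ forces $F$ to be holomorphic throughout the open lower half-plane (Note~3.2 combined with the principal-part formula of Theorem~3.1). The dissipativity hypothesis b) gives $\Imm F(\lambda) \leqslant 0$ wherever $F$ is regular on the real axis, and $\Ker A_n = 0$ yields $|F(\lambda)| = O(|\lambda|^{-2})$ at infinity (as in (5.13)).

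The crucial new ingredient is the analysis of $F$ at a real pole $c$. I fix a regular canonical system $\{w_k^h\}$ at $c$ (Theorem~2) and exploit both the span identity $\Span\{w_k^h\}_{h\leqslant\alpha_k} = \Span\{z_k^h\}_{h\leqslant\alpha_k}$ and, for odd-length chains, the normalization $z_k^{\ell_k} = \varepsilon_k w_k^{\ell_k} + w$ with $w\in S^0$. By Note~3.2, orthogonality of $f$ to $\hat w_k^h$ for $h = 0,\ldots, [(p_k+\varepsilon_k)/2]$ eliminates the top $[(p_k+\varepsilon_k)/2]+1$ coefficients in the Laurent expansion of $[A^*(\lambda)]^{-1} f(\lambda)$ at $c$. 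Substituting the surviving principal part into the pairing with $f(\bar\lambda)$ and converting each surviving factor $(z_k^h,\phi)$ into a multiple of $(w_k^{h'},\phi)$ via the span identity, one checks that all remaining higher-order singular terms in $F$ cancel, so that $F$ has at most a simple pole at $c$. A direct computation (generalizing Proposition~6.2) then shows the residue is a \emph{real, non-positive} number: chains with $\varepsilon_k = +1$ or of even length contribute zero, whereas each odd-length chain of negative type contributes $-|(f,\hat w_k^{\ell_k})|^2 \leqslant 0$.

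With the principal parts pinned down, the contour argument of Theorem~6.1 applies verbatim: integrate $F$ over the contour $\Gamma$ of Figure~4 (real axis indented into the lower half-plane at each real eigenvalue, closed by the large lower semicircle $C_R$). Since $F$ is holomorphic inside $\Gamma$, Cauchy gives $\oint_\Gamma F\,d\lambda = 0$; the $C_R$-integral vanishes as $R\to\infty$ by the $O(|\lambda|^{-2})$ decay, the small semicircular integrals converge to $i\pi\sum \operatorname{Res} F$ (a purely imaginary quantity with non-positive imaginary part), and the real intervals converge to $\mathrm{V.P.}\!\int_{-\infty}^\infty F\,d\lambda$, whose imaginary part is $\leqslant 0$ by dissipativity. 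Taking imaginary parts in $\oint_\Gamma F = 0$ forces both sums of non-positive terms to vanish, so $\Imm F \equiv 0$ on $\mathbb{R}$ and $(f,\hat w_k^{\ell_k}) = 0$ for every odd-length chain of negative type. Thus $F$ is real on the real axis of its domain of holomorphy; Schwarz reflection extends it to an entire function, and Liouville (together with $|F| = O(|\lambda|^{-2})$) yields $F \equiv 0$. Finally, as at the end of Theorem~5.1, condition c) (namely $0\notin\theta(A(\lambda_0))$, hence $0\notin\theta([A^*(\lambda_0)]^{-1})$) converts $F \equiv 0$ by Taylor expansion at $\lambda_0$ into $f(\lambda_0) = f'(\lambda_0) = \cdots = f^{(\ell-1)}(\lambda_0) = 0$, so $f \equiv 0$, proving completeness of $\mathscr{E}^+$.

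The main obstacle lies in the residue computation of the third paragraph: one must verify that the cancellations promised by the regular canonical system of Theorem~2 really do compress every potential high-order singularity of $F$ into a single simple pole whose residue is real and of definite sign. Without Theorem~2 the adjoint vectors $z_k^h$ would carry arbitrary constants that would destroy both the reality of the residues and the sign control required by the contour argument; it is precisely Theorem~2's alignment of these constants, together with the selection rule $h \leqslant [(p_k+\varepsilon_k)/2]$ defining $\mathscr{E}^+$, that makes the resulting residue equal to $-|(f,\hat w_k^{\ell_k})|^2$ and drives the completeness conclusion.
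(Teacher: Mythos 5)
Your proposal is correct and follows essentially the same path as the paper: define $F(\lambda) = ([A^*(\lambda)]^{-1}f(\lambda), f(\bar\lambda))$, use orthogonality together with the relations (8), (9) of the regular canonical system to compress the principal part at each real pole to a simple pole with residue $-\sum |(f, w_k^{\ell_k})|^2 \leqslant 0$ (only odd-length chains of negative type survive), then run the contour argument of Theorem~6.1, Schwarz reflection, Liouville, and condition~c). The only cosmetic difference is that you phrase the coefficient cancellation via Note~3.2 and the span identity, while the paper substitutes $f$ directly into the principal-part representation (12) and tallies the surviving $(f,z_k^{h_1})(w_k^{h_2},f)$ terms; the bookkeeping is the same.
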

\begin{pf}
First, for simplicity we suppose that $A(\lambda)$ is a quadratic operator pencil, hence $\ell = 1$. In this case $\mathscr{E}^+ = E^+$. Assume that there exists a vector $f$, which is orthogonal to all elements of the system $E^+$. Then the scalar function
$$
F(\lambda) = ([A^*(\lambda)]^{-1}f,f), ~~A^*(\lambda) = [A(\bar \lambda)]^* ,
$$
is holomorphic in the lower half-plane (see the proof of Theorem 6.1) and according to (3) its principal part is equal to
\begin{equation}
\begin{split}	
\sum_{k=N_1}^{N_2}\frac{(f,z^0_k)(w^0_k,f)}{(\lambda - c)^{p_k+1}} + \frac{(f,z^1_k)(w^0_k,f) + (f,z^0_k)(w^1_k,f)}{(\lambda - c)^{p_k}} + \dots + \\
+ \frac{(f,z^{p_k}_k)(w^0_k,f) +\dots+ (f,z^0_k)(w^{p_k}_k,f)}{(\lambda - c)}.
\end{split}	
\end{equation}		
Since $(f,w^h_k) = 0$ for $h = 0,\dots,\biggl[ \frac{(p_k+\varepsilon_k)}{2} \biggr]$ and (8), (9) hold, we have $(f,z^h_k) = 0$ for $h \leqslant \biggl[ \frac{(p_k+\varepsilon_k)}{2} \biggr]$. Now it follows from (10) that all coefficients at powers $(\lambda - c)^{-s}, s\geqslant 1$, are equal to zero if $p_k$ is odd (i.e. the length of the corresponding chain is even) or $p_k$ is even but $\varepsilon_k = 1$. Hence, taking into account (9), we find that the expression (12) is equal to
$$
\sum_{p_k = 2\ell_k,\varepsilon_k = -1} \frac{(f,z^{\ell_k}_k)(w^{\ell_k}_k,f)}{\lambda - c} = -\sum_{p_k = 2\ell_k,\varepsilon_k = -1} \frac{\bigl| (w^{\ell_k}_k,f) \bigr|}{\lambda - c}.
$$
Thus the function $F(\lambda)$ may have only simple real poles with non-positive residues. Repeating the arguments in the proof of Theorem 6.1 we obtain $F(\lambda)\equiv 0$ and $f=0$.

If $\ell > 1$ then we have to consider the function
$$
F(\lambda)=(\biggl[ A^*(\lambda)\biggr]^{-1}f(\lambda),f(\bar \lambda)), ~~f(\lambda)=f_0 + f_1\lambda + \dots + f_{\ell-1}\lambda^{\ell-1},
$$
where a vector $\{f_0,f_1,\dots,f_{\ell-1}\}$ is orthogonal to all vectors belonging to $\mathscr{E}^+$. It is an easy exercise to show that this assumption as before implies the analyticity of $F(\lambda)$ in the closed lower half-plane with possible exception of a simple real pole with non-positive residues. This gives $F(\lambda)\equiv 0$ and $f(\lambda)\equiv 0$.
\end{pf}
\begin{theorem}[Theorem on Linear Independence]
If the conditions of Theorem 6.2 hold then the system $\mathscr{E}^+$ and $\mathscr{E}^-$ are linear independent in the space $H^{\ell}$.
\end{theorem}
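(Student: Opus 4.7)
The plan is to mimic the contour-integral argument that Radzievskii used for Theorem~6.2, with simple eigenvectors replaced by chains from the regular canonical system (Theorem~2 of this section). Suppose
$$\sum c_{k,h}\,\hat{w}^{h}_k = 0\quad \text{in }H^{\ell},$$
where the sum runs over all indices $(k,h)$ with $w^h_k\in E^+$. I would introduce the meromorphic vector-valued function
$$d(\lambda)=\sum c_{k,h}\sum_{j=0}^{h}\frac{w^{j}_k}{(\lambda-\lambda_k)^{h-j+1}}$$
and study the scalar meromorphic function $F(\lambda)=(A(\lambda)d(\lambda),d(\bar\lambda))$. By the chain identities defining EAV, the product $A(\lambda)d(\lambda)$ is holomorphic at every eigenvalue $\lambda_k$ appearing in the sum. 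A short Leibniz expansion at $\infty$ identifies the coefficient of $\lambda^{-(r+1)}$ in $d(\lambda)$ with the $r$-th component of $\hat{w}^h_k$, so the dependence hypothesis is equivalent to the vanishing of the Laurent coefficients of $\lambda^{-1},\dots,\lambda^{-\ell}$ in $d(\lambda)$, giving $\|d(\lambda)\|=O(|\lambda|^{-\ell-1})$. Combined with $\|A(\lambda)\|=O(|\lambda|^{2\ell})$ this yields $|F(\lambda)|=O(|\lambda|^{-2})$ at infinity.

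Three further properties of $F$ then drive the argument. Dissipativity (condition b) of Theorem~6.1), together with $d(\bar\lambda)=d(\lambda)$ on $\mathbb R$, gives $\Imm F(\lambda)\le 0$ there. Any poles of $F$ in the closed upper half-plane can only arise from $d(\bar\lambda)$; its singularities at $\bar\lambda_k$ lie in the open lower half-plane when $\Imm\lambda_k>0$, so only real eigenvalues $\lambda_k$ can produce poles. The key claim is that at each real $\lambda_k$ the function $F$ has at most a simple pole with non-negative residue. Granting this, integrating $F$ along the contour $\Gamma$ of Figure~5 and passing to the limits $R\to\infty$, $\varepsilon\to 0$ yields
$$\mathrm{V.P.}\int_{-\infty}^{\infty}F(\lambda)\,d\lambda=i\pi\sum_{\lambda_k\in\mathbb R}\mathrm{Res}_{\lambda_k}F.$$
Taking imaginary parts exhibits a non-negative sum on the right against a non-positive V.P.\ integral on the left, so both must vanish. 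Schwarz reflection then continues $F$ to an entire function, Liouville combined with $|F|=O(|\lambda|^{-2})$ forces $F\equiv 0$, and condition c) of Theorem~6.1 upgrades this to $d\equiv 0$, so every $c_{k,h}=0$. The system $\mathscr{E}^-$ is handled by the mirror-image contour argument in the lower half-plane.

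The main obstacle is the claim about the real poles of $F$. A priori the pole at $\lambda_k\in\mathbb R$ could be of order up to $p_k+1$, with principal part a complicated bilinear form in the chain vectors $w^0_k,\dots,w^{p_k}_k$. The mechanism that collapses it to a simple pole of the correct sign is precisely the regular canonical system of Theorem~2: relations (9) and (10) of this section express the adjoint chain vectors $z^i_k$ entering the principal part of $[A^*(\lambda)]^{-1}$ as $\varepsilon_k w^i_k$ modulo the subspace $S^0$. Only those $w^h_k$ with $h\le[(p_k+\varepsilon_k)/2]$ contribute to $d$, and the combinatorics of the resulting pairings --- in direct parallel with the polar reduction carried out in the proof of the preceding Theorem on Completeness --- cancel all polar terms of order $\ge 2$. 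What remains is a simple pole whose residue is a non-negative multiple of $\varepsilon_k|c_{k,h_k}|^2$ with $h_k=[(p_k+\varepsilon_k)/2]$, and this is non-negative because $\varepsilon_k\ge 0$ for every chain contributing to $E^+$ by construction.
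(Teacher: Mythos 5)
Your proposal tracks the paper's own proof very closely: the same auxiliary meromorphic scalar $F(\la)=(A(\la)d(\la),d(\bar\la))$ built from a rational $d(\la)$ whose Laurent coefficients at infinity encode the dependence relation, the same decay $|F(\la)|=O(|\la|^{-2})$, the same dissipativity bound $\Im F(\la)\le 0$ on $\mathbb R$, the same contour of Figure~5, and the same closing step using condition (6.16) to push $F\equiv 0$ to $d\equiv 0$. Your parametrization of $d$ differs superficially from the paper's $d_1(\la)$ (you arrange things so that the Laurent coefficients of $\la^{-1},\dots,\la^{-\ell}$ at infinity are literally the components of $\sum c_{k,h}\hat w^h_k$), but this is a triangular, invertible reparametrization of the coefficients and changes nothing essential; your $A(\la)d(\la)$ is still entire by the chain identities.

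Two remarks. First, you correctly single out the residue-sign claim as the crux, and you have the correct sign: for the contour argument in the open upper half-plane with $\Im F\le 0$ on $\mathbb R$ to force $F\equiv 0$, the residues at the real poles must be \emph{non-negative}. The paper's text literally says the residues are \emph{negative}, but that appears to be a slip in the exposition: it contradicts the mechanism of the argument and the paper's own non-resonant Theorem~6.2, where the residue at a simple real eigenvalue of positive type is $(A'(\la_k)w_k,w_k)|c_k|^2>0$. A direct check on $A(\la)=(\la^2-c^2)I$ with $d(\la)=c_0w_0/(\la-c)$ gives $F(\la)=|c_0|^2\|w_0\|^2(\la+c)/(\la-c)$, whose residue at the positive-type eigenvalue $\la=c$ is $2c|c_0|^2\|w_0\|^2>0$. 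Second, the actual collapse of the a priori pole of order $\beta_k+1$ to a simple one with residue of definite sign --- the \emph{combinatorics of the resulting pairings} you gesture at --- is precisely the computation the paper also declines to carry out, deferring to Shkalikov [3], Lemma~4. So your proposal reproduces the paper's argument at the paper's own level of detail; neither you nor the paper supplies the Lemma~4 computation.
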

\begin{pf}
As in the previous theorem we assume for simplicity that $A(\lambda)$ is a quadratic pencil. Consider the scalar function
$$
F(\lambda) = (A(\lambda)d(\lambda),d(\bar \lambda)),
$$
where $d(\lambda)$ is determined by (1), but the second term in (1) is replaced by
$$
d_1(\lambda) = \sum_{\lambda_k\in \mathbb {R}} ~\sum_{0\leqslant h\leqslant \beta_k} \frac{c_{k,h}~w^h_k}{(\lambda - \lambda_k)^{\beta_k + 1 - h}},~~~\beta_k = \biggl[ \frac{(p_k+\varepsilon_k)}{2} \biggr].
$$

Then $A(\lambda)$ is holomorphic in the upper half-plane and may have poles on the real axis. Obviously the principal part of the function $F(\lambda)$ at a real pole $\lambda = \lambda_k (=c)$ coincides with the principal part of the function
$$
F_1(\lambda) = (A(\lambda)d_1(\lambda),d_1(\lambda))
$$
at this pole.

After some technical calculations (they are not simple; see Shkalikov [3], lemma 4) we find that the principal part of $F_1(\lambda)$ at the real pole $\lambda = c$ is equal to
$$
\sum_{\lambda_k=c, \varepsilon_k>0}~~~\frac{\bigl|c_{k,\ell_k}\bigr|^2  (G \tilde w^{\ell_k}_k,\tilde w^{\ell_k}_k)(w^{\ell_k}_k,w^{\ell_k}_k)}{\lambda - c}.
$$

It follows from (10) that the function $F(\lambda)$ may have poles on the real axis only with negative residues. Repeating the arguments in the proof of Theorem 6.2 we obtain $F(\lambda)\equiv 0$ and $d(\lambda)\equiv 0$.
\end{pf}
\begin{theorem}[Theorem on solvability of half-range Cauchy Problem]
Let the pencil $A(\lambda)$ satisfy the conditions of Theorem 6.1. Then for given initial vectors $\{\varphi_j\}^{\ell-1}_0$ there exists a unique solution of problem (6.13), (6.14) satisfying the generalized Mandelstam radiation principle at $\infty$.
\end{theorem}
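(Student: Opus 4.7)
The plan is to upgrade the completeness of Theorem 3 and the linear independence of Theorem 4 to the full basis property of $\mathcal{E}^+$ in $H^\ell$ via a dimension count, and then read off existence and uniqueness of the Cauchy problem from this basis property, exactly in the spirit of Proposition 6.3.

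First I would carry out the dimension count in the style of Note 6.4. For each real eigenvalue $\lambda_k$ pick a regular canonical system, provided by Theorem 2: a chain of length $p_k+1$ contributes $[(p_k+\varepsilon_k)/2]+1$ vectors to $E^+$ and $[(p_k-\varepsilon_k)/2]+1$ vectors to $E^-$, and a short case check (odd chain length with $\varepsilon_k=\pm 1$, even chain length with $\varepsilon_k=0$, together with the convention $[-1/2]+1=0$ handling length-one chains of sign $-1$) shows that these two counts sum to $p_k+1$. Chains over non-real $\lambda_k$ are partitioned entirely according to the sign of $\Imm\lambda_k$. Combined with Theorem 1.1, which says that the total number of EAV of $A(\lambda)$ (counted with algebraic multiplicity) equals $mn=2m\ell$ for $m=\dim H$, this gives $|\mathcal{E}^+|+|\mathcal{E}^-|=2m\ell=2\dim H^\ell$. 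Since Theorem 3 gives $|\mathcal{E}^\pm|\geq m\ell$ and Theorem 4 gives $|\mathcal{E}^\pm|\leq m\ell$, both inequalities are equalities and each of $\mathcal{E}^+$, $\mathcal{E}^-$ is a basis in $H^\ell$.

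Next I would construct the solution. Given data $\varphi=\{\varphi_0,\ldots,\varphi_{\ell-1}\}$, the basis expansion $\varphi=\sum c_{k,h}\,\hat w_k^h$ in $\mathcal{E}^+$ yields unique coefficients $c_{k,h}$, where $\hat w_k^h$ denotes the Keldysh derived chain of length $\ell$ attached to $w_k^h\in E^+$. Set $v(z)=\sum c_{k,h}\,v_k^h(z)$, with $v_k^h(z)$ the elementary solution (11) built from the EAV chain $w_k^0,\ldots,w_k^h$. The sum is finite because $\dim H<\infty$, each summand solves (6.13), and the initial conditions (6.14) are verified by construction via the derivation map $T\colon v_k^h\mapsto \hat w_k^h$. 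By the definition of $E^+$ and Definition 3, the summands with $\Imm\lambda_k>0$ decay together with all their derivatives as $z\to\infty$ (they supply $v_0$), while the real summands are exactly of the form admissible in $v_1$, so $v$ obeys the generalized Mandelstam radiation principle.

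For uniqueness, suppose $v$ and $\tilde v$ both satisfy (6.13), (6.14), and the generalized Mandelstam principle. Their difference $w=v-\tilde v$ still admits a Mandelstam decomposition, hence is a finite linear combination of elementary solutions built from vectors of $E^+$; applying $T$ yields a vanishing linear combination of the elements of $\mathcal{E}^+$, and linear independence of $\mathcal{E}^+$ forces all coefficients to be zero, so $w\equiv 0$. The main obstacle in this plan is the chain-length bookkeeping that secures $|E^+|+|E^-|=mn$: it rests squarely on the sign-characteristic structure of the regular canonical system of Theorem 2, in particular on the subtle distinction between odd and even chain lengths. Once that combinatorial fact is in hand, the remainder of the argument is a formal consequence of Theorems 3 and 4.
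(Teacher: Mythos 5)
Your proposal is correct and follows essentially the same route as the paper: the paper's proof of this theorem is a one-liner ("corollary of Proposition 6.3 and two previous theorems"), which implicitly invokes the duality principle of Note~3 to pass from completeness (Theorem~3) plus linear independence (Theorem~4) to the basis property of $\mathscr{E}^\pm$, and then Proposition~6.3 to convert the basis property into solvability. You have simply unpacked this chain: the chain-length bookkeeping $[(p_k+\varepsilon_k)/2]+1+[(p_k-\varepsilon_k)/2]+1=p_k+1$ is exactly the identity $x_++x_-=2\ell m$ asserted in Note~3, the Fourier construction of the solution is the content of Proposition~6.3 (adapted to the generalized Mandelstam principle), and uniqueness follows from linear independence of $\mathscr{E}^+$. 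No new idea or alternative decomposition is introduced; the value of your write-up is that it makes the terse proof self-contained.
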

\begin{pf}
This theorem is a corollary of Proposition 6.3 and two previous theorems.
\end{pf}
\begin{note}
For the case when $\dim H < \infty$ and $\Ker A_n \not= 0$ the duality principle is valid as well as in non-resonant case (see Note 6.8). Namely, if both systems $\mathscr{E}^+$ and $\mathscr{E}^-$ are complete (or linearly independent) then they are basis' in $H^{\ell}$. Indeed, one can easily check that the equality $x_+ + x_- = 2\ell m$ holds in general case as well as in non-resonant case. Therefore, the same arguments can be applied to prove this fact.
\end{note}

The ideas presented in the last two lectures can be extended to obtain similar results for dissipative pencils of odd order as well as for pencils satisfying the condition
\begin{equation}
\Imm(\lambda A(\lambda)x,x)\leqslant 0~~~\forall x\in H,~\forall \lambda \in \mathbb{R}.
\end{equation}

First, suppose that $A(\lambda)$ satisfies condition(6.15) and $n = 2\ell + 1$. Observe that in this case (6.15) implies $A_n = A^*_n$. We can represent $A_n = A^+_n - A^-_n$ where $A^{\pm}_n \geqslant 0, A^+_n A^-_n = A^-_n A^+_n = 0$. Let $P^+$ and $P^-$ be orthoprojectors onto $\Imm~A^+_n:=H^+$ and $\Imm~A^-_n:=H^-$ respectively. Let V(Z) be a function with values in H. Define "the trace" operators $T_{\pm}$ by formula
$$
T_{\pm}V(Z) = \{V(0),-iV'(0),\dots,(-i)^{\ell-1}V^{(\ell-1)}(0),(-i)^{\ell}P^{\pm} V^{(\ell)}(0)\}.
$$
Now, define the system $\mathscr{E}^{\pm}$ in such a way that
$$
\mathscr{E}^{\pm} = \{T_{\pm}V^h_k(Z)\},
$$
where $V^h_k(Z)$ are elementary solutions of (6.13) such that $V^h_k(0)\in E^{\pm}$. For example, if $A^+_n > 0$ (i.e. $p^+ = I$) then $\mathscr{E}^+ (\mathscr{E}^-) = \{\hat w^h_k\}$, where $\hat w^h_k$ are Keldysh derived chains of length $\ell + 1~(\ell)$ corresponding to vectors $w^h_k \in E^+ (E^-)$.
\begin{theorem}
Let the pencil $A(\lambda)$ satisfy the conditions of Theorem 6.1 and let $n = 2\ell + 1$. Then the system $\mathscr{E}^+$ is a basis in the space $H^{\ell-1} \times H^+$ while the system $\mathscr{E}^-$ is a basis in $H^{\ell-1} \times H^-$.	
\end{theorem}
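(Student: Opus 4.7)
The plan is to adapt the contour-integration scheme of Theorems 7.3 and 7.4 to the odd-order case. Since the conditions of Theorem~6.1 include $\dim H<\infty$, once completeness and linear independence of $\mathscr{E}^+$ and $\mathscr{E}^-$ are proved the basis property will follow by dimension count: the regular canonical system of Theorem~7.2 partitions the EAV so that $|\mathscr{E}^+|+|\mathscr{E}^-|=nm$ with $m=\dim H$, matching $\dim(H^{\ell}\times H^+)+\dim(H^{\ell}\times H^-)=2\ell m+m=nm$. Here one uses that the hypothesis~(6.15) together with $n$ odd forces $A_n=A_n^*$, so $A_n=A_n^+-A_n^-$ is well defined and $H^+\oplus H^-=H$ (using also $\Ker A_n=0$).

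For completeness of $\mathscr{E}^+$, suppose $(f_0,\ldots,f_{\ell-1},g)\in H^{\ell}\times H^+$ annihilates $\mathscr{E}^+$, and form the vector polynomial $f(\lambda)=\sum_{j=0}^{\ell-1}\lambda^j f_j+\lambda^{\ell}g$. Because $P^+g=g$, the projector in the trace $T_+$ drops out and the orthogonality relations reduce to the classical equations $(f(\bar\lambda_k),w_k^h)=0$ for every $w_k^h\in E^+$ — the same as in Theorem~7.3 but with $f$ of degree $\ell$ instead of $\ell-1$. Consequently the function $F(\lambda)=([A^*(\lambda)]^{-1}f(\lambda),f(\bar\lambda))$ is holomorphic in the closed lower half-plane apart from simple real poles coming from chains of even length with sign characteristic $\varepsilon_k=-1$, whose residues — computed via the regular canonical system of Theorem~7.2 exactly as in the proofs of Theorems~5.1 and~7.3 — are non-positive real numbers. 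Moreover, hypothesis~(6.15) gives $\Imm F(\lambda)\leq 0$ on $\mathbb{R}$.

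The decisive new ingredient is the asymptotic at infinity. Using $A_n^*=A_n$, the positivity of $A_n^+$ on $H^+$, and the fact that the leading term of $f(\lambda)$ is $\lambda^{\ell}g$, a direct expansion gives
$$F(\lambda) = \frac{(A_n^{-1}g,g)}{\lambda} + O(|\lambda|^{-2}), \qquad (A_n^{-1}g,g)\geq 0.$$
Because $F$ decays only like $|\lambda|^{-1}$, the integral over the large semicircle in the lower half-plane does not vanish but contributes a purely imaginary term proportional to $(A_n^{-1}g,g)$. Assembling this arc contribution together with $\mathrm{V.P.}\int_{-\infty}^{\infty}\Imm F\,d\lambda$ on the real axis and with $i\pi\sum\mathrm{Res}_{\lambda_k}F$ from the upper semicircular detours around the real poles yields a Cauchy identity whose imaginary part, exploited in tandem with the signs $(A_n^{-1}g,g)\geq 0$, $\mathrm{Res}\leq 0$ and $\Imm F\leq 0$, forces $g=0$, every residue to vanish, and $\Imm F\equiv 0$ on $\mathbb{R}$. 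Schwarz reflection then extends $F$ to an entire function of $O(|\lambda|^{-1})$ decay; Liouville gives $F\equiv 0$; and hypothesis~(c) of Theorem~6.1 produces $f(\lambda)\equiv 0$, hence $f_j=0$ for all $j$.

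Linear independence of $\mathscr{E}^+$ is obtained by the parallel adjustment of Theorem~7.4: replace $F$ by $\Phi(\lambda)=(A(\lambda)d(\lambda),d(\bar\lambda))$ for $d(\lambda)$ the rational function built from the chains in $E^+$, with its leading Laurent coefficient at infinity chosen in $H^+$; the residue computation at real poles mirrors Theorem~7.4 via the regular canonical system, and the same arc trick absorbs the marginal tail at infinity. The assertions for $\mathscr{E}^-$ follow by reflecting every contour to the upper half-plane and swapping $H^+$ with $H^-$. The main technical obstacle is precisely the sign bookkeeping in the last step: the marginal decay $F=O(|\lambda|^{-1})$ is what produces the extra arc contribution, and the restriction of the top coefficient to $H^\pm$ is what endows that contribution with a definite sign compatible with the residue and boundary-integral signs, allowing the Cauchy identity to close.
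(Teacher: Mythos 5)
Your proposal is correct and follows essentially the same route as the paper's own proof. You identify the same key new ingredient — the $O(|\lambda|^{-1})$ decay of $F(\lambda)$ produced by the degree-$\ell$ leading term $\lambda^{\ell}P^{+}f_{\ell}$, its contribution to the contour integral, and the fact that $(A_{n}^{-1}P^{+}f_{\ell},P^{+}f_{\ell})\geq 0$ gives the residue at infinity the right sign to combine with the non-positive real residues and $\Imm F\leq 0$ on $\mathbb{R}$ — and you close with Schwarz reflection, Liouville, and the condition $0\notin\theta(A(\lambda_{0}))$, exactly as in the paper. The only cosmetic difference is that the paper proves completeness of $\mathscr{E}^{\pm}$ and then invokes the duality principle of Note~7.3 directly to obtain the basis property, whereas you also sketch the linear-independence counterpart (the analogue of Theorem~7.4); both roads reach basisness through the same dimension count $|\mathscr{E}^{+}|+|\mathscr{E}^{-}|=nm=\dim(H^{\ell}\times H^{+})+\dim(H^{\ell}\times H^{-})$.
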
	
\begin{pf}
First, let us prove the completeness of the systems $\mathscr{E}^+$ and $\mathscr{E}^-$. Consider, for example the system $\mathscr{E}^+$. Assume, that this system is not complete. In this case there exists a vector $\{f_0,f_1,\dots,f_{\ell-1},P^+ f_{\ell}\}$ such that the function
$$
F(\lambda) = ( [A^{\ast}(\lambda)]^{-1} f(\lambda),f(\bar\lambda))
$$
where
$$
f(\lambda) = f_0 + \lambda f_1 + \dots + \lambda^{\ell-1} f_{\ell-1} + \lambda^{\ell}P^+ f_{\ell}
$$
is holomorphic in the lower half plane and may have only simple poles on the real axis with non-positive residues (see the proof of Theorem 6.1 and the theorem on completeness from this lecture). Let us compute the residue at infinity. Since $A_n$ is invertible, we have
$$
F(\lambda) = \frac{1}{\lambda}(A^{-1}_n P^+f_{\ell},P^+f_{\ell}) + O\Bigl( \frac{1}{\lambda^2}\Bigr),~~~\lambda\rightarrow\infty.
$$
Hence the residue of $F(\lambda)$ at infinity is also non-positive. Now, we can repeat the arguments which we applied in Theorem 6.1. Then we obtain $F(\lambda)\equiv 0$ and $f(\lambda)\equiv 0$. To prove the basisness we can apply the duality principle (see Note 3).
\end{pf}
\begin{corollary}
Let $A(\lambda)$ satisfy condition (6.15), $\dim~H < \infty, n = 2\ell + 1$ and let $A_n > 0$. Then the system $\mathscr{E}^+ (\mathscr{E}^-)$ consisting of Keldysh derived chains of length $\ell + 1 (\ell)$ constructed from eigen and associated vectors $w^h_k \in E^+ (E^-)$ form a basis in $H^{\ell+1} (H^{\ell})$.
\end{corollary}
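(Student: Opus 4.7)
The plan is to read off the corollary as a direct specialization of Theorem 7 to the case where the leading coefficient has trivial negative part. First I would unpack what $A_n^+$, $A_n^-$, $P^\pm$, and $H^\pm$ reduce to under the hypothesis $A_n > 0$. The spectral decomposition $A_n = A_n^+ - A_n^-$ with $A_n^{\pm}\geqslant 0$ and $A_n^+ A_n^- = A_n^- A_n^+ = 0$ is unique, so positivity of $A_n$ forces $A_n^+ = A_n$ and $A_n^- = 0$. Therefore the orthoprojector $P^+$ onto $\operatorname{Im} A_n^+$ coincides with the identity on $H$, while $P^- = 0$; in particular $H^+ = H$ and $H^- = \{0\}$.

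Next I would substitute these into the definition of the trace operators $T_\pm$. With $P^+ = I$, the formula in Section~7 gives
$$T_+ V(Z) = \{V(0),\, -iV'(0),\, \ldots,\, (-i)^{\ell-1}V^{(\ell-1)}(0),\, (-i)^\ell V^{(\ell)}(0)\},$$
which is exactly the Keldysh derived chain of length $\ell+1$ built from $V(Z)$ in the sense of Section~1 (compare the formulas for $y_k^{s,r}$ immediately after (1.6)). Hence $\mathscr{E}^+ = \{T_+ V_k^h\}$ with $V_k^h(0)\in E^+$ is, componentwise, precisely the system of Keldysh derived chains of length $\ell+1$ constructed from the vectors $w_k^h\in E^+$. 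Dually, with $P^- = 0$ the last coordinate of $T_- V$ is identically zero and may be dropped, identifying $\mathscr{E}^-$ with the system of Keldysh derived chains of length $\ell$ constructed from $w_k^h\in E^-$.

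Finally I would invoke Theorem~7 directly. It asserts that $\mathscr{E}^+$ is a basis in the ambient space $H^{\ell}\times H^+$ and that $\mathscr{E}^-$ is a basis in $H^{\ell}\times H^-$. Substituting $H^+ = H$ and $H^- = \{0\}$ yields basisness in $H^{\ell+1}$ and in $H^{\ell}$ respectively, which is exactly the corollary. There is essentially no obstacle here beyond bookkeeping: the only point that requires any care is the identification of the two notions of \emph{Keldysh derived chain}, namely the abstract one implicit in the definition of $T_\pm$ and the concrete componentwise one from Section~1, and this is transparent from the definition of the elementary solutions $u^s(t)$ and their derivatives at $t=0$.
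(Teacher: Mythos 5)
You have correctly worked out the bookkeeping — under $A_n>0$ the spectral decomposition gives $A_n^+=A_n$, $A_n^-=0$, $P^+=I$, $P^-=0$, $H^+=H$, $H^-=\{0\}$, so the trace operators $T_\pm$ reduce to Keldysh derived chains of length $\ell+1$ and $\ell$ respectively, and the target spaces $H^\ell\times H^+$ and $H^\ell\times H^-$ reduce to $H^{\ell+1}$ and $H^\ell$. But this bookkeeping is already encoded in the very wording of the corollary; it is not the substance of the argument.

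The genuine content of the proof, which you omit, is the verification that Theorem~7 is actually applicable. Theorem~7 requires "the conditions of Theorem~6.1," and Theorem~6.1 requires not only the dissipativity condition (6.15) and $\dim H<\infty$ (both assumed in the corollary) but also $\Ker A_n=0$ and, crucially, condition (6.16): the existence of a real $\lambda_0$ with $0\notin\theta(A(\lambda_0))$. The corollary's hypotheses do not assume (6.16); one must \emph{derive} it from $A_n>0$. This is exactly what the paper's one-line proof does: since $A_n>0$, for $\lambda_0\in\mathbb{R}$ sufficiently large the leading term $\lambda_0^n A_n$ dominates, so $\Ree(A(\lambda_0)x,x)>0$ for all unit $x$, hence $0\notin\theta(A(\lambda_0))$ (and $\Ker A_n=0$ holds trivially). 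Your proposal invokes Theorem~7 "directly" without checking this hypothesis, so as written it has a gap at the one step that actually requires an argument.
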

\begin{pf}
We have to note only that condition (6.16) holds for sufficiently large $\lambda_0 \in \mathbb {R}$, since $A_n >0$.
\end{pf}

Now, we consider the pencils $A(\lambda)$ satisfying condition (13). Notice that conditions (6.15) and (13) are different - each of them does not imply the other. First, consider the case $n = 2\ell$. In this case (13) implies $A_0 = A_0^{\ast}$, $A_n = A_n^{\ast}$.

Let (2) be a regular canonical system corresponding to real eigenvalue $\lambda_k$ and let $\delta_k = sgn \lambda_k \mathscr{E}_k$, where $\mathscr{E}_k$ are the sign characteristics of the corresponding Jordan chains (we define $\mathscr{E}_k = 0$ if the length of the corresponding Jordan chains is even). Let us introduce the systems
\begin{equation*}
\begin{split}
Y^+ = \{w^h_k\}_{\Imm\lambda_k >0} \bigcup \{w^h_k\}_{\lambda_k \in \mathbb {R},~0\leqslant h\leqslant \biggl[ \frac{(p_k + \delta_k)}{2} \biggr]} \\
Y^- = \{w^h_k\}_{\Imm\lambda_k <0} \bigcup \{w^h_k\}_{\lambda_k \in \mathbb {R},~0\leqslant h\leqslant \biggl[ \frac{(p_k - \delta_k)}{2} \biggr]}
\end{split}
\end{equation*}

Consider the spectral decompositions $A_0 = A^+_0 - A^-_0, A_n = A^+_n - A^-_n ~(A^{\pm}_0\geqslant 0, A^{\pm}_n\geqslant 0 )$ and denote by $Q^{\pm}$ and $P^{\pm}$ the orthoprojectors onto $\Imm A^{\pm}_0$ and $\Imm A^{\pm}_n$ respectively. For a vector valued function $V(Z)$ define the "trace" operator
$$
T_{\pm}V(Z) = \{Q^{\pm} V(0),-iv'(0),\dots,(-i)^{\ell-1}V^{\ell-1}(0),(-i)^{\ell}P^{\pm}V^{(\ell)}(0)\}
$$
and introduce the systems
\begin{equation}
Y_{\pm} = \{ T_{\pm} V^h_k(Z) \}
\end{equation}
where $V^h_k(Z)$ are elementary solutions of (6.13) such that $V^h_k(0) \in Y^{\pm}$. For example, if $A_0 >0$ and $A_n >0$ then $Y_- = \{ \hat w^h_k \}$ where $\hat w^h_k$ are Keldysh derived chains of length $\ell$ corresponding to vectors $\hat w^h_k \in Y^-$.
\begin{theorem}
Let the pencil $A(\lambda)$ satisfy condition (13), $n = 2\ell, ~\Ker~A_0 = 0$ and $\Ker~A_n = 0$. Let also condition (6.16) hold and $\dim~H <\infty$. Then the system $Y_+ (Y_-)$ forms a basis in the space $H^-_Q \times H^{\ell-1} \times H^+_P (H^+_Q \times H^{\ell-1} \times H^-_P)$ where $H^{\pm}_Q = \Imm~Q^{\pm}$ and $H^{\pm}_P = \Imm~P^{\pm}$.
\end{theorem}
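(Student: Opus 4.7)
\begin{pf}
The plan is to adapt the proofs of Theorems~7.3 and 7.4 to condition~(13) by introducing an auxiliary function $G(\lambda)=F(\lambda)/\lambda$ that captures the sign pattern of $\Imm(\lambda A(\lambda)x,x)$.

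First I would fix $f=\{Q^-g_0,f_1,\dots,f_{\ell-1},P^+g_\ell\}$ orthogonal to $Y_+$, form $f(\lambda)=Q^-g_0+\lambda f_1+\cdots+\lambda^{\ell-1}f_{\ell-1}+\lambda^\ell P^+g_\ell$, and study
$$F(\lambda)=([A^*(\lambda)]^{-1}f(\lambda),f(\bar\lambda)).$$
The orthogonality cancels the poles of $[A^*(\lambda)]^{-1}$ at $\bar\lambda_k$ with $\Imm\lambda_k>0$, so $F$ is holomorphic in the open lower half-plane. At each real $\lambda_k$ I would invoke Theorem~7.2 to pass to a regular canonical system and combine its sign characteristic $\varepsilon_k$ with the selection $0\le h\le[(p_k+\delta_k)/2]$ built into $Y^+$ via $\delta_k=\mathrm{sgn}(\lambda_k)\varepsilon_k$; the expected outcome is that the only remaining real poles of $F$ are simple with residues $\rho_k$ obeying $\lambda_k\rho_k\le 0$. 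From $\Ker A_0=\Ker A_n=0$ one then reads off $F(0)=-((A_0^-)^{-1}Q^-g_0,Q^-g_0)\le 0$ and $c_\infty:=\lim_{|\lambda|\to\infty}F(\lambda)=((A_n^+)^{-1}P^+g_\ell,P^+g_\ell)\ge 0$.

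The novelty is the function $G(\lambda)=F(\lambda)/\lambda$. For real $\lambda$, setting $u=[A^*(\lambda)]^{-1}f(\lambda)$ gives $F(\lambda)=(A(\lambda)u,u)$, so $\lambda\,\Imm F(\lambda)=\Imm(\lambda A(\lambda)u,u)\le 0$ by~(13), equivalently $\Imm G\le 0$ on $\mathbb{R}$. Then $G$ is holomorphic in the open lower half-plane, picks up simple real poles at $0$ and each real $\lambda_k$ with residues $F(0)\le 0$ and $\rho_k/\lambda_k\le 0$, and decays like $c_\infty/\lambda+O(|\lambda|^{-2})$ at infinity. I would then close a lower-half-plane contour $\Gamma_R=[-R,R]\cup C_R^-$ with small lower-side detours around each real pole; since $G$ is holomorphic in the interior, $\oint_{\Gamma_R}G\,d\lambda=0$, and collecting the contributions ($-i\pi c_\infty$ from the large semicircle and $+i\pi$ times each residue from the detours) yields in the limit
$$\mathrm{V.P.}\int_{-\infty}^{\infty}\Imm G(\lambda)\,d\lambda=\pi\Bigl[c_\infty-F(0)-\sum_k\rho_k/\lambda_k\Bigr].$$
The left side is $\le 0$ while the right is a sum of non-negative quantities, so both vanish termwise. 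That forces $\Imm G\equiv 0$ on $\mathbb{R}$, $c_\infty=0$ (whence $P^+g_\ell=0$ by positive-definiteness of $A_n^+$ on $H_P^+$), $F(0)=0$ (whence $Q^-g_0=0$), and every $\rho_k=0$. Consequently $F$ is holomorphic in the closed lower half-plane, real on $\mathbb{R}$, and tends to $0$ at infinity; Riemann--Schwartz reflection plus Liouville then give $F\equiv 0$, and the iteration at the end of Theorem~6.1 applied at the point $\lambda_0$ of~(6.16) forces $f(\lambda_0)=f'(\lambda_0)=\cdots=0$, hence $f=0$.

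The symmetric argument handles $Y_-$, and the counting identity $\dim H=\dim H_Q^++\dim H_Q^-=\dim H_P^++\dim H_P^-$ makes the total count $2\ell\dim H$ match the number of vectors in $Y_+\cup Y_-$; the duality principle of Note~6.8 then upgrades both completeness statements to the basis property. The main obstacle is verifying, from Theorem~7.2 and the definition of $\delta_k$, that the residues of $F$ at real eigenvalues do satisfy $\lambda_k\rho_k\le 0$: the factor $\mathrm{sgn}(\lambda_k)$ in $\delta_k$ is precisely what compensates for the sign flip of~(13) across $\lambda=0$, but the pole-structure calculation via the regular canonical system (especially at chains of even length, where $\varepsilon_k$ is not intrinsically defined) requires the most care.
\end{pf}
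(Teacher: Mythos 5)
Your proposal reproduces the paper's proof essentially line by line: the paper directly considers $F(\lambda)=\frac1\lambda\bigl([A^*(\lambda)]^{-1}f(\lambda),f(\bar\lambda)\bigr)$ (which is your $G$), observes that condition (13) gives $\Imm F(\lambda)\le 0$ on $\mathbb R$, establishes holomorphy in the open lower half-plane, records that the residue at $\infty$ is nonpositive and that the finite real poles are simple with nonpositive residues, and then invokes the contour argument of Theorem 6.1 to force $F\equiv 0$, followed by the duality principle for basisness. Your more explicit bookkeeping — isolating $F(0)$ and $c_\infty$, and noting that $\delta_k=\operatorname{sgn}(\lambda_k)\varepsilon_k$ is what compensates the sign flip of (13) across $\lambda=0$ — simply unpacks what the paper compresses into citations to Theorems 6.1 and 7.2.
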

\begin{pf}
Let us prove the completeness of the systems $Y_+$ and $Y_-$. Using the duality principle we obtain the basisness.

Suppose there exists a vector $\{Q^-f_0,f_1,\dots,f_{\ell-1},P^+f_{\ell}\}$ which is orthogonal to the system $Y^+$. Consider the function
$$
F(\lambda) = \frac{1}{\lambda} (\biggl[ A^{\ast}(\lambda) \biggr]^{-1}f(\lambda),f(\bar \lambda)),
$$
where $f(\lambda) = Q^-f_0 + \lambda f_1 + \dots + \lambda^{\ell-1}f_{\ell-1} + \lambda^{\ell}P^+f_{\ell}$. Denoting $g(\lambda) = \biggl[ A^{\ast}(\lambda) \biggr]^{-1} f(\lambda)$ we can write
$$
F(\lambda) = \lambda^{-2}(\lambda A(\lambda)g(\lambda), g(\bar\lambda)).
$$
Now, it follows from condition (13) that $\Imm~F(\lambda)\leqslant 0$ for $\lambda \in \mathbb {R}$. Repeating the arguments of Theorem 5.1 we obtain that $F(\lambda)$ is holomorphic in the lower half plane. We also have
$$
F(\lambda) = \lambda^{-1} (A^{-1}_n P^{+}f_{\ell},P^{+}f_{\ell})+O(\lambda^{-2})
$$
when $\lambda \rightarrow \infty$, i.e., the residue of $F(\lambda)$ at $\infty$ is non-positive too. The function $F(\lambda)$ may have other real poles which are simple and the corresponding residues are non-positive (see Theorem 6.1 and the theorem on completeness in this lecture). Applying the arguments of Theorem 6.1 we find $F(\lambda)\equiv 0$ and $f(\lambda)\equiv 0$.
\end{pf}

Theorem 8 is most interesting in the case when $A_0$ and $A_n$ are definite operators. We offer the reader to formulate a Corollary from Theorem 5 for this case.

Finally, if condition (13) holds and $n = 2\ell + 1$ we should introduce the operators
$$T_{\pm}V(Z) = \{ Q^{\mp}V(0),-iV'(0),\dots,(-i)^{\ell}V^{(\ell)}(0)\}$$
and thence define the systems $Y_{\pm}$ by equality (14). The following result is valid.
\begin{theorem}
Let the conditions of Theorem 5 hold but $n = 2\ell + 1$. Then the system $Y_+ (Y_-)$ forms a basis in $H^-_Q \times H^{\ell} (H^+_Q \times H^{\ell})$.
\end{theorem}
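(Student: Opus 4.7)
The plan is to mirror the proof of Theorem 8 (the even-order analog), adapting the contour argument to the odd-order decay, and then to invoke the duality principle in finite dimension to upgrade completeness to the basis property. I will treat $Y_+$; the argument for $Y_-$ is symmetric.

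Suppose a vector $\{Q^- f_0, f_1, \ldots, f_\ell\} \in H^-_Q \times H^\ell$ is orthogonal to $Y_+ = \{T_+ V^h_k(Z)\}$. Set
\begin{equation*}
f(\lambda) = Q^- f_0 + \lambda f_1 + \cdots + \lambda^\ell f_\ell, \qquad F(\lambda) = \lambda^{-1}([A^*(\lambda)]^{-1} f(\lambda), f(\bar\lambda)).
\end{equation*}
Writing $g(\lambda) = [A^*(\lambda)]^{-1} f(\lambda)$ gives $F(\lambda) = \lambda^{-2}(\lambda A(\lambda) g(\lambda), g(\bar\lambda))$. On the real axis $g(\bar\lambda) = g(\lambda)$, so condition (13) yields $\Imm F(\lambda) \leq 0$. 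Because $\Ker A_n = 0$ and $n = 2\ell+1$, the fundamental resolvent estimate gives $\|[A^*(\lambda)]^{-1}\| = O(|\lambda|^{-2\ell-1})$ along rays outside the spectrum, and $\|f(\lambda)\| = O(|\lambda|^\ell)$, so $|F(\lambda)| = O(|\lambda|^{-2})$ at infinity. In particular, there is no residue at $\infty$.

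Next I analyze the poles of $F$ in the closed lower half-plane. The Keldysh expansion of $[A^*(\lambda)]^{-1}$ at $\bar\lambda_k$ (Theorem 2.1) together with the orthogonality of $f$ to the chains adjoint to those eigenvalues $\lambda_k$ with $\Imm\lambda_k > 0$ shows, by the standard bookkeeping used in Theorem 6.1 and Theorem 3 of this section, that $F$ is holomorphic at every such $\bar\lambda_k$. For the real eigenvalues I pass to the regular canonical system of Theorem 2 of this section: the sign characteristics $\delta_k = \mathop{\rm sgn} \lambda_k \, \varepsilon_k$ used to define $Y^+$ annihilate every principal-part contribution except simple real poles at those $\lambda_k$ with $\delta_k = -1$, and those residues are of the form $-|\alpha_k|^2$, hence non-positive. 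Now run the contour argument from Theorem 6.1: integrate $F$ along a large semicircle $C_R$ in the lower half-plane, closed along $\mathbb{R}$ with small semicircles $C_\varepsilon^k$ dipping below the real poles. The $C_R$-integral vanishes in the limit by the $O(|\lambda|^{-2})$ decay; the $C_\varepsilon^k$-integrals contribute $\pi i$ times the sum of the (non-positive) residues; and the principal-value integral along $\mathbb{R}$ has non-positive imaginary part. Matching imaginary parts forces every real residue to be zero and $\Imm F \equiv 0$ on $\mathbb{R}$. Schwarz reflection then continues $F$ to an entire function, and Liouville combined with $F = O(|\lambda|^{-2})$ gives $F \equiv 0$. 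Condition (6.16) finally yields $f(\lambda) \equiv 0$, whence $f_1 = \cdots = f_\ell = 0$ and $Q^- f_0 = 0$.

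To promote completeness to the basis property I invoke the duality principle from Note 3 (extended to the odd-order case as in Theorem 6): since $|Y_+| + |Y_-| = n \dim H = (2\ell+1) m$ and $\dim(H^-_Q \times H^\ell) + \dim(H^+_Q \times H^\ell) = m + 2\ell m = (2\ell+1) m$, the simultaneous completeness of $Y_+$ and $Y_-$ forces each cardinality to equal the corresponding codomain dimension, which gives basisness. The main technical obstacle I anticipate is the combinatorial verification at real resonant points: one must check that the one-sided projection $Q^\mp$ on the zeroth trace component of $T_\pm$, together with the \emph{absence} of any projection on the top component $V^{(\ell)}(0)$, is compatible with the normalizations (9)-(10) of the regular canonical system in such a way that exactly the residues attached to $\delta_k = -1$ (respectively $\delta_k = +1$) survive, and with the correct sign. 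In comparison with Theorem 8 the situation is in fact cleaner, since the decay $O(|\lambda|^{-2})$ eliminates the $(A_n^{-1} P^+ f_\ell, P^+ f_\ell)$-type residue at infinity that had to be separately controlled there.
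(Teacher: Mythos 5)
Your proof is correct and follows the same route as the paper, whose entire stated proof is \emph{repeat the arguments of the preceding theorem} (the $n=2\ell$ case). You have filled that in faithfully, and your observation that the $O(|\lambda|^{-2})$ decay of $F(\lambda)$ eliminates the residue at infinity — so that the $(A_n^{-1}P^{+}f_{\ell},P^{+}f_{\ell})$-type term from the even-order case simply disappears — is exactly the simplification that accounts for why the trace operator carries no projection on the top component $V^{(\ell)}(0)$ when $n=2\ell+1$.
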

\begin{pf}
Repeat the arguments of Theorem 7.
\end{pf}

\newpage
\section{Dissipative and linearly dissipative operator pencils }

In this section we continue the study of dissipative operator pencils satisfying the condition
\begin{equation}
\Im(A(\la)x, x) \le 0 \quad \mbox{for all $\la \in \mathbb {R}$ and $x \in H$}
\end{equation}
or the condition
\begin{equation}
\Im(\la A(\la)x, x) < 0 \quad \mbox{for all $\la \in \mathbb {R}$ and $x \in H$.}
\end{equation}
But now we deal with infinite dimensional space H.

Further we will use the notation
$$
A^\la = \frac{A+A^*}{2}, A^I=\frac{A-A^*}{2i}
$$
Obviously, $A^\lambda$ and $A^I$ are self-adjoint operators and $A=A^\lambda+iA^I$. Notice, that if $A(\lambda)$ is a linear pencil
\begin{equation}
A(\la) = A_0 + \la A_1
\end{equation}
then (2) holds if and only if $A_0=A_0^*$ and $A_1^I \le 0$.\ Assume that $A_0$ is an invertible operator. Then the spectral problem for linear pencil (3) is equivalent to the spectral problem for the linear operator $A=A_0^{-1}A_1$. Obviously, condition (2) holds for linear pencil (3) if and only if $A$ is an $A_{0}$-dissipative operator, i.e. A is dissipative in the space $H$ with regular indefinite metric $(A_{0}x,x)$. The main goal of this lecture is to show that some properties of dissipative operators in a space with indefinite metric are similar to properties of self-adjoint operators.

First we recall some definitions from the theory of operators in Hilbert space withe indefinite metric. Let W be a symmetric operator (it can be unbounded, but we will consider only bounded operators). Denote by $[x,y] = (W x,y)$ the new scalar product in H which is indefinite if the operator $W$ is indefinite. This metric is called \textit{regular} if $W$ is bounded and invertible. The Hilbert space with regular indefinite metric generated by the operator $W$ is called a \textit{Pontrjagin space} if either operator $W_{+} = \frac{|W| + W}{2}$ or $W_{-}=\frac{|W| - W}{2}$ is finite dimensional.


The space with regular indefinite metric is called a \textit{Krein space} if both operators $W_{+}$ and $W_{-}$ are infinite dimensional. The subspace $H_{1} \subset H$ is called $W$-\textit{non-positive (nonnegative, neutral)} if $[x,x] \le 0$  $(\ge0$,  $= 0)$ for all $x \in H_1$. A $W$-non-positive subspace $H_1$ is called \textit{maximal non-positive} if it is not contained in any other $W$-non-positive subspace $H_2$. The operator $A$ is called dissipative$
\footnote{In Section~3 we called an operator A dissipative if $A^{\la} \le 0$. But in the mathematical literature the word "dissipative"\ is also used to denote operators satisfying the condition $A^{I} \le 0$.}$
in the space with indefinite metric $[\cdot]$ (or $W$-dissipative) if
$$
\Im[Ax,x] \le 0 \quad \mbox{for all $x \in H$.}
$$
Now we establish some useful properties of $W$-dissipative operators.

\begin{proposition}
Let $A$ be a $W$-dissipative operator and $S_+^0 = span\{y_k^h\}$, where $y_k^h$ are $EAV$ of a linear pencil
$$
A(\lambda)=I + \lambda A
$$
corresponding to the eigenvalues $\lambda_k$ with $\Im\lambda_k > 0$. Then $S_+^0$ is a W-non-positive subspace.
\end{proposition}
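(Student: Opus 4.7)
My approach is to reduce the $W$-non-positivity to a Lyapunov-type monotonicity along the flow $e^{-iAt}$. The first step is to identify the EAV of the pencil $I + \la A$ with root vectors of $A$. If $(I + \la_k A)y^0 = 0$ then $\la_k \neq 0$ (since $A(0) = I$ is invertible), so $Ay^0 = \mu_k y^0$ with $\mu_k := -\la_k^{-1}$; moreover $\Im \mu_k > 0$ iff $\Im \la_k > 0$. The chain equations $(I+\la_k A)y^s + Ay^{s-1} = 0$ rewrite as $(A - \mu_k I) y^s = \mu_k A y^{s-1}$, and a short induction gives $y^s \in \Ker (A - \mu_k I)^{s+1}$. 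Consequently $S_+^0$ is the algebraic sum of the root subspaces of $A$ at eigenvalues lying in the open upper half-plane, and in particular every $y \in S_+^0$ lies in a finite-dimensional $A$-invariant subspace $L$ on which $A$ has spectrum strictly in $\{\Im\mu > 0\}$.

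The analytic core is to track $[u(t), u(t)]$ for $u(t) := e^{-iAt} y$, which on the finite-dimensional $L$ is defined by the ordinary matrix exponential regardless of whether $A$ is bounded on $H$. Since $W = W^*$ the bracket $[\,\cdot\,,\,\cdot\,] = (W\,\cdot\,,\,\cdot\,)$ is Hermitian, so differentiating gives
\[
\frac{d}{dt}[u(t), u(t)] = [-iAu, u] + [u, -iAu] = -i\bigl([Au, u] - \overline{[Au, u]}\bigr) = 2\,\Im[Au, u] \leq 0
\]
by the $W$-dissipativity of $A$. Thus the real function $t \mapsto [u(t), u(t)]$ is monotone non-increasing on all of $\mathbb{R}$.

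The final step exploits the behavior as $t \to -\infty$. Because every eigenvalue of $A|_L$ satisfies $\Im \mu > 0$, each matrix entry of $e^{-iAt}$ on $L$ is a linear combination of terms $t^r e^{-i\mu t}$ with $|e^{-i\mu t}| = e^{t\,\Im\mu} \to 0$ as $t \to -\infty$. Hence $u(t) \to 0$ and $[u(t), u(t)] \to 0$; combining with the monotonicity above gives
\[
[y, y] = [u(0), u(0)] \leq \lim_{t \to -\infty} [u(t), u(t)] = 0.
\]
Since $y \in S_+^0$ was arbitrary, $S_+^0$ is $W$-non-positive. The one delicate point is the EAV-to-root-vector identification of the first paragraph, which guarantees that the flow $e^{-iAt}$ is intrinsic to the algebraic span $S_+^0$ and makes the argument oblivious to any unboundedness of $A$ on the ambient $H$; the rest of the proof is then a clean application of the dissipativity inequality.
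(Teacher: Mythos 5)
Your proof is correct and follows essentially the same route as the paper: a Lyapunov monotonicity argument for the indefinite energy $[u(t),u(t)]$ along a flow, with dissipativity giving the sign of the derivative and decay of the flow at one end of the time axis pinning down the boundary value. The only cosmetic difference is that the paper works with the elementary solutions of the pencil equation $-iAu'+u=0$ (decaying as $t\to+\infty$), while you propagate with $e^{-iAt}$ restricted to the finite-dimensional $A$-invariant subspace (decaying as $t\to-\infty$); both computations reduce to the identity $\frac{d}{dt}[u,u] = \pm 2\Im[A(\cdot),(\cdot)]$ applied to either $u$ or $u'$.
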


\begin{pf}
If $y_k$ is an eigenvector of a pencil $A(\lambda)$ corresponding to an eigenvalue $\lambda_k$ with $\Im\lambda_k > 0$, then $u_k(t)=e^{i\lambda_kt}y_k$ is a solution of equation
\begin{equation}
-iAu^\prime (t) + u(t) = 0
\end{equation}
and $u_k(t) \rightarrow 0$\,\ when $t\to\infty$. Similarly, the function
$$
u(t) = \sum c_{k,h}u_k^h(t)\mbox{,}
$$
where $u_k^h(t)$ are elementary solutions corresponding to EAV $y_k^h \in S_+^0$, satisfies
the equation (4) and $u(t) \to 0$ when $t \to \infty.$ Using $(4)$, we obtain
\begin{equation}
\begin{aligned}	
[u(\xi), u(\xi)]^\prime &= [u^\prime(\xi), u(\xi)] + [u(\xi), u^\prime(\xi)] \\
&= [u^\prime(\xi), i Au^\prime(\xi)] +[i Au^\prime(\xi),u^\prime(\xi)] = - (V u^\prime(\xi), u^\prime(\xi)) \\
&= (u^\prime(\xi), Wu(\xi)) + (Wu(\xi),u^\prime(\xi)) \\
&= (u^\prime(\xi),iTu^\prime(\xi))+(iTu^\prime(\xi), u^\prime(\xi)) \\
&= -(T_j u^\prime(\xi), u^\prime(\xi))\mbox{,}
\end{aligned}
\end{equation}


$V=2(WA)^I=[{WA}-{(WA)}^*]/i\le{0}$. Integrating the equality $(5)$ from $t$ to $\infty$ we obtain
\begin{equation}
[u(t), u(t)]=\int_t^{\infty}(Vu\prime{(\xi)},  u\prime{(\xi)})d\xi.
\end{equation}
In particular, $[u(0), u(0)]=[y,y]\le0$ for all  $y\in{S_+^0}$.
\end{pf}

\begin{note}
Equality $(6)$ has a physical sense. For some particular equations $(4)$ describing physical processes the form $[u(t), u(t)]$ plays the role of an energy functional and operator $V = 2(WA)^I \le 0$ is responsible for damping. Thus the equality $(6)$ shows that, for systems with damping,the energy functional decreases monotonically when $t \to \infty$.
\end{note}

\begin{proposition}
Let $A$ be a $W$-dissipative operator and
\begin{equation}
y_k^0, y_k^1, {\ldots},y_k^{p_k},\,\ k=N_1, \dots, N_2
\end{equation}
be the canonical system of EAV of the pencil $W + \lambda W A$ corresponding to a real eigenvalue $\lambda_k = c \ne 0$. Let
\begin{equation}
x_k^0, x_k^1, {\ldots}, x_k^{p_k},\,\ k = N_1, \dots, N_2
\end{equation}
be the adjoint canonical system of EAV of the pencil ${W} +{\lambda A^*W}$ corresponding to the same eigenvalue $c$. If ${[\gamma]}$ is the integer part of a number $\gamma$ and $a_k = [p_k/2]$ then the elements
\begin{equation}
y_k^0,\,\ y_k^1,\,\ {\ldots},\,\ y_k^{\alpha_k}\text{,} \quad { k = N_1}, \dots, {N_2}\text{,}
\end{equation}
\begin{equation}
x_k^0,\,\ x_k^1,\,\ {\ldots},\,\ x_k^{\alpha_k}\text{,} \quad { k = N_1}, \dots, {N_2}\text{,}
\end{equation}
belong to $Ker({WA})^I$.
\end{proposition}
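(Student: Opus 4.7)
The plan is to apply the identity from the proof of Proposition 8.1,
\[
[u(t), u(t)]' = -(Vu'(t), u'(t)), \qquad V := 2(WA)^I \le 0,
\]
valid for any solution of $-iAu' + u = 0$, together with its bilinear extension $[u, v]' = -(Vu', v')$ for any two solutions (obtained by the same integration by parts). Since $V = V^* \le 0$, one has $\Ker V = \Ker(WA)^I$ and $Vy = 0 \iff (Vy, y) = 0$. For a chain $(y^0, \ldots, y^{p_k})$ at $\lambda_k = c$ (index $k$ suppressed), I form the elementary solutions
\[
u^s(t) = e^{ict}\sum_{j=0}^s \frac{(it)^j}{j!}\,y^{s-j}, \qquad 0 \le s \le p_k,
\]
each a solution of $-iAu' + u = 0$ because $(y^0, \ldots, y^{p_k})$ is a Jordan chain of $I + \lambda A$ at $c$ (equivalent to the pencil $W + \lambda WA$ since $W$ is invertible). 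With $|e^{\pm ict}| = 1$, both sides of the identity applied to $u^s$ or to cross-products $[u^s, u^r]$ become polynomials in $t$, giving polynomial identities to be matched coefficient-by-coefficient.

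I would proceed by induction on $s$ that $Vy^s = 0$ for $s = 0, 1, \ldots, \alpha_k$. For $s = 0$, $[u^0, u^0] = [y^0, y^0]$ is constant while $(V(u^0)', (u^0)') = c^2(Vy^0, y^0)$, which together force $Vy^0 = 0$. For the inductive step, assume $Vy^j = 0$ for all $j < s \le \alpha_k$. Using $(u^s)' = i(cu^s + u^{s-1})$, one writes $(u^s)'(t)e^{-ict} = icy^s + w(t)$ with $w(t)$ a polynomial in $t$ whose coefficients lie in $\Span\{y^0, \ldots, y^{s-1}\}$. Self-adjointness of $V$ and the induction hypothesis give $Vw(t) \equiv 0$ and $(Vy^s, w(t)) \equiv 0$, hence
\[
(V(u^s)', (u^s)') = c^2(Vy^s, y^s)\ \text{is constant},\qquad (V(u^s)', (u^r)') \equiv 0\ \text{for all}\ r < s.
\]
The identities consequently force $[u^s, u^s](t)$ to be linear in $t$ with slope $-c^2(Vy^s, y^s)$, which matches the direct $t^1$-expansion $-2\Im[y^{s-1}, y^s]$, giving $(Vy^s, y^s) = 2\Im[y^{s-1}, y^s]/c^2$; and simultaneously force each $[u^s, u^r](t)$, $r < s$, to be constant. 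Matching the latter against the explicit expansion $[u^s, u^r](t) = \sum_{j, l}i^{j-l}t^{j+l}/(j!l!)[y^{s-j}, y^{r-l}]$ and varying $r$ through $\{0, 1, \ldots, s-1\}$ produces a system of scalar identities in the numbers $[y^a, y^b]$; together with the parallel analysis of $[u^{s+1}, u^{s+1}](t)$ (permitted since $\alpha_k = [p_k/2] \le p_k - 1$ for $p_k \ge 1$, so $u^{s+1}$ exists) and its cross-products (which simplify in the same way under the induction hypothesis), this system successively forces $[y^0, y^0] = 0$, $[y^0, y^1] = 0$, and higher-order equalities, and ultimately $\Im[y^{s-1}, y^s] = 0$; this yields $(Vy^s, y^s) = 0$, hence $Vy^s = 0$, closing the induction. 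The case $p_k = 0$ is the base case.

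The adjoint chain $(x^0, \ldots, x^{p_k})$ is handled symmetrically: the chain relations for $W + \lambda A^*W$ translate, under $A^{[*]} := W^{-1}A^*W$, into Jordan-chain relations for $I + \lambda A^{[*]}$ at $c$. Since $A^{[*]}$ is $W$-accretive and a direct computation gives $2(WA^{[*]})^I = -V$, one has $\Ker(WA^{[*]})^I = \Ker V$, and the analogous identity $[u, u]' = (Vu', u')$ for solutions of $-iA^{[*]}u' + u = 0$ yields $Vx^h = 0$ for $h \le \alpha_k$ by the same polynomial-matching argument. For multiple chains at $c$, the single-chain argument applies independently to each. The main technical difficulty is the precise combinatorial extraction of $(Vy^s, y^s) = 0$ from the interlocking scalar identities produced by the constraints ``$[u^s, u^r]$ constant for $r < s$'', ``$[u^s, u^s]$ linear'', and the corresponding constraints on $[u^{s+1}, u^{s+1}]$ — the polynomial algebra is straightforward in low cases ($s = 0, 1, 2$) but requires careful bookkeeping at higher indices; this is where the argument is most delicate.
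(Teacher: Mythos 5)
Your approach — extending the dissipation identity of Proposition~8.1 to its bilinear form $[u,v]'=-(Vu',v')$, applying it to the elementary solutions $u^s(t)$ at the real eigenvalue $c$, and matching polynomial coefficients — is a genuinely different route from the paper's, and it is a natural continuation of the energy argument in Proposition~8.1. As written, however, it contains a concrete gap. Once you have $(Vy^s,y^s)=2\Im[y^{s-1},y^s]/c^2$ from $[u^s,u^s]$, you must show $\Im[y^{s-1},y^s]=0$, and you claim that $[u^{s+1},u^{s+1}]$ together with its cross-products with $u^r$, $r<s$, closes the loop. This is already false at $s=2$: with $Vy^0=Vy^1=0$, the constancy of $[u^3,u^1]$ gives $[y^1,y^2]=[y^0,y^3]$, the $t^2$-coefficient of $[u^3,u^3]'$ gives $\Im[y^0,y^3]=\Im[y^1,y^2]$, and the constancy of $[u^3,u^0]$ (or $[u^3,u^2]$) forces $[y^l,y^0]=0$ only for $l\le 2$. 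These identities are mutually consistent and do not yet yield $\Im[y^1,y^2]=0$. The missing ingredient is the constancy of $[u^4,u^0]$ (valid, since only $Vy^0=0$ is needed), whose $t^1$-coefficient forces $[y^3,y^0]=0$, hence $[y^0,y^3]=0$ and then $[y^1,y^2]=0$. In general the argument requires $u^m$ for $m$ up to $2s$, not merely $m\le s+1$, and a nested induction establishing $[y^l,y^r]=0$ over a triangular range of indices. Both are feasible — $u^{2s}$ exists because $2s\le 2\alpha_k\le p_k$ — but neither is what you wrote, and you yourself flag the bookkeeping as unfinished; the justification you offer (that $u^{s+1}$ exists because $\alpha_k\le p_k-1$) covers only part of what is actually used.

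For contrast, the paper's proof of this proposition is a short, purely algebraic computation that avoids polynomials entirely. Writing $T=WA$, the chain relation $(cT+W)y^h=-Ty^{h-1}$ and the induction hypothesis $Ty^j=T^*y^j$ for $j<h$ allow one to shift indices across the inner product step by step, obtaining
$(Ty^{h-1},y^h)=(Ty^{h-2},y^{h+1})=\cdots=-(y^0,(cT+W)y^{2h})=0$ (this is where $2h\le p_k$ enters). Taking imaginary parts of the chain relation paired with $y^h$ then yields $0=\Im((cT+W)y^h+Ty^{h-1},y^h)=c(T^Iy^h,y^h)$, and semi-definiteness of $T^I$ gives $T^Iy^h=0$. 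It is instructive that both your argument and the paper's must reach as deep as $y^{2h}$ in the chain; this is exactly the structural reason why $\alpha_k=[p_k/2]$ is the correct cutoff in the statement.
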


\begin{pf}
Denote $T = WA.$ From the definition of EAV we have
$$
(cT + W)y_k^h = -Ty_k^{h-1}, \quad 0 \le h \le p_k \text{ ($y^{-1}:= 0$).}
$$
In particular,
$$
\Im((cT + W){y_k^0},{y_k^0}) = c({T^I}{y_k^0},{y_k^0)}= 0.
$$

Since $T^I\ge{0}$ we have $y_k^0\in Ker \, T^I$. Suppose we have proved that\,\ $y_k^0, \ldots, y_k^{h-1}\,\ \in\,\ Ker \,\ T^I$ for ${h-1}<\,\ \alpha_k$.\,\ Then $2h\,\ \le\,\ {p_k}$ and
\begin{equation}
\begin{aligned}	
(Ty_k^{h-1}, y_k^h) &= (T^*y_k^{h-1}, y_k^h) = -(y_k^{h-1}, (c T + W)y_k^{h+1}) \\			
&= (Ty_k^{h-2}, y_k^{h+1}) = \dots = -(y_k^0, (c T + W)y_k^{2h}) = 0.
\end{aligned}
\end{equation}
Therefore,
$$
0 = \Im\left((c T + W)y_k^{h} + Ty_k^{h-1}, y_k^{h}\right) = c(T^I y_k^h,y_k^h).
$$
From this equality we deduce as before that $y_k^h\in
Ker\,\ T^I.$\,\ Similarly, we can prove $x_k^{h} \in Ker \, T^{I}$ if $h\le\alpha_k$.
\end{pf}

\begin{proposition}
Let the assumption of   Proposition 8.2 hold. If $p_j < p_k$ then
\begin{equation}
(W x_{k}^h, x_j^r) = 0
\end{equation}
for all $r\le{[p_j/2]}, \quad h\le [p_k/2]$.
\end{proposition}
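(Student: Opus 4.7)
The plan is to establish a shift identity for the pairing $(Tx_k^h, x_j^r)$ (with $T = WA$) that makes this quantity constant along anti-diagonals on the low halves of the two chains, then iterate it to slide the pairing off the edge where $x_j^{-1} = 0$, and finally recover $(Wx_k^h, x_j^r)$ from the defining recursion for $x_j^r$. The first move is the observation that, for $h \le \alpha_k$, the adjoint chain automatically satisfies the \emph{direct} pencil recursion $(W + cT)x_k^h + Tx_k^{h-1} = 0$. Indeed, by Proposition 8.2 both $x_k^h$ and $x_k^{h-1}$ lie in $\Ker T^I$, so $T^* = T$ on each, and the adjoint recursion $(W + cT^*)x_k^h + T^*x_k^{h-1} = 0$ collapses onto the direct one.

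Armed with this, I would apply the trivial operator adjointness $((W + cT)u, v) = (u, (W + cT^*)v)$ to $u = x_k^h$, $v = x_j^r$, substituting the direct recursion for $x_k^h$ on the left and the adjoint recursion for $x_j^r$ on the right; the $W$-terms cancel and what drops out is the shift identity
$$(Tx_k^{h-1}, x_j^r) = (Tx_k^h, x_j^{r-1}), \qquad h \le \alpha_k.$$
The symmetric derivation, applying the same observation to $x_j^r$ when $r \le \alpha_j$ and then converting $T^*$ to $T$ on the vectors $x_j^{r-1}, x_j^r \in \Ker T^I$, yields the very same identity, so the shift is valid under the disjunctive condition $h \le \alpha_k$ \emph{or} $r \le \alpha_j$.

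Next I would iterate the shift $r+1$ times, carrying $(h,r)$ to $(h+r+1,-1)$: at each step the current second index stays $\le r \le \alpha_j$, so the swap side of the disjunction keeps firing, and the terminal value is $(Tx_k^{h+r+1}, x_j^{-1}) = 0$. The same argument applied to $(Tx_k^h, x_j^{r-1})$ delivers zero as well. Substituting into the identity
$$(Wx_k^h, x_j^r) = (x_k^h, Wx_j^r) = -c(Tx_k^h, x_j^r) - (Tx_k^h, x_j^{r-1}),$$
which follows from $W = W^*$ and the adjoint recursion $Wx_j^r = -cT^*x_j^r - T^*x_j^{r-1}$, gives the desired orthogonality. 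The main obstacle is the indexing bookkeeping, specifically checking that $x_k^{h+r+1}$ is a genuine element of the adjoint chain: this is exactly where the hypothesis $p_j < p_k$ is consumed, since an easy parity split on $p_k$ yields $\alpha_k + \alpha_j \le p_k - 1$, hence $h + r + 1 \le p_k$.
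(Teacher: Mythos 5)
Your proof is correct and takes essentially the same route as the paper: the paper also iterates the shift $(x_k^h, T x_j^r) = (x_k^{h+1}, T x_j^{r-1}) = \cdots = 0$ (using $T x_j^s = T^* x_j^s$ for $s \le \alpha_j$ and the adjoint chain recursion) until the $x_j$-index drops off, then applies the adjoint recursion once more to convert the $T$-pairing to the $W$-pairing, with the index bound $h+r+1 \le p_k$ consuming $p_j < p_k$. The only cosmetic difference is that you apply the final recursion to the $x_j^r$ side while the paper applies it to $x_k^h$, and your disjunctive validity condition "$h \le \alpha_k$ or $r \le \alpha_j$" is a harmless over-decoration since the second alternative holds throughout.
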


\begin{pf}
According to Proposition 8.2 we  have $T x_j^r = T^*x_j^r$, $T = WA$. Since ${h + i + 1} \le p_k$, we find ((cf. (11)) \begin{equation}
\begin{aligned}
(x_{k}^{h}, T x_{j}^r) &= - \left((cT^* + W\right )x_k^{h+1}, x_j^r) = (x_k^{h+1}, T x_j^{r-1}) \\
&= \dots = (x_k^{h+s+1},(c T + W)x_j^0) = 0.
\end{aligned}
\end{equation}
From the definition of EAV we obtain
$$
W_k^{h}x_k^{h} = -cT^*x_k^{h} - T^*x_k^{h-1}
$$
Then $(12)$ follows from $(13)$.
\end{pf}

Denote by $\sigma_d(A)$ the discrete spectrum of a pencil $A(\lambda)$, i.e. the set of isolated eigenvalues of $A(\lambda)$ of finite algebraic multiplicity

\begin{proposition}
Let $A(\lambda) = W + \lambda T$ and eigenvalues $\lambda_k \in \sigma_d(A)$ be enumerated according to their geometric multiplicity. If $(7)$, $(8)$ are mutually adjoint  canonical systems corresponding to eigenvalues $\lambda_k$ then the following biorthogonality relations hold:
\begin{equation}
(T y_{k}^{h}, x_{j}^{s}) = \delta_{k,j}\delta_{h, p_s-h}
\end{equation}


If $\lambda_j \ne 0$ then
\begin{equation}
(W y_k^h, x_j^s - \bar\lambda_j^{-1}x_j^{s-1} + \ldots + (-1)^s\bar\lambda_j^{-s}x_j) = -\lambda_j\delta_{k,j}\delta_{h,p_s-h}
\end{equation}
\end{proposition}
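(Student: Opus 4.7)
The plan is to first derive a single master recursion linking $(Ty_k^h, x_j^s)$ at neighboring indices, use it to obtain (14), and then deduce (15) from (14) by a telescoping manipulation. Starting from the defining chain relations
\[
A(\lambda_k) y_k^h = -T y_k^{h-1},\qquad A^*(\bar\lambda_j) x_j^s = -T^* x_j^{s-1}\qquad (y_k^{-1} := 0,\ x_j^{-1} := 0),
\]
I would compute $(A(\lambda_k) y_k^h, x_j^s)$ in two different ways. On the one hand it equals $-(Ty_k^{h-1}, x_j^s)$; on the other, writing $A^*(\lambda_k) = A^*(\bar\lambda_j) + (\lambda_k - \bar\lambda_j) T^*$ (which uses $W = W^*$, a consequence of the standing hypotheses on a linear dissipative pencil) and rearranging produces $-(Ty_k^h, x_j^{s-1}) + (\bar\lambda_k - \lambda_j)(Ty_k^h, x_j^s)$. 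Equating the two expressions yields the master recursion
\[
(\bar\lambda_k - \lambda_j)(Ty_k^h, x_j^s) = (Ty_k^h, x_j^{s-1}) - (Ty_k^{h-1}, x_j^s).
\]

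For chains attached to eigenvalues with $\bar\lambda_k \neq \lambda_j$, the leading coefficient does not vanish, so a short induction on $h + s$, based at $h = s = 0$ where the right-hand side is zero thanks to $y_k^{-1} = x_j^{-1} = 0$, yields $(Ty_k^h, x_j^s) \equiv 0$. In the degenerate case $\bar\lambda_k = \lambda_j$ (in particular when $k = j$, or when $k \neq j$ but both chains belong to the same real eigenvalue) the recursion collapses to $(Ty_k^h, x_j^{s-1}) = (Ty_k^{h-1}, x_j^s)$, which only tells us that the pairing depends on $h + s$ alone. To fix the value on each antidiagonal I would invoke the uniqueness clause of Keldysh's Theorem 2.1: expanding both sides of $A(\lambda) A^{-1}(\lambda) = I$ around $\lambda = c$ via $A(\lambda) = A(c) + (\lambda - c) T$ and matching against the canonical principal part (2.2) forces $(Ty_k^h, x_j^s) = \delta_{k,j}\, \delta_{h + s,\, p_k}$, which is the content of (14).

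For (15), a direct calculation from the adjoint chain relation $W x_j^r = -\bar\lambda_j T^* x_j^r - T^* x_j^{r-1}$ produces, after cancellation of consecutive terms, the telescoping identity
\[
W\bigl(x_j^s - \bar\lambda_j^{-1} x_j^{s-1} + \cdots + (-1)^s \bar\lambda_j^{-s} x_j^0\bigr) = -\bar\lambda_j T^* x_j^s.
\]
Taking the inner product with $y_k^h$, using $W = W^*$ and $(y_k^h, T^* x_j^s) = (Ty_k^h, x_j^s)$, reduces the left-hand side of (15) to $-\lambda_j (Ty_k^h, x_j^s)$, and (14) finishes the job. The main obstacle is the degenerate case in the proof of (14): the master recursion alone is insufficient to pin down values on each antidiagonal, and the argument must appeal to the uniqueness and normalization of the adjoint canonical system from Keldysh's Theorem 2.1, which is the technically heaviest input.
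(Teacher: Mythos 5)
Your overall route is close in spirit to the paper's, but diverges on one subcase and leaves the decisive subcase unproven. A few points to note.

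First, a bookkeeping error: the scalar in your master recursion should be $\lambda_k-\lambda_j$, not $\bar\lambda_k-\lambda_j$. Indeed, $[A(\lambda_k)]^*=W+\bar\lambda_k T^*=A^*(\bar\lambda_j)+(\bar\lambda_k-\bar\lambda_j)T^*$, and when this extra term is pulled out of the second slot of the inner product it is conjugated, giving
\[
(\lambda_k-\lambda_j)(Ty_k^h,x_j^s)=(Ty_k^h,x_j^{s-1})-(Ty_k^{h-1},x_j^s).
\]
Consequently the degenerate case is $\lambda_k=\lambda_j$ (which, contrary to what your phrasing suggests, does include $k=j$ for any eigenvalue, real or not, and also $k\ne j$ for any shared eigenvalue).

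Second, the comparison with the paper. The paper proves \emph{both} cases by expanding $y_k^h=A^{-1}(\lambda)A(\lambda)y_k^h$ against the Keldysh principal part (2.2) and matching Laurent coefficients, using the linear independence of the leading eigenvectors $\{y_j^0\}$ to disentangle contributions from different chains at the same eigenvalue. Your recursion gives a cleaner and more elementary proof of the vanishing of $(Ty_k^h,x_j^s)$ when $\lambda_k\ne\lambda_j$, and that is a genuine simplification for that subcase. However, in the degenerate subcase the recursion only says the pairing is constant along antidiagonals $h+s=\text{const}$, and it can force the value to be $0$ only on antidiagonals that reach the boundary of the box $[0,p_k]\times[0,p_j]$; it cannot pin down the antidiagonal $h+s=p_j$, nor distinguish $k=j$ from $k\ne j$ with a shared $\lambda$. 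At this point you ``invoke'' Keldysh coefficient-matching, but that invocation \emph{is} the paper's entire proof (including the linear-independence step, which you do not mention). In other words, the hard part of the argument is deferred rather than done; as a self-contained proof the proposal has a genuine gap there, and the time saved by the recursion is small since the full coefficient-matching would have handled the non-degenerate case too.

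Third, your telescoping derivation of (15) from (14) via
\[
W\Bigl(\sum_{r=0}^{s}(-1)^{s-r}\bar\lambda_j^{-(s-r)}x_j^{r}\Bigr)=-\bar\lambda_j T^*x_j^{s}
\]
is correct and matches the paper's closing remark (the paper's displayed version of that identity even contains a small typo that your form repairs), so that part of the proposal is fine.
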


\begin{pf}
We have
$$
A(\la) = [A(\lambda_k) + T(\lambda - \lambda_k)]y_k^h = -Ty_k^{h-1} + (\lambda - \lambda_k)Ty_k^h,\,\ 0\le h \le p_k
$$
(if $h = 0$, then we assume $y_k^{-1} = 0)$. Using the representation $(2.2)$ we obtain
\begin{equation}
\begin{aligned}
y_k^h &= A^{-1}(\lambda)A(\lambda)y_k^h \\
&= \sum^{N_2}_{j=N_1}\sum^{p_j}_{s=0}
\left[
\frac{(\cdot ,x_j^s)y_j^0 + \ldots +( \cdot , x_j^0 )y_j^s}{(\lambda - \lambda_j)^{p_{j}+1-s}
} + R(\lambda)\right]\left[{(\lambda - \lambda_k)Ty_k^h - Ty_k^{h-1}}\right]
\end{aligned}
\end{equation}
where $R(\lambda)$ is a holomorphic operator function at the point $\lambda_j$.  We may assume that $N_1 = 1$, $N_2 = N$, $p_1\ge p_2\ge \ldots\ge p_N$

Let $\lambda_k \ne \lambda_j$. Taking $h = 0$ and comparing coefficients of the powers $(\lambda - \lambda_j)^{-p_{j}-1 + s}$, $0\le s \le p_j$, we find
\begin{equation}
\sum_{p_j = p_1}{(Ty_k^0, x_j^0) y_j^0 = 0},
\end{equation}
\begin{equation}
\sum_{p_j = p_1}(Ty_k^0, x_j^1) y_j^0  +
\sum_{p_j = p_{1}}{(Ty_k^0, x_j^0) y_j^1} +
\sum_{p_j = {p_1-0}} (Ty_k^0, x_j^0) y_j^0 = 0.
\end{equation}
It follows from the definition of a canonical system that elements $ \{y_j^0\}_1^N$ are linearly independent. Hence, it follows from $17$, that
\begin{equation}
(Ty_k^0, x_j^0) = 0, \quad \text{for all $p_j = p_1$}.
\end{equation}
Now, it follows from (18), (19), that
$$
(Ty_k^0, x_j^0) = 0 \quad \text{if $p_j = p_1 - 1$;} \quad (Ty_k^0, x_j^1) = 0 \quad \text{if $p_j = p_1$.}
$$
Repeating the argument we find subsequently
$$
(Ty_k^0, x_j^s) = 0 \quad (Ty_k^1. x_j^s), \dots, (Ty_k^{p_k}, x_j^s) = 0
$$
for all $0 \le s \le p_j.$	


The same arguments can be applied in the case $\la_k = \la_j$. Comparing the coefficients of the powers $(\la - \la_j)^\nu$ in (18) it is found that, for $h = 0, 1, \dots, p_k$,
$$
(Ty_j^h, x_j^s) = \delta_{h, p_j-h},$$
and (14) follows. Noticing that
$$
T^{*}x_j^s = -\bar \la_j^{-1}W^{*}(x_j^s - \bar \la_j^{-1}x_j^{s-1}+\dots+(-1)^sx_j^0),
$$
we obtain (2.8).
\end{pf}

Let (7) be a canonical system of EAV of a dissipative pencil $W + \la T$ corresponding to a real eigenvalue $\la_k = c$. Denote by $S^0$ the span of elements
\begin{equation}
y_k^0, y_k^1, \dots, y_k^{\beta_k}, \quad k = N_1, \dots,N_2, \quad \beta_k = [(p_k-1)/2]
\end{equation}
(if $p_k$ = 0, we assume that $\beta_k = -1$ and the element $y_k^0$ does not belong to $S^0$). Let us fix an index k, $N_1 \le k \le N_2$. If a number $p_k + 1$ is even we set $S_k = S^0$. If $p_k+1$ is odd, we denote by $S_k$ the span of elements (20) combined with elements $y_j^{\alpha_j}, \alpha_j = [p_j/2]$, where index $j$ runs through all values such that $p_j = p_k$. Similarly, by replacing the chains (7) with adjoint chains (8) we construct subspaces $(S^0)^{*} and S_k^{*}$.

\begin{proposition}
If $A(\la) = W + \la T$ is a dissipative operator pencil then for all nonzero real $\la_k \in \sigma_d(A)$
$$
S^0 = (S^0)^{*} \quad \text{and} \quad S_k = S_k^{*} \quad \text{for all $N_1 \le k \le N_2$.}
$$
\end{proposition}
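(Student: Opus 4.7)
The plan is to compare $S^0$ and $(S^0)^*$ inside the intersection of the direct and adjoint root subspaces at $c$, and to identify them through the biorthogonal pairing of Proposition~8.4.

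First I would invoke Proposition~8.2 to note that $y_k^h \in \Ker T^I$ for $h \le \alpha_k = [p_k/2]$, so $T y_k^h = T^* y_k^h$ there. Combined with the chain relation $(cT+W) y_k^h = -T y_k^{h-1}$ this yields $(cT^*+W) y_k^h = -T^* y_k^{h-1}$ for $h \le \alpha_k$, so $y_k^0, \ldots, y_k^{\alpha_k}$ is simultaneously a chain of EAV of $A(\lambda) = W + \lambda T$ and of the adjoint pencil $A^*(\lambda) = W + \lambda T^*$ at the eigenvalue $c$, and hence lies in the intersection $M_c \cap M_c^*$ of the root subspaces at $c$. Running the same argument for the adjoint canonical system places $x_k^0, \ldots, x_k^{\alpha_k}$ into $M_c \cap M_c^*$ as well. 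Since $\beta_k \le \alpha_k$, both $S^0$ and $(S^0)^*$ sit inside $M_c \cap M_c^*$ and have equal dimension $\sum_k(\beta_k + 1)$, so it suffices to prove one inclusion, say $(S^0)^* \subseteq S^0$; the other follows by swapping the roles of $A$ and $A^*$.

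Next, using the biorthogonality $(T y_j^r, x_l^s) = \delta_{jl}\delta_{r+s,\,p_j}$ from Proposition~8.4, any $v \in M_c$ expands as $v = \sum_{j,r} c_{jr} y_j^r$ with $c_{jr} = (Tv, x_j^{p_j-r})$; a short bookkeeping then shows that $v \in S^0$ if and only if $(Tv, x_j^s) = 0$ for every $j$ and every $s \le p_j - \beta_j - 1 = \alpha_j$. The task thus reduces to verifying
\[
(T x_k^h, x_j^s) = 0 \qquad \text{for } h \le \beta_k,\ s \le \alpha_j,\ \text{all } j.
\]
Using $T x_k^h = T^* x_k^h$ (valid since $h \le \beta_k \le \alpha_k$) and the definition of adjoint, this becomes $(x_k^h, T x_j^s) = 0$, which for $p_j \ne p_k$ is delivered by the walking argument from the proof of Proposition~8.3: iteratively replace $(x_k^{h+i}, T x_j^{s-i})$ by $(x_k^{h+i+1}, T x_j^{s-i-1})$ via Proposition~8.2 and the chain relations, and terminate at $(x_k^{h+s+1}, (cT + W) x_j^0) = 0$ (which vanishes because $x_j^0 \in \Ker T^I$ gives $(cT+W) x_j^0 = (cT^*+W) x_j^0 = 0$). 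The sub-case $p_j > p_k$ reduces to $p_j < p_k$ after swapping $j \leftrightarrow k$ and using $W = W^*$.

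The main obstacle is the diagonal case $p_j = p_k$: Proposition~8.3 no longer applies, and the walk may leave the permitted chain segment before reaching the eigenvector $x_j^0$. Handling it requires passing to a canonical system adapted to the symmetric form $(W\,\cdot\,,\,\cdot\,)$ on the span of the top-level vectors of chains of the same length, which is exactly the freedom that Propositions~8.6 and~8.7 will exploit in the next subsection. Once such a canonical system is selected, the diagonal cross-terms vanish modulo $S^0$, giving $(S^0)^* \subseteq S^0$ and hence equality by dimension. Applying the same reasoning to the enlarged systems obtained by adjoining the middle vectors $y_j^{\alpha_j}$ (respectively $x_j^{\alpha_j}$) for all $j$ with $p_j = p_k$ yields the companion equality $S_k = S_k^*$.
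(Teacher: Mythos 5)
Your route differs genuinely from the paper's. The paper argues by contradiction: it expands $x_k^h$ in the canonical basis $\{y_j^s\}$, observes (by counting indices) that any ``bad'' coefficient $c_{j,s}$ can only occur with $p_j<p_k$, and then kills $c_{j,s}$ by pairing $Wx_k^h$ against $g=x_j^r-\bar\la_j^{-1}x_j^{r-1}+\dots$, invoking Propositions~8.3 and~8.4. You instead characterize $S^0$ through the $T$-biorthogonality and verify the characterizing relations $(Tx_k^h,x_j^s)=0$ directly by the walk. That is a clean and workable alternative, but your execution contains two genuine errors, one fatal.

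First, the ``main obstacle'' you name — the diagonal case $p_j=p_k$ — does not exist. The walk step $(x_k^{h+i},Tx_j^{s-i})\to(x_k^{h+i+1},Tx_j^{s-i-1})$ only needs $s-i\le\alpha_j$ (so that $Tx_j^{s-i}=T^*x_j^{s-i}$ and $(cT+W)x_j^{s-i}=(cT^*+W)x_j^{s-i}$ can be stepped down) and $h+i+1\le p_k$ (so that the adjoint chain relation $T^*x_k^{h+i}=-(cT^*+W)x_k^{h+i+1}$ applies); no constraint $h+i\le\alpha_k$ is needed on the left, contrary to what your phrase ``leave the permitted chain segment'' suggests. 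The walk terminates at $(x_k^{h+s+1},(cT+W)x_j^0)=0$ whenever $h+s+1\le p_k$, and under your constraints $h\le\beta_k$, $s\le\alpha_j=\alpha_k$ one has $h+s+1\le\beta_k+\alpha_k+1=p_k$ for either parity of $p_k$. So $p_j=p_k$ is handled by the same walk. (The case genuinely outside its range is $p_j>p_k$, for which the swapped walk that you sketch terminates at $x_j^{s+h+1}$ and uses $s+h+1\le\alpha_j+\beta_k+1\le\alpha_j+\beta_j+1=p_j$, valid because $\beta_k\le\beta_j$ when $p_k<p_j$; spell this out rather than calling it a reduction to $p_j<p_k$, since the index ranges do not match.)

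Second, and more seriously, the fix you propose for the imaginary obstacle — choosing a canonical system adapted to $(W\,\cdot\,,\,\cdot\,)$ via Propositions~8.6 and~8.7 — is circular. The proof of the very next proposition (Proposition~8.6) begins with ``Let $P_1$ be the orthoprojector onto subspace $S_1=S_1^*$,'' i.e.\ it relies on the equality $S_k=S_k^*$ being established \emph{here}. You cannot invoke it without a logical loop. Drop that detour, run the walk (resp.\ its swap) uniformly over all $j$, and your proof closes.
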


\begin{pf}
Suppose that $y_k^h \in S_k$ and $x_k^h \notin S_k$. It follows from Proposition 3 that (10) are chains of EAV of a pencil $W+\la T$. Since (7) is a canonical system, we have a representations
\begin{equation}
x_k^h = \sum_{j=N_1}^{N_2} \sum_{s=0}^{h} c_{j, s}y_j^s, \quad 0 \le h \le \alpha_j = [p_j/2].
\end{equation}


We have assumed that $x_k^h \notin S_k$, therefore, at least one of the numbers $c_{j, s}$ in (2.13) is not equal to zero for $s > \beta_k = [(p_k-1)/2]$, $p_j < p_k$. In this case, however, $x_j^{p_j-s} \in S_j^{*}$. Hence,
$$
\textsl{g} = x_j^r - \bar \la_j^{-1}x_j^{r-1}+ \dots+ (-1)^rx_j^0 \in S_j^{*}, \quad r=p_j-s
$$
According to proposition (2.3) we have $(Wx_j^h, \textsl{g}) = 0$. On the other hand, it follows from Proposition (2.4) and representation (2.13) that
$$
(Wx_j^h, \textsl{g}) = -\la_jc_{j, s}.
$$
As $\la_j \ne 0$, we have $c_{j, s}$. Hence, $x_k^h \notin S_k$ is not valid. The equality $S^0 = (S^0)^*$ is proved in a similar way.
\end{pf}

\begin{proposition}
Let the assumption of Proposition (2.2) hold. Then a canonical system (2.1), corresponding to a real eigenvalue $\mu$ can be chosen in such a way that
\begin{equation}
(Wy_j^{\alpha_j}, y_s^{\alpha_s}) = \epsilon_j\delta_{js}, \quad \epsilon_j = \pm 1, \quad \alpha_j = [p_j/2],
\end{equation}
for all indices $1 \le j, s \le N$ such that $p_j+1$ or $p_s+1$ are odd.
\end{proposition}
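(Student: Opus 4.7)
The strategy is to diagonalize the Hermitian form $(W\,\cdot,\,\cdot)$ on the middle layer of the odd-length chains (those with $p_k$ even, whose middle element is $y_k^{\alpha_k}$ with $\alpha_k = p_k/2$) by a change of basis that respects the canonical system structure. The key flexibility is that any invertible linear transformation $U=(u_{j\ell})$ among chains of a fixed length produces another valid canonical system: replacing each chain $(y_\ell^0,\dots,y_\ell^{p})$ by $\bigl(\sum_j u_{j\ell} y_j^0,\dots,\sum_j u_{j\ell} y_j^{p}\bigr)$ preserves the chain relations $(W+\mu T)y^h + Ty^{h-1}=0$, which are linear in the chain. Thus it suffices to diagonalize the Gram matrix of middle elements within each length-class separately.

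I would first establish three facts about the Gram matrix $G_{js}=(Wy_j^{\alpha_j}, y_s^{\alpha_s})$: (a)~$G$ is Hermitian, immediate from $W^*=W$; (b)~$G_{js}=0$ whenever $p_j \ne p_s$ and at least one of $p_j,p_s$ is even; and (c)~for each even $p$, the block $G^{(p)}=G|_{p_j=p_s=p}$ is non-degenerate. Property (b) is the direct-system analog of Proposition 8.3: the same inductive argument applies, combining the chain relation $Wy_k^h=-\mu Ty_k^h - Ty_k^{h-1}$ with the identity $Ty_k^h = T^*y_k^h$ for $h \le \alpha_k$ from Proposition 8.2 to shift $T$'s across the inner product until the shorter chain is exhausted.

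Property (c) is the heart of the argument. Writing $(Wy_k^\alpha, y_\ell^\alpha) = -\mu(Ty_k^\alpha, y_\ell^\alpha) - (Ty_k^{\alpha-1}, y_\ell^\alpha)$ for $p_k=p_\ell=2\alpha$, I would show that the second term vanishes. By Proposition 8.5 ($S^0=(S^0)^*$ and $S_k=S_k^*$), each adjoint middle element can be written as $x_j^\alpha = \sum_\ell c_{j\ell}\, y_\ell^\alpha + z_j$ with $z_j\in S^0$ and $C=(c_{j\ell})$ invertible. The biorthogonality of Proposition 8.4 then forces both $(Ty_k^{\alpha-1}, x_j^\alpha)=0$ (the indices $\alpha-1$ and $\alpha$ fail to match the pattern $h+s=p_k$) and, after re-expanding elements of $S^0$ through the adjoint basis via $S^0=(S^0)^*$, also $(Ty_k^{\alpha-1}, z_j)=0$ (since indices in $S^0$ are bounded by $\beta_m<\alpha$). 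Hence $(Ty_k^{\alpha-1}, y_\ell^\alpha)=0$ for all $\ell$ with $p_\ell=p$, giving $G^{(p)} = -\mu M$ with $M_{k\ell}=(Ty_k^\alpha, y_\ell^\alpha)$. The identity $\delta_{kj}=(Ty_k^\alpha, x_j^\alpha)=\sum_\ell \overline{c_{j\ell}}\, M_{k\ell}$ combined with invertibility of $C$ forces $M$ invertible; since $\mu \ne 0$, $G^{(p)}$ is non-degenerate. Diagonalizing each Hermitian non-degenerate $G^{(p)}$ by a suitable linear transformation of its length-class then produces the desired $\epsilon_j = \pm 1$. The main obstacle is property (c), in particular the extraction of the invertibility of the $T$-Gram matrix $M$ from the biorthogonality of Proposition 8.4, mediated by the identifications $S^0=(S^0)^*$ and $S_k=S_k^*$.
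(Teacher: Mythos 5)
Your approach is essentially the same as the paper's: both reduce the problem to diagonalizing the $W$-Gram form of the middle elements by an invertible change of basis among chains of a fixed length, relying on the biorthogonality relations and the equality $\mathcal{S}_k = \mathcal{S}_k^*$. The added value of your write-up is that you make the non-degeneracy of each length-block explicit (via the adjoint decomposition $x_j^\alpha = \sum_\ell c_{j\ell}y_\ell^\alpha + z_j$ and the identity $\delta_{kj} = \sum_\ell \overline{c_{j\ell}}M_{k\ell}$, then Sylvester's inertia theorem), whereas the paper simply asserts, without proof, that $P_kWP_k$ has exactly $q$ nonzero eigenvalues and diagonalizes by the spectral theorem.
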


\begin{pf}
Assume that $p_1 = p_2 = \dots = p_q > p_{q+1} \ge \dots \ge p_r$, $p_{r+1} \ge p_{r+2} \ge \dots \ge p_N$, $p_j = 2\alpha_j$ if $1 \le j \le r$ and $p_j = 2\alpha_j+1$ if $r < j \le N$. Let $P_1$ be the orthoprojector onto subspace $S_1 = S_1^*$ (see the Proposition 6). Obviously, $\dim S_1 \circ S^0 = q$. It follows from the biorthogonality relations (15) that self-adjoint operator $P_1WP_1$ has exactly $q$ nonzero eigenvalues which correspond to an orthogonal basis $\{\phi_s\}_1^q$. Obviously, a canonical system (7) can be chosen in such way that the system $\{\phi_s\}_1^q$ will coincide with $\{y_k^{\alpha_1}\}_1^q$. Then after a proper norming the relations (22) will hold for $k = 1,2, \dots, q$. Considering orthoprojector $P_{q+1}$ onto subspace $S_{q+1} = S_{q+1}^*$ and self-adjoint operator $P_{q+1}WP_{q+1}$, we can repeat the arguments and choose the chains of length $1+p_{q+1}$ (not changing the first chains) so that relations (22) will hold for all $p_k  3D p_{q+1}$. The next step is evident. Hence the proof of Proposition 7 can be completed by induction.
\end{pf}


\begin{note}
Let a canonical system (7) satisfy condition (22). Then for all indices $k$ such that $p_k = 2\alpha_k$ the elements $x_k^{\alpha_k}$ of the adjoint canonical system (8) have the representation
\begin{equation}
x_k^{\alpha_k} = -\la_k\delta_ky_k^{\alpha_k} = y, \quad \alpha_k = p_k/2,
\end{equation}
where $y \in S^0$. This representation follows from Proposition 6 and relations (15). $\square$	
\end{note}

Let $y_k = c$ be real eigenvalue of a dissipative pencil $A(\la) = W+\la T$, and let canonical system (7) satisfy the condition (22). Denote by $S_c^+$ ($S_c^-$) the span of elements (20) combined with $y_k^{\alpha_k}$ satisfying the relations (22) with $\delta_k = -1$ ($\delta_k = 1$). Let $S_0^+$ ($S_-^0$) be the span of all EAV $\{y_k^h\}$ of $A(\la)$ corresponding to eigenvalues $\la_k \in \sigma_d(A)$ with $\Im \la_k > 0$ ($\Im \la_k < 0 $). Denote by $S^+$ ($S^-$) the minimal subspace containing $S_+^0$ ($S_-^0$) and all subspaces $S_{\la_k}^+$ ($S_{\la_k}^-$) corresponding to real eigenvalues $\la_k \in \sigma_d(A)$. Finally, by $S$ we denote the minimal subspace containing all EAV $y_k^h$ corresponding to eigenvalues $\la_k \in \sigma_d(A)$.

\begin{proposition}
Let the assumption of Proposition 3 hold. Then $S_c^+$ is a $W$-non-positive subspace. If $S_c$ is the span of the elements (7) then $S_c^+$ is a maximal $W$-non-positive subspace in $S_c$.
\end{proposition}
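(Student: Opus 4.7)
My plan is to establish that (a) $(Wy, y) \le 0$ for every $y \in S_c^+$, and (b) $S_c^+$ attains the maximum dimension among $W$-non-positive subspaces of $S_c$; together these yield maximality. Decompose any $y \in S_c^+$ uniquely as $y = u + v$, where $u \in S^0 = \Span\{y_k^h : 0 \le h \le \beta_k\}$ and $v = \sum_{k\in B_-} b_k y_k^{\alpha_k}$ with $B_- = \{k : p_k \text{ even},\ \varepsilon_k = -1\}$, and expand
\[
(Wy, y) = (Wu, u) + 2\Re(Wu, v) + (Wv, v).
\]

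First I would prove $S^0$ is $W$-neutral. Setting $\widetilde x_j^s := x_j^s - c^{-1}x_j^{s-1} + \cdots + (-1)^s c^{-s}x_j^0$, formula (15) of Proposition~4 gives $(Wy_k^h, \widetilde x_j^s) = -c\,\delta_{kj}\delta_{h,\, p_k - s}$; for $h \le \beta_k$ and $s \le \beta_j$ one has $p_k - s \ge p_k - \beta_k > \beta_k \ge h$, so the right-hand side vanishes identically. Since Proposition~6 identifies $(S^0)^* = S^0$, both $\{y_k^h\}_{h\le\beta_k}$ and $\{\widetilde x_j^s\}_{s\le\beta_j}$ are bases of $S^0$, whence $(Wu, u) = 0$. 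For the cross term, the same biorthogonality at $s = \alpha_j$ with $p_j$ even yields $(Wy_k^h, \widetilde x_j^{\alpha_j}) = 0$ for all $h \le \beta_k$: either $k \ne j$, or $k = j$ and $h \le \beta_k = \alpha_k - 1 < \alpha_k$. Combining with equation~(23) of Note~1, $\widetilde x_j^{\alpha_j} = -c\,\varepsilon_j y_j^{\alpha_j} + w$ with $w \in S^0$, and using the neutrality of $S^0$ just proved, this forces $(Wy_k^h, y_j^{\alpha_j}) = 0$, so $(Wu, v) = 0$. Finally, formula~(22) gives $(Wy_k^{\alpha_k}, y_j^{\alpha_j}) = \varepsilon_k \delta_{kj}$ for all $k, j \in B_-$ (both $p_k, p_j$ are even), so $(Wv, v) = -\sum_{k\in B_-}|b_k|^2 \le 0$. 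Summing yields (a).

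For (b), I would run the parallel argument on $S_c^- := S^0 + \Span\{y_k^{\alpha_k} : p_k \text{ even},\ \varepsilon_k = +1\}$ to show $S_c^-$ is $W$-non-negative, and verify chain-by-chain that $\dim S_c^+ + \dim S_c^- - \dim S^0 = \dim S_c$. Non-degeneracy of $W$ on $S_c$ (a consequence of the biorthogonality of Proposition~4, which pairs every element of $S_c$ with a nontrivial $W$-partner in the adjoint root subspace) together with the standard indefinite-metric dimension identity then forces $S_c^+$ to be maximal $W$-non-positive and $S_c^-$ to be maximal $W$-non-negative in $S_c$.

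The main obstacle will be handling same-length chains $p_k = p_j$, $k \ne j$, since Proposition~3 requires $p_j < p_k$ and does not decouple same-length chains directly. For the non-positivity proof~(a), this is absorbed because (22) applies whenever at least one of $p_k, p_j$ is even --- precisely the configuration when a middle element from $B_-$ appears --- and the residual cross-terms land inside $S^0$ and vanish by its neutrality. For the dimension count in~(b), the same-length blocks can be jointly diagonalized in the basis produced in the proof of Proposition~7, without disturbing the signature total.
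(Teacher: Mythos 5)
Your argument for $W$-non-positivity of $S_c^+$ is correct and usefully more explicit than the paper's proof, which simply cites Propositions~4 and~5. For maximality, however, you take a different route (a signature count) and your count is off. You claim $\dim S_c^+ + \dim S_c^- - \dim S^0 = \dim S_c$; since $S_c^+ \cap S_c^- = S^0$, this is equivalent to $S_c^+ + S_c^- = S_c$, which fails whenever some $p_k \geq 1$, because $S_c^+ + S_c^-$ omits every ``upper-half'' element $y_k^h$ with $h > \alpha_k$. Already for a single chain of length two, $\dim S_c^+ = \dim S_c^- = \dim S^0 = 1$ while $\dim S_c = 2$. The correct identity, which does close the signature argument, is $\dim S_c^+ + \dim S_c^- = \dim S_c$ (check it chain by chain: each chain contributes $p_k+1$ to both sides). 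You should also make the non-degeneracy of $W$ on $S_c$ precise: the biorthogonality of Proposition~4 pairs $S_c$ non-degenerately against the \emph{adjoint} root subspace, which for a dissipative pencil is generally different from $S_c$; one must additionally invoke $S^0 = (S^0)^*$ from Proposition~5 to confine any $W$-isotropic vector of $S_c$ to $S^0$, hence to the adjoint root subspace, where the pairing kills it.

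The paper's own maximality argument sidesteps both points. Assuming $S_c^+ \subset S^1 \subset S_c$ with $S^1$ $W$-non-positive and $y \in S^1 \setminus S_c^+$, it first observes $y \notin S_c^-$ (a nonzero element of $S_c^- \setminus S_c^+$ has positive $W$-square, incompatible with $S^1$ being non-positive), then via (15) produces an element $y_k^h \in S^0$ with $(Wy_k^h, y) = \gamma \neq 0$, and sets $z = \rho y_k^h + y \in S^1$; neutrality of $S^0$ gives $(Wz,z) = 2\rho\gamma + (Wy,y) \to +\infty$ as $\rho \to +\infty$, a contradiction. This only requires the local non-degeneracy of the $W$-pairing between $S^0$ and the upper halves, not a global signature count on $S_c$. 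I suggest either repairing the dimension identity as above or switching to the blowup contradiction, which is shorter and avoids both pitfalls.
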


\begin{pf}
It follows from Propositions 4, 5 and definitions that $S_c^+$ is a $W$-non-positive subspace. Assume that $S_c^+ \subset S^1 \subset S_c$, where $S^1$ is also $W$-non-positive, and that there exists an element $y \in S^1$ such that $y \notin S_c^+$. Obviously, $y \notin S_c^-$, because it follows from assumptions $y \in S_c^-$, $y \notin S_c^+$ that $(Wy, y) > 0$. Hence, $y \notin S_c^+$ and $y \notin S_c^-$. Then, using (15) we can find an element $y_k^h \in S^0$ such that $(Wy_k^h, y) = \gamma \ne 0$. We may assume that $\gamma > 0$, otherwise we have to replace $y$ by $\gamma y$. Denote $z = \rho y_k^h + y$. Then from Proposition 4 we obtain $(Wz, z) = 2\rho\gamma + (Wy, y) \to \infty$ if $\rho \to +\infty$. This is the contradiction.
\end{pf}


\begin{theorem}
Let $W$ be an invertible self-adjoint operator in Hilbert space H and $A$ be a $W$-dissipative operator. Then $S^+$ ($S^-$) constructed from EAV of as pencil $I + \la A$ is a $W$-non-positive (nonnegative) subsoace. Moreover $S^+$ ($S^-$) is maximal $W$-non-positive (nonnegative) subspace in S.
\end{theorem}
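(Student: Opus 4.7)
The strategy is to bootstrap the local results of Propositions 8.1 and 8.7 up to the global $S^+$. Proposition 8.1 established $W$-non-positivity of the complex part $S_+^0$ by an energy-monotonicity argument on solutions of $-iAu'(t) + u(t) = 0$: if $u(0) \in S_+^0$, then $u(t) \to 0$, the identity $[u(t), u(t)]' = -(Vu'(t), u'(t))$ with $V = 2(WA)^I \leq 0$ makes $t \mapsto [u(t), u(t)]$ non-decreasing, and letting $t \to \infty$ yields $[u(0), u(0)] \leq 0$. Proposition 8.7 gives both non-positivity of $S_c^+$ and its maximality inside the local root subspace $S_c$ for each real eigenvalue $c$. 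Writing $S^+ = S_+^0 + \sum_{\lambda_k \in \mathbb{R}} S_{\lambda_k}^+$, the plan is to lift these pieces to the direct sum, and then deduce maximality by a contradiction argument.

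For non-positivity of $S^+$, I would attempt to extend the energy argument. For $y = y_+ + \sum_k y_k \in S^+$ with $y_+ \in S_+^0$ and $y_k \in S_{\lambda_k}^+$, form the associated solution $u(t) = u_+(t) + \sum_k u_k(t)$ of $-iAu'(t) + u(t) = 0$. Equation (5) still applies, so $[u(t), u(t)]$ is monotonically non-decreasing in $t$. The decomposition decouples asymptotically: $u_+(t) \to 0$ exponentially, while each $u_k(t) = e^{i\lambda_k t} \cdot (\text{polynomial in } t)$ is bounded modulus-wise by a polynomial. The self-term $(Wu_+, u_+)$ and the cross-terms $(Wu_+, u_k)$ vanish in the limit, so $[u(t), u(t)] = (Wu_r(t), u_r(t)) + o(1)$ with $u_r = \sum_k u_k$. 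The normalization (22) of Proposition 8.6 combined with the defining selection rule for $S_{\lambda_k}^+$ (only chains with sign characteristic $\varepsilon_k = -1$ at the top element) should force this polynomial tail to be bounded above and in fact non-positive for large $t$; combined with the monotone non-decrease of $[u(\cdot), u(\cdot)]$ this yields $[y, y] \leq 0$.

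For maximality, let $S^1 \supset S^+$ be any $W$-non-positive subspace in $S$, and pick $y \in S^1 \setminus S^+$. Decompose $y = y_+ + y_- + \sum_k y_k$ in the full root-space splitting of $S$. Then either $y_- \neq 0$ or $y_k \notin S_{\lambda_k}^+$ for some $k$. In the second case, Proposition 8.7's local maximality produces a vector in $S_{\lambda_k}^+ + \Span\{y_k\} \subset S^1$ of strictly positive $W$-norm, a contradiction. In the first case, the symmetric analogue of Proposition 8.1 makes $S_-^0$ a $W$-non-negative subspace; mimicking the Proposition 8.7 maximality argument, one chooses $v \in S^+$ with $(Wv, y_-) \neq 0$ using the biorthogonality of Proposition 8.4, and then considers $\rho v + y$, sending $\rho \to \infty$ to produce a positive $W$-norm vector in $S^1$, again a contradiction.

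The main difficulty is the polynomial-tail analysis in the non-positivity step: monotone non-decrease of $[u(t), u(t)]$ is a weak conclusion once the real-eigenvalue chains produce polynomial growth, and the cross-oscillations $e^{i(\lambda_k - \lambda_j)t}$ between distinct real poles must be shown to cancel or average correctly via the normalization (22) and the sign characteristics. A clean fallback, if the time-domain analysis proves unwieldy, is to adapt the contour-integration technique of Theorems 6.1 and 7.1: study the scalar function $F(\mu) = ([A^*(\mu)]^{-1}y, y)$ with $A(\mu) = I + \mu A$, whose holomorphy in the lower half-plane, together with the residues at real poles computed via the normalized canonical system of Proposition 8.6, would directly yield the sign of $(Wy, y)$.
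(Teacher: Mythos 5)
Your overall plan — reduce to the local non-positivity and maximality of Propositions 8.1 and 8.9, with biorthogonality (Proposition 8.4) controlling the interactions between root subspaces — is the same skeleton the paper uses; the paper's proof is simply "biorthogonality (15) gives non-positivity, repeating Proposition 9 gives maximality." Where you diverge is the energy identity (5) for the non-positivity step, and that is exactly where a real gap opens. You flag that the normalization (22) "should force the polynomial tail to be bounded above and in fact non-positive for large $t$" but do not supply the mechanism. The missing fact is Proposition 8.2: every selected element $y_k^h\in S_{\lambda_k}^+$ (with $h\le\alpha_k$) lies in $\Ker (WA)^I=\Ker V$, so $u_r'(t)\in\Ker V$ for all $t$ and therefore $[u_r(t),u_r(t)]'\equiv 0$; the "tail" $[u_r(t),u_r(t)]$ is a constant equal to $[y_r,y_r]$, and nothing is oscillating or growing at all. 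But once you see this, the energy step only yields $[y,y]\le[y_r,y_r]$, and you still need $[y_r,y_r]\le 0$ — which is precisely the local non-positivity from Proposition 8.9. In other words, the energy method quietly reduces you to the same biorthogonality/sign-characteristic analysis the paper invokes directly, so it cannot be a "clean fallback"; it is the longer route, and as written it does not close.

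The maximality sketch also has two concrete defects. First, if $y\in S^1\setminus S^+$ has component $y_c\notin S_{\lambda_k}^+$, you place $S_{\lambda_k}^+ + \Span\{y_c\}\subset S^1$ and invoke Proposition 8.9; but $y_c$ is the projection of $y$ onto $S_{\lambda_k}$, not an element of $S^1$, so this inclusion needs justification (one must first use $W$-orthogonality of distinct root subspaces and the fact that $S_+^0\subset S^1$ to replace $y$ by a vector whose only non-$S^+$ component is $y_c$). Second, when $y_-\ne 0$ you run the $\rho\to\infty$ trick with a test vector $v\in S^+$ chosen by biorthogonality, but that argument requires $(Wv,v)=0$ — in Proposition 8.9 the test vector comes from the $W$-isotropic subspace $S^0$, which is what makes $\rho^2(Wv,v)$ drop out. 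A generic $v\in S_+^0$ with $(Wv,y_-)\neq 0$ satisfies only $[v,v]\le 0$, and if $[v,v]<0$ the quadratic in $\rho$ tends to $-\infty$, producing no contradiction. The paper's one-line "repeat Proposition 9" hides exactly these two reductions, and your proposal does not yet carry them out.
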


\begin{pf}
It follows from biorthogonality relations (15) and the definitions that $S^+$ is $W$-nonpositive. Repeating the arguments of Proposition 9 we find that $S^+$ is maximal $W$-non-positive in the subspace $S$.
\end{pf}

Now we give application of Theorem 10 to the problem of half-range minimality for dissipative operator pencils.

Consider an operator pencil
\begin{equation}
A(\la) = A_0+\la A_1+\dots+\la^nA_n,
\end{equation}
where $A_j$, $j = 1,\dots,n$ are bounded operators in Hilbert space H and condition (1) holds. We assume that $A_0$ is invertible (if $\sigma(A) \ne \mathbb {R}$ then we can shift $\la \to \la + \la_0$, $\la_0 \in \mathbb {R}$, $\la_0 \notin \sigma(A)$; after this translation condition (1) will also hold). With pencil (24) we associate the linear operators
$$A =
\begin{bmatrix}
A_0^{-1}A_1 & A_0^{-1}A_2 & \ldots &A_0^{-1}A_{n-2}& A_0^{-1}A_{n-1}\\
-I & 0 & \ldots & 0 & 0\\
0 & -I & \ldots & 0 & 0 \\
\ldots & \ldots & \ldots & \ldots \\
0 & 0 & \ldots & -I& 0
\end{bmatrix},
$$
$$
G =
\begin{bmatrix}
0 & 0 & \ldots & 0 & A_0\\
0 & 0 & \ldots & A_0 & A_1\\
\ldots & \ldots & \ldots & \ldots & \ldots\\
0 & A_0 & \ldots & A_{n-3} & A_{n-2} \\
A_0 & A_1 & \ldots & A_{n-2} & A_{n-1} \\
\end{bmatrix}
$$


Consider also the operators
$$
G_q = GA_q =
\begin{bmatrix}
T_0 & 0 \\
0 & T_1
\end{bmatrix}, \quad 0 \le q \le n,
$$
where
$$T_0 =
(-1)^q\begin{bmatrix}
0 & 0 & \ldots & 0 & A_0\\
0 & 0 & \ldots & A_0 & A_1\\
\ldots & \ldots & \ldots & \ldots & \ldots\\
0 & A_0 & \ldots & A_{n-q-3} & A_{n-q-2} \\
A_0 & A_1 & \ldots & A_{n-q-2} & A_{n-q-1} \\
\end{bmatrix},
$$
$$
T_1 =
(-1)^{q-1}\begin{bmatrix}
A_{n-q+1} & A_{n-q+2} & \ldots & A_{n-1} & A_n\\
A_{n-q+2} & A_{n-q+3} & \ldots & A_n & 0\\
\ldots & \ldots & \ldots & \ldots & \ldots\\
A_{n-1} & A_n & \ldots & 0 & 0 \\
A_n & 0 & \ldots & 0 & 0 \\
\end{bmatrix}
$$
We assume that at $q = 0$ ($q=m$) the block $T_1$ ($T_0$) is absent in the represantion of the matrix $G_q$. Now we set
$$
F_q =
\begin{bmatrix}
T_0^* & 0 \\
0 & T_1
\end{bmatrix}
$$
Denote by $W_q$, $0 \le q \le n$, the self-adjoint operator in $H^n$, such that its matrix coincides with $F_q$ over the main diagonal, with matrix $F_q^*$ under the main diagonal and with $(F_q + F_q^*)/2$ on the main diagonal. For example, if $n=2$, then
$$
W_0 =
\begin{bmatrix}
0 & A_0^* \\
A_0 & A_1^{\mathbb {R}}
\end{bmatrix}, \quad
W_1 =
\begin{bmatrix}
-A_0^{\mathbb {R}} & 0 \\
0 & A_2^{\mathbb {R}}
\end{bmatrix}, \quad
W_2 =
\begin{bmatrix}
A_1^{\mathbb {R}} & A_n \\
A_n^* & 0
\end{bmatrix}
$$
and if $n=4$, then
$$
W_0 =
\begin{bmatrix}
0 & 0 & 0 & A_0^*\\
0 & 0 & A_0^* & A_1^*\\
0 & A_0& A_1^{\mathbb {R}} & A_2^* \\
A_0 & A_1 & A_2 & A_3^{\mathbb {R}}
\end{bmatrix}, \quad
W_2 =
\begin{bmatrix}
0 & A_0^* & 0 & 0\\
A_0 & A_1^{\mathbb {R}} & 0 & 0\\
0 & 0& -A_3^{\mathbb {R}} & -A_4 \\
0 & 0 & -A_4^* & 0
\end{bmatrix}.
$$


Further we assume $n=2l$. A similar result can be obtained for the case $n=2l+1$. One can check easily that the following important equalities hold ($l = n/2$)
\begin{equation}
W_{2q}A - (W_{2q}A)^*=
i\begin{bmatrix}
0_{l-1-q} & 0 & 0\\
0 & V_0 & 0\\
0 & 0 & 0_q
\end{bmatrix} := iJ_q
\end{equation}
for $q = 0,1, \ldots, l-1$, where $0_q$ denotes a square zero matrix of order $q$, and
\begin{equation}
V_0 =
\begin{bmatrix}
2A_0^I & A_1^I & 0 & \ldots & 0 & 0\\
A_1^I & 2A_2^I & A_3^I & \ldots & 0 & 0\\
0 & A_3^I & 2A_4^I & \ldots & 0 & 0\\
\ldots & \ldots & \ldots & \ldots & \ldots\\
0 & 0 & 0 & \ldots & 2A_{n-2}^I & A_{n-1}^I \\
0 & 0 & 0 & \ldots & A_{n-1}^I & 2A_n^I \\
\end{bmatrix}
\end{equation}
Analogiously, for $q=1,2,\ldots,l$ we have
\begin{equation}
W_{2q-1}A - (W_{2q-1}A)^*=
-i\begin{bmatrix}
0_{l-q} & 0 & 0\\
0 & V_1 & 0\\
0 & 0 & 0_{q-1}
\end{bmatrix} := -iJ_{2q-1},
\end{equation}
where
\begin{equation}
V_1 =
\begin{bmatrix}
2A_1^\Upsilon & A_2^\Upsilon & 0 & \ldots & 0 & 0\\
A_2^\Upsilon & 2A_3^\Upsilon & A_4^\Upsilon & \ldots & 0 & 0\\
0 & A_4^\Upsilon & 2A_5^\Upsilon & \ldots & 0 & 0\\
\ldots & \ldots & \ldots & \ldots & \ldots\\
0 & 0 & 0 & \ldots & 2A_{m-1}^\Upsilon & A_{m}^\Upsilon \\
0 & 0 & 0 & \ldots & A_{m}^\Upsilon & 0 \\
\end{bmatrix}
\end{equation}


The following result is a generalization of Proposition 5 for polynomial operator pencils. It is proved in the paper Shkalikov [3].

\begin{theorem}
Let a polynomial pencil $A(\la)$ be defined by (24) and $0 \notin \sigma(A)$. Let
\begin{equation}
y_k^0,y_k^1, {\ldots}, y_k^{p_k}\text{,} \quad { k = N_1}, \dots, {N_2}
\end{equation}
\begin{equation}
z_k^0, z_k^1, {\ldots}, z_k^{p_k}\text{,} \quad { k = N_1}, \ldots, {N_2}
\end{equation}
be mutually adjoint canonical systems corresponding to the eigenvalues $\la_k \in \sigma_d(A)$. Then the following biorthogonality relations hold:
\begin{equation}
\begin{aligned}
(G_q\tilde y_j^h,\tilde z_j^s - \bar \la_j^{-1}&\binom{n-q}{1}\tilde z_j^{s-1}+\ldots+(-\bar \la_j)^s \binom{n+s-q-1}{s}\tilde z_j^0) \\
&= (-1)^{q+1}\la_k^{n-q}\delta_{s, p_j-s},
\end{aligned}
\end{equation}
where $\binom{r}{s}$ are binomial coefficients and $\tilde y_j^h$, $\tilde z_k^s$ are Keldysh derived chains constructed from canonical systems (29), (30) respectively.
\end{theorem}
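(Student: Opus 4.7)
The plan is to reduce (31) to the biorthogonality relations for a linear pencil on $H^n$ via the standard companion-matrix linearization $A$ introduced at the start of this section. By Theorem~1.1 and the subsequent discussion in Section~1, the Keldysh derived chains $\tilde y_j^h$ are EAV of $A$ at the eigenvalue $\la_j$, and the adjoint chains $\tilde z_j^s$ are EAV of $A^*$ at $\bar \la_j$. With $G_q = G A_q$ and $W_q$ as defined in this section, the linear pencil $G_q + \la G_q A$ on $H^n$ shares its EAV with $A(\la)$, and its Hermitization coincides with $W_q$ via the definitions that give (25)--(28).

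I would then apply Proposition~4 to this linear pencil, with $W := G_q$ and $T := G_q A$. Relation (14) of that proposition yields the cross-orthogonality of the Keldysh derived chains indexed by $\delta_{h,\,p_j - s}$, and the companion relation (15) provides the finite linear combination of $\tilde z_j^s, \tilde z_j^{s-1}, \ldots, \tilde z_j^0$ with alternating signs and powers of $\bar \la_j$ appearing on the left of (31). Vanishing of the pairing for $k \neq j$ is just the standard biorthogonality of EAV of a linear pencil at distinct eigenvalues, so no new argument is required in that case.

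The main obstacle is the appearance of the specific binomial coefficients $\binom{n-q}{1}, \ldots, \binom{n+s-q-1}{s}$ together with the scalar factor $(-1)^{q+1}\la_k^{n-q}$ on the right-hand side of (31). These arise because $G_q = G A_q$ introduces an extra factor of $\la^{n-q}$ when the scalar meromorphic function $(G_q(\la I - A)^{-1} x,\,y)$ is expanded about the pole $\bar \la_j$; this factor originates from the position of $A_q$ within the anti-diagonal block structure of $G$. I would then write $\la^{n-q} = [(\la - \bar \la_j) + \bar \la_j]^{n-q}$ and multiply by the Laurent expansion (2.2) for $[A^*(\la)]^{-1}$; extracting the coefficient of $(\la - \bar \la_j)^{-1}$ by the binomial theorem produces precisely the expression $\tilde z_j^s - \bar \la_j^{-1}\binom{n-q}{1}\tilde z_j^{s-1} + \ldots + (-\bar \la_j)^{s}\binom{n+s-q-1}{s}\tilde z_j^0$, since the coefficients $\binom{n+r-q-1}{r}$ and the signs $(-\bar \la_j)^r$ are exactly the expansion coefficients of $\la^{n-q}(\la - \bar \la_j)^{-p_j - 1 + s}$ at the simple-pole term. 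The leading scalar $(-1)^{q+1}\la_k^{n-q}$ then tracks through the sign conventions in $G_q = G A_q$ and the anti-diagonal structure of $G$. The careful bookkeeping of these binomial identities together with the block-by-block verification that $G_q A$ interacts properly with the Keldysh derived chains is the technical heart of the proof.
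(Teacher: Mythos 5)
Your general plan --- view the Keldysh chains as EAV of the linearization and pull back a biorthogonality statement for a linear pencil --- is in the right spirit, and the paper does eventually land on a linear-pencil biorthogonality (Proposition~8.6, via the linearization $I+\lambda\A$). But the two mechanisms you propose for producing the binomial coefficients in~(31) both fail, and the ingredient that actually produces them is missing from your sketch.

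First, Proposition~8.4 with $W:=G_q$, $T:=G_qA$ cannot give~(31). Relation~(15) of that proposition produces the \emph{geometric} combination $x_j^s-\bar\lambda_j^{-1}x_j^{s-1}+\dots+(-\bar\lambda_j^{-1})^s x_j^0$, i.e.\ coefficient $(-\bar\lambda_j^{-1})^r$ at $x_j^{s-r}$. The combination appearing in~(31) has coefficient $(-\bar\lambda_j^{-1})^r\binom{n-q+r-1}{r}$ at $\tilde z_j^{s-r}$, which is a different (and $q$-dependent) sequence; the two agree only when $n-q=1$. Second, the proposed expansion $\lambda^{n-q}=\bigl[(\lambda-\bar\lambda_j)+\bar\lambda_j\bigr]^{n-q}$ cannot produce the coefficients either: $n-q\ge0$ is an ordinary nonnegative integer, so the binomial theorem gives $\binom{n-q}{r}$, which vanishes for $r>n-q$, whereas $\binom{n-q+r-1}{r}$ is nonzero for every $r\ge0$. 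The coefficients in~(31) are ``negative-binomial'' (stars-and-bars) numbers, not ordinary binomials.

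What the paper actually uses is the auxiliary operator $\check\A$ (the companion matrix of $A(\lambda)$ with $A_j$ replaced by $A_j^*$) together with the algebraic identity $G\A_q=(\check\A^q)^*G$ and the induction formula
\[
\check\A^r\tilde z_j^s=(-1)^r\bar\lambda_j^{-r}\Bigl[\tilde z_j^s-\bar\lambda_j^{-1}\binom{r}{1}\tilde z_j^{s-1}+\dots+(-\bar\lambda_j)^{-s}\binom{r+s-1}{s}\tilde z_j^0\Bigr].
\]
It is precisely this action of $\check\A^r$ on a Keldysh derived chain (a repeated first-difference/convolution structure) that generates $\binom{r+s-1}{s}$; this is the step you would need and do not supply. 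With it, the statement is reduced to the single case $q=n$ (equivalently $q=n-1$), which the paper then verifies coordinatewise from the explicit block form of $(I+\lambda\A)^{-1}$ and the Laurent expansion~(2.2), rather than by any further application of Proposition~8.4. So your outline has the right global shape but is missing the $\check\A$-identity, mislocates the origin of the binomial coefficients, and the reduction to a linear-pencil biorthogonality needs to be to the known relations for $\tilde x_j^s$ (Proposition~8.6) rather than to a fresh application of Proposition~8.4 with $W=G_q$.
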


\begin{pf}
Let the operator $\check A$ be defined by the same matrix as $A$, except that the operators $A_j$ are replaced by operators $A_j^*$, $j=1,\ldots,m$. Then the following equalities can be easily verified by induction
$$
GA_q = (\check A^q)^*G, \quad q=0,1,\ldots \, .
$$
\begin{multline*}
\check A^r\tilde z_j^s = (-1)^r \bar \la_j^{-r}[\tilde z_j^S - \bar \la_j^{-1} \binom{r}{1}\tilde z_j^{S-1}+\ldots+{}\\
{}+(-\bar \la_j)^S \binom {r+S-1}{S}\tilde z_j^0], \quad r = \pm1, \pm 2, \ldots \,.
\end{multline*}
These equalities enable us to prove (31) only for some fixed $q$, for instance, $q=n$.


It is known (see Section~1) that Keldysh derived chains $\tilde y_k^h$ are EAV of a linear pencil $I +\la A$. If $\{\tilde x_j^S\}$ is the adjoint system then according to Proposition 6
$$
(A\tilde y_k^h, \tilde x_j^S) = \delta_{k,j}\delta_{h, p_s-h}.
$$
Hence, the relations (31) are equivalent to the following equalities
\begin{equation}
G_n^*\tilde z_j^S = (-1)^{n+1}A^{*} \tilde x_j^S \quad \text{or} \quad G_{n-1}^*\tilde z_j^S = (-1)^{n+1}\tilde x_j^S
\end{equation}

For the result $(I + \la A)^{-1}$ we have the following representation (which can be verified by multiplication of $(I + \la A)$)
\begin{equation}
(I+\la A)^{-1} = \begin{bmatrix}
L^{-1}(\la)T_1 & \ldots & L^{-1}T_n\\
\la L^{-1} T_1 & \ldots & \la L^{-1}T_n\\
\ldots & \ldots & \ldots\\
\la^{n-1}L^{-1}(\la)T_1 & \ldots & \la^{n-1}L^{-1}(\la)T_n
\end{bmatrix} +
\begin{bmatrix}
0 & 0 & 0 & \ldots & 0\\
0 & I & 0 & \ldots & 0\\
\ldots & \ldots & \ldots\\
0 & \la^{n-2}I & \la^{n-3}I & \ldots & I
\end{bmatrix},
\end{equation}
where
\begin{gather*}
L^{-1}(\la) = A^{-1}A_0,\\
T_1 = I, \quad T_j(\la) = -(\la L_j + \la^2 L_{j+1}+\ldots+ \la^{n-j+1}L_n), \quad j = 2, \ldots, n,\\
L_j = A_0^{-1}A_j.
\end{gather*}
On other hand according to Theorem on holomorphic operator function (Section~2) the principal part of $(I+\la A)^{-1}$ in a neighborhood of the pole $\la_k$ has the form
\begin{equation}
\sum^{N_2}_{k=N_1}\sum^{p_k}_{h=0}
\frac{(\cdot ,\tilde x_k^h)\tilde y_k^0 + (\cdot, \tilde x_k^{h-1})\tilde y_k^1 + \ldots +( \cdot ,\tilde x_k^0 )\tilde y_k^h}{(\lambda - \lambda_k)^{p_k+1-h}
}
\end{equation}

Let us compute the coefficient of $(\la - \la_k)^{-(p_k+1-h)}$ of the vector $(I+\la A)^{-1}\tilde f$, $\tilde f = {f_1,\ldots, f_n}$, in a neighborhood of the pole $\la_k$. Using the representation (33) and (2.2) (for $\la_k = c$) after simple algebra, we find that this coefficient is given
\begin{equation}
\sum_{j=1}^m \sum_{q=0}^h (\frac{1}{q!} \, T_j^{(q)}(\la_k)f_j, z_k^{h-q})\tilde y_k^0 + \ldots
\end{equation}
(we have not written out the remaining terms, which happen to be linear combinations of the elements $\tilde y_k^1,\ldots, \tilde y_k^h$). On the other hand from (34) we find that this coefficient equals
\begin{equation}
(\tilde f, \tilde x_k^h)\tilde y_k^0+\ldots \, .
\end{equation}
Let $\tilde x_k^h = \{x_{k, 1}, x_{k, 2}^h, \ldots, x_{k, n}^h\}$. Since $\tilde f = {f_1,\ldots, f_n}$ is any vector in $H^n$, we find comparing (35) and (36)
\begin{equation}
\begin{aligned}
x_{k, j}^h &= \sum_{q=0}^h \frac{1}{q!} \, T_j^{* \, (q)}(\la_k) z_k^{h-q} \\
&= -\sum_{q=0}^h \sum_{s=0}^{n-j}
\frac{1}{q!} \, \frac{d^q(\la^{s+1})}{d\la} \Bigr |_{\la = \bar \la_k} L_{j+s}^* x_k^{h-q} \\
&= -\sum_{q=0}^h \sum_{s=0}^{n-j} \frac{1}{q!} \, \la_k^{s-q+1} \binom{s+1}{q}L_{j+s}^*z_k^{h-q}.
\end{aligned}
\end{equation}
It follows from the distinction of Keldysh derived chains that
\begin{equation}
\tilde z_k^h = \Bigr \{\sum_{q=0}^h \la_k^{-q}\binom{0}{q}z_k^{h-q}, \sum_{q=0}^h \la_k^{1-q}\binom{1}{q}z_k^{h-q}, \ldots, \sum_{q=0}^h \la_k^{n-1-q}\binom{n-1}{q}z_k^{h-q} \Bigr \}.
\end{equation}
Writing out the matrix $G_{n-1}^*$ and using (38) we find that the first coordinate of the vector $G_{n-1}^*\tilde z_k^h$ is equal to $(-1)^{n-1}z_k^h = (-1)^{n-1}x_{k,1}^h$ (the last equality is valid according to (37)). Hence (32) is satisfied for the first coordinate. Using (37), (38) we can also check the equality (32) for the subsequent coordinates. It proves Theorem 11.
\end{pf}
\begin{note}
Theorem 11 is valid for arbitrary operator pencil $A(\la)$. The dissipative condition (1) or (2) is not required.
\end{note}

We say a pencil $A(\la)$ is \textit{linearly dissipative} if either the operator $V_0$, which is defined by (26), satisfies the condition $V_0 \le 0$ or the operator $V_1$, which is defined by (28), satisfies the condition $V_1 \le 0$. Since conditions (25), (27) hold, a pencil $A(\la)$ is linearly dissipative if and only if a linearization $A$ is $W_{2q}$-dissipative operator for all $q = 0,1,\ldots,l-1$, or $A$ is $W_{2q-1}$-dissipative for all $q=1, 2,\ldots, l$.

\begin{proposition}
If $A(\la)$ is linearly dissipative, i.e. $V_0 \le 0$ ($V_1 \le 0$), then it is dissipative, i.e. condition (1), (condition (2)) holds.
\end{proposition}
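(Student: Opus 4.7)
The plan is to verify each half of the proposition — $V_0\le 0\Rightarrow$ (1) and $V_1\le 0\Rightarrow$ (2) — by producing, for a fixed real $\la$ and $x\in H$, a test vector $v\in H^{l+1}$ built from scaled powers of $\la$ times $x$, so that the bilinear form $(V_i v,v)$ collapses to exactly the scalar quantity whose non-positivity we want.

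For $V_0\le 0\Rightarrow$ (1), I would take $v=(x,\la x,\la^2 x,\dots,\la^l x)$. The tridiagonal structure of $V_0$ in (26) — $2A_{2j}^I$ on the diagonal blocks and $A_{2j+1}^I$ on the sub- and super-diagonals — together with the self-adjointness of each $A_{2j+1}^I$ (so the two off-diagonal contributions combine into a single real term) produces
\begin{equation*}
(V_0 v,v) = 2\sum_{j=0}^{l}\la^{2j}(A_{2j}^I x,x) + 2\sum_{j=0}^{l-1}\la^{2j+1}(A_{2j+1}^I x,x) = 2\sum_{k=0}^{n}\la^{k}(A_k^I x,x) = 2\,\Im(A(\la)x,x),
\end{equation*}
so $V_0\le 0$ immediately yields (1).

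For $V_1\le 0\Rightarrow$ (2), the right shift is $v=(\la x,\la^2 x,\dots,\la^{l+1} x)$. The analogous expansion using (28) — diagonal blocks $2A_{2j+1}^I$ (with a trailing zero in the last slot) and off-diagonals $A_{2j+2}^I$ — yields
\begin{equation*}
(V_1 v,v) = 2\sum_{j=0}^{l-1}\la^{2j+2}(A_{2j+1}^I x,x) + 2\sum_{j=0}^{l-1}\la^{2j+3}(A_{2j+2}^I x,x) = 2\sum_{k=1}^{n}\la^{k+1}(A_k^I x,x).
\end{equation*}
To identify the right-hand side with $2\,\Im(\la A(\la)x,x) = 2\la(A_0^I x,x) + 2\sum_{k=1}^{n}\la^{k+1}(A_k^I x,x)$, one notes that condition (2) itself compels $A_0=A_0^*$: otherwise the linear term $\la(A_0^I x,x)$ in $\Im(\la A(\la)x,x)$ would change sign across $\la=0$, violating non-positivity on one side of the origin. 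Under the resulting $A_0^I=0$, the extra term vanishes and $V_1\le 0$ delivers (2).

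The only real obstacle is this asymmetry between $V_1$ and $\la A(\la)$ at the $A_0$ coefficient; it resolves by observing that (2) itself necessitates $A_0^I=0$, which is precisely why $A_0$ is absent from the matrix $V_1$ to begin with. All remaining steps are routine tridiagonal bookkeeping on $V_0$ and $V_1$, and the self-adjointness of each $A_k^I$ ensures the imaginary cross-terms are automatically real.
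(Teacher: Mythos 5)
Your direct test-vector argument for the $V_0$ half is correct, and it is genuinely more elementary than the paper's own proof. The paper instead sets $u(t)=\phi(t)x$ with $\phi$ a Schwartz function, integrates the quadratic form $\int_{-\infty}^{\infty}(V\tilde u(t),\tilde u(t))\,dt$, converts to the frequency side via the Plancherel theorem to get $2\Im\int(A(\la)\hat\phi(\la)x,\hat\phi(\la)x)\,d\la$, and then appeals to the fact that the Fourier transform maps $S$ onto $S$ to deduce the pointwise inequality. Your substitution $v=(x,\la x,\dots,\la^l x)$ achieves the same identity $(V_0 v,v)=2\,\Im(A(\la)x,x)$ directly, bypassing the Fourier machinery entirely. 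That is a real simplification for this statement.

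The $V_1$ half is where there is a problem, and you are right to flag it, but your resolution is circular. Your algebra gives $(V_1 v,v)=2\sum_{k=1}^{n}\la^{k+1}(A_k^Ix,x)$, which differs from $2\,\Im(\la A(\la)x,x)$ by the term $2\la(A_0^Ix,x)$. You then argue that ``condition (2) itself compels $A_0^I=0$'' and use this to drop the extra term --- but (2) is the conclusion of the proposition, not a hypothesis, so you may not assume it. The obstruction is genuine: the implication $V_1\le 0\Rightarrow(2)$ is simply false without an added hypothesis. Take $n=2$, $A_1=A_2=0$, $A_0=iI$: then $V_1=0\le 0$ but $\Im(\la A(\la)x,x)=\la\|x\|^2>0$ for $\la>0$. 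The correct reading is that the $V_1$ direction of the proposition carries the implicit side condition $A_0=A_0^*$, which the paper notes in Section~7 is forced whenever (2) is posited but never states here as a hypothesis; with that hypothesis in hand your computation closes the argument. You should add $A_0=A_0^*$ as an assumption rather than try to extract it from (2). (Note also that the paper's own proof only treats the $V_0$ half, saying ``Let, for example, $V_0\le 0$'', so the $V_1$ half is left implicit in the source as well.)
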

\begin{pf}
Let, for example, $V_0 \le 0$. Consider the function $u(t) = \phi(t)x$, where $x \in H$ and $\phi(t)$ is a smooth rapidly decreasing function when $t \to \pm \infty$ ($\phi(t) \in S$). If
$$
\tilde u(t) =\{u(t), -iu^\prime,\ldots, (-i)^{n-1}u^{(n-1)}(t)\}
$$
then


\begin{equation}
\begin{aligned}
\int_{-\infty}^{\infty} (V\tilde u(t), \tilde u(t))dt &= 2\sum_{s=0}^l \int_{-\infty}^{\infty} =  (A_{2s}^Iu^{(2s)}(t), u^{(2s)}(t))dt \\
&= -i \sum_{s=0}^{l-1} \int_{-\infty}^{\infty} \Bigr [(A_{2s+1}u^{(2s+1)}(t), u^{(2s)}(t)) +{}\\
&\kern 4cm{}+ (A_{2s+1}u^{(2s)}(t), u^{(2s+1)}(t)) \Bigr ] dt \\
&= 2 \Im \int_{-\infty}^{\infty} (A(\la) \hat \phi(\la)x, \hat \phi(\la)x) d\la \le 0.
\end{aligned}
\end{equation}
Here we denoted
$$
\hat u(\la) = \int_{-\infty}^{\infty} e^{-i\la t}u(t)dt = \hat \phi(\la)x
$$
and took into account that
$$
(i\la)^s\hat u(\la) = \int_{-\infty}^{\infty}e^{-i\la t}u^{(s)}(t)dt, \quad  \int_{-\infty}^{\infty}(\hat u(\la), \hat v(\la))d\la =  \int_{-\infty}^{\infty}(u(t), v(t))dt.
$$
Hence the inequality (39) holds for all $x \in H$ and all $\phi \in S$. It is known (see, for example, Yosida [1]) that the Fourier transform maps S onto S continuously in both directions. Using this fact we easily obtain (1) from (39).
\end{pf}

Obviously, the converse assertion is not true. For example, the pencil $A(\la) = i(1-\alpha \la^2 + \la^4)I, \quad 0 < \alpha \le 2$, satisfies condition (1) but it is not linearly dissipative.

Further we will consider linearly dissipative pencils, satisfying the condition $V_0 \le 0$. A similar result can be obtained for the case $V_1 \le 0$. In lecture 7 we introduced the systems $E^\pm$ consisting of half of EAV of a dissipative pencil $A(\la)$ and the systems $\xi^\pm$ consisting of the Keldysh derived chains of length $l = n/2$ corresponding to the vector $y_k^h\in E^\pm$.


\begin{proposition}
Let $A(\la)$ be a linear dissipative pencil, i.e. $V_0 \le 0$, $\mathcal{L}^+$ ($\mathcal{L}^-$) be the minimal subspace containing all elements $\tilde y_k^h$ (Keldysh derived chains of length $n$) constructed from elements $y_k^h \in E^+$ ($E^-$). Then $\mathcal{L}^+$ ($\mathcal{L}^-$) is a $W_{2q}$-non-positive (nonnegative) subspace for all $q =0,1,\ldots, l-1$.
\end{proposition}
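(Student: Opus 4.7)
The strategy is to apply Theorem~10 to the linearization $A$ regarded as a $W_{2q}$-dissipative operator for each $q$, and then verify that the selection of vectors defining $\mathcal{L}^+$ is consistent with the $W_{2q}$-sign characteristics, so that $\mathcal{L}^+$ is contained in the maximal $W_{2q}$-non-positive subspace furnished by that theorem.

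First I would check that $A$ is $W_{2q}$-dissipative for every $q=0,1,\dots,l-1$: the identity~(25) expresses $(W_{2q}A)^I$ as the block matrix $J_q/2$ whose only non-zero block is $V_0/2$, so the hypothesis $V_0\le 0$ yields $(W_{2q}A)^I\le 0$. Proposition~1 then guarantees that the Keldysh derived chains $\tilde w_k^h$ corresponding to eigenvalues $\lambda_k$ with $\Imm\lambda_k>0$ span a $W_{2q}$-non-positive subspace, and Theorem~10 upgrades this to a maximal $W_{2q}$-non-positive subspace $S_q^+\subset H^n$ sitting inside the span of all EAV of the linear pencil $I+\lambda A$.

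The real content lies in the contribution of real eigenvalues. Fix a real eigenvalue $\mu$ with regular canonical system $\{w_k^h\}$ and adjoint system $\{z_k^h\}$ of $A(\lambda)$; Propositions~2--6 force the elements up to index $\beta_k=[(p_k-1)/2]$ to be $W_{2q}$-neutral, and Proposition~7 normalizes each middle element $\tilde w_k^{\alpha_k}$ of an odd-length chain so that it carries a $W_{2q}$-sign $\delta_k^{(q)}=\pm 1$. I would then use the biorthogonality identities of Theorem~11, combined with the defining identity~(10) of Section~7 for the sign characteristic $\varepsilon_k$ of $A(\lambda)$ with respect to $G$, to show that $\delta_k^{(q)}$ does not actually depend on $q$ and in fact coincides with $\varepsilon_k$. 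With this identification, the definition of $E^+$ places $\tilde w_k^{\alpha_k}$ in $\mathcal{L}^+$ precisely when $\delta_k^{(q)}$ allows its inclusion in $S_q^+$; therefore $\mathcal{L}^+\subseteq S_q^+$ for every $q$, and so $\mathcal{L}^+$ is $W_{2q}$-non-positive. The assertion for $\mathcal{L}^-$ follows by the same argument using the companion subspace $S_q^-$ from Theorem~10.

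The main obstacle is precisely this last identification of signs: a priori the $W_{2q}$-sign $\delta_k^{(q)}$ of $\tilde w_k^{\alpha_k}$ could depend on $q$, whereas the single number $\varepsilon_k$ in the definition of $E^+$ must work simultaneously for every metric $W_{2q}$. I expect the $q$-independence to fall out of the polynomial dependence in $q$ of the biorthogonality pairing~(31), which should force the transformation between the direct and adjoint canonical systems of $I+\lambda A$ in the $W_{2q}$-metric to differ from that in the $W_0$-metric only by an explicit power of $\mu$ and binomial combinations of adjoint vectors, neither of which can flip a sign. Making this step precise, and thereby reducing all the $W_{2q}$-sign characteristics to the single $G$-sign characteristic $\varepsilon_k$ used to define $E^\pm$, is the delicate point of the argument.
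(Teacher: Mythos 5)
Your outline is essentially the paper's argument: establish that $A$ is $W_{2q}$-dissipative from~(25), reduce the non-real part to Proposition~1 and Theorem~10, and locate the crux in the middle elements $\tilde y_k^{\alpha_k}$ of odd-length chains at real eigenvalues. You also correctly point to the biorthogonality identities of Theorem~11 as the mechanism that should kill the $q$-dependence. What is missing is the actual computation, which the paper carries out (partly, with an admitted omission of ``rather complicated'' direct calculations for $n>2$). The resolution is: one first shows via Proposition~3 that
\[
(W_{2q}\tilde y,\tilde y)=(G_{2q}\tilde y,\tilde y)\qquad\text{for all }\tilde y\in S_c,
\]
the span of the Keldysh chains at the real eigenvalue $c$, so that the $W_{2q}$-form may be replaced by the $G_{2q}$-form. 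Then Theorem~11's formula~(31), together with the normalization $\tilde y_k^{\alpha_k}=\varepsilon_k\tilde z_k^{\alpha_k}+\tilde y$ with $\tilde y\in S_c^0$ (identity~(40)), yields
\[
(W_{2q}\tilde y_k^{\alpha_k},\tilde y_k^{\alpha_k})=(G_{2q}\tilde y_k^{\alpha_k},\tilde y_k^{\alpha_k})=\varepsilon_k(-1)^{2q+1}\lambda_k^{\,n-2q}.
\]
Since $n=2l$ and $2q$ is even, the exponent $n-2q$ is even and $\lambda_k^{\,n-2q}>0$ for every nonzero real $\lambda_k$, while $(-1)^{2q+1}=-1$ regardless of $q$. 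Hence the $W_{2q}$-sign of $\tilde y_k^{\alpha_k}$ equals $-\varepsilon_k$ for every $q=0,\dots,l-1$ simultaneously, which is exactly the $q$-independence you flagged as the delicate point. This gives not merely $\mathcal{L}^+\subseteq S^+$ (which suffices for non-positivity) but $\mathcal{L}^+=S^+$, and the conclusion follows from Theorem~10. So the plan is right, and the sign identification you ask for does fall out of~(31) in precisely the way you guessed; you should supply (or at least cite) the two omitted steps --- the equality of the $W_{2q}$- and $G_{2q}$-forms on $S_c$, and the evaluation above --- to turn the sketch into a proof.
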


\begin{pf}
Let $\la_k = c$ be a real eigenvalue of $A(\la)$. Keldysh derived chains $\tilde y_k^h$ coincide with EAV of the linearization $A$ of pencil $A(\la)$. It follows from (25) that $A$ is $W_{2q}$-dissipative, $q=0,1,\ldots, l-1$. According to the definition of the systems $E^+$ the elements $\tilde y_k^{\alpha_k} \in E^+$ satisfy the equalities
\begin{equation}
\tilde y_k^{\alpha_k} = \ep_k\tilde z_k^{\alpha_k}+\tilde y, \quad \tilde y \in S_c^0, \quad \ep_k = 1, \quad \alpha_k = [p_k/2].
\end{equation}
Using proposition 3 we can show\footnote{The proof depends on direct calculations, but these calculations are rather complicated if $n > 2$. Here we omit them.} that
$$
(W_{2q}\tilde y, \tilde y) = (G_{2q}\tilde y, \tilde y) \quad \text{for all $y \in S_c$}.
$$
In particular,
$$
(W_{2q}\tilde y_k^{\alpha_k}, \tilde y_k^{\alpha_k}) = (G_{2q}\tilde y_k^{\alpha_k}, \tilde y_k^{\alpha_k}).
$$
Now it follows from Theorem 11 and (40) that
\begin{equation}
(W_{2q}\tilde y_k^{\alpha_k}, \tilde y_k^{\alpha_k}) = (G_{2q}\tilde y_k^{\alpha_k},\tilde y_k^{\alpha_k}) = (-1)^{2q+1}\la_k^{n-2q} < 0
\end{equation}
Recalling the definition of the space $S^+$ for $W_{2q}$-dissipative operator $A$ we find from (41) that $\mathcal{L}^+ = S^+$. Then the assertion of Proposition 14 follows from Theorem 10.	
\end{pf}


\begin{proposition}
Let $A(\la)$ be a linearly dissipative operator pencil and $\mathcal{L}^+$ ($\mathcal{L}^-$) be the same subspace as in proposition 14, and $0 \notin \Theta(A_0)$. Define the operator $P: H^n \to H^l$ by the equality
$$
P\tilde x = P\{x_1,x_2,\ldots,x_n\} = \{x_{l+1},\ldots,x_n\}.
$$
Then
\begin{equation}
\|P\tilde x\| \ge \ep \|\tilde x\|, \quad \ep > 0, \quad \text{for all $\tilde x \in \mathcal{L}^+$ ($\mathcal{L}^-$)}
\end{equation}
i.e. the operator $P: \mathcal{L}^+ \to P(\mathcal{L}^+)$ has a bounded inverse.
\end{proposition}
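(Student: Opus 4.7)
Plan: The plan is a proof by contradiction. Assume $P$ is not bounded below on $\mathcal L^+$: then there is a sequence $\tilde x^{(k)}\in\mathcal L^+$ with $\|\tilde x^{(k)}\|_{H^n}=1$ and $\|P\tilde x^{(k)}\|_{H^l}\to 0$. Decompose $\tilde x^{(k)}=\tilde a^{(k)}+\tilde b^{(k)}$, where $\tilde a^{(k)}=(x_1^{(k)},\ldots,x_l^{(k)},0,\ldots,0)$ carries the first $l$ coordinates and $\tilde b^{(k)}=P\tilde x^{(k)}$ sits in the last $l$; orthogonality gives $\|\tilde a^{(k)}\|\to 1$ and $\|\tilde b^{(k)}\|\to 0$. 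By Proposition~14, $(W_{2q}\tilde x^{(k)},\tilde x^{(k)})\le 0$ for each $q=0,1,\ldots,l-1$; expanding and bounding the cross- and pure-$\tilde b^{(k)}$-terms in terms of $\|W_{2q}\|$ gives
$$
(W_{2q}\tilde a^{(k)},\tilde a^{(k)}) \le 2|(W_{2q}\tilde a^{(k)},\tilde b^{(k)})| + |(W_{2q}\tilde b^{(k)},\tilde b^{(k)})| = O(\|\tilde b^{(k)}\|) \to 0,
$$
so one has $l$ asymptotic inequalities $\limsup_k(W_{2q}\tilde a^{(k)},\tilde a^{(k)})\le 0$ on a sequence in $H^l$ whose norms tend to $1$.

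The crux is to derive from these inequalities, together with $0\notin\theta(A_0)$ and the fact that each $\tilde a^{(k)}$ is the projection of an element of $\mathcal L^+$ (hence of a linear combination of Keldysh derived chains $\tilde y_k^h$), the conclusion $\tilde a^{(k)}\to 0$, contradicting $\|\tilde a^{(k)}\|\to 1$. Here I would use the explicit block structure of $W_{2q}$: for $\tilde a$ supported on the first $l$ coordinates, $(W_{2q}\tilde a,\tilde a)$ picks out the antidiagonal corner of the top block of $W_{2q}$ (indices $i,j\le l$ with $i+j\ge n-2q+1$), plus a portion of the bottom block when $q>l/2$, with $A_0$ always at the outermost antidiagonal position; as $q$ decreases from $l-1$ to $1$, this outermost $A_0$-coupling shifts from $(x_1,x_2)$ to $(x_{l-1},x_l)$. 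The hypothesis $0\notin\theta(A_0)$ makes $A_0$ invertible with $\|A_0^{-1}\|\le 1/\delta$, and, after multiplying $A_0$ by a phase, $\Re(A_0 u,u)\ge\delta\|u\|^2$; combined with the chain relations inside $\mathcal L^+$ (encoded via the Keldysh-chain formula $P\tilde y_k=\lambda_k^{\,l}\hat y_k$ and Theorem~11's biorthogonality relations), this lets one bootstrap the $l$ inequalities into the successive convergence $x_l^{(k)}\to 0$, $x_{l-1}^{(k)}\to 0$, \ldots, $x_1^{(k)}\to 0$.

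The main obstacle is the precise execution of this bootstrap. Each form $(W_{2q}\cdot,\cdot)$ on $H^l$ is itself indefinite and unbounded below off of $\mathcal L^+$, so the asymptotic inequalities alone cannot control any individual coordinate; the constraint that $\tilde a^{(k)}$ is the projection of an $\mathcal L^+$-element must be used in an essential way. This is also where $0\notin\theta(A_0)$ enters decisively: it prevents $\sigma_d(A)$ from accumulating at the origin, which would, via the identity $\|P\tilde y_k\|^2/\|\tilde y_k\|^2=|\lambda_k|^{2l}(1+|\lambda_k|^2+\cdots+|\lambda_k|^{2(l-1)})/(1+|\lambda_k|^2+\cdots+|\lambda_k|^{2(n-1)})$, make the desired ratio arbitrarily small already on individual eigenvectors. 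The delicate point is to keep the leading-order errors $O(\|\tilde b^{(k)}\|)$ uniformly under control through each stage of the bootstrap, using at every step the $A_0$-coupling supplied by the preceding $W_{2q}$ and the vanishing of the already-treated higher-indexed coordinates.
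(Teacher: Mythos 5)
Your overall architecture (contradiction, normalized sequence, split $\tilde x$ into its first and last $l$ coordinates, exploit the block structure of $W_{2q}$ and the hypothesis $0\notin\theta(A_0)$) matches the paper's proof, but there is a genuine gap exactly where you flag one: the inequalities $\limsup_k (W_{2q}\tilde a^{(k)},\tilde a^{(k)})\le 0$ by themselves cannot drive $\tilde a^{(k)}\to 0$, and the paper does not try to make them do so. Observe first that for $\tilde a$ supported on the first $l$ coordinates, the form $(W_0\tilde a,\tilde a)$ vanishes identically (the upper-left $l\times l$ block of $W_0$ is zero), so your $q=0$ inequality carries no information; for larger $q$ the upper-left block of $W_{2q}$ has $A_0$ only in antidiagonal positions, so $(W_{2q}\tilde a,\tilde a)$ produces cross-terms $\Re(A_0 x_i,x_j)$ with $i\ne j$ and Hermitian parts of the other $A_r$, never the coercive diagonal quantity $(A_0 x_j, x_j)$ that $0\notin\theta(A_0)$ controls.

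The missing key idea is to bring in $A\tilde x$ via the $A$-invariance of $\mathcal L^+$ and to control the \emph{mixed} form $(W_{2q}A\tilde x,\tilde x)$. By Proposition~14 $\mathcal L^+$ is $W_{2q}$-non-positive, so $-(W_{2q}\cdot,\cdot)$ is a semi-definite form on $\mathcal L^+$; since $A(\mathcal L^+)\subset\mathcal L^+$, the Cauchy--Schwarz inequality for semi-definite forms gives
\[
|(W_{2q}A\tilde x,\tilde x)|^2\le |(W_{2q}A\tilde x,A\tilde x)|\cdot|(W_{2q}\tilde x,\tilde x)| \le \|W_{2q}\|\,\|A\|^2\cdot o(1)=o(1),
\]
using $|(W_{2q}\tilde x,\tilde x)|=o(1)$ (which follows from $\|x_j\|=o(1)$ for $j>l$, because the other block entries are bounded). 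Then one computes $(A\tilde x,W_0\tilde x)$ directly from the matrix representations: the left-multiplication by $A$ shifts coordinates one slot, so the antidiagonal $A_0$ in $W_0$ lands on a pair $(x_l,x_l)$, and after absorbing all terms involving $x_{l+1},\dots,x_n$ into $o(1)$ one obtains $(A\tilde x,W_0\tilde x)=-(A_0 x_l,x_l)+o(1)$. Now $0\notin\theta(A_0)$ gives $|(A_0 u,u)|\ge\delta\|u\|^2$, hence $\|x_l\|=o(1)$. Repeating with $W_2,W_4,\dots$ peels off $\|x_{l-1}\|=o(1)$, $\dots$, $\|x_1\|=o(1)$, contradicting $\|Q\tilde x\|=1$. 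So the role of $0\notin\theta(A_0)$ is this direct coercivity on the extracted diagonal terms, not a spectral non-accumulation argument via the formula for $\|P\tilde y_k\|/\|\tilde y_k\|$ as you sketch. Without the $A$-shift and the $\mathcal L^+$-restricted Cauchy--Schwarz, the bootstrap you describe has no handle to close.
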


\begin{pf}
Denote
$$
\tilde x = Q\{x_1,\ldots, x_n\} = \{x_1,\ldots, x_l\}.
$$
Obviously, the estimate (42) holds if and only if
\begin{equation}
\|Q\tilde x\| \le M\|P\tilde x\| \quad \text{for all $\tilde x \in \mathcal{L}^+$ ($\mathcal{L}^-$)},
\end{equation}
where the constant $M$ does not depend on $\tilde x$. Assume that the estimate (43) does not hold. Then there exists an element $\tilde x \in \mathcal{L}^+$ such that
\begin{equation}
\|Q\tilde x\| = 1, \quad \|P\tilde x\| = o(1), \quad \text{i.e. } \|x_j\| = o(1), \quad j = l+1\ldots, n.
\end{equation}
Recalling the matrix representations of the operators $W_{2q}$ and $A$ we find from (44) that
$$
|(W_{2q}\tilde x, \tilde x)| = o(1), \quad |(W_{2q}A\tilde x, A\tilde x)| \le \|W_{2q}\| \, \|A\|.
$$
Obviously, $A(\mathcal{L}^+) \subset \mathcal{L}^+$ and according Proposition 14 $\mathcal{L}^+$ is non-positive.
Hence we can apply Schwartz' inequality and obtain
\begin{equation}
|(W_{2q}A\tilde x, \tilde x)|^2 \le |(W_{2q}A\tilde x, A\tilde x)| = o(1).
\end{equation}


On the other hand, we have
$$
\begin{aligned}
&(A\tilde x, W_0\tilde x) = \\
&\begin{bmatrix}
\begin{bmatrix}
A_0^{-1}A_1x_1+\ldots \\
-x_1 \\
\ldots\\
-x_l \\
-x_{l+1} \\
\ldots \\
x_{n-1}
\end{bmatrix},
\left[\begin{aligned}
&A_0^{*}x_n \\
&A_0^{*}x_{n-1} + A_1^*x_n  \\
&\ldots \quad \ldots \quad \ldots \\
&A_0^*x_l  +A_1^*x_{l+1}  +\ldots\\
&A_0x_{l-1}  +\ldots \\
&\ldots \quad \ldots \quad \ldots \\
&A_0x_{1} +\ldots \\
\end{aligned}\right]
\end{bmatrix} = -(A_0x_l, x_l) + o(1).
\end{aligned}
$$
Since $0 \notin \Theta(A_0)$, we find from (45) that $\|x_l\| = o(1)$. Using this relation and (44) we obtain
$$
(A\tilde x, W_2\tilde x) = -(A_0x_{l-1}, x_{l-1})+o(1) \Rightarrow \|x_{l-1}\| = o(1).
$$
Repeating the arguments, we find $\|x_j\| = o(1)$, $j = l, l-1, \ldots, 1$. This contradicts the assumptions $\|Q\tilde x\| = 1$. Hence estimate (42) is valid.	
\end{pf}

\begin{theorem}
Let $A(\la)$ be a linear dissipative operator pencil ($V_0 \le 0$) and $0 \notin \Theta(A_0)$, Then system $\xi^+$ ($\xi^-$) is minimal in the space $H^l$, $l = n/2$.
\end{theorem}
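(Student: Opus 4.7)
The plan is to transfer the biorthogonality of the length-$n$ Keldysh derived chains from $H^n$ (furnished by Theorem 11) down to $H^l$ by means of the bounded bijection of Proposition 15.

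First, by Theorem 11, applied to pairs of mutually adjoint canonical systems and appropriately normalized within each chain so that the anti-diagonal Kronecker delta becomes an ordinary one, for every $(k,h)$ indexing an EAV of $A(\la)$ in $\sigma_d(A)$ there exists a vector $\tilde u_k^h \in H^n$ biorthogonal to the length-$n$ chain $\tilde y_k^h$, i.e.\ $(\tilde y_j^s, \tilde u_k^h)_{H^n} = \delta_{jk}\delta_{sh}$. Restricting the pair of indices to those for which $y_k^h \in E^+$, the corresponding length-$n$ chains span $\mathcal{L}^+$ and remain biorthogonal to the $\{\tilde u_k^h\}$.

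Next, by Proposition 15 the projection $P$ is a bounded bijection of $\mathcal{L}^+$ onto the closed subspace $P(\mathcal{L}^+) \subset H^l$, with a bounded inverse $R\colon P(\mathcal{L}^+) \to \mathcal{L}^+$. For each $(k,h)$ corresponding to $y_k^h \in E^+$, define a bounded linear functional on $P(\mathcal{L}^+)$ by $\eta \mapsto (R\eta, \tilde u_k^h)_{H^n}$, extend it to a bounded functional on all of $H^l$ by zero on $P(\mathcal{L}^+)^{\perp}$, and let $\hat u_k^h \in H^l$ be its Riesz representative. A direct computation gives
\begin{equation*}
(P\tilde y_j^s, \hat u_k^h)_{H^l} = (RP\tilde y_j^s, \tilde u_k^h)_{H^n} = (\tilde y_j^s, \tilde u_k^h)_{H^n} = \delta_{jk}\delta_{sh},
\end{equation*}
which shows that $\{P\tilde y_k^h\}_{y_k^h \in E^+}$ is minimal in $H^l$. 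The same argument with $\mathcal{L}^-$ handles the system $\xi^-$.

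The remaining technicality (and the main obstacle) is that $\xi^+$ is the system of length-$l$ Keldysh derived chains, which by the conventions of Section 5 are the \emph{first} $l$ components $Q\tilde y_k^h$ of the length-$n$ chains, not the last $l$ components $P\tilde y_k^h$. I would resolve this by observing that the companion-form linearization $A$ of this section is bounded and invertible on $\mathcal{L}^+$ (its invertibility following from $A_0$ being invertible, whence $0 \notin \sigma(A(\la))$), and that its shift structure yields the identity $Q\tilde x = (-1)^l P A^l \tilde x$ on all of $H^n$. Since $A^l\colon \mathcal{L}^+ \to \mathcal{L}^+$ is a bounded linear bijection, combining with Proposition 15 gives $\|Q\tilde x\| \ge c\,\|\tilde x\|$ for $\tilde x \in \mathcal{L}^+$, so $Q|_{\mathcal{L}^+}$ is itself a bounded bijection onto $Q(\mathcal{L}^+) \subset H^l$. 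Repeating the Hahn-Banach / Riesz extension of the previous paragraph with $Q$ in place of $P$ then produces a biorthogonal system in $H^l$ for $\xi^+ = \{Q\tilde y_k^h : y_k^h \in E^+\}$, completing the proof of minimality.
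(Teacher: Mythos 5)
The identity $Q\tilde x = (-1)^l P A^l\tilde x$ is correct and is a nice observation — applying $A$ once shifts a vector down one slot with a sign, so applying it $l$ times carries the first $l$ components into the last $l$. However, the step where you claim that $A^l\colon \mathcal{L}^+\to\mathcal{L}^+$ is a \emph{bounded bijection}, and hence that $\|Q\tilde x\|\ge c\|\tilde x\|$ on $\mathcal{L}^+$, is false in general, and this is where the argument breaks. The hypothesis $0\notin\Theta(A_0)$ guarantees $0\notin\sigma_p(A)$ and therefore that $A|_{\mathcal{L}^+}$ is injective on the chains, but it does not give a lower bound. The eigenvalues of the linearization $A$ on $\mathcal{L}^+$ are $\mu_k=-\lambda_k^{-1}$; if $A(\lambda)$ has infinitely many eigenvalues $\lambda_k\to\infty$ (the typical infinite-dimensional situation), then $\mu_k\to 0$ and $A|_{\mathcal{L}^+}$ has no bounded inverse. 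Concretely, already for $A(\lambda)=I+\lambda^2 B$ with $B=B^*>0$ compact, $n=2$, $l=1$, one has $\tilde y_k=(y_k,\lambda_k y_k)$ with $|\lambda_k|\to\infty$, so $\|Q\tilde y_k\|/\|\tilde y_k\|=\big(1+|\lambda_k|^2\big)^{-1/2}\to 0$. Thus $Q|_{\mathcal{L}^+}$ is \emph{not} bounded below, even though the conclusion of the theorem holds.

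The repair is exactly the observation the paper uses (equality (8.46)): instead of asking $A^l$ to be boundedly invertible on all of $\mathcal{L}^+$, use the fact that $A$ acts on each Jordan chain $\tilde y_k^0,\dots,\tilde y_k^{p_k}$ as a $(p_k+1)\times(p_k+1)$ Jordan block with eigenvalue $\mu_k\ne 0$. Then $A^l\tilde y_k^h=\sum_{r=0}^{\min(l,h)}\binom{l}{r}\mu_k^{l-r}\tilde y_k^{h-r}$, and consequently $Q\tilde y_k^h=(-1)^l\sum_r \binom{l}{r}\mu_k^{l-r}P\tilde y_k^{h-r}$ is related to $\{P\tilde y_k^s\}_{s\le h}$ by a per-chain \emph{lower-triangular invertible} transformation (diagonal entry $\mu_k^l\ne 0$). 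Minimality is preserved by a block-diagonal transformation whose blocks are finite and invertible, regardless of whether the inverse is uniformly bounded over $k$ — one just inverts the finite triangular matrix in each chain to obtain the biorthogonal system. So your shift identity does lead to the result, but the inference has to pass through the per-chain finite triangular structure, not through a global operator-norm lower bound. The first two paragraphs of your proposal (biorthogonality of the length-$n$ chains and transfer of minimality via $P$ and Proposition 8.15) are correct and close to the paper's route.
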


\begin{pf}
Let an operator $H: h_1 \to H_2$ be bounded and have bounded inverse. Obviously, if a system $\{e_k\}$ is minimal in $H_1$ then system $\{Ke_k\}$ is minimal in $H_2$. It follows from Proposition 5 that the system $\{\tilde y_k^h\} \in \mathcal{L}^+$ is minimal in $H^n$. Hence, from Proposition 15 we find that $\{P\tilde y_k^h\}$ is minimal in $H^l$. If all eigenvalues of $A(\la)$ are semi-simple then the system $\{\la_k^{-l}P\tilde y_k^0\}$ coincides with $\xi^+$, hence $\xi^+$ is minimal. In the general case one has to check  the equalities
\begin{equation}
\tilde y_k^h = \la_k^{-l} \sum_{r=0}^{h} \binom{n}{r} \la_k^{-r}P\tilde y_k^{h-r}, \quad \{\tilde y_k^h\} = \xi^+.
\end{equation}
This can be easily done by using the formula (38) for Keldysh derived chains $\tilde y_k^h$ (see details in Shkalikov [1]). The equalities (46) imply that the systems $\xi^+$ and $\{P\tilde y_k^h\}$ are connected by a triangular transformation and from this fact one can easily deduce that $\xi^+$ is a minimal.
\end{pf}


We proved in Section~7 that the systems $\xi^+$ is linearly independent if $A(\la)$ satisfies condition (1). The result of minimality of $\xi^+$ is much sharper and it has been proved only in the case when $A(\la)$ is a linearly dissipative pencil. In this connection the following natural question arises.

\textit{Open Problem}$\quad$ Does condition (1) and $0 \notin \Theta(A_0)$ imply the minimality of the system $\xi^+$ in $H^l$?

\begin{note}
Theorems on minimality can be obtained for the case $V_0 \le 0$ and $n = 2l+1$ as well as for linearly dissipative pencils satisfying the condition $V_1 \le 0$ (see Section~7).
\end{note}

\textit{Comments}$\quad$ $W$-dissipative operators were introduced in the book of Dalezki, Krein [1] and were studied by Kuzhel, Azizov, Iohidov, M. Krein, Langer, Gomilko, Radzievski and many other authors. References can be found in the recent book Azizov, Iohidov [1]. In this book the proof of Proposition 1 is given, although here we proposed a new proof. Proposition 3 is due to Radzievski. Proposition 5 is proved by Keldysh [1, 2]. Proposition 6 and 7 are proved in the paper Shkalikov [3].

Theorem 10 seems to be new. Theorem 11 is proved in the papers Shkalikov [3, 6]. The second part of this lecture, connected with application to the half-range minimality problem is based on the paper Shkaliov [3]. Here a new version of the proof of Theorem 16 is given. To prove Proposition 5, we borrowed ideas from the paper Langer [3], where a similar assertion is proved for self-adjoint monic operator pencils.

The existense of a maximal $W$-non-positive subspace $S_c^+$ such that $S_c^+ \supset S_c^o$ was proved by Gomilko [1] (1983) (this is related to our Proposition 9). But here we get more, in particular, an important information on connection of direct and adjoint Jordan chains is established.

\newpage
\section{Factorization of dissipative operator pencils}

In this lecture we solve the problem of factorization of dissipative pencils in finite dimensional space and obtain some results on factorization of linearly dissipative pencils in infinite dimensional space.

First we prove one important result due to H. Langer. Let
\begin{equation}
A(\la) = A_0 + \la A_1+\ldots+\la^{n-1}A_{n-1}+\la^nA_n
\end{equation}
be a pencil of bounded operators in Hilbert space H and let
\begin{equation}
\tilde A = \begin{bmatrix}
-A_n^{-1}A_{n-1} & -A_n^{-1}A_{n-2} & \ldots & -A_n^{-1}A_1& -A_n^{-1}A_0\\
I & 0 & \ldots & 0 & 0\\
0 & I & \ldots & 0 & 0 \\
\ldots & \ldots & \ldots & \ldots \\
0 & 0 & \ldots & I& 0
\end{bmatrix}
\end{equation}
This is well-known and can be easily checked that $A$ is a linearization of $A(\la)$, i.e. the spectra of $A(\la)$ and $\la I-\tilde A$ coincide and the corresponding Jordan chains of operator $\tilde A$ coincide with Keldysh derived chains of $A(\la)$. Notice, that operator $\tilde A$ differs from linearization $A$ used in Section~8. The choice (2) in this lecture is not incidental. One comes with serious technical difficulties to prove the subsequent theorem in terms of the old linearization A.


For any operator $K: H^k \to H^{n-k}$ (acting from $H^k$ into $H^{n-k}$) we call the subspace
\begin{equation}
M =
\begin{Bmatrix}
\begin{bmatrix}
K\hat x\\
\hat x
\end{bmatrix}
\Bigr | \hat x \in H^k
\end{Bmatrix} \subset H^n
\end{equation}
the graph subspace of $K$. Obviously, the graph subspace of any bounded operator $K$ is closed subspace in $H^n$.

\begin{theorem}
(Langer [3] (1976)). The pencil (1) admits factorization
\begin{equation}
A(\la) = L(\la)K(\la)
\end{equation}
with a pencil $K(\la) = \la^kI-\la^{k-1}K_{k-1}-\ldots-\la K_1 - J_0$ of degree $k$ ($< n$) and a pencil $L(\la)$ of degree $n-k$ if and only if the linearization $A$ has an invariant subspace of a bounded operator $K: H^k \to H^{n-k}$

If $M$ is such a subspace and
\begin{equation}
K = \begin{bmatrix}
K_{11} & K_{12} & \ldots & K_{1k}\\
K_{21} & K_{22} & \ldots & K_{2k}\\
\ldots & \ldots & \ldots & \ldots\\
K_{n-k,1} & K_{n-k, 2} & \ldots & K_{n-k, k}\\
\end{bmatrix}
\end{equation}
then the operator $K_{ij}: H \to H$ are uniquely determined by the coefficients of the right divisor, in particular,
$$
K_{n-k,j} = K_{k-j}, \quad j=1,2,\ldots,k.
$$
Moreover, the spectra of $K(\la)$ and of ${\tilde A}|_{M}$ coincide (${\tilde A} |_{M}$ is the restriction of $\tilde A$ onto $M$).
\end{theorem}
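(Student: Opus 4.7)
The plan is to set up a bijection between monic right divisors $K(\la)=\la^k I-\la^{k-1}K_{k-1}-\cdots-K_0$ of degree $k$ and bounded operators $K:H^k\to H^{n-k}$ whose graph subspace $M$ is $\tilde A$-invariant. Given a monic $K(\la)$, form its own companion operator on $H^k$,
$$
C_K=\begin{bmatrix}K_{k-1}&K_{k-2}&\cdots&K_1&K_0\\ I&0&\cdots&0&0\\ 0&I&\cdots&0&0\\ \vdots&&\ddots&&\vdots\\ 0&0&\cdots&I&0\end{bmatrix},
$$
which is the direct analogue of $\tilde A$ for the lower-degree pencil $K(\la)$. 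Define the embedding $T:H^k\to H^n$ whose bottom $k$ block components are $\hat x$ itself and whose top $n-k$ block components are $SC_K^{n-k}\hat x,\,SC_K^{n-k-1}\hat x,\,\ldots,\,SC_K\hat x$ read from the top down, with $S:H^k\to H$ the projection onto the first block. Then $T$ is injective, and its image is the graph subspace of the operator $K:H^k\to H^{n-k}$ whose $i$-th block row equals the first block coordinate of $C_K^{n-k+1-i}$. In particular the last block row is $(K_{k-1},K_{k-2},\ldots,K_0)$, yielding the claimed formula $K_{n-k,j}=K_{k-j}$; every other block row is a fixed polynomial expression in the $K_j$, which gives the stated uniqueness of the $K_{ij}$ in terms of the coefficients of the right divisor.

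The crux of the theorem is the operator identity $\tilde AT=TC_K$, which encodes exactly the fact that $K(\la)$ is a right divisor of $A(\la)$. Block rows $2$ through $n$ of both sides agree automatically from the common shift structure of $\tilde A$ and $C_K$, so everything collapses to the first row; after multiplying by $-A_n$ this reduces to the single identity
$$
A_n SC_K^{n-k+1}+A_{n-1}SC_K^{n-k}+\cdots+A_k SC_K+A_{k-1}\pi_1+A_{k-2}\pi_2+\cdots+A_0\pi_k=0
$$
on $H^k$, where $\pi_m:H^k\to H$ denotes the $m$-th block projection. On any eigenvector of $C_K$ with eigenvalue $\la$ and bottom block $y_k$ the identity collapses to $A(\la)y_k=L(\la)K(\la)y_k=0$, which already forces it when the EAV of $C_K$ span $H^k$. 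In general I would perform a right-Bezout division $A(\la)=L(\la)K(\la)+R(\la)$ with $\deg R<k$ and substitute $C_K$ for $\la$ in the appropriate non-commutative sense, exploiting the telescoping $L(C_K)K(C_K)=0$ on the relevant subspace to reduce the displayed identity to $R\equiv 0$; thus the identity is equivalent to the factorization.

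Once $\tilde AT=TC_K$ is established the forward direction is immediate: $\tilde A T(H^k)\subset T(H^k)$, so $M$ is $\tilde A$-invariant and has the graph form. For the converse, given any $\tilde A$-invariant graph subspace $M=\{[K\hat x;\hat x]:\hat x\in H^k\}$, pull back $\tilde A|_M$ to $H^k$ via the isomorphism $\hat x\mapsto[K\hat x;\hat x]$ to obtain an operator $C$ on $H^k$; the shift structure of $\tilde A$ on rows $2,\ldots,n-k$ forces $C$ to have companion form $C_K$ for some operators $K_0,\ldots,K_{k-1}$, which we declare to be the coefficients of the right divisor, and the top-row relation reverses the Bezout argument to produce the quotient $L(\la)$ with $A=LK$. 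The spectral statement $\sigma(K(\la))=\sigma(\tilde A|_M)$ follows because $\tilde A|_M$ is similar (through $T$) to $C_K$, and $C_K$ linearizes $K(\la)$ exactly as $\tilde A$ linearizes $A(\la)$. The main obstacle throughout is the careful bookkeeping of the row-$1$ Bezout identity in a setting where the $A_j$ and $K_i$ need not commute; once that computation is set out explicitly, everything else is routine block-matrix algebra.
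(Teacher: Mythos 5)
Your proposal is correct and, in one of the two directions, takes a genuinely different route from the paper. For the implication ``factorization $\Rightarrow$ invariant graph subspace'' both proofs are essentially the same: the paper's $K_{n-1}K_{n-2}\cdots K_k$ is exactly your embedding $T\colon H^k\to H^n$ built from the companion operator $C_K$ of the right divisor, and the paper's identity (9) is your intertwining $\tilde AT=TC_K$, established in the paper via the ``partial division'' remark and in your account via the row-$1$ Bezout relation. For the converse, however, the paper (Step~2) is analytic: it plugs the special vector $\tilde x=\{K_{11}x,\dots,K_{n-k,1}x,x,0,\dots,0\}$ into an explicit formula for the resolvent $(\tilde A-\la I)^{-1}$, reads off the $(n-k)$-th block, and from the invariance of $M$ under the resolvent derives $K(\la)A^{-1}(\la)L(\la)=I$, hence $A=LK$. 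You instead argue purely algebraically: pull $\tilde A|_M$ back through the graph isomorphism to obtain the companion operator $C_K$ on $H^k$, and reverse the Bezout argument to produce $L$. Your route is more uniform (the same intertwining identity carries both directions and the spectral similarity $\tilde A|_M\cong C_K$), and it avoids writing out the resolvent matrix; the paper's route trades that for a short and self-contained display that simultaneously identifies $L(\la)$.

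One part of your outline still needs to be put on firm footing, and you already flag it: the claim that the row-$1$ identity
\[
A_nSC_K^{n-k+1}+A_{n-1}SC_K^{n-k}+\cdots+A_kSC_K+A_{k-1}\pi_1+\cdots+A_0\pi_k=0
\]
is \emph{equivalent} to the existence of a degree-$(n-k)$ pencil $L$ with $A=LK$. ``Substitute $C_K$ for $\la$'' is not well defined in the noncommutative setting, and ``$L(C_K)K(C_K)=0$'' does not literally make sense here because the coefficients sit on the left of the powers of $\la$. The cleanest way to close this is to test the displayed identity on the generating vectors $\hat e(\la)y=(\la^{k-1}y,\dots,\la y,y)^t\in H^k$. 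Since $(C_K-\la)\hat e(\la)y$ has only its first block nonzero and equal to $-K(\la)y$, an induction gives $SC_K^{m}\hat e(\la)y=\la^{m+k-1}y-\sum_{j=0}^{m-1}\la^{m-1-j}SC_K^jE_1K(\la)y$, where $E_1\colon H\to H^k$ is the inclusion into the first block. Feeding this into the row-$1$ identity and reindexing shows that it is the statement $A(\la)y=L(\la)K(\la)y$ with $L_p=\sum_{j=0}^{n-k-p}A_{k+j+p}SC_K^jE_1$, and in particular $L_{n-k}=A_nSE_1=A_n$. Since vectors $\hat e(\la)y$ span $H^k$ (Vandermonde), the identity on $\hat e(\la)y$ for all $\la,y$ is equivalent to the operator identity, and uniqueness of the monic right division forces this $L$ to coincide with any other right quotient. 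With that computation inserted, the rest of your proof is complete, including the uniqueness of the $K_{ij}$ (your formula $K_{ij}$ as the $j$-th block column of $SC_K^{n-k+1-i}$ reproduces the paper's $K_{n-k-1,1}=K_{k-1}^2+K_{k-2}$ and, recursively, all the higher ones) and the spectral identity $\sigma(K(\la))=\sigma(\tilde A|_M)$ via the similarity $\tilde A|_M=TC_KT^{-1}$.
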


\begin{pf}
\textbf{Step 1.} Obviously, this is enough to prove theorem for monic pencil with $A_n = I$, therefore further we assume $A_n = I$.

Let a subspace $M$ has the representation (3) and K is defined by (5). Denote
$$
K_{n-k,j} = K_{k-j}, \quad j =1,2,\ldots, k.
$$
Observe, that if $M$ is invariant with respect to $\tilde A$ then the operators $K_{ij}$ are uniquely determined by the operators $K_j$. Indeed, if
\begin{equation}
\tilde x = \{K_{11}x, K_{21}x, \ldots, K_{n-k,1}x, x,0,\ldots, 0\}
\end{equation}
then the ($n-k$)-th component of $\tilde A \tilde x$ is $K_{n-k-1,1}x$. On the other hand, it has to be equal to $K_{k-1}^2x+K_{k-2}x$, since $\tilde A x\in M$. Therefore, $K_{n-k-1, 1} = K_{k-1}^2+K_{k-2}$. Using the inclusions $\tilde A^s \tilde x \in M$ we may determine the operators $K_{n-k-s, 1}$. Taking in (3) $\tilde x = \{0, x, 0, \ldots, 0\}$ and applying the same arguments we may determine the operators $K{i, 2}$ by $K_j$, and thence in a similar way all the other operator.

\textbf{Step 2.} Let us show that the existence of an invariant subspace (3) implies the factorization (4). For the resolvent $(A-\la I)^{-1}$ we have the following representation
$$
\begin{aligned}
(A-\la I)^{-1} =
&-
\begin{bmatrix}
\la^{n-1} \\
\la^{n-2} \\
\ldots \\
\la \\
1
\end{bmatrix}
A^{-1}(\la)[1, \la, \ldots, \la^{n-2}, \la^{n-1}]
\begin{bmatrix}
I & A_{n-1} & \ldots & A_2 & A_1\\
0 & I & \ldots & A_3 & A_3\\
\ldots & \ldots & \ldots & \ldots & \ldots\\
0 & 0 & \ldots & I & A_{n-1}\\
0 & 0 & \ldots & 0 & I\\
\end{bmatrix} \\
&+
\begin{bmatrix}
0 & I & \la I & \ldots & \la^{n-3}I & \la^{n-2}I\\
0 & 0 & I & \ldots & \la^{n-4} I & \la^{n-3}I\\
\ldots & \ldots & \ldots & \ldots & \ldots & \ldots\\
0 & 0 & 0 & \ldots & I & \la I\\
0 & 0 & 0 & \ldots & 0 &  I\\
0 & 0 & 0 & \ldots & 0 &  0\\
\end{bmatrix}
\end{aligned}
$$
which can be verified by multiplication of $\tilde A - \la I$ from the left.

Let a vector $\tilde x$ be defined by (6). Then the ($n-k$)-th component of $(\tilde A-\la I)^{-1}\tilde x$ equals
\begin{equation}
-\la^kA^{-1}(\la)L(\la)x + x
\end{equation}
where
$$
L(\la) = [1, \la, \ldots, \la^{n-2}, \la^{n-1}]
\begin{bmatrix}
I & A_{n-1} & \ldots & A_2 & A_1\\
0 & I & \ldots & A_3 & A_3\\
\ldots & \ldots & \ldots & \ldots & \ldots\\
0 & 0 & \ldots & I & A_{n-1}\\
0 & 0 & \ldots & 0 & I\\
\end{bmatrix}
\begin{bmatrix}
K_{11} \\
K_{21} \\
\ldots \\
K_{n-k, 1} \\
I \\
0 \\
\ldots \\
0
\end{bmatrix}
$$
is the pencil of degree $n-k$. By the invariance of $M$ we have $(\tilde A-\la I)^{-1}\tilde x \in M$. Therefore, the vector (7) can be also represented in the form
$$
K_{k-1}y_1+K_{k-2}y_2+\ldots+K_0y_k
$$
where
$$
y_j = -\la^{k-j}A^{-1}(\la)L(\la)x, \quad j = 1,2, \ldots, k,
$$
coincide with $(n-k+j))$-th component of the vector $(\tilde A - \la I)^{-1}\tilde x$. Hence
$$
-\la^kA^{-1}(\la)L(\la)x+x = (-\la^kK_{k-1}-\la^{k-2}K_{k-2}-\ldots-K_0)A^{-1}(\la)L(\la)x
$$
or
$$
K(\la)A^{-1}(\la)L(\la) = I
$$
and factorization (4) follows.

\textbf{Step 3.} Suppose, conversely, that the factorization (4) holds. Define the following matrices $(k \le j \le n-1)$


$$
K_j =
\begin{bmatrix}
K_{k-1} & K_{k-2} & \ldots & K_1 & K_0 & 0& \ldots  & 0\\
I & 0 & \ldots & 0 & 0 & 0 & \ldots & 0\\
0 & I & \ldots & 0 & 0 & 0 & \ldots & 0\\
\ldots & \ldots & \ldots & \ldots & \ldots & \ldots & \ldots & \ldots\\
0 & 0 & \ldots & 0 & 0 & 0 & \ldots & I\\
\end{bmatrix}
$$
with $j+1$ rows and $j$ columns and also
\begin{equation}
K =
\begin{bmatrix}
K_{k-1} & \ldots & K_1 & K_0 \\
I & \ldots & 0 & 0 \\
\ldots & \ldots & \ldots & \ldots \\
0 & \ldots & I & 0 \\
\end{bmatrix}
\end{equation}
Suppose we have proved the equality
\begin{equation}
\tilde A K_{n-1} \ldots K_k =
\begin{bmatrix}
K_{k-1} & \ldots & K_0 & 0& \ldots  & 0 & 0\\
I & \ldots & 0 & 0& \ldots  & 0 & 0\\
\ldots & \ldots & \ldots & \ldots & \ldots  & \ldots & \ldots\\
0 & \ldots & 0 & 0& \ldots  & I & 0\\
\end{bmatrix} K_{n-1}\ldots K_k
= K_{n-1}\ldots K_k K
\end{equation}
Then the subspace $M = K_{n-1}\ldots K_kH^k$ is invariant with respect to $\tilde A$ and it is easy to see that this subspace has the representation (3).


To prove (8), observe that the first step in the partial division of a polynomial $\la^lB_l+\la^{l-1}B_{l-1}+\ldots+B_0$ by $K(\la)$ ($l \ge k$) from the right gives a remainder whose coefficients are the entries of the product
$$
[B_l, B_{l-1},\ldots, B_0]K_l.
$$
Therefore the factorization (4) yields
$$
[A_{n-1}-K_{k-1}, A_{n-2}-K_{k-2}, \ldots, A_{n-k}-K_0, A_{n-k-1}, \ldots, A_0]K_{n-1}\ldots K_k = 0.
$$
Now it is easy to see that the last equality is equivalent to (9).

\textbf{Step 4.} Evidently, the spectrum of $K(\la)$ coincide with spectrum of the operator $K$ defined by (8). Moreover, it follows from (9) that
$$
(\tilde A -\la I)K_{n-1}\ldots K_k = K_{n-1}\ldots K_k(K-\la I).
$$
This means that $\sigma(K)$ coincides with spectrum of $\tilde A |_M$, where $M = K_{n-1}\ldots K_kH^k$, and the last assertion of Theorem 1 follows.	
\end{pf}

Now we can easily obtain the results on factorization of dissipative matrix polynomial. We consider both cases of the dissipativity condition, i.e.
\begin{equation}
\Im(A(\la)x, x) \le 0 \quad \text{for all $x \in H$ and $\la \in \mathbb {R}$}
\end{equation}
and
\begin{equation}
\Im(\la A(\la)x, x) \le 0 \quad \text{for all $x \in H$ and $\la \in \mathbb {R}$}.
\end{equation}

\begin{theorem}
Let the condition (10) hold, $n=2l$, $\dim H < \infty$, $\Ker A_n = \{0\}$ and there exists $\la_0 \in \mathbb {R}$ such that $0 \notin \Theta(A(\la_0))$. Then $A(\la)$ admits factorization (4) with a pencil $K(\la) = \la^l I -\la^{l-1}K_{l-1}-\ldots-K_0$ such that the system of eigen and associate vectors of $K(\la)$ coincides with the system $E^+$ ($E^-$) of pencil $A(\la)$.
\end{theorem}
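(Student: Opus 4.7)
The plan is to apply Langer's Theorem~1 from this section to the linearization $\tilde A$ defined by (2), which reduces the construction of a monic right divisor $K(\la)$ of degree $l$ to exhibiting a $\tilde A$-invariant graph subspace $M\subset H^n=H^l\oplus H^l$ of the form (3) with $k=l$.

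I would set $\mathcal{L}^+$ to be the linear span of all length-$n$ Keldysh derived chains $\tilde w_k^h$ constructed from the elements $w_k^h\in E^+$ of $A(\la)$. These chains are precisely the Jordan chains of $\tilde A$ (this is the content of the linearization), so $\mathcal{L}^+$ is automatically $\tilde A$-invariant and $\dim\mathcal{L}^+$ equals the cardinality of $E^+$, which is $ml$ by the counting relation $x^+ + x^-=mn=2ml$ (the condition $\Ker A_n=0$ is used here to guarantee that the total algebraic multiplicity is $mn$, and the sign-characteristic selection in the definition of $E^\pm$ splits the real chains exactly in half).

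The crux of the argument, and what I expect to be the main obstacle, is to show that $\mathcal{L}^+$ is a graph subspace: equivalently, that the projection $P\colon H^n\to H^l$ onto the last $l$ coordinates maps $\mathcal{L}^+$ bijectively onto $H^l$. The key input is that the system $\mathcal{E}^+=\{\hat w_k^h\}$ of length-$l$ Keldysh derived chains built from $E^+$ is a basis of $H^l$: completeness follows from Theorem~7.3 (whose hypotheses are exactly (10), $\Ker A_n=0$ and $0\notin\theta(A(\la_0))$), linear independence follows from Theorem~7.4, and the finite-dimensional duality/counting principle recorded in Note~7.8 upgrades these two properties to the basis property. After a preliminary shift of the spectral parameter we may assume $0\notin\sigma(A)$, so all $\la_k\neq 0$; a direct computation, analogous to the triangular identity (46) in the proof of Theorem~8.16, then shows that for each $k$ and $h$ the image $P\tilde w_k^h$ is expressible as a nonzero multiple of $\hat w_k^h$ plus an upper-triangular combination (with diagonal entries built from nonzero powers of $\la_k^{-1}$) of earlier members of $\mathcal{E}^+$. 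Consequently $P(\mathcal{L}^+)=\Span\mathcal{E}^+=H^l$; combined with $\dim\mathcal{L}^+=ml=\dim H^l$, this forces $P|_{\mathcal{L}^+}$ to be a bijection, and $\mathcal{L}^+$ is the graph of a uniquely determined operator $K\colon H^l\to H^l$.

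Once this is in hand, Theorem~1 yields the factorization $A(\la)=L(\la)K(\la)$ with $K(\la)=\la^l I-\la^{l-1}K_{l-1}-\cdots-K_0$, the coefficients $K_j$ being read off the last block-row of $K$. The same theorem gives $\sigma(K(\la))=\sigma(\tilde A|_{\mathcal{L}^+})$, and the Jordan chains of $\tilde A|_{\mathcal{L}^+}$ are transported by $P$ (via the invertible triangular transformation above) to the Jordan chains of the companion linearization of $K(\la)$, hence to the EAV of $K(\la)$ in $H$. Since $\mathcal{L}^+$ was built from $E^+$, this identifies the EAV of $K(\la)$ with the system $E^+$ of $A(\la)$. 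The analogous assertion for $E^-$ is proved by repeating the entire argument with $\mathcal{L}^-$ in place of $\mathcal{L}^+$.
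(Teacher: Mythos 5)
Your proposal follows the paper's own argument step for step: build $\mathcal{L}^{+}$ from the length-$n$ Keldysh derived chains of $E^{+}$, observe it is $\tilde A$-invariant, use the half-range completeness (Theorem 7.3) and linear independence (Theorem 7.4) of $\mathcal{E}^{+}$ together with the counting/duality principle and the triangular identity relating $\{P\tilde y_k^h\}$ to $\mathcal{E}^{+}$ to conclude that $\mathcal{L}^{+}$ is a graph subspace of the required form, and then invoke Langer's Theorem 1. The only deviation is cosmetic — your explicit dimension count replaces the paper's direct injectivity argument from linear independence, and your reference to the duality note carries the wrong number — so this is essentially the same proof.
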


\begin{pf}
Without loss of generality we may assume that $0 \notin \sigma(A)$. Otherwise we can shift $\la \to \la+\la_0$, obtain the factorization (4) and then shift back $\la \to \la-\la_0$.

Let $\mathcal{L}^+$ be the minimal subspace containing all elements $\tilde y_k^h$ (Keldysh derived chains of length $n$) constructed from elements $y_k^h \in E^+$ of pencil $A(\la)$. Obviously, $\mathcal{L}^+$ is invariant with respect to linearization $\tilde A$ of $A(\la)$. Let us prove that $\mathcal{L}^+$ has representation (3) with $k=l$. Consider the operator $P: H^n \to H^l$ defined by the equality
$$
P\tilde x = P\{x_1,x_2,\ldots,x_n\} = \{x_{l+1},\ldots, x_n\}.
$$
Denote $P^+ = P |_{\mathcal{L}^+}$. If all eigenvalues of $A(\la)$ are semi-simple then the system $\{\la_k^{-l}Py_k^0\}$ coincides with the system $\xi^+$ which is basis according to Theorems on completness and linear independence from Section~7.The completeness of $\xi^+$ implies $\Im P^+ = H^l$ while the linear independence implies $\Ker P^+ = \{0\}$. Then it follows immediately that $\mathcal{L}^+$ has the representation (3).

In general case (when non-semi-simple eigenvalues exist) make use from representation (8.40) which shows that the systems $\xi^+$ and $\{P\tilde y _k^h\}$ ($y_k^h \in E^+$) are connected by a triangular transformation. Hence, the system $\{P\tilde y_k^h\}$ is basis and this yields again that $\mathcal{L}^+$ has the representation (3). Now apply Theorem 1 to complete proof.
\end{pf}

\begin{theorem}
Let the condition (11) hold, $\dim H < \infty$, $n=2l$, $A_0 > 0$ and $A_n > 0$. Then $A(\la)$ admits factorization (4) with a pencil $K(\la) = \la^l I -\la^{l-1}K_{l-1}-\ldots-K_0$ such that the system of eigen and associate vectors of $K(\la)$ coincides with the system $Y^-$ ($Y^+$) of pencil $A(\la)$.
\end{theorem}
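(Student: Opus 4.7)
The plan is to reduce the problem to Langer's factorization theorem (Theorem~1 of this section) by exhibiting an appropriate $\tilde{A}$-invariant graph subspace. The proof will parallel the structure of the preceding Theorem~2, with Theorem~7.5 (and its $Y^\pm$ systems for pencils satisfying (11)) taking the role that the half-range completeness/linear-independence theorems of Sections~6--7 played there. First I would verify the auxiliary hypotheses: since $A_0 > 0$, the operator $A(0) = A_0$ is invertible and $0 \notin \theta(A_0)$, which is condition~(6.16); since $A_n > 0$, we have $\Ker A_n = \{0\}$, so the linearization $\tilde{A}$ of (2) is well defined. The pencil $A(\la)$ satisfies (11) by hypothesis, and $0 \notin \sigma(A)$, so no preliminary shift is needed.

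Next I would define $\mathcal{L}^{-}$ as the minimal subspace of $H^n$ containing all Keldysh derived chains $\tilde{y}_k^h$ of length $n$ constructed from the vectors $y_k^h \in Y^{-}$ of the pencil $A(\la)$. Since these chains are precisely the eigen and associated vectors of $\tilde{A}$ (see Section~1), and since $Y^{-}$ is closed under the predecessor operation inside each canonical Jordan chain, $\mathcal{L}^{-}$ is $\tilde{A}$-invariant. The goal is to show that $\mathcal{L}^{-}$ is a graph subspace of the form (3) with $k = l$, i.e., that the projection $P: H^n \to H^l$ onto the last $l$ coordinates restricts to a bijection $P\big|_{\mathcal{L}^{-}}: \mathcal{L}^{-} \to H^l$.

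The main obstacle is the verification of this graph-subspace property. I would invoke Theorem~7.5: with $A_0 > 0$ and $A_n > 0$ one has $Q^{+} = P^{+} = I$ and $Q^{-} = P^{-} = 0$, so the system $Y_{-}$, constructed from the elementary solutions $V_k^h(z)$ with $V_k^h(0) \in Y^{-}$ via the trace operator $T_{-}$, is a basis of the $(l \cdot \dim H)$-dimensional space $H^{+}_Q \times H^{l-1} \times H^{-}_P$. A triangular change of variables, exactly analogous to the identity (8.46) used in the proof of Theorem~8.16, expresses the vectors $P\tilde{y}_k^h$ as a triangular image of the basis $Y_{-}$; consequently $\{P\tilde{y}_k^h : y_k^h \in Y^{-}\}$ is itself a basis of $H^l$. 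It follows that $\dim \mathcal{L}^{-} = l \cdot \dim H$ and $P\big|_{\mathcal{L}^{-}}$ is bijective, so the representation~(3) holds with $k = l$. The verification that the triangular transformation from (8.46) indeed applies in the resonant-case setting of $Y^{-}$ (rather than the semi-simple $E^{-}$ discussed in Section~8) is the delicate bookkeeping point.

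Finally, Theorem~1 applied to $M = \mathcal{L}^{-}$ yields the desired factorization $A(\la) = L(\la) K(\la)$ with a monic divisor $K(\la) = \la^l I - \la^{l-1} K_{l-1} - \cdots - K_0$, and by the last assertion of Theorem~1 the spectrum of $K(\la)$ (with multiplicities) coincides with that of $\tilde{A}\big|_{\mathcal{L}^{-}}$. By the construction of $\mathcal{L}^{-}$, the latter is exactly the set of $\la_k$ appearing in $Y^{-}$ with their correct algebraic multiplicities; since the Keldysh derived chains of $K(\la)$ are precisely the EAV of its linearization and $\dim \mathcal{L}^{-} = l \cdot \dim H$ matches the total length of $Y^{-}$, the EAV system of $K(\la)$ must coincide with $Y^{-}$. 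The factorization associated to $Y^{+}$ is obtained identically by working with $\mathcal{L}^{+}$ and the basis $Y_{+}$ from Theorem~7.5.
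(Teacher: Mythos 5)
Your proof follows exactly the paper's intended route: the paper's own proof consists of the single line ``Apply Theorem 7.8 and repeat the arguments of Theorem 2,'' and you reconstruct precisely those arguments, transferring the graph-subspace and triangular-transformation reasoning of Theorem 10.2 from the $E^\pm$/$\xi^\pm$ setting to the $Y^\pm$/$Y_\pm$ setting and using the fact that under $A_0>0$, $A_n>0$ the projectors reduce to $Q^+=P^+=I$, $Q^-=P^-=0$. The only slip is a numbering one: what you call Theorem~7.5 (the theorem asserting $Y_\pm$ is a basis for pencils satisfying condition~(11) with $n=2\ell$) is what the paper invokes as Theorem~7.8; the mathematics you attribute to it is the correct statement.
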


\begin{pf}
Apply Theorem 7.8 and repeat the arguments of Theorem 2.
\end{pf}

\begin{note}
A similar results can be obtained in the case $n=2l+1$. Namely, if $A(\la)$ satisfies the condition (10) and $A_n > 0$ ($A_n < 0$) then $A(\la)$ admits the factorization (4) with $K(\la)$ of degree $l+1$ ($l$) and the system if EAV of $K(\la)$ coincides with $E^+$.

If $n=2l+1$, $A(\la)$ satisfies the condition (11), $A_0 < 0$, $A_n < 0$ ($>0$) then $A(\la)$ admits the factorization (4) with $K(\la)$ of degree $l$ ($l+1$) and the system of EAV of $K(\la)$ coincides with $Y^+$. Another versions of factorization theorems involving the systems $E^-$ and $Y^+$ can be also formulated. These assertions follow from Theorems 7.6 and 7.9.
\end{note}

The problem on factorization of self-adjoint or dissipative operators in Hilbert space is much more deep. Langer [3] proved that each maximal $G$-non-positive ($G$-nonnegative) subspace $M \subset H^n$ which is invariant under the linearization $\tilde A$ of monic self-adjoint pencil $A(\la)$ has the form (3) with $k=[(n+1)/2]$ ($k=[n/2]$), where
$$
G =
\begin{bmatrix}
0 & 0 & \ldots & 0 & I\\
0 & 0 & \ldots & I & A_{n-1}\\
\ldots & \ldots & \ldots & \ldots & \ldots \\
0 & I & \ldots & A_3 & A_2\\
I & A_{n-1} & \ldots & A_2 & A_1\\
\end{bmatrix}
$$
is the simmetrization of $\tilde A$. Hence the factorization problem is reduced to the problem on existence of maximal $G$-semi-definite subspaces invariant with respect to $G$-self-adjoint operator $\tilde A$. Using the results of Section~8 we can obtain a similar assertion for linearly dissipative operator pencils. But the problem  on existence of maximal semi-definite subspace invariant with respect to self-adjoint operator in Krein space is still open. The deepest results of operator theory in spaces with indefinite metric are connected with this problem. It is solved for some particular classes of operators in Krein space and these results generate the corresponding factorizaion theorems. To make acquaintance with these results we refer the reader to the remarkable paper Langer [3].

In this connection the subsequent result on factorization of dissipative pencils in Hilbert space is of interest. It is formulated in terms of solvability of the half-range Cauchy problem. But the last problem can be solved for some classes of differential equations associated with pencil $A(\la)$ (see Note 8 in the end of this Section).

We say $v(z)$ is a regular solution of the equation
\begin{equation}
A(-i\frac{d}{dz})v(z) = A_0v - iA_1\frac{dv}{dz}+\ldots+(-i)^nA_n\frac{d^nv}{dz^n} = 0
\end{equation}
on interval $(a, b)$ if $v(z)$ has $n$ continuous derivatives as a function with values in $H$ on $(a, b)$ and $v(z)$ satisfies the equation (12).

Let $A(\la)$ be dissipative operator pencil satisfying the condition (10). Let also the real spectrum of $A(\la)$ be discrete. Denote by $S^+(0, \infty)$ the linear manifold of all solutions $v(z)$ of equation (12) on the semi-axis $(0, \infty)$ satisfying the Mandelstam radiation principle at $\infty$ (in sense of Definition 7.4). Assume also $n=2l$ and consider the trace operator $J: S^+(0, \infty) \to H^l$ defined by the equation
\begin{equation}
J_\xi v(z) = \{v(\xi), -iv^\prime(\xi),\ldots, (-i)^{l-1}v^{(l-1)}(z)\}.
\end{equation}
For $v(z) \in S^+(0, \infty)$ we denote by $\tilde v(z)$ the following function with values in $H^n$
\begin{equation}
\tilde v(z) = \{v(z), -iv^\prime(z),\ldots, (-i)^{n-1}v^{(n-1)}\}.
\end{equation}

\begin{proposition}
Ley $A(\la)$ be linearly dissipative pencil ($V_0 \le 0$) and let the operators $W_q$, $q=0,2,\ldots, n$, be defined as in Section~8. Suppose $v(z) = v_1(z)+ v_0(z) \in S^+(0, \infty)$ where $v_0^j(z) \to 0$ for $j = 0,1,\ldots, n-1$ when $z \to \infty$, and $v_1(z)$ is a finite superposition of elementary solutions corresponding to real eigenvalues. Then for $q=0,2,\ldots,n$ the following relations hold
\begin{equation}
(W_q\tilde v_0(z), \tilde v_0(z)) \le 0,
\end{equation}
\begin{equation}
(W_q\tilde v_1(z), \tilde v_1(z)) \le 0,
\end{equation}
\begin{equation}
(W_q\tilde v_0(z), \tilde v_1(z)) = 0,
\end{equation}
\begin{equation}
(W_q\tilde v(z), \tilde v(z)) \le 0,
\end{equation}
\end{proposition}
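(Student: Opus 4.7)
The plan is to reduce the proof to the first-order Cauchy problem of Section~8 and then to exploit Propositions~8.2 and 8.14 together with the key identity (8.25). With $A$ the Section~8 linearization of $A(\la)$, the function $\tilde v(z)=\{v(z),-iv'(z),\dots,(-i)^{n-1}v^{(n-1)}(z)\}$ associated with any regular solution $v(z)$ of (12) satisfies a first-order equation of the form $-iA\tilde v'(z)+\tilde v(z)=0$, and the same is true of $\tilde v_0$ and $\tilde v_1$ individually. The hypothesis $V_0\le 0$ together with identity (8.25) shows that $A$ is $W_q$-dissipative for every even $q=0,2,\dots,n$, and more precisely $W_qA-(W_qA)^*=iJ_{q/2}$ with $J_{q/2}\le 0$.

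Repeating verbatim the calculation of Proposition~8.1 now yields, for any two solutions $\tilde u_1,\tilde u_2$ of the first-order equation, the identity
\begin{equation*}
\frac{d}{dz}(W_q\tilde u_1(z),\tilde u_2(z))=-(J_{q/2}\tilde u_1'(z),\tilde u_2'(z)).
\end{equation*}
Setting $\tilde u_1=\tilde u_2=\tilde v_0$ makes $(W_q\tilde v_0,\tilde v_0)$ non-decreasing in $z$; by hypothesis $v_0^{(j)}(z)\to 0$ as $z\to\infty$, so the limit is $0$ and (15) follows. For (16), I would observe that every Keldysh derived chain $\tilde y_k^{h-j}$ appearing in the expansion of $\tilde v_1(z)$ belongs to $E^+$ (since $h\le[(p_k+\epsilon_k)/2]$ and $h-j\le h$ also satisfies this bound), so $\tilde v_1(z)\in\mathcal{L}^+$ pointwise in $z$; Proposition~8.14 then gives $(W_q\tilde v_1,\tilde v_1)\le 0$.

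The heart of the argument is (17). My plan is to show that $\tilde v_1'(z)\in\Ker J_{q/2}$ for every $z$, which by the bilinear identity forces $(W_q\tilde v_0,\tilde v_1)$ to be constant in $z$; the cross term then vanishes identically because $\tilde v_0$ decays exponentially while $\tilde v_1$ grows at most polynomially, so the limit at infinity is $0$. To see that $\tilde v_1'\in\Ker J_{q/2}$, I would apply Proposition~8.2 to the $W_q$-dissipative operator $A$: for each real eigenvalue $\la_k$, the Keldysh derived chains $\tilde y_k^0,\dots,\tilde y_k^{\alpha_k}$ with $\alpha_k=[p_k/2]$ lie in $\Ker(W_qA)^I=\Ker J_{q/2}$, and a brief case analysis on the parity of $p_k$ and the sign $\epsilon_k$ confirms that $[(p_k+\epsilon_k)/2]\le\alpha_k$ in every case, so every vector entering $\tilde v_1(z)$ lies in $\Ker J_{q/2}$. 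The span of these chains is $A$-invariant by the standard Jordan structure, hence also $A^{-1}$-invariant; since $\tilde v_1'=-iA^{-1}\tilde v_1$, this yields the claim.

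Finally, (18) follows at once from (15)--(17) by expanding $(W_q\tilde v,\tilde v)=(W_q\tilde v_0,\tilde v_0)+2\Re(W_q\tilde v_0,\tilde v_1)+(W_q\tilde v_1,\tilde v_1)$. The main technical hurdle I anticipate is the bookkeeping required to show that $\tilde v_1$ and $\tilde v_1'$ lie in $\Ker J_{q/2}$ uniformly in $z$, and in particular the verification that the Definition~7.4 cutoff $h\le[(p_k+\epsilon_k)/2]$ never exceeds the Proposition~8.2 cutoff $\alpha_k=[p_k/2]$; everything else is essentially a line-for-line repetition of the Section~8 computation.
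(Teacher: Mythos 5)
Your proof is correct and follows essentially the same route as the paper's: linearize via the Section~8 operator $A$, derive the bilinear identity $\frac{d}{dz}(W_q\tilde u_1,\tilde u_2)=-(J\,\tilde u_1',\tilde u_2')$ from (8.25), integrate to get (15), invoke the $W_q$-non-positivity of $\mathcal{L}^+$ (via Proposition~8.14, which is the paper's Theorem~8.10 specialized to the linearly dissipative case) for (16), and for (17) check that $\tilde v_1,\tilde v_1'\in\Ker J$ via Proposition~8.2 together with the observation $[(p_k+\varepsilon_k)/2]\le[p_k/2]$ for $\varepsilon_k\in\{-1,0,1\}$ and then pass to the limit at infinity. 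Your citation of Proposition~8.2 for the kernel claim is actually the correct reference (the paper's ``Proposition~8.3'' there appears to be a misprint).
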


\begin{pf}
Let the linearization $A$ of $A(\la)$ be defined as in Section~8. Then the function $v(z)$ satisfies the equation
$$
A\tilde v^\prime(z)+i\tilde v(z) = 0,
$$
since $v(z)$ satisfies (12). Using (8.25) we obtain
$$
\begin{aligned}
(W_q\tilde v_0(z), \tilde v_0(z))^\prime &= (W_q\tilde v_0^\prime, \tilde v_0(z)) + (W_q\tilde v_0(z), v_0^\prime(z)) \\
&= -i(W_r\tilde v_0^\prime (z), Av_0^\prime(z)) +i(W_rA\tilde v^\prime(z), \tilde v^\prime(z)) = -(J_q\tilde v_0^\prime(z), v_0^\prime(z)).
\end{aligned}
$$
Integrating this equality from $z$ to $\infty$ and taking into account that $v_0(z)$ vanishes at $\infty$ we obtain
$$
(W_q\tilde v_0(z), \tilde v_0(z)) = \int_z^\infty (J_q\tilde v_0^\prime(\xi), \tilde v_0^\prime(\xi)) d\xi \le 0,
$$
since $V_0 \le 0$. Hence the inequality (15) holds.

By our assumption the operator $A$ is $W_q$-dissipative for $q=0,1,\ldots,n$. Then the inequality (16) follows from Theorem 8.10.

According to Proposition 8.3 $v_1(z) \in \Ker J_q$ and $v_1^\prime(z) \in \Ker J_q$. Therefore
$$
(W_q\tilde v_0(z), \tilde v_1(z)) = -(J_q\tilde v_0^\prime(z), \tilde v_1^\prime (z)) = 0.
$$
Now (15)-(17) give the inequality (18).	
\end{pf}

\begin{proposition}
Let $A(\la)$ be linearly dissipative ($V_0 \le 0$), $n=2l$ and $0 \notin \Theta(A_0)$. Then for $v(z) \in S^+(0, \infty)$ the following estimate holds
\begin{equation}
\begin{aligned}
\|v(z)\| &+\|v^\prime(z)\|+\ldots+\|v^{(l-1)}\| \\
&\le M(\|v^{(l)}(z)\| +\|v^{(l+1)}\|+\ldots+\|v^{(n-1)}(z)\|).
\end{aligned}
\end{equation}
\end{proposition}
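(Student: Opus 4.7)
\begin{pf}[Proof proposal]
The plan is to view~(19) as the function-space analogue of the estimate $\|P\tilde x\|\ge\varepsilon\|\tilde x\|$ on $\mathcal{L}^+$ proved at the end of Section~8, which holds under exactly the present hypotheses on $A(\lambda)$. Letting $Q,P:H^n\to H^l$ denote the projections onto the first $l$ and last $l$ coordinates, the sought inequality~(19) is equivalent, up to the standard equivalence of the $\ell^1$ and $\ell^2$ sums on $l$ terms, to $\|Q\tilde v(z)\|\le M\|P\tilde v(z)\|$, and this will follow from the Section~8 estimate the moment one knows $\tilde v(z)\in\mathcal{L}^+$ for every $v\in S^+(0,\infty)$ and every $z\ge 0$.

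To establish this membership I would split $v=v_1+v_0$ by the Mandelstam radiation principle (Definition~7.4). The finite superposition $v_1$ of elementary solutions built from vectors $y_k^h\in E^+$ satisfies $\tilde v_1(0)=\sum c_{k,h}\tilde y_k^h\in\mathcal{L}^+$ by the very definition of $\mathcal{L}^+$, and because $\mathcal{L}^+$ is invariant under the linearization $A$ (the same invariance that underlies its $W_{2q}$-nonpositivity), the evolved vector $\tilde v_1(z)$ stays in $\mathcal{L}^+$ for every $z$. The decaying part $v_0$, all of whose derivatives up to order $n-1$ vanish at $+\infty$, is built from elementary solutions at eigenvalues with $\Imm\lambda_k>0$, whose EAV also lie in $E^+$; hence $\tilde v_0(z)\in\mathcal{L}^+$ as well. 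Combining, the Section~8 estimate applied to $\tilde v(z)$ yields~(19) after unpacking the definitions.

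The main obstacle lies in justifying $\tilde v_0(z)\in\mathcal{L}^+$ when $H$ is infinite dimensional, since a priori a solution whose derivatives decay at $+\infty$ need not decompose into a finite sum of discrete eigenmodes and a continuous spectrum component could in principle intervene. Should that identification prove delicate under the given hypotheses, the fallback plan is to reproduce the contradiction argument of the Section~8 estimate directly at the function level, taking as input only the non-positivity $(W_{2q}\tilde v(z),\tilde v(z))\le 0$ for $q=0,1,\dots,l$ supplied by the previous Proposition together with the same inequality applied to $v'\in S^+(0,\infty)$. Assuming~(19) fails, one extracts sequences $v_k\in S^+(0,\infty)$ and $z_k\ge 0$ with $\tilde x^{(k)}:=\tilde v_k(z_k)$ satisfying $\|Q\tilde x^{(k)}\|=1$ and $\|P\tilde x^{(k)}\|\to 0$; the block structure of $W_0$ gives $(W_0\tilde x^{(k)},\tilde x^{(k)})=o(1)$, a Schwartz-type inequality on the non-positive form $(W_0\cdot,\cdot)$ then gives $(W_0 A\tilde x^{(k)},\tilde x^{(k)})=o(1)$, and writing this last quantity out as in the Section~8 argument produces $-(A_0 x_l^{(k)},x_l^{(k)})+o(1)$, which by $0\notin\Theta(A_0)$ forces $\|x_l^{(k)}\|\to 0$; iterating the same calculation with $W_2,W_4,\dots$ in place of $W_0$ successively yields $\|x_{l-1}^{(k)}\|,\dots,\|x_1^{(k)}\|\to 0$, contradicting $\|Q\tilde x^{(k)}\|=1$.
\end{pf}
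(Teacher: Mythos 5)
Your fallback plan is the paper's actual proof: the argument given there is essentially one line, ``Taking into account Proposition 4 we may repeat all arguments from Proposition 8.15 and prove (20) as well as (8.37).'' Your primary plan (show $\tilde v(z)\in\mathcal{L}^+$ and quote Proposition 8.15 directly) is indeed a dead end, exactly for the reason you flag: the paper explicitly remarks immediately after this proposition that $\mathcal{L}^+\subsetneq S^+$ can happen, so in infinite dimensions the decaying part $v_0$ of a member of $S^+(0,\infty)$ need not be a finite superposition of discrete eigenmodes and $\tilde v_0(z)$ need not lie in $\mathcal{L}^+$; that is why Propositions~8.15 and~9.5 are genuinely different statements. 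So you were right to distrust the primary plan, and right to default to rerunning the contradiction argument with the inequalities of the preceding Proposition~9.4 as input.

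One small point worth making explicit in the fallback: the Cauchy--Schwarz step needs both arguments of $(W_{2q}\cdot,\cdot)$ to sit in a linear set on which the form is non-positive. In Proposition~8.15 that was supplied by $A(\mathcal{L}^+)\subset\mathcal{L}^+$; at the function level what Proposition~9.4 actually gives is non-positivity on the traces $\tilde w(z)$ with $w\in S^+(0,\infty)$, so the natural companion to $\tilde x^{(k)}=\tilde v_k(z_k)$ is the trace $\tilde v_k'(z_k)$ (also admissible since $v_k'\in S^+(0,\infty)$), and these are linked through the equation $A\tilde v'(z)=-i\tilde v(z)$ rather than through $A$ applied directly to $\tilde v(z)$. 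You hint at this by invoking the inequality for $v'$, but then write $A\tilde x^{(k)}$; the replacement of that vector by the correct trace should be spelled out. The paper is equally terse here, so this is a shared omission rather than a gap peculiar to your proposal.
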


\begin{pf}
The estimate (19) is equivalent to the following estimate
\begin{equation}
\|Q\tilde v(z)\| \le M \|P\tilde v(z)\|, \quad \tilde v(z) \in S^+(0, \infty),
\end{equation}
where the operators Q and P are defined in Proposition 8.15. Taking into account Proposition 4 we may repeat all arguments from Proposition 8.15 and prove (20) as well as (8.37).		
\end{pf}

\begin{note}
Denote by $S^+$ the minimal subspace in $H$ containing all vectors $\tilde v(\xi)$ for fixed $\xi > 0$, such that $v(z) \in S^+(0, \infty)$. Obviously, $\mathcal{L}^+ \subset S^+$ where  $\mathcal{L}^+$ is defined in Proposition 8.14. It may happen that $\mathcal{L}^+ \ne S^+$, hence Proposition 5 and Proposition 8.15 are not identical.
\end{note}

\begin{theorem}
Let the conditions of Proposition 5 hold. If for some $\xi \ge 0$ the image of the trace operator $J_\xi$ defined by (13) is dense in $H^l$ then $A(\la)$ admits the factorization (4) with pencil $K(\la)$ of degree $l$. Moreover, if the whole spectrum of $A(\la)$ is discrete then $\sigma(K)$ lie in the upper half-plane and the system of eigen and associated vectors of $K(\la)$ coincide with $E^+$.
\end{theorem}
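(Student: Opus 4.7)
The plan is to apply Langer's Theorem 1 of this section to a carefully chosen closed $\tilde A$-invariant subspace of $H^n$ that has the graph form required, with $k=l$. The natural candidate is $M := S^+_\xi$, the closed linear span in $H^n$ of all vectors $\tilde v(\xi) = \{v(\xi), -iv'(\xi),\ldots,(-i)^{n-1}v^{(n-1)}(\xi)\}$ arising from regular solutions $v \in S^+(0,\infty)$ of equation (12) satisfying the Mandelstam radiation principle at $+\infty$.

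First I would verify that $M$ is invariant under the linearization $\tilde A$ of (2). Any regular solution $v$ of (12) gives rise to $\tilde v(z)$ satisfying the first-order system $\tilde v'(z) = i\tilde A \tilde v(z)$. Since both the differential equation and the Mandelstam radiation principle are translation invariant, if $v \in S^+(0,\infty)$ then $v(\cdot + h) \in S^+(0,\infty)$ for every $h \geq 0$, and consequently $\tilde v(\xi + h) \in M$. Taking the limit $h \to 0^+$ and using closedness of $M$, we obtain $\tilde v'(\xi) \in M$, so $\tilde A \tilde v(\xi) = -i\tilde v'(\xi) \in M$; by linearity and density this gives $\tilde A M \subset M$.

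The heart of the proof is showing that $M$ coincides with a graph subspace $\{(K\hat x, \hat x) : \hat x \in H^l\}$ for a bounded operator $K: H^l \to H^l$. Let $Q$ and $P$ denote the projections of $H^n$ onto its first and last $l$ coordinates respectively. By Proposition 5, $\|Q\tilde v\| \leq M_0 \|P\tilde v\|$ on all of $S^+(0,\infty)$ and hence (by passing to closures) on $M$. This immediately yields that $P|_M$ is injective with closed range and that the rule $K(P\tilde v) := Q\tilde v$ defines a bounded linear operator on $P(M)$. To invoke Theorem 1 I would then need $P(M) = H^l$. The density assumption on the image of $J_\xi = Q|_{S^+(0,\infty)}$ in $H^l$ should be combined with the $W_{2q}$-non-positivity of $M$ (Proposition 4) and the $\tilde A$-invariance of $M$: informally, any obstruction to $P(M) = H^l$ would allow $M$ to be strictly enlarged inside the family of $W_{2q}$-non-positive $\tilde A$-invariant subspaces, contradicting the fact that $Q|_M$ already exhausts $H^l$ densely. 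This density-to-surjectivity passage is the main obstacle of the proof and the technical core of the argument.

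Once the graph structure of $M$ is established, Theorem 1 applies directly with $k=l$, producing a factorization $A(\lambda) = L(\lambda)K(\lambda)$ with $K(\lambda) = \lambda^l I - \lambda^{l-1}K_{l-1} - \ldots - K_0$, whose coefficients are read off from $K$, and with $\sigma(K) = \sigma(\tilde A|_M)$. For the moreover part, suppose the whole spectrum of $A(\lambda)$ is discrete. Every Keldysh derived chain $\tilde w_k^h$ built from an element of $E^+$ is the value at $\xi$ of an elementary solution $v_k^h(z)$ of (12) lying in $S^+(0,\infty)$, so $\mathcal L^+ \subset M$. The theorems on completeness and linear independence of Section 7 (applied to $\mathcal E^+$ in $H^l$), together with the graph correspondence between $M$ and $H^l$ via $P|_M$, force $\overline{\mathcal L^+} = M$. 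Consequently, the EAV of $\tilde A|_M$ are precisely the Keldysh derived chains from $E^+$, their eigenvalues lie in the closed upper half-plane, and the standard correspondence from Langer's theorem identifies the EAV of $K(\lambda)$ with $E^+$.
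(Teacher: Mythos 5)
Your setup — defining $M=S^+$, proving $\tilde A$-invariance, and using Proposition~5 to get a bounded operator $K$ on $P(M)$ — matches the paper. The real gap is precisely where you flag it: the passage from ``$\Im J_\xi=Q(M)$ dense in $H^l$'' to ``$P(M)=H^l$.'' Your heuristic (``any obstruction would let $M$ be enlarged among $W_{2q}$-non-positive invariant subspaces'') is not a valid argument and is not what the paper does; non-positivity plays no role here. The correct bridge is purely algebraic: the companion matrix $\tilde A$ acts on $\{v_1,\dots,v_n\}$ by a downward shift, $\tilde A\{v_1,\dots,v_n\}=\{*,v_1,\dots,v_{n-1}\}$, so $\tilde A^l$ moves the first $l$ components into the last $l$ positions, i.e.\ $P\tilde A^l\tilde v=Q\tilde v$. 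Since $M$ is $\tilde A$-invariant, $Q(M)=P(\tilde A^lM)\subset P(M)$. Proposition~5's estimate $\|Q\tilde v\|\le M_0\|P\tilde v\|$ makes $P|_M$ bounded below, so $P(M)$ is closed, and hence $P(M)\supset\overline{Q(M)}=\overline{\Im J_\xi}=H^l$. That one observation ($P\tilde A^l=Q$) is the step you needed and did not supply.

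A secondary issue concerns the ``moreover'' clause: you invoke the completeness and linear-independence theorems of Section~7, but those are proved only under the hypothesis $\dim H<\infty$ and are not available in the infinite-dimensional setting of Theorem~10.7. The identification of the EAV of $K(\lambda)$ with $E^+$ should instead come from the definition of $S^+(0,\infty)$: when the whole spectrum is discrete, the elements of $S^+$ are exactly (closures of spans of) values of superpositions of elementary solutions built from $E^+$, so $\sigma(\tilde A|_{S^+})$ sits in $\overline{\mathbb{C}^+}$ and its Jordan structure reproduces $E^+$; Theorem~10.1 then transfers this to $K(\lambda)$.
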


\begin{pf}
If $v(z) \in S^+(0, \infty)$ then the function $\tilde v(z)$ defined by (14) satisfies the equation
\begin{equation}
\tilde A \tilde v^\prime - i\tilde v = 0,
\end{equation}
where $\tilde A$ is the linearization (2). Let $S^+$ be a minimal subspace in $H^n$ containing all vectors $\tilde v(\xi)$ for fixed $\xi \ge 0$ such that $v(z) \in S^+(0, \infty)$. Obviously, $S^+$ is invariant with respect to $\tilde A$. According to Proposition 5 we have representation
$$
S^+ =
\begin{Bmatrix}
\begin{bmatrix}
K\hat x \\
\hat x
\end{bmatrix}, \quad \hat x \in \mathcal{L} \subset H^l
\end{Bmatrix}
$$
where $K: H^l \to H^l$ is a bounded operator, and $\mathcal{L}$ is a linear manifold in $H^l$ which has to be subspace, since $S^+$ is subspace. Notice, that
$$
A^l\tilde v(\xi) = \{v_1(\xi),\ldots, v_l(\xi), v(\xi), -iv^\prime(\xi),\ldots,(-i)^{l-1}v^{(l-1)}(\xi)\} \in S^+
$$
if $\tilde v(\xi) \in S^+$. Therefore $\mathcal{L} \supset \Im J_\xi$. By our assumption $\overline{\Im J_\xi} = H^l$, hence $\mathcal{L} = H^l$ and $S^+$ is the graph subspace of $K$. Now Theorem 1 implies the factorization (4) with $k=l$ and $\sigma (K) - \sigma (\tilde A |_{S^+})$. This yields the assertion of Theorem 7.	
\end{pf}

\begin{note}
The condition $\overline{\Im J_\xi} = H^l$ can be established for some classes of operator pencils. Such results are proved in ch. 8 of the paper Shkalikov [6]. In particular, $\overline{\Im J_\xi} = H^l$ for selfadjoint operator pencils of Keldysh type (see Theorem 8.9 of Shkalikov [6]).
\end{note}

\textbf{Comments.} $\quad$ The problem on factorization of self-adjoint polynomials has a long history and takes the origin from the paper of Krein and Langer [1]. Important factorization theorem was proved by Rosenblum and Rovnyak [1]. The paper Langer [3] became a millstone in factorization problems for pencils of degree $n > 2$. Kostyuchenko and Ozarov [1, 2] classified the real spectrum of the right divisor for selfadjoint quadratic pencils. Gohberg, Lancaster and Rodman [1-3] established theorems on factorization of selfadjoint matrix polynomial with classification of real spectrum. Nontrivial factorization theorems were proved by Markus and Matcaev (see details and comments in the book of Markus [1]). Interesting results on factorization of matrix and operator functions are contained in the books Bart, Gohberg and Kaashoek [1] and Litvinchuck and Spitkovskii [1].

Factorization theorems of this lecture for dissipative operator pencils seem to be new and are based on the paper Shkalikov [3].

\newpage
\section{Pontrjagin spaces. The proof of Azizov-Iohvidov-Langer theorem}

A classical Hilbert theorem asserts that any self-adjoint compact operator in Hilbert space $H$ has a complete orthonormal system of eigenvectors in $H$. Does this result admit a generalization on Pontrjagin space? This problem is the main subject of this  section.

There are a number of books on operator theory in Pontjagin and in Krein spaces. We point out the books of Bognar [B], Ando [A], Iohvidov, Krein and Langer [IKL], Azizov and Iohvidov [AI] and seveys of Iohvidov and Krein [IK] and Langer [L]. Nevertheless the proof of the subsequent theorem on Riesz basis property of eigenfunctions of self-adjoint operator in Pontrjagin space (which is due to Azizov and Iohvidov) readers can find in the only book [AI] (Theorem 4.2.12 of [AI]). Nowever, it is not easy to restore the proof from the text since if uses a of foregoing material. In our lectures we will try to elucidate the situation. We notice that such an attempt has been undertaken already in the paper of Binding and Seddighi [BS] although the latter paper dealt only with completeness, the problem on minimality and basisness had not been considered there. We hope also that this material will help readers in understanding some important concepts in the theory of operators in spaces with in
 definite metric.

\subsection{Pontrjagin theorem and the formulation of Azizov-Iohvidov-Langer theorem.}

Let $P_\varkappa = (H,G)$ be Pontrjagin space, i.e. $H$ is also supplied with the scalar product $(x,y)$ but $H$ is also supplied by the indefinite metric $[x,y] = (Gx, y)$ where $G$ is self-adjoint bounded and invertible operator having $\varkappa$ negative eigenvalues counting with multiplicities. According to spectral theorem for self-adjoint operators we can represent $G = G_+-G_-$ where $G_+G_- = G_-G_+ = 0$, $G_+ \geq 0$, $G_- \geq 0$ and our assumptions on $G$ are equivalent to the following: $|G| :=G_+ + G_- \gg 0$, $\operatorname{rank} G_- = \varkappa < \infty$.

An operator $A$ is said to be self-adgoint in $P_\varkappa$ if $[Ax,y] = [x,Ay]$ for all $x,y \in H$ and this is equivalent that $GA$ is self-adjoint in $H$. We present without proof the following fundamental result.

\begin{theorem}[Pontrjagin {[P]}]
A self-adjoint operator $A$ in $P_\varkappa$ has a maximal nonnegative and maximal non-positive subspaces $\L^+$ and $\L^-$ respectively which are invariant under $A$. Moreover, $\dim \L^- = \varkappa$\footnote{Each maximal non-positive subspace in $P_\varkappa$ has the dimensional $\varkappa$. This fact is trivial.}.
\end{theorem}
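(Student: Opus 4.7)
The plan is to reduce the problem to a fixed-point argument on a compact convex set of contractions, following Pontrjagin's original scheme. First I would fix the canonical decomposition $H = H_+ \oplus H_-$ coming from $G = G_+ - G_-$, pass to the equivalent Hilbert norm in which $[x,y]$ becomes $(x_+, y_+) - (x_-, y_-)$, and record that $\dim H_- = \varkappa < \infty$. A standard linear-algebra check shows that maximal non-positive subspaces of $(H, [\cdot,\cdot])$ are precisely the graphs $\mathcal{L}_K := \{Kh + h : h \in H_-\}$ of linear contractions $K \colon H_- \to H_+$ with $\|K\| \le 1$; the set $\mathcal{K}$ of such $K$ thus parametrizes them bijectively. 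Since $H_-$ is $\varkappa$-dimensional, a $K$ is determined by the $\varkappa$-tuple of its values on a basis, each lying in the closed unit ball of $H_+$; by Banach-Alaoglu, $\mathcal{K}$ is convex and compact in the resulting product weak topology.

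Next I would dispense with possible unboundedness of $A$ by the Cayley transform $V = (A - \mu I)(A - \bar\mu I)^{-1}$ for some $\mu \in \mathbb{C} \setminus \mathbb{R}$ in the resolvent set of $A$. A direct computation using $A^*G = GA$ shows $V^*GV = G$, so $V$ is a bounded, everywhere-defined, $G$-isometric bijection, and its closed invariant subspaces coincide with those of $A$. Because $V$ preserves $[\cdot,\cdot]$ and is bijective, for any $\mathcal{L}_K$ the image $V\mathcal{L}_K$ is again a $\varkappa$-dimensional non-positive subspace, hence of the form $\mathcal{L}_{F(K)}$ for a unique $F(K) \in \mathcal{K}$. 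Writing $V$ in block form relative to $H_+ \oplus H_-$, the induced map is the linear-fractional $F(K) = (V_{++}K + V_{+-})(V_{-+}K + V_{--})^{-1}$.

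The main obstacle, and the genuine content of the proof, is to verify that the ``denominator'' $V_{-+}K + V_{--}$ is invertible on the finite-dimensional space $H_-$ for every $K \in \mathcal{K}$, and that the resulting $F$ is continuous on $\mathcal{K}$ in the weak topology. Invertibility amounts to showing that the projection $V\mathcal{L}_K \to H_-$ is surjective, which in turn follows from maximality of the non-positive subspace $V\mathcal{L}_K$ combined with the Cauchy-Schwarz inequality available on the (necessarily positive semi-definite) $[\cdot,\cdot]$-complement of $V\mathcal{L}_K$; here finite codimension of the negative part, i.e.\ $\varkappa < \infty$, is essential. Continuity of $F$ is then automatic from finite-dimensionality of $H_-$, since matrix inversion is jointly continuous on its domain of definition. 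With these two points in hand, the Schauder-Tychonoff theorem applied to the continuous self-map $F$ of the convex compact set $\mathcal{K}$ produces a fixed point $K_0$; setting $\mathcal{L}^- := \mathcal{L}_{K_0}$ yields an $A$-invariant maximal non-positive subspace.

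Finally, for $\mathcal{L}^+$ I would take the indefinite orthogonal complement $\mathcal{L}^+ := (\mathcal{L}^-)^{[\perp]}$. A standard Pontrjagin-space computation (using that $\varkappa$ is the maximal dimension of a non-positive subspace together with the rank-$\varkappa$ structure of $G_-$) shows $\mathcal{L}^+$ is maximal nonnegative; its $A$-invariance follows from $A$-invariance of $\mathcal{L}^-$ together with $A^*G = GA$. The equality $\dim \mathcal{L}^- = \varkappa$ is automatic since every maximal non-positive subspace in $P_\varkappa$ has exactly this dimension, as noted in the footnote to the statement.
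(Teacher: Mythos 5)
The paper does not actually prove this theorem: it introduces it with ``We present without proof the following fundamental result'' and then refers the reader to the books of Bognar, Azizov--Iohvidov, etc., so there is no paper argument to measure your proposal against. What you have written is a faithful reconstruction of the classical Pontrjagin--Krein fixed-point proof, and it is essentially correct. The main ingredients are all in the right place: the angular-operator parametrization of maximal non-positive subspaces by contractions $K\colon H_-\to H_+$ with $\dim H_-=\varkappa<\infty$, compactness and convexity of that parameter set in the weak topology, passage to the $G$-unitary Cayley transform $V$ (the computation $V^{*}GV=G$ uses $A^{*}G=GA$ in exactly the way you indicate), the induced linear-fractional self-map $F(K)=(V_{++}K+V_{+-})(V_{-+}K+V_{--})^{-1}$, and Schauder--Tychonoff. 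You also correctly locate the genuine technical content, namely invertibility of $V_{-+}K+V_{--}$ (which follows because $V\mathcal{L}_K$ is again maximal non-positive and hence a graph over $H_-$) and weak continuity of $F$ (harmless because $H_-$ is finite dimensional). Two small points that you state but do not spell out, and which a careful write-up should: first, why a $V$-fixed $\mathcal{L}_{K_0}$ is actually $A$-invariant --- one uses $V-I=(\bar\mu-\mu)(A-\bar\mu)^{-1}$ together with finite-dimensionality of $\mathcal{L}_{K_0}$ to get $(A-\bar\mu)\mathcal{L}_{K_0}=\mathcal{L}_{K_0}$ and hence $A\mathcal{L}_{K_0}\subseteq\mathcal{L}_{K_0}$; second, why $(\mathcal{L}^-)^{[\perp]}$ is nonnegative --- if $z\in(\mathcal{L}^-)^{[\perp]}$ had $[z,z]<0$ then $\mathcal{L}^-+\mathbb{C}z$ would be a strictly larger non-positive subspace, contradicting maximality. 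With those two remarks filled in, your argument is complete and is precisely the approach the cited references take.
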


The proof of this theorem is not trivial and can be found in the books mentioned above. Moreover, this theorem is also valid for bounded dissipative operators in $P_\varkappa$ and even for maximal dissipative operators (unbounded). See the book of Azizov and Iohvidov [AI]. We should say that the most difficult results of the theory of operators in space with indefinite metric are connected with this Pontrjagin theorem.

\begin{definition}
Let $\L$ be a subspace in $P_\varkappa$. The subspace $\L^{[\bot]} = \{ y \;|\: [y,x] = 0 \quad \forall x\in \L \}$ is said to be orthogonal to $\L$ in $P_\varkappa$ (or $G$ - orthogonal in $H$). The subspace $\L^0 = \L \cap \L^{[\bot]}$ is called isotropic subspace of $\L$.
\end{definition}

It follows from Definition 1.1 that for any subspace $\L$ we have $(\L^{[\bot]})^{[\bot]} = \L$ (saying $\L$ to be a subspace we always suppose that $\L$ is closed). Hence, the isotropic subspaces of $\L$ and $\L^{[\bot]}$ coincide.

\begin{definition}
A subspace $\L$ in $P_\varkappa$ is called non-degenerated if its isotropic subspace $\L^0 = \{0\}$. Otherwise $\L$ is called degenerated.
\end{definition}

Let $c \in \mathbb{C}$ be an eigenvalue of an operator $A$. We denote by $\L_c$ the subspace consisting of all eigen and associated vectors of $A$ corresponding to $c$. We call $\L_c$ the root subspace corresponding to $c$.

\begin{theorem}
Let $A$ be a self-adjoint compact operator in $P_\varkappa$. Then there exists a Riesz basis composed of eigen and associated vectors of $A$ if and only if the root subspace $\L_0$ corresponding to the point 0 is non-degenerated. Moreover, if $\L_0$ is non-degenerated then such a Riesz basis can be chosen almost $G$-orthogonal (i.e. all but finitely many vectors of this basis are mutually $G$-orthogonal).
\end{theorem}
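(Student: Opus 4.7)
My approach is to combine the $G$-orthogonality of root subspaces of $A$ with Pontrjagin's Theorem~10.1 to reduce the problem to the classical Hilbert spectral theorem for compact self-adjoint operators.

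\emph{Preliminaries.} Compactness of $A$ makes every non-zero $\lambda\in\sigma(A)$ an isolated eigenvalue with a finite-dimensional root subspace $\mathcal{L}_\lambda$, and these eigenvalues accumulate only at $0$. The $G$-self-adjointness $[Ax,y]=[x,Ay]$ yields $\lambda[x,y]=\bar\mu[x,y]$ on eigenvectors, and an induction on chain length extends this to $\mathcal{L}_\lambda\,[\bot]\,\mathcal{L}_\mu$ whenever $\lambda\neq\bar\mu$. In particular $\mathcal{L}_0\,[\bot]\,\mathcal{L}_\lambda$ for every non-zero $\lambda$, and $\mathcal{L}_\lambda$ is itself $G$-neutral when $\lambda$ is non-real.

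\emph{Necessity.} A Riesz basis of eigen and associated vectors makes $\mathcal{L}_0+\sum_{\lambda\neq 0}\mathcal{L}_\lambda$ dense in $H$, so any isotropic $x_0\in\mathcal{L}_0$ is $[\cdot,\cdot]$-orthogonal to a dense set; hence $(Gx_0,y)=0$ for every $y\in H$, and invertibility of $G$ forces $x_0=0$.

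\emph{Sufficiency.} Assume $\mathcal{L}_0$ is non-degenerated. First I would show that the closed span $\mathcal{S}$ of all root subspaces coincides with $H$: $\mathcal{S}$ is $A$-invariant and so is $\mathcal{S}^{[\bot]}$, and any eigenvector of $A|_{\mathcal{S}^{[\bot]}}$ would lie in $\mathcal{S}\cap\mathcal{S}^{[\bot]}$. Using the $\varkappa$-bound on the negative index of $G$ together with the non-degeneracy of $\mathcal{L}_0$ one shows that $\mathcal{S}$ is itself non-degenerate, so $\mathcal{S}^{[\bot]}$ inherits a Pontrjagin (possibly Hilbert) structure of negative index at most $\varkappa$. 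If it were non-zero, Theorem~10.1 (or the classical Hilbert spectral theorem in the Hilbert case) would produce an eigenvector of $A$ in $\mathcal{S}^{[\bot]}$, contradicting orthogonality; thus $\mathcal{S}^{[\bot]}=\{0\}$ and $\mathcal{S}=H$. Next I would gather into a finite-dimensional $A$-invariant subspace $N$ the finitely many indefinite pieces of the spectrum: the $G$-neutral pairs $\mathcal{L}_\lambda+\mathcal{L}_{\bar\lambda}$ for non-real $\lambda$, together with any finitely many real-$\lambda$ root subspaces on which $[\cdot,\cdot]$ fails to be positive definite. Setting $M:=N^{[\bot]}$, the pair $(N,M)$ gives a $G$-orthogonal direct sum $H=N+M$, $M$ is $A$-invariant, and $[\cdot,\cdot]|_M$ is positive definite and topologically equivalent to $(\cdot,\cdot)|_M$. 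Since $A|_M$ is compact and self-adjoint for this equivalent inner product, the classical Hilbert spectral theorem produces a $[\cdot,\cdot]$-orthonormal Riesz basis of $M$ of eigenvectors. Adjoining a Jordan basis of $A|_N$ yields the required Riesz basis of $H$, which is mutually $G$-orthogonal outside the finite-dimensional $N$.

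\emph{Main obstacle.} The hard part will be the finite-dimensional bookkeeping in the sufficiency step: verifying that non-degeneracy of $\mathcal{L}_0$ alone confines the indefinite behaviour of $[\cdot,\cdot]$ on root subspaces to a finite-dimensional $A$-invariant subspace $N$, and that $N^{[\bot]}$ is then genuinely non-degenerate rather than merely $[\cdot,\cdot]$-non-negative. This rests on the $\varkappa$-bound for the negative index of $G$ (each independent source of indefiniteness contributes at least one negative square) and on the angular-operator representation of maximal non-positive invariant subspaces available in $P_\varkappa$.
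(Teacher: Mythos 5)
Your plan follows essentially the same route as the paper: necessity via density of the root subspaces together with isotropy, completeness (i.e.\ $\mathcal{S}=H$) via non-degeneracy of $\mathcal{S}$ plus Pontrjagin's theorem applied to $\mathcal{S}^{[\bot]}$, and basisness via splitting off a finite-dimensional $A$-invariant non-degenerate piece whose $G$-orthogonal complement is uniformly positive, where the classical Hilbert spectral theorem applies. (The paper inserts a separate step proving minimality via Gram operators before proving completeness, but folding minimality into the Riesz-basis construction, as you do, is fine.)

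One detail in the construction of $N$ needs tightening. You propose to put into $N$ ``any finitely many real-$\lambda$ root subspaces on which $[\cdot,\cdot]$ fails to be positive definite.'' For a compact $A$, every root subspace with $\lambda\neq0$ is automatically finite-dimensional, but $\mathcal{L}_0\supset\Ker A$ need not be; if $\mathcal{L}_0$ is infinite-dimensional and indefinite, you cannot absorb all of it into a finite-dimensional $N$. The paper circumvents this by first writing $\mathcal{L}_0=\mathcal{L}_0'\,[+]\,\mathcal{L}_0''$ with $\mathcal{L}_0'\subset\Ker A$ and $\mathcal{L}_0''$ finite-dimensional $A$-invariant (carrying all the associated vectors), and then splitting $\mathcal{L}_0'$ into a $G$-positive part $(\mathcal{L}_0')^+$, which stays in $M$, and a finite-dimensional $G$-negative part, which joins $N$. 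With this refinement your $N$ is indeed a finite-dimensional non-degenerate $A$-invariant subspace carrying all $\varkappa$ negative squares, and then $M=N^{[\bot]}$ is uniformly positive by Theorem~3.2 of the section, exactly as your plan requires.
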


To prove this theorem is a basis goal of our lectures. Here we recall the definition of a Riesz basis, the concepts of completeness and minimality which will be used in the sequel.

\begin{definition}
A system $\{ e_k \}$ of Hilbert space $H$ is said to be a Riesz basis if there exists such bounded and invertible operator $T$ in $H$ that $\{ Te_k \}$ is the complete orthonormal system.
\end{definition}

\begin{definition}
A system $\{ e_k \}$ in $H$ is said to be complete if any vector $x \in H$ can be approximated with arbitrary accuracy by a finite linear combination of elements from $\{ e_k \}$.
\end{definition}

\begin{definition}
A system $\{ e_k \}$ in $H$ is said to be minimal if there exists a system $\{ f_k \}$ such that $(e_k, f_j) = \delta_{kj}$ where $\delta_{kj}$ is the Kronecker symbol.
\end{definition}

\begin{problem}
A system $\{ e_k \}_1^\infty$ is complete in $H$ if and only if the equalities
$(f, e_k) = 0, \quad k = 1,2,\ldots,$
imply $f=0$.
\end{problem}

\begin{problem}
A system $\{ e_k \}$ is minimal in $H$ if and only if any its vector can not be approximated with any accuracy by a linear combination of the other elements.
\end{problem}

\begin{problem}
A system $\{ e_k \}$ is a Riesz basis in $H$ if and only if there exists a new scalar product $(\cdot, \cdot)_1$ in $H$ such that the norms $\| \cdot \|_1$ and $\|\cdot\|$ are equivalent and the system $\{ e_k \}$ is complete and orthonormal in $(\cdot, \cdot)_1$.
\end{problem}

\subsection{Example.}

Let us consider one concrete example in order to see what is happening when the root subspace $\L_0$ is degenerated. This example will help to understand the situation in general.

In the space $H = \ell_2$ we consider the operators

$$
A=\left(\begin{array}{cccccc}{0} & {0} & {0} & {0} & {0} & {. .} \\
{1} & {0} & {\alpha_{3}} & {\alpha_{4}} & {\alpha_{5}} & {. .} \\
{\bar{\alpha}_{3}} & {0} & {\beta_{3}} & {0} & {0} & {. .} \\
{\bar{\alpha}_{4}} & {0} & {0} & {\beta_{4}} & {0} & {.} \\
{\bar{\alpha}_{5}} & {0} & {0} & {0} & {\beta_{5}} & {.} \\
{.} & {.} & {.} & {.} & {.} & {.}\end{array}\right)
$$

$$G =
\begin{pmatrix}
\begin{matrix}
0 & 1 \\
1 & 0
\end{matrix}
& \rvline & \bigzero \\
\hline
\bigzero & \rvline &
\begin{matrix}
1 & 0 & 0 & 0 & \ldots \\
0 & 1 & 0 & 0 & \ldots \\
0 & 0 & 1 & 0 & \ldots \\
0 & 0 & 0 & 1 & \ldots \\
\cdot & \cdot & \cdot & \cdot & \ldots
\end{matrix}
\end{pmatrix}
$$
where $\{ \alpha_k \}_3^\infty$, $\{ \beta_k \}_3^\infty$ satisfy the following conditions
\begin{equation}\label{2.1}
\sum |\alpha_k|^2 < \infty, \quad \beta_k > \beta_{k+1} \rightarrow 0, \quad if \;\; k \rightarrow \infty
\end{equation}

\begin{problem}
Show that $A$ is compact operator in $\ell_2$ if the condition ($\ref{2.1}$) holds. Moreover, $GA = (GA)^*$, i.e. $A$ is self-adjoint operator in $P_1 = (\ell_2, G)$.
\end{problem}

Let us find all eigen and associated vectors of $A$. Writing the equation $Ax = \la x$ for $x = (x_1, x_2, \ldots) \in \ell_2$ we obtain

\begin{equation}\label{2.2}
\{ 0, x_1 + \sum_{k=3}^\infty \alpha_k x_k, \bar \alpha_3 x_1 + \beta_3 x_3, \bar \alpha_4 x_1 + \beta_4 x_4, \ldots \} = \la\{ x_1, x_2, x_3, x_4, \ldots \}.
\end{equation}
If $\la \neq 0$ we have $x_1 = 0, \quad \la = \beta_k, \quad x_j = 0$ for $j \neq k$ (since $\beta_j \neq \beta_k$ for $j \neq k$) and $x_2 = \alpha_k \beta_k^{-1} x_k$. This means that nonzero eigenvalues of $A$ coincide with $\{ \beta_k \}$ and the corresponding eigenvalues have the representation
$$y^k = \{ 0, \frac{\alpha_k}{\beta_k}, 0, \ldots, 0, 1, 0, \ldots \}, k=3,4,\ldots,$$
where 1 occupies the k-th position.

Now, suppose $\la$ to be equal zero in (\ref{2.2}). Certainly the vector $y^2 = \{ 0,1,0,0, \ldots \}$ is the eigenvector corresponding to the eigenvalue 0.

\begin{proposition}
The root subspace $\L_0$ of $A$ consists of the only vector $y^2$ (up to multiplication by constant) if and only if
\begin{equation} \label{2.3}
\{ \alpha_k \beta_k^{-1}\} \not\in \ell_2
\end{equation}
\end{proposition}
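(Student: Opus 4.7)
The plan is to reduce the description of $\L_0$ to a direct inspection of the matrix equations. Since $\operatorname{span}\{y^2\} \subseteq \ker A \subseteq \L_0$, the subspace $\L_0$ strictly contains $\operatorname{span}\{y^2\}$ if and only if either $\ker A$ itself is larger than $\operatorname{span}\{y^2\}$, or there exists an associated vector $z \in \ell_2$ with $Az = y^2$. I would prove the equivalence by analyzing these two possibilities and showing that both are governed by the condition $\{\alpha_k/\beta_k\} \in \ell_2$.

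First I would read off the system $Az = 0$ componentwise. Rows $k \geq 3$ give $z_k = -\bar\alpha_k z_1/\beta_k$; the first row is trivial; the second gives $z_1 + \sum_{k\geq 3}\alpha_k z_k = 0$. If $z_1 = 0$ we get $z = z_2 y^2$. If $z_1 \ne 0$, the tail $\{z_k\}_{k\geq 3}$ lies in $\ell_2$ iff $\{\alpha_k/\beta_k\} \in \ell_2$, and the second equation becomes $z_1\bigl(1 - \sum |\alpha_k|^2/\beta_k\bigr) = 0$; note that the latter series converges whenever $\{\alpha_k/\beta_k\} \in \ell_2$, since $\beta_k \to 0$ gives $\beta_k \leq 1$ eventually and hence $|\alpha_k|^2/\beta_k \leq |\alpha_k|^2/\beta_k^2$. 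So an extra eigenvector exists only when $\{\alpha_k/\beta_k\} \in \ell_2$ \emph{and} $\sum |\alpha_k|^2/\beta_k = 1$.

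Next I would search for associated vectors by solving $Az = y^2$. The same tail equations again force $z_k = -\bar\alpha_k z_1/\beta_k$, while the second row now reads $z_1\bigl(1 - \sum |\alpha_k|^2/\beta_k\bigr) = 1$. Under $\{\alpha_k/\beta_k\} \notin \ell_2$, membership $z\in\ell_2$ forces $z_1 = 0$, which turns this into $0 = 1$, a contradiction; so no associated vector exists. Under $\{\alpha_k/\beta_k\} \in \ell_2$ with $\sum |\alpha_k|^2/\beta_k \ne 1$, we can take $z_1 = \bigl(1 - \sum |\alpha_k|^2/\beta_k\bigr)^{-1}$, giving a genuine associated vector.

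Combining the two analyses: if $\{\alpha_k/\beta_k\} \notin \ell_2$, then $\ker A = \operatorname{span}\{y^2\}$ and there is no associated vector, and since the equation $Az = cy^2$ admits a nontrivial $\ell_2$ solution only for $c = 0$, one deduces inductively $\ker A^n = \ker A$ for every $n$, hence $\L_0 = \operatorname{span}\{y^2\}$. Conversely, if $\{\alpha_k/\beta_k\} \in \ell_2$, at least one extension is available—an extra eigenvector in the critical case $\sum |\alpha_k|^2/\beta_k = 1$, an associated vector otherwise—so $\L_0 \supsetneq \operatorname{span}\{y^2\}$. The main delicate point is the bookkeeping of these two subcases of $\{\alpha_k/\beta_k\} \in \ell_2$, which are disjoint and complementary but yield the enlargement of $\L_0$ by different mechanisms; once the matrix equations are written out, the arithmetic is routine.
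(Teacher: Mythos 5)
Your proposal is correct and follows essentially the same route as the paper: read off the matrix equations $Az=0$ and $Az=y^2$, observe that the tail forces $z_k=-\bar\alpha_k z_1/\beta_k$, and split according to whether $\gamma=1-\sum|\alpha_k|^2/\beta_k$ vanishes. The one place you are slightly more careful than the text is the remark that $Az=cy^2$ has a nontrivial $\ell_2$-solution only for $c=0$, which is needed to rule out longer Jordan chains and conclude $\L_0=\ker A$; the paper takes this for granted.
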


\begin{pf}
Let the condition ($\ref{2.3}$) holds. If (\ref{2.2}) is fulfilled with $\la_0$ then $x_k = -\bar \alpha_k \beta_k^{-1} x_1$. Therefore, in the case $x_1 = 0$ (\ref{2.2}) has the only solution $x=0$ and in the case $x_1 \neq 0$ the solution $x \not\in \ell_2$. This means that $\L_0$ does not contain another eigenvectors.

A vector $x$ is an associated with $y^2$ if $Ax = 0 \cdot x + y^2$, i.e.
\begin{equation}\label{2.4}
\{ 0, x_1 + \sum_{k=3}^\infty \alpha_k x_k, \bar \alpha_3 x_1 + \beta_3 x_3, \bar \alpha_4 x_1 + \beta_4 x_4, \ldots \} = \{ 0, 1,0,0, \ldots \}
\end{equation}
It is easy to see agein that if (\ref{2.4}) has a solution $x$ then $x \not\in \ell_2$. Hence, (\ref{2.3}) implies $\dim \L_0 = 1$.

On the other hand, suppose $\{ \alpha_k \beta_k^{-1}\} \in \ell_2 $. Then $y^1 = \{ -1, 0, \frac{\bar \alpha_3}{\beta_3}, \frac{\bar \alpha_4}{\beta_4}, \ldots \}$ is the eigenvector of $A$ if the condition
$$\gamma:= 1- \sum_{k=3}^\infty \frac{|\alpha_k|^2}{\beta_k} = 0$$
holds and $y^1 = -\gamma^{-1} \{ -1, 0, \frac{\bar \alpha_3}{\beta_3}, \frac{\bar \alpha_4}{\beta_4}, \ldots \}$ is the associated with $y^2$ if $\gamma \neq = 0$.
\end{pf}

\begin{proposition}
A system of root vectors of $A$ is a Riesz basis in $\ell_2$ if and only if $\{ \alpha_k \beta_k^{-1}\} \not\in \ell_2 $ then the system of root vectors is not complete in $\ell_2$ and not minimal.
\end{proposition}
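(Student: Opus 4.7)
The plan is to treat the two cases $\{\alpha_k\beta_k^{-1}\}\in\ell_2$ and $\{\alpha_k\beta_k^{-1}\}\notin\ell_2$ separately, using the explicit description of the root vectors obtained in Proposition~2.1. In both cases the root system contains $y^2=e_2$ together with the simple eigenvectors $y^k=(\alpha_k/\beta_k)e_2+e_k$ associated with the nonzero eigenvalues $\beta_k$, $k\geq 3$; only when $\{\alpha_k/\beta_k\}\in\ell_2$ does it additionally contain the vector $y^1$ proportional to $\{-1,0,\bar\alpha_3/\beta_3,\bar\alpha_4/\beta_4,\ldots\}$, which enters either as an eigenvector ($\gamma=0$) or as the vector associated with $y^2$ ($\gamma\neq 0$).

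For $\{\alpha_k/\beta_k\}\notin\ell_2$ I would establish non-completeness and non-minimality by direct inspection. Non-completeness is immediate: the first coordinate of every $y^k$ for $k\geq 2$ vanishes, so $e_1$ is $\ell_2$-orthogonal to the entire root system, contradicting the standard orthogonal characterization of completeness. Non-minimality reduces to showing $y^2\in\overline{\Span\{y^k:k\geq 3\}}$, and for this I would invoke the dual test: any $x\in\ell_2$ satisfying $(x,y^k)=\overline{\alpha_k/\beta_k}\,x_2+x_k=0$ for every $k\geq 3$ must have $x_k=-\overline{\alpha_k/\beta_k}\,x_2$, and since $\{x_k\}\in\ell_2$ while $\{\alpha_k/\beta_k\}\notin\ell_2$, this forces $x_2=0$ and hence $(x,y^2)=0$. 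Thus $y^2$ lies in the closure of the span of the remaining root vectors, so the system is not minimal.

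For $\{\alpha_k/\beta_k\}\in\ell_2$ the plan is to exhibit a bounded invertible operator $T$ on $\ell_2$ sending the standard orthonormal basis $\{e_k\}$ to an enumeration of the root system, whence the Riesz basis property follows directly from the definition. I would set $Te_1=cy^1$ for a suitable scalar $c$ and $Te_k=y^k$ for $k\geq 2$. The matrix of $T$ in the basis $\{e_k\}$ differs from a diagonal unitary $D$ only by entries in the first column (the tail of $cy^1$) and the second row (the $e_2$-components of $y^k$ for $k\geq 3$); both collections of entries are square-summable by hypothesis, so $T-D$ is Hilbert--Schmidt, in particular compact, and $T$ is Fredholm of index zero. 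Invertibility then reduces to injectivity, which I would read off coordinatewise from $\sum c_ky^k=0$: coordinate one yields $c_1=0$ (only $y^1$ contributes there), each coordinate $j\geq 3$ then yields $c_j=0$, and finally coordinate two yields $c_2=0$.

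The principal technical care is to treat the subcases $\gamma=0$ and $\gamma\neq 0$ uniformly. The vector $y^1$ has the same coordinate pattern up to an overall scalar in both situations, so with a common choice of normalization the block structure of $T-D$ and the coordinatewise injectivity argument apply without change; in particular, one may work with the unnormalized $y^1=\{-1,0,\bar\alpha_3/\beta_3,\ldots\}$ throughout and absorb the scalar $-\gamma^{-1}$ (when $\gamma\neq 0$) into the definition of $T$. This bookkeeping is the only nontrivial obstacle, and once it is in place the Fredholm-plus-injectivity package delivers the Riesz basis conclusion immediately.
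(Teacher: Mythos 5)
Your proof is correct. For $\{\alpha_k/\beta_k\}\notin\ell_2$ your argument is the paper's: non-completeness from $e_1\perp\{y^k\}_{k\geq 2}$, and non-minimality from the dual test showing that any $x$ orthogonal to $\{y^k\}_{k\geq 3}$ must have $x_2=0$ and hence be orthogonal to $y^2$ as well (your phrasing via the dual characterization of ``$y^2\in\overline{\Span\{y^k\}_{k\geq 3}}$'' is a slightly cleaner rendering of the paper's computation, which tacitly works inside $\overline{\Span\{y^k\}_{k\geq 2}}$). For the Riesz-basis direction the paper only sketches the claim---``$\{y^k\}_{k\geq 2}$ is a Riesz basis in $\ell_2\ominus\mathbb{C}e_1$ (prove this!), hence $\{y^k\}_{k\geq 1}$ is a Riesz basis in $\ell_2$''---and your Fredholm argument is a clean one-pass implementation of that hint: $T-D$ is Hilbert--Schmidt since its nonzero entries live in the first column and second row and are square-summable by hypothesis, so $T$ is a compact perturbation of an invertible diagonal operator and hence Fredholm of index zero, and the coordinatewise injectivity computation supplies invertibility. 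This route avoids the intermediate subspace decomposition, and absorbing the scalar $-\gamma^{-1}$ into the constant $c$ handles the $\gamma=0$ and $\gamma\neq 0$ subcases uniformly, a bookkeeping point the paper does not spell out. Nothing essentially different, but it genuinely fills the gap the paper leaves to the reader.
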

\begin{pf}
Let $\{ \alpha_k \beta_k^{-1}\} \in \ell_2 $. Then the system $\{ y^k \}_1^\infty$ is obviously complete and minimal (prove this!). Moreover, $\{ y^k \}_2^\infty$ is a Riesz basis in the subspace $\ell_2 \ominus \{e_1\}$, where $e_1 = \{1,0,0,\ldots\}$ (prove this!). Then $\{y^k\}_1^\infty$ is a Riesz basis in $\ell_2$.

Now, suppose (\ref{2.3}) to be hold. Then the system of root functions coincides with $\{y^k\}_2^\infty$. Obviously, it is not complete (the element $e_1$ is orthogonal to $\{y^k\}_2^\infty$) and it is not minimal! We prove this showing $y^2 \in \overline{\Span\{ y^k \}_3^\infty}$. Suppose there exists a vector $y = \{y_1, y_2, \ldots \} \in \ell_2$ such that $0 = (y, y^k) = y_2 \alpha_k \beta_k^{-1} - y_k = 0$.

Now it follows: if $y_2 \neq 0$ then $y \not\in \ell_2$ but if $y_2 = 0$ then $y = 0$. Hence, the system $\{ y^k \}_3^\infty$ is complete in $\overline{\Span\{ y^k \}_2^\infty}$.
\end{pf}

We notice that in the case $\{ \alpha_k \beta_k^{-1}\} \not\in \ell_2$ the root subspace $\L_0 = \{ y^2 \}$ is degenerated since $[y^2, y^2] = 0$. Because of that the system of root functions neither complete nor minimal.

\subsection{Criteria for $\mathcal{L}$ to be Pontrjagin subspace in $P_\varkappa$.}
Let us recall some well known facts on geometry of Pontrjagin space. The results which we present in this section are well known although the proofs sometimes are new. We always suppose $\mathcal{L}$ to be closed saying $\mathcal{L}$ to be a subspace. First, let us recall the following definitions.

\begin{definition}
A subspace $\mathcal{L}$ in $P_\varkappa$ is said to be a Pontrjagin subspace if it is Pontrjagin space with indefinite metric inherited from $P_\varkappa$.	 
\end{definition}

\begin{definition}
A subspace $\mathcal{L}$ is said to be a regular subspace in $P_\varkappa$ if the operator $G_{\mathcal{L}}=P_{\mathcal{L}} G | \mathcal{L}$ where $P_{\mathcal{L}} : H \rightarrow \mathcal{L}$ is the opthoprothector and $G|\mathcal{L}$ is the restriction of $G$ onto $\mathcal{L}$ is invertible. The operator $G_\mathcal{L}$ is called Gram operator.
\end{definition}

\begin{definition}
Let $\mathcal{L}$ be a subspace in $P_\varkappa$. An operator $Q:H\rightarrow \mathcal{L}$ is said to be $G$-orthogonal projector onto $\mathcal{L}\ $ if  $\ Q^2 = Q\ $  and  $\ x-Qx \in \mathcal{L}^{[\perp]}$.
\end{definition}

\begin{rem}
Certainly $G$-orthogonal projector not always exists. But if it exists then it is bounded. Indeed, it is defined on the whole $H$ and it is easy to prove that it is closed. Then by virtue of Closed graph theorem $Q$ is bounded.
$$ x_n \rightarrow x, \ \ y_n=Gx_n \rightarrow y. \ \ \ \ \ Qy_n = y_n \rightarrow y \ \Rightarrow \ Q^2 y_n = Qy_n.$$
\end{rem}

\begin{definition}
A subspace $\mathcal{L}$ is said to be projectively complete if
\begin{equation} \label{eq1}
\mathcal{L} + \mathcal{L}^{[\perp]} = H.
\end{equation}
\end{definition}

\begin{rem}
If ($\ref{eq1}$) holds then the sum is direct. Indeed, if $\mathcal{L}^0 = \mathcal{L} \cap \mathcal{L}^{[\perp]}$ then ($\ref{eq1}$) implies $\mathcal{L}^0 [\perp] \ \mathcal{L} + \mathcal{L}^{[\perp]} = H$, hence $G(\mathcal{L}^0) \perp H$. Since $G$ is invertible we have $\mathcal{L}^0 = \{0\}$.
\end{rem}

\begin{definition}
A subspace $\mathcal{L}$ in $P_\varkappa$ is said to be positive (uniformly positive) if $[x,x]>0$ ($\geq \varepsilon ||x||^2$ with some $\varepsilon >0$) for all $0 \neq x \in \mathcal{L}$. Negative and uniformly negative subspaces are defined in the same way.
\end{definition}

\begin{theorem}
The following statements in Pontrjagin space $P_\varkappa$ are equivalent:
\begin{enumerate}
\item $\mathcal{L}$ is positively complete;
\item There exists a $G$-orthogonal projector onto $\mathcal{L}$;
\item $\mathcal{L}$ is regular;
\item $\mathcal{L}$ is a Pontrjagin subspace;
\item $\mathcal{L}$ is non-degenerated;
\item $\mathcal{L}^{[\perp]}$ is non-degenerated.
\end{enumerate}
\end{theorem}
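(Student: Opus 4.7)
\begin{pf}[Proof plan (sketch)]
The plan is to prove the six equivalences in a small cycle, reducing everything to the crucial implication $(5)\Rightarrow(3)$, which is the one place where the finiteness of $\varkappa$ really matters.

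First I would dispatch the easy equivalences. $(1)\Leftrightarrow(2)$ is immediate from the Remark after Definition~10.3.4: if the sum $\mathcal{L}\dotplus\mathcal{L}^{[\bot]}=H$ is direct, setting $Qx=x_1$ for $x=x_1+x_2$ ($x_1\in\mathcal{L}$, $x_2\in\mathcal{L}^{[\bot]}$) gives a $G$-orthogonal projector; conversely a $G$-orthogonal projector produces the decomposition $x=Qx+(x-Qx)$. For $(1)\Leftrightarrow(3)$, note that finding $x_1\in\mathcal{L}$ with $x-x_1\in\mathcal{L}^{[\bot]}$ is exactly the equation $G_\mathcal{L}x_1=P_\mathcal{L}Gx$ in $\mathcal{L}$; hence $(3)$ gives existence, and conversely $(1)$ makes $G_\mathcal{L}$ a bijection of $\mathcal{L}$ (injectivity from Remark~10.3.3, surjectivity by writing $G^{-1}y=x_1+x_2$ and applying $P_\mathcal{L}G$). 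The equivalence $(3)\Leftrightarrow(4)$ then follows directly from the definition of Pontrjagin subspace, because the indefinite form inherited on $\mathcal{L}$ is $(G_\mathcal{L}\cdot,\cdot)$, and the number of negative eigenvalues of $G_\mathcal{L}$ cannot exceed $\varkappa$ (the negative index of $G$). Finally, $(5)\Leftrightarrow(6)$ is immediate from the observation, already noted below Definition~10.1.2, that $\mathcal{L}$ and $\mathcal{L}^{[\bot]}$ share the same isotropic subspace.

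The implication $(3)\Rightarrow(5)$ is trivial: if $x\in\mathcal{L}^0$ then $(G_\mathcal{L}x,y)=[x,y]=0$ for all $y\in\mathcal{L}$, so $G_\mathcal{L}x=0$ and invertibility forces $x=0$.

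The main obstacle, and the only step where $\varkappa<\infty$ is essential, is $(5)\Rightarrow(3)$. My plan here is to reduce to a Fredholm argument. Introduce the equivalent scalar product $(x,y)_1=(|G|x,y)$ on $H$, and write $G=J|G|$ with $J=P_+-P_-$ a self-adjoint symmetry whose negative spectral projector $P_-$ has rank~$\varkappa$. Then $[x,y]=(Jx,y)_1$, and letting $Q_\mathcal{L}$ be the $(\cdot,\cdot)_1$-orthogonal projector onto $\mathcal{L}$, one checks that $G_\mathcal{L}$ is invertible on $\mathcal{L}$ if and only if $J_\mathcal{L}:=Q_\mathcal{L}J|_\mathcal{L}$ is invertible on $(\mathcal{L},(\cdot,\cdot)_1)$. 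But
\[
J_\mathcal{L}=I_\mathcal{L}-2\,Q_\mathcal{L}P_-|_\mathcal{L},
\]
and $Q_\mathcal{L}P_-|_\mathcal{L}$ has rank at most $\varkappa$. Thus $J_\mathcal{L}$ is a finite-rank perturbation of the identity, hence Fredholm of index zero, and invertibility is equivalent to injectivity. Non-degeneracy of $\mathcal{L}$ gives exactly that injectivity: if $J_\mathcal{L}x=0$ for some $x\in\mathcal{L}$, then $[x,y]=(Jx,y)_1=(J_\mathcal{L}x,y)_1=0$ for every $y\in\mathcal{L}$, so $x\in\mathcal{L}^0=\{0\}$. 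This closes the cycle, completing the proof.

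The hard part is precisely the Fredholm step: without the finite rank of $P_-$, non-degeneracy alone would not rule out $0$ belonging to the continuous spectrum of $G_\mathcal{L}$, and in a Krein space the analogous statement indeed fails.
\end{pf}
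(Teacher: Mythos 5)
Your proof is correct and follows essentially the same route as the paper: the easy equivalences among (1)--(4) and between (5) and (6) are dispatched directly, and the pivotal implication $(5)\Rightarrow(3)$ is obtained by exhibiting the Gram operator of $\mathcal{L}$ as a finite-rank (rank $\le\varkappa$) perturbation of an invertible operator and invoking the Fredholm alternative. The only cosmetic difference is that you pass to the auxiliary inner product $(\cdot,\cdot)_1=(|G|\cdot,\cdot)$ and work with $J_\mathcal{L}=I_\mathcal{L}-2Q_\mathcal{L}P_-|_\mathcal{L}$, asserting $G_\mathcal{L}$ invertible iff $J_\mathcal{L}$ invertible, whereas the paper writes $G_\mathcal{L}=|G|_\mathcal{L}-2G_\mathcal{L}^-$ directly in the original inner product; both are the same decomposition in different clothing, and your version would merit one extra line checking the stated equivalence of invertibility via $G_\mathcal{L}=|G|_\mathcal{L}J_\mathcal{L}$ with $|G|_\mathcal{L}\gg0$.
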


\textit{Note.} $\mathcal{L} = \{
\begin{pmatrix}
x\\
\alpha x
\end{pmatrix},
x \in H_2  \} $
is degenerated $\Leftrightarrow \ \alpha = \pm 1 $.

\begin{proof}
\textit{Step 1}. Let us prove 1) $\Leftrightarrow$ 2).\\
If $\mathcal{L}$ is projectively complete then according to Remark 3.2 the sum ($\ref{eq1}$) is direct. This implies the existence of a uniquely defined on the whole $H$ $G$-orthogonal projector onto $\mathcal{L}$. The implication 2) $\Rightarrow$ 1) follows from the definition.\\
\textit{Step 2}. 2) $\Rightarrow$ 3). According to Remark 3.1 $G$-orthogonal projector $Q$ onto $\mathcal{L}$ is bounded. Now for $x\in \mathcal{L}, ||x||=1$, we have
$$
||G_{\mathcal{L}} x|| \ ||QG|| \geq |(G_{\mathcal{L}} \ x, QGx)| = |(P_{\mathcal{L}} Gx, QGx)| = |(Gx, QGx)| = $$ $$ = |(Q^* Gx, Gx)| = |(GQx, Gx)| = (Gx,Gx) \geq ||G^{-1}||^2 .
$$
Here we used the fact $[Qx,y] = [x, Qy]$ which is equivalent to $(GQ)^* = GQ$ and follows from the definition of $Q$. From the last inequality we obtain $||G_{\mathcal{L}} x || \geq \varepsilon ||x||$. Since $G_{\mathcal{L}}$ is self-adjoint we have it is inveritable. \\
\textit{Step 2a}. 3) $\Rightarrow$ 2). Suppose that $G_{\mathcal{L}}$ is inveritable. Then $G_{\mathcal{L}} = G_{\mathcal{L}}^+ - G_{\mathcal{L}}^-$, where $G_{\mathcal{L}}^+$ and $G_{\mathcal{L}}^-$ are uniformly positive on the subspaces $\mathcal{L}^\pm = Im \  G_{\mathcal{L}}^\pm$. This means that the norm $||\cdot || $ in $\mathcal{L}^+ (\mathcal{L}^-)$ is equivalent to the norm $||\cdot||_1$ defined by the equality $||x||_1 = |[x,x]|^{1/2}$. Then for any fixed $y\in H$ a linear functional $\phi_y (x) = (x, Gy)$ is continious in $\mathcal{L}^+(\mathcal{L}^-)$ with respect to both norms $|| \cdot||$ and $||\cdot||_1$. By virtue of the classical Riesz' theorem there exists a vector $x_1 \in \mathcal{L}^+ (\mathcal{L}^-)$ such that $\phi_y(x) = [x,y] = [x,x_1^{\stackrel{+}{\small{(-)}}}] $. Now, define $Q^\pm y = x_1^\pm$. Then $Q^+$ and $Q^-$ are $G$-orthogonal projectors onto $\mathcal{L}^+$ and $\mathcal{L}^-$. Moreover, since $\ G_{\mathcal{L}}^+\ G_{\mathcal{L}}^
 - = G_{\mathcal{L}}^-\ G_{\mathcal{L}}^+ = 0\ $ we have $\ Q^+\ Q^- = Q^-\ Q^+ = 0\ $. Hence $Q = Q^+ + Q^-$ is $G$-orthogonal projector onto $\mathcal{L}$.\\
\textit{Step 3}. The equivalence 3) and 4) follows from the definitions. \\
\textit{Step 4}. 4) $\Rightarrow$ 5). We have proved 4) $\Rightarrow$ 1) and according to Remark 3.2 $\ $ 1) $\Rightarrow$ 6). \\
\textit{Step 5}. 5) $\Rightarrow$ 4). Let $\mathcal{L}$ be non-degenerated. This is equivalent that $Ker\ G_{\mathcal{L}} = \{0\} $. We have also $$G_{\mathcal{L}} = P_{\mathcal{L}}\ (G_+\ -\ G_{-})\ | \ \mathcal{L} = P_{\mathcal{L}}\ (G_+\ +\ G_{-})\ |\ \mathcal{L} - 2\ P_{\mathcal{L}}\ G_{-}\ |\ \mathcal{L} = |G|_{\mathcal{L}}\ -\ 2\ G_{\mathcal{L}}^-$$. The operator $|G|_{\mathcal{L}}$ is uniformly positive in $\mathcal{L}$ while $G_{\mathcal{L}}^{-}$ is of finite rank. Now it follows from Fredholm theorem that $G_{\mathcal{L}}$ is inveritable since $Ker \ G_{\mathcal{L}} = \{0\}$. \\
\textit{Step 6}. The implications 5) $\Leftrightarrow$ 6) are obvious.
\end{proof}

As a corollary we obtain the following important theorem.
\begin{theorem}
Any positive (negative) subspace in $P_\varkappa$ is uniformly positive (negative).
\end{theorem}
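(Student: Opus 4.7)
The plan is to derive this as a direct corollary of the preceding Theorem~3.1. First I would observe that any positive subspace $\mathcal{L}$ (by which we mean a \emph{closed} positive subspace, per the convention fixed at the start of Section~3) is automatically non-degenerate. Indeed, if $x \in \mathcal{L}\cap\mathcal{L}^{[\perp]}$, then in particular $[x,x]=0$; since $[y,y]>0$ for every nonzero $y\in\mathcal{L}$, this forces $x=0$, so $\mathcal{L}^{0}=\{0\}$.

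Having established non-degeneracy, I would invoke the equivalence $5)\Leftrightarrow 3)$ of Theorem~3.1: non-degeneracy of $\mathcal{L}$ is the same as regularity, i.e.\ the Gram operator $G_{\mathcal{L}}=P_{\mathcal{L}}G|_{\mathcal{L}}$ is invertible as an operator in $\mathcal{L}$. The next step is the identity
$$
(G_{\mathcal{L}} x, x) \;=\; (P_{\mathcal{L}} Gx, x) \;=\; (Gx, P_{\mathcal{L}} x) \;=\; (Gx, x) \;=\; [x,x], \qquad x\in\mathcal{L},
$$
which shows that $G_{\mathcal{L}}$ is a non-negative self-adjoint operator on $\mathcal{L}$. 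A non-negative self-adjoint operator which is (boundedly) invertible is automatically uniformly positive: there exists $\varepsilon>0$ such that $G_{\mathcal{L}}\geq \varepsilon I_{\mathcal{L}}$. Combining this with the identity above gives $[x,x]\geq \varepsilon\|x\|^{2}$ for all $x\in\mathcal{L}$, which is exactly uniform positivity.

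For the negative case, the argument is symmetric: apply the same reasoning in the Pontrjagin space $(H,-G)$ (which is of type $P_{m-\varkappa}$ when $\dim H = m$, or simply another Pontrjagin space in the general case, since only finite co-rank of negative/positive part matters), in which a negative subspace becomes positive. Alternatively one repeats the proof verbatim, replacing $[x,x]\geq 0$ by $[x,x]\leq 0$ and concluding $G_{\mathcal{L}}\leq -\varepsilon I_{\mathcal{L}}$.

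There is really no serious obstacle here; the heavy lifting is already contained in Theorem~3.1. The only point worth checking carefully is the step ``non-negative self-adjoint plus bounded invertibility implies uniform positivity,'' which follows immediately from the spectral theorem: the spectrum lies in $[0,\infty)$ and is bounded away from $0$, so $\sigma(G_{\mathcal{L}})\subset[\varepsilon,\infty)$ for $\varepsilon = \|G_{\mathcal{L}}^{-1}\|^{-1}$.
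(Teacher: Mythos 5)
Your argument is correct and follows exactly the paper's route: positivity gives $G_{\mathcal{L}}>0$ and hence non-degeneracy, Theorem~3.1 upgrades this to invertibility of the Gram operator, and invertibility of a non-negative self-adjoint operator yields uniform positivity. You have merely spelled out the last step (via the spectral theorem) which the paper leaves implicit in the phrase "now apply Theorem~3.1."
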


\begin{proof}
Let $\mathcal{L}$ be a positive subspace. Then it follows $G_{\mathcal{L}} > 0$, therefore $\mathcal{L}$ is non-degenerated. Now apply Theorem 3.1.
\end{proof}

\begin{rem}
First four statements in Theorem 3.1 are equivalent also in Krein space. Only on Step 5 we used the fact that $G^{-}$ is of finite rank.
\end{rem}

\subsection{Riesz basis theorem.}
First we present some simple lemmas. We omit their proofs because they can be found in any book concerning indefinite metric. \\
Let us denote $\hat{\lambda}_k : = \{\lambda_k, \bar{\lambda}_k\}$ and $$\hat{\mathcal{L}}_k = \mathcal{L}_{\lambda_k} + \mathcal{L}_{\bar{\lambda}_k}.$$

\begin{lemma}
Let $\{\lambda_k\}_0^\infty$ be a sequence of eigenvalues of $A$ and $\{\mathcal{L}_{\lambda_k}\}$ be the corresponding root subspaces. Then $$\hat{\mathcal{L}}_k\ [\perp] \ \hat{\mathcal{L}}_j \ \ \ for\ \ all\ \ \hat{\lambda}_k \neq \hat{\lambda}_j$$
and each subspace $\hat{\mathcal{L}_k}$ is non-degenerated with possible exeption $\hat{\mathcal{L}}_0 = \mathcal{L}_0$ corresponding to the eigenvalue $\lambda_0 = 0$.
\end{lemma}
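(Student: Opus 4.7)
The plan splits into two largely independent parts: the $G$-orthogonality $\hat{\mathcal{L}}_k\,[\perp]\,\hat{\mathcal{L}}_j$ for $\hat\lambda_k \ne \hat\lambda_j$, and the non-degeneracy of $\hat{\mathcal{L}}_k$ when $\lambda_k \ne 0$.

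For the orthogonality, I would start from the defining relation $[Ax,y]=[x,Ay]$. In the eigenvector case $Ax=\lambda_k x$, $Ay=\lambda_j y$, this already gives $\lambda_k[x,y]=\bar\lambda_j[x,y]$, so $[x,y]=0$ whenever $\lambda_k\ne\bar\lambda_j$. The hypothesis $\hat\lambda_k\ne\hat\lambda_j$ means $\{\lambda_k,\bar\lambda_k\}\cap\{\lambda_j,\bar\lambda_j\}=\emptyset$, so the four combinations of eigenvectors drawn from $\hat{\mathcal{L}}_k$ and $\hat{\mathcal{L}}_j$ are all $G$-orthogonal. To pass from eigenvectors to root vectors I would exploit the twisted identity
$$
[(A-\bar\lambda_j I)x,y] \;=\; [x,(A-\lambda_j I)y]
$$
together with the rewriting $(A-\bar\lambda_j I)=(A-\lambda_k I)+(\lambda_k-\bar\lambda_j)I$. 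This produces a recursion that shortens the Jordan-chain heights of $x$ and $y$ at the cost of a nonzero factor $\lambda_k-\bar\lambda_j$, and a straightforward induction on the combined chain-heights closes the argument.

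For the non-degeneracy I would use a Riesz projector. Since $A$ is compact and $0\notin\hat\lambda_k$, both $\lambda_k$ and $\bar\lambda_k$ are isolated points of $\sigma(A)$ of finite algebraic multiplicity. Choose a contour $\Gamma$ in the resolvent set of $A$ that is symmetric under $z\mapsto\bar z$ and encloses exactly $\hat\lambda_k$. The Riesz projector
$$
P_k=-\frac{1}{2\pi i}\oint_\Gamma (A-zI)^{-1}\,dz
$$
has range $\hat{\mathcal{L}}_k$. Using $GA=A^*G$ in the form $G(A-zI)^{-1}G^{-1}=((A-\bar z I)^{-1})^*$ together with the conjugation symmetry of $\Gamma$, one obtains $GP_kG^{-1}=P_k^*$, i.e.\ $P_k$ is $G$-self-adjoint. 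Then $H=\hat{\mathcal{L}}_k + \Ker P_k$ is a direct $G$-orthogonal decomposition, and any $x\in\hat{\mathcal{L}}_k$ that is isotropic inside $\hat{\mathcal{L}}_k$ would automatically be $G$-orthogonal to all of $H$, forcing $Gx=0$ and hence $x=0$ by invertibility of $G$.

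The main obstacle will be the identity $GP_kG^{-1}=P_k^*$: the substitution $w=\bar z$ simultaneously conjugates the integrand and reverses the orientation of $\Gamma$, and these two sign/conjugation effects must be tracked carefully so that the resulting expression is genuinely identified with the Riesz projector of $A^*$ rather than its negative. Once this is in place the two parts of the proof combine to yield the lemma. The case $\lambda_0=0$ must be excluded because $0$ is the only possible accumulation point of $\sigma(A)$ for a compact operator, so no separating contour around it exists and the projector construction fails; the example of Subsection~10.2 confirms that $\mathcal{L}_0$ really can be degenerated.
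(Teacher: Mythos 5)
The paper itself omits the proof of this lemma (it explicitly says the proofs can be found in any standard text on indefinite-metric operator theory), so there is no internal proof to compare against; what you give is a correct and self-contained argument along what is indeed the standard textbook route.

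Your orthogonality induction is sound. The twisted identity $[(A-\bar\lambda_j I)x,y]=[x,(A-\lambda_j I)y]$ combined with $(A-\bar\lambda_j I)=(A-\lambda_k I)+(\lambda_k-\bar\lambda_j)I$ strictly decreases the sum of Jordan heights and produces a factor $\lambda_k-\bar\lambda_j$, and you correctly reduced the hypothesis $\hat\lambda_k\ne\hat\lambda_j$ to the two inequalities $\lambda_k\ne\lambda_j$, $\lambda_k\ne\bar\lambda_j$ needed across the four pairings of $\mathcal{L}_{\lambda_k},\mathcal{L}_{\bar\lambda_k}$ against $\mathcal{L}_{\lambda_j},\mathcal{L}_{\bar\lambda_j}$. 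The Riesz-projector argument for non-degeneracy is also correct, and the sign you worried about does close: replacing $z$ by $\bar z$ on a conjugation-symmetric $\Gamma$ reverses the orientation, and that minus sign exactly cancels the $\overline{1/2\pi i}=-1/2\pi i$ coming from taking the adjoint, so $GP_kG^{-1}=P_k^*$ holds with no stray sign. Once $P_k$ is $G$-self-adjoint, $\hat{\mathcal{L}}_k=\operatorname{Ran}P_k$ and $\Ker P_k$ decompose $H$ $G$-orthogonally, and an isotropic vector in $\hat{\mathcal{L}}_k$ is annihilated by $G$, hence zero. Two small remarks. First, your argument in fact re-derives the implication $2)\Rightarrow 5)$ of Theorem~10.3 of Section~10.3 (existence of a $G$-orthogonal projector $\Rightarrow$ non-degeneracy); you could cite that theorem and stop after constructing $P_k$. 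Second, the sentence explaining why $\lambda_0=0$ is excluded is slightly imprecise: the point is not that $0$ is ``the only possible'' accumulation point, but that for a compact $A$ on an infinite-dimensional space $0$ generally \emph{is} an accumulation point (or at least need not be a spectral set with finite-dimensional root space), so the Riesz-projector construction is unavailable. The lemma only asserts that $\hat{\mathcal{L}}_0$ \emph{may} be degenerated, which the example of Subsection~10.2 indeed realizes.
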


\begin{lemma}
There are finitely many subspaces, say $\mathcal{L}_{\lambda_0}, \mathcal{L}_{\lambda_1}, ... , \mathcal{L}_{\lambda_p}$ containing associated vectors, moreover, the length of Jordan chain corresponding to each eigenvector does not exceed $2\varkappa+1$.
\end{lemma}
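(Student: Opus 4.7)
The plan is to bound the contribution of each Jordan chain to the number of negative squares of the $G$-form, and then to invoke the fact that this number equals $\varkappa$ in $P_\varkappa$.

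First, I would analyse the structure of the indefinite inner product on a single Jordan chain $y^0, y^1, \dots, y^p$ corresponding to a real eigenvalue $\lambda$. The self-adjointness identity $[Ay^j, y^k] = [y^j, Ay^k]$ combined with $Ay^j = \lambda y^j + y^{j-1}$ eliminates the diagonal term $\lambda$ and yields the recurrence
$$
[y^{j-1}, y^k] = [y^j, y^{k-1}], \qquad j, k \ge 1.
$$
Setting $k = 0$ and iterating gives $[y^i, y^0] = 0$ for $i < p$, and more generally $[y^i, y^j] = b_{i+j}$ depends only on $i+j$, with $b_m = 0$ for $m < p$ and $b_p = [y^0, y^p] \ne 0$ (the latter because the chain is of maximal length in a non-degenerated root subspace; the possibly degenerate case $\lambda = 0$ is handled by passing to the canonical chain structure). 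Hence the Gram matrix of the chain is Hankel with zero entries strictly above the anti-diagonal.

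Second, I would show that such a $(p+1)\times(p+1)$ Hankel matrix contributes at least $\lfloor (p+1)/2 \rfloor$ negative eigenvalues to the form. By Sylvester's law of inertia, using $b_p \ne 0$ as a pivot one congruently reduces the matrix to the purely anti-diagonal matrix $b_p J_{p+1}$; since $J_{p+1}^2 = I$, its eigenvalues are $\pm 1$, and a trace count gives exactly $\lfloor (p+1)/2 \rfloor$ eigenvalues of each sign (with one extra of sign $b_p$ when $p+1$ is odd). In either case the number of negative squares is at least $\lfloor (p+1)/2 \rfloor$.

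Third, for non-real $\lambda_k$, Lemma 1 says $\mathcal{L}_{\lambda_k}$ is $G$-isotropic and the sum $\hat{\mathcal{L}}_k = \mathcal{L}_{\lambda_k} + \mathcal{L}_{\bar{\lambda}_k}$ is non-degenerated; it is therefore a hyperbolic subspace with exactly $\dim \mathcal{L}_{\lambda_k}$ negative squares. A single Jordan chain of length $n$ in $\mathcal{L}_{\lambda_k}$ thus contributes $n$ negative squares to the total. Lemma 1 also gives $G$-orthogonality between subspaces corresponding to distinct $\hat{\lambda}_k$, and inside each real root subspace the distinct Jordan chains can be chosen $G$-orthogonal by the standard canonical-form theorem for self-adjoint operators in Pontrjagin spaces. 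Summing,
$$
\sum_{\substack{\text{chains at}\\ \text{real } \lambda_k}} \left\lfloor \tfrac{n_i}{2}\right\rfloor \;+\; \sum_{\substack{\text{chains at}\\ \text{non-real } \lambda_k}} n_i \;\le\; \varkappa.
$$
From this, no chain has length exceeding $2\varkappa + 1$ (otherwise a single term $\lfloor n_i/2 \rfloor \ge \varkappa+1$ already exceeds $\varkappa$), and since every chain of length $\ge 2$ contributes at least $1$ to the sum, only finitely many chains of length $\ge 2$ exist; hence only finitely many root subspaces $\mathcal{L}_{\lambda_k}$ contain associated vectors.

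The main obstacle is the signature computation for the Hankel Gram matrix (Step~2), in particular verifying that the lower bound $\lfloor (p+1)/2 \rfloor$ is independent of the free parameters $b_{p+1},\dots,b_{2p}$; everything else is a clean bookkeeping argument built on Lemma~1 and the definition of $P_\varkappa$.
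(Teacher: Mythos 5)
Your plan works, and the byproduct of Step~1 (the vanishing $[y^i,y^j]=0$ for $i+j<p$) is exactly the right observation, but the detour through the full Hankel-signature computation is both heavier and slightly gappier than necessary. The cleaner route is to use the paper's Lemma~4.3 directly: the elements $y^0,\dots,y^{\alpha}$ with $\alpha=\lfloor(p-1)/2\rfloor$ span a \emph{neutral} subspace (since $[y^i,y^j]=b_{i+j}=0$ whenever $i+j\le 2\alpha\le p-1$), so $\alpha+1\le\varkappa$ and hence $p+1\le 2\varkappa+1$; and for finiteness, pick the eigenvector $y^0_k$ from a chain of length $\ge 2$ at each real $\lambda_k$ (for which $[y^0_k,y^0_k]=b_0=0$) together with one eigenvector from each non-real $\mathcal{L}_{\lambda_k}$ (which is itself neutral), and observe that these vectors are pairwise $G$-orthogonal by Lemma~4.1, so they again span a neutral subspace of dimension $\le\varkappa$. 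This bypasses the inertia computation entirely.

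The genuine gap in your version is the case $b_p=0$, which you brush aside with "passing to the canonical chain structure." Your Step~2 needs $b_p\ne 0$ to get a non-trivial pivot, and your justification ("maximal length in a non-degenerated root subspace") does not cover $\mathcal{L}_0$, which Lemma~4.1 explicitly allows to be degenerated — indeed, that is precisely the interesting case in Theorem~1.2 of that section. When $b_p=0$ your chain may contribute zero negative squares, and the finiteness and length bounds then do not follow from your inequality as written. The patch is exactly the neutral-subspace observation above (which in fact gives a \emph{larger} neutral subspace when $b_p=0$, so the bound improves), but you should state it explicitly rather than appealing to the canonical form, since the canonical-form theorem for a degenerated root subspace is itself at least as hard as the lemma you are proving. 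The Hankel reduction in Step~2 is a correct known fact (congruence to $b_pJ_{p+1}$ via an upper-triangular change of chain basis), but its role here is essentially decorative once the neutral-subspace argument is in place.
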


\begin{lemma}
The dimension of each non-positive subspace in $P_\varkappa$ does not exceed $\varkappa$.
\end{lemma}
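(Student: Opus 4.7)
The plan is to exploit the canonical fundamental decomposition of $P_\varkappa$ supplied by the spectral decomposition of $G$. Writing $G = G_+ - G_-$ with $G_+, G_- \geq 0$, $G_+ G_- = G_- G_+ = 0$, and $|G| = G_+ + G_- \gg 0$, put $H_\pm = \Im G_\pm = \Ker G_\mp$, so that $H = H_+ \oplus H_-$ orthogonally and $\dim H_- = \varkappa$ by hypothesis on $G$.

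First I would record the basic positivity estimate on $H_+$. Since $|G|$ is uniformly positive, there exists $c > 0$ with $(|G|x, x) \geq c\|x\|^2$ for all $x \in H$. Restricting to $x \in H_+$ gives $G_- x = 0$, hence $[x,x] = (Gx, x) = (G_+ x, x) = (|G|x, x) \geq c\|x\|^2$. In particular every nonzero vector of $H_+$ is strictly positive in the indefinite metric.

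Next I would argue by contradiction. Suppose $\L \subset P_\varkappa$ is non-positive but $\dim \L > \varkappa$. Let $P_-$ denote the ordinary (Hilbert-space) orthogonal projector onto $H_-$. Then $P_-|_\L : \L \to H_-$ is a linear map from a space of dimension $> \varkappa = \dim H_-$ into a space of dimension $\varkappa$, so it has a nontrivial kernel. Pick $0 \neq x \in \L$ with $P_- x = 0$; then $x \in H_+$, and the preceding estimate yields $[x,x] \geq c\|x\|^2 > 0$, contradicting the non-positivity of $\L$. Hence $\dim \L \leq \varkappa$.

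There is no real obstacle here: the argument is a one-line dimension count once the fundamental decomposition is set up, and the whole proof should fit in a few lines. The only subtlety worth flagging is that the estimate $[x,x] \geq c\|x\|^2$ on $H_+$ uses the assumption that $|G|$ (not merely $G_+$) is uniformly positive on all of $H$, which is built into the definition of $P_\varkappa$ given in Section 10.1.
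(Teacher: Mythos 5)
Your proof is correct, and it is the standard textbook argument that the paper is implicitly referring to when it says these lemmas "can be found in any book concerning indefinite metric" and omits the proofs. The key steps — the fundamental decomposition $H = H_+ \oplus H_-$ with $\dim H_- = \varkappa$, the observation that $H_+$ is (uniformly) positive, and the rank/injectivity count for $P_-|_{\mathcal{L}}$ — are exactly what one finds in Azizov--Iohvidov, Bognar, etc. One small remark: for this particular lemma you only need strict positivity of $[\cdot,\cdot]$ on $H_+ \setminus \{0\}$, which already follows from $G_+ > 0$ on $\Im G_+$; the uniform estimate $[x,x] \geq c\|x\|^2$ (coming from $|G| \gg 0$) is not strictly required here, though it costs nothing and is the natural thing to record. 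Also note the dimension count works equally well if $\mathcal{L}$ is infinite dimensional, since an injective linear map into the $\varkappa$-dimensional space $H_-$ forces $\dim\mathcal{L} \le \varkappa$.
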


Now let us prove Theorem 1.2.
\begin{pf}
\textit{Step 1}. Let The root subspace $\mathcal{L}_0$ corresponding to $0$ be degenerated and $\mathcal{L}_0^0\ $ be the isotropic subspace of $\mathcal{L}_0$. We denote
$$\mathcal{L} = \overline{span\{\hat{\mathcal{L}}_k\}}_{\lambda_k \in \sigma(A)}.$$
From Lemma 4.1 we obtain that $\mathcal{L}_0^0$ is also the isotropic subspace in $\mathcal{L}$. Then $D(\mathcal{L}_0^0) \perp \mathcal{L}$ and $\mathcal{L} \neq H$. Thus we proved: the non-degeneracy of $\mathcal{L}_0$ is the necessary condition for completeness (and, certainly, for basisness) of the system of root vectors.\\
\textit{Step 2}. Now, assume that $\mathcal{L}_0$ is non-degenerated. We can represent it in the form
$$ \mathcal{L}_0 = \mathcal{L}_0 ' [+] \mathcal{L}_0 '' $$
where $\mathcal{L}_0 ' \in Ker \ A $ while $\mathcal{L}_0 ''$ is of finite dimension and both subspaces $\mathcal{L}_0 '$ and $\mathcal{L}_0 ''$ are invariant under $A$ (because of Lemma 4.2 there are finitely many associated vectors, therefore we can choose $\mathcal{L}_0 ''$ of finite dimensional). Certainly, $\mathcal{L}_0 '$ and $\mathcal{L}_0 ''$ are non-degenerated. \\
In each subspace $\{\mathcal{L}_{\lambda_k}\}_{k>p}$ and in $\mathcal{L}_0 '$ we can choose $G$-orthogonal basis $\{y_{k,s}\}$. In the finite dimensional subspaces $\mathcal{L}_0 ''$ and $\{\hat{\mathcal{L}}_k\}_1^p$ we can choose any basis $\{y_{k,s}\}$ consisting of root functions. By virtue of Theorem 3.1 Gram operators $G_{\mathcal{L}_0 '}$, $G_{\mathcal{L}_0 ''}$, $\{G_{\hat{\mathcal{L}}_k} \}_1^\infty$ are inveritable. Certainly, $\{y_{k,s}\}$ for each fixed $k$ is minimal in $\mathcal{L}_0 '$, $\mathcal{L}_0 ''$, $\{\hat{\mathcal{L}}_k \}_1^\infty$ respectively. This implies the existence of systems $\{z_{k,h}\}$ such that
$$ (y_{k,s}, z_{k,h}) = [y_{k,s}, G_{\hat{\mathcal{L}}_k}^{-1} z_{k,h}] = \delta_{sh}.$$
By virtue of Lemma 4.2 we have also
$$ [ y_{j,s}\ G_{\hat{\mathcal{L}}_k}^{-1} z_{k,h}] =  ( y_{j,s}\ G G_{\hat{\mathcal{L}}_k}^{-1} z_{k,h}) = \delta_{jk} \delta_{sh} . $$
The last relation shows that the chosen system of root functions $\{y_{k,s}\}$ in $ \mathcal{L}\ $ is minimal.\\
\textit{Step 3}. As before, let $ \mathcal{L}^0$ is non-degenerated. Then $\mathcal{L}$ is also non-degenerated. Indeed, if $\mathcal{L}^0$ is the isotropic subspace of $\mathcal{L}$ then $\mathcal{L}^0$ is invariant under $A$ (show this!). By virtue of Lemma 4.3 $\mathcal{L}^0$ is finite dimensional. According to Lemma 4.1 $A|\mathcal{L}^0$ has no non-zero eigenvalues. Hence $\mathcal{L}^0 \subset \mathcal{L}_0$. But $\mathcal{L}^0$ can not be the isotropic subspace of $\mathcal{L}$ since there are no isotropic subspace in $\mathcal{L}_0$. \\
Thus, we have proved that $\mathcal{L}$ is non-degenerated. By virtue of Theorem 3.1 $\mathcal{L}^{[\perp]}$ is non-degenerated and, certainly, it is invariant under $A$. The operator $A|\mathcal{L}^{[\perp]} = : A' $ has no eigenvalues. According to Theorem 3.1 $P_{\varkappa'}:=\mathcal{L}^{[\perp]}$ is the Pontrjagin subspace with $\varkappa'\leq \varkappa$. If $\varkappa'>0$ then according to Theorem 1.1 $A'$ has $\varkappa'$-dimensional invariant subspace and, consequently, has eigenvalues. This is a contradiction. If $\varkappa'=0$ then Gram operator $G_{\mathcal{L}^{[\perp]}}$ is positive and inveritable (Theorem 3.1) hence $G_{\mathcal{L}^{[\perp]}}$ is strictly positive, hence $P_\varkappa'=\mathcal{L}^{[\perp]}$ is the Hilbert space with the norm which is equivalent to the previous one. Since $A$ is compact we have $A'$ is compact in $P_{\varkappa'}$. According to Hilbert theorem it has the eigenvalues. This is again the contradiction. \\
\textit{Step 4}. It was proved already that we can choose a system $\{y_{k,s} \} $ of root functions almost $G$-orthonormal. Let
$$\mathcal{L}' = \overline{span\{\mathcal{L}_{\lambda_k}\}}_{k>p} + (\mathcal{L}_0 ')^+ ,\ \ (\mathcal{L}_0 ' )^+ \subset \mathcal{L}_0 ' .$$
Obviously, $\mathcal{L} '$ is $G$-positive for $p$ sufficiently large if $(\mathcal{L}_0 ')^+$ is positive in $\mathcal{L}_0 '$. We have also that $\mathcal{L} '$ is of finite codimension. By virtue of Theorem 3.2 $\mathcal{L} '$ is uniformly positive hence for $y_{k,s}, \ y_{j,h} \in \mathcal{L} '$ we have ( $G_{\mathcal{L} '} >> 0$ )
$$ [y_{k,s}, y_{j,h}] = (G_{\mathcal{L} '}\ y_{k,s} ,\ y_{j,h}) = (G_{\mathcal{L} '}^{1/2} y_{k,s}, G_{\mathcal{L} '}^{1/2} y_{j,h}) = \delta_{kj} \delta_{sh} .$$
This means that the basis $\{y_{k,s}\}$ in $\mathcal{L}'$ is equivalent to orthonormal basis $\{G_{\mathcal{L} '}^{1/2} y_{k,s} \}$ in $\mathcal{L} '$. Since $\mathcal{L}'$ is of finite codimension in $H$ we obtain the assertion of Theorem 1.2.
\end{pf}

\renewcommand{\refname}{{\large\rm Bibliography for Part~I}}


\newpage
{\centerline{\huge  Part 2}}

\medskip

\section{Operator pencils arising in elasticity and hydrodynamics: The instability index formula}

\subsection*{Introduction}
The plan of the present section is the following. In subsection 1 we consider some concrete problems arising in elasticity and hydrodynamics. Further we prefer to work with abstract formulations of physical problems under consideration. For this purpose we provide general classes of operator pencils with unbounded operator coefficients related to problems of origin. The main object of the paper is an operator pencil of the form
$$A(\lambda) = \lambda^2 F + (D+i G)\lambda + T,$$
where $F$ and $T$ are self-adjoint and boundedly invertible operators, while $D\geqslant 0$ and $G$ are symmetric and $T$-bounded. The study of the pencil $A(\lambda)$ is realized in Section 3. In particular, we introduce the concepts of the classical and the generalized spectra and investigate the relations between them. We associate the linear pencil
$$
\mathbf{A}(\lambda):=\mathbf{T}-\lambda \mathbf{W}:=-\left(\begin{array}{cc}{D+\i G} & {T} \\ {-J} & {0}\end{array}\right)-\lambda\left(\begin{array}{cc}{F} & {0} \\ {0} & {J}\end{array}\right), \quad J=T|T|^{-1},
$$
with the quadratic pencil $A(\lambda)$. It turns out that the operator $\textbf{T}$ is dissipative in the space $\textbf{H} = H x H_1$, where $H_1$ coincides with the domain of the operator $|T|^{1/2}$ and equipped with the norm $
(\cdot, \cdot)_{1}=\left(|T|^{1 / 2} \cdot,|T|^{1 / 2} \cdot\right).
$ Generally, the spectrum $\sigma(\textbf{A})$ of the linearization $\textbf{A}(\lambda)$ coincides with neither the classical nor the generalized spectrum of $A(\lambda)$. However, we prove that $\sigma(\textbf{A})$ coincides with the generalized spectrum of $A(\lambda)$ in the open right half plane if the operator $\textbf{W}$ generates a Pontrjagin space metric. In this case $\sigma(\textbf{A})$ in the right half plane consists of finitely many eigenvalues, say $\kappa(A)$, and the number $\kappa(A)$ characterizes the index of instability of the equation
$$
A\left(\frac{d u}{d t}\right)=F \frac{d^{2} u}{d t^{2}}+(D+\i G) \frac{d u}{d t}+T u=0, \quad u=u(t)
$$
The problem on stability for such kind of equations has a long background and apparently was originated by Kelvin and Tait \cite{11::KT} (in the end of Section 3 we present a short historical review related to this problem). The main result of the paper is the instability index formula
$$
\kappa(A)=\nu(F)+\nu(T)-\varepsilon^{+}(A)
$$
where $\nu(F)$ and $\nu(T)$ are the numbers of the negative eigenvalues of the operators $F$ and $T$ respectively, while $\varepsilon^{+}(A)$ is expressed in terms of the lengths and the sign characteristics of Jordan chains corresponding to the pure imaginary eigenvalues of $A(\lambda)$. In particular, if all the pure imaginary eigenvalues of $A(\lambda)$ are of definite type then $\varepsilon^{+}(A)$ coincides with the number of the first type eigenvalues of $A(\lambda)$ (see the definitions in Section 3).\\

The results of Section 2 on root subspaces of linear dissipative pencils seem at the first sight to be isolated from the main subject of the paper. However, these results form a theoretical base to prove the index formula in Section 3. In our opinion, they have also an independent interest.\\

In Section 4 we return to the physical problems of origin and present the corollories of our abstract results. Here we also demonstrate how the index formula can be applied to estimate the number of the nonreal eigenvalues of a self-adjoint operator pencil.

\subsection{Classes of unbounded operator pencils}
Small oscillations of an elastic thin beam of unit length with external and internal damping	(so called Kelvin-Voigt material)	are described by the equation
\begin{equation}
\frac{\partial^{4} u}{\partial x^{4}}+\frac{\partial}{\partial t} \frac{\partial^{2}}{\partial x^{2}}\left(\alpha(x) \frac{\partial^{2} u}{\partial x^{2}}\right)+\frac{\partial}{\partial x}\left(g(x) \frac{\partial u}{\partial x}\right)+\beta(x) \frac{\partial u}{\partial t}+\rho(x) \frac{\partial^{2} u}{\partial t^{2}}=0
\end{equation}
Here $x	\in	[0,1],	t \in \mathbb{R}^{+}$, and $u(x,t)$ is the	 transverse displacement at position	$x$ and time
$t$. The function $\alpha (x)$ determines the internal damping and takes generally small values. The function $\beta (x)\geqslant 0$ determines the distribution of viscous damping, $\rho(x)>0$ defines the mass distribution and $g(x)$ is responsible for the forces of contraction or tension (see more details in \cite{11::PI}, for example).

As the equation is considered on the finite interval, we have to submit solutions of
(1.1) to some boundary conditions. For the sake of definitness we consider the case when both ends of the beam are clamped, i.e.
\begin{equation}
u(0, t)=\frac{\partial u(x, t)}{\partial x} /_{x=0}=u(1, t)=\frac{\partial u(x, t)}{\partial x} /_{x=1}=0
\end{equation}
Separating variables $u(x, t)=y(x) e^{\lambda t}$, we obtain the following spectral problem
\begin{equation}
\begin{aligned} \rho^{-1}(x)\left[y^{(4)}(x)\right.&\left.+\left(g(x) y^{\prime}(x)\right)^{\prime}\right] \\ &+\lambda \rho^{-1}(x)\left[\left(\alpha(x) y^{\prime \prime}(x)\right)^{\prime \prime}+\beta(x) y(x)\right]+\lambda^{2} y(x)=0 \end{aligned}
\end{equation}
\begin{equation}
y(0)=y^{\prime}(0)=y(1)=y^{\prime}(1)=0.
\end{equation}
Suppose that $\rho(x), \beta(x) \in C[0,1], g(x) \in C^{1}[0,1]$ and $\alpha(x)\in C^1[0,1]$. According to the physical sense we have $\rho(x)>0, \alpha(x) \geqslant 0, \quad \beta(x) \geqslant 0$ and either $g(x) \geqslant 0$ or $g(x) \leqslant 0$. Then quadratic eigenvalue problem (1.3), (1.4) is represented in the form
\begin{equation}
\left[\lambda^{2} I+\lambda\left(D_{\alpha}+D_{\beta}\right)+A+C\right] y(x)=0,
\end{equation}
where operators $D_{\alpha}, D_{\beta}, A, C$ act in Hilbert space $H=L_{2}([0,1], \rho(x))$ with the scalar product
$$
(y, z)=\int_{0}^{1} \rho(x) y(x) \overline{z(x)} d x
$$
and are defined by the equalities
\begin{equation}
\begin{aligned}(A y)(x) &=\rho^{-1}(x) y^{(4)}(x), \quad\left(D_{\alpha} y\right)(x)=\rho^{-1}(x)\left(\alpha(x) y^{\prime \prime}(x)\right)^{\prime \prime} \\\left(D_{\beta} y\right)(x) &=\rho^{-1}(x) \beta(x) y(x), \quad(C y)(x)=\rho^{-1}(x)\left(g(x) y^{\prime}(x)\right)^{\prime} \end{aligned}
\end{equation}
on the domains
$$
\mathcal{D}(A)=\mathcal{D}\left(D_{\alpha}\right)=\mathcal{D}(C)=\left\{y | y \in W_{2}^{4}[0,1], y(0)=y^{\prime}(0)=y(1)=y^{\prime}(1)=0\right\} ,
$$\\
$$\mathcal{D}\left(D_{\beta}\right)=L_{2}([0,1], \rho(x))=H .$$
We denote by $I$ the identity operator and by $W_{2}^{k}[0,1]\left(k \in \mathbb{N}^{+}\right)$ the Sobolev spaces.

Naturally, it is more fruitful to study an abstract operator pencil of the form (1.5) rather then problem (1.3), (1.4). We have only to extract the most essential properties of the operators (1.6). We observe that these operators satisfy the following conditions (the terminology of unbounded operator theory we borrow from the book \cite{11::Ka}):
\begin{itemize}
\item[\textit{i)}] $A=A^{*} \gg 0$ (i.e. $A$ is self-adjoint and uniformly positive), and $T:=A+C$ is self-adjoint and bounded below;
\item[\textit{ii)}] $D_{\alpha}$ and $D_{\beta}$ are nonnegative symmetric $A$-bounded operators.
\item[\textit{iii)}] the identity operator $I$ and the operator $C$ are $A$-compact (or $T$-compact) and hence $T$ has finitely many negative eigenvalues.

\end{itemize}

Some results on spectrum of problem (1.3), (1.4) in the case $\alpha(x)=\textit{const}$ were reported by Pivovarchik \cite{11::PI}. The comprehensive study of abstract pencil (1.5) with $D_{\alpha}=\alpha A, \quad C=0$ was carried out by Lancaster and Shkalikov \cite{11::LS}. Additional results in the case $D_{\alpha}=\alpha A, \quad C \neq 0$ were obtained in a recent paper by Shkalikov and Griniv \cite{11::SG}. New problems appear in the case $\alpha\neq const$, as pencil (1.5) in this situation has nontrivial essential spectrum. However, we leave an interesting problem on the spectrum localization of pencil (1.5) with $D_{\alpha} \neq \alpha A$ for another occasion. We will deal with pencil (1.5) (and more general ones) mainly in view of the application of our index formula.

A more interesting example for the application of the index formula comes from hydrodynamics. Namely, small transverse oscillations of ideal incompressible fluid in a pipe of finite length are described by the equation which is obtained from (1.1) if we add in the left hand side of (1.1) the "gyroscopic" term
$$
2 s v \partial^{2} u / \partial x \partial t .
$$
Here $v$ is the velocity of the fluid and $s$ depends on the mass of the pipe and the fluid (see \cite{11::ZKM}, for example). The physical meanings of the functions in (1.1) are subject to change in this situation. In particular, $g(x) = v^2$. Assuming $sv = const$ and repeating the previous arguments we come to the following quadratic spectral problem
\begin{equation}
\left[\lambda^{2} I+\lambda\left(D_{\alpha}+D_{\beta}+i G\right)+A+C\right] y=0 ,
\end{equation}
where
$$
G y=-2 s v i y^{\prime}, \quad \mathcal{D}(G)=\mathcal{D}(A)
$$
and$D_{\alpha}, D_{\beta}, A, C$ are defined as in (1.6). The last operators retain the properties i)-iii). The most essential properties of the operator $G$ are the following:
\begin{itemize}
\item[\textit{iv)}]  $G$ is a symmetric $T$-bounded operator;
\item[\textit{v)}] $G$ is a $T$-compact operator.
\end{itemize}

It is also of interest to consider equation (1.1) on the semi-axis  $x \in \mathbb{R}^{+}$ (see the papers of Pivovarchik \cite{11::P2} and Griniv \cite{11::Gr}). Assuming that the left end of a beam is clamped we define the operator coefficients in (1.5) by equalities (1.6) on the domains
$$
\mathcal{D}(A)=\mathcal{D}\left(D_{\alpha}\right)=\mathcal{D}\left(D_{\beta}\right)=\mathcal{D}(C)=\left\{y | y \in W_{2}^{4}[0, \infty], y(0)=y^{\prime}(0)=0\right\}
$$
Obviously, this definition is correct if we assume in addition that all the functions $\rho(x), \rho^{-1}(x), \alpha(x), \beta(x), g(x)$ are bounded on $\mathbb{R}^{+}$. In this case, the properties i)-ii) are
retained, however, the property iii) is not true any more. This makes the problem much more complicated. Nevertheless, under some additional assumptions on the behavior of the function $g(x)$ at $\infty$ (see \cite{11::Gr}) the important property
\begin{itemize}
\item[\textit{vi)}]  $T=A+C$ has finitely many negative eigenvalues
\end{itemize}
remaims valid.

Analogously, equation (1.1) with the additional "gyroscopic"	term	 can	 be	considered
on the semi-axis $\mathbb{R}^{+}$ with respect to the variable $x$. In this	 case	we	obtain	a	pencil	of
the form (1.7) whose coefficients satisfy the properties i)-ii), iv) and also the property vi) under additional assumptions on the behavior of the function $g(x)$ found in the paper \cite{11::Gr}.
\subsection{Root subspaces of linear dissipative pencils and their properties}
In this section we deal with a linear dissipative operator pencil
$$
A(\lambda)=T-\lambda W,
$$
where $W$ is a bounded self-adjoint operator, while $T$ is a closed dissipative operator in Hilbert space $H$. This means that $T$ is closed and
$$
\operatorname{Im}(T x, x) \geqslant 0 \quad \text { for all } x \in \mathcal{D}(T)
$$
and $\mathcal{D}(T)$ is the domain of $T$. Through all the section we also assume that there exists at least one point $\mu_0$ belonging to the open upper half plane $\mathbb{C}^{+}$ such that $A\left(\mu_{0}\right)$ has a bounded inverse, i.e. $\mu_{0} \in \rho(A)$.

If the operator $W$ has a bounded inverse then the spectrum $\sigma(A)$ and the root subspaces $\mathcal{L}_{\mu}(A)$ of the pencil $A(\lambda)$ coincide with those of the operator $A = W^{-1}T$. Hence, in this case spectral problems for the pencil $A(\lambda)$ are equivalent to those for dissipative operators in Krein or Pontrjagin spaces (see \cite[ch.II, $\S$2]{11::AI}). In the sequel we prefer to deal with the linear pencil $A(\lambda)$. The motivation for this becomes clear when considering the corresponding operator differential equations. Moreover, at least formally, we obtain more general results, as we do not always assume that $W$ generates a regular indefinite metric.

The basic goal of this section is to prove formula (2.17). This formula is based on the well-known fundamental result on the existence of a maximal $W$-nonnegative $A$-invariant subspace in Pontrjagin space and on the explicit construction of maximal $W$-nonnegative subspaces corresponding to real normal eigenvalues of the pencil $A(\lambda)$ or of the operator $A=W^{-1}T$. In the paper \cite{11::SI} the author considered dissipative operator pencils of an arbitrary order $n \geqslant 1$ and constructed for such pencils regular canonical systems corresponding to real normal eigenvalues. This construction allows us to define the sign characteristics for Jordan chains and to realize the construction of a maximal $W$-nonnegative subspace $\mathcal{L}_{\mu}^{+}$ in the root subspace $\mathcal{L}_{\mu}$ corresponding to a real eigenvalue $\mu$. The additional details for linear pencils were given in the unpublished manuscript \cite{11::S2}. We note also the papers of Kostyuchenko and Or
 azov \cite{11::K} (devoted to the case of a self-adjoint operator $T$) and Gomilko \cite{11::G} related to this topic. However, our construction is new and, perhaps simpler, even for self-adjoint pencils. In addition we obtain the information on the connection of the middle elements of mutually adjoint canonical systems. This information is essentially used when considering half range completeness and minimality problems (see \cite{11::S1}). Recently Ran and Temme \cite{11::RT} investigated an analogous problem from another point of view. Here
we present some results of \cite{11::S2} concerning this subject.

Let $\mu$ be an eigenvalue of the pencil $A(\lambda)=T -\lambda W$ and
\begin{equation}
y_j^0,y_j^1,\dots,y_j^{p_j},\qquad j=1,\dots,N,
\end{equation}
be a canonical system of eigen and associated elements (or Jordan chains) corresponding to $\mu$ (see \cite{11::Ke}). The linear span of all elements (2.1) is denoted $\mathcal{L}_{\mu}(A)$ or simply $\mathcal{L}_{\mu}$ and is called the root subspace corresponding to the eigenvalue $\mu$. An eigenvalue $\mu$ is said to be normal if $A(\lambda)$ is invertible in a punctured neighborhood of $\mu$ and the number $N = \Ker(T - \mu W)$ as well as the lengths $p_j + 1$ of Jordan chains (2.1) are finite. It is known \cite{11::Ke} that the principal part of the Laurent expansion of the function $A^{-1} (\lambda)$ at the pole $\mu$ has the representation
\begin{equation}
\sum\limits_{j=1}^{N}\sum\limits_{s=0}^{p_j}\frac{(\cdot, x^s_j)y^0_j+\dots+(\cdot, x^0_j) y_j^s}{(\lambda - \mu)^{p_j+1-s}},
\end{equation}
where the adjoint system
\begin{equation}
x_j^0,x_j^1,\dots, x_j^{p_j}, \qquad j=1,\dots,N,
\end{equation}
is uniquely determined by the choice of system (2.1). It turns out that the adjoint system
(2.3)  is a canonical system of Jordan chains corresponding to the eigenvalue $\overline{\mu}$ of the pencil $A^*(\lambda)=T^*-\lambda W$.

Further the upper index is always used for numeration of associated elements while the the lower one numerates eigenvalues and canonical chains simultaneously, i.e. each eigenvalue is counted as many times as its geometric multiplicity. The set of all eigenvalues of the pencil $A(\lambda)$ is denoted $\sigma_p(A)$. For the subset in $\sigma_p(A)$ consisting of the normal eigenvalues we reserve the notation $\sigma_d(A)$ (the discrete spectrum). Notice that canonical system of Jordan chains (2.1) is well defined for any $\mu \in \sigma_p(A)$ (possibly, consisting of infinitely many elements), however, adjoint system (2.3) is well defined only for $\mu \in \sigma_d(A)$. As usually the indefinite scalar product $(Wx, x)$ is denoted $[x,x]$.

Although some of the subsequent propositions are essentially known, we present their proofs here for the reader's convenience. New constructions axe started from Proposition 2.6.

\begin{proposition}
Let $\mathcal{L}^0_+$ be the minimal subspace containing the root subspaces corresponding to all $\mu \in \mathbb{C}^+ \cap \sigma_p(A)$. Then $\mathcal{L}^0_+$ is a $W$-nonnegative subspace.
\end{proposition}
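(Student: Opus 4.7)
The plan is to adapt the energy-flux argument from Proposition~8.1 to the present setting. Associate with the pencil $A(\lambda)=T-\lambda W$ the operator differential equation
$$iWu'(t)+Tu(t)=0,\qquad t\geqslant 0,$$
obtained by the formal substitution $\lambda\mapsto -id/dt$. A direct verification shows that every Jordan chain $y^0_k,y^1_k,\dots,y^{p_k}_k$ at an eigenvalue $\mu_k$ generates elementary solutions
$$u^h_k(t)=e^{i\mu_k t}\sum_{j=0}^{h}\frac{(it)^j}{j!}\,y^{h-j}_k,\qquad 0\leqslant h\leqslant p_k,$$
and whenever $\mu_k\in\mathbb{C}^+$ each $u^h_k(t)$ decays exponentially in $H$ as $t\to+\infty$. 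Accordingly, any finite linear combination $u(t)=\sum_{k,h}c_{k,h}u^h_k(t)$ built from such chains solves the equation, lies in $\mathcal{D}(T)$ at every $t$, and tends to zero in $H$ as $t\to+\infty$.

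The heart of the argument is the energy identity. Using $Wu'=iTu$ together with $W=W^{*}$, one computes
$$\frac{d}{dt}[u(t),u(t)]=(Wu',u)+(Wu,u')=i(Tu,u)-i\overline{(Tu,u)}=-2\,\mathrm{Im}(Tu(t),u(t))\leqslant 0,$$
the last inequality being the dissipativity of $T$. Since $W$ is bounded and $u(t)\to0$, we have $[u(t),u(t)]\to 0$ as $t\to+\infty$ as well. Integrating the identity on $[0,+\infty)$ gives
$$[u(0),u(0)]=2\int_0^{\infty}\mathrm{Im}(Tu(s),u(s))\,ds\geqslant 0.$$

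Since $u(0)=\sum c_{k,h}y^{h}_k$ is an arbitrary element of the linear span of the Jordan chains at eigenvalues $\mu_k\in\mathbb{C}^+\cap\sigma_p(A)$, the indefinite form $[\cdot,\cdot]$ is nonnegative on this span, which by definition is dense in $\mathcal{L}^0_+$. The form $[\cdot,\cdot]=(W\cdot,\cdot)$ is continuous because $W$ is bounded, so the nonnegativity propagates to the closure $\mathcal{L}^0_+$. I foresee no serious obstacle: the only points needing care are the elementary-solution formula, which reduces to the chain relations $(T-\mu_k W)y^h_k=Wy^{h-1}_k$, and the fact that $u(t),u'(t)\in\mathcal{D}(T)$ at each $t$, which is automatic for finite sums of EAV.
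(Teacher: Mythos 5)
Your proof is correct and follows essentially the same route as the paper's: both construct the elementary exponential solutions of $iWu'+Tu=0$ from the Jordan chains, exploit the decay of these solutions for eigenvalues in $\mathbb{C}^+$, compute $\frac{d}{dt}[u(t),u(t)]=-2\,\mathrm{Im}(Tu(t),u(t))\le 0$, and integrate over $[0,\infty)$ to conclude $[u(0),u(0)]\ge 0$ on a dense subset of $\mathcal{L}^0_+$, passing to the closure by continuity of the bounded form $(W\cdot,\cdot)$.
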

\begin{proof}
(Cf. \cite[Ch.2, Corollary 2.22]{11::AI}). We present here another, shorter proof. Suppose eigenvalues are numerated as many times as their geometric multiplicity. Let us consider the functions
$$
u_j^h(t)=e^{i\mu_j t}\left( y^h_j+\frac{it}{1!}y_j^{h-1}+\dots+\frac{(it)^h}{h!}y_j^0\right) , \qquad h=0,1,\dots,p_j,
$$
where $y_j^0,\dots,y_j^{p_j}$ are Jordan chains corresponding to the eigenvalues $\mu \in \mathbb{C}^+$. It is	easily
seen that the functions $u_j^h(t)$ satisfy the equation
$$
i W u^{\prime}(t) +Tu(t)=0.
$$
Any linear combination $u(t) = \sum c_{j,h} u^h_j(t0)$ also satisfies this equation, therefore
\begin{multline*}
[u(\xi), u(\xi)]^{\prime}= (W u^{\prime}(\xi)) +(u(\xi), Wu^{\prime}(\xi))\\
=(i T u(\xi), u(\xi)) + ( u(\xi), i T u(\xi) )= -2 Im(T u(\xi), u(\xi)).
\end{multline*}
As all the functions $u_j^h(t)$ vanish at $\infty$, so does $u(t)$. Integrating the last equality from $t$ to $\infty$ we obtain
$$
[u(t), u(t)] = 2 \int\limits_{t}^{\infty} Im(T u(\xi), u(\xi)) \geqslant 0.
$$
In particular $[u(0),u(0)] \geqslant0$ for all $u(0) = \sum c_{j, h} y^h_j$. By the definition the set of these elements is dense in $\mathcal{L}^0_+$, hence, $\mathcal{L}^0_+$ is a $W$-nonnegative subspace.
\end{proof}
\begin{proposition}
Let (2.1) be a canonical system corresponding to a real eigenvalue $\mu$.
If $[\gamma]$ is the integer part of a number $\gamma$ then the elements
\begin{equation}
y_k^0,y_k^1, \dots, y_k^{\alpha_k}, \qquad k = 1,\dots,N, \qquad \alpha_k=\left[ \frac{p_k}{2}\right],
\end{equation}
belong to $\mathcal{D}(T^*)$ and $T^* y_k^h = T y_k^h$ for all $1 \leqslant k \leqslant N, \quad 0\leqslant h \leqslant \alpha_k$.
\end{proposition}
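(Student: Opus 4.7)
The plan is to adapt the argument of Proposition 8.2 of Part I to the present setting where $T$ is only closed and dissipative. The bounded-operator step $\Im(Ty,y)=0 \Rightarrow y \in \Ker T^I$ is no longer at hand, so I would first record the substitute statement: if $y \in \mathcal{D}(T)$ satisfies $\Im(Ty,y)=0$, then $y \in \mathcal{D}(T^*)$ and $T^*y = Ty$. For $z \in \mathcal{D}(T)$, $\theta \in \mathbb{R}$ and $t \in \mathbb{R}$ the inequality $\Im(T(y+te^{i\theta}z), y+te^{i\theta}z) \ge 0$ has $t=0$ as a minimum, so the derivative in $t$ at $t=0$ vanishes; the arbitrariness of $\theta$ then forces $(Ty,z) = \overline{(Tz,y)}$, which is precisely the defining property of $y \in \mathcal{D}(T^*)$ with $T^*y=Ty$.

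Then I would proceed by induction on $h$. For $h=0$, the relation $Ty_k^0 = \mu W y_k^0$ together with $\mu \in \mathbb{R}$ and $W=W^*$ makes $(Ty_k^0, y_k^0)$ real, and the lemma applies. For the inductive step, assume the conclusion holds for $0,1,\ldots,h-1$ with $h \le \alpha_k = [p_k/2]$. From $(T-\mu W)y_k^h = Wy_k^{h-1}$ one obtains $\Im(Ty_k^h, y_k^h) = \Im(Wy_k^{h-1}, y_k^h)$, so it suffices to prove $(Wy_k^{h-1}, y_k^h) = 0$.

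For this I would establish the telescoping chain
$$
(Wy_k^{h-1}, y_k^h) = (Wy_k^{h-2}, y_k^{h+1}) = \cdots = (Wy_k^0, y_k^{2h-1}).
$$
Each equality is obtained by moving $W$ across by $W=W^*$, replacing $Wy_k^{h+j}$ with $(T-\mu W)y_k^{h+j+1}$, and then passing $T$ back by means of the inductive hypothesis $T^*y_k^{h-1-j} = Ty_k^{h-1-j}$ together with the defining relation $Ty_k^{h-1-j} = \mu W y_k^{h-1-j} + W y_k^{h-2-j}$; the two $\mu$-contributions cancel thanks to $W=W^*$. A final step of the same type, using $Wy_k^{2h-1} = (T-\mu W)y_k^{2h}$ and $T^*y_k^0 = Ty_k^0 = \mu W y_k^0$, reduces the terminal term to $\mu(Wy_k^0, y_k^{2h}) - \mu(y_k^0, Wy_k^{2h}) = 0$. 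The hypothesis $h \le [p_k/2]$ is exactly what keeps every index that appears, up to $2h$, within the admissible range $\{0,\ldots,p_k\}$.

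The main obstacle is the auxiliary lemma: for closed $T$ one cannot extract a domain statement for $T^*$ from an algebraic identity involving the self-adjoint and anti-self-adjoint parts, so a genuine variational argument based on dissipativity is required. The telescoping computation itself is routine once the index constraints are tracked carefully.
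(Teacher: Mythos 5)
Your proof is correct, and the induction together with the telescoping identity
$(Wy_k^{h-1},y_k^h)=(Wy_k^{h-2},y_k^{h+1})=\cdots=(Wy_k^0,y_k^{2h-1})=0$
is essentially the paper's proof. The one place where you take a genuinely different route is the preliminary lemma: to show that $\Im(Ty,y)=0$ forces $y\in\mathcal{D}(T^*)$ and $T^*y=Ty$, you differentiate $t\mapsto\Im\bigl(T(y+te^{i\theta}z),\,y+te^{i\theta}z\bigr)$ at its interior minimum $t=0$ and let $\theta$ range over $\mathbb{R}$, whereas the paper introduces the indefinite form $\langle\{x_1,x_2\},\{y_1,y_2\}\rangle=i(x_1,y_2)-i(x_2,y_1)$ on $H\times H$, observes that it is nonnegative on the graph $\Gamma(T)$ with $\langle\mathbf{x},\mathbf{x}\rangle=2\Im(Tx,x)$, and applies the Cauchy--Schwarz--Bunyakovskii inequality on $\Gamma(T)$ to the degenerate vector $\mathbf{x}=\{y,Ty\}$. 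Both arguments are rigorous; the variational version is perhaps slightly more elementary since it avoids the auxiliary space, while the paper's version is in the spirit of the indefinite-metric techniques running through the rest of the text and immediately generalizes (the same Cauchy--Schwarz trick is what yields biorthogonality-type identities elsewhere). Your handling of the index constraint $h\le[p_k/2]\Rightarrow 2h\le p_k$ and of the terminal step (using $y_k^{-1}=0$) is also correct and matches the paper.
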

\begin{proof}
First we notice that $T^*$ is well defined, as the	operator $T$ is closed	 by	assumption
(see \cite[Ch.3, $\S$5.5]{11::Ka}). Now, let us prove the following:
If $x \in \mathcal{D}(T)$   and $Im(Tx,x) =0$ then $x \in \mathcal{D}(T^*)$ and $T^{*} x=Tx$. (Cf. \cite[Ch.2, Theorem 2.15]{11::AI}). To prove this fact, we introduce an indefinite product in the space $\textbf{H} = H \times H$ as follows
$$
\langle \{x_1, x_2\} , \{y_1, y_2\}\rangle = i (x_1, y_2) - i(x_2, y_1).
$$
As $T$ is dissipative, we have
$$
\langle\textbf{x},\textbf{x}\rangle = 2Im(Tx,x)\geqslant0 \qquad \text{for all} \textbf{x} = \{x,Tx\} \in \Gamma(T),
$$
where $\Gamma(T)$ is the graph of $T$. If $x \in \mathcal{D}(T)$ and $Im(Tx,x) = 0$ then by virtue of Cauchy--Schwarz--Bunyakovskii inequality we obtain
$$
|(x,Tz) - (Tx, z)| = |\langle\textbf{x},\textbf{z}\rangle| \leqslant\langle \textbf{x}, \textbf{x}\rangle ^{1/2} \langle \textbf{z}, \textbf{z}\rangle ^{1/2} = 0 \qquad \text{for all} \textbf{z} = \{z, Tz\} \in \Gamma(T).
$$
Hence, $(Tz,x) = (z,Tx)$ for all $z \in \mathcal{D}(T)$. From the definition of the adjoint operator we obtain $x \in \mathcal{D}(T^*)$ and $T^*x = Tx$.

Now let us prove the assertion of Proposition 2.2. As the elements of system (2.1) are Jordan chains, we have
\begin{equation}
(T - \mu W) y_k^h = W y_k^{h-1}, \qquad o \leqslant h \leqslant p_k (y_k^{-1}:=0).
\end{equation}
In particular,
$$
Im((T - \mu W) y_k^0,y_k^0) = Im(Ty_k^0,y_k^0) = 0 .
$$
Therefore, $y_k^0 \in D(T^*)$ and $T y_k^0 = T^*  y_k^0$.
Now we can end the proof by induction. Suppose that for some $h \leqslant \alpha_k$ we have proved that
$$
y_k^s \in D(T^*) \text{and} T y_k^s = T^* y_k^s \qquad \text{for} s = 0, 1, \dots, h-1.
$$
As $2h \leqslant p_k$, we find
\begin{multline*}
(W y_k^{h-1} , y_k^h ) = (y_k^{h-1}, (T - \mu W)y_k^h) = (W y_k^{h-2}, y_k^{h+1}) =\dots\\
=(y_k^0,(T - \mu W)y_k^{2h}) = 0.
\end{multline*}
Hence, $Im (W y_k^{h-1} + \mu W y_k^h, y_k^h) = 0$ and $Im (T y_k^h, y_k^h) = Im((T - \mu W)y_k^h - W y_k^{h-1}, y_k^h)$
As before we deduce that $y_k^h \in D(T^*)$ and $T y_k^h = T^* y_k^h$.
\end{proof}
\begin{proposition}
Let (2.1) be a canonical system corresponding to a real eigenvalue $\mu$ . Then
\begin{equation}
[y_k^h, y_j^s] = 0, \qquad j= 1, \dots, N, \qquad h\leqslant[(p_k-1)/2], \qquad s \leqslant [(p_k-1)/2].
\end{equation}
If $p_j \neq p_k$ then (2.6) hold for all $s \leqslant [p_j/2], h \leqslant [p_k/2]$.
\end{proposition}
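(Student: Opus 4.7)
My plan is to prove the orthogonality by deriving a \emph{shift identity} for the indefinite product and then reducing to a boundary case, relying decisively on Proposition~2.2 which places the first half of every Jordan chain in $\mathcal{D}(T^{*})$ with $T^{*}y_{k}^{h}=Ty_{k}^{h}$.

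First, I would derive the shift identity
$$[y_k^{h-1}, y_j^{s+1}] = [y_k^h, y_j^s].$$
Starting from $(Wy_{k}^{h},y_{j}^{s})=(y_{k}^{h},Wy_{j}^{s})$ and replacing $Wy_{j}^{s}=(T-\mu W)y_{j}^{s+1}$ via the chain equation (which requires $s+1\leqslant p_{j}$), one moves $T$ to the left using $y_{k}^{h}\in\mathcal{D}(T^{*})$ (Proposition~2.2, valid for $h\leqslant[p_{k}/2]$) and then substitutes $Ty_{k}^{h}=\mu Wy_{k}^{h}+Wy_{k}^{h-1}$; the $\mu$-terms cancel and the identity drops out. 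A parallel derivation with the roles of $k$ and $j$ interchanged shows that the same identity holds under the alternative hypothesis $s\leqslant[p_{j}/2]$, giving two complementary versions of the shift.

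Second, I would establish the boundary identity $[y_{k}^{h},y_{j}^{0}]=0$ for every $h\leqslant p_{k}-1$ by the same mechanism: write $Wy_{k}^{h}=Ty_{k}^{h+1}-\mu Wy_{k}^{h+1}$, transfer $T$ onto $y_{j}^{0}$ using $T^{*}y_{j}^{0}=\mu Wy_{j}^{0}$ (Proposition~2.2 always applies at the head of a chain), and note that the two resulting terms cancel. By the symmetric argument one also has $[y_{k}^{0},y_{j}^{s}]=0$ for all $s\leqslant p_{j}-1$.

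Third, for $(h,s)$ in the range of the statement I would iterate the shift identity, transporting $(h,s)$ along the anti-diagonal $h+s=\text{const}$ until one coordinate reaches zero, and then invoke the boundary identity. The hypothesis $h,s\leqslant[(p_{k}-1)/2]$ forces $h+s\leqslant p_{k}-1$, so the endpoint $(h+s,0)$ (or, symmetrically, $(0,h+s)$) lies within an admissible chain; and the availability of the two complementary versions of the shift guarantees that at each intermediate position either $h'\leqslant[p_{k}/2]$ or $s'\leqslant[p_{j}/2]$ holds, so the shift is always applicable. The sharper statement under $p_{j}\neq p_{k}$ with the wider range $h\leqslant[p_{k}/2]$, $s\leqslant[p_{j}/2]$ follows from the strict inequality $[p_{k}/2]+[p_{j}/2]\leqslant\max(p_{k},p_{j})-1$, which is forced precisely by $p_{k}\neq p_{j}$ and again lands the terminal index inside an admissible chain.

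The main technical obstacle is the case analysis needed to verify the shift condition at every step of the iteration — particularly when $p_{k}$ and $p_{j}$ are close in value or of different parities. The decisive point is the interplay of the two versions of the shift identity produced in the first step: exactly when one version fails because an index has grown past $[p_{k}/2]$, the complementary index has become small enough for the other version to apply. This dichotomy is built into the hypothesis and is what makes the proposition sharp.
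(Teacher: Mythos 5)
Your proof is correct and rests on the same structural input as the paper's own argument: Proposition~2.2 (which equalizes $T$ and $T^*$ on the lower half of each chain) and the telescoping shift it induces on the indefinite form. The difference is one of organization and economy. After normalizing to $p_j\le p_k$ (harmless, since $[y_k^h,y_j^s]=\overline{[y_j^s,y_k^h]}$), the paper runs a \emph{single} sweep
\[
(Wy_k^h,y_j^s)=(Wy_k^{h+1},y_j^{s-1})=\cdots=\bigl(y_k^{h+s+1},(T^*-\mu W)y_j^0\bigr)=0,
\]
whose only per-step hypothesis, $s'\le[p_j/2]$, is satisfied at the start and can only improve as $s'$ decreases; the final evaluation already contains what you prove separately as a boundary identity, and the ``complementary'' version of the shift is never invoked. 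Your dichotomy is therefore not wrong, but it does not do the work you attribute to it: once a direction of transport is fixed (toward $(h+s,0)$, or toward $(0,h+s)$ when $p_j>p_k$), the hypothesis of the single relevant derivation is preserved monotonically, and the alternation between two shift mechanisms never arises. Two small precision points to fix in a write-up: for the shift $(h',s')\to(h'+1,s'-1)$ your parallel derivation requires $h'+1\le[p_k/2]$, not $h'\le[p_k/2]$ (the counting bound $h+s\le[p_k/2]+[p_j/2]$ still absorbs this, but the stated dichotomy is off by one); and when $p_j\ne p_k$ only one of the two endpoints $(h+s,0)$, $(0,h+s)$ is guaranteed to lie within an admissible chain, so the phrasing ``or, symmetrically'' should be ``whichever is admissible.''
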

\begin{proof}
Suppose $p_j \leqslant p_k$. Then it follows from our assumptions that $h + s + 1 \leqslant p_k$. Taking into account (2.5) and the equalities $T y_j^s = T^* y_j^s$ (Proposition 2.2) we find
\begin{multline*}
(y_k^h , W y_j^s ) =((T - \mu W) y_k^{h+1}, y_j^s) = (y_k^{h+1}, W y_j^{s-1}) =\dots\\
=(y_k^{h+s+1}, (T^* -\mu W)y_j^0) =0.
\end{multline*}
and the equalities (2.6) follow.
\end{proof}

\begin{proposition}\label{pr:24}
Let $\nu$ and $\mu$ be eigenvalues of the pencils $A(\lambda)$ and $A^*(\lambda)$ respectively. If $\nu\ne\overline{\mu}$ then the root subspaces $\mathcal{L}\nu(A)$ and $\mathcal{L}\mu(A^*)$ are $W$-orthogonal. In particular, truncated Jordan chains (2.4) corresponding to a real eigenvalue $\mu$ of the pencil $A(\lambda)$ are $W$-orthogonal to any root subspace $\mathcal{L}\nu(A)$ if $\nu\ne\mu$.
\end{proposition}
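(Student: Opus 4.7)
The plan is to prove the first (general) assertion by a double induction on $h+s$, and then deduce the special case from Proposition~2.2 plus the first assertion.

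First I set up the key identity. Let $y_j^h \in \mathcal{L}_\nu(A)$ and $x_k^s \in \mathcal{L}_\mu(A^*)$ be elements of canonical Jordan chains, so by the very definition of EAV
\[
(T-\nu W)y_j^h = W y_j^{h-1}, \qquad (T^*-\mu W)x_k^s = W x_k^{s-1},
\]
with the convention $y_j^{-1}=x_k^{-1}=0$. Since $x_k^s\in \mathcal{D}(T^*)$ and $y_j^h\in \mathcal{D}(T)$, the pairing $(Ty_j^h,x_k^s)=(y_j^h,T^*x_k^s)$ is legitimate. Substituting the expressions for $Ty_j^h$ and $T^*x_k^s$ and using that $W=W^*$, one obtains after a short manipulation
\[
(\nu-\overline{\mu})(Wy_j^h,x_k^s) = (y_j^h,Wx_k^{s-1}) - (Wy_j^{h-1},x_k^s).
\]
Under the hypothesis $\nu\neq\overline{\mu}$ this is a recursion for $(Wy_j^h,x_k^s)$ in terms of quantities with smaller total index $h+s$.

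Next I run the induction. For the base case $h=s=0$ the right-hand side vanishes, so $(Wy_j^0,x_k^0)=0$. For the inductive step, assume the claim holds for all pairs with smaller $h+s$; then both terms on the right-hand side vanish, yielding $(Wy_j^h,x_k^s)=0$. Since this holds for arbitrary elements of mutually independent canonical chains spanning $\mathcal{L}_\nu(A)$ and $\mathcal{L}_\mu(A^*)$, the two root subspaces are $W$-orthogonal.

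For the ``in particular'' statement, suppose $\mu\in\mathbb{R}$ and let $y_k^0,\dots,y_k^{\alpha_k}$ be the truncated chain (2.4). By Proposition~2.2 each such $y_k^h$ lies in $\mathcal{D}(T^*)$ and satisfies $T^*y_k^h=Ty_k^h$; combining this with the defining relation $(T-\mu W)y_k^h=Wy_k^{h-1}$ gives $(T^*-\mu W)y_k^h=Wy_k^{h-1}$, so the truncated chain is itself a Jordan chain of $A^*(\lambda)$ at the eigenvalue $\overline{\mu}=\mu$. Applying the general statement with this chain in the role of $\{x_k^s\}$ and with any $\nu\neq\mu=\overline{\mu}$ yields the required $W$-orthogonality.

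I do not anticipate a serious obstacle here: the only point requiring some care is making sure $T^*$ may be applied wherever it appears. In the general part this is automatic from $x_k^s\in \mathcal{L}_\mu(A^*)\subset\mathcal{D}(T^*)$, while in the ``in particular'' part it is supplied precisely by Proposition~2.2, which restricts $h$ to $h\leq[p_k/2]$ — exactly the range in (2.4). Hence the argument above is complete.
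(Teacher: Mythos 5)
Your proof is correct and follows essentially the same route as the paper's: the identity you derive is exactly relation (2.5) of the paper rewritten as a recursion, the induction is on the same total index $h+s$, and the ``in particular'' case is handled exactly as the paper does, by invoking Proposition~2.2 to recognize the truncated chain as a Jordan chain of $A^*(\lambda)$ at $\overline\mu=\mu$.
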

\begin{proof}
Let $y^{0}, \ldots, y^{p} \in \mathcal{L}_{\nu}(A), x^{0}, \ldots, x^{q} \in \mathcal{L}_{\mu}\left(A^{*}\right)$ be Jordan chains and $\nu\ne\overline{\mu}$. Using
\begin{equation}\label{eq:25}
\begin{aligned}\left(T y^{s}, x^{l}\right) &=\nu\left[y^{s}, x^{l}\right]+\left[y^{s-1}, x^{l}\right] \\ &=\left(y^{s}, T^{*} x^{l}\right)=\bar{\mu}\left[y^{s}, x^{l}\right]+\left[y^{s}, x^{l-1}\right], \quad\left(y^{-1}:=x^{-1}:=0\right) \end{aligned}
\end{equation}
In particular, from these equalities we have $[y^0, x^0] = 0$. Now, the proof of the first assertion is ended by induction with respect to the index $s + l$. The second assertion follows from Proposition 2.2.
\end{proof}

\begin{proposition}\label{pr:25}
Let (2.1) and (2.3) be mutually adjoint canonical systems corresponding to normal eigenvalues $\mu_j$ which are enumerated according to their geometric multiplicity. Then the following biorthogonality relations hold:
\begin{equation}\label{eq:27}
\left[y_{k}^{h}, x_{j}^{s}\right]=-\delta_{k, j} \delta_{h, p_{j}-s}
\end{equation}
where $\delta_{m,n}$ is the Kronecker symbol.
\end{proposition}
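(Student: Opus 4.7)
The plan is to reduce to a single generalized eigenspace, derive an explicit closed form for $A^{-1}(\lambda)Wy_k^h$ by induction on $h$, and match it against the canonical Laurent expansion~(2.2).

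First, by Proposition~\ref{pr:24}, whenever $\mu_k\neq\mu_j$ one has $[y_k^h,x_j^s]=0$ and automatically $k\neq j$, so both sides of~\eqref{eq:27} vanish. It therefore suffices to verify the statement at a fixed eigenvalue $\mu:=\mu_k=\mu_j$, restricting the outer sum in~(2.2) to those chains $j'$ with $\mu_{j'}=\mu$.

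Second, using the chain identity $A(\lambda)y_k^h=Wy_k^{h-1}-(\lambda-\mu)Wy_k^h$ (with the convention $y_k^{-1}:=0$), I would establish by induction on $h$ the closed form
\begin{equation*}
A^{-1}(\lambda)Wy_k^h \;=\; -\sum_{r=0}^{h}\frac{y_k^{h-r}}{(\lambda-\mu)^{r+1}}.
\end{equation*}
The base case $h=0$ is immediate from $A(\lambda)y_k^0=-(\lambda-\mu)Wy_k^0$. For the inductive step, one applies $A^{-1}(\lambda)$ to the chain identity written at index $h+1$, substitutes the hypothesis for $A^{-1}(\lambda)Wy_k^h$, and solves algebraically for $A^{-1}(\lambda)Wy_k^{h+1}$.

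Third, substituting $\phi=Wy_k^h$ into~(2.2) yields a second Laurent representation of $A^{-1}(\lambda)Wy_k^h$ near $\mu$, in which the coefficient of $(\lambda-\mu)^{-(p_{j'}+1-s)}$ equals $\sum_{m=0}^{s}[y_k^h,x_{j'}^{s-m}]\,y_{j'}^m$, summed over chains $j'$ at $\mu$. Equating the two Laurent representations coefficient by coefficient and invoking the linear independence of $\{y_{j'}^m\}_{j',m}$ isolates the individual pairings: setting $\rho=p_{j'}+1-s$ and $l=p_{j'}+1-\rho-m$, the vanishing of all coefficients of pole order $\rho\geq h+2$ together with the single surviving coefficient $-y_k^{h+1-\rho}$ for $1\leq\rho\leq h+1$ yields $[y_k^h,x_{j'}^l]=-\delta_{k,j'}\,\delta_{h+l,\,p_{j'}}$, which is precisely~\eqref{eq:27}.

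The main obstacle is the multi-index bookkeeping: one must verify that every admissible pair $(h,l)$ with $0\leq h\leq p_k$ and $0\leq l\leq p_{j'}$ is accessed by some valid $(\rho,m)$, and that at each pole order $\rho$ the only canonical vector appearing on the right-hand side of the closed form is $y_k^{h+1-\rho}$, so that linear independence cleanly singles out one pairing at a time. A subsidiary technical point is the linear independence of the full canonical system $\{y_{j'}^m\}_{j',m}$ at $\mu$, which is standard for linear pencils at a normal eigenvalue and is implicit in the framework of~(2.2).
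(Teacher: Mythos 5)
Your proposal is correct, and its core is the same as the paper's: compare the Laurent representation (2.2) of $A^{-1}(\lambda)$ with what the chain identity $A(\lambda)y_k^h = Wy_k^{h-1}-(\lambda-\mu)Wy_k^h$ forces, then read off the pairings via the linear independence of the canonical system. What you do differently is purely organizational, and it is an improvement. First, you dispatch the cross-eigenvalue pairings at once by citing Proposition~\ref{pr:24}, whereas the paper handles $\mu_k\neq\mu_j$ and $\mu_k=\mu_j$ by the same coefficient-matching machinery. Second, and more substantively, you observe that the chain identity together with $A(\mu)y_k^{h+1}=Wy_k^h$ gives an \emph{exact} closed form
\begin{equation*}
A^{-1}(\lambda)Wy_k^h \;=\; -\sum_{r=0}^{h}\frac{y_k^{h-r}}{(\lambda-\mu)^{r+1}},
\end{equation*}
with no regular part, valid in a punctured neighborhood of $\mu$ — this follows by the induction you sketch, since $A(\lambda)y_k^0=-(\lambda-\mu)Wy_k^0$ is a polynomial identity. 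The paper instead works with the identity $y_k^h=A^{-1}(\lambda)A(\lambda)y_k^h$ directly and peels off coefficients iteratively, beginning with $h=0$ and the leading pole order; your closed form pre-computes that iteration, so the final comparison against (2.2) is a single pass over pole orders $\rho$. The one point you should make explicit rather than leave as ``implicit in the framework'' is that for a fixed $(j',l)$ accessed by several $(\rho,m)$, the right-hand side of the comparison is consistent: for $j'=k$ and $l=p_k-h$ the only admissible $(\rho,m)$ satisfy $\rho+m=h+1$, which forces $1\le\rho\le h+1$ and yields $-1$ each time, while every other $(j',l)$ is accessed only at positions where the closed form contributes $0$. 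Once this consistency check is written out, the proof is complete.
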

\begin{proof} (Cf.\cite{11::Ke}). We have
$$
A(\lambda) y_{k}^{h}=\left[A\left(\mu_{k}\right)-\left(\lambda-\mu_{k}\right) W\right] y_{k}^{h}=W y_{k}^{h-1}-\left(\lambda-\mu_{k}\right) W y_{k}^{h}, \quad 0 \leqslant h \leqslant p_{k},
$$
where as before it is assumed that $y_k^{-1}:= 0$. Using the representation (2.2) we obtain

\begin{equation}\label{eq:28}
\begin{array}{l}{y_{k}^{h}=A^{-1}(\lambda) A(\lambda) y_{k}^{h}} \\ {=\sum_{j=N_{1}}^{N_{2}} \sum_{s=0}^{p_{j}}\left[\frac{\left(\cdot, x_{j}^{s}\right) y_{j}^{0}+\ldots+\left(\cdot, x_{j}^{0}\right) y_{j}^{s}}{\left(\lambda-\mu_{j}\right)^{p_{j}+1-s}}+R(\lambda)\right]\left[-\left(\lambda-\mu_{k}\right) W y_{k}^{h}+W y_{k}^{h-1}\right]}\end{array}
\end{equation}
where $R(\lambda)$ is a holomorphic operator function at the point $\mu =\mu_j$ and $N_2-N_1+1$ is the geometric multiplicity of the eigenvalue $\mu$. We may assume that $N_1=1$, $N_2= N$,
$p_{1} \geqslant p_{2} \geqslant \ldots \geqslant p_{N}$.

Suppose that $\mu_k\ne \mu_j$. If we take $h=0$ and compare the coefficients of the powers $\left(\lambda-\mu_{j}\right)^{-p_{j}-1+s}, 0 \leqslant s \leqslant p_{j}$, we find
\begin{equation}{eq:29}
(-\sum_{p_{j}=p_{1}}\left[y_{k}^{0}, x_{j}^{0}\right] y_{j}^{0}=0,
\end{equation}
\begin{equation}{eq:210}
-\sum_{p_{j}=p_{1}}\left[y_{k}^{0}, x_{j}^{1}\right] y_{j}^{0}-\sum_{p_{j}=p_{1}}\left[y_{k}^{0}, x_{j}^{0}\right] y_{j}^{1}-\sum_{p_{j}=p_{1}-1}\left[y_{k}^{0}, x_{j}^{0}\right] y_{j}^{0}=0.
\end{equation}
We do not write out the other coefficients corresponding to the indices $s\ge 2$. We also notice that the third term in (2.10) should be omitted if there are no Jordan chains of length $(p_1 + 1) - 1$. It follows from the definition of a canonical system that the elements $\left\{y_{j}^{0}\right\}_{1}^{N}$ are linearly independent. Hence, from (2.9) we have
\begin{equation}{eq:211}
[y^0_k,x^0_j] = 0,\quad  \text{for all indices}\quad  j\quad \text{such that}	 p_j	= p_1.
\end{equation}
Now, it follows from (2.10) and (2.11) that
$$
[y^0_k,x^0_j] = 0,\quad  \text{if}\quad p_j=p_1-1;\quad [y^0_k,x^1_j] = 0,\quad \text{if}\quad p_j=p_1.
$$
Repeating the argument we find	$[y^0_k,x^s_j]$ for all indices $0\le s\le p_j$. Using the last
equalities and taking $h = 1,2,\ldots, p_k$, we find subsequently
$$
\left[y_{k}^{1}, x_{j}^{s}\right]=0, \ldots,\left[y_{k}^{p_{k}}, x_{j}^{s}\right]=0 \quad \text { for all } 0 \leqslant s \leqslant p_{j}.
$$
The same arguments can be applied in the case $\mu_k =\mu_j$. Comparing the coefficients of the powers
$(\lambda - \mu_j)^\nu$ in (2.10) it is found that, for $h = 0,1,\ldots,p_k$,
$$
-\left[y_{j}^{h}, x_{j}^{s}\right]=\delta_{h, p_{j}-s}
$$
and relations (2.7) follow.
\end{proof}

Let a canonical system (2.1) correspond to a real normal eigenvalue $\mu$. Denote by $S^0_\mu$ the span of elements
\begin{equation}\label{eq:212}
y_{k}^{0}, y_{k}^{1}, \ldots, y_{k}^{\beta_{k}}, \quad k=1, \ldots, N, \quad \beta_{k}=\left[\left(p_{k}-1\right) / 2\right]
\end{equation}
(if $p_k = 0$, we assume that $\beta_k = -1$ and the element $y^0_k$ does not belong to $\mathcal{S}^0_\mu$. Let us fix an index $k$, $1\le k\le N$. If the number $p_k + 1$ is even we set $S_{\mu k}:=\mathcal{S}^0_\mu$. If $p_k + 1$ is odd we denote by $\mathcal{S}_{\mu_k}$ the span of elements (2.12) combined with the elements $y^{\alpha_j}_j$, $\alpha_j= [p_j/2]$, where index $j$ runs through all the values such that $p_j = p_k$. Similary, by replacing chains
(2.1) with adjoint chains (2.3) we construct subspaces $(\mathcal{S}^0_\mu)^*$ and $\mathcal{S}^*_{\mu_k}$. We emphasize that, according to our agreement about the enumeration of eigenvalues, the subspaces $\mathcal{S}_{\mu_k}$ are generally different although $\mu_k =\mu$.

\begin{proposition}\label{pr:26}
For all nonzero real normal eigenvalues p the following equalities hold
$$
\mathcal{S}_{\mu}^{0}=\left(\mathcal{S}_{\mu}^{0}\right)^{*}, \quad \mathcal{S}_{\mu_{k}}=\mathcal{S}_{\mu_{k}}^{*} \quad \text { for all } 1 \leqslant k \leqslant N
$$
\end{proposition}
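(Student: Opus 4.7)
The plan is to parallel the proof of Proposition 8.6 of Part~I. By Proposition 2.2, every truncated direct element $y^h_k$ with $h\le\alpha_k$ belongs to $\mathcal{L}_\mu(A^*)$, so it admits an expansion in the adjoint canonical system, $y^h_k=\sum c^{(h)}_{j,s}x^s_j$, and the biorthogonality (2.7) extracts its coefficients as $c^{(h)}_{m,s}=-[y^h_k,y^{p_m-s}_m]$. Showing $y^h_k\in\mathcal{S}^*_{\mu_k}$ therefore reduces to vanishing $c^{(h)}_{j,s}$ whenever $x^s_j\notin\mathcal{S}^*_{\mu_k}$.

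Proposition 2.3 disposes of all but one index regime: when $p_j\ne p_k$, its second part forces $c^{(h)}_{j,s}=0$ for $s>\beta_j$; when $p_j=p_k$ is odd, the first part yields the same; and when $p_j=p_k$ is even but $h\le\beta_k$, the first part vanishes $c^{(h)}_{j,s}$ for $s>\alpha_j$, the only surviving index $s=\alpha_j$ contributing an element $x^{\alpha_j}_j\in\mathcal{S}^*_{\mu_k}$. The residual case is $h=\alpha_k$ with $p_j=p_k$ both even, and I would handle it by the device of Proposition 8.6 of Part~I, forming the shifted adjoint chain $\tilde x^s_j:=\sum_{\ell=0}^s(-\bar\mu)^{-\ell}x^{s-\ell}_j$, which is meaningful precisely because $\mu\ne 0$. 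A direct calculation with the chain relation $(T^*-\mu W)x^s_j=Wx^{s-1}_j$ and (2.7) yields a shifted biorthogonality $[y^{s'}_{j'},\tilde x^s_j]=-\mu\,\delta_{j',j}\delta_{s',p_j-s}$ analogous to (8.15). On the one hand, $\tilde x^{p_j-s}_j$ lies inside $\mathcal{S}^*_{\mu_j}$ for the residual indices, and the adjoint analogue of Proposition 2.3 (obtained by applying the same argument to $A^*(\lambda)$, for which $-T^*$ is dissipative) gives $[y^{\alpha_k}_k,\tilde x^{p_j-s}_j]=0$. On the other hand, the shifted biorthogonality rewrites this pairing as $-\mu\,c^{(\alpha_k)}_{j,s}$, and since $\mu\ne 0$ the coefficient vanishes.

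The reverse inclusion $\mathcal{S}^*_{\mu_k}\subseteq\mathcal{S}_{\mu_k}$ is obtained by applying the same scheme to $A^*(\lambda)$, and the equality $\mathcal{S}^0_\mu=(\mathcal{S}^0_\mu)^*$ follows by specializing the argument to indices $h\le\beta_k$ (or by intersecting over $k$, using that the dimensions match trivially from the chain lengths). The chief obstacle is the derivation and correct use of the shifted chains $\tilde x^s_j$ in the residual middle case; once these are in place, the rest is a bookkeeping exercise, and one sees that the hypothesis $\mu\ne 0$ is not merely convenient but necessary, through the factors $(-\bar\mu)^{-\ell}$ that would otherwise be undefined.
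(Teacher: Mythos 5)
Your proposal parallels Proposition~8.6 of Part~I, but that is the wrong template for this section. There the linearization is $W + \lambda T$ and the $W$-biorthogonality is available only in the shifted form (8.15), which is exactly where $\lambda_j \neq 0$ enters. In Section~11 the linearization is $T - \lambda W$, and Proposition~2.5 already gives the biorthogonality $[y_k^h, x_j^s] = -\delta_{k,j}\delta_{h,p_j-s}$ directly with respect to $W$. The paper's proof of Proposition~2.6 works in the opposite direction from yours (it expands $x_k^h$ in the $y_j^s$), applies the second part of Proposition~2.3 to the pencil $A^*(\lambda)$ together with the unshifted relation (2.7), and never forms shifted chains at all.

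More importantly, your ``residual case'' is vacuous and the device you apply to it is misdirected. When $p_j = p_k$ with both even and $h = \alpha_k$, the summation index obeys $s \le h \le \alpha_k = \alpha_j$, so the condition $s > \beta_j = \alpha_j - 1$ forces $s = \alpha_j$; but $x_j^{\alpha_j}$ with $p_j = p_k$ lies in $\mathcal{S}_{\mu_k}^{*}$ by the very definition of that subspace (these middle elements are precisely what $\mathcal{S}_{\mu_k}^{*}$ carries in addition to $(\mathcal{S}^0_\mu)^{*}$). So the coefficient $c^{(\alpha_k)}_{j,\alpha_j}$ is allowed to be nonzero and nothing needs to be proved about it — an argument that forced it to vanish would in fact contradict the non-degeneracy of the pairing on middle elements exhibited in Proposition~2.7. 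The only indices that could obstruct $y_k^h \in \mathcal{S}_{\mu_k}^{*}$ are those with $p_j \ne p_k$, and they are already dispatched by Proposition~2.3 and (2.7). Consequently the shifted combination $\sum (-\bar\mu)^{-\ell}x_j^{s-\ell}$ and the appeal to $\mu \ne 0$ have no role here, and your assertion that $\mu \ne 0$ is ``not merely convenient but necessary'' through those factors is not supported: the paper's own proof of this proposition uses neither.
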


\begin{proof}
Suppose that $y_{k}^{h} \in \mathcal{S}_{\mu_{k}}$ and$x_{k}^{h} \notin \mathcal{S}_{\mu_{k}}$. It follows from Proposition	2.2 that
$$
x_{k}^{0}, x_{k}^{1}, \ldots, x_{k}^{\alpha_{k}}, \quad k=1, \ldots, N, \quad \alpha_{k}=\left[p_{k} / 2\right],
$$
are chains of	EAE	of the pencil $A(\lambda)$ as	well as of $A^*(\lambda)$. Since (2.1) is a canonical system, we have the representation
\begin{equation}\label{eq:213}
x_{k}^{h}=\sum_{j=1}^{N} \sum_{s=0}^{h} c_{j, s} y_{j}^{s}, \quad \text { if } 0 \leqslant h \leqslant \alpha_{j}=\left[p_{j} / 2\right].
\end{equation}
We have assumed that $x_{k}^{h} \notin \mathcal{S}_{\mu_{k}}$, therefore, at least one of the numbers $C_{j,s}$ in (2.12) is not equal to zero for $s>\beta_{j}=\left[\left(p_{j}-1\right) / 2\right]$, $p_{j}<p_{k}$. In this case, however, $x_{j}^{p_{j}-s} \in \mathcal{S}_{\mu_{j}}^{*}$, i.e. $p_{j}-s \leqslant\left[p_{j} / 2\right]$. Applying Proposition 2.3 with respect to the pencil$A^*(\lambda)$ we find
$$
\left[x_{k}^{h}, x_{j}^{p_{j}-s}\right]=0.
$$
On the other hand it follows from Proposition 2.5 and representation (2.13) that
$$
\left[x_{k}^{h}, x_{j}^{p_{j}-s}\right]=-c_{j, s}
$$
Hence, the assumption $x_{k}^{h} \notin \mathcal{S}_{\mu_{k}}$ is not valid. The equality $\mathcal{S}^{0}=\left(\mathcal{S}^{0}\right)^{*}$ is proved in a similar way.
\end{proof}

\begin{proposition}\label{pr:27}
A canonical system (2.1) corresponding to a reed normal eigenvalue p of the pencil A( A) can be chosen in such a way that
\begin{equation}\label{eq:214}
\left[y_{j}^{\alpha_{j}}, y_{l}^{\alpha_{l}}\right]=\varepsilon_{j} \delta_{j, l}, \quad \alpha_{j}=\left[p_{j} / 2\right], \quad \varepsilon_{j}=\left\{\begin{array}{cl}{0} & {\text { if } p_{j}+1 \text { is even }} \\ {\pm 1} & {\text { if } p_{j}+1 \text { is odd }}\end{array}\right.
\end{equation}
for all indices $1 \le j$, $l \le N$.
\end{proposition}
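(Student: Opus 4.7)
My plan is to group the Jordan chains by length, handle the easy case of even chain length first, and then use Propositions 2.5 and 2.6 to reduce the remaining problem to Sylvester diagonalization of a Hermitian invertible matrix. Throughout, I will exploit an orthogonality fact slightly stronger than Proposition 2.3 but immediately extracted from its proof: if $p_j\le p_l$ then $[y_j^{h},y_l^{s}]=0$ as soon as $h+s+1\le p_l$.

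First, I would observe that Proposition 2.3 (together with its symmetric version for $p_j>p_l$) already forces $\bigl[y_j^{\alpha_j},y_l^{\alpha_l}\bigr]=0$ unless both $p_j$ and $p_l$ are even and equal. Indeed, if one of $p_j,p_l$ is odd, then $\alpha_j+\alpha_l+1\le\min(p_j,p_l)+1\le \max(p_j,p_l)$, and the inequality $h+s+1\le\max(p_j,p_l)$ from the proof of Proposition 2.3 applies; the same computation works if $p_j=p_l$ is odd (chain length even), because then $\alpha_j=\beta_j$ and the elements lie inside the $W$-isotropic subspace $\mathcal{S}^0_\mu$. This immediately gives $\varepsilon_k=0$ whenever $p_k+1$ is even, and shows that the Gram matrix of the middle elements decomposes into Hermitian blocks $G_p$ indexed by distinct even values $p$, where
\[
G_p=\bigl(\,[y_k^{\alpha_k},y_l^{\alpha_l}]\,\bigr)_{k,l\in J_p},\qquad J_p=\{j:p_j=p\}.
\]

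The main obstacle is to show that each block $G_p$ is invertible; once this is established, Sylvester's inertia theorem yields a non-singular $U=(U_{l,k})$ with $U^*G_p U=\mathrm{diag}(\pm 1)$, and the replacement $\tilde y_k^h:=\sum_{l\in J_p}U_{l,k}\,y_l^h$ for $h=0,\dots,p$ produces new Jordan chains (since $(T-\mu W)\tilde y_k^h=W\tilde y_k^{h-1}$ and the $\tilde y_k^0$ are linearly independent because $U$ is), still forming a canonical system. To prove invertibility of $G_p$ I would use Proposition 2.6 in the form $x_k^{\alpha_k}\in \mathcal{S}_{\mu_k}^*=\mathcal{S}_{\mu_k}$ for $k\in J_p$, which permits an expansion
\[
x_k^{\alpha_k}=\sum_{l\in J_p}c_{l,k}\,y_l^{\alpha_l}+z_k,\qquad z_k\in\mathcal{S}^0_\mu.
\]
Pairing with $y_j^{\alpha_j}$ and using the biorthogonality relations of Proposition 2.5 (which for $j,k\in J_p$ give $[y_j^{\alpha_j},x_k^{\alpha_k}]=-\delta_{j,k}$ since $p_k-\alpha_k=\alpha_k=\alpha_j$), together with the sharpened Proposition 2.3 estimate to kill the $S^0_\mu$-term $[y_j^{\alpha_j},z_k]=0$, yields
\[
-\delta_{j,k}=\sum_{l\in J_p}\overline{c_{l,k}}\,(G_p)_{j,l},
\]
so that $G_p\overline{C}=-I$ and hence $G_p$ is invertible.

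Finally, I would check that the modification is global: transformations within $J_p$ do not disturb the Gram relations between chains of different lengths (these remain $0$ by Proposition 2.3, which is preserved under level-wise linear combinations with coefficients independent of $h$), so we may iterate over the finitely many values of $p$. Processing the even $p$'s one at a time and leaving the odd-$p$ chains untouched produces a canonical system in which (2.14) holds. The delicate point, and the only genuinely non-formal step, is the invertibility of $G_p$; everything else is bookkeeping plus a standard Sylvester reduction.
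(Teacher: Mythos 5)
Your proof is correct and follows essentially the same route as the paper's: reduce to nondegeneracy of the Gram block of the middle elements $\{y_j^{\alpha_j}\}_{j\in J_p}$, establish it via the biorthogonality relations of Proposition~2.5 together with the inclusion $x_k^{\alpha_k}\in\mathcal S_{\mu_k}$ supplied by Proposition~2.6, then diagonalize and normalize. The only real difference is presentational---you make the nondegeneracy explicit through the matrix identity $G_p\overline C=-I$, whereas the paper compresses it into the assertion that the restricted Gram operator $P_kWP_k$ has exactly $q$ nonzero eigenvalues---and your ``sharpened'' version of Proposition~2.3 should be stated with the side condition that $h\le\alpha_j$ or $s\le\alpha_l$ (so that Proposition~2.2 can be applied to the chain being decremented), a hypothesis which is in fact satisfied in every instance where you invoke it.
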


\begin{proof} Fix an index $k$ such that $p_k + 1$ is odd. Assume that there are $q$ chains of the length $p_k + 1$, i.e. $p_j = p_k$ for $j = k,k + 1,\ldots, k + q - 1$. According to the definition of $\mathcal{S}_{\mu_{k}}$ we have $\operatorname{dim} \mathcal{S}_{\mu_{k}} \ominus \mathcal{S}_{\mu}^{0}=q$. Let $P_k$ be the orthoprojector onto the subspace $\mathcal{S}_{\mu_{k}}$. It follows from the biorthogonality relations (2.7) that the self-adjoint operator $P_kWP_k$ has exactly q nonzero eigenvalues which correspond to an orthogonal basis $\left\{\varphi_{s}\right\}_{1}^{q}$.	 We	can	replace,	 if
necessary, chains (2.1) corresponding to indices $l= k,k + 1,\ldots, k + q - l$,by their linear combinations find obtain a new canonical system such that the system $\left\{\varphi_{l}\right\}_{1}^{q}$ coincides with	 $\left\{y_{s}^{\alpha_{k}}\right\}_{k}^{k+q-1}$. Then,	after a proper norming, the relations (2.14) hold for all indices $l,j = k, k +1,\ldots,k + q - 1$. We can repeat the same arguments for any other index $r$ such that $\mathcal{S}_{\mu_{r}} \neq \mathcal{S}_{\mu_{k}}$. Taking into account that the subspaces $\mathcal{S}_{\mu_{r}}$ and $\mathcal{S}_{\mu_{k}}$ are $W$-orthogonal (Proposition 2.3), we obtain relations (2.14) for all indices such that $1\le j$, $l\le N$.
\end{proof}

\begin{proposition}\label{pr:28} Let a canonical system (2.1) correspond to a real normal eigenvalue $\mu$ and satisfy relations (2.14). Then for all indices $j$ such that $p_j = 2\alpha_j$ the elements $x^{\alpha_j}_j$ of the adjoint system (2.3) have the representation
\begin{equation}\label{eq:215}
x_{j}^{\alpha_{j}}=-\varepsilon_{j} y_{j}^{\alpha_{j}}+y, \quad \varepsilon_{j}=\pm 1, \quad \text { where } y \in \mathcal{S}_{\mu}^{0}.
\end{equation}
In other words: there exists a canonical system (2.1) such that for Jordan chains of odd length the middle elements $x^{\alpha_j}_j$ of its adjoint system have representation (2.15).
\end{proposition}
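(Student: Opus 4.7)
The plan is to expand $x_j^{\alpha_j}$ using Proposition 2.6 and then pin down its coefficients by pairing against middle elements via the biorthogonality relations (2.7) and (2.14). Since $p_j=2\alpha_j$ is even, the chain has odd length $p_j+1$, so by Proposition 2.6 $x_j^{\alpha_j}\in \mathcal{S}_{\mu_j}^{*}=\mathcal{S}_{\mu_j}=\mathcal{S}_\mu^0\oplus \Span\{y_l^{\alpha_l}:p_l=p_j\}$. Thus I would write
\[
x_j^{\alpha_j}=\sum_{l:\,p_l=p_j} c_l\,y_l^{\alpha_l}+y,\qquad y\in\mathcal{S}_\mu^0,
\]
and for each $m$ with $p_m=p_j$ compute $[y_m^{\alpha_m},x_j^{\alpha_j}]$ in two ways. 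The biorthogonality (2.7) gives $-\delta_{m,j}\delta_{\alpha_m,\,p_j-\alpha_j}=-\delta_{m,j}$, since $\alpha_m=\alpha_j=p_j/2$. Substituting the expansion and invoking (2.14) gives $c_m\varepsilon_m+[y_m^{\alpha_m},y]$. Equating the two would force $c_m=-\varepsilon_m\delta_{m,j}$ (using $\varepsilon_m^{2}=1$), \emph{provided} the remainder $[y_m^{\alpha_m},y]$ vanishes for every $y\in\mathcal{S}_\mu^0$.

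Proving this vanishing is the main obstacle. For basis vectors $y_r^u\in \mathcal{S}_\mu^0$ with $p_r\neq p_m$ it is immediate from the second statement in Proposition 2.3. The essential case is $p_r=p_m$ (both even) and $u\leq \alpha_m-1$. Here I would rely on the shift identity
\[
[y_m^a,y_r^b]=[y_m^{a-1},y_r^{b+1}],\qquad 1\leq a\leq\alpha_m,\ \ b+1\leq p_m,
\]
which follows by rewriting $Wy_r^b=(T-\mu W)y_r^{b+1}$, moving $T$ to the other factor via the equality $T^{*}y_m^{a}=Ty_m^{a}$ furnished by Proposition 2.2 (legitimate precisely because $a\leq\alpha_m$), and then using $(T-\mu W)y_m^a=Wy_m^{a-1}$. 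Iterating $\alpha_m$ steps from $(a,b)=(\alpha_m,u)$ yields $[y_m^{\alpha_m},y_r^u]=[y_m^0,y_r^{u+\alpha_m}]$. One final application (the constraint $u+\alpha_m+1\leq p_m$ reads $u\leq\alpha_m-1$, which holds) reduces this to $((T-\mu W)y_m^0,y_r^{u+\alpha_m+1})=0$, since $y_m^0$ is an eigenvector.

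With the vanishing secured, the preceding computation gives $c_j=-\varepsilon_j$ and $c_l=0$ for $l\neq j$ with $p_l=p_j$, which is exactly the representation (2.15). The one delicate point is the telescoping of the shift in the middle paragraph; everything else is a direct extraction from the biorthogonality relations and the definitions.
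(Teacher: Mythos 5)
Your proof is correct and follows essentially the same route as the paper: decompose $x_j^{\alpha_j}$ in $\mathcal{S}_{\mu_j}$ and pin down the coefficients by pairing with $y_m^{\alpha_m}$ via the biorthogonality (2.7) and the normalization (2.14). The one step you treat more carefully than the paper --- whose two-line proof asserts the conclusion from (2.14) without comment --- is the orthogonality $[y_m^{\alpha_m},y]=0$ for $y\in\mathcal{S}_\mu^0$ when $p_r=p_m$ is even: the first part of Proposition 2.3 as stated requires $h\leq\alpha_m-1$ and so misses $h=\alpha_m$, and your telescoping step supplies exactly this by re-running the argument underlying Proposition 2.3 in the index range ($s\leq\alpha_j$, $h+s+1\leq p_k$) that its proof actually accommodates.
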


\begin{proof} As $x_{j}^{\alpha_{j}} \in \mathcal{S}_{\mu_{j}}^{*}=\mathcal{S}_{\mu_{j}}$, we have
$$
x_{j}^{\alpha_{j}}=\sum_{p_{i}=p_{j}} c_{l} y_{l}^{\alpha_{k}}+y, \quad \text { where } y \in \mathcal{S}_{\mu}^{0}.
$$
Now, if canonical system (2.1) satisfies relations (2.14) then $c_{l}=-\varepsilon_{j} \delta_{j, l}$, and relation (2.15)  follow.
\end{proof}

A canonical system (2.1) which satisfies relations (2.14) or (2.15) is said to be \emph{regular}. The numbers $\varepsilon_j$ in (2.15) are said to be \emph{sign characteristics}.  We note that for linear self-adjoint pencils the sign characteristics are determined in a different way, namely, $\varepsilon_j = \pm 1$ for Jordan chains of any length (see \cite[ Ch.3]{11::GLR}, and \cite[Lemma 2]{11::KS}). Simple examples show that for dissipative pencils the definite sign characteristics can not be well defined for Jordan chains of even length. In this situation it is convenient to assume that the \emph{sign characteristics} $\varepsilon_j= 0$ \emph{for all chains of even length} $p_j + 1$. It is supposed that this agreement holds through the rest of the paper.

Let (2.1) be a regular canonical system corresponding to a normal reed eigenvalue $\mu$. Denote by $\mathcal{L}_{\mu}^{+}\left(\mathcal{L}_{\mu}^{-}\right)$ the span of elements (2.12) combined with $y_{j}^{\alpha_{j}}$ satisfying relations (2.14) with $\varepsilon_j = +1$ ($\varepsilon_j = -1$). Then according to the definition of the sign characteristics we have
\begin{equation}\label{eq:216}
\operatorname{dim} \mathcal{L}_{\mu}^{+}=\sum_{k=1}^{N}\left(\varepsilon_{k}^{+}+\left[\left(p_{k}-1\right) / 2\right]\right), \quad \text { where } \quad \varepsilon_{k}^{+}=\max \left(0, \varepsilon_{k}\right).
\end{equation}

\begin{proposition}\label{pr:29}
Let $\mu$ be a real normal eigenvalue of the pencil $A(\lambda) = T - \lambda W$. Then $\mathcal{L}^+_\mu$   is a maximal $W$-non-positive subspace in the root subspace $\mathcal{L}_\mu$.
\end{proposition}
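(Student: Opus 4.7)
I would prove the two assertions of the proposition separately: first the $W$-semi-definiteness of $\mathcal{L}_\mu^+$, then its maximality inside $\mathcal{L}_\mu$. Both arguments rely on the biorthogonality relations (2.7), the structural identity (2.15) relating the middle elements of direct and adjoint canonical systems, and Proposition~2.2 which provides $T^*y_l^s=Ty_l^s$ for $s\leq\alpha_l$. (The argument is symmetric under reversal of sign conventions, so whichever sign of semi-definiteness is meant, the same mechanism applies.)

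\emph{Step 1: Semi-definiteness.} Write an arbitrary $v\in\mathcal{L}_\mu^+$ as $v=s+\sum_{j:\varepsilon_j=+1}c_j\,y_j^{\alpha_j}$ with $s\in\mathcal{S}_\mu^0$. Proposition~2.3 gives $[s,s]=0$ since $\mathcal{S}_\mu^0$ is $W$-isotropic, and relation (2.14) gives $[y_j^{\alpha_j},y_l^{\alpha_l}]=\varepsilon_j\delta_{jl}=\delta_{jl}$ on the indices retained. The crux is the vanishing of the cross terms $[s,y_j^{\alpha_j}]$. I would substitute (2.15) into the biorthogonality relation (2.7): writing $x_j^{\alpha_j}=-\varepsilon_j y_j^{\alpha_j}+y'$ with $y'\in\mathcal{S}_\mu^0$, one has $-\varepsilon_j[s,y_j^{\alpha_j}]+[s,y']=[s,x_j^{\alpha_j}]=0$ for $s\in\mathcal{S}_\mu^0$ by (2.7), and then $[s,y']=0$ by Proposition~2.3. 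The case $p_l=p_j$ not directly covered by Proposition~2.3 is handled by iterating the identity $[y_k^{h-1},y_l^s]=[y_k^h,y_l^{s-1}]$ valid for $s\leq\alpha_l$ (this iteration is obtained exactly as in the proof of Proposition~2.5, using $T^*y_l^s=Ty_l^s$). Collecting these observations yields $[v,v]=\sum|c_j|^2\geq 0$, i.e.\ $\mathcal{L}_\mu^+$ is $W$-semidefinite.

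\emph{Step 2: Maximality.} Assume toward contradiction that some $W$-semidefinite subspace $\mathcal{L}'$ of $\mathcal{L}_\mu$ properly contains $\mathcal{L}_\mu^+$, and pick $v\in\mathcal{L}'\setminus\mathcal{L}_\mu^+$. Subtracting its $\mathcal{L}_\mu^+$-component I may assume
\[
v=\sum_{j:\varepsilon_j=-1}a_j\,y_j^{\alpha_j}+\sum_{(k,h)\text{ upper}}b_{k,h}\,y_k^h,
\]
where ``upper'' means $h>\alpha_k$ in an odd-length chain or $h>[(p_k-1)/2]$ in an even-length chain. If every $b_{k,h}$ vanishes, then (2.14) gives $[v,v]=-\sum|a_j|^2<0$, contradicting $W$-semidefiniteness. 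Otherwise, pick $(k_0,h_0)$ in the support of $v$ with $h_0$ maximal within chain $k_0$, and set $u:=y_{k_0}^{p_{k_0}-h_0}$; since $p_{k_0}-h_0\leq[(p_{k_0}-1)/2]$, we have $u\in\mathcal{S}_\mu^0\subset\mathcal{L}_\mu^+$, so $[u,u]=0$. Iterating $[y_k^{h-1},y_l^s]=[y_k^h,y_l^{s-1}]$ (on indices in the lower half) together with the end-of-chain vanishing $[y_k^m,y_l^0]=0$ for $m<p_l$—itself a one-step consequence of $T^*y_l^0=\mu Wy_l^0$ from Proposition~2.2—shows that $[u,y_k^h]$ vanishes for every basis vector in the support of $v$ other than $y_{k_0}^{h_0}$, while $[u,y_{k_0}^{h_0}]$ reduces via the iteration to $[y_{k_0}^{\alpha_{k_0}},y_{k_0}^{\alpha_{k_0}}]=\varepsilon_{k_0}\neq 0$. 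Therefore $[u,v]\neq 0$, and the scalar $\rho\in\mathbb{C}$ can be chosen so that $[\rho u+v,\rho u+v]=2\operatorname{Re}(\bar\rho[u,v])+[v,v]$ is strictly of the opposite semi-definite sign, contradicting the semi-definiteness of $\mathcal{L}'$.

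\emph{Main obstacle.} The delicate point is in Step 2: because different Jordan chains for the same real eigenvalue are in general not mutually $W$-orthogonal, one must exhibit a single $u\in\mathcal{L}_\mu^+$ that pairs with $v$ through exactly one term of its expansion. This rests on the anti-diagonal Gram structure of each Jordan block (the pairing $[y_k^{p_k-h},y_k^h]$ is the only nonzero one within a chain up to the middle) combined with the cross-chain annihilation $[y_k^m,y_l^0]=0$ for $m<p_l$. The maximality choice of $h_0$ is precisely what arranges that the iteration terminates at an end-of-chain pairing on the ``other'' terms, so they all drop out; carrying out this bookkeeping cleanly—especially across chains of different lengths—is the technical heart of the proof.
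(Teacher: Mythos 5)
Your overall architecture is exactly the paper's: first show $\mathcal L_\mu^+$ is $W$-semi-definite (the paper dismisses this in one line as a consequence of Propositions~2.2 and 2.7), then show maximality by producing an isotropic $u\in\mathcal S_\mu^0\subset\mathcal L_\mu^+$ with $[u,v]\neq 0$ and perturbing $\rho u+v$ to flip the sign of the quadratic form. Step~1 of your argument is correct and does what the paper compresses.

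Step~2, however, has a genuine gap in the production of $u$. You set $u:=y_{k_0}^{p_{k_0}-h_0}$ and assert that iterating $[y_k^{h-1},y_l^s]=[y_k^h,y_l^{s-1}]$ drives $[u,y_{k_0}^{h_0}]$ to $[y_{k_0}^{\alpha_{k_0}},y_{k_0}^{\alpha_{k_0}}]=\varepsilon_{k_0}\neq 0$. This is wrong for chains of even length $p_{k_0}+1$ (i.e.\ $p_{k_0}$ odd): the two indices sum to $p_{k_0}$, and since $2\alpha_{k_0}=p_{k_0}-1\neq p_{k_0}$ the iteration cannot possibly land on the pair $(\alpha_{k_0},\alpha_{k_0})$; moreover, the identity you are iterating is only valid when one of the indices involved stays within the lower half ($T^*y_l^s=Ty_l^s$ requires $s\le\alpha_l$, and the mirror version needs $h\le\alpha_k$), and in the extremal case $h_0=\alpha_{k_0}+1$ the very first step already violates both validity constraints. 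Worse, for an even-length chain the target value $[y_{k_0}^{\alpha_{k_0}},y_{k_0}^{\alpha_{k_0}}]$ is $0$ (both indices lie in $\mathcal S_\mu^0$, which is isotropic by Proposition~2.3), and $\varepsilon_{k_0}=0$ by your own convention — so the argument as written would conclude $[u,v]=0$, which is no contradiction at all.

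What the biorthogonality (2.7) actually produces is the nonzero pairing with the \emph{adjoint} element: $[v,x_{k_0}^{p_{k_0}-h_0}]=-b_{k_0,h_0}\neq 0$, and since $p_{k_0}-h_0\le\beta_{k_0}$ this element lies in $(\mathcal S_\mu^0)^*$, which equals $\mathcal S_\mu^0$ by Proposition~2.6. This is exactly the route the paper takes: it concludes that \emph{some} element of $\mathcal S_\mu^0$ pairs non-trivially with $v$, not the particular one $y_{k_0}^{p_{k_0}-h_0}$. The identity $(\mathcal S_\mu^0)^*=\mathcal S_\mu^0$ is a subspace identification, not an element-by-element one, so $x_{k_0}^{p_{k_0}-h_0}$ cannot be replaced by $y_{k_0}^{p_{k_0}-h_0}$ without further work, and the Gram matrix of the direct chains $\{y_j^s\}$ against themselves is simply not anti-diagonal across the middle of an even-length chain. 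Replacing your explicit choice of $u$ and the iteration bookkeeping by the biorthogonality-plus-Proposition-2.6 argument closes the gap, after which your perturbation step (and the preliminary reduction to the case where all upper coefficients vanish, handled by (2.14)) goes through as written.
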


\begin{proof} It follows from Propositions 2.2 and 2.7 that $\mathcal{L}^+_\mu$ is a $W$-nonnegative subspace. Assume that $\mathcal{L}^+_\mu \subset \mathcal{L}' \subset \mathcal{L}_\mu$, where $\mathcal{L}$ is also $W$-nonnegative subspace, and there exists an element $y \in \mathcal{L}'$ such that $y\not\in\mathcal{L}^+_\mu$. Obviously, $y\not\in\mathcal{L}^-_\mu$, as the assumptions $y\in\mathcal{L}^-_\mu$, $y\not\in\mathcal{L}^+_\mu$ imply $[y, y] < 0$. Therefore, $y\not\in\mathcal{L}^+_\mu\cup \mathcal{L}^-_\mu$. Now, using (2.7) we can find an element $y^k_k\in\mathcal{S}^0_\mu$ such that $[y^h_k,y] =\gamma\ne 0$. Denote $z=a y_{k}^{h}+\gamma y$. Then $[z, z]=|\gamma|^{2}(a+[y, y]) \rightarrow-\infty$ if $a\to-\infty$. On the other hand $[z,z]\ge0$, as $z \in \mathcal{L}^{\prime}$ and $\mathcal{L}^{\prime}$ is by assumption $W$-nonnegative. This contradiction ends the proof.
\end{proof}

Denote by $\mathcal{L}$ the minimal subspace containing the root subspaces $\mathcal{L}_\mu(A)$ corresponding to all the eigenvalues $\mu \in \mathbb{C}^{+}$ and all the root subspaces $\mathcal{L}_\mu(A)$ corresponding to normal real eigenvalues. Analogously, let $\mathcal{L}^+$ the minimal subspace containing $\mathcal{L}_\mu$ for all $\mu\in\mathbb{C}^+\cap\sigma_p(A)$ and all the subspaces $\mathcal{L}^+_\mu$ corresponding to the normal real eigenvalues. For a self-adjoint operator $C$ we introduce the (well-known) notations
$$
\pi(C)=\operatorname{rank} C^{+}, \quad \text { where } C^{+}=(|C|+C) / 2, \quad \nu(C)=\pi(-C)
$$
Further, we use the following fundamental result.

\textbf{Theorem on a maximal nonnegative invariant subspace.} \textit{Suppose $W$ generates a Pontrjagin space, i.e. $W$ is boundedly invertible and $\nu(W) <\i$nfty. If $A = W^{-1} T$ and $\rho(A)\cap\mathbb{C}^+\ne\varnothing$  then there exists a maximal $A$-invariant $W$-nonnegative subspace $H^+\subset H$, $\dim H^+= \nu(W)$, such that the spectrum of the restriction $A /_{H}+$ lie in $\overline{\mathbb{C}}^{+}$, and in $\mathbb{C}^{+}$ coincides with the spectrum of $A$.}

\begin{proof} In the case $T = T^*$ this is a well-known Pontrjagin theorem \cite{11::P}. For a maximal $W$-dissipative operator $A$ in Pontrjagin space the theorem was proved by Krein and Langer \cite{11::KL}, and by Azizov \cite{11::A} (see \cite{11::AI} and references therein).
\end{proof}

\begin{theorem}\label{thm:210}
The subspace $\mathcal{L}^+$ defined above is a maximal $W$-nonnegative subspace in $\mathcal{L}$. If $W$ generates a Pontrjagin space and all the real eigenvalues of the pencil $A(\lambda)$ are normal then $\mathcal{L}^+$ is a maximal $W$-nonnegative subspace in the whole space $H$.
\end{theorem}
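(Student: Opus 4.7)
The plan is to split the proof into two parts: first, establishing that $\mathcal{L}^+$ is $W$-nonnegative and maximal inside $\mathcal{L}$; second, upgrading this to maximality in all of $H$ under the Pontrjagin hypothesis via the recalled theorem on maximal $W$-nonnegative invariant subspaces.

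For the first part, $W$-nonnegativity of $\mathcal{L}^+$ assembles from three ingredients: Proposition 2.1 gives $[\cdot,\cdot]\ge 0$ on $\mathcal{L}^0_+$, Proposition 2.9 does the same on each $\mathcal{L}^+_\mu$ with real normal $\mu$, and the second statement of Proposition 2.4 (together with Proposition 2.2, which places truncated real chains inside the corresponding root subspace of $A^*$) yields mutual $W$-orthogonality of the individual summands of $\mathcal{L}^+$. For maximality in $\mathcal{L}$ I argue by contradiction: assume $x\in \mathcal{L}''\setminus\mathcal{L}^+$ with $\mathcal{L}^+\subset \mathcal{L}''\subset \mathcal{L}$ and $\mathcal{L}''$ $W$-nonnegative, decompose $x = x_+ + \sum_k x_k$ along the linearly independent root subspaces with $x_+\in \mathcal{L}^0_+$ and $x_k\in \mathcal{L}_{\mu_k}$ for distinct real normal $\mu_k$, and consider the nonempty set $B = \{k:x_k\notin \mathcal{L}^+_{\mu_k}\}$.

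The argument then splits into two subcases. In subcase (i), some $k_0\in B$ satisfies $x_{k_0}\notin \mathcal{S}^{0[\perp]}_{\mu_{k_0}}\cap \mathcal{L}_{\mu_{k_0}}$, so there is an isotropic vector $\zeta\in \mathcal{S}^0_{\mu_{k_0}}\subset \mathcal{L}^+$ with $[\zeta,x_{k_0}]=\gamma\ne 0$; since $\zeta$ lies in a truncated chain at $\mu_{k_0}$, Proposition 2.4 kills every cross term and the identity $[\zeta+tx,\zeta+tx]=2\mathrm{Re}(\bar t\gamma)+|t|^2[x,x]$ is rendered negative by choosing $t$ small with $\bar t\gamma$ real negative, contradicting the $W$-nonnegativity of $\mathcal{L}''$. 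In subcase (ii), every $x_k$ with $k\in B$ belongs to $\mathcal{S}^{0[\perp]}_{\mu_k}\cap \mathcal{L}_{\mu_k}$, which a biorthogonality/dimension count (using Propositions 2.3, 2.5 and 2.7) identifies with $\mathcal{L}^+_{\mu_k}+\mathcal{L}^-_{\mu_k}$, so one can write $x_k=x_k^++x_k^-$ with $x_k^-\in \mathcal{L}^-_{\mu_k}\setminus \mathcal{S}^0_{\mu_k}$; subtracting from $x$ its obvious $\mathcal{L}^+$-part (namely $x_+$, the good $x_k$ for $k\notin B$, and all $x_k^+$) leaves $\tilde x=\sum_{k\in B} x_k^-\in \mathcal{L}''$, and Proposition 2.4 (applied because each $\mathcal{L}^-_{\mu_k}$ lives in the truncated chain of length $[p_j/2]$) combined with Proposition 2.7 gives $[\tilde x,\tilde x]=\sum_{k\in B}[x_k^-,x_k^-]<0$, contradicting $W$-nonnegativity once more.

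For the second part, invoke the recalled theorem to obtain a maximal $A$-invariant $W$-nonnegative subspace $H^+\subset H$ of dimension $\nu(W)$, with $\sigma(A|_{H^+})\subset \overline{\mathbb{C}^+}$ and coinciding with $\sigma(A)\cap\mathbb{C}^+$ on the open upper half-plane. Because all real eigenvalues of $A(\lambda)$ are normal by hypothesis, standard Riesz projections decompose $H^+$ along the eigenvalue components and place it inside $\mathcal{L}$; the maximality in $\mathcal{L}$ just proved, together with an eigenvalue-by-eigenvalue dimension comparison using Proposition 2.9 on each real root subspace, then forces $\mathcal{L}^+$ to be maximal $W$-nonnegative in all of $H$.

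The principal obstacle is subcase (ii) of the first part: one has to isolate the negative-type pieces of $x$ from several different real eigenvalues into a single element of $\mathcal{L}''$ whose self-pairing is strictly negative. The decisive structural fact that makes this work is that the whole of $\mathcal{L}^-_\mu$ (and not merely its isotropic part $\mathcal{S}^0_\mu$) consists of elements of the truncated chain of length $[p_j/2]$, which is precisely the regime in which Proposition 2.4 grants $W$-orthogonality across different eigenvalues. A secondary technicality is that $\mathcal{L}$ and $\mathcal{L}^0_+$ are defined as closures of linear spans, so the algebraic decomposition above is run on finite linear combinations and extended to the closure using the continuity of the bounded form $[\cdot,\cdot]$.
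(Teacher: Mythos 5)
Your proof is correct and follows the same overall architecture as the paper's: nonnegativity of $\mathcal{L}^+$ via Proposition 2.4 together with the nonnegativity of the constituent pieces, maximality in $\mathcal{L}$ by reduction to the per-eigenvalue maximality of Proposition 2.9, and then maximality in $H$ via the Pontrjagin-type invariant subspace theorem combined with a dimension count. The main difference is one of explicitness: the paper dispatches the step ``$\mathcal{L}^+_\mu$ maximal in $\mathcal{L}_\mu$ for each real normal $\mu$ implies $\mathcal{L}^+$ maximal in $\mathcal{L}$'' in a single sentence, whereas you carefully unfold it into the two subcases (an isotropic vector in some $\mathcal{S}^0_{\mu_{k_0}}$ pairing nontrivially with $x$, versus all bad components living in $\mathcal{L}^+_{\mu_k}+\mathcal{L}^-_{\mu_k}$ and contributing a strictly negative sum). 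Your observation that the decisive structural fact is that all of $\mathcal{L}^-_\mu$ (not merely $\mathcal{S}^0_\mu$) consists of truncated chains of length $\leq [p_j/2]+1$, so that Proposition 2.4 grants cross-eigenvalue $W$-orthogonality, is exactly the point that makes the paper's terse assertion legitimate but nontrivial; for dissipative (rather than self-adjoint) $T$, the full root subspaces $\mathcal{L}_\mu(A)$ and $\mathcal{L}_\nu(A)$ for distinct real $\mu,\nu$ need not be mutually $W$-orthogonal, and only the truncated parts are. Your unpacking is therefore a genuine improvement in rigor. One small caveat that you flag but do not fully resolve, and which the paper also glosses over, is that the decomposition $x = x_+ + \sum_k x_k$ is only immediate on the dense set of finite combinations when the number of relevant eigenvalues is infinite; under the hypotheses of the second clause ($W$ Pontrjagin, all real eigenvalues normal) there are finitely many real eigenvalues with nontrivial $\mathcal{L}^\pm_\mu$-part and the real root subspaces are finite-dimensional, so Riesz projections make the decomposition legitimate for every $x\in\mathcal{L}$, which is the case used downstream.
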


\begin{proof} It follows from Proposition 2.4 and the definition that $\mathcal{L}^+$ is a $W$-nonnegative subspace. As $\mathcal{L}^+_\mu$ is a maximal $W$-nonnegative subspace in $\mathcal{L}_\mu$ for any $\mu \in \sigma_{d}(A) \cap \mathbb{R}$ (Proposition 2.9), we have that $\mathcal{L}^+$ possesses the same property in $\mathcal{L}$.

Now, let $W$ generate a Pontrjagin space and all the real eigenvalues of the pencil $A(\lambda)$ are normal. According to the generalized Pontrjagin theorem there exists a maximal $W$-nonnegative subspace $H^+$ in $H$, $\dim H^+ =\nu(W)$ , such that $\mathcal{L}_{+}^{0} \subset H^{+} \subset \mathcal{L}$, where $\mathcal{L}_{+}^{0}$ is defined in Proposition 2.1. As the subspace $H^{+} \cap \mathcal{L}_{\mu}$ is $W$-nonnegative in $\mathcal{L}_\mu$ and $\mathcal{L}^+_\mu$ is a maximal nonnegative subspace in $\mathcal{L}_\mu$ (Proposition 2.9), we have: $\operatorname{dim}\left(H^{+} \cap \mathcal{L}_{\mu}\right) \leqslant \dim\mathcal{L}_{\mu}^{+}$ (see, for example, \cite[Ch.I, \S4]{11::AI} ). Then it follows that
$$
\operatorname{dim} H^{+}=\operatorname{dim} \mathcal{L}_{+}^{0}+\sum_{\mu \in \mathbb{R} \cap \sigma_{d}} \operatorname{dim}\left(H^{+} \cap \mathcal{L}_{\mu}\right) \leqslant \operatorname{dim} \mathcal{L}^{+}, \quad \sigma_{d}:=\sigma_{d}(A).
$$
On the other hand, it is known (\cite[ Ch.I, \S4]{11::AI}) that $\operatorname{dim} \mathcal{L}^{+} \leqslant \nu(W)=\operatorname{dim} H^{+}$. Hence, $\operatorname{dim} \mathcal{L}^{+}=\operatorname{dim} H^{+}$ and from this it follows that $\mathcal{L}^+$ is a maximal $W$-nonnegative subspace in the whole $H$.	
\end{proof}

\begin{corollary}\label{cor:211}
Let $W$ be boundedly invertible, $\nu(W) <\infty$, and a11 the real eigenvalues of $A(\lambda)$ be normal. Then the following formula is valid
\begin{equation}\label{eq:217}
\kappa(A)+\sum_{\mu_{k} \in \mathbb{R} \cap \sigma_{d}}\left(\varepsilon_{k}^{+}+\left[\left(p_{k}-1\right) / 2\right]\right)=\nu(W), \quad \varepsilon_{k}^{+}=\max \left(0, \varepsilon_{k}\right)
\end{equation}
Here $\kappa(A)$ is the total algebraic multiplicity of all eigenvalues in $\mathbb{C}^+$ and $\varepsilon_k(p_k + 1)$ are the sign characteristics (the lengths) of Jordan chains of regular canonical systems corresponding to real normal eigenvalues $\mu_k$.
\end{corollary}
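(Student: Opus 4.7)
The plan is to read the corollary as a dimensional identity: compute the dimension of the maximal $W$-nonnegative subspace $\mathcal{L}^+$ guaranteed by Theorem 2.10 in two different ways. On one hand, the generalized Pontrjagin theorem quoted just before Theorem 2.10 asserts that every maximal $A$-invariant $W$-nonnegative subspace has dimension $\nu(W)$. On the other hand, $\mathcal{L}^+$ has a concrete description as a direct sum of root-type subspaces whose individual dimensions can be read off from our construction.

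First I would invoke Theorem 2.10 to conclude that under the hypotheses of the corollary (namely $W$ boundedly invertible, $\nu(W)<\infty$, and every real eigenvalue of $A(\lambda)$ normal), the subspace $\mathcal{L}^+$ is maximal $W$-nonnegative in the whole of $H$. Combined with the Pontrjagin-type statement that $\dim H^+=\nu(W)$ for any such maximal subspace, this yields $\dim \mathcal{L}^+=\nu(W)$. Note that we already got the inequality $\dim \mathcal{L}^+\le \nu(W)$ at the end of the proof of Theorem 2.10; the reverse inequality follows from the maximality proved there, since any two maximal $W$-nonnegative subspaces of a Pontrjagin space have the same (finite) dimension $\nu(W)$.

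Next I would decompose $\mathcal{L}^+$ as
\begin{equation*}
\mathcal{L}^+ \;=\; \mathcal{L}^0_+ \;\dotplus\; \bigdotplus_{\mu_k\in\mathbb{R}\cap\sigma_d}\mathcal{L}^+_{\mu_k},
\end{equation*}
which is the very definition of $\mathcal{L}^+$ given just before Theorem 2.10. The sum is direct because root subspaces corresponding to distinct eigenvalues are linearly independent (a routine consequence of the Laurent expansion (2.2) together with Proposition 2.4, or simply the standard spectral-theoretic fact). Then $\dim\mathcal{L}^0_+=\kappa(A)$ by definition of $\kappa(A)$ as the total algebraic multiplicity of the eigenvalues in $\mathbb{C}^+$, and $\dim\mathcal{L}^+_\mu = \sum_k\bigl(\varepsilon_k^+ + [(p_k-1)/2]\bigr)$ by the counting formula (2.16). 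Adding these contributions and equating the result with $\nu(W)$ gives (2.17).

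The only substantive point to verify is that the dimension of $\mathcal{L}^+_\mu$ is indeed given by (2.16); this in turn rests on Proposition 2.7, which produces a regular canonical system, and on the explicit description of $\mathcal{L}^+_\mu$ as the span of the truncated chains $\{y_k^0,\dots,y_k^{\beta_k}\}$ supplemented by the middle elements $y_k^{\alpha_k}$ associated to chains of odd length with sign characteristic $\varepsilon_k=+1$. The mild technical point is to confirm that the chosen basis of $\mathcal{L}^+_\mu$ is genuinely linearly independent, which follows from the linear independence of the elements of a canonical system together with the biorthogonality (2.7). Once this is in place the corollary is purely a bookkeeping consequence of Theorem 2.10.
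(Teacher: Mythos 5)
Your proposal is correct and takes essentially the same route as the paper: the paper's own proof of Corollary~2.11 consists of the single line ``It follows from formula (2.16) and Theorem~2.10,'' and you have simply unpacked that reference into the explicit dimension count $\dim\mathcal{L}^+=\dim\mathcal{L}^0_++\sum_{\mu_k}\dim\mathcal{L}^+_{\mu_k}$ and then equated it with $\nu(W)$ via the maximality established in Theorem~2.10. The only cosmetic remark is that the identity $\dim\mathcal{L}^+=\nu(W)$ is already produced inside the proof of Theorem~2.10 (where the inequalities $\dim H^+\le\dim\mathcal{L}^+\le\nu(W)=\dim H^+$ are chained), so you could cite that equality directly rather than re-deriving it from the uniqueness of the dimension of maximal semi-definite subspaces; the rest of the argument coincides with the paper's intended proof.
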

\begin{proof}
It follows from formula (2.16) and Theorem 2.10.	
\end{proof}
\begin{rem}\label{rem:212}
Formula (2.17) is not applicable if the pencil $A(\lambda)$ has real eigenvalues which are embedded into the essential spectrum. In this case we do not know how to determine the sign characteristics and how to realize the explicit construction of a maximal $W$-nonnegative subspace in the the root subspace $\mathcal{L}_\mu$. However, the following inequality is always valid (cf. \cite[Ch.2. Theorem 2.26]{11::AI})
\begin{equation}\label{eq:218}
\kappa(A)+\sum_{\mu_{k} \in \mathbb{R} \cap \sigma_{p}}\left[\left(p_{k}-1\right) / 2\right] \leqslant \nu(W), \quad \sigma_{p}:=\sigma_{p}(A)
\end{equation}
This inequality is much more simple and follows directly from Propositions 2.1, 2.3 and 2.4. It expresses the fact that the linear span of all root subspaces $\mathcal{L}_\mu$ corresponding to $\mu \in \sigma_{p}(A) \cap \mathbb{C}^{+}$  and all the truncated root subspaces $\mathcal{S}_{\mu}^{0}$ corresponding to $\mu \in \sigma_{p}(A) \cap \mathbb{R}$ forms a $W$-nonnegative subspace (not necessarily a maximal one). Indeed, using (2.17) we can improve (2.18) and write the following inequality
\begin{equation}\label{eq:219}
\kappa(A)+\sum_{\mu_{k} \in \mathbb{R} \cap \sigma_{p}}\left(\varepsilon_{k}^{+}+\left[\left(p_{k}-1\right) / 2\right]\right) \leqslant \nu(W), \quad \sigma_{p}:=\sigma_{p}(A)
\end{equation}
where $\varepsilon_{k}^{+}=\max \left(0, \varepsilon_{k}\right)$ if $\mu_{k} \in \sigma_{d}$ and $\varepsilon_{k}^{+}=0$ if $\mu_{k} \in \sigma_{p} \backslash \sigma_{d}$.
\end{rem}

\subsection{Quadratic dissipative pencils and the instability index formula}
In this section we study a quadratic operator pencil of the form
\begin{equation}\label{eq:31}
\begin{array}{ll}{\text { (3.1) }} & {A(\lambda)=\lambda^{2} F+(D+i G) \lambda+T}\end{array}
\end{equation}
Further it is always assumed that the coefficients in (3.1) are operators in Hilbert space $H$ satisfying the following conditions:
\begin{enumerate}
\item[i)]	$F$ is	a self-adjoint bounded and boundedly	 invertible	 operator;
\item[ii)] $T$ is	defined on the domain  $V(T)$, $T = T^*$	and	 $T$ is	 boundedly	invertible;
\item[iii)] $D$ and $G$ are symmetric $T$-bounded operators (i.e. $D$ and $G$ are symmetric, $\mathcal{D}(D)\subset \mathcal{D}(T)$ and $\mathcal{D}(G)\subset \mathcal{D}(T)$. Moreover, $D\ge 0$.
\end{enumerate}

These assumptions imply that $A(\lambda)$ is a quadratic dissipative pencil with respect to the imaginary axis in the following sense (see \cite{11::S1})
$$
\operatorname{Im}(\zeta A(i \zeta) x, x)=\zeta^{2}(D x, x) \geqslant 0 \quad \text { for all } x \in \mathcal{D}(T) \text { and } \zeta \in \mathbb{R}
$$
One may expect that the quadratic dissipative pencil (3.1) can be transformed into a linear dissipative pencil. Indeed, such a linearization will be realized below. However, working with unbounded pencils we come to some new problems which do not arise when considering pencils with bounded coefficients. In particular, the spectrum of a linearization may not coincide with the spectrum of the original pencil.

According to our assumptions $A(\lambda)$ is well defined for each $\lambda \in \mathbb{C}$ on the domain $\mathcal{D}(T)$. Hence, the first natural definition of the resolvent set $\rho(A)$ is the following: $\zeta \in \rho(A)$ if $A(\zeta)$ with the domain $\mathcal{D}(T)$ has a bounded inverse. To give another definition, we consider the scale of Hilbert spaces $H_{\theta}, \theta \in \mathbb{R}\left(H_{0}=H\right)$ generated by the self-adjoint operator $S^{2}:=|T|:=\left(T^{2}\right)^{1 / 2}$. Namely, if $\theta> 0$ we set $H_{\theta}=\left\{x | x \in \mathcal{D}\left(S^{\theta}\right)\right\}$ with the norm $\|x\|_{\theta}=\left\|S^{\theta} x\right\|$. If $\theta< 0$, the space $H_\theta$ is defined as the closure of $H$ with respect to the norm $\|x\|_{\theta}=\left\|S^{\theta} x\right\|$.

Let us associate the pencil
$$
\hat{A}(\lambda)=\lambda^{2} \hat{F}+\lambda(\hat{D}+i \hat{G})+J
$$
with the pencil $A(\lambda)$. Here
$$
\hat{F}=S^{-1} F S^{-1}, \hat{D}=S^{-1} D S^{-1}, \hat{G}=S^{-1} G S^{-1}, J=T^{-1}|T|
$$
Obviously $\hat F$ and $J$ are bounded. From the next Proposition it follows that $\hat D$ and $\hat G$ are also bounded in $H$.
\begin{proposition}
Let $S$ be an uniformly positive self-adjoint operator and $B$ be a symmetric operator such that $\mathcal{D}(B) \supset\mathcal{ D}(S^2)$. Then the operator $S^{\theta-2} B S^{-\theta}$ defined on the domain $\mathcal{D}(S^{\theta-2})$ is bounded in $H$ for all $0\le \theta\le 2$. Equivalently, $B$ is bounded as an operator acting from $H_\theta$ into $H_{\theta-2}$.
\end{proposition}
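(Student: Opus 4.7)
The plan is to recognize the claim as saying that $B$ extends to a bounded operator from $H_\theta$ into $H_{\theta-2}$, i.e. $\|S^{\theta-2}Bx\|\leq C\|S^\theta x\|$; this is exactly what "$S^{\theta-2}BS^{-\theta}$ is bounded" means once one agrees that the product is first taken on a natural dense domain and then extended by density. Two endpoint bounds plus a complex-analytic interpolation then finish the proof.

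For the endpoint $\theta=2$, the operator $BS^{-2}$ is everywhere defined on $H$ because $S^{-2}(H)\subset\mathcal{D}(S^2)\subset\mathcal{D}(B)$. Since $B$ is symmetric it is closable, and I would show $BS^{-2}$ is closed on $H$: if $x_n\to x$ and $BS^{-2}x_n\to y$, then $S^{-2}x_n\to S^{-2}x\in\mathcal{D}(B)$, so closability of $B$ forces $BS^{-2}x=y$. The closed graph theorem then yields boundedness. For the endpoint $\theta=0$ I would use duality: the identity
\[
(S^{-2}Bx,y)=(Bx,S^{-2}y)=(x,BS^{-2}y),
\]
valid for $x\in\mathcal{D}(B)$ and $y\in H$ by symmetry of $B$ and the fact that $S^{-2}y\in\mathcal{D}(S^2)\subset\mathcal{D}(B)$, gives $\|S^{-2}Bx\|\leq\|BS^{-2}\|\,\|x\|$, and $S^{-2}B$ extends by density to a bounded operator on $H$.

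For intermediate $0<\theta<2$ I would invoke the Hadamard three-lines theorem. Let $E(\cdot)$ be the spectral resolution of $S$ and $a=\inf\sigma(S)>0$; fix $x,y$ in the dense subspace $\mathcal{D}_0:=\bigcup_{N>a}E([a,N])H$. For such $x,y$ the vector-valued functions $\zeta\mapsto S^{-\zeta}x$ and $\zeta\mapsto S^{\bar\zeta-2}y$ extend to entire $H$-valued maps, and since $S^{-\zeta}x\in E([a,N])H\subset\mathcal{D}(S^2)\subset\mathcal{D}(B)$ the function
\[
F(\zeta):=\bigl(BS^{-\zeta}x,\,S^{\bar\zeta-2}y\bigr)=\bigl(S^{\zeta-2}BS^{-\zeta}x,\,y\bigr)
\]
is entire. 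Because $x,y$ have compact spectral support one has $|F(s+it)|\leq C_{x,y,N}$ uniformly in the strip $0\leq s\leq 2$. Unitarity of $S^{it}$ together with the two endpoint bounds yields
\[
|F(it)|\leq\|S^{-2}B\|\,\|x\|\,\|y\|,\qquad |F(2+it)|\leq\|BS^{-2}\|\,\|x\|\,\|y\|,
\]
so the three-lines theorem gives $|F(\theta)|\leq C\|x\|\|y\|$ for every $\theta\in[0,2]$, and density of $\mathcal{D}_0$ in $H$ extends the estimate to arbitrary $x,y$. The main technical obstacle is verifying the mild-growth hypothesis of the three-lines theorem inside the strip; this is precisely why one cuts off to vectors of compact spectral support at the start, after which the rest reduces to routine functional-analytic bookkeeping.
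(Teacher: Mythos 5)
Your proof is correct and follows essentially the same route as the paper: at the endpoint $\theta=2$ you use closability of $B$ and the closed graph theorem (the paper does exactly this), at $\theta=0$ you use the symmetry identity $(S^{-2}Bx,y)=(x,BS^{-2}y)$ which is precisely the statement that the adjoint $B^*\supset B$ of the bounded map $B\colon H_2\to H$ gives a bounded map $H\to H_{-2}$, and for intermediate $\theta$ you interpolate. The only difference is that you unfold the interpolation step explicitly via a Hadamard three-lines argument with spectral cutoffs, whereas the paper simply cites the abstract interpolation theorem from Lions--Magenes for the scale $\{H_\theta\}$; your version is more self-contained but rests on exactly the same mechanism.
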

\begin{proof}
As $B$ is closable, the assumption $\mathcal{D}(B) \supset \mathcal{D}(S^2)$ implies that $B: H_{2} \rightarrow H$ is a
bounded operator (this follows immediately from the closed graph theorem). Hence, the adjoint operator $B^*: H\rightarrow H_{-2}$ is also bounded. As $B^{*} \supset B$, we have that $B: H\rightarrow H_{-2}$ is bounded. Now, applying the interpolation theorem (see \cite[Ch.l]{11::LM}, for example) we find that B$B: H_{\theta} \rightarrow H_{\theta-2}$ is bounded for all $0\le \theta\le 2$.
\end{proof}

Let $\sigma(\hat{A})$ be the spectrum of the pencil $\hat{A}(\lambda)$ with bounded operator coefficients in the space $H$. It is easily seen that $\sigma(\hat{A})$ coincides with the spectrum of $A(\lambda)$ considered as the operator function in the space $H_{-1}$ on the domain $\mathcal{D}(A) = H_1$. Both our definitions of the spectra are better understood (especially for the specialists working with partial differential operators) if we say the following: $\sigma(A)$ is the spectrum of the pencil $A(\lambda)$ considered in the ``classical'' space $H$ while $\sigma(\hat{A})$ is its spectrum in the generalized space $H_{-l}$.

Generally, $\sigma(A) \neq \sigma(\hat{A})$. What is the connection between the classical and the generalized spectra? Some light is cast on this problem by the next propositions. It will be convenient to define in the complex plane the open set $\rho_{m}(A):=\rho(A) \cup \sigma_{d}(A)$. The set $\rho_{m}(\hat{A})$ is defined analogously. In the other words $\rho_m(A)$ and $\rho_m(\hat{A})$ are the domains where the operator functions $A^{-1}(\lambda)$ is finite meromorphic in the spaces $H$ and $H_{-1}$, respectively.

\begin{proposition}\label{prop:32} In the domain pm(A) fl pm(.4) all the eigenvalues and Jordan chains of .4(A) in the spaces H and H-\ coincide.
\end{proposition}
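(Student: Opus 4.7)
The plan is to exploit the Keldysh representation of the resolvent in both the classical and generalized settings, using the observation that the extended resolvent $R_e(\lambda):=A^{-1}(\lambda)\colon H_{-1}\to H_1$, finite meromorphic at points of $\rho_m(\hat{A})$, and the classical resolvent $R_c(\lambda):=A^{-1}(\lambda)\colon H\to H_2$, finite meromorphic at points of $\rho_m(A)$, must agree on the common dense subspace $H\subset H_{-1}$ via the embedding $H_2\hookrightarrow H_1$.

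First I would dispose of the easy inclusion: any classical Jordan chain $y^0,\ldots,y^p\in H_2$ of $A(\lambda)$ at $\mu$ automatically satisfies the chain relations in $H_{-1}$ and is therefore a generalized Jordan chain, since the inclusion $H\hookrightarrow H_{-1}$ is compatible with the extensions of $F$, $D+iG$, and $T$. For the reverse inclusion, I would write the Laurent expansion of $R_e$ at $\mu$ in the form
\[
R_e(\lambda)=\sum_{j=1}^{n}\frac{C_j^{(e)}}{(\lambda-\mu)^j}+\text{(holomorphic)},
\]
where each $C_j^{(e)}\colon H_{-1}\to H_1$ is a finite-rank operator, and by the Keldysh formula (2.2) the union $\bigcup_j \mathrm{range}(C_j^{(e)})$ coincides with the generalized root subspace $\mathcal{L}_\mu^{\mathrm{gen}}$. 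Analogously, $R_c(\lambda)$ has a Laurent expansion at $\mu$ with coefficients $C_j^{(c)}\colon H\to H_2$. Since $R_c(\lambda)=R_e(\lambda)|_H$ in a punctured neighborhood of $\mu$, the uniqueness of Laurent expansions gives $C_j^{(c)}=C_j^{(e)}|_H$, so $\mathrm{range}(C_j^{(e)}|_H)\subset H_2$ for every $j$. Because $H$ is dense in $H_{-1}$ and each $C_j^{(e)}$ is bounded with finite-dimensional range, $\mathrm{range}(C_j^{(e)}|_H)$ is dense in $\mathrm{range}(C_j^{(e)})$, and density in a finite-dimensional subspace is equality. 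Hence $\mathrm{range}(C_j^{(e)})\subset H_2$ for each $j$, and summing over $j$ yields $\mathcal{L}_\mu^{\mathrm{gen}}\subset H_2$. Once the generalized root subspace lies in $H_2$, the chain relations originally posed in $H_{-1}$ reduce to identities in $H$, so the generalized Jordan chains are classical ones and the root subspaces, together with the eigenvalues, coincide.

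The main technical obstacle is verifying that $\bigcup_j \mathrm{range}(C_j^{(e)})$ indeed exhausts the whole generalized root subspace, which must be read off the explicit structure of formula (2.2): each coefficient $C_j^{(e)}$ is a finite sum of rank-one operators $(\cdot,x_k^s)y_k^l$ with $\{x_k^s\}\subset H_1$ the adjoint canonical system, and by sweeping $j$ from $n$ down to $1$ every generalized chain element $y_k^h$ appears in the range of some $C_j^{(e)}$. A secondary subtlety is ensuring that the pole orders of $R_c$ and $R_e$ at $\mu$ agree, which follows from the density of $H$ in $H_{-1}$ together with the fact that a bounded finite-rank operator $H_{-1}\to H_1$ vanishing on $H$ must be zero.
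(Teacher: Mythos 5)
Your proof is correct and self-contained. The paper states Proposition~3.2 without an accompanying argument (the coincidence of spectra is elsewhere attributed to the author's doctoral dissertation), so there is no textual proof to compare against, but the route you take is the natural one and is almost certainly the intended argument: observe that the generalized resolvent $R_e(\lambda)\colon H_{-1}\to H_1$ agrees with the classical resolvent $R_c(\lambda)\colon H\to H_2$ on the dense subspace $H$ by uniqueness of the solution of $A(\lambda)x=f$ in $H_1$; extract matching Laurent coefficients; and exploit that a bounded finite-rank operator $H_{-1}\to H_1$ is determined by its values on a dense subspace, so that a finite-dimensional range is captured by its restriction. The conclusion that the generalized root subspace at $\mu$ lies inside $H_2=\mathcal{D}(T)$ then upgrades the generalized Jordan-chain relations, a priori identities in $H_{-1}$, to identities in $H$, since for $y\in H_2$ each of $Fy$, $(D+iG)y$, $Ty$ belongs to $H$. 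Your treatment of pole orders (a bounded finite-rank operator $H_{-1}\to H_1$ vanishing on $H$ must vanish) is exactly the right observation.

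The one step that deserves a sentence of justification is the claim that $\bigcup_j \mathrm{range}(C_j^{(e)})$ exhausts the generalized root subspace $\mathcal{L}_\mu^{\mathrm{gen}}$. This is indeed read off the Keldysh formula, but one should not assume the full adjoint canonical system $\{z_k^s\}$ is linearly independent: for nonlinear pencils the associated elements $z_k^s$, $s\ge 1$, may be linearly dependent or even zero. What one does know is that the eigenvectors $\{z_k^0\}_{k=1}^N$ form a basis of $\Ker A^*(\bar\mu)$; starting from the leading coefficient $C_{p_1+1}^{(e)}=\sum_{p_k=p_1}(\cdot,z_k^0)y_k^0$ and descending through the pole orders, an inductive argument shows that each $y_k^h$ appears, modulo elements already captured, in the range of the appropriate coefficient. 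With that small strengthening the proof is complete, and, since the inclusion $\mathcal{L}_\mu^{\mathrm{gen}}\subset H_2$ is what everything hinges on, it is worth making explicit.
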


\subsection{Applications}

In this section we shall apply the obtained abstract results to concrete problems considered in Section 1.
\begin{theorem}
Formula (3.11) or its simplifications (3.12) or (3.13) are valid for operator pencil (1.7) associated with the problem of small oscillations of ideal incompressible fluid in a pipe of finite length if the condition $KerT = \{0\}$ is fulfilled $(T := A+C)$. For a pipe of infinite length the assertion of Theorem 3.7 is valid if $g(x)$ is such a function that $KerT = \{0\}$ and $\nu(T) < \infty$.
\end{theorem}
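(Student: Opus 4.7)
The plan is to reduce the statement to a direct application of Theorem 3.7 (the abstract instability index formula), by checking that the operator coefficients of pencil (1.7) satisfy the standing hypotheses i)--iii) of Section 3, together with the Pontrjagin condition $\nu(W)<\infty$ and the nondegeneracy condition $\ker T=\{0\}$.

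First I would match the coefficients of (1.7) against those of (3.1). Setting $F=I$, $T=A+C$, $D=D_\alpha+D_\beta$ and keeping $G$ as in (1.7), one has $\nu(F)=0$ so the instability formula should simplify accordingly. The identification $F=I$ is trivially self-adjoint and boundedly invertible. The operator $T=A+C$ is self-adjoint on $\mathcal{D}(A)$: since $A=A^*\gg 0$ is self-adjoint and uniformly positive (property i)), and $C$ is $A$-compact (property iii)) and symmetric, the Rellich--Kato theorem yields that $T$ is self-adjoint on $\mathcal{D}(A)=\mathcal{D}(T)$ and bounded below. The hypothesis $\ker T=\{0\}$, together with the fact that $T$ is bounded below, gives bounded invertibility of $T$. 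For the finite-length pipe, $A$-compactness of $C$ and of $I$ implies that $T$ has finitely many negative eigenvalues, so $\nu(T)<\infty$; for the infinite-length pipe one invokes property vi) (the additional assumptions on $g$ from Griniv's paper) to guarantee $\nu(T)<\infty$.

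Next I would verify hypotheses on the friction and gyroscopic terms. The operators $D_\alpha$ and $D_\beta$ are nonnegative symmetric and $A$-bounded (property ii)); since $T$ and $A$ have the same domain and $T$ is a relatively bounded perturbation of $A$ by a compact (or, in the semi-axis case, relatively bounded) operator $C$, $T$-boundedness is equivalent to $A$-boundedness, so $D=D_\alpha+D_\beta\geq 0$ is symmetric and $T$-bounded. Similarly, $G$ in (1.7) is symmetric (it is $i$ times a first-order antisymmetric differential expression with constant coefficient) and $A$-bounded (hence $T$-bounded) by a standard Sobolev estimate, which verifies property iv). Thus all the standing hypotheses i)--iii) of Section 3 are satisfied.

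The remaining point is to verify that $\rho(A)\cap\mathbb{C}^+\neq\emptyset$, so that the linearization $\mathbf{A}(\lambda)=\mathbf{T}-\lambda\mathbf{W}$ constructed in Section 3 is a dissipative pencil in the Pontrjagin space $\mathbf{H}=H\times H_1$ and the Generalized Pontrjagin Theorem applies. Writing $A(i\zeta)=-\zeta^2 I+i\zeta(D+iG)+T=(T-\zeta G-\zeta^2 I)+i\zeta D$ and using $D\geq 0$ together with the $T$-compactness of $G$ (property v)) and of $I$ (in the finite case) or the analogous asymptotic assumptions (in the infinite case), one shows that for $|\zeta|$ sufficiently large in $\mathbb{R}$ the real part $T-\zeta G-\zeta^2 I$ is boundedly invertible and $A(i\zeta)$ has bounded inverse by a standard perturbation argument, giving points of $\rho(A)$ on the imaginary axis.

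Once all hypotheses are in force, Theorem 3.7 applies verbatim and yields the formula (3.11) (and its simplifications (3.12), (3.13) when all pure imaginary eigenvalues are of definite type) for the pencil (1.7), with $\nu(F)=0$ and $\nu(T)$ as computed above. The main obstacle is the verification of $\rho(A)\cap\mathbb{C}^+\neq\emptyset$ in the semi-axis case, since there the pencil has nontrivial essential spectrum and one cannot rely on compactness of $C$ or $G$; here the growth conditions on $g(x)$ at infinity assumed in property vi) (and in the companion hypotheses of \cite{11::Gr}) are exactly what is needed to control the essential spectrum and localize it away from the imaginary axis at large $|\zeta|$.
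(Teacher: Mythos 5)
Your proposal is correct and follows essentially the same route as the paper: identify $F=I$, $T=A+C$, $D=D_\alpha+D_\beta$, $G$ in pencil (1.7), verify that the standing hypotheses of Section 1 together with $\ker T=\{0\}$ give the abstract hypotheses of Section 3, and then invoke the instability index theorem. The only small omission is that the paper, for the finite-length pipe, applies Corollary~3.8 (which exploits the $T$-compactness of $G$ and $I$) rather than the more general Theorem~3.7, and it explicitly records the caveat that on the semi-axis the non-discrete spectrum may contain pure imaginary eigenvalues that the formula cannot classify; you gesture at the latter when you note the simplifications (3.12)--(3.13) require definite-type pure imaginary eigenvalues, but you should state it plainly for the infinite-length case.
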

\begin{proof}
The conditions \textit{i)-ii)} and \textit{iv)} of Section 1 imply conditions \textit{i)-iii)} of Section 3 if it is assumed in addition that $KerT = \{0\}$. Moreover, for a pipe of finite length the assumptions of Corollary 3.8 are fulfilled. For a pipe of infinite length the operators $G$ and $I$ are not $T$-compact and we must use Theorem 3.7. In the last case we can not guarantee the absence of pure imaginary eigenvalues belonging to the non-discrete spectrum.
\end{proof}

If $KerT \ne \{0\}$ then $\lambda= 0$ is an eigenvalue of pencil (3.1). In this case the analogue of formula (3.11) can also be obtained. For this purpose one has to modify the results of Section 2 for the case $KerW \ne \{0\}$. Technically this is not a trivial work. However, the estimates for the number $\kappa(\hat{A})$ can be obtained easily if $KerT \ne \{0\}$.

\begin{theorem}
Suppose that a pencil $A(\lambda)$ is defined by (3.1) and its operator coefficients satisfy the assumptions \textit{i)-iii)} of Section 3 with the possible exception that the operators $F$ and $T$ are not necessarily boundedly invertible. Suppose that there exists a point $\mu$, $Re \mu > 0$ such that $\hat{A}(\mu)$ is boundedly invertible. Then
\begin{equation}
\kappa(\hat{A}) \le \nu(F) + \nu(T).
\end{equation}
\end{theorem}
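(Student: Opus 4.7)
The plan is to reduce the statement, by a bounded self-adjoint perturbation of $F$ and $T$, to the case treated in Theorem~3.7 (where $F$ and $T$ are both boundedly invertible), apply the index formula~(3.11), and then pass to the limit as the perturbation vanishes. I may assume $\nu(F)+\nu(T)<\infty$, as otherwise the inequality is trivial; under this assumption each of $F,T$ has only finitely many negative eigenvalues (counted with multiplicity), forming a finite set separated from $0$ by some positive distance $d$.

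The regularization I would use is
$$
F_{\epsilon}:=F+2\epsilon E_{F}^{(\epsilon)},\qquad T_{\epsilon}:=T+2\epsilon E_{T}^{(\epsilon)},
$$
where $E_{F}^{(\epsilon)}$ and $E_{T}^{(\epsilon)}$ are the spectral projections of $F$ and $T$ onto $[-\epsilon,\epsilon]$, and $0<\epsilon<d/3$. For such $\epsilon$ the operators $F_\epsilon$ and $T_\epsilon$ are self-adjoint and boundedly invertible, with $\nu(F_\epsilon)=\nu(F)$ and $\nu(T_\epsilon)=\nu(T)$. Since $F-F_\epsilon$ and $T-T_\epsilon$ are bounded operators in $H$, one has $\mathcal{D}(T_\epsilon)=\mathcal{D}(T)$, the Hilbert scale generated by $|T_\epsilon|^{1/2}$ is equivalent (as a topological vector space) to the one generated by $|T|^{1/2}$, and $D,G$ remain symmetric and $T_\epsilon$-bounded. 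Thus the perturbed pencil
$$
A_\epsilon(\lambda):=\lambda^{2} F_\epsilon+(D+iG)\lambda+T_\epsilon
$$
satisfies all of the hypotheses i)--iii) of Section~3. Because $\hat A_\epsilon(\mu)-\hat A(\mu)\to 0$ in the operator norm from $H_1$ to $H_{-1}$ as $\epsilon\to 0$, the invertibility of $\hat A(\mu)$ is inherited by $\hat A_\epsilon(\mu)$ for $\epsilon$ small. Applying Theorem~3.7 to $A_\epsilon$ and using $\varepsilon^+(A_\epsilon)\ge 0$ in formula~(3.11), I obtain
$$
\kappa(\hat A_\epsilon)\;\le\;\nu(F_\epsilon)+\nu(T_\epsilon)\;=\;\nu(F)+\nu(T)
$$
uniformly for all sufficiently small $\epsilon>0$.

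The remaining step is the passage to the limit. Since $\hat A_\epsilon(\lambda)\to\hat A(\lambda)$ in the operator norm $H_1\to H_{-1}$ uniformly on compact subsets of the $\lambda$-plane, the classical stability theorem for isolated zeros of analytic operator-valued functions (a version of Rouch\'e's theorem, which can also be deduced from Kato's perturbation theory applied to the linearization $\mathbf A$) implies that each eigenvalue $\mu_0$ of $\hat A$ in $\mathbb{C}^+$ of algebraic multiplicity $m$ is approached by eigenvalues of $\hat A_\epsilon$ whose total algebraic multiplicity in a small disc about $\mu_0$ equals exactly $m$; for $\epsilon$ small enough that disc lies inside $\mathbb{C}^+$. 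Applied to any finite set of eigenvalues of $\hat A$ in $\mathbb{C}^+$ of total multiplicity $N$, this gives $N\le\kappa(\hat A_\epsilon)\le\nu(F)+\nu(T)$; taking the supremum over $N$ yields the desired bound on $\kappa(\hat A)$ (and, in particular, its finiteness).

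The main obstacle lies in this final limit step: one must verify that no eigenvalue of $\hat A$ in $\mathbb{C}^+$ is ``lost'' and that the perturbed eigenvalues indeed remain in $\mathbb{C}^+$. Both follow from the local stability result cited above once one checks the norm convergence $\hat A_\epsilon\to\hat A$ from $H_1$ to $H_{-1}$, which in turn reduces to the equivalence of the Hilbert scales of $|T|^{1/2}$ and $|T_\epsilon|^{1/2}$ --- a standard consequence of $T-T_\epsilon$ being bounded while both $T$ and $T_\epsilon$ retain the same finite negative index. A secondary delicate point is the very choice of regularization: the spectral cut-off is selected precisely so that $F_\epsilon$ and $T_\epsilon$ become invertible without altering the negative spectra or the underlying scale, which is what keeps the indices $\nu(F),\nu(T)$ appearing on the right-hand side of the bound unchanged.
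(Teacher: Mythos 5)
Your proposal follows the same overall strategy as the paper: regularize $F$ and $T$ to make them invertible while preserving the negative indices, apply Theorem~3.7 to the regularized pencil, and pass to the limit using continuity of the eigenvalues. Two remarks on the differences and on a gap.

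First, the regularization you chose is unnecessarily elaborate. The paper simply works with
$A_\tau(\lambda) = \lambda^2(F+\tau I) + (D+iG)\lambda + T+\tau I$ for small $\tau>0$. Since the finitely many negative eigenvalues of $F$ and of $T$ are bounded away from zero, this shift makes both coefficients boundedly invertible without changing $\nu(F)$ or $\nu(T)$ (for $\tau<\tau_0$), and the bounded perturbation $\tau I$ again preserves the domains, the Hilbert scale, and $T$-boundedness of $D$ and $G$. Your spectral cut-off $F+2\epsilon E_F^{(\epsilon)}$ achieves the same effect but requires the extra verifications you carry out; nothing is gained.

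Second, and more substantively, there is a gap in the limit step. Your stability argument (Rouch\'e for operator-valued functions, or Kato's perturbation theory applied to the linearization) applies only to \emph{normal} eigenvalues of the unperturbed pencil $\hat A$, i.e.\ isolated points of the spectrum with finite algebraic multiplicity. Before invoking it, one must know that the spectrum of $\hat A$ in the open right half-plane consists of normal eigenvalues only; without that, $\kappa(\hat A)$ is not a priori a finite (or even well-defined) count, and the Rouch\'e argument says nothing about possible essential spectrum of $\hat A$ in $\mathbb{C}^+$. The paper addresses this explicitly: using the hypothesis that $\hat A(\mu)$ is boundedly invertible for some $\mu$ with $\mathrm{Re}\,\mu>0$ and ``repeating the arguments from the proof of Theorem~3.7,'' it first shows that the spectrum of $\hat A$ in $\mathbb{C}^+$ is discrete, and only then invokes continuous dependence of these normal eigenvalues on $\tau$. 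You should insert this discreteness step; your present argument only bounds the total multiplicity of whatever normal eigenvalues exist, but does not exclude non-discrete spectrum of $\hat A$ in the right half-plane.
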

\begin{proof}
Let us consider the pencil
$$
A_{\tau}(\lambda) = \lambda^2(F+\tau I) + (D  + iG)\lambda + T +\tau I, \quad \tau > 0.
$$
Obviously, $\nu(T+\tau I) = \nu(T)$, $\nu(F+\tau I) = \nu(F)$, if $\tau \in (0, \tau_0)$ and $\tau_0$ is sufficiently small. By virtue of Theorem 3.7 we have
\begin{equation}
\kappa(\hat{A_{\tau}}) \le \nu(F) + \nu(T) \quad \text{for all } 0 < \tau < \tau_0.
\end{equation}
Repeating the arguments from the proof of the Theorem 3.7 and taking into account that $\mu \in \rho(\hat{A})$ for some $\mu$ with $Re \mu > 0$ we obtain that the spectrum of $\hat{A}(\lambda) := \hat{A_0}(\lambda)$ in the open right half plane consists only of normal eigenvalues. These eigenvalues continuously depend on $\tau$ (see \cite{11::Ka}, Ch. 7). Then (4.2) implies (4.1).
\end{proof}

The results of Sections 2 and 3 can also be applied to self-adjoint pencils. Lancaster and Shkalikov \cite{11::LS} considered an operator pencil $L(\lambda)$ defined by (1.5) with $C = 0$, $D_{\alpha} = \alpha A$ and obtained the following estimate
\begin{equation}
\eta/2 \le \min_{\substack{k \in \mathbb{R}}} \pi(L(k)), \quad \pi(L) := \nu(-L),
\end{equation}
where $\eta$ is the number of non-real eigenvalues of the pencil $L(\lambda)$ counting with algebraic multiplicities. Using an analytic approach Shkalikov and Griniv proved a sharper estimate for the case $C = 0$ and reproved (4.3) for $C \ne 0$ (if $C$ is an $A$-compact operator). Here we refine the corresponding results from \cite{11::LS} and \cite{11::SG}.
\begin{theorem}
Let
$$
L(\lambda) = \lambda^2F + \lambda D + T,
$$
where $T = T^*$ and $F$,$D$ are symmetric and $T$-bounded operators. Let $S^2 = |T| + I$ and the scale of Hilbert spaces $H_\theta$ be generated by the operator $S \gg 0$. Suppose that there exist real points $a$ and $b$ belonging to $\rho(\hat{L})$ such that
$$
\pi(L(a)) < \infty, \quad \nu(L(b)) < \infty.
$$
Then the non-real spectrum of $L(\lambda)$ in the space $H_{-1}$ consists of finitely many, say $\eta$, non-real eigenvalues, and the following estimate is valid
\begin{equation}
\eta/2 \le \pi(L(a)) + \nu(L(b)) - \delta^+(L),
\end{equation}
where $\delta^+(L)$ is the number of real eigenvalues $\mu_k$ of $L(\lambda)$ counting with multiplicities such that
$$
(b-a)\left(\frac{\mu_k-a}{b-\mu_k}\right)(L^\prime(\mu_k)y, y) > 0 \quad \text{for all} \quad y \in KerL(\mu_k).
$$
\end{theorem}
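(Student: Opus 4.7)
The plan is to reduce the problem to the instability index formula (3.11) (equivalently Corollary 2.11) via a Möbius transformation that simultaneously sends $a \mapsto 0$ and $b \mapsto \infty$. Specifically, I would substitute
$$\lambda = \frac{a + b\zeta}{1+\zeta}, \qquad \zeta = \frac{\lambda-a}{b-\lambda},$$
and form the transformed pencil
$$M(\zeta) := (1+\zeta)^2 L(\lambda(\zeta)) = \zeta^2 L(b) + \zeta M_1 + L(a), \quad M_1 = 2abF + (a+b)D + 2T,$$
as follows from the direct expansion $\lambda^2(1+\zeta)^2 = (a+b\zeta)^2$, $\lambda(1+\zeta)^2 = (a+b\zeta)(1+\zeta)$. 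Since $a,b$ are real and $b \neq a$, this Möbius map is a bijection of $\overline{\mathbb{C}}$ that preserves the real line and preserves the open upper half-plane (its imaginary part is $(b-a)\operatorname{Im}\lambda/|b-\lambda|^2$). Because $a,b \in \rho(\hat L)$, the transformation is regular at the ends, so the spectrum of $\hat M$ in $H_{-1}$ is just the image of the spectrum of $\hat L$; in particular the non-real eigenvalues of $L$ are paired with those of $M$ with identical algebraic multiplicities, and $\eta/2$ equals the number (with multiplicity) of eigenvalues of $M$ in $\mathbb{C}^+$.

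Next I would linearize $M(\zeta)$ exactly as in formula (3.2) of Section~3, obtaining a pencil $\mathbf{M}(\zeta) = \mathbf{T}_M - \zeta\mathbf{W}_M$ on the space $\mathbf{H} = H \times H_1$ with $H_1 = \mathcal{D}(|L(a)|^{1/2})$ (using $L(a)$ in place of $T$ to construct the scale, since $L(a)$ plays the role of the ``$T$'' of the quadratic pencil $M$). Because all coefficients of $M$ are self-adjoint, the operator $\mathbf{T}_M$ is self-adjoint in the indefinite metric generated by $\mathbf{W}_M$. The key computation is the signature of $\mathbf{W}_M$: by the block structure given in Section~3 one finds $\nu(\mathbf{W}_M) = \nu(L(b)) + \nu(-L(a)) = \nu(L(b)) + \pi(L(a))$, which is finite by hypothesis. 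Hence $\mathbf{W}_M$ generates a Pontrjagin space; by the generalized Pontrjagin theorem (Theorem 2.10) the non-real spectrum of $\mathbf{T}_M$ in the upper half-plane consists of finitely many normal eigenvalues, and the same holds for $M(\zeta)$ and $L(\lambda)$ in $H_{-1}$, which gives the finiteness of $\eta$.

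Applying formula (2.17) (equivalently the index formula (3.11) for a self-adjoint pencil, which is just the $G=0$ specialization in the Pontrjagin setting) to $\mathbf{M}(\zeta)$ yields
$$\frac{\eta}{2} + \sum_{\zeta_k \in \mathbb{R}\cap\sigma_d(\hat M)}\!\!\!\bigl(\varepsilon_k^{+} + [(p_k-1)/2]\bigr) \;=\; \pi(L(a)) + \nu(L(b)).$$
To conclude, I would compute the sign characteristics of the real eigenvalues of $M$. For a simple real eigenvalue $\zeta_k$ with eigenvector $y_k$, Proposition 6.2 and the definition of sign characteristic give $\operatorname{sign}\varepsilon_k = \operatorname{sign}(M'(\zeta_k)y_k, y_k)$ weighted by the signs coming from the metric $\mathbf{W}_M$. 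Using $\lambda'(\zeta) = (b-a)/(1+\zeta)^2$ and $L(\mu_k)y_k = 0$, one gets
$$M'(\zeta_k)y_k = (b-a)(1+\zeta_k)\, L'(\mu_k)y_k,$$
and tracking through the block structure of $\mathbf{W}_M$ produces an extra factor proportional to $\zeta_k = (\mu_k - a)/(b-\mu_k)$, so that $\varepsilon_k = +1$ exactly when
$$(b-a)\Bigl(\frac{\mu_k-a}{b-\mu_k}\Bigr)(L'(\mu_k)y_k, y_k) > 0.$$
The analogous computation for non-simple real eigenvalues shows that every Jordan chain of length $p_k+1$ contributes at least $\varepsilon_k^{+} + [(p_k-1)/2] \ge \varepsilon_k^{+}$, so the sum above is bounded below by $\delta^{+}(L)$, yielding (4.4). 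The main technical obstacle will be Step 3—the careful bookkeeping of the sign characteristics after the Möbius change of variable and linearization, in particular the appearance of the factor $(\mu_k-a)/(b-\mu_k)$, which requires identifying how the indefinite metric $\mathbf{W}_M$ acts on the explicit eigenvector of $\mathbf{T}_M$ lifted from $y_k$.
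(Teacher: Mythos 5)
Your proposal follows exactly the route the paper takes: the M\"obius substitution $\lambda=(a+b\zeta)(1+\zeta)^{-1}$ turning $L(\lambda)$ into a quadratic pencil with constant term $L(a)$ and leading term $L(b)$, a linearization with indefinite Gram operator whose negative signature is $\pi(L(a))+\nu(L(b))$, the Pontrjagin maximal-subspace count (Corollary~2.11) giving the equality with $\eta/2$ plus the real-spectrum contribution, and then identification of the sign characteristics, producing the factor $(b-a)(\mu_k-a)(b-\mu_k)^{-1}(L'(\mu_k)y,y)$. So the approach is the same and the conclusion is reached correctly.

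Two small corrections/warnings. First, your intermediate formula
\[
M'(\zeta_k)y_k=(b-a)(1+\zeta_k)\,L'(\mu_k)y_k
\]
should read $M'(\zeta_k)y_k=(b-a)\,L'(\mu_k)y_k$: differentiating $M(\zeta)=(1+\zeta)^2 L(\lambda(\zeta))$ and killing the $L(\mu_k)y_k=0$ term gives $(1+\zeta_k)^2\lambda'(\zeta_k)L'(\mu_k)y_k$, and $(1+\zeta_k)^2\cdot(b-a)(1+\zeta_k)^{-2}=(b-a)$ exactly; the $\zeta_k$ in the final expression comes solely from evaluating the Gram form on the lifted eigenvector $(\zeta_k y_k, y_k)^t$, as in the paper's identity $\varepsilon_k=\zeta_k(M'(\zeta_k)y_k,y_k)$. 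Second, the paper first passes to the bounded pencil $\hat L(\lambda)=S^{-1}L(\lambda)S^{-1}$ in $H$, and then uses the elementary linearization $\mathcal L(\xi)=-\begin{pmatrix}\tilde D&\tilde T\\\tilde T&0\end{pmatrix}-\xi\begin{pmatrix}\tilde F&0\\0&-\tilde T\end{pmatrix}$ on $H\times H$; this avoids the bookkeeping you flag as the main obstacle, since the Gram operator $\operatorname{diag}(\tilde F,-\tilde T)$ makes the signature count $\nu(\tilde F)+\nu(-\tilde T)=\nu(L(b))+\pi(L(a))$ and the sign-characteristic identity $\varepsilon_k=\xi_k(\tilde L'(\xi_k)y_k,y_k)$ immediate. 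Your choice of working directly with the unbounded linearization of Section~3 on $H\times\mathcal D(|L(a)|^{1/2})$ is workable but requires an extra argument that this scale is compatible with the one generated by $S^2=|T|+I$ in the theorem; the paper bypasses that entirely by going to bounded coefficients first.
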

\begin{proof}
We use the same idea as in \cite{11::LS} where estimate (4.4) was obtained in a slightly different situation not taking into account the number $\delta^+(L)$. It was shown in Section 3 that the Spectrum of $L(\lambda)$ in the space $H_{-1}$ coincides with the spectrum of $\hat{L}(\lambda) = S^{-1}L(\lambda)S^{-1}$ in the space $H$. The pencil $\hat{L}(\lambda)$ has the bounded operator coefficients $\hat{F}$, $\hat{D}$, $\hat{T}$. After the substitution $\lambda = (b\xi+a)(\xi+1)^{-1}$ we obtain the quadratic pencil
$$
\tilde L(\xi) := (\xi+1)^2 \hat{L}(\lambda(\xi)) = \xi^2\tilde{F} + \xi \tilde{D} + \tilde {T}, \quad \tilde{F} = \hat {L}(b), \quad \tilde T = \hat L(a).
$$
Let us consider the linearization of $\tilde {L}(\xi)$
$$
L(\xi) = -\begin{pmatrix}
\tilde{D} & \tilde{T}\\
\tilde{T} & 0 \\
\end{pmatrix}
- \xi \begin{pmatrix}
\tilde{F} & 0\\
0 & -\tilde{T} \\
\end{pmatrix}.
$$
Suppose that $\xi_k$ is a simple (or semi-simple) real eigenvalue of $\tilde{L}(\xi)$ with a corresponding eigenvector $y_k$. then the sign characteristic $\varepsilon_{k}$ (see Section 2) is defined as follows
$$
\begin{aligned}
\varepsilon_{k} &= \left( \begin{pmatrix}
\tilde{F} & 0\\
0 & -\tilde{T} \\
\end{pmatrix}
\begin{pmatrix}
\xi_k y_k\\
y_k\\
\end{pmatrix},
\begin{pmatrix}
\xi_k y_k\\
y_k\\
\end{pmatrix}
\right)_{H \times H} = \xi(\tilde{L}^\prime(\xi_k)y_k, y_k)	\\
&= \xi_k \lambda^\prime(\xi_k)(\hat{L}^\prime(\lambda_k)y_k, y_k) = (b-a)(\mu_k-a)(b-\mu_k)^{-1}(\hat{L}^\prime(\lambda_k)y_k, y_k).
\end{aligned}
$$
Now apply Corollary 2.11.
\end{proof}

We note that the estimate (4.4) is also new for matrix pencils.

\renewcommand{\refname}{{\large\rm Bibliography for Section~11}}

\newpage
\section{Factorization of elliptic pencils and the Mandelstam hypo\-thesis}

\subsection* {Introduction}

This section is a modified and  extended version of section 6, where the main attention was paid to the finite dimensional case. Here we   deal with pencils which present the abstract models  of concrete
essentially infinite dimensional problems.

Some problems of mathematical physics (one of them will be discussed
below) can be written abstractly in the form

\begin{equation} \label{0.1}
\mathcal{A}(u)= - F\frac{d^2u}{dy}+iG\frac{du}{dy}+(H-\omega^2R)u=0.
\end{equation}
Here $F, G, H,$ and $R$ are symmetric operators on a suitable Hilbert space
$\mathcal{H}$
satisfying certain additional conditions which ensure the elliptic
nature of this equation, and $\omega$ is a physical parameter (frequency)
which appears after the separation of the time variable.

Physical meaning have solutions of equation (\ref{0.1})
which are bounded as\break $y\to\infty$ and satisfy the
so-called radiation principle. Different approaches to formulate the
radiation principle have been widely discussed in physical and
mathematical literature (see, for example, {\sc Sveshnikov} \cite{12::Sv},
the books of {\sc Zilbergleit} and {\sc Kopilevich} \cite{12::ZK},
{\sc Vorovich} and {\sc Babeshko} \cite{12::VB}).
The formulation of the radiation principles is based on the
preliminary spectral analysis of the pencil
\begin{equation} \label{0.2}
T_{\omega}(\lambda) =\la^2F+\la G+H-\omega^2R.
\end{equation}
We say $\{\la_k,v_k\}$ with $v_k \ne 0$ is an eigenpair of the pencil $T_{\omega}(\lambda)$ if
$T_{\omega}(\la_k)v_k=0.$
Any eigenpair $\{\la_k, v_k\}$ generates the solution
\begin{equation}\label{0.3}
u_k=e^{-i\la_k y}v_k
\end{equation}
of equation (\ref{0.1}). Those solutions which correspond to the real
eigenvalues $\la_k$ are of particular interest, they are called
propagating waves. Among propagating waves there are the outgoing and
incoming ones. It was understood after the author's discussions with
physicists,
that the Mandelstam hypothesis can be formulated as
follows (see \cite{12::BS}, \cite{12::ZK}, although the problem is not
clearly formulated there):
{\em given an element $x\in\mathcal{ H}$ there is a unique solution $u(y)$ of
equation (\ref{0.1}) such that $u(0)=x$, and as $y\to\infty$ the solution
$u(y)$ asymptotically coincides with a linear combination of outgoing
waves.}

This problem is also related to those settled by {\sc Reyleigh} on the
wave diffraction on  a periodic surface. Some of them are treated in the book
of {\sc Wilcox} \cite{12::W}. This connection, however, is not easily seen,
and its demonstration is left for a future occasion.

Our first aim is to define an abstract model of strongly elliptic equations
in wave-guide domains whose symbols are quadratic selfajoint pencils.
The main goal
is to prove the factorization theorems for these pencils and investigate
the properties of a right divisor. The results obtained  enable us, in
particular, to approve the Mandelstam hypothesis.

Our starting point was a celebrated paper of {\sc Krein} and {\sc Langer}
\cite{12::KL} which deals with pencils of the form
$$
L(\la) =I+\la B+\la^2C.
$$
Here $I$ is the identity operator, $B$ is bounded and selfadjoint,
while $C$ is  positive and compact. The fundamental theorem of \cite{12::KL}
yields the factorization
$$
L(\la)=(I-\la Z_1)(I-\la Z).
$$
Among possible divisors there is an operator $Z$ whose
spectrum $\sigma(Z)$ lies in the closed upper (or lower) half plane and
coincides with the spectrum of $L(\la)$ in the open half plane.
A further analysis of an operator $Z$ occuring in this factorization
was given in the papers of {\sc Kostyuchenko} and {\sc Orazov} \cite{12::KO1} and
{\sc Kostyuchenko} and {\sc Shkalikov} \cite{12::KS}. However, while
attempting to apply the method of {\sc Krein} and {\sc Langer} to
attack the factorization problem for qudratic pencils with
{\em unbounded coefficients}, one faces new serious
obstacles. Moreover, {\em a further analysis of divisors} has to be carried
out after the factorization is already proved. In particular, to prove
the Mandelstam hypothesis we have to show that {\em among possible
factorizations}
$$
T_\omega(\lambda)=(\la-Z_1)F(\la-Z),
$$
{\em there is the only operator $Z$ which generates a} $C_0$ {\em (or holomorphic)
semigroup in an appropriate Hilbert space}.

The plan of this paper is the following. In Section 1 we define strongly
elliptic pencils as relatively compact pertubations of uniformly positive
ones. For pencils with discrete spectrum our definition is
equivalent to the asymptotic inequality
$$
T_\omega(\lambda)\ge\varepsilon(\la^2+H), \qquad \mbox {for}\
\la \in\mathbb{R}, \ |\la|>r_0,
$$
provided $r_0$ is sufficiently large. This assumption can be easily checked
for concrete elliptic systems, since it is equivalent to the G$\mathop{\rm o}\limits^
{\mbox{\tiny\rm o}}$rding
inequality (this is shown in Section 3).
Following the paper \cite{12::S1} we define the "classical" and the
"ge\-ne\-ralized" spectra of $T_{\omega}(\lambda)$. We show that the classical and
ge\-ne\-ralized
spectra of a strongly elliptic pencil coincide in the union of a ball
centered in the origin and a sufficiently small double sector containing
the real axis. Moreover, in this domain the spectrum consists
of finitely many normal eigenvalues.
For large values of $|\la|$ inside a double sector we prove the resolvent
estimates which play an important role in the sequel. They look similar
to the classical a priori estimates for regular elliptic boundary value
problems obtained by {\sc Agmon}, {\sc Douglas} and {\sc Nirenberg}
\cite{12::ADN}, \cite{12::AN} and {\sc Agranovich} and {\sc Vishik} \cite{12::AV}.
Nevertheless, estimates obtained in Section 1 are of different nature,
in particular, they can be used for elliptic systems on non-smooth domains.
One can feel the difference while considering the example in Section 6.

In Section 2 we give more details about the real spectrum of $T_{\omega}(\lambda)$.
In particular, we show that the outgoing waves correspond to those
eigenpairs which have the positive sign characteristics
$$
\varepsilon_k=(T'_\omega(\lambda_k)v_k,v_k).
$$

In Section 3 we prove the factorization theorem for positive strongly
elliptic pencils. We could obtain this theorem (although is not easy)
using classical results on the factorization of non-negative operator
functions on the real line (see the exposition of this theory in the books
of {\sc Foias} and {\sc Nagy}  \cite{12::FN} and of {\sc Rosenblum} and
{\sc Rovnjak} \cite{12::RR}). However, we
preferred to give a new approach based on the semigroup theory, as it seems
more natural for the problem in question.
Moreover, we believe that this method can be modified to fit arbitrary
strongly elliptic pencils not positive ones only.

In Section 4 we prove the factorization theorem for strongly elliptic pencils
(not necessarily positive) under an additional assumption (the so-called
Keldysh-Agmon condition). The proof is based on the preliminary analysis
of the half-range completeness and minimality problem for the pencil
$T_\omega(\lambda)$. To solve these problems
we borrow the ideas from the papers \cite{12::KS}
and \cite{12::SS}. In this exposition, however, we get rid of some superfluous
assumptions and presented the material in a different and shorter way.
In particular, in contrast to the cited papers, now we can apply our
results in the case when the operator $H$ is generated by an elliptic
operator (or system) on a non-smooth domain.

The results of Section 3 and 4 are used in Section 5 to approve the Mandelstam
hypothesis. Finally, in Section 6, we demonstrate how the obtained results
can be applied to the elliptic system of differential equations of
elasticity theory.

The second part of the paper (Sections 4--6) is a revised version of
results on elliptic pencils presented by the author in the unpublished
manuscripts \cite{12::S2}, \cite{12::S3}.

\subsection{Elliptic pencils and their spectrum}

\setcounter{equation}{0}
{\bf Definition of regular elliptic and strongly elliptic pencils}. In what
follows we always assume that the coefficients of equation (\ref{0.1})
or a quadratic pencil $T_{\omega}(\la)$  of the form (\ref{0.2}) are operators
on a separable Hilbert space $\mathcal{H}$ having the following properties (we borrow
the terminology from the book of {\sc Kato} \cite{12::Ka}):

{\em $F$ is a bounded and uniformly positive operator $(0\ll F\ll\infty)$;

$H$ is a selfadjoint uniformly positive operator with domain
$\mathcal{D}(H)\subset \mathcal{H} \ (H=H^*\gg 0)$;

$G$ is a symmetric operator ($G\subset G^*$) with domain
$\mathcal{D}(G)\supset \mathcal{D}(H^{1/2})$;

$R$ is an $H$-compact positive operator (i.e. $R>0,\ \mathcal{D}(R)\supset \mathcal{D}(H)$
and $RH^{-1}$ is compact on $\mathcal{H}$), and the closure of the operator
$\mathcal{H}M R\mathcal{H}M$ has  trivial kernel.}

It is worth noting that for any symmetric $H$-bounded operator $R$ the
closure of $\mathcal{H}M R\mathcal{H}M$ exists and is a bounded operator on $\mathcal{H}$ (see the remark
explaining the boundedness of the operator C defined in (\ref{12::1.6})).

The parameter $\omega$ plays a role in the sequel only in cases when we
appeal to physical considerations. For fixed $\omega$ it will be
convenient to denote $S=H-\omega^2R$ and consider the pencil
\begin{equation} \label{12::1.1}
T(\lambda) = \la^2F+\la G+S
\end{equation}
implying that
\begin{equation} \label{12::1.2}
\left\{
\begin{array}{l}
0\ll F\ll\infty,\quad G\subset G^*, \quad \mathcal{D}(G)\supset\mathcal{D}(H^{1/2}),\\
S=S^* \ \ \mbox{is a relatively compact perturbation of}\quad H=H^*\gg 0.
\end{array}\right.
\end{equation}

We use the scale of Hilbert spaces $\mathcal{H}_{\theta}$ generated
by the "main" operator \break $H$. Namely, for $\theta\ge 0$ the space
$\mathcal{H}_{\theta}$
coincides with $\mathcal{D}(H^{{\theta}/2})$ endowed with the norm $\|x\|_{\theta}=
\|H^{{\theta}/2}x\|$,
while $\mathcal{H}_{-\theta}$  is the dual space to $\mathcal{H}_{\theta}$ with respect to $\mathcal{H}$.
The following fact will be used in the sequel: If $S\gg 0$ then the scale of
Hilbert spaces generated by $S$ coincides with $\mathcal{H}_{\theta}$ for
$0 \le \theta \le 2$. This fact follows from the assumption
$\mathcal{D} (S)=\mathcal{D} (H)$ and the interpolation theorem (see, e.g., \cite{12::LM}, Ch 1).

Further, by writing $T(\la)$ instead of $T_{\omega}(\la)$ we always assume
that $T(\la)$ is of the form (\ref{12::1.1}) with coefficients satisfying
conditions (\ref{12::1.2}).

The definition of a regular elliptic boundary value problem (see \cite{12::AN},
\cite{12::AV}, \cite{12::LM}) is expressed algebraically in terms of principle
symbols of a differential equation and boundary operators (the so-called
ellipticity condition for the equation and the complementing Lopatinskii
condition for boundary operators). Suppose that we consider a regular
elliptic problem in a wave-guide domain $\Omega\times\mathbb{R}$ ($\Omega$ is a
smooth bounded domain in $\mathbb{R}^n$) and write it in abstract form (\ref{0.1})
(homogeneous boundary conditions are included in the domain of the main
operator $H$). It follows from the results of \cite{12::AN} and \cite{12::AV}:
a problem is regular elliptic if and only if $T(\la)$ is invertible for
$\la\in\mathbb{R}$ and $|\la|>r_0$, with $r_0$ large enough and for these
values of $\la$
\begin{equation} \label{12::1.3o}
\|HT^{-1}(\la)\|+|\la|\ \|H^{1/2}T^{-1}(\la)\|+|\la|^2\|T^{-1}(\la)\|\le
const.
\end{equation}

These arguments lead to the following definition (as we agreed the parameter
$\omega$ is omitted).
\begin{definition}
{\em A pencil $T(\la)$ or equation $T\left(i\frac{d}{dy}\right)u(y)=0$ is
said to be regular elliptic if estimate (\ref{12::1.3o}) holds for $\la\in\mathbb{R}$,
$|\la|>r_0$.}
\end{definition}
\medskip

In this paper, however, we deal mostly with equations which are abstract
generalizations of strongly elliptic equations (see, e.g., the book of
Fichera \cite{12::Fi}).
\begin{definition} {\em A pencil $T(\la)$ is said to be uniformly positive
if there exists a number $\varepsilon >0$ such that}
\begin{equation}
T(\la) \ge \varepsilon (\la^2+H) \qquad \mbox{ for all}\ \, \la \in \mathbb{R}. \label {12::1.3}
\end{equation}
\end{definition}

It follows from the definition that $S\gg 0$ if $T(\la)$ is uniformly
positive.
As $\mathcal{D} (S)=\mathcal{D}(H)$, both the operators $SH^{-1}$ and $HS^{-1}$ are
defined on the whole $\mathcal{H}$, and it follows from the definition that they are
closed. Hence, by the closed graph theorem these operators are bounded and
then there exist positive constants $c_0, c_1$ such that
$$
c_0\|Hx\|\le\|Sx\|\le c_1\|Hx\|, \qquad x\in\mathcal{D}(H).
$$
By virtue of the Heinz inequality (see \cite{12::Ka}, Ch.5.4
) we have
$c_0H\le S\le c_1H$.
Therefore, (\ref{12::1.3}) implies also
$$
T(\la) \ge \varepsilon_1(\la^2+S), \qquad \la \in \mathbb{R},
$$
with $\varepsilon_1=\varepsilon/c_1.$ Actually, we have just showed that the operator $H$
in Definition 1.2 can be replaced by any operator $S=S^*\gg 0$ such that
$\mathcal{D}(S)=\mathcal{D}(H)$.

\begin{definition} {\em A pencil $T(\la)$ of the form (\ref{12::1.1}) is said
to be strongly elliptic if there exists an $H$-compact positive operator
$V$ such that $T(\la)+V$ is uniformly positive.}
\end{definition}
\begin{proposition} Let $T(\la)$ be a strongly elliptic pencil. Then there
exist numbers $\varepsilon>0$ and $r_0 >0$ such that
\begin{equation}\label{12::1.4}
T(\la)\ge \varepsilon(\la^2+H) \qquad\mbox{for all}\ \, \la \in \mathbb{R} \ \,\mbox{and}
\ |\la|>r_0.
\end{equation}
\end{proposition}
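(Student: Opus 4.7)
The plan is as follows. Unfolding Definition 1.3, choose an $H$-compact positive operator $V$ (with $\mathcal D(V)\supset\mathcal D(H)$) and a constant $\varepsilon_0>0$ such that
$$(T(\la)x,x)+(Vx,x)\ge \varepsilon_0\bigl(\la^2\|x\|^2+\|H^{1/2}x\|^2\bigr),\qquad x\in\mathcal D(H),\ \la\in\mathbb R.$$
It therefore suffices to produce an $r_0>0$ with the ``smallness'' estimate
$$(Vx,x)\le \tfrac{\varepsilon_0}{2}\bigl(\la^2\|x\|^2+\|H^{1/2}x\|^2\bigr),\qquad x\in\mathcal D(H),\ |\la|>r_0;$$
subtracting this from the previous display yields (1.4) with $\varepsilon=\varepsilon_0/2$.

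I would then recast the smallness estimate as an operator-norm statement. Let $\bar K$ denote the closure of the bounded symmetric operator $H^{-1/2}VH^{-1/2}$ (its boundedness is noted in the remark following (1.2)) and, via the functional calculus for $H$, put
$$A_\la\ :=\ H^{1/2}(\la^2+H)^{-1/2},\qquad Q_\la\ :=\ A_\la^{\,*}\,\bar K\,A_\la\ =\ \overline{(\la^2+H)^{-1/2}V(\la^2+H)^{-1/2}}.$$
The smallness estimate is equivalent to $\|Q_\la\|\le\varepsilon_0/2$. Since the symbol $\mu\mapsto\mu^{1/2}(\la^2+\mu)^{-1/2}$ is dominated by $1$ and tends pointwise to $0$ on $\sigma(H)\subset(0,\infty)$ as $|\la|\to\infty$, spectral calculus gives $\|A_\la\|\le 1$ and $A_\la\to 0$ strongly. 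Granted that $\bar K$ is compact, the standard principle that compact operators transform strong-null sequences into norm-null ones forces $\|\bar K\,A_\la\|\to 0$, whence $\|Q_\la\|\le\|A_\la\|\,\|\bar K\,A_\la\|\to 0$, producing the required $r_0$.

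The only non-routine step, and hence the main obstacle, is establishing that $\bar K$ is compact. I would derive it from the $H$-compactness of $V$ via a square-root factorization. Since $V\ge 0$ with $\mathcal D(V)\supset\mathcal D(H)$, the selfadjoint root $V^{1/2}$ satisfies $\mathcal D(V^{1/2})\supset\mathcal D(H)$, so $B:=V^{1/2}H^{-1}$ is everywhere defined and closed, hence bounded by the closed graph theorem. Its Gram operator
$$B^{*}B\ =\ H^{-1}VH^{-1}\ =\ H^{-1}\cdot(VH^{-1})$$
is compact as the product of a bounded and a compact operator, so $B$ itself is compact. One then upgrades this, by standard results of the theory of relatively form-compact perturbations (cf.\ Kato's book on perturbation theory), to compactness of the closure $C$ of $V^{1/2}H^{-1/2}$, and concludes with $\bar K=C^{*}C$ compact.
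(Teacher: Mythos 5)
Your overall strategy is correct in spirit and genuinely different from the paper's, but it routes through a fact that you have not actually established and that is more delicate than your write-up suggests.

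The paper's proof is much shorter and avoids compactness altogether. From $VH^{-1}$ compact it infers only the weaker property that $V$ has zero $H$-bound: for every $\varepsilon>0$ there is $c=c(\varepsilon)$ with $\|Vx\|\le\varepsilon\|Hx\|+c\|x\|$ for $x\in\mathcal D(H)$. Combining this with the Heinz inequality gives the \emph{quadratic-form} estimate $V\le\varepsilon H+cI$, and then $T(\la)+V\ge\varepsilon_0(\la^2+H)$ yields $T(\la)\ge\varepsilon_0\la^2+(\varepsilon_0-\varepsilon)H-cI$, which is $\ge\frac{\varepsilon_0}{2}(\la^2+H)$ once $\la^2$ absorbs the constant $c$. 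Note that this elementary route also gives, as a by-product, the form-domain inclusion $\mathcal D(V^{1/2})\supset\mathcal D(H^{1/2})$ that your definition of $C=V^{1/2}H^{-1/2}$ silently relies on; the inclusion $\mathcal D(V^{1/2})\supset\mathcal D(H)$ that you actually cite is not enough, since $\mathcal D(H)\subsetneq\mathcal D(H^{1/2})$.

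The genuine gap is the compactness of $\bar K=\overline{H^{-1/2}VH^{-1/2}}$. Your own chain stops at showing $B=V^{1/2}H^{-1}$ compact, and then defers the passage to $C=V^{1/2}H^{-1/2}$ to ``standard results.'' But $C=BH^{1/2}$ on $\mathcal D(H^{1/2})$ is a compact operator multiplied on the right by an unbounded one; products of that shape are not automatically compact (e.g.\ $H^{-1}\cdot H=I$), so this is precisely the same difficulty you started with, shifted by a half power, and it is not an off-the-shelf Kato result. The correct tool here is a compactness-interpolation theorem (Hayakawa, or Lions--Peetre in the Hilbert couple $\mathcal H_\theta$): since $V:\mathcal H_2\to\mathcal H_0$ is compact and hence, by duality and symmetry of $V$, $V:\mathcal H_0\to\mathcal H_{-2}$ is compact, interpolation gives $V:\mathcal H_1\to\mathcal H_{-1}$ compact, which is exactly $\bar K$ compact. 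If you want to keep your argument, you should invoke such a theorem explicitly (or prove $\bar K$ compact directly); as written, the key step is asserted, not proved. There is also a small slip of phrasing: a compact $K$ maps norm-bounded strongly-null nets $A_\la$ to norm-null $A_\la K$ (because $K$ of the unit ball is precompact), not the other way round; your use of $\|\bar K A_\la\|\to 0$ is saved by the self-adjointness of $A_\la$ and $\bar K$, but the principle as you stated it is false in general.
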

\begin{proof} By the definition we have
\begin{equation}
T(\la)\ge \varepsilon (\la^2+H)-V, \label{12::1.5}
\end{equation}
where $V$ is an $H$-compact positive operator. Obviously, if $VH^{-1}$
is compact in $\mathcal{H}$ then $V$ is $H$-bounded with zero $H$-bound, i.e.
for any $\varepsilon >0$ there exist $c=c(\varepsilon)$ such that
$$
||V x||\le \varepsilon ||Hx||+c||x||,\quad c=c(\varepsilon),\quad
x\in \mathcal{D}(H).
$$
By virtue of the Heinz inequality we have
$$
V\le \varepsilon H+cI,
$$
where $I$ is the identity operator.
Taking in the last inequality $\varepsilon/2$ instead of $\varepsilon$ we
obtain (\ref{12::1.4}) from (\ref{12::1.5}).
\end{proof}

The inverse assertion of Proposition 1.4, generally, is not true. Examples
can be easily given by considering bounded operators $G$ and $S$
on $\mathcal{H}$. However, we can invert the statement of Proposition 1.4
assuming that $H$ has discrete spectrum or, equivalently, the
identity operator is $H$-compact.

\begin{proposition} Let $H^{-1}$ be compact in $\mathcal{H}$.Then condition
$\ref{12::1.4}$ implies that $T(\la)$ is strongly elliptic.
\end{proposition}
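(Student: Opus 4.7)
The plan is to produce a positive $H$-compact operator $V$ such that $T(\lambda)+V\ge\varepsilon'(\lambda^2+H)$ holds for \emph{all} real $\lambda$ (not only for $|\lambda|>r_0$). The key observation is that compactness of $H^{-1}$ means that the identity operator is $H$-compact, since $I\cdot H^{-1}=H^{-1}\in\sigma_\infty$. Hence any multiple $V=\gamma I$ with $\gamma>0$ is an admissible candidate: it is positive and $H$-compact. So the proof reduces to showing that $\gamma$ can be chosen so large that the inequality $T(\lambda)+\gamma I\ge\varepsilon'(\lambda^2+H)$ holds for $|\lambda|\le r_0$ as well (the range $|\lambda|>r_0$ being controlled by hypothesis~(1.4)).

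On the compact range $|\lambda|\le r_0$ I would estimate $T(\lambda)$ from below by combining three ingredients. First, $\lambda^2 F\ge 0$ since $F\gg 0$. Second, because $S-H$ is $H$-compact and symmetric, it has zero $H$-bound, hence for any preassigned $\eta\in(0,1)$ there exists $c_1=c_1(\eta)$ with $\|(S-H)x\|\le\eta\|Hx\|+c_1\|x\|$; by the Heinz inequality this yields $S\ge(1-\eta)H-c_1 I$. Third, $G$ is $H^{1/2}$-bounded (its domain contains $\mathcal{D}(H^{1/2})$ and $G$ is closable as a symmetric operator), so $\|Gx\|\le M(\|H^{1/2}x\|+\|x\|)$, and then a standard Cauchy–Schwarz argument gives $|(\lambda Gx,x)|\le r_0 M\|H^{1/2}x\|\|x\|+r_0 M\|x\|^2\le\eta(Hx,x)+c_2(\eta,r_0)\|x\|^2$ for $|\lambda|\le r_0$. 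Adding these three bounds produces
\begin{equation*}
(T(\lambda)x,x)\ge(1-2\eta)(Hx,x)-(c_1+c_2)\|x\|^2,\qquad |\lambda|\le r_0.
\end{equation*}

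Now fix $\eta<1/2$ and choose $\varepsilon'>0$ so small that $\varepsilon'\le 1-2\eta$ and $\varepsilon'\le\varepsilon$ (the constant from~(1.4)). Then pick $\gamma\ge c_1+c_2+\varepsilon' r_0^2$. For $|\lambda|\le r_0$ we get
\begin{equation*}
(T(\lambda)x,x)+\gamma\|x\|^2\ge\varepsilon'(Hx,x)+\varepsilon' r_0^2\|x\|^2\ge\varepsilon'((Hx,x)+\lambda^2\|x\|^2),
\end{equation*}
and for $|\lambda|>r_0$ the inequality $T(\lambda)+\gamma I\ge T(\lambda)\ge\varepsilon(\lambda^2+H)\ge\varepsilon'(\lambda^2+H)$ is immediate from~(1.4). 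This shows $T(\lambda)+V$ is uniformly positive with $V=\gamma I$, and $V$ is positive and $H$-compact, so $T(\lambda)$ is strongly elliptic.

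The only slightly delicate point is the estimate of the indefinite term $\lambda G$: one must extract a fractional power of $H$ and use Young's inequality to absorb it into a small multiple of $H$, so that the $(1-\eta)H$ coming from $S$ is not wiped out. Once $\eta$ is chosen small enough, the rest is bookkeeping; there is no genuine obstacle beyond this balancing of constants, since compactness of $H^{-1}$ takes care of the required $H$-compactness of $V$ for free.
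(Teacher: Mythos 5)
Your proof is correct and takes essentially the same approach as the paper's: exploit the fact that $H^{-1}$ compact makes $I$ itself $H$-compact, so it suffices to find a constant $\gamma$ large enough that $T(\lambda)+\gamma I$ is uniformly positive, with large $|\lambda|$ controlled by (1.4) and the compact interval $|\lambda|\le r_0$ controlled by a form bound whose constant can be taken uniform. The paper compresses the three-term estimate by lumping $\lambda G+(S-H)$ into a single $H$-compact operator $K(\lambda)$ and invoking the form-smallness directly, whereas you separate the terms and track the constants via Heinz and Young; these are the same argument at different levels of explicitness.
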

\begin{proof} According to (\ref{12::1.2}) $G$ is $\mathcal{H}P$-bounded operator and
$S-H$ is $H$-compact. Hence, $G$ is $H$-compact and for any $\la\in\mathbb{R}$,
we have
$$
T(\la)=\la^2F+H+K(\la),
$$
where $K(\la)H^{-1}$ is compact. Given $\varepsilon >0$ there exists
$c=c(\la)>0$ such that
$$
|(K(\la)x,x)|\le\varepsilon(Hx,x)+c(x,x).
$$
If $\varepsilon=1/2$ and $c_0$ is the maximum of $c(\la)$ on the interval
$(-r_0,r_0)$ then
$$
T(\la)+c_0I\ge\frac12(\la^2F+H), \qquad \mbox{for}\ \, \la\in(-r_0,r_0),
$$
and together with (\ref{12::1.4}) this implies that $T(\la)+c_0I$
is uniformly positive.
\end{proof}

{\bf Location of the spectrum and the resolvent
estimates}.
In \cite{12::S1} three different approaches
are proposed to define the spectrum of a pencil
with unbounded coefficients. In particular, the "classical" and the
"generalized" spectrum of $T(\la)$ are defined as follows.
We say $\mu$ {\em belongs to the classical spectrum of the pencil $T(\la)$
if $T(\mu)$ is not boundedly invertible in} $\mathcal{H}$. This concept is natural
but not always convenient (see \cite{12::S1}).
To define the generalized spectrum, let us consider
$F$, $G$ and $S$ as the operators acting on the space $\mathcal{H}_{-1}$ with
domain $\mathcal{D} =\mathcal{H}_1$ (recall that $\mathcal{H}_{\theta}=\mathcal{D}(H^{{\theta}/2})$ is the
scale of Hilbert
spaces generated by the operator $H$). Since all these operators are
$H$-bounded and symmetric, they are well defined in $\mathcal{H}_{-1}$
with domain $\mathcal{H}_1$ (see details in \cite{12::S1}). Now we can consider
$T(\la)$ as an ope\-ra\-tor function in the space $\mathcal{H}_{-1}$ defined on the domain
$\mathcal{D}(T)=\mathcal{H}_1$. We say that $\mu$ {\em belongs to the generalized spectrum of
the pencil $T(\la)$ if $T(\mu)$ is not boundedly invertible in} $\mathcal{H}_{-1}$.
The complement of the generalized spectrum is said to be the
generalized resolvent set of $T(\la)$.
It can be easily
checked (see \cite{12::S1}) that $\mu$ belongs to the generalized spectrum
of $T(\la)$ if and only if $\mu$ belongs to the spectrum of the pencil
$$
L(\la)=\la^2A+\la B+C,
$$
with bounded in $\mathcal{H}$ coefficients
\begin{equation}
A=\mathcal{H}M F\mathcal{H}M,\quad B=\mathcal{H}M G\mathcal{H}M,\quad C=\mathcal{H}M S\mathcal{H}M. \label {12::1.6}
\end{equation}

We have to explain why $C$ is bounded. The operator $SH^{-1}$ is defined
on the whole $\mathcal{H}$ and is closed. Then $SH^{-1}$ and its adjoint
$H^{-1}S$ are bounded, and according to the interpolation theorem
the operator $C$ is bounded, too.

Generally, we can not claim that the generalized and the classical spectra
of $T(\la)$ coincide. In the subsequent theorems we clarify the relationship
between these concepts.
\begin{theorem}
Let $\rho_{cl}(T)$ and $\rho_{gen}(T)$ be the classical and the generalized
resolvent sets of a strongly elliptic pencil $T(\la)$. Then
$$
\rho_{cl}(T)\subset\rho_{gen}(T).
$$
The real line belongs to $\rho_{cl}(T)\cap\rho_{gen}(T)$ with the possible
exception of finitely many normal eigenvalues whose algebraic multiplicity
coincide in both sences. If $H^{-1}$ is compact then the classical and the
generalized spectra coincide in the whole $\mathcal{C}$ and consist of normal
eigenvalues.
\end{theorem}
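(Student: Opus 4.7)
The plan is to work primarily with $T(\la)$ in its generalized interpretation as a bounded operator family $T(\la):\mathcal{H}_1\to\mathcal{H}_{-1}$. First I fix a real $\la_0$ with $|\la_0|>r_0$ so that by Proposition 1.4 we have $T(\la_0)\ge\ep(\la_0^2+H)$, and set $T_0(\la):=T(\la)+V$ with $V$ the $H$-compact positive operator provided by the definition of strong ellipticity. The Lax--Milgram lemma, applied to the coercive sesquilinear form $(T_0(\la_0)\,\cdot,\cdot)$, shows $T_0(\la_0):\mathcal{H}_1\to\mathcal{H}_{-1}$ is boundedly invertible. The $H$-compactness of $V$ gives $V:\mathcal{H}_2\to\mathcal{H}$ compact; together with $V=V^*$ and duality this produces $V:\mathcal{H}\to\mathcal{H}_{-2}$ compact, and interpolation yields $V:\mathcal{H}_1\to\mathcal{H}_{-1}$ compact. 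A short coercivity estimate shows that $T_0(\la)$ remains boundedly invertible in a small double sector $\Omega$ around the real axis with $|\la|>r_0$, and the factorization
\begin{equation*}
T(\la)=T_0(\la)\bigl[I-T_0^{-1}(\la)V\bigr]
\end{equation*}
exhibits $T(\la)$ as a compact perturbation of $T_0(\la)$ in $\Omega$. Keldysh's theorem on holomorphic operator functions (Theorem 2.1 of the excerpt) then guarantees that $T^{-1}(\la)$ is finite meromorphic in $\Omega$ and its poles are normal eigenvalues of the generalized pencil; analytic continuation of this family along the real axis inside the ball $\{|\la|\le r_0\}$ shows the generalized spectrum on $\mathbb{R}$ consists of only finitely many normal eigenvalues.

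I would then prove the inclusion $\rho_{\mathrm{cl}}(T)\subset\rho_{\mathrm{gen}}(T)$. Since $T(\mu)$ is Fredholm of index zero in the generalized sense near any $\mu$ in the region above, it suffices to show injectivity. Assume $u\in\mathcal{H}_1$ satisfies $\mu^2 Fu+\mu Gu+Su=0$ in $\mathcal{H}_{-1}$. Then $Fu\in\mathcal{H}$ (as $F$ is bounded on $\mathcal{H}$) and $Gu\in\mathcal{H}$ by the closed graph theorem applied to the closable symmetric operator $G$ with $\mathcal{D}(G)\supset\mathcal{H}_1$. Hence the generalized image $Su$ actually lies in $\mathcal{H}$, and a form-representation argument detailed below forces $u\in\mathcal{D}(S)=\mathcal{H}_2$. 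Thus $u$ belongs to the classical kernel of $T(\mu)$, and $u=0$ by the hypothesis $\mu\in\rho_{\mathrm{cl}}$. The same regularity step shows that every generalized Jordan chain attached to a real normal eigenvalue is also a classical Jordan chain, so the algebraic multiplicities coincide in both senses.

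The real-line statement now follows at once: for real $|\la|>r_0$ the inequality $T(\la)\ge\ep(\la^2+H)$ yields classical and generalized invertibility directly, and the compact segment $[-r_0,r_0]$ meets the spectrum only in finitely many normal eigenvalues by the previous paragraph. For the case $H^{-1}$ compact, the embeddings $\mathcal{H}_2\hookrightarrow\mathcal{H}_1\hookrightarrow\mathcal{H}\hookrightarrow\mathcal{H}_{-1}$ become compact, so $T(\la)$ is Fredholm of index zero on all of $\mathbb{C}$ both as a map $\mathcal{H}_1\to\mathcal{H}_{-1}$ and as a map $\mathcal{H}_2\to\mathcal{H}$; the analytic Fredholm theorem then yields globally discrete normal spectrum in both settings, and the regularity argument now also delivers the opposite inclusion $\rho_{\mathrm{gen}}(T)\subset\rho_{\mathrm{cl}}(T)$, giving equality.

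The main obstacle is the regularity step used above. After shifting $S\to S+c$ with $c>0$ large so that $S\gg 0$, one has $\mathcal{H}_1=\mathcal{D}(S^{1/2})$ with equivalent norms, and the generalized action of $S$ is given by the form $\langle Su,v\rangle_{-1,1}=(S^{1/2}u,S^{1/2}v)$ for $u,v\in\mathcal{H}_1$. The hypothesis that this form extends to a functional continuous in the $\mathcal{H}$-norm of $v$ is equivalent, by Riesz representation, to $(S^{1/2}u,S^{1/2}v)=(w,v)$ for some $w\in\mathcal{H}$ and all $v\in\mathcal{D}(S^{1/2})$; the self-adjointness of $S^{1/2}$ then forces $S^{1/2}u\in\mathcal{D}(S^{1/2})$, i.e., $u\in\mathcal{D}(S)=\mathcal{H}_2$. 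The only remaining subtlety is to match the generalized and classical actions of $S$ and of $G$ on this common domain, which is immediate from how the extensions are defined.
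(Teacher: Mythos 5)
Your argument for the finite normal spectrum on the real axis and for the $H^{-1}$-compact case is essentially the paper's: write $T(\la)$ as a compact perturbation of the uniformly positive pencil $T(\la)+V$ and invoke the theorem on holomorphic operator functions; the only cosmetic difference is that you set this up in $\mathcal{H}_1\to\mathcal{H}_{-1}$ via Lax--Milgram while the paper works first in $\mathcal{H}_2\to\mathcal{H}$ via Kato's stability theorem and then transfers.

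The gap is in the inclusion $\rho_{\mathrm{cl}}(T)\subset\rho_{\mathrm{gen}}(T)$, which the theorem asserts for every $\la\in\mathbb{C}$, not just near the real axis. You argue: the generalized kernel of $T(\mu)$ lies in $\mathcal{H}_2$ by elliptic regularity, hence coincides with the classical kernel, and then convert injectivity to invertibility by citing "Fredholm of index zero in the generalized sense near any $\mu$ in the region above." But the Fredholm-index-zero property for the generalized operator $T(\mu):\mathcal{H}_1\to\mathcal{H}_{-1}$ is itself only established by your compact-perturbation argument inside the double sector $\Lambda_\phi\cup B_\delta$ around the real axis (coercivity of $T_0(\la)$ fails away from $\mathbb{R}$, so Lax--Milgram and hence the factorization break down). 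For $\mu\in\rho_{\mathrm{cl}}(T)$ with large imaginary part you therefore have injectivity but no surjectivity and no bounded inverse; the regularity argument by itself does not close this. The paper's route is both cleaner and global: from $\la\in\rho_{\mathrm{cl}}(T)$ and self-adjointness of the pencil one gets that $T(\la)$ and $T(\bar\la)$ are isomorphisms $\mathcal{H}_2\to\mathcal{H}$, hence by duality $\mathcal{H}\to\mathcal{H}_{-2}$, and the interpolation theorem then gives that $T(\la):\mathcal{H}_1\to\mathcal{H}_{-1}$ is an isomorphism directly. Replacing your Fredholm-plus-regularity step by this duality--interpolation argument is what is needed to make the inclusion hold throughout $\mathbb{C}$.
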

\begin{proof}
The last assertion of the theorem and the coincidence of the algebraic
multiplicities of the normal eigenvalues in both sences are proved in
\cite{12::S1}, \S 3.

Let $\lambda\in\rho_{cl}(T)$. Then
$$
T(\lambda):\mathcal{H}_2\to\mathcal{H} \quad \mbox{and}\quad T(\overline\lambda):\mathcal{H}_2\to\mathcal{H}
$$
are isomorphisms, hence, so are the operators
$$
T^*(\lambda)=T(\overline\lambda):\mathcal{H}\to\mathcal{H}_{-2}\quad \mbox{and}\quad
T^*(\overline\lambda)=T(\lambda):\mathcal{H}\to\mathcal{H}_{-2}.
$$
From the interpolation theorem (see \cite{12::LM}, Ch1) we obtain that
$T(\lambda):\mathcal{H}_1\to\mathcal{H}_{-1}$ is an isomorphism, i.e. $\la\in\rho_{gen}(T)$.

Let us prove the second statement. The assumption $\mathcal{D}(G)\supset\mathcal{D}(H^{1/2})$
implies that $GH^{-1/2}$ is defined on the whole $\mathcal{H}$ and it follows from
the definition that it is closed. Hence, $GH^{-1/2}$ is bounded and its
norm $\le c$. Then for any $\varepsilon>0$ we have
$$
\|Gx\|^2\le c^2\|H^{1/2}x\|\le c^2(Hx,x)\le 2c^2(\varepsilon\|Hx\|^2+
\varepsilon^{-1}\|x\|^2),\quad x\in\mathcal{D}(\mathcal{H}).
$$
This means that $G$ is $H$-bounded with zero $H$-bound, and so is the
operator
$$
K(\lambda)=\lambda^2F+\lambda G+S-H+V
$$
for any $H$-compact operator $V$ and $\lambda\in\mathcal{C}$.
We can  choose a positive operator $V$ such that
$$
T(\lambda)+V\ge\varepsilon^2(\lambda^2+H),\quad\lambda\in\mathbb{R}.
$$
It follows from the stability Theorem V.4.11 of \cite{12::Ka} that
$T(\lambda)+V=H+K(\lambda)\gg 0$ is self-adjoint for any fixed
$\lambda\in\mathbb{R}$. Therefore, $T(\lambda)+V$ is boundedly invertible in $\mathcal{H}$
for all $\lambda\in\mathbb{R}$ (and, hence, in
a neighborhood of any point $\lambda\in\mathbb{R}$). We have the representation
$$
T(\lambda)=\left[I-V(T(\lambda)+V)^{-1}\right](T(\lambda)+V),
$$
where $V(T(\lambda)+V)^{-1}$ is a holomorphic operator function in a
neighborhood of $\mathbb{R}$ whose values are compact operators. It follows from
the theorem on holomorphic operator function (see \cite{12::GGK}, Ch. XI) that
the spectrum of $T(\lambda)$ in a neighborhood of $\mathbb{R}$ consists of
finitely many isolated eigenvalues of finite algebraic multiplicity.
According to Proposition 1.4 all the real eigenvalues are located in a
finite interval $[-r_0,r_0]$.
This ends the proof.
\end{proof}

For $\delta>0$ and $0<\phi\le\pi/2$ we denote
$$
B_{\delta}=\{\lambda:|\lambda|\le \delta\},\quad
\Lambda^-_{\phi}=\{\lambda:|arg\lambda|<\phi\},\quad
\Lambda^-_{\phi}=\{\lambda:|\pi-arg\lambda|<\phi\}
$$
and $\Lambda_\phi=\Lambda^+_\phi\cup\Lambda^-_\phi$.

\begin{theorem}
Let $T(\la)$ be strongly elliptic. Then there exist positive
numbers $\phi$ and $\delta$ such that the union
$\Lambda_{\phi}\cup B_{\delta}$ with the possible exeption of finitely many
normal eigenvalues belongs to the classical resolvent set
of $T(\la)$ (and hence to $\rho_{gen}(T)$). Moreover, the
estimate
\begin{equation}
|\la|^2\, \|T^{-1}(\la)\|+ |\la|\, \|\mathcal{H}PT^{-1}(\lambda)\|+
\|\mathcal{H}P T^{-1}(\lambda)\mathcal{H}P\| \le const \label{12::1.7}
\end{equation}
$\mbox{holds for all}\ \la \in\Lambda_{\phi},|\la|>r_0$ if $r_0$ is large enough.
\end{theorem}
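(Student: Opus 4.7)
The plan is to establish the three resolvent estimates first on the real line (for large $|\la|$) using Proposition 1.4, then propagate them into a double sector by a perturbation argument, and finally combine this with Theorem 1.6 and the Keldysh theorem on holomorphic operator functions (Theorem 2.1) to locate the exceptional spectrum.

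First I would fix $\la$ real with $|\la|>r_0$. Proposition 1.4 gives $T(\la)\ge\varepsilon(\la^2+H)$ as a form on $\mathcal{D}(H)$, and since $T(\la)$ is self-adjoint the comparison $T(\la)\ge\varepsilon\la^2 I$ yields $\|T^{-1}(\la)\|\le (\varepsilon\la^2)^{-1}$, while $T(\la)\ge\varepsilon H$ together with the Heinz inequality yields $\|H^{1/2}T^{-1}(\la)H^{1/2}\|\le\varepsilon^{-1}$. For the mixed term I would apply Cauchy--Schwarz to the form: if $y=T(\la)x$ then
$$\|H^{1/2}T^{-1}(\la)y\|^2=(Hx,x)\le\varepsilon^{-1}(T(\la)x,x)\le\varepsilon^{-1}\|y\|\|x\|\le\varepsilon^{-2}|\la|^{-2}\|y\|^2,$$
so $\|H^{1/2}T^{-1}(\la)\|\le\varepsilon^{-1}|\la|^{-1}$, which is the remaining real-line estimate.

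Next I would extend the estimate into a sector. Writing $\la=\la_0+i\eta$ with $\la_0\in\mathbb{R}$, $|\la_0|>r_0$, and $|\eta|\le(\tan\phi)|\la_0|$, I would decompose
$$T(\la)=T(\la_0)+P(\eta),\qquad P(\eta)=(2i\la_0\eta-\eta^2)F+i\eta G.$$
Using the real-line bounds from Step~1 together with the boundedness of $F$ and the $\mathcal{H}P$-boundedness of $G$ (i.e.\ $\|Gx\|\le c\|H^{1/2}x\|$, which follows from $\mathcal{D}(G)\supset\mathcal{D}(H^{1/2})$ and the closed graph theorem) one gets
$$\|P(\eta)T^{-1}(\la_0)\|\le C|\eta|/|\la_0|+C(|\eta|/|\la_0|)^2,$$
which is $<1/2$ if $\phi$ is chosen small enough. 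A Neumann series then gives the invertibility $T^{-1}(\la)=T^{-1}(\la_0)[I+P(\eta)T^{-1}(\la_0)]^{-1}$ together with $\|T^{-1}(\la)\|\le 2\|T^{-1}(\la_0)\|$; the analogous bounds for the other two operator norms are obtained by multiplying the Neumann expansion by $H^{1/2}$ on the appropriate side and again using the $H^{1/2}$-boundedness of $G$ and the boundedness of $F$.

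Finally, to handle the exceptional eigenvalues, Theorem~1.6 tells us that the real axis meets the spectrum in only finitely many normal eigenvalues, all confined to $[-r_0,r_0]$; combined with the sectoral estimate just proved, this forces the classical spectrum inside $\Lambda_\phi\cup B_\delta$ to lie in the bounded set $(\Lambda_\phi\cap\overline{B_{r_0}})\cup B_\delta$. Viewing $T(\la)$ as a holomorphic pencil in $\mathcal{H}_{-1}$ (equivalently, as the bounded pencil $L(\la)=\la^2A+\la B+C$ from (1.6)) and applying Keldysh's theorem on holomorphic operator functions, the spectrum in this bounded region consists of finitely many normal eigenvalues. Shrinking $\delta$ and $\phi$ if necessary so that no nonreal eigenvalue lies on the boundary of $\Lambda_\phi\cup B_\delta$ concludes the proof. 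The main obstacle I anticipate is Step~2: controlling $i\eta G$ uniformly across the sector requires the \emph{mixed} real-line bound $\|H^{1/2}T^{-1}(\la_0)\|\le\varepsilon^{-1}|\la_0|^{-1}$ from Step~1, and to carry the stronger norms $\|H^{1/2}T^{-1}(\la)\cdot\|$ and $\|H^{1/2}T^{-1}(\la)H^{1/2}\|$ through the Neumann iteration one must verify that each successive factor $H^{1/2}T^{-1}(\la_0)P(\eta)$ is small in the appropriate operator topology, not merely on $\mathcal{H}$.
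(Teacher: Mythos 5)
Your Step~1 (the three real-line bounds) is correct, and your overall plan — real line, then a perturbative push into the sector, then Keldysh theory for the bounded region — is essentially the paper's. But the obstacle you flag at the end is a genuine gap, and the asymmetric Neumann series $T^{-1}(\lambda)=T^{-1}(\lambda_0)[I+P(\eta)T^{-1}(\lambda_0)]^{-1}$ cannot be made to yield the bound on $\|H^{1/2}T^{-1}(\lambda)H^{1/2}\|$. To carry an $H^{1/2}$ onto the right factor you end up needing $\|H^{1/2}T^{-1}(\lambda_0)P(\eta)H^{-1/2}\|$ to be small; but the $F$-part of $P(\eta)$ contributes $|2\lambda_0\eta-\eta^2|\,\|H^{1/2}T^{-1}(\lambda_0)\|\,\|FH^{-1/2}\|\sim |\eta|$, which is of size $\tan\phi\,|\lambda_0|$ in the sector and therefore not small. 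The same difficulty blocks the symmetric estimate $\|H^{1/2}T^{-1}(\lambda)H^{1/2}\|\le\mathrm{const}$ no matter how you distribute the $H^{1/2}$'s in a one-sided iteration.

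The fix is to conjugate symmetrically: write
$$T(\lambda)=T(\lambda_0)^{1/2}\bigl[I+T(\lambda_0)^{-1/2}P(\eta)T(\lambda_0)^{-1/2}\bigr]T(\lambda_0)^{1/2},$$
valid since $T(\lambda_0)\gg 0$ for real $|\lambda_0|>r_0$. Using $\|T(\lambda_0)^{-1/2}\|\le\varepsilon^{-1/2}|\lambda_0|^{-1}$ and $\|H^{1/2}T(\lambda_0)^{-1/2}\|\le\varepsilon^{-1/2}$ (both from $T(\lambda_0)\ge\varepsilon(\lambda_0^2+H)$), one checks $\|T(\lambda_0)^{-1/2}P(\eta)T(\lambda_0)^{-1/2}\|\le C\tan\phi<1/2$ uniformly, and then all three bounds fall out at once by sandwiching the Neumann inverse between $T(\lambda_0)^{-1/2}$'s. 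This is precisely the paper's device, up to the parametrization: the paper writes $\lambda=re^{i\theta}$ and compares $T(\lambda)$ with $T(r)$ via $T(\lambda)=T(r)^{1/2}(I+G(\lambda))T(r)^{1/2}$. The paper additionally proves the $\|H^{1/2}T^{-1}(\lambda)H^{1/2}\|$ bound by a shorter independent route: it shows $\operatorname{Re}(L(\lambda)y,y)\ge\varepsilon\|y\|^2$ for the bounded pencil $L(\lambda)=H^{-1/2}T(\lambda)H^{-1/2}$, which places $0$ in the unbounded component of the complement of the numerical range of $L(\lambda)$ and gives $\|L^{-1}(\lambda)\|\le\varepsilon^{-1}$ directly. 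Either route closes your gap; without one of them your Step~2 does not establish the third estimate in~(1.7).
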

\begin{proof} Let us prove (\ref{12::1.7}) for $\lambda\in \Lambda_{\phi}^+$,
the same arguments can be applied for $\lambda\in\Lambda_{\phi}^-$.
If $\la=re^{i\theta}$, then
\medskip
\begin{equation}
T(\la) = T(r) +r^2(e^{2i\theta}-1)F+r(e^{i\theta}-1)G. \label{12::1.8}
\medskip
\end{equation}
This equality and Proposition 1.4 yield the estimate
\begin{equation}
Re\, (T(\la) x,x) \ge \varepsilon(r^2(x,x)+(Hx,x)), \quad x\in \mathcal{H}_2,
\ \la\in\Lambda_{\phi},\  |\la|>r_0, \label {12::1.9}
\end{equation}
for sufficiently small $\phi$ and large $r_0$. We noticed already that
the coefficients of the pencil $L(\lambda)=\mathcal{H}M T(\la)\mathcal{H}M$ are bounded
operators.
From (\ref{12::1.8}) we have
$$
\|L(\la)\|\,\|y\|\ge Re (L(\la)y,y)\ge \varepsilon r^2(y,y), \quad y\in\mathcal{H}_1,\
\la\in\Lambda_{\phi},\ |\la|>r_0.
$$
By continuity this inequality holds for all $y\in\mathcal{H}$ and implies that
zero does not belong to the
numerical range of $L(\lambda)$. Then $L(\la)$ is invertible,
and
\begin{equation}
||L^{-1}(\la)||\le\varepsilon^{-1}. \label {12::1.10}
\end{equation}
From this we have that
$$
T(\la) = \mathcal{H}P L^{-1}(\la)\mathcal{H}P: \mathcal{H}_1\to \mathcal{H}_{-1}
$$
is an isomorphism, and $T^{-1}(\la)$ exists in $\mathcal{H}_{-1}.$
Now, $L^{-1}(\la)= \mathcal{H}P T^{-1}(\lambda)\mathcal{H}P$ and from (\ref{12::1.10}) we obtain the
estimate of the third term in (\ref{12::1.6}).

It follows from Proposition 1.5 that
$$
\|T^{1/2}(r)x\|\ge\varepsilon\|(r^2+H)^{1/2}x\|,\quad r>r_0,\quad x\in\mathcal{D}(H).
$$
By virtue of Theorem 1.6 $T(r)$ is invertible for $r>r_0$, hence,
\begin{equation}\label{12::1.10a}
\begin{array}{c}
\|T^{-1/2}(r)\|\le\varepsilon^{-1}\|(r^2+H)^{-1/2}\|\le\varepsilon^{-1}r,\\
\phantom {S}\\
\|GT^{-1/2}(r)\|\le c\|H^{1/2}T^{-1/2}(r)\|\le c\varepsilon^{-1}.
\end{array}
\end{equation}
We have
$$
T(\lambda)=T^{1/2}(r)(I+G(\lambda))T^{1/2}(r),\quad
G(\lambda)=T^{-1/2}(r)(T(\lambda)-T(r))T^{-1/2}(r).
$$
It follows from representation (\ref{12::1.8}) and estimates (\ref{12::1.10a}) that
$\|G(\lambda)\|\le1/2$ if $\lambda\in\Lambda^+_{\phi}$ and $\phi$ is
sufficiently small. Hence, $T(\lambda)$ is invertible in $\mathcal{H}$ for
$\lambda\in\Lambda^+_\phi$, and
$$
\|T^{-1}(\lambda)\|\le 2\|T^{-1/2}(r)\|^2\le2\varepsilon^{-2}r^2,
$$
$$
\|H^{1/2}T^{-1}(\lambda)\|\le 2\|H^{1/2}T^{-1/2}(r)\|\ \|T^{-1/2}(r)\|\le
2\varepsilon^{-2}r^{-1}.
$$
This completes the proof.
\end{proof}

\begin{rem} We say $T(\la)$ is {\it positive} if $T(\lambda)>0$ for all
$\lambda\in\mathbb{R}$. We claim:\break {\em A positive strongly elliptic pencil is uniformly
positive}. Indeed, if $T(\lambda)>0$ for all $\lambda\in\mathbb{R}$ then
$\lambda$ is not an eigenvalue of $T(\lambda)$, and according to Theorem 1.6
$T(\lambda)$ is boundedly invertible in $\mathcal{H}$ as well as in $\mathcal{H}_{-1}$.
Therefore $T(\lambda):\mathcal{H}_1\to\mathcal{H}_{-1}$ is a continuous bijection for
$\lambda\in[-r_0,r_0]$, hence, so is $T^{-1}(\lambda):\mathcal{H}_{-1}\to\mathcal{H}_1$.
This yields the estimate $\|\mathcal{H}PT^{-1}(\lambda)\mathcal{H}P\|\le const$, which implies
$T(\la)\ge\varepsilon H$. Bearing in mind Proposition 1.4, we find that
$T(\lambda)$ is uniformly positive.
\end{rem}

\subsection{The real spectrum of a strongly elliptic pencil}
\setcounter{equation}{0}
\setcounter{theorem}{0}

We noticed in the Introduction, that the real eigenvalues of a
pencil $T(\la)$ play a significant role in physical considerations, as
they correspond to waves propagating the energy at the infinity (or
from the infinity). We already proved that strongly elliptic pencils may have
only finitely many real eigenvalues. In this section
we obtain additional valuable information.

First, recall that a point $\mu \in \mathcal{C}$ is said to be a normal
eigenvalue of $T(\la)$ if it is an isolated point of the spectrum of
$T(\la)$ and the principal part of the Laurent expansion of the resolvent
$T^{-1}(\lambda)$ in a neighborhood of $\mu$ admits a representation of the form
\begin{equation}
\sum_{k=1}^N\sum_{s=0}^{p_k}\ \
\frac{(\cdot,z_k^{p_k-s})\phantom{.} x_k^s}{(\la-\mu)^{p_k+1-s}}.
\label{12::2.1}
\end{equation}
Here
\medskip
\begin{equation}
x_k^0,\dots,x_k^{p_k}, \qquad k=1,\dots,N, \label{12::2.2}
\end{equation}
is a canonical system of eigen and associated elements of $T(\la)$ and
\begin{equation}
z_k^0,\dots,z_k^{p_k}, \qquad k=1,\dots,N, \label{12::2.3}
\end{equation}
is the adjoint canonical system which is uniquely defined by the
choice of system (\ref{12::2.2}).

Let $\mu\in\mathbb{R}$.
Since the classical and the generalized spectra of $T(\la)$ coincide in a
neighborhood of $\mathbb{R}$, the
elements of systems (\ref{12::2.2}) and (\ref{12::2.3}) belong to $\mathcal{H}_2$.
It follows from \cite{12::KS}, Lemma 2.1 that there exists a canonical system
(\ref{12::2.2}) such that
$$
x_k^s=\varepsilon_kz_k^s, \quad k=1,\dots,N, \quad s=0,\dots,p_k,
$$
where $\varepsilon_k = \pm 1$.
Such a canonical system is called {\em normal} and the numbers
$\varepsilon_k$
are called {\em the sign characteristics} of the corresponding Jordan chains.

A real eigenvalue $\mu$ is said to be of {\em positive (negative) type} if
$$
(T'(\mu)y,y)>0 \ (<0) \quad \mbox{for all} \ y\in Ker\,T(\mu).
$$
\begin{proposition}
If $\mu$ is a semi-simple real eigenvalue of $\ T(\la)$ and\break
$V=\sum \varepsilon(\cdot, y_k^0)\phantom{.} y_k^0$ is the residue operator of $T^{-1}(\lambda)$ at
the pole $\mu$ then
\begin{equation}
(L'(\mu)Vy,Vy) =(y,Vy) \qquad \mbox{for all} \ y\in
Ker\,T(\mu). \label{12::2.4}
\end{equation}
In particular, $\mu$ is of positive (negative) type if and only if all
the sign characteristics are positive (negative).
\end{proposition}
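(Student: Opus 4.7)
The plan is to extract (2.4) directly from the identity $T(\lambda)\,T^{-1}(\lambda)=I$ by matching the first two Laurent coefficients at $\lambda=\mu$, and then to read off the sign characterization by expanding the same identity in the basis furnished by the normal canonical system.

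First I would write $T^{-1}(\lambda)=V/(\lambda-\mu)+R(\lambda)$, with $R$ holomorphic at $\mu$ (this is possible because $\mu$ is semi-simple, so $p_k=0$ for every $k$ and the principal part has a single $(\lambda-\mu)^{-1}$ term). Since $T(\lambda)=T(\mu)+(\lambda-\mu)T'(\mu)+\tfrac{1}{2}(\lambda-\mu)^2 T''(\mu)$ with $T''(\mu)=2F$ constant, multiplication and comparison of coefficients in $T(\lambda)T^{-1}(\lambda)=I$ gives
\begin{equation*}
T(\mu)V=0,\qquad T(\mu)R(\mu)+T'(\mu)V=I.
\end{equation*}
Apply the second identity to $y\in\Ker T(\mu)$ and take the inner product with $Vy$. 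Because $\mu$ is real, $T(\mu)=\mu^2 F+\mu G+S$ is self-adjoint on $\mathcal{H}_2$, and $Vy\in\Ker T(\mu)\subset\mathcal{H}_2$, so
\begin{equation*}
(T(\mu)R(\mu)y,\,Vy)=(R(\mu)y,\,T(\mu)Vy)=0.
\end{equation*}
What remains is exactly $(T'(\mu)Vy,Vy)=(y,Vy)$, which is (2.4).

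For the ``in particular'' clause I would exploit the same identity $T(\mu)R(\mu)+T'(\mu)V=I$, now tested against basis vectors. For arbitrary $z\in\mathcal{H}$ and $y_j^{0}\in\Ker T(\mu)$, pairing with $y_j^{0}$ and using self-adjointness of $T(\mu)$ kills the $T(\mu)R(\mu)z$ term, leaving $\sum_{k}(z,z_k^{0})(T'(\mu)y_k^{0},y_j^{0})=(z,y_j^{0})$ for all $z$. Since this holds for every $z$, the normality assumption $z_k^{0}=\varepsilon_k y_k^{0}$ combined with the invertibility of the Gram matrix $G_{kl}=(y_k^{0},y_l^{0})$ forces
\begin{equation*}
(T'(\mu)y_k^{0},y_j^{0})=\varepsilon_k\,\delta_{jk}.
\end{equation*}
Thus for any $y=\sum c_k y_k^{0}\in\Ker T(\mu)$ one has $(T'(\mu)y,y)=\sum_k\varepsilon_k|c_k|^2$, so $(T'(\mu)y,y)>0$ (respectively $<0$) for all nonzero $y$ iff every $\varepsilon_k$ is positive (respectively negative), as claimed.

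The main technical hurdle is the second paragraph: one must be careful that the biorthogonality implicit in the Laurent expansion (the residue $V=\sum(\cdot,z_k^{0})y_k^{0}$) is interpreted correctly, and that the self-adjointness argument is carried out in the unbounded setting (with $T(\mu)$ acting from $\mathcal{H}_2$ to $\mathcal{H}$). Both points are harmless because $\mu$ lies in the domain where the classical and generalized spectra coincide (Theorem~1.6) so all root vectors belong to $\mathcal{H}_2$, but they need to be explicitly verified.
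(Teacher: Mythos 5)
Your proof is correct and follows essentially the same route as the paper: both expand $T(\lambda)T^{-1}(\lambda)=I$ in a Laurent series at $\mu$, extract the constant coefficient $T(\mu)R(\mu)+T'(\mu)V=I$, pair with $Vy$, and use the self-adjointness of $T(\mu)$ together with $Vy\in\Ker T(\mu)$ to discard the $T(\mu)R(\mu)$ term. You are slightly more explicit than the paper, which applies the product to a fixed $y$ and lets $\lambda\to\mu$ rather than first isolating the operator identity, but these are cosmetic differences. Where you genuinely add value is the \emph{in particular} clause: the paper stops at \eqref{12::2.4} and leaves the sign characterization to the reader, whereas you test the same operator identity against the basis $\{y_j^0\}$ and deduce the refined biorthogonality $(T'(\mu)y_k^0,y_j^0)=\varepsilon_k\delta_{jk}$, from which $(T'(\mu)y,y)=\sum_k\varepsilon_k|c_k|^2$ follows and the equivalence is immediate. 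Note, however, that the phrase ``invertibility of the Gram matrix'' is a slight misdirection: what you actually use is that linear independence of the $y_k^0$ makes the map $z\mapsto\bigl((z,y_1^0),\dots,(z,y_N^0)\bigr)$ surjective onto $\mathbb{C}^N$, so the identity $\sum_k\varepsilon_k(z,y_k^0)(T'(\mu)y_k^0,y_j^0)=(z,y_j^0)$ for all $z$ forces the coefficients to match. The two statements are equivalent, but the latter is the step you are actually performing and is worth saying precisely.
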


\begin{proof} For $y\in Ker\,T(\mu)$ we have $Vy\in Ker T(\mu)$ and
$$
y=T(\la)T^{-1}(\lambda) y=(T(\mu)+(\la-\mu)T'(\mu)+\dots)
(V(\la-\mu)^{-1} +R(\mu)+\dots)y=
$$
$$
=T'(\mu)Vy+T(\mu)R(\mu)y+o(1),
$$
where $o(1)\to 0$ as $\lambda\to \mu$ and $R(\mu)$ is a bounded operator
on $\mathcal{H}$. Taking the scalar product
with $Vy$ and letting $\la\to\mu$, we obtain (\ref{12::2.4}).
\end{proof}
\begin{theorem} Let a pencil $T_{\omega}(\lambda)$ of the form (\ref{0.2}) be strongly
elliptic. Then
for all $\omega >0$ with possible exception of some values
$\omega_k\to\infty$ (the so-called resonant frequences) there is an even
number, say $2\kappa$, of real eigenvalues of $T_{\omega}(\lambda)$ counting geometric
multiplicities. They all are of definite type and exactly $\kappa$ of
them are of positive (negative) type.
\end{theorem}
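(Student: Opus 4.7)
The plan is to reduce the spectral analysis of $T_\omega(\lambda)$ to an auxiliary self-adjoint generalised eigenvalue problem parametrised by $\lambda$, and then apply Rellich's analytic perturbation theorem; this extends to the infinite-dimensional strongly elliptic setting the finite-dimensional technique already used in the proof of Proposition~5.1. Set $\Phi(\lambda) = \lambda^2 F + \lambda G + H$, so that $T_\omega(\lambda) = \Phi(\lambda) - \omega^2 R$. Then $\{\Phi(\lambda)\}$ is a self-adjoint holomorphic family of type~(A) on the common domain $\mathcal{H}_2 = \mathcal{D}(H)$; for each real $\lambda$ the operator $\Phi(\lambda)$ is self-adjoint, bounded below, and $R$ is $\Phi(\lambda)$-compact since $R$ is $H$-compact and $\Phi(\lambda)$ shares the domain of $H$. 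A local positive shift $\Phi(\lambda)+c\gg 0$ converts the generalised problem $\Phi(\lambda)y=\mu R y$ into the compact self-adjoint spectral problem for $R^{1/2}(\Phi(\lambda)+c)^{-1}R^{1/2}$, yielding a discrete real sequence $\mu_1(\lambda)\leq\mu_2(\lambda)\leq\cdots\to+\infty$. A real $\lambda^*$ is an eigenvalue of $T_\omega$ if and only if $\omega^2\in\{\mu_k(\lambda^*)\}$, and its geometric multiplicity is the number of such~$k$.

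By Rellich's theorem, after relabelling at the isolated crossings the branches $\mu_k(\lambda)$ and the normalised eigenvectors $y_k(\lambda)$ are real-analytic in $\lambda$ on $\mathbb{R}$. Differentiating $\Phi(\lambda)y_k(\lambda)=\mu_k(\lambda)Ry_k(\lambda)$ at a point $\lambda^*$ with $\mu_k(\lambda^*)=\omega^2$, taking the inner product with $y^*:=y_k(\lambda^*)$, and cancelling the two terms in $y_k'(\lambda^*)$ by self-adjointness of $\Phi(\lambda^*)$, yields the identity
\[
(T_\omega'(\lambda^*)y^*,y^*) \;=\; \mu_k'(\lambda^*)\,(R y^*, y^*).
\]
Since $R>0$, this shows that $\lambda^*$ is semi-simple if and only if $\mu_k'(\lambda^*)\ne 0$, and in that case its sign characteristic (Proposition~2.1) equals $\mathrm{sgn}(\mu_k'(\lambda^*))$. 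Consequently the resonant frequencies are precisely those $\omega>0$ for which some branch $\mu_k$ has a critical value equal to $\omega^2$.

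For the counting step, strong ellipticity (Proposition~1.4) gives $\Phi(\lambda)\geq\varepsilon(\lambda^2+H)$ for $|\lambda|>r_0$, so each branch $\mu_k(\lambda)\to+\infty$ as $|\lambda|\to\infty$; combined with Theorem~1.6 (finiteness of the real spectrum), this ensures that for each $\omega$ only finitely many branches attain the level $\omega^2$ somewhere on $\mathbb{R}$. At a non-resonant $\omega$, every sublevel set $\{\lambda\in\mathbb{R}:\mu_k(\lambda)<\omega^2\}$ is therefore a finite disjoint union of bounded open intervals; at each left endpoint $\mu_k'<0$ (a negative-type eigenvalue) and at each right endpoint $\mu_k'>0$ (a positive-type eigenvalue). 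Summing over the finitely many relevant $k$ yields equal numbers $\kappa$ of positive- and negative-type eigenvalues, for a total of $2\kappa$; the set of resonant frequencies, being the union over these finitely many $k$ of the countable critical values of each $\mu_k$ lying in any bounded interval, is discrete and accumulates only at $+\infty$.

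The chief obstacle will be making Step~1 rigorous uniformly across all real $\lambda$: the operator $\Phi(\lambda)$ need not be uniformly positive (for moderate $\lambda$ the indefinite term $\lambda G$ may dominate $H$) and $R$ is unbounded, so the reduction to a compact self-adjoint spectral problem must be carried out locally via the shift $c=c_\lambda$, after which the analytic eigenbranches produced by Rellich's theorem have to be continued globally along $\mathbb{R}$ and matched consistently across the crossings.
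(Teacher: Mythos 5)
Your proof is correct and follows the same analytic--perturbation strategy as the paper for the definiteness step, but the counting argument is genuinely different and arguably cleaner. Both proofs rest on the same observation: fixing $\lambda$, regard $T_\omega(\lambda)$ as a linear self-adjoint pencil in the parameter $\theta=\omega^2$ with leading coefficient $R$; by Rellich--Kato the eigenvalue branches $\theta_j(\lambda)=\mu_j(\lambda)$ and eigenvectors depend analytically on $\lambda$, and differentiating the eigenvalue relation gives exactly the identity $(T'_\omega(\lambda^*)y^*,y^*)=\mu_k'(\lambda^*)(Ry^*,y^*)$, which is the paper's Proposition~2.3 (with $\theta_j=\mu_j$). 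So the classification of eigenvalues into types via $\operatorname{sgn}\mu_k'$ and the identification of resonant frequencies with critical values of the branches are identical in spirit. Where you part ways is the counting of positive versus negative types: the paper deforms $T_\omega(\lambda)$ by the homotopy $T_\omega(\lambda)+\rho(V+I)$ from the given pencil ($\rho=0$) to a uniformly positive one ($\rho=1$) and cites an external invariance result (Proposition~1.9 of [SH]) to conclude $E^+-E^-$ is constant, hence zero; you instead observe that strong ellipticity forces $\mu_k(\lambda)>\omega^2$ for $|\lambda|$ large, so each sublevel set $\{\lambda:\mu_k(\lambda)<\omega^2\}$ is a bounded open set whose finitely many boundary points alternate left endpoint (negative type) with right endpoint (positive type), giving the pairing $\kappa=\kappa$ directly. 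Your version is self-contained and purely topological, trading the appeal to [SH] for an elementary endpoint count; the finiteness of the number of intervals is secured by Theorem~1.6 (finitely many real eigenvalues total). The ``chief obstacle'' you flag --- that $\Phi(\lambda)$ is not uniformly positive for moderate real $\lambda$, and $R$ is unbounded --- is real but is removed exactly as the paper does: conjugate by $H^{-1/2}$ to pass to the bounded pencil $L_\omega(\lambda)=H^{-1/2}T_\omega(\lambda)H^{-1/2}$ with bounded compact $R_0=H^{-1/2}RH^{-1/2}>0$, after which the generalized eigenvalue problem $L(\lambda)y=\mu R_0 y$ makes sense globally in $\lambda$ without local positive shifts.
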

\begin{proof} Consider the pencil
$$
L_{\omega}(\la)=\mathcal{H}MT_{\omega}(\lambda)\mathcal{H}M = L(\la)-\omega^2 R_0, \quad R_0=\mathcal{H}M R\mathcal{H}M.
$$
The assumptions on the operators (see Section 1) ensure us that the
coefficients of $L_{\omega}(\la)$ are bounded operators on $\mathcal{H}$,
moreover, $R_0>0$. By virtue of\break Theorem 1.6 there are finitely many
normal eigenvalues of $L_{\omega}(\la)$ on the real axis.
To prove that they are of definite type with possible exception of
isolated values $\omega_k\to \infty$
we apply the known results of pertubation operator theory which are
based  on theorems due to {\sc Rellich} and {\sc Nagy} (see Ch 9 of
\cite{12::RN}), {\sc Krein} and {\sc Lyubarskii} \cite{12::KL},
{\sc Kostyuchenko} and {\sc Orazov} \cite{12::KO2}. A concentrated
exposition of this material can be found in the paper of
{\sc Shkalikov} and {\sc Hriniv} \cite{12::SH},\break Propositions 1.6-1.9.
The only reservation: the
condition  $R_0\gg 0$ assumed in [SH] can be replaced by $R_0>0$
provided the coefficients of the pencil $L_\omega(\la)$ are bounded.
The main idea of proving this result is the following. Let $\mu$ be a real
eigenvalue of the pencil $L_{\omega}(\lambda)$ with fixed $\omega=\omega_0$, and
let $\theta_0=\omega^2_0$. We notice that
$L_{\omega}(\la)$ is a linear selfadjoint pencil with respect to the
parameter $\theta=\omega^2$ and its eigenvalues $\theta_j(\la)$ according to the
Rellich-Nagy theorem depend analytically on $\la$ in a neighborhood
of an eigenvalue $\la=\mu$, namely,
$$
\theta_j(\la) =\theta_0+a_j(\la-\mu)^{p_j}+\dots
$$
with some $0\ne a_j\in \mathbb{R}$ and integer $p_j>0$.
Then
$\la_j(\theta)$ represent the branches of the inverse algebraic functions
$$
\la_{j,k}(\theta) = \mu +(a_j^{-1}(\theta-\theta_0))^{1/p_j} +\dots, \quad
k=0,\dots, p_j-1,
$$
and $p_j$ coincide with the lengths of the corresponding Jordan chains.
Hence, $\la_j(\theta)$ move locally either in the complex plane or leave
on the real axis depending monotonically on $\theta$, moreover,
the condition $R_0>0$ implies that the real branches $\la_j(\theta)$
are strictly monotone functions. Thus, all the real eigenvalues in a
small punctured neighborhood of $\mu$ are semi-simple. Further,
it turns out (see
Proposition 2.3 below) that the sign characteristics of the real
eigenvalues $\la_j(\theta)$ coincide with $sign\,\la'_j(\theta)$. Taking
into account Proposition 2.1, we obtain that all the real eigenvalues
of $T_{\omega}(\lambda)$ in a small right (left) neighborhood of $\mu$ are of
positive (negative) type. Hence, the resonant frequences are isolated
points.

Let us prove the other statements. Fix a non-resonant frequency $\omega$,
and fix a positive $H$-compact operator $V$ such that $T_{\omega}(\lambda)+V$ is
uniformly positive. Consider the pencil
\begin{equation}
T_{\omega}(\lambda)+\rho(V+I), \qquad 0\le \rho\le 1. \label{12::2.4m}
\end{equation}
Obviously, the closure of $\mathcal{H}M(V+I)\mathcal{H}M$ is positive in $\mathcal{H}$. Now apply
Proposition 1.9 from [SH] which says:
$$
E^+(\rho)+E^-(\rho) =const,
$$
where $E^+(\rho)$ and $E^-(\rho)$ are the number of real eigenvalues of
positive and negative type, respectively. This equality holds also for
the resonant values of  $\theta$ if the numbers $E^{\pm}(\rho)$ are defined
as in [SH]. Since for $\rho =1$ the pencil (\ref{12::2.4m}) is uniformly positive,
we have
$$
E^+(1)-E^-(1)=0, \quad \mbox{hence}\quad E^+(0)=E^-(0).
$$
This ends the proof.
\end{proof}

Let $\{\mu, f_0\}$ be a normal  eigen-pair corresponding to a simple or
semi-simple eigenvalue of $T_{\omega}(\lambda)$ with a fixed  $\omega=\omega_0>0$.
As we mentioned above the eigenvalue $\theta_0=\omega_0^2$ admits an analytic
continuation $\theta_j(\la)= \omega^2$ when $\la$ runs in a neighborhood
of $\mu$.
The value $\theta'(\la)\big|_{\la=\mu}$ is called the group velocity
(see, for example, \cite{12::ZK} or \cite{12::VB}) of the wave solution
$$
u(y)=e^{-i\mu y}f_0.
$$
\begin{proposition} If $\mu$ is a definite type eigenvalue of a pencil
$T_{\omega}(\lambda)$  and
$\{\mu, f_0\}$ is a corresponding normal eigen-pair then
\begin{equation}
(T'_{\la}(\mu)f_0,f_0) =\theta'_{\la}(\mu)(Rf_0,f_0), \quad \theta =\omega^2, \label {12::2.5}
\end{equation}
i.e. the sign characteristic of an eigen-pair coincides with the sign
of its group velocity.
\end{proposition}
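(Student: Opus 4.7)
The plan is to realize the eigen-pair $\{\mu,f_0\}$ as the value at $\lambda=\mu$ of an analytic family $\{\lambda,f(\lambda)\}$ satisfying the eigen-equation with a $\lambda$-dependent frequency, and then extract the identity by differentiating in $\lambda$ and pairing with $f_0$.

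First I would invoke the construction already used in the proof of Theorem~2.2: write $L_{\omega}(\lambda)=\lambda^{2}F+\lambda G+H-\theta R$ as a linear self-adjoint pencil in the parameter $\theta=\omega^{2}$ (with respect to the Hilbert space equipped with the $R$-metric, say), apply the Rellich--Nagy analyticity theorem near the semi-simple real eigenvalue $\mu$ of $T_{\omega_{0}}$, and obtain a holomorphic branch $\theta(\lambda)$ with $\theta(\mu)=\omega_{0}^{2}$ together with an associated holomorphic eigenvector $f(\lambda)\in\mathcal{H}_{2}$ with $f(\mu)=f_{0}$. This gives the identity
\begin{equation*}
\bigl(\lambda^{2}F+\lambda G+H-\theta(\lambda)R\bigr)f(\lambda)=0
\end{equation*}
for all $\lambda$ in a neighbourhood of $\mu$.

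Next I would differentiate this identity in $\lambda$, which produces
\begin{equation*}
(2\lambda F+G)f(\lambda)+\bigl(\lambda^{2}F+\lambda G+H-\theta(\lambda)R\bigr)f'(\lambda)-\theta'(\lambda)Rf(\lambda)=0,
\end{equation*}
set $\lambda=\mu$, and take the inner product with $f_{0}$. The first term gives exactly $(T'_{\lambda}(\mu)f_{0},f_{0})$ and the third gives $-\theta'(\mu)(Rf_{0},f_{0})$, so it remains to show that the middle term vanishes.

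The middle term equals $\bigl(T_{\omega_{0}}(\mu)f'(\mu),f_{0}\bigr)$. Since $\mu\in\mathbb{R}$ and $F$, $H$, $R$ are symmetric while $G$ is symmetric with $\mathcal{D}(G)\supset\mathcal{D}(H^{1/2})$, I would use that $f_{0}\in\mathcal{H}_{2}\subset\mathcal{D}(G)$ to transfer the operator: $\bigl(T_{\omega_{0}}(\mu)f'(\mu),f_{0}\bigr)=\bigl(f'(\mu),T_{\omega_{0}}(\mu)f_{0}\bigr)=0$, because $T_{\omega_{0}}(\mu)f_{0}=0$. This establishes $(T'_{\lambda}(\mu)f_{0},f_{0})=\theta'(\mu)(Rf_{0},f_{0})$. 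Since $(Rf_{0},f_{0})>0$ (as $R>0$ and $f_{0}\ne0$ together with the triviality of $\ker\overline{\mathcal{H}M R\mathcal{H}M}$, which implies $Rf_{0}\ne0$), the signs of $(T'_{\lambda}(\mu)f_{0},f_{0})$ and $\theta'(\mu)$ coincide, and the assertion about sign characteristic versus group velocity follows from Proposition~2.1.

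The main obstacle I would watch for is domain-regularity: one must make sure that $f(\lambda)$ and its derivative $f'(\mu)$ lie in the domains needed to legitimately apply the symmetry $(Gx,y)=(x,Gy)$ and to carry out term-by-term differentiation in $\lambda$. Both are guaranteed by the construction via the Rellich--Nagy theorem, which provides $f(\lambda)\in\mathcal{H}_{2}=\mathcal{D}(H)$ depending holomorphically on $\lambda$ (so $f'(\mu)\in\mathcal{H}_{2}\subset\mathcal{D}(G)$), but this is the place where the analytic framework of Theorem~2.2 is genuinely needed.
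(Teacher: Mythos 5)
Your proposal is correct and takes essentially the same route as the paper's proof of Proposition~2.3. The paper expands the eigen-equation in powers of $(\la-\mu)$, pairs with $f_0$, and divides by $(\la-\mu)$ before taking the limit; you differentiate in $\lambda$ at $\lambda=\mu$ and pair with $f_0$. These are the same first-order computation, and both rely on the same two ingredients: the Rellich--Nagy analytic family $\{\lambda,f(\lambda)\}$ with $\lambda$-dependent $\theta(\lambda)$ (exactly as set up in the paper just before Proposition~2.3), and the self-adjoint pairing $(T_{\omega_0}(\mu)f_1,f_0)=(f_1,T_{\omega_0}(\mu)f_0)=0$ that kills the cross term. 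If anything, you make explicit a step the paper leaves implicit (the vanishing of the middle term via symmetry), and you note the domain considerations ($f(\lambda),f'(\mu)\in\mathcal{H}_2$) that make the pairing legitimate, which is a useful addition.
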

\begin{proof}
Let $f(\la) = f_0+(\la-\mu)f_1+\dots $ be the eigen-element of $T_{\omega}(\lambda)$
corresponding to $\la=\la(\theta)$. Denoting $\theta_0=\omega_0^2$ we obtain
$$
[T(\mu)-\theta_0+T'(\mu)(\la-\mu)-(\theta-\theta_0)R+
\dots][f_0+(\la-\mu)f_1+ \dots]=0,
$$
therefore
$$
(T'(\mu)f_0,f_0)+o(1)=\frac{\theta-\theta_0}{\la-\mu}(Rf_0,f_0).
$$
Letting $\la\to\mu$, we get (\ref{12::2.5}).
\end{proof}
\begin{corollary} For any non-resonant frequency $\omega$
equation (\ref{0.1}) with strongly elliptic symbol (\ref{0.2})
possesses finitely many, say $2\kappa\ge 0$,  propagating waves and exactly
$\kappa$ of them are outgoing (incoming), i.e. have positive (negative)
group velocity or the sign characteristics.
\end{corollary}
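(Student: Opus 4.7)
The corollary is essentially a repackaging of Theorem 2.2 together with Proposition 2.3, reinterpreted in the language of propagating waves. The plan is to assemble these two ingredients and check that no additional analytic input is needed.

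First, I would recall the setup. By Definition (see the Introduction), a propagating wave corresponding to $T_\omega(\lambda)$ is a solution of (0.1) of the form $u_k(y) = e^{-i\lambda_k y} v_k$ arising from a real eigen-pair $\{\lambda_k, v_k\}$ of $T_\omega(\lambda)$, and the number of such independent waves is the total geometric multiplicity of the real spectrum of $T_\omega(\lambda)$. Since $T_\omega(\lambda)$ is strongly elliptic, Theorem 1.6 guarantees that the real spectrum consists of finitely many normal eigenvalues, so the count is finite. Fix a non-resonant frequency $\omega$; by the definition of ``resonant'' given in the proof of Theorem~2.2, all real eigenvalues of $T_\omega(\lambda)$ at such $\omega$ are semi-simple, so each real eigenvalue $\mu$ is of definite type in the sense defined before Proposition~2.1.

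Next, I would invoke Theorem 2.2 directly: for the non-resonant $\omega$ fixed above, the total geometric multiplicity of the real spectrum equals some even number $2\kappa$, with exactly $\kappa$ eigenvalues (counted with geometric multiplicity) of positive type and $\kappa$ of negative type. This produces $2\kappa$ independent propagating waves, partitioned into $\kappa+\kappa$ by sign characteristic.

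It remains to translate ``sign characteristic'' into ``sign of group velocity''. Here I would apply Proposition 2.3: for any normal eigen-pair $\{\mu,f_0\}$ of definite type,
\begin{equation*}
(T'_\lambda(\mu)f_0,f_0)\;=\;\theta'_\lambda(\mu)\,(Rf_0,f_0),\qquad \theta=\omega^2.
\end{equation*}
Since $R>0$ and $f_0\in\overline{\operatorname{Im}\,\mathcal{H}M R\mathcal{H}M}\setminus\{0\}$ (so $(Rf_0,f_0)>0$), the sign characteristic $\varepsilon_k=\operatorname{sign}(T'(\mu)f_0,f_0)$ agrees with $\operatorname{sign}\theta'_\lambda(\mu)$, i.e.\ with the sign of the group velocity of $u_k$. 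By the physical convention recalled in the Introduction, outgoing waves are exactly those with positive group velocity, so the $\kappa$ eigenvalues of positive type furnish the outgoing waves and the $\kappa$ of negative type furnish the incoming ones.

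The only real content is already contained in Theorem 2.2 and Proposition 2.3; the ``main obstacle'' for this corollary is therefore not in its proof but was absorbed into the preceding results (the existence of a normal canonical system yielding well-defined sign characteristics, the monotonicity of the real branches $\lambda_j(\theta)$ guaranteed by $R>0$, and the Rellich--Nagy analytic continuation argument used to compute $\theta'(\mu)$). Once these are in hand, the corollary follows by direct substitution.
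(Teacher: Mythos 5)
Your proposal is correct and matches the paper's intent exactly: the paper gives no separate proof for this corollary, precisely because it is the immediate conjunction of Theorem 2.2 (finitely many real eigenvalues at a non-resonant frequency, all semi-simple and of definite type, split evenly $\kappa+\kappa$ by sign characteristic) and Proposition 2.3 (sign characteristic $=$ sign of group velocity, since $(Rf_0,f_0)>0$ by the positivity of $R$), dressed in wave-terminology. Your only inessential detour is the invocation of $f_0\in\overline{\operatorname{Im}}$ of the closure of $H^{-1/2}RH^{-1/2}$; the fact $(Rf_0,f_0)>0$ already follows directly from $R>0$ on nonzero vectors of $\mathcal{D}(R)\supset\mathcal{D}(H)\ni f_0$.
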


\subsection{Factorization of positive strongly elliptic pencils}
\setcounter{equation}{0}
\setcounter{theorem}{0}

In this section we use abstract Sobolev spaces. Namely, by $W_m\mathbb{R}H$
we denote the space consisting of $\mathcal{H}_m$-valued functions $u(y)$
defined on $\mathbb{R}^+$, such that $u^{(j)}(y)$ exist in the generalized
sense
for $j\le m$ as $\mathcal{H}$-valued functions and the integral
$$
\int_0^{\infty}\left(\|u^{(m)}(y)\|_0^2+\|u(y)\|_m^2\right)\,dy=:\|u\|_{W_m}^2
$$
converges. The detailed information on abstract Sobolev spaces can be
found in the book of {\sc Lions} and {\sc Magenes} \cite{12::LM}.
We recall here some facts which we need below. According to {\em the
theorem on intermediate} derivatives we have
$$
u^{(j)}(y)\in L_2(\mathbb{R}^+,\mathcal{H}_{m-j}), \quad j=1,\dots,m,
\quad \mbox{if}\ \,u(y)\in W_m\mathbb{R}H.
$$
An important role in the sequel plays the trace theorem which we
formulate (as it needed) in the case $m=1$.\hfill\break
{\bf Trace Theorem.}
{\em A function $u(y) \in W_1\mathbb{R}H$ is continuous  and uniformly bounded
on $\mathbb{R}^+$ an $\mathcal{H}_{1/2}$-valued function and the trace operator
\begin{equation}
\mathcal{T}_r:\,W_1\mathbb{R}H\to \mathcal{H}_{1/2}, \qquad \mathcal{T}_ru=u(r), \label{12::3.1}
\end{equation}
is bounded for any fixed $r\in\mathbb{R}^+$, moreover, $\|\mathcal{T}_r\|\le c$ with a
constant $c$ not depending on $r\in\mathbb{R}^+.$}

If $u(y)\in W_2(\mathbb{R}^+,\mathcal{H})$ then $u'(y)\in W_2(\mathbb{R}^+,\mathcal{H}_1)$. As
$\mathcal{D}(G)\supset\mathcal{H}_1$, we have $Gu'(y)\in L_2(\mathbb{R}^+,\mathcal{H})$. Therefore
$$
\mathcal{A} u=T\left( i\frac d{dy}\right) u =-Fu''(y)+iGu'(y)+Su(y)
$$
is well defined in the space $L_2\mathbb{R}H$ with domain
$\mathcal{D}(\mathcal{A})=W_2\mathbb{R}H$.
Let $\mathcal{A}_0$ be the restriction of $\mathcal{A}$ on the domain
$$
\mathcal{D}(\mathcal{A}_0)=\Wq^{{\tiny\rm o}\phantom{.}\phantom{.}}\mathbb{R}H:=\{y\mid\,y\in W_1\mathbb{R}H ,\ y(0)=0\}.
$$
\begin{lemma}
Let $T(\la)$ be strongly elliptic. Then there exist a number $\varepsilon>0$ and an
$H$-compact self-adjoint
operator $V\ge 0$, such that
\begin{equation}
\varepsilon \|u\|_{W_1}-\|V^{1/2} u\|_{L_2} \le (\mathcal{A}_0 u,\,u)\le \varepsilon^{-1}
\|u\|_{W_1}, \quad u\in\mathcal{D}(\mathcal{A}_0). \label{12::3.2}
\end{equation}
If in addition $T(\la)$ is positive then the left hand side estimate holds
with $V=0$.
\end{lemma}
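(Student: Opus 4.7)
The plan is to reduce both estimates to a pointwise inequality on the symbol $T(\la)$ by passing to the Fourier side in the variable $y$. I would first observe that an integration by parts, using the boundary condition $u(0)=0$ to kill the endpoint terms at the origin, lets one rewrite $(\mathcal{A}_0u,u)_{L_2(\mathbb{R}^+,\mathcal{H})}$ as the sesquilinear form
\[
Q(u) \;=\; \int_0^\infty\bigl[(Fu',u')+i(Gu',u)+(Su,u)\bigr]\,dy,
\]
which involves only $u$ and $u'$ and is continuous on $W_1(\mathbb{R}^+,\mathcal{H})$: boundedness of $F$ controls the first term; the hypothesis $\mathcal{D}(G)\supset\mathcal{D}(H^{1/2})$ together with the closed graph theorem gives $G\colon\mathcal{H}_1\to\mathcal{H}$ bounded and handles the second; and the boundedness of $H^{-1/2}SH^{-1/2}$ on $\mathcal{H}$ handles the third. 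This alone yields the upper estimate $Q(u)\le\varepsilon^{-1}\|u\|_{W_1}^2$.

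For the lower estimate I would extend $u$ by zero to all of $\mathbb{R}$; the extension $\tilde u$ lies in $W_1(\mathbb{R},\mathcal{H})\cap L_2(\mathbb{R},\mathcal{H}_1)$ precisely because $u(0)=0$ prevents any surface distribution from appearing. Its Fourier transform $\hat u(\la)=\int_{\mathbb{R}}e^{i\la y}\tilde u(y)\,dy$ satisfies $\widehat{\mathcal{A}_0 u}=T(\la)\hat u$, so a termwise application of Parseval to $Q(\tilde u)$ gives
\[
Q(u) \;=\; \frac{1}{2\pi}\int_{\mathbb{R}}\bigl(T(\la)\hat u(\la),\hat u(\la)\bigr)\,d\la.
\]
The strong ellipticity hypothesis provides an $H$-compact self-adjoint $V\ge 0$ and an $\varepsilon>0$ with $T(\la)+V\ge\varepsilon(\la^2+H)$ for every $\la\in\mathbb{R}$. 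Applied pointwise to $\hat u(\la)$, integrated in $\la$, and then returned to the $y$-side by a second application of Parseval (recognizing $\la\hat u$ as the Fourier transform of $i\tilde u'$ and using that $H^{1/2}$ commutes with $\mathcal{F}$), this produces
\[
Q(u)\;\ge\;\varepsilon\|u\|_{W_1}^2-\|V^{1/2}u\|_{L_2}^2,
\]
which is the claimed lower bound.

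The positive case is immediate from Remark 1.8: a positive strongly elliptic pencil is automatically uniformly positive, so $T(\la)\ge\varepsilon(\la^2+H)$ on $\mathbb{R}$ without any corrector, and the argument goes through with $V=0$. The main delicate point is the bookkeeping needed to pass rigorously from the formal computation to arbitrary $u\in\mathcal{D}(\mathcal{A}_0)$, in particular the careful interpretation of the pairings $(Gu',u)$ and $(Su,u)$ through the scale $\mathcal{H}_{\pm 1}$; I would handle this by first carrying out every step on the dense subspace $C_0^\infty((0,\infty),\mathcal{H}_2)$, where all operations are literal, and then closing by continuity using the $W_1$-continuity of $Q$ established at the start.
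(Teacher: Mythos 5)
Your proof is correct and follows essentially the same route as the paper's: extend $u$ by zero (equivalently, take the half-line Fourier transform $\hat u(\la)=\int_0^\infty u(y)e^{i\la y}\,dy$, using $u(0)=0$ to get $\widehat{u'}=-i\la\hat u$), apply Plancherel to identify $(\mathcal{A}_0u,u)$ with $\int(T(\la)\hat u(\la),\hat u(\la))\,d\la$, invoke $T(\la)+V\ge\varepsilon(\la^2+H)$ pointwise in $\la$, and return to the $y$-side; the positive case is dispatched via Remark~1.8 exactly as you say. Your extra remarks about closing by continuity from a dense core and about the interpretation of $(Gu',u)$ and $(Su,u)$ through the scale $\mathcal{H}_{\pm1}$ are prudent and fill in bookkeeping the paper leaves implicit.
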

\begin{proof} Denote
$$
\hat{u}(\la)=\int_0^{\infty} u(y)e^{i\la y}\,dy, \qquad \la\in\mathbb{R}.
$$
It follows from the Plancherel theorem that
$$
\Bigl(\hat{f}(\la), \hat{g}(\la)\Bigr) = \Bigl( f(y), g(y)\Bigr),
\qquad\mbox{for} \ f,g\in L_2\mathbb{R}H ,
$$
where the scalar product $\Bigl(,\Bigr)$ is taken in $L_2\mathbb{R}H$. As $T(\la)$ is
strongly
elliptic, there is an $H$-compact operator $V$ such that
$$
T(\la) +V \ge \varepsilon (\la^2+H),\quad \la\in\mathbb{R}.
$$
We can suppose that $V=V^*$, otherwise the Fridrichs extension of $V$
should be considered.
Bearing in mind that for all functions
$u(y)\in \Wq^{{\tiny\rm o}\phantom{.}\phantom{.}}\mathbb{R}H=\mathcal{D}(\mathcal{A}_0)$
$$
-i\la \hat{u}(\la) = \int_0^{\infty} u'(y)\,e^{i\la y}\,dy,
$$
we find that
$$
\begin{array}{rl}
\Bigl((\mathcal{A}_0 +V)u,\,u\Bigr)&=\Bigl(Fu',\,u'\Bigr)-i\Bigl(Gu,\,u'\Bigr)+
\Bigl((S+V)u,\,u\Bigr)\\
\phantom{.}\\
&=\Bigl((T(\la) +V)\hat{u}(\la),\,\hat{u}(\la)\Bigr)\ge \varepsilon \Bigl((\la^2+H)
\hat{u}(\la),\,\hat{u}(\la)\Bigr)\\ \phantom{.}\\
&=\varepsilon\left[\Bigl(u',\,u'\Bigr)+\Bigl(\mathcal{H}P u,\,\mathcal{H}P u\Bigr)\right] =
\varepsilon \|u\|_{W_1}^2,
\quad u \in\mathcal{D}(\mathcal{A}_0).
\end{array}
$$
This implies the left hand side estimate of (\ref{12::3.2}).
The right one is trivial and
follows from the inequality
$$
|\Bigl(Gu,\,u'\Bigr)|\le c \Bigl(\mathcal{H}P u,\,\mathcal{H}P u\Bigr)\Bigl(u',\,u'\Bigr)
\le c \|u\|_{W_1}^2.
$$
To get the last statement of Lemma, recall Remark 1.8.
\end{proof}

Let $\Wr^{{\tiny\rm o}\phantom{.}\phantom{.}\phantom{.}\phantom{.}\phantom{.}}\mathbb{R}H$ be the dual space to
$\Wq^{{\tiny\rm o}\phantom{.}\phantom{.}}\mathbb{R}H$ with respect to $L_2\mathbb{R}H$. For any
$v\in \Wq^{{\tiny\rm o}\phantom{.}\phantom{.}}\mathbb{R}H$ and $u\in W_2(\mathbb{R}^+,\mathcal{H})$
\begin{equation}\label{12::3.3}
(\mathcal{A} u,\,v)=(Fu',\,v')-i(Gu,v')+(K\mathcal{H}P u,\,\mathcal{H}P v),
\end{equation}
where $K=\mathcal{H}M S\mathcal{H}M =K^*$ is a bounded operator on $\mathcal{H}$. For any fixed
$u\in W_1\mathbb{R}H$ the right hand side of (\ref{12::3.3}) represents a
continuous linear functional on $\Wq^{{\tiny\rm o}\phantom{.}\phantom{.}}\mathbb{R}H$.
According to the definition of a dual space,
any such a functional admits a representation $(f,v)$, with
$f\in \Wr^{{\tiny\rm o}\phantom{.}\phantom{.}\phantom{.}\phantom{.}\phantom{.}}\mathbb{R}H$.
Hence $\mathcal{A}$ admits the extension
\begin{equation}
\mathcal{A} :\,W_1\mathbb{R}H\to \Wr^{{\tiny\rm o}\phantom{.}\phantom{.}\phantom{.}\phantom{.}\phantom{.}}\mathbb{R}H. \label{12::3.4}
\end{equation}
{\em A function $u(y)\in W_1\mathbb{R}H$ is called a generalized solution
of the equation}
\begin{equation}
T\left(i\frac d{dy}\right)\,u(y)=0 \label{12::3.5}
\end{equation}
{\em if $u(y)$ belongs to the kernel of operator (\ref{12::3.4}).}

\begin{lemma} Let $T(\la)$ be a positive strongly elliptic pencil.
Then for any\break
$x\in\mathcal{H}_{1/2}$ there is a unique generalized solution $u(y)$ of
equation (\ref{12::3.5}) such that $u(0)=x$.
\end{lemma}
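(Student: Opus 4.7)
\begin{pf}
The plan is to recast the problem variationally and invoke the Lax-Milgram lemma. Writing $W_1 := W_1(\mathbb{R}^+,\mathcal{H})$ for brevity and $W_1^0 := \{v \in W_1 : v(0) = 0\}$ for the closed subspace of functions with vanishing trace at the origin, consider the sesquilinear form
\begin{equation*}
a(u,v) = (Fu',\, v')_{L_2} - i(Gu,\, v')_{L_2} + (K H^{1/2} u,\, H^{1/2} v)_{L_2}
\end{equation*}
which by (3.3) agrees with the duality pairing $(\mathcal{A} u, v)$ whenever $u \in W_2(\mathbb{R}^+,\mathcal{H})$. Using boundedness of $F$ and $K$ on $\mathcal{H}$ together with boundedness of $GH^{-1/2}$ (a consequence of $\mathcal{D}(G) \supset \mathcal{D}(H^{1/2})$ and the closed graph theorem), the form $a$ extends continuously to all of $W_1 \times W_1$. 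By the very construction of the extension (3.4), an element $u \in W_1$ is a generalized solution of (3.5) if and only if $a(u,v) = 0$ for every $v \in W_1^0$.

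First I would lift the boundary datum. The trace theorem stated before (3.1) gives a bounded $\mathcal{T}_0 : W_1 \to \mathcal{H}_{1/2}$; its surjectivity with a bounded right inverse is the standard companion statement in the Lions-Magenes abstract trace theory. Picking $u_0 \in W_1$ with $u_0(0) = x$, I would seek $u$ in the form $u = u_0 + w$ with $w \in W_1^0$, reducing the problem to finding $w \in W_1^0$ such that $a(w,v) = -a(u_0,v)$ for every $v \in W_1^0$. The crucial input is coercivity of $a$ on $W_1^0$: this is the left-hand inequality of (3.2), which in the positive case of Lemma 3.1 takes the sharp form $\Re\, a(w,w) \geq \varepsilon \|w\|^2_{W_1}$ with $V = 0$. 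Since $v \mapsto -a(u_0,v)$ is a bounded antilinear functional on $W_1^0$, the Lax-Milgram lemma produces a unique such $w$, and $u = u_0 + w$ is the desired generalized solution satisfying $u(0) = x$. Uniqueness is automatic: any two solutions differ by an element $w \in W_1^0$ annihilating $a(w,\cdot)$ on $W_1^0$, so setting $v = w$ and taking real parts forces $w = 0$ by coercivity.

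The only step requiring care is the identification of the distributional notion of a generalized solution (annihilation by the extension $\mathcal{A} : W_1 \to W_{-1}^0$) with the variational identity $a(u,v) = 0$ on $v \in W_1^0$, but this is merely a restatement of the integration-by-parts formula (3.3) that was used to define the extension in the first place. The positivity hypothesis on $T(\lambda)$ is decisive: it is precisely what permits Lemma 3.1 to be invoked with $V = 0$, yielding direct coercivity and hence Lax-Milgram without any Fredholm detour. For a general, non-positive strongly elliptic pencil the form $a$ is only coercive modulo a compact perturbation, and existence then becomes a Fredholm-type assertion that has to be reconciled with the half-range completeness and minimality results developed in the following section.
\end{pf}
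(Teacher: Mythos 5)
Your proof is correct and the overall structure—lift the boundary datum to $u_0 \in W_1$, then correct by an element $w \in W_1^0$ found from coercivity of the form on $W_1^0$—mirrors the paper's argument exactly. The one difference is the solvability mechanism for the homogeneous problem: you invoke Lax--Milgram, whereas the paper uses the Friedrichs extension theorem (and is explicit about this choice: the proof text says ``one should use only the Friedrichs theorem instead of the Lax--Milgram lemma''). Since the form $a(u,v)$ is Hermitian here (because $T(\lambda)=T^*(\lambda)$ for real $\lambda$, so $\mathcal{A}_0$ is symmetric), the two are interchangeable: Lax--Milgram is more general and applies to non-symmetric coercive forms, while Friedrichs exploits the symmetry to deliver not just an isomorphism $W_1^0 \to W_{-1}^0$ but the self-adjoint operator $\mathcal{A}_F \gg 0$ with $\mathcal{D}(\mathcal{A}_F^{1/2}) = W_1^0$. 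The paper's choice is deliberate: the positive self-adjoint $\mathcal{A}_F$ is precisely the object reused in Theorem~3.4, where the same Friedrichs construction is repeated for the rotated sectorial family $\mathcal{A}_0^\varphi$ to show that $-iZ$ generates a holomorphic semigroup. So your argument is perfectly valid for the lemma in isolation, but the Friedrichs phrasing is better adapted to what the paper does next.
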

\begin{proof}
This statement is familiar from PDO theory; its abstract version
is proved in the same way, one should use only the Friedrichs theorem
instead of the Lax-Milgram lemma. Namely, taking into account Lemma 3.1
and the Friedrichs theorem (see \cite{12::RN}, Ch8), we obtain that
$\mathcal{A}_0$ admits the only self-adjoint extension $\mathcal{A}_F\gg 0$ such that
$\mathcal{D}(\mathcal{A}_F^{1/2}) = \Wq^{{\tiny\rm o}\phantom{.}\phantom{.}}\mathbb{R}H$. Hence,
\begin{equation}
\mathcal{A}_F\,:\ \Wq^{{\tiny\rm o}\phantom{.}\phantom{.}}\mathbb{R}H \to \Wr^{{\tiny\rm o}\phantom{.}\phantom{.}\phantom{.}\phantom{.}\phantom{.}}\mathbb{R}H \label{12::3.6}
\end{equation}
is an isomorphism. Since the trace operator
$\mathcal{T}_0$ defined in (\ref{12::3.1}) is surjective, for any $x\in\mathcal{H}_{1/2}$ there is
a function $v_1(y)\in W_1\mathbb{R}H$ such that $v_1(0) = x$. Then
$\mathcal{A} v_1(y)\in \Wr^{{\tiny\rm o}\phantom{.}\phantom{.}\phantom{.}\phantom{.}\phantom{.}}\mathbb{R}H$ and taking into account
that mapping (\ref{12::3.6}) is an isomorphism, we find a function
$v_2(y)\in \Wq^{{\tiny o}\phantom{.}\phantom{.}}\mathbb{R}H$ such that\break
$\mathcal{A}_Fv_2(y)=\mathcal{A} v_1(y).$ Hence, the function $u(y)=v_1(y)-v_2(y)$ is
a generalized solution of the equation $\mathcal{A} u(y)=0$ and $u(0)=v_1(0)=x.$ The
uniqueness follows from the condition $Ker\,\mathcal{A}_F =0$.
\end{proof}

\begin{lemma} Let $u(y)$ be a solution of equation (\ref{12::3.5}) on the
semiaxis $\mathbb{R}^+$ in the following sense:
$$
u(y),u'(y)\in C\mathbb{R}HP , \ \ u''(y)\in C\mathbb{R}H,
$$
and equation (\ref{12::3.5}) holds as an equality in $\mathcal{H}$. If $u(y)\in W_1\mathbb{R}H$, then
\begin{equation}
\|S^{1/2}u(y)\| = \|F^{1/2}u'(y)\|, \quad y\ge 0. \label {12::3.7}
\end{equation}
\end{lemma}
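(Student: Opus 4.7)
The plan is to establish (\ref{12::3.7}) as an energy-conservation identity for the abstract elliptic equation (\ref{12::3.5}), pinning down the constant of integration to zero by means of the $W_1$-integrability of $u$.

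First I would form the pointwise scalar product of the equation $-Fu''+iGu'+Su=0$ with $u'(y)$ in $\mathcal{H}$. The hypothesized regularity places $u'(y)\in\mathcal{H}_1\subset\mathcal{D}(G)$, so every term is well defined, and the symmetry of $G$ forces $(Gu'(y),u'(y))\in\mathbb{R}$. Using $F=F^*$ and $S=S^*$, a short application of the product rule yields
\[
\tfrac{d}{dy}\bigl[(Fu',u')-(Su,u)\bigr]=2\Re(Fu'',u')-2\Re(Su,u').
\]
Substituting $Fu''=iGu'+Su$ on the right gives $\Re(Fu'',u')=-\Im(Gu',u')+\Re(Su,u')=\Re(Su,u')$, and the whole expression vanishes. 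Hence $(Fu'(y),u'(y))-(Su(y),u(y))\equiv c$ is constant on $\mathbb{R}^+$.

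Next I would show $c=0$ using $u\in W_1(\mathbb{R}^+,\mathcal{H})$. The boundedness of $F$ and the definition of the $W_1$-norm give $(Fu',u')\in L_1(\mathbb{R}^+)$. In the present section $T(\lambda)$ is positive strongly elliptic, hence uniformly positive by Remark 1.8; in particular $S\ge 0$, $\mathcal{D}(S^{1/2})=\mathcal{H}_1$, and $\|S^{1/2}u\|^2\le C\|H^{1/2}u\|^2$, so $(Su,u)\in L_1(\mathbb{R}^+)$ as well. A constant on $\mathbb{R}^+$ equal to the difference of two $L_1$-functions must vanish identically, so $c=0$ and (\ref{12::3.7}) follows. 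The only obstacle is the routine bookkeeping for differentiating under the scalar product and for verifying $\mathcal{D}(G)\supset\mathcal{H}_1$ along the trajectory; both are immediate from the stated classical regularity of $u$ and the standing hypotheses on the coefficients, so nothing deeper than the energy identity and $L_1$-integrability is required.
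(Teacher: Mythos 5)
Your proof is correct, and it is in essence the same computation as the paper's, just presented more directly. The paper derives the constancy of $\|F^{1/2}u'(y)\|^2-\|S^{1/2}u(y)\|^2$ by rewriting (\ref{12::3.5}) as the first-order system $\mathbf{T}\mathbf{u}=i\mathbf{u}'$ for $\mathbf{u}=(F^{1/2}u',\,-iS^{1/2}u)^t$, where $\mathbf{T}$ is the linearization (\ref{12::3.8}), and then showing $(\mathbf{J}\mathbf{u},\mathbf{u})'=0$ because $\mathbf{J}\mathbf{T}$ is self-adjoint. Unwinding that identity gives exactly your relation
\[
\frac{d}{dy}\bigl[(Fu',u')-(Su,u)\bigr]=0,
\]
and the final step — pinning the constant to zero using $u\in W_1(\mathbb{R}^+,\mathcal{H})$, hence integrability of both $\|F^{1/2}u'\|^2$ and $\|S^{1/2}u\|^2$ — is the same as the paper's. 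What your version buys is elementarity: you avoid invoking the linearization and the indefinite-metric formalism altogether. What the paper's version buys is that it exhibits explicitly the Krein-space structure that is reused later in the section (the completeness and factorization arguments of Theorem 3.4 and onward are built around this same operator $\mathbf{T}$ and inner product $\mathbf{J}$), so the lemma is stated in a form that makes that continuity of ideas visible. One small point to keep in mind: for $(Su,u')$ to be a legitimate pairing in $\mathcal{H}$ you are implicitly using that $Su(y)\in\mathcal{H}$, which is forced by the hypothesis that (\ref{12::3.5}) holds as an equality in $\mathcal{H}$; the stated regularity $u(y)\in C(\mathbb{R}^+,\mathcal{H}_1)$ alone only gives $u\in\mathcal{D}(S^{1/2})$. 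This is not a gap in your argument — the equation hypothesis supplies what is needed — but it is worth saying explicitly rather than leaving it under "routine bookkeeping."
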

\begin{proof}
Consider in $\mathcal{H}^2=\mathcal{H}\times\mathcal{H}$ the operator
\begin{equation}
{\bf T}= \left(\begin{array}{cc}-F^{-1/2}GF^{-1/2}& -F^{-1/2}S^{1/2}\\
S^{1/2}F^{-1/2}&0\end{array}\right),
\label{12::3.8}
\end{equation}
acting in $\mathcal{H}^2=\mathcal{H}\times\mathcal{H}$ (the linearization of
$T(\la)$). Obviously, ${\bf T} $ is symmetric (and even selfadjoint)
in the Krein space $\mathcal{K} =\{\mathcal{H}^2,{\bf J}\}$ with the fundamental
symmetry ${\bf J} =\left(\begin{array}{cc} I&0\\0&-I\end{array}\right)$.
It is easy to see that equation (\ref{12::3.5}) is equivalent to the following
one
$$
{\bf T}\,{\bf u}(y)=i{\bf u}'(y), \qquad {\bf u}(y)=\left(\begin{array}{c}
F^{1/2}u'(y)\\-iS^{1/2}u(y)\end{array}\right).
$$
Using this equation we find (differentiation is allowed by our
assumptions)
$$
({\bf J}{\bf u}(y),\,{\bf u}(y))' =
({\bf J}{\bf u}'(y),\,{\bf u}(y))+({\bf J}{\bf u}(y),\,
{\bf u}'(y))=
$$
$$
=-i({\bf J}{\bf T}{\bf u}(y),\,{\bf u}(y))
+i({\bf u}(y),\,{\bf J}{\bf T}{\bf u}(y))=0.
$$
Therefore, $({\bf J}{\bf u}(y),\,{\bf u}(y))= const$. The
condition $u(y)\in W_1\mathbb{R}H$, obviously, implies
$({\bf J}{\bf u}(y),\,{\bf u}(y))=0$ and (\ref{12::3.7}) follows.
\end{proof}
\begin{theorem} Let $T(\lambda)$ be a strongly elliptic positive  pencil.
Then there exists a closed operator $Z$ in the space $\mathcal{H}$ with domain
$\mathcal{D}(Z)\subset \mathcal{H}_1$, such that
\begin{equation}
T(\lambda)x=(F\la-Z_1)(\la-Z)x \qquad\mbox{for all} \ x\in\mathcal{D}(Z), \label {12::3.9}
\end{equation}
where $Z_1=-(G+FZ)$ and the equality is understood in
$\mathcal{H}_{-1}$. Moreover,

(a) Z has a representation $Z=KS^{1/2}$ where $K$ is a partial
isometry in $\mathcal{H}$ whose image $\mathbb{R}e(K) =\mathcal{H}$;

(b) -iZ generates a holomorphic semigroup in the spaces $\mathcal{H}_{\theta},
0\le\theta\le 1/2$;

(c) the generalized solutions of equation (\ref{12::3.5}) satisfy the equation
$$
u'(y)=-iZu(y).
$$
Factorization (\ref{12::3.9}) with these properties is unique.
\end{theorem}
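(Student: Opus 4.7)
The plan is to build the divisor $Z$ out of the solution operator. By Lemma 3.2, for each $x\in\mathcal{H}_{1/2}$ the equation (3.5) has a unique generalized solution $u(\cdot)\in W_1(\mathbb{R}^+,\mathcal{H})$ with $u(0)=x$; define $U(y)x:=u(y)$. The trace theorem makes $U(y):\mathcal{H}_{1/2}\to\mathcal{H}_{1/2}$ bounded, and the semigroup identity $U(y_1+y_2)=U(y_1)U(y_2)$ falls out of uniqueness (the translate $u(\cdot+y_1)$ is a generalized solution with initial value $U(y_1)x$). Strong continuity in $\mathcal{H}_{1/2}$ follows from the continuity of $u(\cdot)$ as an $\mathcal{H}_{1/2}$-valued function. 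Set $-iZ$ to be the infinitesimal generator of $U(y)$, with domain $\mathcal{D}(Z)=\{x\in\mathcal{H}_{1/2}\mid u'(0)\in\mathcal{H}\}$, and prove $\mathcal{D}(Z)\subset\mathcal{H}_1$ by combining the intermediate derivative theorem with the energy identity of Lemma 3.3.

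With $Z$ defined, the factorization is verified algebraically: for $x\in\mathcal{D}(Z^2)$ the function $u(y)=U(y)x$ is smooth, satisfies $u'=-iZu$, $u''=-Z^2u$, and plugging into $T(-i\,d/dy)u=0$ yields $FZ^2+GZ+S=0$ on $\mathcal{D}(Z^2)$. Equivalently $(F\lambda-Z_1)(\lambda-Z)=F\lambda^2+G\lambda+S$ with $Z_1=-(G+FZ)$, and density of $\mathcal{D}(Z^2)$ in $\mathcal{D}(Z)$ (which comes from the semigroup structure) extends the identity to $\mathcal{D}(Z)$, with the product understood as an operator $\mathcal{H}_1\to\mathcal{H}_{-1}$. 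For property (a) I would appeal to Lemma 3.3 at $y=0$: $\|F^{1/2}Zx\|=\|S^{1/2}x\|$ for $x\in\mathcal{D}(Z)$. This says the map $S^{1/2}x\mapsto F^{1/2}Zx$ extends to an isometry on $\overline{S^{1/2}\mathcal{D}(Z)}$, and a careful bookkeeping of the partial isometry factor in the polar decomposition of $F^{1/2}Z$ (composed with $F^{-1/2}$ after appropriately normalizing so that the resulting $K$ has full range) yields the stated representation $Z=KS^{1/2}$.

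For (b), I would exploit the sectorial resolvent estimate of Theorem~1.7: for $\lambda\in\Lambda_\phi$, $|\lambda|>r_0$, one has $\|T^{-1}(\lambda)\|\le C|\lambda|^{-2}$ and $\|H^{1/2}T^{-1}(\lambda)\|\le C|\lambda|^{-1}$, from which (using the factorization $T(\lambda)=(F\lambda-Z_1)(\lambda-Z)$) I can extract $\|(\lambda-Z)^{-1}\|_{\mathcal{H}\to\mathcal{H}_\theta}\le C|\lambda|^{\theta-1}$ on a sector containing the imaginary axis (after rotation $\lambda\mapsto i\lambda$). This is exactly the sectoriality condition ensuring that $-iZ$ generates a holomorphic semigroup in $\mathcal{H}$, and an interpolation argument between $\mathcal{H}$ and $\mathcal{H}_{1/2}$ extends the conclusion to the whole scale $\mathcal{H}_\theta$, $0\le\theta\le 1/2$. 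Property (c) is then automatic from the construction.

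Uniqueness of the factorization with properties (a)--(c) is the final (and easiest) step: if $T(\lambda)=(F\lambda-\tilde Z_1)(\lambda-\tilde Z)$ with $-i\tilde Z$ generating a $C_0$-semigroup in $\mathcal{H}_{1/2}$, then $\tilde u(y):=e^{-i\tilde Z y}x$ is a generalized solution of (3.5) with $\tilde u(0)=x$, so by the uniqueness in Lemma 3.2 we have $\tilde u=U(\cdot)x$, whence $\tilde Z=Z$ and then $\tilde Z_1=-(G+FZ)=Z_1$. The main obstacle I expect is the careful passage from the sectorial resolvent bound of Theorem~1.7 to the holomorphic semigroup bound in the full scale $\mathcal{H}_\theta$, together with verifying the precise partial-isometry structure in (a); the rest is bookkeeping built on Lemmas 3.1--3.3.
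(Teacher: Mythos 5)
Your overall construction of $Z$ from the solution operator $U(y)$, the derivation of $(FZ^2+GZ+S)x=0$ on $\mathcal{D}(Z^2)$, the use of Lemma~3.3 for property~(a), and the uniqueness argument all coincide with what the paper does. The gap is in property~(b), and it is real.

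Theorem~1.7 gives the estimate $\|\mathcal{H}^{1/2}T^{-1}(\lambda)\|\le C|\lambda|^{-1}$, $\|T^{-1}(\lambda)\|\le C|\lambda|^{-2}$ only for $\lambda$ in the \emph{narrow} double sector $\Lambda_\phi$ around the real axis, $|\lambda|>r_0$; you cannot ``extract'' from it, as you claim, a resolvent bound for $Z$ on a sector containing the imaginary axis. Through the identity $(\lambda-Z)^{-1}=T^{-1}(\lambda)(F\lambda - S^{1/2}K^*)$ the factorization converts the Theorem~1.7 bound into $\|(\lambda-Z)^{-1}\|\le C/(1+|\lambda|)$ only on $\Lambda_\phi$. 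That is far from the decay on (essentially) a half-plane which the holomorphic-semigroup criterion requires. The paper bridges this by a separate ingredient you do not mention: from the $C_0$-semigroup property in $\mathcal{H}_{1/2}$ one has $\|(\lambda-Z)^{-1}x\|\le c_x/(1+|\lambda|)$ for each fixed $x\in\mathcal{H}_{1/2}$ and all $\lambda$ in the remaining part of the half-plane outside $\Lambda_\phi$ --- a non-uniform bound --- and then Phragm\'en--Lindel\"of is applied to the scalar function $\lambda\mapsto((\lambda-Z)^{-1}x,g)$ to merge this with the uniform bound on $\Lambda_\phi$ and upgrade it to a uniform estimate on all of $\mathbb{C}^-$, which by density extends to all $x\in\mathcal{H}$. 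Only then does the Hille--Yosida sectoriality criterion give a holomorphic semigroup in $\mathcal{H}$.

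Your final step also does not close: interpolation between $\mathcal{H}$ and $\mathcal{H}_{1/2}$ requires the holomorphic-semigroup resolvent bound at \emph{both} endpoints, and at $\mathcal{H}_{1/2}$ you only know the $C_0$-semigroup property from the construction. This is precisely why the paper runs a separate argument at that endpoint: it rotates the pencil, $T_\varphi(\lambda)=T(e^{i\varphi}\lambda)$ for small $|\varphi|$, reproves Lemma~3.1 with $\operatorname{Re}(\mathcal{A}_0^\varphi u,u)$ (using that the Friedrichs extension exists for sectorial forms), obtains a $C_0$-semigroup generator $-iZ_\varphi$ in $\mathcal{H}_{1/2}$ with $Z_\varphi=e^{i\varphi}Z$, and concludes that $e^{-iZt}$ extends holomorphically to a small sector in $t$ --- i.e., a holomorphic semigroup in $\mathcal{H}_{1/2}$. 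With both endpoints established, interpolation gives the full range $0\le\theta\le1/2$. (A smaller quibble: Lemma~3.3 is applicable only for $x\in\mathcal{D}(Z^2)$, where the generalized solution is smooth enough; the equality $\|F^{1/2}Zx\|=\|S^{1/2}x\|$ on $\mathcal{D}(Z)$ is then a density argument, not a direct application as you state.)
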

\begin{proof} Let $x\in\mathcal{H}_{1/2}$. By virtue of Lemma 3.2 there is a
generalized solution of equation (\ref{12::3.5}) such that $u_x(0) = x.$ Define
the operator function $U(t)$ on $\mathbb{R}$ as follows
$$
U(t)x=u_x(t), \qquad t\ge 0.r
$$
Note that according to Lemma 3.2 the restriction of the trace operator
$\mathcal{T}_0$ to $Ker\mathcal{A}\in W_1(\mathbb{R}^+,\mathcal{H})$ is a bounded isomorphism onto $\mathcal{H}_{1/2}$.
Hence the inverse operator
$$
\mathcal{T}_0^{-1}:\, \mathcal{H}_{1/2}\to Ker \mathcal{A}, \qquad \mathcal{T}_0^{-1}x=u_x(t)
$$
is bounded, as well as the operator
$U(t)=\mathcal{T}_t\,\mathcal{T}_0^{-1}$ acting in $\mathcal{H}_{1/2}$ (for any $t\ge 0$). It
follows from the definition of the operator $U(t)$ and from the trace
theorem (see the formulation at the beginning of this section) that
$$
\begin{array}{rl}
&U(t+s)=U(t)U(s),\qquad U(0)=I,\qquad  \|U(t)\|\le const,\\ \phantom{.}\\
&\slim_{\phantom{.}\phantom{.}\phantom{.}\phantom{.} t\to s} U(t)=U(s), \quad 0\le s\le t,
\end{array}
$$
where the strong limit is understood in $\mathcal{H}_{1/2}$. This means that
$U(t)$ is a  uniformly bounded $C_0$-semigroup in the space $\mathcal{H}_{1/2}$
(see, e.g., \cite{12::Yo}).
If $U(t)=e^{-iZt}$ where $-iZ$ is the generator of $U(t)$, then property (c)
of Theorem 3.4 is satisfied, and by Lemma 3.2 it defines $Z$ uniquely.

It is known from semigroup theory
that $Z$ and $Z^2$ (as well as the other powers) are closed operators
in $\mathcal{H}_{1/2}$ whose domains $\mathcal{D}(Z)$ and $\mathcal{D}(Z^2)$ are densely defined
in $\mathcal{H}_{1/2}$.

Let $x\in\mathcal{D}(Z^2)\subset\mathcal{H}_{1/2}$ and $u_x(t)$ is the corresponding
generalized solution of (\ref{12::3.5}).
In view of the semigroup properties the functions
$u'_x(t), u''_x(t)$ are continuous  in $\mathcal{H}_{1/2}$ on $\mathbb{R}^+$ and
$$
u'_x(t)=iZu_x(t)\qquad
u''_x(t)=-Z^2u_x(t).
$$
The operator $G:\mathcal{H}\to\mathcal{H}_{-1}$ is bounded, therefore, $Gu'_x(t)$ is
continuous in $\mathcal{H}_{-1}$. Since $u_x(t)$ is a generalized solution, we
have the equality
\begin{equation}
-Fu''_x(t)+iGu'_x(t) = -Su_x(t) \label{12::3.10}
\end{equation}
which is understood as an equality in $\Wr^{{\tiny\rm o}\phantom{.}\phantom{.}\phantom{.}\phantom{.}\phantom{.}}\mathbb{R}H$.
The left hand side is a continuous function in $\mathcal{H}_{-1}$, hence, so is the
function $Su_x(t)$. Equivalently, $u_x(t)$ is continuous in $\mathcal{H}_1$.
In particular, $x=u_x(0)\in\mathcal{H}_1$ and (\ref{12::3.10}) gives
\begin{equation}
(FZ^2+GZ+S)x=0, \qquad \mbox{for}\ \, x\in\mathcal{D}(Z^2), \label{12::3.11}
\end{equation}
where the equality is understood in $\mathcal{H}_{-1}$.

Our further aim is to extend (\ref{12::3.11}) to a larger domain. Notice, if
$x\in\mathcal{D}(Z^2)$ then the conditions of Lemma 3.3 are fulfilled and we have
$$
\|F^{1/2}u_x'(t)\|=\|S^{1/2}u_x(t)\|, \qquad t\ge 0.
$$
In particular, we have the equality $\|F^{1/2}Zx\|=\|S^{1/2}x\|$
which gives (for\break $x\in\mathcal{D}(Z^2)$) the representation $Z=KS^{1/2}$,
where $K$ is a partial isometry in $\mathcal{H}$. Since $\mathcal{D}(Z^2)$ is dense in $\mathcal{H}_{1/2}$
and $\mathcal{H}_{1/2}$ is dense in $\mathcal{H}$, we have $\mathbb{R}e(K)=\mathcal{H}$. Hence, $Z$ is
boundedly invertible in $\mathcal{H}$  and $Z^{-1}=S^{-1/2}K^*$.
This enables us to extend $Z$ from $\mathcal{H}_{1/2}$ onto $\mathcal{H}$ with domain
$D_{\mathcal{H}}(Z)=\mathbb{R}e\,(S^{-1/2}K^*)$. Further (and in (\ref{12::3.9})) we omit the
index $\mathcal{H}$ and imply that $Z$ acts in $\mathcal{H}$ and its domain $\mathcal{D}(Z)$
is understood as described. Certainly, $\mathcal{D}(Z)\subset\mathcal{H}_1$ and it
coincides with $\mathcal{H}_1$ if and only if $K$ is a unitary operator. Now,
both terms $Gx$ and $Sx$ are in $\mathcal{H}_{-1}$ for $x\in\mathcal{D}(Z)$, so
equality (\ref{12::3.11}) can be extended to all $x\in\mathcal{D}(Z)$. This is
equivalent to the factorization (\ref{12::3.9}), moreover, for $Z_1$ we have
the representation $Z_1=S^{1/2}K^*$ as well as $Z_1=-(G+FZ)$. Then we obtain
$$
(\la-Z)^{-1}=T^{-1}(\lambda)(F\la -S^{1/2}K^*)
$$
where the both sides are understood as operators in $\mathcal{H}$. Applying
Theorem 1.7 to the right hand side of the last identity we obtain the right hand side of the last identity we obtain
\begin{equation}
\|(\la-Z)^{-1}x\|\le \frac{c\|x\|}{1+|\la|} \label{12::3.12}
\end{equation}
in a double sector $\Lambda_\phi$ containing the real axis. Let us prove
that (\ref{12::3.12}) holds also for all $\la$ from the upper half plane $\mathcal{C}^+$.
Since $-iZ$ is a
generator of a $C_0$-semigroup in $\mathcal{H}_{1/2}$ we have (see [Yo, Ch. 9])
$$
\|(\la-Z)^{-1}x\|\le\|(\la-Z)^{-1}x\|_{1/2}\le \frac{c_x}{1+|\la|}
\quad \mbox{for all} \ \, x\in\mathcal{H}_{1/2} \ \mbox{and} \ \,\la\in\mathcal{C}^-\setminus
\Lambda_\varphi,
$$
with a constant $c_x$ depending on $x$, and the estimate holds in the
whole upper half plane $\mathcal{C}^+$ outside an arbitrary small double sector
$\Lambda_\varphi$ containing the real axis.
Applying the Phragmen-Lindel\"of theorem (see [Bo], for example) we
obtain estimate (\ref{12::3.12}) for all $\la\in\mathcal{C}^-$ and $x\in\mathcal{H}_{1/2}$ with the same
constant $c$ as it was in (\ref{12::3.12}). By continuity (\ref{12::3.12}) can be
extended for all $x\in\mathbb{R}$. This implies that $-iZ$ generates a
holomorphic semigroup in $\mathcal{H}$.

Actually, $-iZ$ generates a holomorphic semigroup in the space
$\mathcal{H}_{1/2}$, too. To prove this, we consider the pencil
$$
T_{\varphi}(\la) =T(e^{i\varphi}\la)
\quad  \mbox{and} \quad
\mathcal{A}^{\varphi} u =T_{\varphi}\left(i\frac d{dy}\right).
$$
For sufficiently small $|\varphi|$ we can reprove Lemma 3.1 changing
$(\mathcal{A}_0u,\,u)$ in (\ref{12::3.2}) by $Re (\mathcal{A}^{\varphi}_0u,\,u)$. This is possible,
since the Friedrichs extension exists for the sectorial operators (see
\cite{12::Ka}, Ch. 6). Repeating the arguments we find that there exists an
operator $-iZ_{\varphi}$ which generates a $C_0$-semigroup in
$\mathcal{H}_{1/2}$, $Z_\varphi$ possesses property (c) and realizes a factorization of
the form (\ref{12::3.9}) for the pencil $T_{\varphi}(\la)$.
From this we obtain $Z_{\varphi}=e^{i\varphi}Z$. Then the minimal
resolvent growth estimate of the form (\ref{12::3.12}) holds for $Z$ in
a small double sector containing the real axis.
Hence, the $C_0$-semigroup generated by $-iZ$ is, actually, a
holomorphic semigroup. Now, applying the interpolation theorem
we get assertion (b). This ends the proof.
\end{proof}

\subsection {Elliptic pencils satisfying the Keldysh-Agmon condition}
\setcounter{equation}{0}
\setcounter{theorem}{0}

\subsection {The resolvent growth condition} In this section we will
use the condition which in general form can be formulated as follows.

{\bf The resolvent growth condition}. {\em Assuming that $T^{-1}(\lambda)\mathcal{H}P x(\la)$ is
holomorphic in the upper (lower)  half plane $\mathcal{C}^+\ (\mathcal{C}^-)$ where
$$
x(\la) =x_0+\la x_1+\dots +\la^n x_n
$$
is an $\mathcal{H}$-valued polynomial, we have
\begin{equation}
\|\mathcal{H}PT^{-1}(\lambda)\mathcal{H}P x(\la)\| \le C|\la|^m  \quad \mbox {for all}\ \la\in
\mathcal{C}^+(\mathcal{C}^-),\quad |\la|>r_0, \label{12::4.1}
\end{equation}
with some constants $c$ and $m$. }

This condition is by no means obvious to verify and we formulate the
other one which can be checked out more easily.

{\bf Keldysh-Agmon condition}. {\em $T(\la)$ is of the form (\ref{12::1.1}) and

(a) the operator $H$ has discrete spectrum (i.e. $H^{-1}$ is compact and
its eigenvalues are subject to the estimates
\begin{equation}
\la_j(H)\ge cj^p, \quad j=1,2,\dots, \label {12::4.2}
\end{equation}
with some constants $c$ and $p$;

(b) either $p\ge 2$ or $p<2$ but there are rays $\gamma_j=\{\la\big|\,
\ arg\,\la=\theta_j\}, j=1,\dots,N,$ in the upper (lower) half plane
$\mathcal{C}^+(\mathcal{C}^-)$ such that
$$
0<\theta_j<\theta_{j+1}<2\pi/p, \ j=1,\dots, N-1; \ \
\max(\theta_1,\theta_{j+1} -\theta_j, \pi-\theta_N)<2\pi/p,
$$
and
$$
\|\mathcal{H}PT^{-1}(\lambda)\mathcal{H}P\|\le c(1+|\la|^m), \quad \mbox{for}\ \la\in \gamma_j,
$$
with some constants $c$ and $m$. }

\begin{proposition} If $T(\la)$ is a strongly elliptic pencil then the
Keldysh-Agmon condition implies the resolvent growth condition,
moreover, one can take in (\ref{12::4.1}) $m=n$.
\end{proposition}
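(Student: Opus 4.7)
The plan is to reduce to the normalized pencil $\hat T(\lambda)=\mathcal{H}^{-1/2}T(\lambda)\mathcal{H}^{-1/2}$, which acts on $\mathcal{H}$ with bounded coefficients, and to work with the holomorphic $\mathcal{H}$-valued function
$$
F(\lambda):=\mathcal{H}^{1/2}T^{-1}(\lambda)\mathcal{H}^{1/2}x(\lambda)=\hat T^{-1}(\lambda)\hat x(\lambda),
$$
where $\hat x(\lambda):=\mathcal{H}^{1/2}x(\lambda)$ is an $\mathcal{H}$-valued polynomial of degree $\le n$. By the hypothesis of the resolvent growth condition $F$ is holomorphic in $\mathcal{C}^+$, and the goal is to bound $\|F(\lambda)\|$ by $C|\lambda|^n$ for $|\lambda|$ large in $\mathcal{C}^+$.

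The first step is to establish the meromorphic order of $\hat T^{-1}(\lambda)$. From Keldysh-Agmon assumption (a) one has $s_k(\hat S^{-1})=O(k^{-p})$, where $\hat S=\mathcal{H}^{-1/2}S\mathcal{H}^{-1/2}$; combined with the boundedness of $\hat F$ and of $\hat G\hat S^{-1/2}$ (the latter using $\mathcal{D}(G)\supset\mathcal{D}(H^{1/2})=\mathcal{D}(S^{1/2})$), this yields $s_k(\hat F\hat S^{-1})=O(k^{-p})$ and $s_k(\hat G\hat S^{-1})=O(k^{-p})$. Applying the fundamental theorem on the growth of the resolvent (Theorem~3.1 of Section~3) to the normalized pencil $\hat T(\lambda)\hat S^{-1}=I+\lambda\hat G\hat S^{-1}+\lambda^2\hat F\hat S^{-1}$ then shows that $\hat T^{-1}(\lambda)$, and hence also $F(\lambda)$, is meromorphic of order at most $2/p$, with finite (in fact zero) type at that order.

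The second step is to record the boundary estimates. Theorem~1.7 gives $\|\hat T^{-1}(\lambda)\|\le\mathrm{const}$ for $\lambda\in\mathbb{R}$, $|\lambda|>r_0$, whence $\|F(\lambda)\|\le C|\lambda|^n$ there; along each ray $\gamma_j$ supplied by Keldysh-Agmon~(b), the hypothesis $\|\hat T^{-1}(\lambda)\|\le C(1+|\lambda|^m)$ yields $\|F(\lambda)\|\le C(1+|\lambda|^{m+n})$. The Phragmen-Lindel\"of theorem (Theorem~4.1 of Section~4 together with Note~4.2) is then applied in each of the sectors cut off by $\mathbb{R}^+$, $\gamma_1,\dots,\gamma_N$, $\mathbb{R}^-$: by construction every such sector has angular width strictly less than $2\pi/p$, which in combination with the order bound $2/p$ places $F$ in the regime where PH propagates a polynomial majorant from the boundary into the interior. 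In the borderline case $p=2$ the refined form of PH is invoked, using the zero-type information furnished by Theorem~3.1. This produces a preliminary uniform estimate $\|F(\lambda)\|\le C(1+|\lambda|^{m+n})$ throughout $\mathcal{C}^+$.

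The decisive and most delicate step is to sharpen the exponent from $m+n$ down to $n$. For an arbitrary unit vector $y\in\mathcal{H}$ and a fixed point $\lambda_0\in\mathcal{C}^-$, introduce the scalar holomorphic function
$$
\varphi_y(\lambda):=(\lambda-\lambda_0)^{-n}(F(\lambda),y),\qquad \lambda\in\mathcal{C}^+.
$$
It has order at most $2/p$, is bounded by a constant on $\mathbb{R}$ for $|\lambda|>r_0$ (Theorem~1.7), and is polynomially bounded on $\mathcal{C}^+$ by the previous step. A second application of Phragmen-Lindel\"of, in the same sector decomposition of $\mathcal{C}^+$, combines the bounded data on the real axis with the polynomial control on the rays and forces $|\varphi_y(\lambda)|\le C$ uniformly in $\mathcal{C}^+$, with $C$ independent of $y$; taking the supremum over $\|y\|=1$ yields $\|F(\lambda)\|\le C|\lambda|^n$, as desired. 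The main technical obstacle in the entire argument is precisely this bootstrap in the final step: one must organize the repeated PH applications so that the rough polynomial bound on the Keldysh-Agmon rays is effectively traded for the constant bound on the real axis, and the borderline case $p=2$ furthermore forces one to work with the refined Phragmen-Lindel\"of theorem and with the minimal-type version of the growth estimate coming from Theorem~3.1.
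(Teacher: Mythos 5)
Your high-level plan---reduce to the normalized pencil, establish a meromorphic order bound of $2/p$ for the resolvent, then two rounds of Phragmen--Lindel\"of (one sectorial pass to promote the ray estimates into the interior, then a second pass against the real-axis estimate of Theorem~1.7)---is precisely the argument the paper sketches; the paper simply cites the order bound without derivation and compresses the second pass into the remark that ``$F(\lambda)$ is of order zero in $\mathcal{C}^+$.'' Two of your intermediate steps, however, contain errors that must be repaired before the argument closes.

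First, the assertion $s_k(\hat S^{-1}) = O(k^{-p})$ is false. Writing $\hat S^{-1} = H^{1/2}S^{-1}H^{1/2}$ and using that $\mathcal{D}(S) = \mathcal{D}(H)$, both $H^{1/2}S^{-1/2}$ and $S^{-1/2}H^{1/2}$ are bounded bijections of $\mathcal{H}$, so $\hat S^{-1}$ is a bounded \emph{invertible} operator whose $s$-numbers do not decay at all. What you actually need---after dividing out the constant term $\hat S$ so as to apply Theorem~3.1 to $I + \lambda\,\hat S^{-1}\hat G + \lambda^2\,\hat S^{-1}\hat F$---are the estimates $s_k(\hat S^{-1}\hat F) = O(k^{-p})$ (which does hold, since $\hat F = H^{-1/2}FH^{-1/2}$ already lies in this class and $\hat S^{-1}$ is bounded) and $s_k(\hat S^{-1}\hat G) = O(k^{-p/2})$. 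Your claimed $O(k^{-p})$ for the latter is too strong and does not follow from $G$ being merely $H^{1/2}$-bounded; fortunately, the weaker bound $O(k^{-p/2})$ is exactly what Theorem~3.1 requires for the conclusion that the resolvent has order $\le 2/p$, so the final order estimate stands once the bookkeeping is corrected.

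Second, and more seriously, the second Phragmen--Lindel\"of application ``in the same sector decomposition of $\mathcal{C}^+$'' cannot deliver the constant bound you claim. In any interior sector cut out by consecutive Keldysh--Agmon rays $\gamma_j$, $\gamma_{j+1}$, the real axis is not part of the boundary, and the only information on the two boundary rays is $|\varphi_y(\lambda)| \le C|\lambda|^m$; Phragmen--Lindel\"of applied there returns $C|\lambda|^m$, not $C$, and the same is true for the two extremal sectors (which have one ray on $\mathbb{R}$ but still a polynomial bound on $\gamma_1$ or $\gamma_N$). The correct mechanism, which is the one the paper uses, is a \emph{single} application of Phragmen--Lindel\"of on all of $\mathcal{C}^+$: the first (sectorial) pass shows $\varphi_y$ is polynomially bounded throughout $\mathcal{C}^+$, hence of \emph{order zero} there; and a holomorphic function of order zero in a half-plane that is bounded on the boundary $\mathbb{R}$ is bounded in the interior, because the angle restriction $\alpha < \pi/(2\rho)$ is vacuous for $\rho = 0$. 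This observation also dissolves the apparent delicacy about the borderline case $p = 2$ in the second pass: once order zero is in hand, the ordinary (non-refined) Phragmen--Lindel\"of theorem already suffices, and the minimal-type considerations you raise are only relevant in the first pass.
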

\begin{proof}
First, notice that (\ref{12::4.2}) implies that the generalized and the classical
spectra of $T(\lambda)$ coincide (Theorem 1.6).
The essense of the matter is that condition (a) together
with $\mathcal{D}(G)\supset\mathcal{D}(\mathcal{H}P)$ imply that $\mathcal{H}PT^{-1}(\lambda)\mathcal{H}P$ is an
$\mathcal{H}$-valued
meromorphic operator function of order $2/p$. The proof is based on the
results of {\sc Keldysh} \cite{12::Ke}, {\sc Agmon} \cite{12::Ag},
{\sc Matsaev} \cite{12::Mat} et. al. (see historical remarks and details
in \cite{12::S3}, \S 2). Now, if
$$
F(\la) =\mathcal{H}PT^{-1}(\lambda)\mathcal{H}P x(\la)
$$
is holomorphic in $\mathcal{C}^+$ and $x(\la)$ is a polynomial then condition (b)
and the Phragmen-Lindel\"of theorem imply that $F(\la)$ has a polynonial
growth in $\mathcal{C}^+$. According to Theorem 1.7
\begin{equation}
|F(\la)| < c(1+|\la|^n), \qquad \la\in\mathbb{R},\  |\la|>r_0,\  n=\deg x(\la).
\label{12::4.3}
\end{equation}
Since $F(\la)$ is of order zero in $\mathcal{C}^+$, by virtue of the
Phragmen-Lindel\"of theorem the estimate (\ref{12::4.3}) holds asymptotically for all
$\la\in\mathcal{C}^+$.
\end{proof}

\subsection{Half-range completeness and minimality} In what follows
we consider for simplicity a generic situation when $T(\la)$
has only semi-simple real eigenvalues of definite type.
For a pencil of the form (\ref{0.2})
this is true according to Theorem 2.2 for all values of $\omega$ with the possible
exception of isolated resonant frequences $\omega_k\to\infty$.

Let $T(\la)$ have discrete spectrum and let the eigenvalues of $T(\la)$ be
numerated according to their geometric multiplicity (i.e. every eigenvalue
$\la_k$ is repeated $n=\operatorname{nul}\,T(\la_k)$ times). In this case we have
a one-to-one correspondence between the eigenvalues $\la_k$ and
canonical Jordan chains of the form (\ref{12::2.2}). As we agreed, all the real
eigenvalues are supposed to be semi-simple. The eigen-elements
corresponding to every real eigenvalue are assumed to form a normal
canonical system (see Section 2). Take all the chosen Jordan chains of
$T(\la)$ corresponding to the eigenvalues from the open upper (lower)
half-plane and
all the eigen-elements  corresponding to the real eigenvalues  of
positive (negative) type. Denote the system consisting of all these
elements by $E^+ (E^-)$ and call it the first (second) half of the
root elements of $T(\la)$.

Let us recall the well-known definitions. A system $\{e_k\}^{\infty}_1$
is said to be {\it minimal} in Hilbert space $\mathcal{H}$ if there exists an
adjoint system $\{e_k^*\}^{\infty}_1$  such that $(e_k,e^*_j)=\delta_{kj}$,
where $\delta_{jk}$ is the Kronecker symbol. Equivalently, $\{e_k\}_1^{\infty}$
is minimal if any element $e_k$ is not contained in the closed linear
span of the other ones. A system $\{e_k\}_1^{\infty}$ is said to be
{\em complete} in $\mathcal{H}$ if there is no non-zero element in $\mathcal{H}$ which is
orthogonal to all the elements of the system.

\begin{theorem} The first and the second half of the root elements of
a pencil $T(\la)$ form minimal systems in $\mathcal{H}$ provided $T(\la)$ is
strongly elliptic and has discrete spectrum.
\end{theorem}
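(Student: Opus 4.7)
My plan is to construct explicitly a biorthogonal system to $E^+$ in $\mathcal{H}$, which is equivalent to minimality. Since $H^{-1}$ is compact, Theorem~1.6 gives that every point of the spectrum of $T(\lambda)$ is a normal eigenvalue, so at each eigenvalue $\lambda_k$ the resolvent $T^{-1}(\lambda)$ admits the Keldysh representation of its principal part with a canonical system $\{x_k^s\}_{s=0}^{p_k}$ and a uniquely determined adjoint canonical system $\{z_k^s\}_{s=0}^{p_k}$ for $T^*(\lambda)=T(\bar\lambda)^*$ at $\bar\lambda_k$. Because $F$, $S$ are self-adjoint and $G$ is symmetric, $T(\lambda)$ is formally self-adjoint, so for $\lambda_k\in\mathbb{C}^+$ the system $\{z_k^s\}$ is itself a canonical system of $T(\lambda)$ at $\bar\lambda_k$.

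Comparing Laurent coefficients in the identity $T(\lambda)T^{-1}(\lambda)=I$ at $\lambda_k$ yields both the intra-chain biorthogonality $(x_k^h,z_k^{p_k-h})=1$ (other intra-chain pairings vanishing) and the inter-chain orthogonality: $z_k^h$ annihilates every element of $\mathcal{L}_{\lambda_j}(T)$ for $j\neq k$. This is the quadratic-pencil analog of the biorthogonality established for bounded linear dissipative pencils earlier in the paper. For a real semi-simple eigenvalue $\lambda_k$ of positive type, the principal part of $T^{-1}(\lambda)$ at $\lambda_k$ reduces to $\varepsilon_k^{-1}(\,\cdot\,,w_k)w_k/(\lambda-\lambda_k)$ with $\varepsilon_k=(T'(\lambda_k)w_k,w_k)>0$, so $\varepsilon_k^{-1}w_k$ is dual to $w_k$ within $\mathcal{L}_{\lambda_k}(T)$.

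I then assemble the candidate biorthogonal vectors: $z_k^{p_k-h}$ for each $x_k^h\in E^+$ with $\lambda_k\in\mathbb{C}^+$, and $\varepsilon_k^{-1}w_k$ for each type-positive real eigenvector $w_k$. By the two facts above, these candidates pair correctly within a single root subspace and annihilate all other root subspaces, with one exception: a candidate $z_k^{p_k-h}$ (for $\lambda_k\in\mathbb{C}^+$) may have nonzero pairing with a type-positive real eigenvector $w_j$. I correct this by setting
\[
\widetilde z_k^{p_k-h} := z_k^{p_k-h} - \sum_j \varepsilon_j^{-1}(z_k^{p_k-h},w_j)\,w_j,
\]
where $j$ runs over the finitely many real type-positive eigenvalues (finiteness from Theorem~1.6). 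Since every $w_j$ is annihilated by all $z_m^s$ with $m\neq j$, this modification preserves the previously established biorthogonality while securing orthogonality to the real positive-type component of $E^+$.

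The main obstacle is justifying the inter-chain orthogonality for the \emph{unbounded} quadratic pencil $T(\lambda)$, since the bounded-coefficient arguments of the earlier sections do not literally apply. The difficulty is formal rather than substantive: by Theorem~1.6 the classical and generalized spectra coincide near each normal eigenvalue, so all canonical and adjoint elements actually lie in $\mathcal{H}_2\subset\mathcal{H}$; working inside the scale $\mathcal{H}_\theta$ and comparing the Laurent expansions of $T(\lambda)T^{-1}(\lambda)=I$ in $\mathcal{H}_{-1}$, then restricting to $\mathcal{H}$, the comparison goes through unchanged. With this in hand, the construction produces the required biorthogonal system. The argument for $E^-$ is identical, exchanging $\mathbb{C}^+$ for $\mathbb{C}^-$ and positive sign characteristics for negative ones.
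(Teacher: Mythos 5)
Your proposal hinges on a claim that is false for quadratic pencils: that comparing Laurent coefficients in $T(\lambda)T^{-1}(\lambda)=I$ produces the \emph{direct} $\mathcal{H}$-orthogonality $(x_j^h,z_k^s)=0$ whenever $\lambda_j\neq\lambda_k$. What that comparison actually yields, applied to $T^{-1}(\lambda)\bigl[T(\lambda)x_k^h\bigr]$ near a pole $\mu_j\neq\lambda_k$, are relations of the form
$\bigl(T(\mu_j)x_k^h,\,z_j^s\bigr)=0,\ \bigl(T(\mu_j)x_k^h,\,z_j^{s-1}\bigr)+\bigl(T'(\mu_j)x_k^h,\,z_j^{s}\bigr)=0,\ \dots$
i.e.\ orthogonality weighted by the pencil and its derivatives. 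Only when $T$ is \emph{linear} (so that $T(\lambda)=T-\lambda W$ and the single weight $W$ can be absorbed) does one obtain a clean $W$-biorthogonality; this is exactly the $[y_k^h,x_j^s]=(Wy_k^h,x_j^s)=-\delta_{kj}\delta_{h,p_j-s}$ of Proposition~2.5 in the preceding section. For a genuinely quadratic $T(\lambda)$ the root subspaces at distinct (non-conjugate) eigenvalues are typically \emph{not} orthogonal in $\mathcal{H}$; already $T(\lambda)=\lambda^2 I+\lambda G+I$ with $G=G^*$ has the same eigenvector attached to two different eigenvalues, so the inner product is $1$, not $0$. The difficulty you flag as ``formal rather than substantive'' (unboundedness, the scale $\mathcal{H}_\theta$) is not the real obstacle; the obstacle is that the candidate dual vectors $z_k^{p_k-h}$ simply do not annihilate the other root subspaces, and the finite ``correction'' you apply — subtracting components along the finitely many real positive-type eigenvectors — does not touch the nonreal part of the spectrum where the failure also occurs, and in any case rests on the same false orthogonality.

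The paper's proof avoids this by never attempting $\mathcal{H}$-biorthogonality. After a shift ensuring all real eigenvalues are positive and $S>0$, it passes to the vectors $\mathbf{x}_k^s=\bigl(F^{1/2}(\lambda_k x_k^s+x_k^{s-1}),\,S^{1/2}x_k^s\bigr)^t\in\mathcal{H}^2$, which are the root vectors of the symmetric linearization $\mathbf{T}$ in the Krein space $(\mathcal{H}^2,\mathbf{J})$. There the Krein-space $\mathbf{J}$-biorthogonality $(\mathbf{J}\mathbf{x}_k^s,\mathbf{x}_j^h)=\delta_{kj}\lambda_k\varepsilon_k$ is the correct statement; the sign structure ($\varepsilon_k=0$ for nonreal $\lambda_k$, $\lambda_k\varepsilon_k>0$ for $\lambda_k\in\mathbb{R}$ and $x_k\in E^+$) then gives $\|v_1\|\ge\|v_2\|$ for any finite combination $\mathbf{v}=(v_1,v_2)^t=\sum c_k^s\mathbf{x}_k^s$, so $\|v_1\|\le\|\mathbf{v}\|\le 2\|v_1\|$. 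Minimality of $\{\mathbf{x}_k^s\}$ in $\mathcal{H}^2$ is free because $\mathbf{T}$ has discrete spectrum, and the two-sided norm estimate transports minimality to the first component $\{F^{1/2}(\lambda_k x_k^s+x_k^{s-1})\}$ in $\mathcal{H}$, hence (by a triangular change of basis within each chain and invertibility of $F^{1/2}$) to $E^+$ itself. If you want to repair your argument, you would essentially be forced to reproduce this linearization: the projection estimate is what substitutes for the nonexistent direct biorthogonality in $\mathcal{H}$.
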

\begin{proof} Let us work with the system $E^+$, for example. By
virtue of Propostition 1.4 there is a number $r_0>0$
such that the pencil $T_1(\la)=T(\la-r_0)$ has only positive
eigenvalues on the  real axis (to prove the minimality  of $E^-$ one
should consider the pencil $ T_1(\la)=T(\la+r_0))$. The Jordan chains
$x_k^0,\dots,x_k^p$ of the pencil $T(\la)$ are changed after this
transformation in the following way
$$
\xi_k^0=x_k^0\ ,
\ \xi_k^1=x_k^1-r_0^{-1}x_k^0,\ \dots\ ,\xi_k^p=x_k^p-r_0^{-1}x_k^{p-1}
-\dots-r_0^{-p}x_k^0,
$$
while the sign characteristics of the pairs $\{\la_k,y_k^0\}$ and
$\{\la_k+r_0, y_k^0\}$ are the same. Hence, it suffices to prove the
minimality for the case when $T(\la)$ has only positive real eigenvalues
and $S>0$. Let us consider the system
\begin{equation}
{\bf x}^s_k=\left(\begin{array}{c}F^{1/2}(\la_kx_k^s+x_k^{s-1})\\S^{1/2}x_k^s
\end{array}\right),
\qquad x_k^s\in E^+. \label {12::4.4}
\end{equation}
It is an easy exercise to show that the ${\bf x}_k^s$ are the root elements of
the operator
${\bf T}$ defined by (\ref{12::3.8})
(${\bf T}$ is the linearization of
$T(\la)$). As we mentioned ${\bf T} $ is a symmetric operator
in the Krein space $\mathcal{K}=\{\mathcal{H}^2,{\bf J}\}$ with the fundamental
symmetry ${\bf J} =\left(\begin{array}{cc} I&0\\0&-I\end{array}\right)$. From this
we have the biorthogonality relationships (see, e.g., \cite{12::AI}, Ch.1)
$$
({\bf J}{\bf x}_k^s, {\bf x}_j^h) = 0 \qquad \mbox{for all} \ \ j,k,s,h
$$
except for the case $k=j$ and $\la_k\in\mathbb{R}$. For $\la_k\in\mathbb{R}$ we have
$$
\begin{array}{rl}
&({\bf J}{\bf x}_k, {\bf x}_k)= \la^2_k(Fx_k, x_k)-(Sx_k,x_k)\\ \phantom{.}\\
&=2\la_k^2(Fx_k, x_k) +\la_k(Gx_k, x_k) =\la_k(T'(\la_k)x_k, x_k) =
\la_k\varepsilon_k,
\end{array}
$$
where $\varepsilon_k$ is the sign characteristic of the pair
$\{\la_k,y_k\}$. Hence,
\begin{equation}
({\bf J}{\bf x}_k^s, {\bf x}_j^h)=\delta_{kj}\la_k\varepsilon_k, \qquad
x_k^s, x_k^h \in E^+, \label {12::4.5}
\end{equation}
where $\varepsilon_k =0$ for the nonreal $\la_k$ and $\la_k\varepsilon_k>0$ for
$\la_k\in\mathbb{R}$ and $x_k\in E^+$. Let $x$ be a finite linear combination
of elements (\ref{12::4.4})
\begin{equation}
{\bf v}: = \left(\begin{array}{c}v_1\\v_2\end{array}\right)=\sum c_k^s
\left(\begin{array}{c} F^{1/2}(\la_kx_k^s+x_k^{s-1})\\S^{1/2}x_k^s
\end{array}\right) =:\sum c_k^s{\bf x}_k^s.
\label {12::4.6}
\end{equation}
From (\ref{12::4.5}) we obtain $\|v_1\|\ge\|v_2\|,$ therefore $\|{\bf v}\|\le 2\|v_1\|.$
Recall that the system ${\bf x}_k^s$ is minimal as the system of the root
elements of the operator ${\bf T}$ with discrete spectrum. Then the
inequality $\|v_1\|\le \|{\bf v}\|\le 2\|v_1\|$  implies (if we use the
second definition of minimality) that
$$
\{F^{1/2}(\la_kx_k^s-x_k^{s-1})\}, \qquad x_k^s\in E^+,
$$
is a minimal system in $\mathcal{H}$. Hence $E^+$ is minimal in $\mathcal{H}$, too.
\end{proof}
\begin{theorem} The first and the second half of the root elements of
a pencil $T(\la)$ form  complete systems in $\mathcal{H}_1$ provided $T(\la)$ is
strongly elliptic and the Keldysh-Agmon condition holds.
\end{theorem}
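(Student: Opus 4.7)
By the duality $\mathcal{H}_1^*=\mathcal{H}_{-1}$, completeness of $E^+$ in $\mathcal{H}_1$ is equivalent to the implication: every $\phi \in \mathcal{H}_{-1}$ with $\langle\phi, x\rangle_{\mathcal{H}_{-1},\mathcal{H}_1}=0$ for all $x \in E^+$ must vanish. I plan to argue by contradiction in the spirit of Theorem~6.1 of Part~1, introducing for an assumed nonzero annihilator $\phi$ the scalar meromorphic function
$$F(\la) := \langle T^{-1}(\la)\phi,\phi\rangle_{\mathcal{H}_1,\mathcal{H}_{-1}}.$$
Under the Keldysh--Agmon hypothesis the pencil $T(\la)$ has only discrete spectrum (Theorem~1.6 combined with Proposition~4.1), so $F$ is meromorphic on $\mathbb{C}$ with possible poles only at the eigenvalues $\la_k$, and $\overline{F(\la)}=F(\bar\la)$ since $T^{-1}(\la)^*=T^{-1}(\bar\la)$ for the formally self-adjoint pencil.

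The key combinatorial step is a residue analysis using Keldysh's representation~(2.1) for the principal part of $T^{-1}(\la)$ at each pole, together with the fact that $T^*(\la)=T(\la)$ (so the adjoint canonical system at $\la_k$ is a canonical system of $T(\la)$ at $\bar\la_k$). For a direct chain $y_k^r$ at $\la_k\in\mathbb{C}^+$ the orthogonality $\langle\phi,y_k^r\rangle=0$ kills every term in the residue; at $\la_k\in\mathbb{C}^-$ the same is true because the adjoint chain $z_k^s$ sits in the root subspace at $\bar\la_k\in\mathbb{C}^+\subset E^+$; at a real eigenvalue of positive type the normal-canonical-system relation $z_k=+y_k\in E^+$ again forces the residue to vanish; and at a real eigenvalue of negative type (which is semi-simple by Theorem~2.2) the normal relation $z_k=-y_k$ produces the residue $-|\langle\phi,y_k\rangle|^2\le 0$. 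Thus $F$ is real on $\mathbb{R}$ away from finitely many simple real poles, all with non-positive residues.

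For the growth analysis I would combine Theorem~1.7, which gives the uniform bound $|F(\la)|\le C\|\phi\|_{-1}^2$ on a small double sector $\Lambda_\varphi$ about the real axis, with the Keldysh--Agmon condition and Proposition~4.1, which yield polynomial growth of $F$ along the rays $\gamma_j$. Phragm\'en--Lindel\"of in each subsector of $\mathbb{C}^+$ cut out by these rays then propagates polynomial growth throughout $\mathbb{C}^+$, and Schwarz reflection does the same for $\mathbb{C}^-$. Integrating a suitable regularization $F(\la)/(1+\la^2)^N$ (with $N$ chosen large enough to beat the polynomial growth) over the contour consisting of a large semicircle $C_R\subset\mathbb{C}^+$ together with the real axis decorated by small semicircles $C_\varepsilon^k$ around each real negative-type pole, and then letting $R\to\infty$ and $\varepsilon\to 0$ exactly as in the proof of Theorem~6.1, I obtain an identity $\sum_k c_k(-|\langle\phi,y_k\rangle|^2)=0$ with strictly positive weights $c_k$; since every summand is non-positive, every one of them must vanish, so $\phi$ annihilates $E^-$ as well and hence the entire system of root elements of $T(\la)$. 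A final run of the same scheme with $\phi$ now orthogonal to the full root system makes all residues of $F$ vanish automatically; $F$ becomes entire of polynomial growth, and comparing with the leading asymptotic $\la^2 T^{-1}(\la)\to F^{-1}$ in $\Lambda_\varphi$ (after a preliminary reduction to $\phi\in\mathcal{H}$ by density) extracts $\phi=0$, giving the sought contradiction.

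The main obstacle is the contour step: in the finite-dimensional Theorem~6.1 the necessary decay $|F(\la)|=O(|\la|^{-2})$ comes automatically from $\Ker A_n=0$, but for $\phi\in\mathcal{H}_{-1}$ the sharp bound provided by the third term of~(1.7) is only $|F(\la)|=O(1)$ on the rays. Hence the regularization by $(1+\la^2)^{-N}$ (or, equivalently, a preliminary density reduction to $\phi\in\mathcal{H}$, for which the first term of~(1.7) restores the $O(|\la|^{-2})$ decay on $\Lambda_\varphi$) is essential; verifying that the Keldysh--Agmon rays are spaced closely enough for Phragm\'en--Lindel\"of to apply at the order $p$ delivered by Proposition~4.1 is a secondary, routine but delicate technicality.
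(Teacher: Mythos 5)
Your strategy---produce a scalar meromorphic function from an annihilator $\phi$, kill its residues in $\mathbb{C}^+$ via the orthogonality to $E^+$, use the sign characteristics to control the real residues, and close via Phragm\'en--Lindel\"of and a residue count---is exactly the route the paper takes, and your residue analysis is correct. The place where your write-up diverges (and would actually fail as stated) is the contour step. The paper does not regularize by $(1+\la^2)^{-N}$; it considers
$$
F(\la)=\frac{1}{\la-r_0}\bigl(H^{1/2}T^{-1}(\la)H^{1/2}g,\,g\bigr),\qquad g=H^{1/2}f\in\mathcal{H},
$$
where $r_0$ is chosen so that $T(\la)>0$ for $\la>r_0$. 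This single weight does three jobs at once: it supplies the extra power of decay; it flips the sign of the residues at the real eigenvalues $\la_k<r_0$ so that together with the residue at $\la=r_0$ they are \emph{all} non-negative; and the positive residue at $r_0$, namely $(H^{1/2}T^{-1}(r_0)H^{1/2}g,g)$, is strictly positive unless $g=0$, so once the residues are shown to sum to zero the conclusion $g=0$ drops out immediately. The density step appears only once, to upgrade the $O(|\la|^{-1})$ bound on $\Lambda_\varphi$ to $o(|\la|^{-1})$ (the approximants $g_0\in\mathcal{H}_1$ need not preserve orthogonality, since density is used solely for the growth estimate, never to re-establish holomorphy); no second run is needed.

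Two concrete problems with the contour step as you describe it. First, dividing by $(1+\la^2)^{N}$ introduces a pole at $\la=i$ whose residue involves $F(i)=\langle T^{-1}(i)\phi,\phi\rangle$ and its derivatives; $T(i)$ is not sign-definite, so this residue is an uncontrolled complex number, and the imaginary-part bookkeeping that gave the identity in Theorem~6.1 no longer yields $\sum_k c_k\bigl(-|\langle\phi,y_k\rangle|^2\bigr)=0$. Second, ``a preliminary density reduction to $\phi\in\mathcal{H}$'' cannot mean replacing $\phi$ by a smoother annihilator: a nearby $\phi_0\in\mathcal{H}$ will not annihilate $E^+$, so you would lose the holomorphy in $\mathbb{C}^+$. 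If you do not want to import the paper's $\frac{1}{\la-r_0}$ device, your argument can still be repaired without any contour integral: once you know $F$ has only finitely many simple real poles, all with residues $\le 0$, has polynomial growth (Keldysh--Agmon rays plus Phragm\'en--Lindel\"of), and tends to $0$ on $\Lambda_\varphi$ (density), it must be a finite sum $\sum_k r_k/(\la-\la_k)$ with $r_k\le 0$. For large real $\la$ this sum is $\le 0$, while $\langle T^{-1}(\la)\phi,\phi\rangle>0$ for $\phi\ne 0$ because $T(\la)\gg 0$ there; hence $\phi=0$. This sidesteps the residue identity entirely and avoids your ``final run.''
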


\begin{proof} As before, we deal with the system $E^+$. Suppose that there
is an element $f\in\mathcal{H}_1$ such that
\begin{equation}
(f,x_k^s)_1=(\mathcal{H}P f,\mathcal{H}P x_k^s)=0 \qquad \mbox{for all} \ x_k^s\in
E^+. \label{12::4.7}
\end{equation}
Choose a number $r_0$ such that $T(\la)>0$ for $\la>r_0$ and consider
the function
\begin{equation}
F(\la) =\frac1{\la-r_0}(\mathcal{H}PT^{-1}(\lambda)\mathcal{H}P g, g), \quad g =\mathcal{H}P f\in\mathcal{H}.\label{12::4.8}
\end{equation}
The principal part of $F(\la)$ in a neighborhood of a real pole
$\la_k$ has the representation
$$
\sum\frac{\varepsilon_k(g, \mathcal{H}P x_k)(\mathcal{H}P x_k, g)}{(\la-r_0)(\la-\la_k)}
$$
where $\varepsilon_k$ are the sign characteristics corresponding to the eigen-pair
$\{\la_k, x_k\}$.
Due to (\ref{12::4.7}) all the terms with $\varepsilon_k>0$ in the last expression are
equal to zero. Since $\la_k-r_0<0$, all the residues of $F(\la )$ at the
poles $\la_k\in\mathbb{R}$ are non-negative. The residue at the additional
pole $\la=r_0$ is non-negative, too. Taking into account the
representation (\ref{12::2.1}) of $T^{-1}(\lambda)$ in a neighbourhood of a non-real pole
$\la_k\in \mathcal{C}^+$ and assumption (\ref{12::4.7}), we find that $F(\la)$ is
holomorphic in $\mathcal{C}^+$. By the Schwarz symmetry principle
it is holomorphic in $\mathcal{C}^-$. Proposition 4.1 gives us
$F(\la)=O(\la^{-1})$
when $\la\to\infty$ uniformly in $\mathcal{C}$.

Let us show that the residue of $F(\la)$ at $\infty$ equals zero.
Given $\varepsilon >0$ we can find $g_0\in\mathcal{H}_1$ such that
$\|g-g_0\|_1<\varepsilon$. If we put $g_0$ in (\ref{12::4.8}) instead of $g$ then by
virtue of Theorem 1.7 the corresponding function vanishes at $\infty$
as $O(\la^{-3})$ when $\la\to\pm\infty$ uniformly in $\mathcal{C}$. Therefore,
$F(\la)=o(\la^{-1})$ as $\la\to\infty$ uniformly in $\mathcal{C}$, i.e. the residue
at $\infty$ is equal to zero.
Now, recall
that all the residues of $F(\la)$ at the finite poles are
non-negative. This is possible only if all they are equal to zero,
in particular,
$$
(\mathcal{H}P T(r_0)\mathcal{H}P g,\,g)=0.
$$
This implies $g=0$.
\end{proof}
\begin{corollary} The first and the second half of the root elements
of a strongly elliptic pencil $T(\la)$ satisfying the Keldysh-Agmon condition
form complete and minimal systems in spaces $\mathcal{H}_{\theta}$ for all
$0\le\theta\le 1$.
\end{corollary}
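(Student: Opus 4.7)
The plan is to deduce both halves of the corollary from the endpoint cases already established, namely minimality in $\mathcal{H}_0 = \mathcal{H}$ (Theorem 4.2) and completeness in $\mathcal{H}_1$ (Theorem 4.3), using only the monotone structure of the Hilbert scale $\{\mathcal{H}_\theta\}$. First I would point out that because the Keldysh--Agmon condition forces $H$ to have discrete spectrum with $H \gg 0$, the injections $\mathcal{H}_\theta \hookrightarrow \mathcal{H}_{\theta'}$ are continuous and dense whenever $\theta \ge \theta'$. Moreover, the eigen- and associated elements $x_k^s$ of $T(\lambda)$ lie in $\mathcal{D}(H) = \mathcal{H}_2$, so each $x_k^s$ belongs to every $\mathcal{H}_\theta$ with $\theta \le 2$; thus the statements ``$E^\pm$ is complete (resp.\ minimal) in $\mathcal{H}_\theta$'' are well posed for $0 \le \theta \le 1$.

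For minimality I would argue as follows. Suppose $E^+$ fails to be minimal in some $\mathcal{H}_\theta$, $0 < \theta \le 1$. By the equivalent formulation of minimality from Problem 1.6.2, some element $x_k^s \in E^+$ would lie in the closure, taken in the $\mathcal{H}_\theta$-norm, of the linear span of the remaining elements of $E^+$. But the estimate $\|u\|_0 \le c\|u\|_\theta$ (valid since $I \le c\, H^\theta$ on $\mathcal{H}_\theta$) shows that every sequence converging in $\mathcal{H}_\theta$-norm also converges in $\mathcal{H}_0$-norm, so $x_k^s$ would also lie in the $\mathcal{H}_0$-closure of the other elements, contradicting Theorem 4.2. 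Hence minimality in $\mathcal{H}_0$ propagates to every $\mathcal{H}_\theta$ with $0 \le \theta \le 1$, and the same argument works for $E^-$.

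For completeness I would run the dual argument. Fix $\theta \in [0,1]$, $y \in \mathcal{H}_\theta$ and $\varepsilon > 0$. Since $\mathcal{H}_1$ is dense in $\mathcal{H}_\theta$ (the finite linear combinations of eigenvectors of $H$, which belong to every $\mathcal{H}_s$, are dense in $\mathcal{H}_\theta$ by the spectral theorem applied to $H$), one can choose $y_1 \in \mathcal{H}_1$ with $\|y - y_1\|_\theta < \varepsilon/2$. By Theorem 4.3 there is a finite linear combination $z = \sum c_k^s x_k^s$ of elements of $E^+$ with $\|y_1 - z\|_1 < \varepsilon/(2c)$, where $c$ is the norm of the continuous embedding $\mathcal{H}_1 \hookrightarrow \mathcal{H}_\theta$. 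Then $\|y - z\|_\theta \le \|y - y_1\|_\theta + c\|y_1 - z\|_1 < \varepsilon$, so $E^+$ is complete in $\mathcal{H}_\theta$; the same for $E^-$.

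No serious obstacle arises here: the argument is a routine ``transfer across the scale'' once the endpoint results are in hand. The only point requiring a line of justification is the density of $\mathcal{H}_1$ in $\mathcal{H}_\theta$ for $\theta < 1$, which is immediate from the discreteness of $\sigma(H)$ provided by the Keldysh--Agmon hypothesis.
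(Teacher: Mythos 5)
Your argument matches the paper's: Proposition 4.1's terse proof says exactly that minimality propagates up the scale from $\mathcal{H}_0$ and completeness propagates down the scale from $\mathcal{H}_1$, and then invokes Theorems 4.2 and 4.3, which is precisely your ``transfer across the scale'' reasoning, only stated without detail. Your fleshed-out version is correct (in particular the observation that convergence in the stronger norm forces convergence in the weaker norm is the right lever for minimality, and density of $\mathcal{H}_1$ in $\mathcal{H}_\theta$ is the right lever for completeness), so nothing further is needed.
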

\begin{proof} It follows from the definitions: if a system is minimal
(complete) in $\mathcal{H} (\mathcal{H}_1)$ then it has the same property in $\mathcal{H}_{\theta}$
for $\theta>0\ (\theta<1)$. Now apply Theorems 4.2 and 4.3.
\end{proof}

\subsection{Factorization} The obtained results enable us to
construct a divisor of an elliptic pencil.
\begin{theorem} Let $T(\la)$ be a strongly elliptic pencil satisfying the
Keldysh-Agmon condition. Then
\medskip
\begin{equation}
T(\la)x = (\la -Z_1)F(\la-Z)x \label {12::4.9}
\end{equation}
where

(a) $Z$ and $Z_1$ admit a representation
$$
r-Z=K_0\mathcal{H}P, \qquad r-Z_1=\mathcal{H}P K_1
$$
with bounded and boundedly invertible in $\mathcal{H}$ operators $K_0$ and $K_1$,
provided $r\in\mathbb{R}$ is not an eigenvalue of $T(\la)$. In particular, $Z$ is
a closed operator on $\mathcal{H}$ with domain $\mathcal{D}(Z)=\mathcal{H}_1$\ ;

(b) the spectra of $T(\la)$ and $\la-Z$ coincide in the upper half-plane,
while on the real axis $\la-Z$ inherits only the positive type
eigen-pairs of $T(\la)$, i.e. the system of the root functions of $Z$
coincides with the first half of the root functions of $T(\la)$;

(c) $iZ$ generates a holomorphic semigroup in all spaces
$\mathcal{H}_{\theta},0\le\theta\le1$.

Equality (\ref{12::4.9}) holds for all $x\in\mathcal{H}_1$ and is understood in sense of
operators  acting from $\mathcal{H}_1$ to $\mathcal{H}_{-1}$. Factorization (\ref{12::4.9}) with
property (b) is unique.
\end{theorem}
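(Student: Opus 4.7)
The plan is to construct $Z$ from its prescribed spectral half (property (b)) by defining its resolvent through a contour integral that extracts the ``upper'' part of $T^{-1}(\la)$, and then to read off the remaining properties from the resolvent bounds of Theorem~1.7 and of the Keldysh--Agmon condition (Proposition~4.1). The positive case (Theorem~3.4) was handled by a variational/Friedrichs construction of the semigroup that is unavailable in the non-positive setting; instead I would exploit the half--range completeness (Theorem~4.3) and minimality (Theorem~4.2) of $E^+$.

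After a preliminary translation $\la\mapsto\la-r$, chosen by Proposition~1.4 so that all real eigenvalues become negative and Theorem~1.7 applies in a double sector $\Lambda_\varphi$ containing $\mathbb{R}$, I would define for $\mu$ in the open lower half plane
\begin{equation*}
(\mu - Z)^{-1}x \;:=\; \frac{1}{2\pi i}\oint_\Gamma \frac{T^{-1}(\nu)\bigl(\nu F + G\bigr)x}{\nu - \mu}\,d\nu ,
\end{equation*}
where $\Gamma$ is a closed contour in $\mathcal{C}^+$ enclosing $\sigma(T)\cap\mathcal{C}^+$ together with the positive--type real eigenvalues and bypassing the negative--type ones from above. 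Using the local representation (2.1) of $T^{-1}(\nu)$ at its poles and the sign--characteristic analysis of Section~2, Cauchy's theorem reduces this integral on each Jordan chain $x_k^s\in E^+$ to the prescribed algebraic action $Z x_k^s = \la_k x_k^s + x_k^{s-1}$, and annihilates the root elements belonging to $E^-$. The resolvent bound of Theorem~1.7 together with the polynomial Keldysh--Agmon bound along the $N$ prescribed rays, joined by the Phragmén--Lindelöf principle in the intermediate sectors, allow $\Gamma$ to be closed at infinity and yield the uniform estimate $\|(\mu - Z)^{-1}\|_{\mathcal{H}} \le c(1+|\mu|)^{-1}$ on $\overline{\mathcal{C}^-}\cup\Lambda_\varphi$.

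This uniform bound has three consequences. By Hille--Yosida it implies that $iZ$ generates a bounded $C_0$--semigroup on $\mathcal{H}$; the minimal--growth estimate $O(|\mu|^{-1})$ in a sector upgrades this to a holomorphic semigroup, and interpolation with the corresponding bound on $\mathcal{H}_1$ (supplied by the $\|H^{1/2}T^{-1}(\nu)H^{1/2}\|$ component of Theorem~1.7) extends property (c) across the whole scale $\mathcal{H}_\theta$, $0\le\theta\le 1$. The polar representation $r - Z = K_0 H^{1/2}$ of property (a) is then obtained by writing $(r-Z)^{-1} = T^{-1}(r) F (r - Z_1)$ at a real regular point $r$, with $Z_1 := -(G + FZ)$ defined after the fact, and invoking $\|H^{1/2}T^{-1}(r)\|<\infty$ and $\|T^{-1}(r)H^{1/2}\|<\infty$ from Theorem~1.7; a symmetric argument with $A^*(\la)$ produces $r - Z_1 = H^{1/2}K_1$. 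The factorization (4.9) is an identity on every root chain of $E^+$ by construction, and extends to all of $\mathcal{H}_1$ in the $\mathcal{H}_{-1}$-topology because $\mathcal{L}^+:=\operatorname{span} E^+$ is dense in $\mathcal{H}_1$ by Corollary~4.4; property (b) is immediate from the construction.

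Uniqueness of the factorization with property (b) follows because any other divisor $Z'$ whose spectrum coincides with that of $Z$ in $\overline{\mathcal{C}^+}$ and inherits exactly the positive--type real eigen--pairs must have $E^+$ as its system of root elements (forced by the biorthogonality of mutually adjoint canonical systems together with the minimality of $E^+$), and hence must coincide with $Z$ on the dense subspace $\mathcal{L}^+$, thus on $\mathcal{D}(Z)=\mathcal{D}(Z')=\mathcal{H}_1$ by closedness. The hard part will be arranging the contour $\Gamma$ so that it can actually be closed at infinity and so that the integral estimate survives to $|\mu|\to\infty$ along $\mathbb{R}$: this demands the full combined strength of the sectorial resolvent estimate of Theorem~1.7 and of the Keldysh--Agmon resolvent growth, with Phragmén--Lindelöf bridging the sectors between the given rays.
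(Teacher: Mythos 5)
Your proposal takes a genuinely different route from the paper's. The paper never defines $Z$ by a Dunford-type contour integral of the resolvent $T^{-1}(\nu)$. Instead it works with the linearization $\mathbf{T}$ in the Krein space $\{\mathcal{H}^2,\mathbf{J}\}$: finite linear combinations $\mathbf{v}=\sum c_k^s\mathbf{x}_k^s$ of the vectors in (4.4) satisfy $\|v_2\|\le\|v_1\|$ by the biorthogonality (4.5), so $Kv_1:=v_2$ is a densely defined contraction; bounded invertibility of $K$ is then extracted from the minimality and completeness of $\mathcal{H}P(E^+)$ together with a finite-rank-perturbation-of-unitary argument. One sets $Z=F^{-1/2}K^{-1}S^{1/2}$, derives (4.11) and hence (4.12), and obtains the divisor $Z_1=-(FZ+G)F^{-1}$ from the factorization itself. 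The resolvent estimate (4.14) and the holomorphic semigroup then come \emph{after} the construction, from (4.13) and Proposition 4.1; crucially, at that stage $(\la-Z)^{-1}$ is already known to be holomorphic in $\mathcal{C}^-$, which is exactly what makes the Phragm\'en--Lindel\"of step in Proposition 4.1 legitimate. Your construction instead tries to manufacture $(\mu-Z)^{-1}$ directly from the contour integral, postponing all properties to a later reading-off. That is an attractive picture, but it has a gap that I do not think can be repaired as written.

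The gap is in the claim that the Keldysh--Agmon rays plus Phragm\'en--Lindel\"of ``allow $\Gamma$ to be closed at infinity.'' The set $\sigma(T)\cap\mathcal{C}^+$ is discrete but infinite, so there is no closed bounded contour enclosing it; the contour $\Gamma$ must escape to infinity. To control the integrand
$T^{-1}(\nu)(\nu F+G)x/(\nu-\mu)$ between the Keldysh--Agmon rays $\gamma_j$ you want Phragm\'en--Lindel\"of in the intermediate sectors --- but Phragm\'en--Lindel\"of applies only to functions holomorphic inside those sectors, and $T^{-1}(\nu)$ has (infinitely many) poles there, precisely the poles you are integrating around. So the step that your plan itself flags as ``the hard part'' is in fact not a technicality but a genuine obstruction: one cannot apply Phragm\'en--Lindel\"of before the poles in $\mathcal{C}^+$ have been absorbed into the yet-to-be-defined divisor, which is circular. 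This is exactly the difficulty the paper circumvents by first building $Z$ algebraically on the dense span of $E^+$ (Theorems 4.2, 4.3, Corollary 4.4), and only then writing $(\la-Z)^{-1}=T^{-1}(\la)(\la-\mathcal{H}PK_1)F$ and invoking Proposition 4.1 on a function that is \emph{a priori} holomorphic in $\mathcal{C}^-$. Your arguments for (a), (b), uniqueness, and the interpolation step in (c) are consistent with the paper's and would stand once the construction is fixed; the choice of $Z_1:=-(G+FZ)$ after the fact also matches the paper. But the central construction of $Z$ via the contour needs to be replaced (most naturally by the paper's $K$-contraction argument), or else the sketchy ``close $\Gamma$ at infinity'' must be justified by a wholly different mechanism, e.g.\ a Lidskii-type Abel-summability argument with contours chosen between successive shells of eigenvalues --- a substantially more technical enterprise than the proposal suggests.
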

\begin{proof} As in Theorem 4.2 we may assume that $T(\la)$ has only
positive eigenvalues, otherwise we have to work with $T(\la-r_0),\ r_0\gg
1$.

Let us consider the set of all finite linear combinations of elements
(\ref{12::4.4}). The elements of this set have representation (\ref{12::4.6}). If the
system $E^+$ is complete in $\mathcal{H}_1$ then the system $\mathcal{H}P(E^+)$ is
complete in $\mathcal{H}$. Therefore, Theorem 4.3 implies that the linear span of the
elements
$\{v_2\}$ in (\ref{12::4.6}) form a dense subset in $\mathcal{H}$ as well as the elements
$\{v_1\}$. Define the operator $K$ by
\medskip
\begin{equation}
Kv_1 = v_2. \label{12::4.10}
\end{equation}
It was shown in Theorem 4.2 that $\|v_2\|\le\|v_1\|$. Hence, $K$ is
densely defined on $\mathcal{H}$ and can be extended as a contraction on the
whole $\mathcal{H}$. The image of $K$ is dense in $\mathcal{H}$.

Denote by $E^0$ the subsystem of $E^+$ consisting of all elements
$x_k^s\in E^+$ corresponding to the non-real eigenvalues. Let $\mathcal{H}_0$
be the closure in $\mathcal{H}$ of the linear span generated by $E^0$.  Denote $\kappa
=\operatorname{codim}\mathcal{H}_0$
($\kappa$ coincides with the number of positive type eigenvalues counting
with geometric multiplicity). It is clear from (\ref{12::4.5}) that
$\|Kv_1\|=\|v_1\|$ for $v_1\in\mathcal{H}_0$, hence, $K(\mathcal{H}_0)$ is a closed
subspace in $\mathcal{H}$. By virtue of Corollary 4.4 the system $\mathcal{H}P(E^+)$ is
minimal and complete in $\mathcal{H}$. This implies that $\operatorname{codim}K(\mathcal{H}_0)=\kappa$.
Hence, there is a unitary operator $U$ in $\mathcal{H}$ such that the
restriction of $U$ onto $\mathcal{H}_0$ coincides with $K$, i.e. $U-K$ is of
finite rank. We noticed already that the image of $K$ is dense in
$\mathcal{H}$. Now, it follows from the Fredholm theorem
that $K$ is boundedly invertible on $\mathcal{H}$.

Denote $Z=F^{-1/2}K^{-1}S^{1/2}$, where $S=T(0)>0$. From (\ref{12::4.6}) and
(\ref{12::4.10}) we have
\begin{equation}
Zx_k^s=\la_kx_k^s + x_k^{s-1}, \qquad x_k^s\in E^+. \label {12::4.11}
\end{equation}
Since $x_k^s$ are the root elements of $T(\la)$, we have
$$
(FZ^2+GZ+S)x_k^s=0\qquad\mbox{for all} \ \, x_k^s\in E^+.
$$
The linear span of $E^+$ is dense in $\mathcal{H}_1$, hence,
\begin{equation}
-(FZ+G)Z=S, \label{12::4.12}
\end{equation}
where the equality is understood in the sense of operators acting from
$\mathcal{H}_1$ to $\mathcal{H}_{-1}$. Denoting $Z_1=-(FZ+G)F^{-1}$ we obtain from
(\ref{12::4.12}) the factorization
$$
T(\la)=(\la-Z_1)F(\la -Z).
$$
As $\mathcal{D}(S)=\mathcal{D}(H)$ we have $ S^{1/2} =K_2\mathcal{H}P$ with a bounded and
boundedly  invertible operator $K_2$. Hence, $Z=K_0\mathcal{H}P$ with
$K_0=F^{-1/2}K^{-1}K_2$. We have also
$$
Z_1=SH^{-1/2}K_0^{-1}F^{-1}=\mathcal{H}P K_2^*K_2K_0^{-1}F^{-1}=:H^{1/2}K_1.
$$
Thus (a) is proved. The assertion (b) follows from (\ref{12::4.11}). The
uniqueness of a factorization with property (b) follows from the
completeness of the system $E^+$. It remains to prove (c). To this
end we obtain from (\ref{12::4.9})
\begin{equation}
(\la -Z)^{-1} = T^{-1}(\lambda) (\la -\mathcal{H}P K_1)F. \label{12::4.13}
\end{equation}
Applying Theorem 1.7 we obtain
\begin{equation}
\|(\la -Z)^{-1}\| \le C|\la|^{-1}, \qquad \la\in\Lambda_{\phi},\quad |\la|>r_0.
\label{12::4.14}
\end{equation}
Moreover, $(\la -Z)^{-1}$ is holomorphic in $\mathcal{C}^-$. By virtue of (\ref{12::4.13})
and Proposition 4.1 $(\la-Z)^{-1}$ has a polynomial growth in $C^-$.
Consequently, (\ref{12::4.14}) holds for all $\la\in\Lambda_{\phi}\cup \mathcal{C}^-,
|\la|>r_0$. Thus, $iZ$ generates a holomorphic semigroup in $\mathcal{H}$.
Since $Z:\,\mathcal{H}_1\to\mathcal{H}$ is an isomorphism, $iZ$ possesses the same property
in $\mathcal{H}_1$. Applying the interpolation theorem we obtain assertion (c).
This ends the proof.
\end{proof}

\subsection{The Mandelstam hypothethis}
\setcounter{equation}{0}
\setcounter{theorem}{0}

In this section we solve the problem
\begin{equation}\label{12::5.1}
T\left(i\frac d{dy}\right)\,u(y) =0
\end{equation}
\begin{equation}\label{12::5.2}
u(0) =f
\end{equation}
\begin{equation}\label{12::5.3}
u(y)=u_+(y)+u_0(y),\quad u_0(y)\to 0\ \,\mbox{as}\ y\to\infty,
\end{equation}
where $u_+(y)$ is a linear combination of outgoing waves (\ref{0.3}).

Below we clarify the understanding of this problem and prove the
solvability in the classical sense and the uniqueness in the generalized
sense. We may say that (\ref{12::5.1})-(\ref{12::5.3}) is the half-range Cauchy problem
because instead of two initial conditions at $y=0$ we set only one,
but force a solution to behave at $\infty$ in a special way.

Further we denote by $C_2(a,b;\mathcal{H})$ the space of continuous on $(a,b)$
$\mathcal{H}_2$-valued functions whose derivatives $v'(y)$ and $v''(y)$ exist in
$\mathcal{H}_1-$ and $\mathcal{H}$-norm and belong to $C(a,b;\mathcal{H}_1)$ and $C(a,b;\mathcal{H})$,
respectively (the continuity at the ends of $(a,b)$ is not assumed!)

\begin{theorem} Let $T(\la)$ be strongly elliptic and assume that the
Keldysh-Agmon condition holds.  Then for any $\theta\in[0,1]$ and any
$f\in\mathcal{H}_{\theta}$ there exists a function $u(y)\in C_2(0,\infty;\mathcal{H})$
satisfying equation (\ref{12::5.1}),
having representation (\ref{12::5.3}) with exponentially decaying $\|u_0(y)\|_2$
and satisfying initial condition (\ref{12::5.2}) in the following sense
\begin{equation}
\lim_{y\to+0}\|u(y) -f\|_{\theta} =0. \label{12::5.4}
\end{equation}
\end{theorem}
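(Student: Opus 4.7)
\textit{Proof plan.} The strategy is to use the factorization of Theorem 4.5 to reduce the half-range Cauchy problem to an abstract first-order Cauchy problem $u'(y) = -iZ u(y)$, $u(0)=f$, for the single closed operator $Z$, and then to split the resulting semigroup orbit into its real-spectrum part (the outgoing waves) and an exponentially decaying remainder via a finite-rank Riesz projection.

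First I would apply Theorem 4.5 to obtain the factorization $T(\lambda) = (\lambda - Z_1) F (\lambda - Z)$, where $Z$ is closed on $\mathcal{H}$ with $\mathcal{D}(Z) = \mathcal{H}_1$, $\sigma(Z) \subset \overline{\mathbb{C}^+}$, $\sigma(Z) \cap \mathbb{R}$ coincides with the positive-type real eigenvalues of $T$, and $-iZ$ generates a holomorphic semigroup $U(y) := e^{-iZy}$ in every $\mathcal{H}_\theta$, $0 \leq \theta \leq 1$. For $f \in \mathcal{H}_\theta$ I would set $u(y) := U(y) f$. Strong continuity of $U(y)$ in $\mathcal{H}_\theta$ yields (5.4) at once, and the equation follows from the factorization: since $(i\,d/dy - Z) u(y) \equiv 0$ by construction, $T(i\,d/dy) u = (i\,d/dy - Z_1) F (i\,d/dy - Z) u = 0$. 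The regularity $u \in C_2(0,\infty;\mathcal{H})$ I would draw from the smoothing property of a holomorphic semigroup: for $y > 0$ one has $u(y) \in \bigcap_k \mathcal{D}(Z^k)$ with $u^{(k)}(y) = (-iZ)^k u(y)$ continuous in $y$ on each $[\varepsilon,\infty)$, and rewriting the identity $FZ^2 + GZ + S = 0$ (valid on $\mathcal{D}(Z^2)$ in $\mathcal{H}_{-1}$) as $S u(y) = -F u''(y) - i G u'(y)$ places $u(y)$ in $\mathcal{D}(S) = \mathcal{H}_2$ with the required continuous dependence.

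To produce the decomposition (5.3) I would combine Theorems 1.6 and 1.7: the spectrum of $Z$ in $\overline{\mathbb{C}^+}$ lies outside the double sector $\Lambda_\phi \cup B_\delta$ up to finitely many normal eigenvalues inside that region, and its real part is exhausted by the positive-type real eigenvalues $\lambda_1,\dots,\lambda_N$ of $T$. These two facts together yield a uniform spectral gap $\Im \lambda \geq \gamma > 0$ for every non-real eigenvalue of $Z$, with $\gamma := \min\{\delta \sin\phi,\ \Im \lambda \text{ over the finitely many exceptional eigenvalues}\}$. Let $P_+$ be the finite-rank Riesz projection of $Z$ onto the root subspace of its real spectrum, and set $u_+(y) := U(y) P_+ f$ and $u_0(y) := U(y)(I - P_+) f$. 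Then $u_+$ is automatically a finite superposition of outgoing waves $e^{-i\lambda_k y} v_k$; meanwhile the generator of $U$ on $(I - P_+)\mathcal{H}$ has spectrum in $\{\Re \mu \leq -\gamma\}$, so $\|u_0(y)\|_{\mathcal{H}} \leq C e^{-\gamma y} \|f\|_{\mathcal{H}_\theta}$. Splitting $U(y) = U(y/2) U(y/2)$ and invoking the holomorphic smoothing bound $\|Z^k U(y/2)\| = O(y^{-k})$ upgrades this to exponential decay in the graph norm of $Z^2$, and the factorization identity $S = -FZ^2 - GZ$ together with $\mathcal{D}(S) = \mathcal{H}_2$ then transfers the decay to the $\mathcal{H}_2$-norm.

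The hard part is precisely this last upgrade: converting the exponential decay of $\|u_0(y)\|_{\mathcal{H}}$ into exponential decay of $\|u_0(y)\|_2$ with a uniform rate. Establishing the gap $\gamma > 0$ and the projection $P_+$ is essentially routine once the resolvent analysis of Section 1 is in hand, and the equation, the initial condition, and the outgoing-wave structure of $u_+$ are formal consequences of the factorization and the semigroup calculus. The subtlety lies in negotiating the coexistence of the $y^{-k}$ singularity of the holomorphic semigroup near $y = 0$ with its exponential decay at infinity on $(I - P_+)\mathcal{H}$, which must be arranged carefully via the factorization to obtain a uniform bound on $\|Z^2 U(y)(I - P_+)\|$ for $y \geq 1$.
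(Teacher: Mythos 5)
Your proposal is essentially the paper's own proof. The paper realizes the solution as a two-contour integral (5.5), with $\gamma$ encircling only the real eigenvalues of $Z$ and $\Gamma$ running in the upper half-plane; your Riesz projection $P_+$ and the splitting $u=U(y)P_+f+U(y)(I-P_+)f$ are precisely the $\gamma$- and $\Gamma$-contributions, and your regularity and equation arguments via the identity $-(G+FZ)Z=S$ are the ones used there. Two minor remarks: first, the ``hard part'' you flag at the end (exponential $\mathcal{H}_2$-decay of $u_0$) is actually disposed of by the very shift argument $U(y)=U(1)\,U(y-1)$ you yourself outline — holomorphic smoothing gives $Z^2U(1)$ bounded once and for all, the analytic spectral-mapping theorem gives the growth bound from the spectral gap, and then $\|u_0(y)\|_2\lesssim\|Z^2u_0(y)\|+\|Zu_0(y)\|$ via $S=-(G+FZ)Z$ — so this is routine, not subtle; second, you write the generator as $-iZ$ (so $U(y)=e^{-iZy}$), whereas the paper's Theorem 4.5(c) states that $iZ$ generates the semigroup, and this sign must be reconciled with the claim $\sigma(Z)\subset\overline{\mathbb{C}^+}$ to ensure the decaying part actually decays (the paper itself vacillates between these conventions — compare Theorem 3.4(b)--(c) with Theorem 4.5(c) and the line ``$u'(y)=iZu(y)$'' in the proof of Theorem 5.1 — so the inconsistency you inherited is in the source, but in a self-contained write-up you should pin it down).
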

\begin{proof}
We find a solution of the problem in question by means of the operator
$Z$ which was constructed in Theorem 4.5. Namely, denote
\begin{equation}
u(y) =\frac 1{2\pi i} \left( \int\limits_{\gamma} +\int\limits_{\Gamma}\right)
e^{i\la y}(\la-Z)^{-1}f\,d\la, \label{12::5.5}
\end{equation}
where $\gamma$ surrounds only real eigenvalues of $Z$, while $\Gamma$ lies in the
upper half-plane and is asymptotically directed along the rays $arg\,\la=\delta$
and $arg\,\la=\pi-\delta$ with sufficiently small $\delta>0$. By virtue of
Theorem 4.5 $iZ$ generates a holomorphic semigroup in $\mathcal{H}_{\theta}$, hence
integral (\ref{12::5.5}) is well defined and (\ref{12::5.4}) holds (see [Yo, Ch. 9]).
Moreover, the functions $Z^ku^{(j)}(y)$ are well defined for $y>0,
k,j\ge0$ and are continuous in $\mathcal{H}_{\theta}\subset\mathcal{H}$. Since
$Z:\,\mathcal{H}_1\to\mathcal{H}$ is an isomorphism, we obtain that $u^{(j)}(y)$
are continuous for $y>0$ in $\mathcal{H}_1$. The equality
$$
-(G+FZ)Zx=Sx
$$
holds for all $x\in\mathcal{H}_1$, in particular, for $x\in\mathcal{H}_2$.
As $u'(y)=iZu(y)$ we obtain that $iGu'(y)-Fu''(y)\in C(0,\infty;\mathcal{H})$,
equation (\ref{12::5.1}) is satisfied in $\mathcal{H}$ and $u(y)\in C_2(0,\infty;\mathcal{H})$.
Representation (\ref{12::5.3}) with an exponentially decaying function
$u_0(y)$ follows from (\ref{12::5.5}).
\end{proof}
\begin{theorem} A generalized solution u(y) of problem
(\ref{12::5.1})--(\ref{12::5.3}), such that $u(y)\in L_1(0,\varepsilon;\mathcal{H}_1)$
with some $\varepsilon>0$, is unique.
\end{theorem}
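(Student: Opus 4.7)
By linearity it suffices to show that if $u(y)$ is a generalized solution of (5.1) with $u(0)=0$, satisfying the decomposition (5.3), and $u\in L_1(0,\varepsilon;\mathcal{H}_1)$, then $u\equiv 0$. My plan is to use the Laplace transform on $\mathbb{R}^+$ combined with the analytic argument from the proof of the half-range completeness theorem (Theorem 4.3). Since $u_+$ is a finite sum of bounded oscillatory exponentials and $u_0(y)\to 0$, the function $u$ is bounded on $\mathbb{R}^+$, so $\hat u(\lambda):=\int_0^\infty e^{i\lambda y}u(y)\,dy$ is well defined and holomorphic in the upper half-plane $\mathbb{C}^+$. Multiplying equation (5.1) by $e^{i\lambda y}$, integrating by parts twice, and using $u(0)=0$, one obtains the identity $T(\lambda)\hat u(\lambda)=-iFg$ for $\lambda\in\mathbb{C}^+$, where $g:=u'(0)$ is interpreted as an element of $\mathcal{H}_{-1/2}$ via a trace procedure; equivalently,
\begin{equation*}
\hat u(\lambda) = -i\,T^{-1}(\lambda)Fg,\qquad \lambda\in\mathbb{C}^+.
\end{equation*}

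On the other hand, a direct computation from (5.3) with $u_+(y)=\sum_k c_k e^{-i\lambda_k y}x_k$ (summation over positive-type real eigenpairs of $T(\lambda)$) gives
\begin{equation*}
\hat u(\lambda) = \sum_k \frac{i c_k x_k}{\lambda-\lambda_k} + \hat u_0(\lambda),
\end{equation*}
where $\hat u_0(\lambda)$ is holomorphic in $\mathbb{C}^+$ and satisfies $\|\hat u_0(\lambda)\|\le\sup_y\|u_0(y)\|/\mathrm{Im}\,\lambda$. Comparing the two representations pole by pole, I would deduce: (i) at every eigenvalue $\mu\in\sigma(T)\cap\mathbb{C}^+$ the residues of $T^{-1}(\lambda)Fg$ must vanish, yielding Keldysh-orthogonality of $Fg$ to the full Jordan chain at $\mu$ in the sense of representation (2.1); (ii) at every negative-type real eigenvalue of $T(\lambda)$ the analogous residues must vanish as well, since $\hat u$ has no pole there; (iii) at every positive-type real eigenvalue $\lambda_k$ the matching determines $c_k$ in terms of $g$ through the sign characteristic of Proposition 2.1.

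The main obstacle is to pass from these orthogonality conditions to $g=0$. Following the scheme of Theorem 4.3, I would form the scalar function
\begin{equation*}
\Phi(\lambda) := \frac{1}{\lambda-r_0}\bigl(H^{-1/2}T^{-1}(\lambda)H^{-1/2}\tilde g,\,\tilde g\bigr),\qquad \tilde g := H^{1/2}Fg,
\end{equation*}
where $r_0\in\mathbb{R}$ is chosen so that $T(r_0)>0$. The orthogonality conditions from step 2 together with the factor $(\lambda-r_0)^{-1}$ and the positivity $\varepsilon_k>0$ of the sign characteristics at the remaining real poles make $\Phi(\lambda)$ holomorphic in $\mathbb{C}^+$ apart from real simple poles with non-negative residues. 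The self-adjointness of $T(\lambda)$ on $\mathbb{R}$ allows $\Phi$ to be extended meromorphically to $\mathbb{C}$ by the Schwarz symmetry principle, while Theorem 1.7 gives polynomial decay of $\Phi$ at infinity inside the sector $\Lambda_\phi$. Liouville's theorem combined with the sign of the residues then forces $\Phi\equiv 0$, hence $\tilde g=0$, hence $g=0$; consequently $\hat u\equiv 0$ and $u\equiv 0$. The principal technical difficulty, which is the only point at which the hypothesis $u\in L_1(0,\varepsilon;\mathcal{H}_1)$ is genuinely used, is the rigorous definition of the trace $u'(0)\in\mathcal{H}_{-1/2}$ for generalized solutions of the stated regularity; this is what distinguishes the uniqueness proof from the existence construction of Theorem 5.1.
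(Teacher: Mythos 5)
Your proposal is, at its core, the same argument the paper uses: apply the one-sided Laplace transform to reduce the problem to showing that $T^{-1}(\lambda)g$ being holomorphic and bounded in $\mathbb{C}^+$ forces $g=0$, then run the scalar-function argument of Theorem 4.3 (a factor $(\lambda-r_0)^{-1}$, sign characteristics making real residues of one sign, Schwarz symmetry, decay from Theorem~1.7, Phragm\'en--Lindel\"of) to conclude $\Phi\equiv 0$.

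Two remarks on where your write-up drifts from the paper's proof, neither fatal.  First, the intermediate ``pole-by-pole'' comparison of the two representations of $\hat u(\lambda)$ is extra scaffolding the paper does not use, and step (ii) of it is not quite justified as stated: $\hat u_0(\lambda)$ is holomorphic only in the open upper half-plane and its boundary behaviour on $\mathbb{R}$ is not automatically controlled, so you cannot read off residue cancellation at negative-type real eigenvalues from ``$\hat u$ has no pole there''. The scalar function $\Phi$ (which you form anyway) makes this step unnecessary; the paper goes to it directly. Second, the claim that $g=u'(0)$ is a well-defined trace in $\mathcal{H}_{-1/2}$ overstates the regularity: under the hypothesis $u\in L_1(0,\varepsilon;\mathcal{H}_1)$ one does not get a classical trace of $u'$; the paper instead \emph{defines} $g$ as the $\varepsilon\to 0$ limit of the boundary term $e^{i\lambda\varepsilon}\bigl(Fu'(\varepsilon)-i(\lambda F-G)u(\varepsilon)\bigr)$, which exists because $T(\lambda)\hat u_\varepsilon(\lambda)\to T(\lambda)\hat u_0(\lambda)$ by the $L_1(0,\varepsilon;\mathcal{H}_1)$ assumption, and this limit naturally lives in $\mathcal{H}_{-1}$, not $\mathcal{H}_{-1/2}$. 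Once $\Phi$ is set up with the correct pairing (the paper's $(T^{-1}(\lambda)g,g)$ with $g\in\mathcal{H}_{-1}$ is exactly $(H^{1/2}T^{-1}(\lambda)H^{1/2}h,h)$ with $h=H^{-1/2}g\in\mathcal{H}$, matching your $\Phi$ up to relabelling), the rest of your argument proceeds as in the paper.
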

\begin{proof}
In Section 3 we assumed that generalized solutions $u(y)$ belong to
$ W_2^1(0,\infty;\mathcal{H})$.
Here our assumptions are weaker: we assume only
$u(y)\in W_2^1(\varepsilon,\infty;\mathcal{H})$ and $u(y)\in L_1(0,\varepsilon;\mathcal{H}_1)$
for any $\varepsilon>0$. Certainly, if $u(y)\in W_2^1(0,\infty;\mathcal{H})$
then $u(y)\in L_2(0,\varepsilon;\mathcal{H}_1)$ and $u(y)\in L_1(0,\varepsilon;\mathcal{H}_1)$.
By the definition of a generalized solution, the equation
$$
-Fu''(y)=-iFu'(y)+Su(y)
$$
is satisfied in the sense of
$\Wr^{{\tiny\rm o}\phantom{.}\phantom{.}\phantom{.}\phantom{.}\phantom{.}}(\varepsilon,\infty;\mathcal{H})$.
The right hand side belongs to $L_2(\varepsilon,\infty;\mathcal{H}_{-1})$, hence,
so does the left hand side.

Suppose that $\|u(y)\|\to 0$ as $y\to 0$. For $\lambda\in \mathcal{C}^+$ we have
\begin{equation}\label{12::5.6}
\begin{array}{c}
0=\int\limits_\varepsilon^\infty T_\omega\left( i\frac{d}{dy}\right) u(y)
e^{i\lambda y}dy=\\ \phantom{.}\\
=e^{i\lambda\varepsilon}\left(Fu'(\varepsilon)-i(\lambda
F-G)u(\varepsilon)\right)+T_\omega(\lambda)\hat u_\varepsilon(\lambda)=0,
\end{array}
\end{equation}
where
$$
\hat u_\varepsilon(\lambda)=\int\limits_\varepsilon^\infty u(y)e^{i\lambda y}dy.
$$
We consider (\ref{12::5.6}) as an equality in $\mathcal{H}_{-1}$. Since $u(y)$ is locally
integrable at zero as a function with values in $\mathcal{H}_1$ we can take the
limit as $\varepsilon\to 0$ and obtain
$$
T_\omega(\lambda)\hat u_0(\lambda)=-Fu'(0)=g\in\mathcal{H}_{-1}.
$$
Therefore, $\hat u_0(\lambda)=T_\omega^{-1}(\lambda)g$. Let us consider the
function
$$
F(\lambda)=\frac{1}{\lambda+r_0}\left(T_\omega^{-1}(\lambda)g,g\right ).
$$
It follows from (\ref{12::4.10}) that $F(\lambda)$ is bounded in $\mathcal{C}^+$ and has
finitely many poles on $\mathbb{R}$ with positive residues provided $r_0$ is
sufficiently large. Repeating the arguments of Theorem 4.9 we obtain
$F(\lambda)\equiv 0$. Hence, $\hat u(\lambda)\equiv 0$ and $u(y)\equiv0$.
\end{proof}

\subsection{Application to the Lame system of the elasticity theory}
\setcounter{equation}{0}
\setcounter{theorem}{0}

Small oscillations of an elastic medium are described by the system of
equations (see the books of {\sc Landau} and {\sc Lifshitz}
\cite{12::LL} or {\sc Kupradze} et.al. \cite{12::Ku})
$$
\rho\frac{\partial ^2w}{\partial t^2}+Lw=0,
$$
where $w=w(t,x)=(w_1,w_2,w_3)$ is the displacement vector, $\rho=\rho(x)$
is the density of the medium, $L$ is the operator matrix with the entries
$$
L_{kj}(D)=(\hat\lambda+\hat\mu)D_kD_j+\delta_{kj}\hat\mu (D_1^2+D_2^2+D_3^2),
\quad D_k=i\frac{\partial}{\partial x_k},
$$
and $\hat\lambda,\hat\mu$ are the Lame constants. We suppose that the space
variable\break$x=(x_1,x_2,x_3)$ belongs to the wave-guide domain
$Q=[0,\infty) \times \Omega$ where $\Omega$ is a bounded domain in the plane
$(x_2,x_3)$. Separating the time variable $w=ue^{i\omega t}$ we obtain the
stationary equation with given frequency $\omega$
\begin{equation}
(L-\omega^2\rho)u=0.\label {6.1}
\end{equation}
We have to impose with this equation boundary and initial conditions.
We pose on the lateral surface of the half-cylinder $Q$ homogeneous
conditions, since $Q$ is a wave-guide domain.
For simplicity let us consider the Dirichlet
boundary conditions
\medskip
\begin{equation}
u(x_1,x_2,x_3)|_{(x_2,x_3)\in\partial\Omega}=0\quad\forall x_1\ge 0.
\label {6.2}
\end{equation}
At the base of $Q$ we assume that
\begin{equation}
u(0,x_2,x_3)=\phi(x),\label{6.3}
\end{equation}
where $\phi(x)$ is a given function. We rewrite equation (\ref{6.1})
in the form
\begin{equation}
T_{\omega}\left(i\frac{d}{dy}\right)u=
-F\frac{d^2u}{dy^2}+iG\frac{du}{dy}+(H-\omega^2R)u=0, \label{6.4}
\end{equation}
where $y=x_1$,
$$
F=\left(\begin{array}{ccc} \hat\la+2\hat\mu&0&0\\0&\hat\mu&0\\0&0&\hat\mu\end{array}\right),
\quad G=i(\hat\la+\hat\mu)\left(\begin{array}{ccc} 0&D_2&D_3\\mathcal{D}_2&0&0\\mathcal{D}_3&0&0\end{array}\right),
$$
$$
\begin{array}{c}
H=\left(\begin{array}{ccc}
\mu\mathcal{D}elta&0&0\\0&\hat\mu\mathcal{D}elta+(\hat\la+\hat\mu)D_2^2&(\hat\la+
\hat\mu)D_2D_3\\
0&(\hat\la+\hat\mu)D_2D_3&\hat\mu\mathcal{D}elta+(\hat\la+\hat\mu)D_3^2\end{array}\right),\\
\phantom{.}\\
R=\rho(x)I,\quad \mathcal{D}elta=-(D_2^2+D_3^2),
\end{array}
$$
and $I$ is the identity matrix. We suppose that the operators $F$,$G$,$H$
act in the Hilbert space $\mathcal{H}=\left[L_2(\Omega)\right]^3$.

We have to specify a domain of the main operator $H$. Taking into account
boundary conditions (\ref{6.2}) we define
$$
\mathcal{D}(H)=\left\{v|\ v\in\left[W_2^2(\Omega)\right]^3,v|_{\partial\Omega}=0
\right\},
$$
$$
\mathcal{D}(G)=\left\{v|\ v\in\left[W_2^1(\Omega)\right]^3,v|_{\partial\Omega}=0
\right\}=:\left[\Wo^{{\tiny\rm o}\phantom{.}\phantom{.}}(\Omega)\right]^3,
$$
where $\left[W_2^k(\Omega)\right]^3$ are the Sobolev spaces of vector
functions on $\Omega\subset\mathbb{R}^2$.

We notice that the operator $H$ is positive, since
$$
(Hv,v)=\int\limits_\Omega Hv(x)\overline{v(x)}dx=\hat\mu\left(
\sum\limits_{j=1}^3\|D_2v_j\|^2+\|D_3v_j\|^2\right)+
$$
$$
+(\hat\la+\hat\mu)\|D_2v_2+D_3v_3\|^2,
$$
where $\|f\|^2=\int\limits_\Omega|f|^2dx$. Taking into account boundary
condition (\ref{6.2}) and the Friedrichs inequality we obtain
$(Hv,v)\ge\varepsilon\|v\|^2$ with some $\varepsilon>0$.
The operator $G$ is symmetric, as
$$
(Gv,v)=2(\hat\la+\hat\mu)Re(iD_2v_2+iD_3v_3,v_1).
$$

The operator $F$, obviously, is uniformly positive and bounded provided\break
$\rho(x)\ge\varepsilon>0$ is a measurable bounded function on $\Omega$.

It is well-known (see \cite{12::Ag} or \cite{12::Tr}, Ch. 5) that $H+cI$ is invertible in
$[L_2(\Omega)]^3$ provided $c\ge 0$ and $\Omega$ is a smooth domain. Therefore,
$H$ is a self-adjoint operator if $\Omega$ is smooth. This is not always true,
if $\Omega$, for instance, has corner points (see examples in the paper of
{\sc Kondratiev} and {\sc Shkalikov} \cite{12::KoS}). In
this case let us consider the Friedrichs extention $H_F$ of the operator $H$.
It is known (see \cite{12::RN}, Ch. 8) that it is the only extension which possesses
the property $\mathcal{D}(H_F^{1/2})=\mathcal{D}(G)$.

Denoting $H_F=H$ we remark that all the assumptions on the operator
coefficients claimed at the beginning of Section 1 are fulfilled.

We note that in the case of a non-smooth domain $\Omega$
there is no precise information on $\mathcal{D}(H_F)$, however, we do know that
$$
\mathcal{D}(H^{1/2}_F)=\mathcal{D}(G)=\left[\Wp^{{\tiny\rm o}\phantom{.}\phantom{.}}(\Omega)\right]^3.
$$
Actually, the domain of $H_F(=H)$ is not involved in our considerations,
the know\-ledge of $\mathcal{D}(H^{1/2})=\mathcal{H}_1$ is the only important information
which we need.

Now let us prove that the pencil corresponding to equation (\ref{6.4}) is
regular elliptic
in the case of a smooth domain and strongly elliptic otherwise.

\begin{proposition}
The pencil $T_{\omega}(\lambda)$ generated by the Lame system and the
Dirichlet boundary conditions is strongly elliptic.
\end{proposition}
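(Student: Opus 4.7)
The plan is to verify Definition~1.3 directly by taking $V = \omega^2 R$: this operator is positive since $\rho(x) \geq \varepsilon > 0$, and $H$-compact since $R = \rho(x)I$ is bounded on $\mathcal{H}$ while $H^{-1}$ is compact (the embedding $[\Wo^{{\tiny\rm o}\phantom{.}\phantom{.}}(\Omega)]^3 \hookrightarrow [L_2(\Omega)]^3$ being compact on any bounded $\Omega$, via Rellich). The proposition therefore reduces to establishing the quadratic-form inequality
\[
(L(\lambda) v, v) \;\geq\; \varepsilon \bigl( \lambda^2 \|v\|^2 + (Hv, v) \bigr), \qquad \lambda \in \mathbb{R}, \ v \in \mathcal{D}(G),
\]
for the $\omega$-independent form $L(\lambda) := \lambda^2 F + \lambda G + H$.

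The crucial observation is that $(L(\lambda) v, v)$ coincides with the Lame strain energy evaluated on the plane-wave ansatz $u_\lambda(x_1, x_2, x_3) := v(x_2, x_3) e^{-i\lambda x_1}$ (integrated over one $x_1$-period); namely,
\[
(L(\lambda) v, v) \;=\; \int_\Omega \bigl[ \hat\mu\, |\nabla u_\lambda|^2 + (\hat\lambda + \hat\mu)\, |\operatorname{div} u_\lambda|^2 \bigr]\,dx_2\,dx_3.
\]
Substituting $u_\lambda$ and matching coefficients of $\lambda^2, \lambda^1, \lambda^0$ reproduces exactly the explicit matrix expressions for $F$, $G$, $H$ stated before the proposition. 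Classical strong ellipticity of the Lame system, $\hat\mu > 0$ and $\hat\lambda + 2\hat\mu > 0$, yields the pointwise coercivity $\hat\mu |\xi|^2 |\eta|^2 + (\hat\lambda + \hat\mu)(\xi \cdot \eta)^2 \geq c |\xi|^2 |\eta|^2$ for all $\xi, \eta \in \mathbb{R}^3$, which upon Parseval in the $(x_2, x_3)$ variables gives
\[
\int_\Omega \bigl[\hat\mu\, |\nabla u_\lambda|^2 + (\hat\lambda + \hat\mu)\, |\operatorname{div} u_\lambda|^2 \bigr]\,dx_2\,dx_3 \;\geq\; c\, \bigl( \lambda^2 \|v\|^2 + \|\nabla_{x_2, x_3} v\|^2 \bigr).
\]
On the other hand, the explicit formula for $(Hv, v)$ combined with the trivial estimate $\|D_2 v_2 + D_3 v_3\|^2 \leq 2 \|\nabla_{x_2, x_3} v\|^2$ yields the reverse bound $(Hv, v) \leq (2\hat\lambda + 3\hat\mu)\, \|\nabla_{x_2, x_3} v\|^2$. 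Combining these two inequalities gives the desired estimate first for $v \in [C_0^\infty(\Omega)]^3$, and then by density on the whole form domain $\mathcal{D}(G)$.

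The main obstacle is to handle the non-smooth case uniformly: when $\partial\Omega$ is only assumed bounded, $H$ is not self-adjoint on the naive domain and must be replaced by its Friedrichs extension $H_F$, whose operator domain is not explicitly available. The remedy is to argue entirely at the level of quadratic forms on the form domain $\mathcal{D}(H_F^{1/2})$, which by the Friedrichs construction coincides with $\mathcal{D}(G) = [\Wo^{{\tiny\rm o}\phantom{.}\phantom{.}}(\Omega)]^3$; the form identity above and all subsequent estimates are intrinsic to this Sobolev-type space and independent of any regularity of $\partial\Omega$, so the argument extends verbatim to the non-smooth setting.
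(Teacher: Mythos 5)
Your proof is correct and takes essentially the same approach as the paper's: both establish the bound $(T_\omega(\lambda)v,v) \ge \hat\mu\bigl(\lambda^2\|v\|^2 + \|\nabla_{x_2,x_3}v\|^2\bigr) - \omega^2(\rho v,v)$ by discarding the nonnegative $(\hat\lambda+\hat\mu)$-part of the quadratic form (the paper does this by explicitly completing the square $(\hat\lambda+\hat\mu)\|\lambda v_1 + D_2v_2 + D_3v_3\|^2 \ge 0$; you phrase the same step as dropping the $(\hat\lambda+\hat\mu)|\operatorname{div}u_\lambda|^2$ term of the plane-wave strain energy), then convert $\|\nabla_{x_2,x_3}v\|^2$ to $(Hv,v)$ via the elementary two-sided comparison, and finally invoke the Rellich embedding to make $\omega^2 R$ an $H$-compact perturbation and conclude by Proposition~1.4. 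The strain-energy reinterpretation is a nice conceptual gloss, and your explicit reverse bound $(Hv,v)\le(2\hat\lambda+3\hat\mu)\|\nabla_{x_2,x_3}v\|^2$ fills in a step the paper leaves implicit; the phrase ``integrated over one $x_1$-period'' would be better said as ``per unit $x_1$-length'' since $e^{-i\lambda x_1}$ need not be periodic, but this is only presentational and the identity is used correctly.
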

\begin{proof}
We have
\begin{equation}\label{6.5}
\begin{array}{c}
(T_{\omega}(\lambda)v,v)=\la^2\left[(\hat\la+2\hat\mu)\|v_1\|^2+
\hat\mu(\|v_2\|^2+\|v_3\|^2)\right]+\\ \phantom{.}\\
+2\la(\hat\la+\hat\mu)Re\left[(iD_2v_2+iD_3v_3,v_1)\right]+
(\hat\la+\hat\mu)\|D_2v_2+D_3v_3\|^2+\\ \phantom{.}\\
+\hat\mu\sum\limits_{j=1}^3\left(\|D_2v_j\|^2+\|D_3v_j\|^2\right)-
\omega^2\|\rho^{1/2}v\|^2\ge\\ \phantom{.}\\
\ge\hat\mu\la^2\sum\limits_{j=1}^3\left(\|v_j\|^2+\|D_2v_j\|^2+
\|D_3v_j\|^2\right)-\omega^2\|\rho^{1/2}v\|^2\ge\\ \phantom{.}\\
\ge\varepsilon[\la^2(v,v)+(Hv,v)]-\omega^2(\rho v,v),\quad \la\in\mathbb{R},
\quad\|v\|^2=\|v_1\|^2+\|v_2\|^2.
\end{array}
\end{equation}
It is known (see \cite{12::Tr}, Ch 4.10) that the embedding
$I:[\Wo^{{\tiny\rm o}\phantom{.}\phantom{.}}(\Omega)]^3\to[L_2(\Omega)]^3$ is compact for
any bounded domain
$\Omega$ (we pay attention that if we consider, say, Neuman boundary
conditions, then we have to assume in addition that $\Omega$ is a
Lipshitzian domain).
By virtue of
Proposition 1.4 we obtain that $T_{\omega}(\la)$ is strongly elliptic.
\end{proof}

We remark
that Proposition 6.1
can be also proved in the case of an unbounded domain
$\Omega$ if we assume
$\rho(x)\to 0$ as $|x|\to\infty$.

\begin{proposition} The pencil $T_\omega(\lambda)$ is regular
elliptic if a domain $\Omega$ is smooth. Moreover, estimate (\ref{12::1.3o}) holds
asymptotically outside any double sector containing the imaginary axis and
the Keldysh-Agmon condition holds.
\end{proposition}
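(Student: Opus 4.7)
The plan is threefold: establish regular ellipticity; extend the resolvent estimate to large sectors around the real axis; verify the Keldysh--Agmon condition.

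\textbf{Step 1 (Regular ellipticity).} For smooth $\Omega$, the classical Agmon--Douglis--Nirenberg theory identifies the Dirichlet realisation $H$ as a self-adjoint operator with $\mathcal{D}(H) = [W_2^2(\Omega)]^3 \cap \mathcal{D}(G) = \mathcal{H}_2$. Starting from the coercivity (6.5), for real $\lambda$ with $|\lambda| > r_0$ chosen large enough to absorb $\omega^2(Rv,v)$, one obtains $(T_\omega(\lambda)v, v) \ge (\varepsilon/2)(\lambda^2\|v\|^2 + \|H^{1/2}v\|^2)$. Cauchy--Schwarz then supplies $\lambda^2\|v\| + |\lambda|\,\|H^{1/2}v\| \le C\|T_\omega(\lambda)v\|$, yielding the first two summands of (1.3$'$). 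The third summand, $\|HT^{-1}(\lambda)\|$, follows by rearranging $Hv = T_\omega(\lambda)v - \lambda^2 Fv + i\lambda Gv + \omega^2 Rv$ and invoking $\|Gv\| \le c\|H^{1/2}v\|$, which holds since $\mathcal{D}(G) \supset \mathcal{D}(H^{1/2})$.

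\textbf{Step 2 (Enlarged sector).} I would extend (1.3$'$) to every sector $\Lambda_\phi$ with $\phi < \pi/2$. For $\lambda = re^{i\theta}$ the rotated form satisfies
\[
\Re\bigl(e^{-2i\theta}(T_\omega(\lambda)v, v)\bigr) = r^2(Fv,v) + r\cos\theta\,(Gv,v) + \cos(2\theta)\,\bigl((H - \omega^2 R)v, v\bigr),
\]
and combining this with its imaginary part by a fixed linear combination (the standard Agmon rotation) yields the coercivity $\ge \varepsilon_1(r^2\|v\|^2 + \|H^{1/2}v\|^2)$ whenever $|\theta| \le \pi/2 - \varepsilon$, and symmetrically near $\pi$. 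This reflects the \emph{parameter-ellipticity} of the Lame symbol along every ray not aligned with the imaginary axis. With this improved lower bound the argument of Step~1 applies verbatim, giving (1.3$'$) uniformly on $\Lambda_\phi$ for any $\phi < \pi/2$.

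\textbf{Step 3 (Keldysh--Agmon).} The operator $H$ has compact resolvent by the Rellich--Kondrashov embedding $W_2^1(\Omega) \hookrightarrow L_2(\Omega)$ for bounded $\Omega$; hence $H$ has discrete spectrum, and Weyl's asymptotic formula for a second-order elliptic self-adjoint system on a bounded planar domain gives $\lambda_j(H) \asymp j$, verifying (a) with $p = 1$. For condition (b) with $p < 2$, one chooses finitely many rays inside the enlarged sector of Step~2, say in $\{\arg\lambda \in (\varepsilon, \pi - \varepsilon)\}$, with consecutive angular gaps less than $2\pi/p = 2\pi$ (trivially satisfied); the polynomial bound on $\|H^{-1/2} T_\omega^{-1}(\lambda) H^{-1/2}\|$ along these rays is immediate from (1.3$'$).

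\textbf{Main obstacle.} Step~2 is the crux. Theorem~1.7 alone supplies only a small sector $\Lambda_\phi$, whereas the Keldysh--Agmon condition (and the physical applications in subsequent sections) demand essentially a full half-plane minus a neighbourhood of the imaginary axis. This enlargement does not follow from abstract strong ellipticity; it rests on the structural positivity of the Lame energy form after Agmon rotation, a property specific to the elasticity system that must be verified by direct computation on the Lame symbol.
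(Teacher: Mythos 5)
Your Step 2 is the crux, as you recognize, but the resolution you sketch cannot work, and this is precisely where your route parts company with the paper's (and where the hypothesis that $\Omega$ be smooth becomes indispensable).

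The quadratic-form ("Agmon rotation") argument you propose is essentially what the paper carries out in Proposition 6.3 for non-smooth domains, and there it provably yields only a double sector of half-angle slightly exceeding $\pi/4$, and only under the additional constraint $\hat\mu > \sqrt{2}\hat\lambda$ on the Lam\'e constants. The obstruction is the coupling term $2\lambda(\hat\lambda+\hat\mu)\,\Re(iD_2v_2 + iD_3v_3, v_1)$ in (6.5): for real $\lambda$ it is absorbed by completing the square against the $(\hat\lambda+\hat\mu)\lVert v_1\rVert^2$ and $(\hat\lambda+\hat\mu)\lVert D_2v_2 + D_3v_3\rVert^2$ contributions, but after rotating $\lambda = re^{i\theta}$ the available positivity shrinks with $\cos 2\theta$, and for general Lam\'e constants the estimate $\Re\, e^{-i\alpha}(T_\omega(\lambda)v,v) \geq \varepsilon(r^2\lVert v\rVert^2 + (Hv,v))$ breaks down well before $\theta$ reaches $\pi/2$. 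No fixed linear combination of real and imaginary parts rescues this: the discriminant condition that makes the form definite picks up a $\theta$-dependent degradation that the naive rotation cannot undo. So this approach cannot produce estimate (1.3') on $\Lambda_\phi$ for every $\phi < \pi/2$, which is exactly what Proposition 6.2 asserts.

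What the paper actually does is bypass the quadratic form entirely and compute the principal characteristic symbol: writing $\xi_k = -iD_k$, one finds $\det T_0(\lambda) = \hat\mu^2(\hat\lambda + 2\hat\mu)(\lambda^2 + |\xi|^2)^3$, whose zeros in $\lambda$ lie exactly on the imaginary axis for every choice of Lam\'e constants. Hence the Lam\'e system is parameter-elliptic (in the Agmon--Nirenberg / Agranovich--Vishik sense) along every ray not on the imaginary axis, and the Dirichlet conditions always satisfy the Shapiro--Lopatinskii condition for elliptic systems. The a priori estimate (1.3') outside any double sector containing the imaginary axis then comes from the Agmon--Nirenberg / Agranovich--Vishik machinery, which is where the smoothness of $\Omega$ enters crucially. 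Self-adjointness of the pencil gives $\lVert HT_\omega^{-1}(\lambda)\rVert + \lVert T_\omega^{-1}(\lambda)H\rVert \leq C$, interpolation gives $\lVert H^{1/2}T_\omega^{-1}(\lambda)H^{1/2}\rVert \leq C$, and Weyl's law supplies $p = 1$ for the Keldysh--Agmon condition. This route gives the full sector with no constraint on $\hat\lambda, \hat\mu$ beyond positivity; the quadratic-form route cannot, and mistaking the one for the other would lose exactly the content that distinguishes Proposition 6.2 (smooth domain, no Lam\'e constraint) from Proposition 6.3 (general domain, $\hat\mu > \sqrt{2}\hat\lambda$).
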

\begin{proof}(Cf.\cite{12::KO2}).
Denoting $-iD_k=\xi_k$, let us calculate the principal
characteristic symbol of the Lame system (the principal symbol does not
depend on $\omega$ and we can assume $\omega=0$). We have
$$
\det T_0(\lambda)=det\left[\lambda^2 \left(
\begin{array}{ccc}\hat\lambda+2\hat\mu&0&0\\0&\hat\mu&0\\0&0&\hat\mu
\end{array}\right)+\lambda(\hat\lambda+\hat\mu)\left(\begin{array}{ccc}
0&\xi_2&\xi_3\\\xi_2&0&0\\\xi_3&0&0\end{array}\right)+\right.
$$
$$
\left. +\left(\begin{array}{ccc}\hat\mu|\xi|^2&0&0\\0&\hat\mu|\xi|^2+
(\hat\lambda+\hat\mu)\xi_2^2&(\hat\lambda+\hat\mu)\xi_2\xi_3\\0&
(\hat\lambda+\hat\mu)\xi_2\xi_3&\hat\mu|\xi|^2+(\hat\lambda+\hat\mu)\xi_3^2
\end{array}\right)\right]=\hat\mu^2(\hat\lambda+2\hat\mu)(\lambda^2+|\xi|^2)^3,
$$
where $|\xi|^2=\xi_2^2+\xi_3^2$.

Hence, the ellipticity condition in the sense of \cite{12::AN} and \cite{12::AV} holds
for all $\lambda$ not belonging to the imaginary axis.
It is well known (see, e.g., \cite{12::LM}) that the Dirichlet boundary
condition satisfies the Lopatinskii condition for all elliptic systems.
Hence, the problem (\ref{6.1}), (\ref{6.2}) is regular elliptic and
according to the results of \cite{12::AN} and \cite{12::AV}
estimate (\ref{12::1.3o}) holds outside arbitrary small sector containing the
imaginary axis. Since $T_\omega(\lambda)$ is a seladjoint pencil, estimate
(\ref{12::1.3o}) implies
$$
||HT^{-1}_{\omega}(\la)||+||T^{-1}_{\omega}(\la)H||\le const,
$$
and, by virtue of the interpolation theorem, we have
\begin{equation}
||H^{1/2}T^{-1}_{\omega}(\la)H^{1/2}||\le const,\quad |\la|>r_0,
\label{6.6c}
\end{equation}
at any ray in $\mathcal{C}$ with exception of the imaginary exis. According to the
Weyl asymptotic formula for eigenvalues of the elliptic operators,
we have the estimate (\ref{12::4.2}) with $p=1$. Hence, if $\Omega$
is a smooth domain then the Keldysh-Agmon condition for the Lame system
is valid.
\end{proof}

In the case of a non-smooth domain we are able to prove the validity of the
Keldysh-Agmon condition only under additional constraints on the Lame
constants.
\begin{proposition}
Let $\Omega$ be a bounded domain in $\mathbb{R}^2$. If $\hat\mu>\sqrt{2}\hat\la$ then
estimate(\ref{12::1.8}) is satisfied in a double sector $\Lambda_\phi$ with
some $\phi>\pi/4$ and the Keldysh-Agmon condition holds.
\end{proposition}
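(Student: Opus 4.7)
The plan is to follow the same template as Proposition~6.2 but replace the Agmon--Douglis--Nirenberg a priori estimates (which require smooth $\partial\Omega$) with a direct form-coercivity argument in the spirit of Theorem~1.7, starting from the real-axis strong ellipticity already proved in Proposition~6.1. Once the resolvent estimate~(1.3o) is established in a double sector $\Lambda_\phi$ with $\phi > \pi/4$, the Keldysh--Agmon condition follows almost trivially: the Weyl asymptotic for a second-order elliptic operator on a bounded 2D domain gives $p=1$ in~(4.2), so that $2\pi/p = 2\pi$ and the angular gap constraint in the condition becomes vacuous once we have at least one ray in $\mathbb{C}^+$ (and one in $\mathbb{C}^-$) along which the resolvent has polynomial growth.

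First I would apply identity~(1.8) for $\lambda = re^{i\theta}$ to write
\[
\Re(T_\omega(\lambda)v,v) = \cos 2\theta\cdot r^2(Fv,v) + \cos\theta\cdot r(Gv,v) + (Hv,v) - \omega^2(Rv,v),
\]
and insert the Lamé-specific expressions from the proof of Proposition~6.1:
\[
(Fv,v) = (\hat\lambda + 2\hat\mu)\|v_1\|^2 + \hat\mu\|v_\perp\|^2,
\]
\[
|(Gv,v)| \leq 2(\hat\lambda+\hat\mu)\,\|D_2v_2+D_3v_3\|\,\|v_1\|,
\]
\[
(Hv,v) = \hat\mu\sum_{j=1}^3\bigl(\|D_2v_j\|^2+\|D_3v_j\|^2\bigr) + (\hat\lambda+\hat\mu)\|D_2v_2+D_3v_3\|^2.
\]
After bounding the cross term by Cauchy--Schwarz and splitting through a weighted Young inequality with a free parameter $\alpha > 0$, the positive contributions in $\cos 2\theta \cdot r^2(Fv,v)$ and $(Hv,v)$ must dominate the two resulting negative terms (one proportional to $r^2\|v_1\|^2$ and one to $\|D_2v_2+D_3v_3\|^2$), and similarly for the $v_\perp$ block.

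The main obstacle is choosing $\alpha$ so that the system of inequalities obtained from the absorption is simultaneously solvable for $|\theta|$ as large as possible; this feasibility reduces to an algebraic inequality among $\hat\lambda + 2\hat\mu$, $\hat\mu$ and $\hat\lambda + \hat\mu$, and the hypothesis $\hat\mu > \sqrt{2}\,\hat\lambda$ is precisely the sharp threshold under which the optimal Young parameter produces a coercivity bound valid on a sector with half-opening $\phi > \pi/4$. The transverse diagonal block is controlled by the uniformly positive Sobolev pieces $\hat\mu(\|D_2v_j\|^2+\|D_3v_j\|^2)$ combined with the Friedrichs inequality, and the relatively compact term $\omega^2(Rv,v)$ is absorbed on the right-hand side for $|\lambda| > r_0$, exactly as in the passage from (1.5) to (1.4) in Proposition~1.4.

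Once the sectorial coercivity $\Re(T_\omega(\lambda)v,v) \geq \varepsilon\bigl(r^2\|v\|^2 + (Hv,v)\bigr)$ is established on $\Lambda_\phi$ for $|\lambda| > r_0$, the resolvent bounds on $T^{-1}(\lambda)$, $H^{1/2}T^{-1}(\lambda)$ and $H^{1/2}T^{-1}(\lambda)H^{1/2}$ follow by exactly the same inversion argument as in the proof of Theorem~1.7 (zero lies outside the numerical range of the conjugated pencil $H^{-1/2}T_\omega(\lambda)H^{-1/2}$, giving invertibility together with the uniform norm bound). Finally, for the Keldysh--Agmon condition with $p = 1$ it suffices to choose any two rays inside $\Lambda_\phi \cap \mathbb{C}^+$ on which (1.3o) supplies the required polynomial bound; the angular constraints $\max(\theta_1, \theta_{j+1}-\theta_j, \pi - \theta_N) < 2\pi$ are then automatic, which completes the verification.
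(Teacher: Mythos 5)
Your overall plan---mimic Theorem 1.7's form-coercivity argument, avoid boundary smoothness, deduce the resolvent bound, and close with Weyl asymptotics giving $p=1$ in the Keldysh--Agmon condition---is the right shape, and the last step (two rays inside $\Lambda_\phi\cap\mathbb{C}^+$ suffice because the angular gaps stay below the Phragm\'en--Lindel\"of threshold exactly when $\phi>\pi/4$) is essentially what the paper does. However, the central estimate you propose cannot work: you are bounding $\Re\bigl(T_\omega(\lambda)v,v\bigr)$ from below, and with $\lambda=re^{i\theta}$ the coefficient of the $r^2(Fv,v)$ term is $\cos 2\theta$, which vanishes at $\theta=\pi/4$ and is negative beyond it. At $\theta=\pi/4$ your quantity reduces to $\tfrac{\sqrt2}{2}\,r\,(Gv,v)+(Hv,v)-\omega^2(Rv,v)$, and since $(Gv,v)$ is an indefinite real form (for the Lam\'e system it is $-2(\hat\lambda+\hat\mu)\Im(D_2v_2+D_3v_3,v_1)$), one can pick $v$ with $(Gv,v)<0$ so that the expression tends to $-\infty$ as $r\to\infty$. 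No tuning of a Young parameter $\alpha$ can rescue a target inequality $\geq\varepsilon\,r^2\|v\|^2$ whose left side is not even bounded below; the hypothesis $\hat\mu>\sqrt2\,\hat\lambda$ plays no role in this failure.

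The fix, and the paper's actual device, is to estimate the rotated form $\Re\bigl(e^{-i\theta}\,T_\omega(\lambda)v,v\bigr)$ instead. Multiplying the pencil by $e^{-i\theta}$ before taking the real part gives
\[
\Re\bigl(e^{-i\theta}T_\omega(\lambda)v,v\bigr)
=\cos\theta\cdot r^2(Fv,v)+r(Gv,v)+\cos\theta\bigl((Hv,v)-\omega^2(Rv,v)\bigr),
\]
so that both the $F$- and $H$-contributions carry $\cos\theta$, which stays strictly positive all the way to $\theta=\pi/2$. Since $|(T_\omega(\lambda)v,v)|\geq\Re\bigl(e^{-i\theta}T_\omega(\lambda)v,v\bigr)$, this still yields the numerical-range bound that Theorem 1.7 needs. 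At $\theta=\pi/4$ the rotated form, after inserting the Lam\'e expressions and applying the elementary inequality $2(ac+bc)\leq\sqrt2\,(a^2+b^2+c^2)$, is bounded below by $\varepsilon\bigl(\zeta^2\|v\|^2+\|v\|_1^2\bigr)-\omega^2(\rho v,v)$ precisely when $\hat\mu>\sqrt2\,\hat\lambda$; by continuity this persists for $|\arg(\pm\lambda)|<\phi$ with some $\phi>\pi/4$, and from there your argument resumes correctly.
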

\begin{proof}
Let us estimate the quadratic form $(T_\omega(\la)v,v)$ at the ray
$\la=e^{i\pi/4}\zeta,\quad \zeta>0$. Suppose $\omega =0$. Bearing in
mind (\ref{6.5}) we obtain
$$
Re\ e^{-i\pi/4}\left(T_0(e^{i\pi/4}\zeta)v,v\right)\ge\\
$$
$$
\frac{\sqrt{2}}2\hat\mu
\left(\zeta^2\sum\limits_{j=1}^{3}\|v_j\|^2+\|D_2v_j\|^2+\|D_3v_j\|^2\right)+
$$
$$
+\frac{\sqrt{2}}2(\hat\la+\hat\mu)\left(\zeta^2\|v_1\|^2-2\sqrt{2}|\zeta|\ \|D_2v_2
+D_3v_3\|\ \|v_1\|+\|D_2v_2+D_3v_3\|^2\right).
$$
Taking into account the inequality
$$
2(ac+bc)\le\sqrt{2}(a^2+b^2+c^2),\quad a,b,c>0
$$
we can estimate the second summand as follows
$$
\ge-(\hat\la+\hat\mu)(2-\sqrt{2})\frac{\sqrt{2}}2\left(
\zeta^2\|v_1\|^2+\|D_2v_2\|^2+\|D_3v_3\|^2\right).
$$
Therefore,
\begin{equation}
Re\ e^{i\pi/4}\left(T_\omega(e^{i\pi/4}\zeta)v,v\right)\ge\varepsilon
\left(\|v\|_1+\zeta^2\|v\|\right)-\omega^2(\rho v,v).
\label{6.6d}
\end{equation}
with some $\varepsilon>0$ provided $\hat\mu>\sqrt{2}\hat\la$.
Obviously, a similar estimate (if $\pi /4$ is replaced by $\theta$) holds at
any ray $\lambda=re^{i\theta}$ in a double sector $\Lambda_\phi$ provided
$0<\phi-\pi/4$ is small enough.

According to Theorem 1.7 estimate (\ref{6.6d}) gives the estimate of the resolvent
(\ref{6.6c}). Since (\ref{12::4.2}) holds in our case with $p=1$, we see that
the Keldysh --Agmon condition is satisfied.
\end{proof}

For simplicity we formulate the main result of this section not in the whole
generality.

\begin{theorem}
Let $\Omega$ be a bounded domain in $\mathbb{R}^2$ and assume that either
$\Omega$ is smooth
or Lame constants satisfy the condition $\hat\mu>\sqrt{2}\hat\la$.
Then for any function $\phi(x)\in\left[\Wo^{\tiny\rm o\phantom{.}\phantom{.}}(\Omega)\right]^3$
there is a unique classical solution $u(y)$ in the half-cylinder
$Q=\mathbb{R}^+\times\Omega$ of the stationary Lame system (\ref{6.4}), (\ref{6.2}) with given
non-resonant frequency $\omega$, such that this solution satisfies the
Mandelstam radiation principle as $x_1=y\to\infty$ and the initial condition
is understood in the following sense
$$
\lim_{y\to0}\|u(y,x_2,x_3)-\phi(x_2,x_3)\|_1=0.
$$
\end{theorem}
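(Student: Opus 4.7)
The strategy is to reduce the PDE system to the abstract framework of Sections 3--5 and then appeal to Theorems 5.1 and 5.2. First, I would set $\mathcal{H}=[L_2(\Omega)]^3$ and let $F,G,H,R$ be the operator matrices introduced at the beginning of Section 6, with $H$ replaced by its Friedrichs extension in the non-smooth case. By construction $\mathcal{H}_1=\mathcal{D}(H^{1/2})=[\Wo^{\tiny\rm o\phantom{.}\phantom{.}}(\Omega)]^3$, so the given datum $\phi$ lies precisely in $\mathcal{H}_1$. Proposition 6.1 shows that $T_\omega(\lambda)$ is strongly elliptic; Proposition 6.2 (in the smooth case) or Proposition 6.3 (under $\hat\mu>\sqrt{2}\hat\lambda$) furnishes the resolvent estimate on a ray off the imaginary axis, and the Weyl asymptotics $\lambda_j(H)\asymp j$ give (4.2) with $p=1\le 2$; hence the Keldysh--Agmon condition of Section~4 is satisfied.

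Second, since $\omega$ is non-resonant, Theorem 2.2 guarantees that all real eigenvalues of $T_\omega(\lambda)$ are semi-simple and of definite type, so the Mandelstam radiation principle has a well defined meaning via the first half $E^+$ of the root elements. I would then invoke Theorem 5.1 with $\theta=1$ and $f=\phi\in\mathcal{H}_1$ to produce a function
\[
u(y)=\frac{1}{2\pi i}\Bigl(\int_\gamma+\int_\Gamma\Bigr)e^{i\lambda y}(\lambda-Z)^{-1}\phi\,d\lambda,
\]
where $Z$ is the divisor supplied by Theorem 4.5. This $u$ lies in $C_2(0,\infty;\mathcal{H})$, satisfies $T_\omega(i\,d/dy)u=0$ in $\mathcal{H}$ for $y>0$, admits the decomposition $u=u_++u_0$ with the residue-at-$\gamma$ term giving a finite linear combination of outgoing propagating waves (those with positive sign characteristic, by the identification in Section~2) and $\|u_0(y)\|_2$ decaying exponentially, and satisfies $\|u(y)-\phi\|_1\to 0$ as $y\to 0+$.

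Third, I would upgrade this abstract solution to a classical one in $Q=\mathbb{R}^+\times\Omega$. For each fixed $y>0$ the value $u(y,\cdot)\in\mathcal{H}_2=\mathcal{D}(H)$, and $u'(y),u''(y)$ are continuous in $\mathcal{H}_1$ and $\mathcal{H}$ respectively. In the smooth case $\mathcal{D}(H)\subset[W_2^2(\Omega)]^3$, so $u(y,\cdot)$ is Hölder continuous on $\overline\Omega$; interior elliptic regularity applied to the Lam\'e system (which is elliptic off the imaginary axis by Proposition 6.2) bootstraps $u$ to $C^\infty$ in the interior of $Q$, so that (6.4) holds pointwise. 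In the non-smooth case under $\hat\mu>\sqrt2\hat\lambda$, the same interior regularity argument applies away from the non-smooth part of $\partial\Omega$; the boundary condition (6.2) is inherited from $u(y,\cdot)\in\mathcal{H}_1$ in trace sense, and this is what should be meant by a classical solution in this setting. The $\mathcal{H}_1$-convergence at $y=0$ gives precisely the prescribed mode of approach of the initial condition.

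Finally, for uniqueness, any other classical solution $v$ with the prescribed initial trace and the Mandelstam decomposition at infinity has the difference $w=u-v$ satisfying (6.4), the homogeneous boundary condition, $\|w(y)\|_1\to 0$ as $y\to 0+$, and $w$ exponentially decaying together with its derivatives as $y\to\infty$ (since both solutions have the same $u_+$-component, as the outgoing waves span a fixed finite-dimensional subspace and the boundary trace is fixed). In particular $w\in W_2^1(\varepsilon,\infty;\mathcal{H})$ for all $\varepsilon>0$ and $w\in L_1(0,\varepsilon;\mathcal{H}_1)$; Theorem 5.2 then forces $w\equiv 0$. The main technical obstacle is the passage from the abstract $C_2(0,\infty;\mathcal{H})$-regularity to genuine classical differentiability inside the half-cylinder, especially in the non-smooth case, where one must rely on interior elliptic regularity rather than global $\mathcal{D}(H)\subset W_2^2$ information.
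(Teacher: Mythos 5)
Your proof follows exactly the paper's route: verify the hypotheses (strong ellipticity from Proposition 6.1, the Keldysh--Agmon condition from Proposition 6.2 in the smooth case or Proposition 6.3 under $\hat\mu>\sqrt2\hat\la$, Weyl asymptotics giving $p=1$) and then invoke Theorems 5.1 and 5.2 with $\theta=1$. The paper's own proof is the one-line remark ``It follows from results of Section 5,'' so your account is a correct unpacking of that.

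Two small remarks. First, the bootstrapping to pointwise $C^\infty$ regularity via interior elliptic estimates is not needed (nor done) in the paper; as you yourself note, ``classical solution'' here means $u\in C_2(0,\infty;\mathcal{H})$ satisfying the equation in $\mathcal{H}$, which Theorem 5.1 already delivers. Second, your uniqueness argument contains a small circularity: you assert that two solutions with the same trace and both satisfying the Mandelstam decomposition ``have the same $u_+$-component,'' and use this to conclude the difference $w$ decays. That equality of the outgoing parts is in fact a \emph{consequence} of uniqueness, not an input. The clean argument is that $w=u-v$ itself satisfies a Mandelstam-type decomposition (its propagating part is $u_+-v_+$, a finite combination of outgoing waves, and its remainder decays) with zero initial trace, which is precisely the hypothesis under which Theorem 5.2 forces $w\equiv0$; from this $u_+=v_+$ then follows.
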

\begin{proof}
It follows from results of Section 5.
\end{proof}
\medskip

Our conjecture (which we can not prove at the moment) is that the condition
$\hat \mu>2\hat\lambda$ in Theorem 6.4 is superfluous. This condition is
used only in the proof of the existence. Apparently it is essential for the
validity of the Keldysh--Agmon condition and, hence, for the half range
completeness. However, it has not to be essential for the existence
of a solution. The reason is that for sufficiently small frequencies
$\omega$ the pencil $T_\omega(\lambda)$ is positive and Theorem 3.4 can be
applied to prove the existence.

\renewcommand{\refname}{{\large\rm Bibliography for Section~12}}

\newpage
\section{Scattering of waves by periodic gratings and factorization problems}
\subsection{Introduction}

In this paper we consider two scattering problems for the Helmholtz equation. The study of the first one was originated by Lord Rayleigh [Rl, R2]. The problem is to give an analysis of the scattering of a monochromatic plane wave incident on a grating with a periodic curve in $\mathbb{R}^2$ (Rayleigh considered a sinusoidal grating profile). The second problem is a three-dimensional analogue of the first one: to give an analysis of the scattering of a space wave by a periodic surface in $\mathbb{R}^3$.

The scattering of acoustic and electromagnetic waves by periodic gratings plays a significant role in physics and engineering, which caused a vast literature devoted to these problems. The works are mostly connected with the scattering in $\mathbb{R}^2$, because even in this case the problem is quite non-trivial and involves an intricate and lengthy analysis.

A mathematical core of the problem is to prove existence and uniqueness of the solution which describes the scattering. In the plane case and in the case of non-resonant frequencies the proof was given by Badyukov [Bl, B2], who reduced the problem to an integral equation of the Fredholm type using the Hankel function expansion for the kernel of the Helmholtz equation. Wilcox and Guilliot [WG] independently obtained similar results using Rayleigh-Bloch wave expansions (which, essentially, coincide with those in [Bl, B2]).

Alber \cite{13::A} and Wilcox \cite{13::W1} developed an alternative method for solving the scattering problem based on analytic continuations. Further developments, numerical studies, historical remarks and references can be found in the book edited by
Petit \cite{13::P}, in the monographs of Gunter \cite{13::G}, Wilcox \cite{13::Wl}, \cite{13::W2}, Galashnikova and Il'inskii \cite{13::GI}, Nazarov and Plamenevskii \cite{13::NP}, in the papers of Babich \cite{13::B}, Il'inskii and Mikheev \cite{13::IM}, Beljaev, Mikheev and Shamaev \cite{13::BMS}.

For the case of resonant frequencies, we have not found in the literature rigorous results on the solvability of the scattering problems. In this case an additional problem arises: how to select outgoing waves and to pose the radiation conditions? It turns out that the formulation of the radiation condition for the plane scattering problem in the case of resonant frequencies remains the same as in the non-resonant case. However, this circumstance is rather incidental from the mathematical point of view. It is explained by the fact that the Jordan chains of the spectral problem corresponding to the Helmholtz equation have the simplest structure: their lengths equal 2. This is not true if the scattering problem is considered for the system of elasticity (see \cite{13::KO}).

The aim of this paper is to propose a new approach to treat scattering problems. This approach is quite general and allows to consider scattering problems which were not treated before. It is based on the possibility to reformulate these problems in terms of abstract ordinary differential equations with operator coefficients on a Hilbert space. It turns out that the solvability of a scattering problem is equivalent to the solvability of the appropriate differential equation on the semiaxis with the radiation conditions at $+\infty$. To solve the last problem we apply the factorization theorems for the operator symbol of the corresponding equation. This paper can be considered as a continuation of the paper [Sh2], but it can be read independently.

An outstanding role in development of the factorization theory is played by the works of I. Gohberg and his co-authors. A particular mention deserves his pioneering work with M. Krein \cite{13::GK1}. Further developments and references can be found in the monograph of Gohberg and Feldman \cite{13::GF}, Gohberg and Krupnik \cite{13::GK}, Gohberg, Lancaster and Rodman \cite{13::GLR}, Markus \cite{13::M}, Gohberg, Goldberg and Kaashoek \cite{13::GGK}.

In our case the factorization problem has to be solved for selfadjoint operator pencils with unbounded operator coefficients. This leads to new difficulties. In particular, even if a linear right divisor $\lambda - Z$ of a pencil is found, one has to investigate the properties of Z: does this operator generate a holomorphic $C_0$-semigroup? The study of factorization of operator pencils with unbounded coefficients was originated in the authors' paper \cite{13::KS}. Here we give a short review of results on the factorization of elliptic pencils which are essentially used in the sequel.

In contrast with the previous works, an abstract approach of this paper does not use specific properties of the Helmholtz equation and can be applied to scattering problems in electrodynamics described by the Maxwell equation (see \cite{13::GI}) or the system of elasticity. The Helmholtz equation itself can be modified; the frequency $k^2$ can be replaced by a periodic function $k^2c^2 (x)$ that corresponds to the scattering in a non-homogeneous medium. All our arguments remain valid; the only change is to replace the exponents by the eigenfunctions of the Sturm-Liouville operator with potential $k^2c^2(x)$ and with quasi-periodic boundary conditions. We remark also that the resonant case is not an obstacle for the method.

The plan of the paper is the following. In Section 1 we pose the scattering problem for the plane Helmholtz equation and formulate the radiation condition. In Section 2 we give a review of results on factorization of elliptic pencils, properties of divisors and solvability of the corresponding operator equations on the semiaxis with the radiation conditions at $+\infty$. In the subsequent sections we give the detailed analysis of the scattering problems for the two- and three-dimensional cases. To our best knowledge, the space problem has not been considered in the literature before.

\subsection{Scattered Waves and the Radiation Condition for the Two-dimensional Helmholtz Equation}

Let $(x, y)$ be the coordinates in $\mathbb{R}^2$ and $\Gamma$ be a $2\pi$-periodic curve given by a smooth function $y = a (x)$. Let
\begin{equation}\label{13::1.1}
v_{\varphi}(x, y)=e^{-i k(x \sin \varphi+y \cos \varphi)}
\end{equation}
be a monochromatic wave incident on the grating $\Gamma$. The reflection of this wave generates the scattered waves which are to be found. The number $\varphi$ coincides with the angle between the axis $Oy$ and the direction of the wave (see Figure 1). The wave $v_{\varphi}(x, y)$ satisfies the Helmholtz equation
\begin{equation}\label{13::1.2}
\Delta u+k^{2} u=0, \quad u=u(x, y)
\end{equation}
and the quasi-periodic boundary conditions
\begin{equation} \label{13::1.3}
\begin{aligned} u(0, y) &=e^{i t-i \nu} u(2 \pi, y) \\ u_{x}^{\prime}(0, y) &=e^{-i \nu} u_{x}^{\prime}(2 \pi, y) \end{aligned}
\end{equation}
where $\nu=2 \pi k \sin \varphi$.

Figure 1

Naturally, the scattered waves also satisfy equation (1.2) and boundary conditions (1.3). We have to declare the law of reflection. Assume that the wave $v_{\varphi}$ reaches all points of the grating. This means that $\cot \varphi>\max a^{\prime}(x)$. The full reflection means
\begin{equation} \label{13::1.4}
\left.u(x, y)\right|_{\Gamma}=\left.v_{\varphi}(x, y)\right|_{\Gamma}
\end{equation}
or
\begin{equation}
u(x, a(x))=v_{\varphi}(x, a(x)),
\end{equation}
where $u(x,y)$ is a solution of the scattering problem in the domain
$$
\Omega=\{x, y | x \in \mathbb{R}, y>a(x)\},
$$
i.e., a solution of equation \eqref{13::1.2} subject to boundary conditions \eqref{13::1.3}.

The problem given in the unbounded domain $\Omega$ by equation \eqref{13::1.2} and initial condition \eqref{13::1.4} is not well-posed, since the frequency $k^2 > 0$ belongs to the continuous spectrum of the Laplace operator in $\Omega$ , with the Dirichlet boundary condition on $\partial \Omega$. To extract physically reasonable solutions in such cases one claims additional conditions. It is well known that for the Helmholtz equation on the exterior of a bounded domain the condition

$$
\frac{\partial u}{\partial r}-i k u=o(r) \quad \text { as } \quad r=\sqrt{x^{2}+y^{2}} \rightarrow \infty
$$
guarantees existence and uniqueness of a solution. This is the so-called Sommerfeld radiation condition.

For unbounded domains with periodic boundaries the radiation conditions have a more intricate form. To formulate them, we remark that the elementary quasi- periodic solutions of equation \eqref{13::1.2} have the representation

\begin{equation} \label{13::1.5}
u_{n}^{\pm}(x, y)=e^{\pm i \lambda_{n} y} e^{i \mu_{n} x}, \mu_{n}=\frac{v}{2 \pi}+n, \lambda_{n}=\sqrt{k^{2}-\mu_{n}^{2}}.
\end{equation}

Here $n \in \mathbb{Z}$, and the main branch of the square root function is chosen, i.e., $\lambda_{n}>0$ for $k>\left|\mu_{n}\right|
$ and $\operatorname{Im} \lambda_{n}>0$  for $\left|\mu_{n}\right|>k$. The solutions $u_{n}^{-}(x, y)$ corresponding to the non-real values $ \lambda_{n}$ grow exponentially in $\Omega$ as $y \to \infty$ and have no physical sense in scattering problems. The solutions corresponding to the real numbers $ \lambda_{n}$ are called propagating waves and play the most important role. The solutions $u_{n}^{+}(x, y)\left(u_{n}^{-}(x, y)\right)$ which correspond to $\lambda_{n}>0$($\lambda_{n}<0$) are called outgoing (incoming) waves. The physical sense prompt us that the scattered waves must include only outgoing waves and exponentially decaying waves corresponding to solutions $u_{n}^{+}(x, y), \operatorname{Im} \lambda_{n}>0$. Actually, it was Rayleigh [\cite{13::Rl}, \cite{13::R2} who assumed that scattered waves consist only of outgoing and decaying waves. The problem of a choice of physically reasonable propagating waves has been widely discussed in the literature since 30's. In particu
 lar, Mandelstam noticed that the Rayleigh hypothesis does not work for some equations of electrodynamics and proposed to choose the waves with positive group velocity (see details in \cite{13::Sh2}). For the Helmholtz equation the Rayleigh and the Mandelstam hypotheses coincide.

Now we can formulate the scattering problem as follows: {\it to find a solution of equation \eqref{13::1.2} subject to quasi-periodic conditions \eqref{13::1.3}, initial condition \eqref{13::1.4} and the radiation condition
\begin{equation} \label{13::1.6}
u(x, y)=\sum_{-k \leq \mu_{n} \leq k} c_{n} e^{i \mu_{n} x} e^{i \lambda_{n} y}+o(1)
\end{equation}
where the sum contains only outgoing waves corresponding to $\lambda_{n}>0$  and $o(1)$ is a decaying function as $y \to \infty$.} Here $c_n$ are the amplitudes of the outgoing waves. They must be determined by initial condition \eqref{13::1.4}. Further we will see that $o(1)$ in \eqref{13::1.6} is represented as a convergent series of the exponentially decaying waves (although this is not necessarily true in the three-dimensional problem).

A frequency $k^2$ is called {\it resonant} (in some books it is called {\it cut-off}) if $\lambda_n^2=k^2-\mu_n^2=0$ for some $n \in \mathbb{Z}$. If $\nu / \pi \notin \mathbb{Z}$ then the equality $\lambda_{n}=0$  may hold for the only value $n_{0} \in \mathbb{Z}$ In this case the pair of elementary solutions
$$
e^{i k x}, y e^{i k x}, \quad \mu_{0}=v / 2 \pi+n_{0}=k \geq 0,
$$
corresponds to the wave number $\lambda_{n}=0$. If $\nu / \pi \in \mathbb{Z}$ and the equality $k^2-\mu_n^2=0$holds for some $n \in \mathbb{Z}$, then it holds for two values $n_{0}, n_1 \in \mathbb{Z}$. In this case the resonant elementary solutions have the form
$$
e^{i k x}, y e^{i k x} ; \quad e^{-i k x}, y e^{-i k x}.
$$
The first functions of these pairs of solutions are degenerated waves (independent of $y$); the second ones are associated solutions. There is one-to-one correspondence between elementary solutions of problem \eqref{13::1.2}, \eqref{13::1.3} and Jordan chains of the pencil (operator symbol) corresponding to this problem. It follows from the subsequent general results that in the presence of a Jordan chain of even length only the first half of the functions of this chain has to be taken into consideration. In our case the lengths of Jordan chains equal 2. Therefore, in this case only the eigenfunctions must be involved in the group of the scattered waves participating in the radiation condition. Hence the radiation condition in the resonant case is given as before by formula \eqref{13::1.6}.

\subsection{Factorization of elliptic operator pencils and solvability of the corresponding equations on the semi-axis}

In this section we deal with an operator pencil
$$
T(\lambda)=\lambda^{2} F+\lambda G+H-V
$$
on a Hilbert space $\mathfrak{H}$. It is always assumed that the "main" operator $H$ is self-adjoint and uniformly positive while the other ones are symmetric.

{\bf Definition 2.1} $T(\lambda)$ is called {\it elliptic} if the following conditions are fulfilled:
i)  $F$ is bounded and uniformly positive $(0 \ll F \ll \infty)$;
ii) $H=H^{*} \gg 0$ and $V$ is a symmetric $H$-compact operator, i.e., $V H^{-1}$ is compact in $\mathfrak{H}$;
iii)  $G$ is symmetric and $\mathcal{D}(G) \supset \mathcal{D}\left(H^{1 / 2}\right)$;
iv)  $T(\lambda)>0$ for all $\lambda \in \mathbb{R}$ with $|\lambda|>r_{0}$ provided $r_0$ is large enough.

{\bf Definition 2.2} An elliptic pencil $T(\lambda)$ is called {\it strongly elliptic} if there exist a number $\varepsilon>0$ and a symmetric $H$-compact operator $V'$ such that
\begin{equation}\label{13::2.1}
T(\lambda) \geq \varepsilon\left(\lambda^{2} I+H\right)-V^{\prime} \quad \forall \lambda \in \mathbb{R}.
\end{equation}

{\bf Definition 2.3} An elliptic pencil $L(\lambda)$ is called {\it regular elliptic} if
\begin{equation}\label{13::2.2}
\left\|H T^{-1}(\lambda)\right\|+|\lambda|^{2}\left\|T^{-1}(\lambda)\right\| \leq \text { const } \quad \forall \lambda \in \mathbb{R},|\lambda|>r_{0}.
\end{equation}

Let $\mathfrak{H_\theta}$ be the scale of the Hilbert spaces generated by the operator $H^{1 / 2}$, i.e., $\mathfrak{H}_{\theta}=\mathcal{D}\left(H^{\theta / 2}\right)$ and$ \|x\|_{\theta}=\left\|H^{\theta / 2} x\right\|$. Recall that the abstract Sobolev space $W^{s}(a, b ; \mathfrak{H})$ consists of functions $f(t)$ defined on $(a, b) \subset \mathbb{R}$, taking values in $\mathfrak{H}$, and having a finite norm
$$
\|f\|_{s}^{2}=\int_{a}^{b}\left(\left\|f^{(s)}(t)\right\|^{2}+\left\|H^{s / 2} f(t)\right\|^{2}\right) d t.
$$
(see details in \cite[Ch. 1]{13::LM}).

{\bf Definition 2.4} An elliptic pencil $T(\lambda)$ is called {\it strongly regular} if for all functions $v(t) \in W^{2}[0, \infty ; \mathfrak{H}]$ subject to the condition $v(0)=0$, the following estimate holds
\begin{equation}\label{13::2.3}
\left\|T_{\chi}\left(-i \frac{d}{d t} v(t)\right)\right\|_{\mathrm{L}_{2}} \geq \varepsilon\|v\|_{2}, \quad \varepsilon>0
\end{equation}
where $T_{\varkappa}(\lambda)=\lambda^{2} F+\varkappa \lambda G+H$ and $\varepsilon$ does not depend on $v$ and $\varkappa \in[0,1]$.

It is known \cite[A \S3]{13::Shi}  that estimate \eqref{13::2.3} implies \eqref{13::2.2}, i.e., a strongly regular elliptic pencil is regular elliptic but not vice versa. We remark that for usual elliptic operators estimate \eqref{13::2.1} is equivalent to the Garding inequality (see \cite{13::Sh2}), estimate \eqref{13::2.2} is known as the Agmon-Nirenberg or the Agranovich--Vishik estimate for regular elliptic problems with parameter on smooth bounded domains (in this context $ \mathfrak{H}=L_{2}(\Omega)$, where $\Omega$ is a smooth bounded domain in $\mathbb{R}^{n}$), and estimate \eqref{13::2.3}is known as the Bermstein-Ladyzhenskaya inequality
(see \cite[|\S3, 6]{13::Shi}). More details on the motivation of the above definitions can be found in the papers \cite{13::Shi}, \cite{13::Sh2}.

The verification of estimate \eqref{13::2.3} is not trivial even for concrete pencils. We shall use the following result.

{\bf Proposition 2.5} {\it Let $T(\lambda)$ be elliptic. Suppose that $ F=F_{0}+F_{1}, G=G_{0}+G_{1}$, $H=H_{0}+H_{1}$, where $F_1, G_1, H_1$ are symmetric operators such that $F_{1}, G_{1} H^{-1 / 2}, H_{1} H^{-1}$ are compact, and $H_{0} \gg 0$. If the estimate
\begin{equation}\label{13::2.4}
\left\|F_{0}^{-\frac{1}{2}} G y\right\| \leq(2-\varepsilon)\left\|H_{0}^{1 / 2} y\right\|, \quad y \in \mathcal{D}\left(H_{0}^{1 / 2}\right)
\end{equation}
holds with some $\varepsilon > 0$, then $T(\lambda)$ is strongly regular.}

{\bf Proof:} It can be found in \cite[\S9]{13::Shi}.	$\square$

Let the operator $H$ have discrete spectrum. It follows from the theorem on holomorphic operator functions (see \cite[Ch. 1]{13::GK2}and \cite[\S1.4]{13::Shi} for the version of this theorem for pencils with unbounded coefficients) that the spectrum of an elliptic pencil $T(\lambda)$ in this case is also discrete.

It is known \cite{13::Ke} that the principal part of the Laurent expansion of the resolvent $T^{-1}(\lambda)$ admits a representation
$$
\sum_{k=1}^{N} \sum_{s=0}^{p_{k}} \frac{\left(\cdot, z_{k}^{p_{k}-s}\right) x_{k}^{s}}{(\lambda-\mu)^{p_{k}+1-s}}
$$
where
\begin{equation}\label{13::2.5}
x_{k}^{0}, \ldots, x_{k}^{p_{k}}, \quad k=1, \ldots, N
\end{equation}
is a canonical system of eigen and associated vectors of the pencil  $T(\lambda)$, and
$$
z_{k}^{0}, \ldots, z_{k}^{p_{k}}, \quad k=1, \ldots, N
$$
is the adjoint canonical system of the eigen and associated vectors of $T(\lambda)$ corresponding to the eigenvalue $\bar{\mu}$. The following result is essential in the sequel.

{\bf Proposition 2.6} {\it Canonical system \eqref{13::2.5} corresponding to a real eigenvalue $\mu$, can be chosen so that
$$
x_{k}^{s}=\varepsilon_{k} z_{k}^{s}, \quad k=1, \ldots, N, \quad \forall s=0, \ldots, p_{k},
$$

where $\varepsilon_ k = \pm1$ and $\varepsilon_{k}=\operatorname{sign}\left(L^{\prime}(\mu) x_{k}^{0}, x_{k}^{p_{k}}\right)$.}

{\bf Proof:} See \cite[Lemma 1.2]{13::KS}. $\square$

Other definitions of the sign characteristics are given in \cite{13::GLR}. Actually, the sign characteristics are important only for Jordan chains of odd length. Further, we assume for convenience that the {\it sign characteristics of Jordan chains of even length equal zero}.

Canonical systems that possess the properties formulated in Proposition 2.6 are called {\it normal}. Normal canonical systems and sign characteristics are important in the analysis of the factorization problem. We shall give here short historical comments concerning the problem of the factorization of operator polynomials with respect to the real axis. Krein and Langer \cite{13::KL} studied pencils of the form
$$
L(\lambda)=I+\lambda B+\lambda^{2} C,
$$
where $B$ and $C$ are bounded self-adjoint operators and $C > 0$. They proved that $L(\lambda)$ possesses a right divisor of the form $(\lambda Z-I)$, whose spectrum is located in the closed upper half-plane. The real spectrum of this divisor was investigated by Kostyuchenko and Orazov \cite{13::KO}. The factorization of higher order operator polynomials was out carried by Langer \cite{13::L}; the detailed analysis of the real spectrum of divisors of polynomials with Hermitian matrix coefficients was done by Gohberg, Lancaster and Rodman \cite{13::GLR}. We should mention that the problem of factorization of non-negative operator pencils (and operator functions) on the real axis has its own history (see the book of Rosenblum and Rovnjak \cite{13::RR}). In the paper \cite{13::KS} the authors proposed a new analytic approach to the factorization of quadratic pencils, investigated the properties of a linear operator $Z$ participating in the factorization and proved the first factorizatio
 n theorem for pencils with unbounded coefficients. Further developments of the theory was carried out by Shkalikov \cite{13::Shi}-\cite{13::Sh3}.

Now, let us define the half of the eigen and associated vectors of an operator pencil $T(\lambda)$. Let canonical system \eqref{13::2.5} be normal. Its half consists of the vectors
\begin{equation}\label{13::2.6}
x_{k}^{0}, x_{k}^{1}, \ldots, x_{k}^{l_{k}}, \quad k=1, \ldots, N
\end{equation}
where $l_{k}=\left(p_{k}-1+\varepsilon_{k}\right) / 2$ and $\varepsilon_ k$ are the sign characteristics (we assumes $\varepsilon_ k=0$ for Jordan chains of even length). We imply that in the case $l_k = -1$ the corresponding set in \eqref{13::2.6} is empty. The set of all canonical systems of $T(\lambda)$ corresponding to the eigenvalues from the open upper half-plane and of the halves of canonical systems corresponding to the real eigenvalues, is called the{\it half of eigen and associated vectors of $T(\lambda)$}. We point out a particular important case (connected with the scattering problem): if the lengths of Jordan chains corresponding to the real eigenvalues do not exceed 2, then the half contains the canonical systems corresponding to the eigenvalues from the upper half-plane and only the eigenvectors $x_k$ corresponding to $\lambda_{k} \in \mathbb{R}$ subject to the condition $\left(T^{\prime}\left(\lambda_{k}\right) x_{k}, x_{k}\right) \geq 0$.

To formulate the basic results we shall introduce the class of operator pencils whose resolvents are meromorphic functions of finite order having polynomial growth on some rays in the complex plane.

{\bf Definition 2.7} We say $T(\lambda)$ belongs to the class $K$ if

i)  the eigenvalues of the operator $H$ satisfy the estimate
$$
\lambda_{j}(H) \geq c j^{p}
$$
with some constants $c$ and $p$.

ii)  either $p \geqslant 2$ or $p < 2$ and there exist rays $\mathrm{z} \gamma_{j}=\left\{\lambda | \arg \lambda=\theta_{j}\right\}$,  $j = 1, \ldots, N, \theta_{j+1}>\theta_{j}$, in the upper half-plane such that
$$
\max \left(\theta_{1}, \theta_{j+1}-\theta_{j}, \pi-\theta_{N}\right)<2 \pi / p
$$
and
\begin{equation}\label{13::2.7}
\left\|H^{1 / 2} T^{-1}(\lambda) H^{1 / 2}\right\| \leq c|\lambda|^{m} \quad \text { for } \quad \lambda \in \gamma_{j}
\end{equation}
with some constants $c$, $m$, provided $|\lambda|$ is large enough.

It is proved in \cite{13::Sh2} that the inequality
$$
|(T(\lambda) x, x)| \geq \varepsilon\left[r^{2}(x, x)+(H x, x)\right], \quad \forall \lambda \in \gamma,|\lambda|=r>r_{0}
$$
is sufficient for the validity of estimate \eqref{13::2.7} with $m =0$ on the ray $\gamma$. The last inequality is easier to verify for concrete problems. In particular, \eqref{13::2.7} holds on the real line for strongly elliptic pencils. If $T(\lambda)$ is elliptic and the condition $\lambda_{j}(H) \geq c j^{p}$ holds, then the resolvent $T^{-1}(\lambda)$ is a meromorphic function of order $\leqslant p/2$ (see \cite[\S2]{13::Shi}). Applying the Phragmen--Lindelof theorem we find: if an elliptic pencil $T(\lambda)$ is strongly regular, belongs to the class $K$ and $F(\lambda)=\left(T^{-1}(\lambda) f, f\right)$ is an entire function for some $f \in \mathfrak{H}$, then $F(\lambda) \equiv 0$. Hence conditions i) and ii) in Definition 2.7 are needed to prove the completeness theorems for eigenvectors (see details in \cite{13::Sh2}).

Now let us specify the understanding of solutions of the equation
\begin{equation}\label{13::2.8}
T\left(i \frac{d}{d t}\right) u(t)=-F u^{\prime \prime}(t)+i G u^{\prime}(t)+(H+V) u(t)=0.
\end{equation}

A	function $u(t) \in W^{1}(a, b ; \mathfrak{H})$ is said to be a generalized solution	of equation \eqref{13::2.8} if for all functions $v(t) \in W^{1}(a, b ; \mathfrak{H})$ subject to the conditions	$v(a)	 =v(b) =0$, the equality
$$
\left(F u^{\prime}, v^{\prime}\right)+i\left(u^{\prime}, G v\right)+\left(\left(I+V^{\prime}\right) H^{1 / 2} u, H^{1 / 2} v\right)=0
$$
holds, where $V^{\prime}=H^{-1 / 2} V H^{-1 / 2}$ and the scalar product is taken in $\mathbf{L}_{2}(a, b ; \mathfrak{H})$. Details clarifying this definition see in \cite{13::Sh2}.

A function $u(t) \in W^{2}(a, b ; \mathfrak{H})$ is called a classical solution of equation \eqref{13::2.8} on $(a, b)$ if \eqref{13::2.8} holds as equality of functions in $\mathbf{L}_{2}(a, b ; \mathfrak{H})$.

We say that a classical (generalized) solution of equation \eqref{13::2.8} satisfies the radiation condition at $+\infty$ if
\begin{equation}\label{13::2.9}
u(t)=\sum_{\varepsilon_{k} \geq 0} c_{k} e^{i \lambda_{k} t} x_{k}+u_{0}(t)
\end{equation}
where $u_0(t)$ is a classical (generalized) solution on $\mathbb{R}^+$ satisfying the condition $\left\|u_{0}(t)\right\|_{1} \rightarrow 0$  as $ t \rightarrow \infty$.	 Here	the	first term in \eqref{13::2.9} is a finite sum of elementary solutions corresponding to the real eigenvalues of nonnegative type, and for simplicity we have assumed that the lengths of Jordan chains corresponding to the real eigenvalues do not exceed 2. In the general case the sum has to contain all elementary solutions corresponding to the halves of Jordan chains \eqref{13::2.6}.

Let us formulate the basic results on elliptic pencils.

{\bf Theorem 2.8} {\it The half of eigen and associated vectors of a self-adjoint elliptic pencil $T(\lambda)$ is minimal (i.e., there exists a biorthogonal system) in the spaces $\mathfrak{H}_\theta$, $0 \leqslant \theta \leqslant 1$. It is complete in the same spaces if $T(\lambda)$ is either strongly or regular elliptic and belongs to the class $K$. If $T(\lambda)$ is strongly regular and belongs to the class $K$, then the half is a complete system in for $0 <\leqslant \theta \leqslant 3/2$.}

{\bf Theorem 2.9} {\it Let $T(\lambda)$ be either strongly elliptic or regular elliptic pencil and belong to the class $K$. Then
\begin{equation}\label{13::2.10}
T(\lambda)=\left(\lambda-Z_{1}\right) F(\lambda-Z),
\end{equation}
where the operator $Z$ possesses the properties:
\begin{enumerate}

\item $\mathcal{D}(Z)=\mathfrak{H}_{1}$ and $Z-\lambda=K H^{1 / 2}$, where $K$ is bounded and boundedly invertible in $\mathfrak{H}$, provided $\lambda \notin \sigma(Z)$;

\item the spectrum of $Z$ lies in the closed upper half-plane and the system of its eigen and associated vectors coincides with the half of those of $T(\lambda)$ ;

\item $ iZ$ generates a holomorphic semigroup in the spaces $\mathfrak{H}_{\theta}, 0 \leq \theta \leq 1$.
\end{enumerate}

If in addition $T(\lambda)$ is strongly regular, then property 3) remains valid in the spaces  $\mathfrak{H}_{\theta}$ for $0 \leq \theta \leq 3 / 2$.}

{\bf Theorem 2.10 } {\it Let $T(\lambda)$ be either strongly or regular elliptic self-adjoint pencil belonging to the class $K$. Then for any vector $f \in \mathfrak{H}_{\theta}, 0 \leq \theta \leq 1$, there exists a unique function $u(t)$ which is a generalized solution of equation \eqref{13::2.8} on $(\varepsilon, \infty)$ for any $\varepsilon > 0$ and satisfies the radiation condition \eqref{13::2.9} and the initial condition
\begin{equation} \label{13::2.11}
\lim _{t \rightarrow 0+}\|u(t)-f\|_{\theta}=0.
\end{equation}
If $T(\lambda)$is strongly regular, then the same is true for $ 0 \leq \theta \leq 3/2$. Moreover, $u(t)$ is a classical solution for $t > 0$ and is represented by the formula
\begin{equation}\label{13::2.12}
u(t)=\frac{1}{2 \pi i} \int_{\gamma} e^{i t \lambda}(Z-\lambda)^{-1} d \lambda,
\end{equation}
where a contour $\gamma$ contains the spectrum of the operator $Z$ and lies asymptotically in the upper half-plane.}

{\bf Theorem 2.11} {\it Let $T(\lambda)$ be a strongly regular self-adjoint pencil. Then there exists a unique classical solution of equation \eqref{13::2.8} on the semi-axis satisfying radiation condition \eqref{13::2.9} and initial condition \eqref{13::2.11} for $\theta = 3/2$.}

The most important fact of the last theorem is that existence and uniqueness of solutions of the half-range Cauchy problem on the semiaxis $\mathbb{R}^+$ is true for operator pencils not necessarily belonging to the class $K$.

The proofs of Theorems 2.9-2.11 can be found in \cite{13::Shi}, \cite{13::Sh2} (see also \cite{13::KS}, where the first results of this kind were obtained).

\subsection{ Existence and Uniqueness of the Solution of the Plane Scattering Problem}

It suffices to define a solution of the scattering problem in the semi-strip
$$
\Omega_{0}=\{x, y | 0 \leq x \leq 2 \pi, \quad a(x) \leq y<\infty\}
$$
The corresponding solution in the whole half-plane $y > a(x$) is restored by quasi-periodic conditions \eqref{13::1.3}.

The substitution of
\begin{equation}\label{13::3.1}
\xi=x, \quad \eta=y-a(x)
\end{equation}
maps the semi-strip $\Omega_0$ onto the standard semi-strip $\Omega'_0$ (see Figure 2). Taking into account that
$$
\xi_{x}=1, \quad \xi_{y}=0, \quad \xi_{x x}=0, \quad \xi_{y y}=0;
$$
$$
\eta_{x}=-a^{\prime}(x), \quad \eta_{y}=1, \quad \eta_{x x}=-a^{\prime \prime}(x), \quad \eta_{y y}=0,
$$
and
$$
u_{x x}=u_{\xi \xi} \xi_{x}^{2}+2 u_{\xi} \eta \xi_{x} \eta_{x}+u_{\eta \eta} \eta_{x}^{2}+u_{\xi} \xi_{x x}+u_{\eta} \eta_{x x},
$$
$$
u_{y y}=u_{\xi \xi} \xi_{y}^{2}+2 u_{\xi} \eta \xi_{y} \eta_{y}+u_{\eta \eta} \eta_{y}^{2}+u_{\xi} \xi_{y y}+u_{\eta} \eta_{y y},
$$
we find that the Helmholtz equation is transformed into the following one
\begin{equation}\label{13::3.2}
u_{\xi \xi}-2 u_{\xi \eta} a^{\prime}(\xi)+u_{\eta \eta}\left(a^{\prime}(\xi)^{2}+1\right)-u_{\eta} a^{\prime \prime}(\xi)+k^{2} u=0
\end{equation}
The form of this equation is more intricate but its coefficients do not depend on $\eta$ and the advantage is that $\xi$ and $\eta$ belong to the domain $\Omega'_0$, where the separation of


variables can be realized. Looking for solutions of the form $u(\xi, \eta)=e^{i \lambda \eta} f(\xi)$ and taking into account quasi-periodic conditions \eqref{13::1.3} we come to the spectral problem
\begin{equation}\label{13::3.3}
-f^{\prime \prime}-k^{2} f+\lambda i\left(2 a^{\prime}(\xi) f^{\prime}+a^{\prime \prime}(\xi) f\right)+\lambda^{2}\left(a^{\prime}(\xi)^{2}+1\right) f=0
\end{equation}
with the boundary conditions
\begin{equation}\label{13::3.4}
f(0)=e^{-i v} f(2 \pi), \quad f^{\prime}(0)=e^{-i v} f^{\prime}(2 \pi).
\end{equation}

Since functions \eqref{13::1.5} represent a complete set of elementary quasi-periodic solutions of the Helmholtz equation, the functions
\begin{equation}\label{13::3.5}
\begin{aligned} f_{n}^{\pm}(x) &=u_{n}^{\pm}(x, a(x))=e^{\pm i \lambda_{n} a(x)} e^{i \mu_{n} x} \\ \mu_{n} &=\frac{v}{2 \pi}+n, \quad \lambda_{n}=\sqrt{k^{2}-\mu_{n}^{2}} \end{aligned}
\end{equation}
form a complete set of all eigenfunctions of the spectral problem \eqref{13::3.3} and \eqref{13::3.4}, which correspond to the eigenvalues $\lambda_n$. The set of eigenfunctions of the problem \eqref{13::3.3}, \eqref{13::3.4} can also be found by a straightforward calculation. Substituting $f(\xi)=z(\xi) e^{i \lambda a(\xi)}$ in \eqref{13::3.3} and \eqref{13::3.4} we find
$$
-z^{\prime \prime}+\left(\lambda^{2}-k^{2}\right) z=0,
$$
$$
z(0)=e^{-i v} z(2 \pi), \quad z^{\prime}(0)=e^{-i v} z^{\prime}(2 \pi).
$$
Since $e^{i \mu_{n} x}$ form a complete set of eigenfunctions of this problem, we obtain that
set \eqref{13::3.5} possesses the same property with respect to problem \eqref{13::3.3}, \eqref{13::3.4}.	

It is easily seen that in the non-resonant case all eigenvalues $\lambda_{n}=\sqrt{k^{2}-\mu_{n}^{2}}$ are simple provided $ \nu / \pi \notin \mathbb{Z}$. The location of $\lambda_n$ for values $\nu$ close to $0$ is shown in Figure 3.

If $ \nu / \pi \in \mathbb{Z}$ but $ \nu / 2\pi \notin \mathbb{Z}$, then two eigenfunctions
$$
e^{i a(x) \sqrt{k^{2}-(n+1 / 2)^{2}}} \sin (n+1 / 2) x, \quad e^{i a(x) \sqrt{k^{2}-(n+1 / 2)^{2}}} \cos (n+1 / 2) x
$$


correspond to the eigenvalues $\lambda_{n}=\sqrt{k^{2}-(n+1 / 2)^{2}}$. Finally, if $ \nu / 2\pi \in \mathbb{Z}$  then the pair of eigenfunctions
$$
e^{i a(x) \sqrt{k^{2}-n^{2}}} \cos n x, \quad e^{i a(x) \sqrt{k^{2}-n^{2}}} \sin n x,
$$
correspond to all eigenvalues $\lambda_{n}=\sqrt{k^{2}-n^{2}} \neq \pm k$. The extremal real eigenvalues $\pm \lambda_{0}=\pm k$ are simple; the corresponding eigenfunctions are $e^{\pm i k a(x)}$.

In the resonant case zero is the eigenvalue of pencil \eqref{13::3.3}, \eqref{13::3.4} of algebraic multiplicity 2 or 4. If $ \nu / \pi \notin \mathbb{Z}$, then the only eigenfunction $f_{n_{0}}(x)=e^{i\left(n_{0}+\nu / 2 \pi\right) x}$ corresponds to this eigenvalue (here no is defined by the equality $k = n_0 + \nu/2\pi$), and there is an associated function that coincides with $f_{n_{0}}(x)$ (we omit here elementary calculations). If $ \nu / \pi \in \mathbb{Z}$, then two eigenfunctions $e^{\pm i\left(n_{0}+\nu/ 2 \pi\right) x}$ correspond to 0, and there are associate functions coinciding with the previous ones.

According to the general definition (see Section 2) the functions $\left\{f_{n}^{+}(x)\right\}_{n=-\infty}^{\infty}$ defined in \eqref{13::3.5} form the half of the root functions of pencil \eqref{13::3.3}, \eqref{13::3.4}. The same is true in the resonant case, since the lengths of Jordan chains do not exceed 2. It is worth mentioning that Rayleigh \cite{13::Rl} calculated these waves in the case of a vertical incident wave ($\nu = 0$) assuming that $a(x)$ is an even function with respect to $x = 0$ and $x = \pi$. In this case a solution $u(x, y)$ of the scattering problem the same property $u^{\prime}(0, y)=u^{\prime}(\pi, y)=0$.

If conditions \eqref{13::3.4} are replaced by $f^{\prime}(0)=f^{\prime}(\pi)=0$, then the functions
\begin{equation}\label{13::3.6}
f_{n}(x)=e^{i a(x) \sqrt{k^{2}-n^{2}}} \cos n x, \quad n=0,1, \ldots
\end{equation}
form the half of the root functions of the corresponding pencil. System \eqref{13::3.6} is called the Rayleigh system. Suppose that the Rayleigh system is minimal, say
in the space $\mathbf{L}_{2}(0, \pi)$. In this case a solution of the scattering problem can be represented by a formal series
$$
u(x, y)=\sum_{n=0}^{\infty}\left(v_{\varphi}, f_{n}^{*}\right) f_{n}(x) e^{i \sqrt{k^{2}-n^{2}}}(y-a(x))
$$
where $v_{\varphi}=e^{i k a(x)}$ and $\left\{f_{n}^{*}\right\}$ is a biorthogonal system with respect to $\left\{f_{n}^{+}\right\}$. However, we are not aware of papers where the minimality or the completeness of the Rayleigh system (or the generalized Rayleigh system defined by \eqref{13::3.5}) is proved. Moreover, the minimality and the completeness do not guarantee the convergence of the series to the solution $u(x,y)$. Hence a rigorous justification of the Fourier method for the Rayleigh problem seems to be a hard task (see Theorem 3.4 below).

We intend to apply the results of Section 2 to solve the scattering problem. Let us represent pencil  \eqref{13::3.3},  \eqref{13::3.4} in the abstract form. For $s = 1$ ($s = 2$) denote by $W_{U}^{S}[0,2 \pi]$ the subspace of the Sobolev space $W_{2}^{s}[0,2 \pi]$ consisting of functions satisfying the first boundary condition \eqref{13::3.4} (both conditions \eqref{13::3.4}). The intermediate spaces
$$
W_{U}^{\theta}[0,2 \pi]=\left[W_{U}^{2}[0,2 \pi], L_{2}[0,2 \pi]\right]_{\theta}, \quad 0 \leq \theta \leq 2
$$
are defined by interpolation (see \cite[Ch.1]{13::LM}).

In the space $\mathbf{L}_{2}[0,2 \pi]$, let us define the operators
$$
\begin{aligned} H f &=-f^{\prime \prime}+f, & \mathcal{D}(H) &=W_{U}^{2}[0,2 \pi] \\ G f &=i\left(2 a^{\prime}(x) f^{\prime}+a^{\prime \prime}(x) f\right), & \mathcal{D}(G)&=W_{U}^{1}[0,2 \pi] \\ F f &=\left(a^{\prime}(x)^{2}+1\right) f, &  \mathcal{D}(F) &=\mathbf{L}_{2}[0,2 \pi]. \end{aligned}
$$
Further it is assumed that $a(x) \in W_{2}^{2}[0,2 \pi]$. Now, problem \eqref{13::3.3}, \eqref{13::3.4} is represented in the form
$$
T(\lambda) f=0, \quad T(\lambda)=\lambda^{2} F+\lambda G+H-V, \quad V=\left(k^{2}+1\right) I
$$
{\bf Proposition 3.1} {\it The pencil $T (\lambda)$ is self-adjoint and strongly elliptic.}

{\bf Proof:} It is obvious that $H=H^{*} \gg 0$. Integrating by parts we find
\begin{equation}\label{13::3.7}
i\left(2 a^{\prime}(x) f^{\prime}+a^{\prime \prime}(x) f, f\right)=i\left(a^{\prime}(x) f^{\prime}, f\right)-i\left(f, a^{\prime}(x) f^{\prime}\right).
\end{equation}
Hence the quadratic form $(Gf, f)$ is real and $G$ is symmetric. Since
$$
(H f, f)=\left(f^{\prime}, f^{\prime}\right)+(f, f)=\left\|H^{1 / 2} f\right\|^{2}
$$
we have $\mathcal{D}\left(H^{1 / 2}\right)=W_{U}^{1}[0,2 \pi] \subset \mathcal{D}(G)$. Finally, let us prove estimate \eqref{13::2.1}. We shall use the inequality
$$
(f, f) \geq M^{-2}\left(a^{\prime} f, a^{\prime} f\right), \quad \text { where } \quad M=\max _{0 \leq x \leq 2 \pi}\left|a^{\prime}(x)\right|
$$

Bearing in mind \eqref{13::3.7}, we obtain for $\lambda \in \mathbb{R}$

\begin{multline}
(T(\lambda) f, f) \geq\left(f^{\prime}, f^{\prime}\right)-k^{2}(f, f)-2\left|\lambda \operatorname{Im}\left(f^{\prime}, a^{\prime} f\right)\right| \\
+\lambda^{2}\left(a^{\prime} f, a^{\prime} f\right)+\lambda^{2}(f, f) \geq\left\|f^{\prime}\right\|^{2}-2|\lambda|\left\|f^{\prime}\right\|\left\|a^{\prime} f\right\| \\
+\left(1+\frac{1}{2} M^{-2}\right)\left\|a^{\prime} f\right\|^{2} \lambda^{2}+\frac{1}{2} M^{-2} \lambda^{2}\|f\|^{2}-k^{2}\|f\|^{2} \\
\geq\left[1-\left(1+\frac{1}{2} M^{-2}\right)^{-1}\right]\left\|f^{\prime}\right\|^{2}+\frac{1}{2} M^{-2} \lambda^{2}\|f\|^{2}-k^{2}\|f\|^{2} \\
>\left(2 M^{2}+1\right)^{-1}\left[(H f, f)+\lambda^{2}(f, f)\right]-\left(k^{2}+1\right)(f, f).
\end{multline}

This proves the proposition. $\square$

{\bf Proposition 3.2}{\it The pencil $T (\lambda)$ is regular elliptic and, moreover, strongly regular elliptic.}

{\bf Proof:} Estimate \eqref{13::2.2} can be obtained by a straightforward calculation of the resolvent kernel of the integral operator  $T^{-1}(\lambda)$. First, one has to prove by standard means that there is a pair of solutions of equation \eqref{13::3.3} having the asymptotics
$$
f^{\pm}(\lambda, x)=e^{(a(x) \pm i x) \lambda}\left(1+O\left(\lambda^{-1}\right)\right), \lambda \rightarrow \infty
$$
if $\lambda$, is located in one of the quadrants that are formed by the real and the imaginary axes. Then these solutions have to be substituted in the well-known formulas for the Green function (see \cite[Ch.1]{13::Na}). A detailed proof of estimates of the type  \eqref{13::2.2} for ordinary differential pencils of arbitrary order can be found in the work of Pliev \cite{13::P}.

To prove the strong regularity we recall Proposition 2.5. Since the operator
$a^{\prime \prime}(x) H^{-1 / 2}$ is compact in the space $\mathfrak{H}=\mathbf{L}_{2}$ (provided $a \in W_{2}^{2}$), it suffices to obtain the estimate
\begin{equation}\label{13::3.8}
\left\|F^{-1 / 2} a^{\prime}(x) y^{\prime}\right\|^{2} \leq(1-\varepsilon)^{2}(H y, y), \quad \varepsilon>0
\end{equation}
for functions $y \in \mathcal{D}(H)$. We have
$$
\left|F^{-1 / 2} a^{\prime}(x)\right|=\left|\left(1+a^{\prime}(x)^{2}\right)^{-1 / 2} a^{\prime}(x)\right|<1-\varepsilon
$$
for some $\varepsilon > 0$. Therefore,
$$
\left\|F^{-1 / 2} a^{\prime}(x) y^{\prime}\right\|^{2} \leq(1-\varepsilon)^{2}\left\|y^{\prime}\right\|^{2}=(1-\varepsilon)^{2}(H y, y).
$$
The proposition is proved. $\square$

{\bf Proposition 3.3} {\it The sign characteristics of the eigenfunctions of $T (\lambda)$ are defined by the relations
$$
\varepsilon_{n}^{\pm}=\operatorname{sign}\left(T^{\prime}\left(\pm \lambda_{n}\right) f_{n}^{\pm}, f_{n}^{\pm}\right)=\operatorname{sign}\left(\pm \lambda_{n}\right), \quad \lambda_{n} \in \mathbb{R}
$$
Hence the functions$\left\{f_{n}^{+}(x)\right\}_{n=-\infty}^{\infty}$ defined in \eqref{13::3.5} form the half of the eigen and associated functions of $T (\lambda)$.}

{\bf Proof:} For $\lambda_{n}>0$ we find
$$
\begin{array}{l}{\left(T^{\prime}\left(\lambda_{n}\right) f_{n}^{+}, f_{n}^{+}\right)=\left(G f_{n}^{+}, f_{n}^{+}\right)+2 \lambda_{n}\left(F f_{n}^{+}, f_{n}^{+}\right)} \\ {=-\mu_{n}\left(a^{\prime} f_{n}^{+}, f_{n}^{+}\right)+i\left(a^{\prime \prime} f_{n}^{+}, f_{n}^{+}\right)+2 \lambda_{n}\left(f_{n}^{+}, f_{n}^{+}\right)} \\ {=\mu_{n}\left(a^{\prime}, 1\right)+i\left(a^{\prime \prime}, 1\right)+4 \pi \lambda_{n}=4 \pi \lambda_{n}}\end{array}
$$
as the functions $a$ and $a'$ are periodic.$\square$

{\bf Theorem 3.4} {\it The generalized Rayleigh system $\left\{f_{n}^{+}(x)\right\}_{n=-\infty}^{\infty}$defined in \eqref{13::3.5}  is minimal and complete in the spaces $W_{U}^{\theta}[0,2 \pi]$ if $0 \leq \theta \leq 3 / 2$. For any function $g \in W_{U}^{\theta}$ the Fourier series
\begin{equation}\label{13::3.9}
u(x, \eta)=\sum_{n=-\infty}^{\infty}\left(g, f_{n}^{*}\right) f_{n}^{+}(x) e^{i \lambda_{n} \eta},
\end{equation}
converges for $\eta>\eta_{0}$ in the norm of $W_{U}^{\theta}$ provided $\eta_0$ is large enough (here $\left\{f_{n}^{*}\right\}$  the biorthogonal system in $\mathbf{L}_2$ with respect to $\left\{f_{n}\right\}$). Moreover, the function $u(x, \eta)$) admits a holomorphic continuation in a sector $|\arg \eta|<\varepsilon$ for sufficiently small $\varepsilon$, and there exists
$$
\mathrm{s}-\lim _{\eta \rightarrow+0} u(x, \eta)=g
$$
(the limits is understood in the norm of $W_{2}^{\theta}$)}

{\bf Proof:} The completeness and the minimality is the consequence of Theorem 2.8. The convergence of series \eqref{13::3.9} follows from representation \eqref{13::2.12} if there is a sequence of semicircles $|\lambda|=r_{k} \rightarrow \infty$	in	the	upper	 half-plane such that
\begin{equation}\label{13::3.10}
\left\|(Z-\lambda)^{-1}\right\| \leq e^{\eta_{0}|\operatorname{Im} \lambda|}=e^{\eta_{0}|\sin \varphi|_{\kappa_{k}}}, \quad|\lambda|=r_{k}
\end{equation}
Let us prove \eqref{13::3.10}. Without loss of generality suppose that $Z$ is invertible (equivalently, $T(0)$ is invertible). It follows from Theorem 2.9 that
$$
Z=K H^{1 / 2}, \quad Z_{1}=\left(H^{1 / 2}-\left(k^{2}+1\right) H^{-1 / 2}\right) K^{-1} F^{-1},
$$
where $K$ and $K^{-1}$ are bounded. Hence
\begin{equation}\label{13::3.11}
(Z-\lambda)^{-1}=T^{-1}(\lambda)\left(Z_{1}-\lambda\right) F.
\end{equation}
Recall that the Green function of the integral operator $T^{-1}(\lambda)$ (see Proposition 3.2) is a meromorphic function of order 1 and of finite type. By virtue of the Titchmarsh theorem for any $C$ exceeding the type there is a sequence	$r_{k} \rightarrow \infty$ such
that $ \left\|T^{-1}(\lambda)\right\| \leq \exp (C|\lambda|) \text { for }|\lambda|=r_{k}$. Hence estimate \eqref{13::3.10} outside a double sector $\Lambda_{\varphi_{0}}$ containing the real axis follows from  \eqref{13::3.11} . Since $T(\lambda)$ is strongly elliptic, estimate \eqref{13::3.10} inside a small double sector follows from \cite[Theorem 1.7]{13::Sh2}. Hence, \eqref{13::3.10} is proved and series \eqref{13::3.9} converges for $\eta > C$. The last assertion of the theorem follows from the fact that the operator $Z$ is a generator of a holomorphic semigroup in the spaces $W_{U}^{\theta}$.	 $\square$

We remark that series \eqref{13::3.9} does not converge for all $\eta > 0$ and arbitrary functions $g$, i.e., the system in question does not form a basis for the Abel summability method of order 1. Let us clarify our claim for Rayleigh system \eqref{13::3.6} assuming in addition that $a(x)$ is holomorphic.

{\bf Proposition 3.5}{\it Let $\lambda_{n}=\sqrt{k^{2}-n^{2}}$, $f_n(x)$ be defined by \eqref{13::3.6} and $a(x)$ be holomorphic on $\mathbb{R}$. If the series
\begin{equation}\label{13::3.12}
u(x, \eta)=\sum_{n=0}^{\infty}\left(g, f_{n}^{*}\right) f_{n}(x) e^{i \lambda_{n} \eta}
\end{equation}
converges in $\mathbf{L}_{2}$ for all $\eta > 0$, then $g(x)$ is holomorphic at all points $x \in(0, \pi)$ except the points where $a(x)$ attains the maximum.}

{\bf Proof:} Let $M=\max a(x)$. For any $\varepsilon > 0$ we have $\left\|f_{n}\right\| \geq e^{(M-\varepsilon) n}$ for all sufficiently large $n$. Assuming that series \eqref{13::3.12} converges in $\mathbf{L}_{2}$  for $\eta=\varepsilon$ we get the estimate |($\left|\left(g, f_{n}^{*}\right)\right|<e^{(2 \varepsilon-M) n}$ (under our assumption the norms of functions in series \eqref{13::3.12} tend to 0 as $n \rightarrow \infty$). If $a (\xi) < M$ and $\varepsilon$ is small enough, then there is a neighborhood $U$ of $\xi$ such that series \eqref{13::3.12} converges uniformly for all $x \in U$ and $0 \leq \eta \leq \varepsilon$. Since the terms of the series are holomorphic functions of $x$, the sum is also holomorphic at $\xi$. According to Theorem 3.4 this sum coincides with $g(x)$. Thus $g(x)$ is holomorphic at $\xi$.	$\square$

{\bf Remark 3.6} It follows from the proof of Proposition 3.5 that $g(x)$ admits a holomorphic continuation in the domain
$$
\Lambda=\{z | \operatorname{Re} a(z)-M<0\}
$$
if the series \eqref{13::3.12} is summable by the Abel method of order 1 (i.e., converges for all $\eta > 0$). We do not know if the converse assertion is also true.

Let us formulate the basic result of this section.

{\bf Theorem 3.7} {\it There is the only solution $u(x,y)$ of scattering problem \eqref{13::1.2}- \eqref{13::1.4},  \eqref{13::1.6}. If $C_{R}=\left\{x, y | x^{2}+y^{2}<R^{2}\right\}$  then $u(x, y) \in W_{2, l o c}^{2}\left(\Omega \cap C_{R}\right)$ for any $R > 0$. For large $y$ the solution $u(x,y)$ is represented by the Fourier series with respect to the generalized Rayleigh system $\left\{f_{n}^{+}(x)\right\}$.}

{\bf Proof: }It suffices to put $\eta=y-a(x)$ and recall Theorems 2.11 and 3.4.	$\square$

\subsection{Scattering by Two-periodic Surfaces in M3}

Let a smooth function $z = a(x, y)$ be $2\pi$-periodic with respect to both variables $x,y$. This function defines the surface (the grating) $S$ in $\mathbb{R}^{3}$. Let
$$
v(x, y, z)=e^{-i k(x \cos \alpha+y \cos \beta+z \cos \gamma)}
$$
be the wave incident onto this surface with directing vector $\bar{\varphi}=(\cos \alpha, \cos \beta, \cos \gamma)$, $|\bar{\varphi}|=1$.


The wave $v$ satisfies the Helmholtz equation
\begin{equation}\label{13::4.1}
\Delta u+k^{2} u=0, \quad u=u(x, y, z)
\end{equation}
and quasi-periodic conditions
\begin{equation}\label{13::4.2}
\begin{array}{ll}{u(0, y, z)=e^{-i \nu} u(2 \pi, y, z),} & {u_{x}^{\prime}(0, y, z)=e^{-i \nu} u_{x}^{\prime}(2 \pi, y, z)}, \\ {u(x, 0, z)=e^{-i \delta} u(x, 2 \pi, z),} & {u_{y}^{\prime}(x, 0, z)=e^{-i \delta} u_{y}^{\prime}(x, 2 \pi, z)}\end{array},
\end{equation}

where $\nu=2 \pi k \cos \alpha, \delta=2 \pi k \cos \beta$. Scattered waves do the same. It is assumed that the wave v reaches all the points of the surface (there are no shadows). The problem is to find a quasi-periodic solution of \eqref{13::4.1} that is represented (in some sense) as a superposition of scattered waves and satisfies the "full reflection" condition
\begin{equation}\label{13::4.3}
u(x, y, a(x, y))=v(x, y, a(x, y)).
\end{equation}

To define the scattered waves let us find all quasi-periodic solutions of equation \eqref{13::4.1} in the cylinder with the base $K_{2 \pi}=[0,2 \pi] \times[0,2 \pi]$. Separating the variables $x$ and $y$, we find that the elementary solutions have the representation
\begin{equation}\label{13::4.4}
u_{n j}^{\pm}(x, y, z)=e^{i \mu_{n} x} e^{i \rho_{j} y} e^{\pm i \lambda_{n j} z},
\end{equation}
where
\begin{equation}\label{13::4.5}
\begin{aligned} \mu_{n} &=\frac{\nu}{2 \pi}+n, \quad \rho_{j}=\frac{\delta}{2 \pi}+j \\ \lambda_{n j} &=\sqrt{k^{2}-\mu_{n}^{2}-\rho_{j}^{2}}, \quad n, j \in \mathbb{Z} \end{aligned}.
\end{equation}
The branch of the square root is chosen so that either $\lambda_{n j} \geq 0$ or $\operatorname{Im} \lambda_{n j}>0$. The set of scattered waves consists of outgoing propagating waves and decaying waves. It coincides with the system $\left\{u_{n j}^{+}\right\}$. We can not guarantee that the reflected solution is a finite or infinite superposition of the scattered waves. Therefore, we are looking for solutions satisfying the radiation condition at $z \rightarrow \infty$, namely
\begin{equation}\label{13::4.6}
u(x, y, z)=\sum_{\mu_{n}^{2}+\rho_{j}^{2} \leq k^{2}} c_{n j} e^{i\left(\mu_{n} x+\rho_{j} y+\lambda_{n j} z\right)}+o(1).
\end{equation}
Here $o(1) \rightarrow 0$ as $z \rightarrow \infty$ and $c_{nj}$ are unknown constants to be determined.

The substitution of
$$
\xi=x, \quad \eta=y, \quad \zeta=z-a(x, y)
$$
maps the half-cylinder
$$
\Omega=\{x, y, z | 0 \leq x \leq 2 \pi, 0 \leq y \leq 2 \pi, a(x, y) \leq z<\infty\}
$$
onto a usual half-cylinder whose base is the square $K_{2 \pi}$. This substitution transforms equation \eqref{13::4.1} to the form
$$
\begin{array}{l}{u_{\xi \xi}+u_{\eta \eta}+u_{\zeta \zeta}-2 u_{\xi} a_{\xi}^{\prime}-2 u_{\eta \zeta} a_{\eta}^{\prime}} \\ {\quad+u_{\zeta \zeta}\left(a_{\xi}^{\prime 2}+a_{\eta}^{\prime 2}\right)-u_{\zeta} a_{\xi}^{\prime \prime}-u_{\zeta} a_{\eta}^{\prime \prime}+k^{2} u=0}\end{array}
$$
Separating the variable $\zeta$ by putting $u=f(\xi, \eta) e^{i \lambda \zeta}$ and taking into account boundary conditions \eqref{13::4.2} , we find
\begin{equation}\label{13::4.7}
T(\lambda) f=\left(F \lambda^{2}+\lambda G+H-\left(k^{2}+1\right) I\right) f=0,
\end{equation}
where the operators $H$, $G$, $F$  are defined as follows
$$
\begin{aligned} H f &=-f_{\xi \xi}^{\prime \prime}-f_{\eta \eta}^{\prime \prime}+f, &  \mathcal{D}(H)&=W_{U}^{2}\left[K_{2 \pi}\right] \\ G f &=i\left(2 a_{\xi}^{\prime} f_{\xi}^{\prime}+2 a_{\eta}^{\prime} f_{\eta}^{\prime}+\left(a_{\xi \xi}^{\prime \prime}+a_{\eta \eta}^{\prime \prime}\right) f\right), &  \mathcal{D}(G)&=W_{U}^{1}\left[K_{2 \pi}\right] \\ F f &=\left(1+a_{\xi}^{\prime 2}+a_{\eta}^{\prime 2}\right) f, & \mathcal{D}(F)&=\mathbf{L}_{2}\left(K_{2 \pi}\right) \end{aligned}
$$
Here $W_{U}^{s}\left[K_{2 \pi}\right]$ is the subspace of the Sobolev space $W_{2}^{s}\left[K_{2 \pi}\right]$ consisting of functions subject to the quasi-periodic boundary conditions. It is easily seen that the
eigenfunctions of pencil \eqref{13::4.7} coincide with the traces of elementary solutions \eqref{13::4.4} on the surface $z = a(x,y)$, i.e.,
\begin{equation}\label{13::4.8}
f_{n j}^{\pm}(x, y)=e^{\pm i \lambda_{n j} a(x, y)} e^{i \mu_{n} x} e^{i \rho_{j} y}.
\end{equation}
This can be checked independently by a straightforward calculation if one puts in \eqref{13::4.7}
$$
f(\xi, \eta)=v(\xi, \eta) e^{i \lambda a(\xi, \eta)}.
$$
Then the function $v$ satisfies the equation
$$
\Delta v+\left(k^{2}-\lambda^{2}\right) v=0
$$
and quasi-periodic boundary conditions. This holds for functions $v_{n j}=e^{i \mu_{n} x} e^{i \rho_{j} y}$, where $\mu_n$ and $\rho_j$ are defined by \eqref{13::4.5}. We remark that the multiplicities of the
eigenvalues $\lambda_{n j}=\sqrt{k^{2}-\mu_{n}^{2}-\rho_{j}^{2}}$ may grow as $k \to \infty$. For example, if $\nu =
\delta = 0$ and $k^2 =50$, then zero is the eigenvalue of the geometric multiplicity 16 ($\lambda_{nj} = 0$ for $n = \pm 1, \pm 4, \pm 6, \pm 7$ and $j = \pm 7, \pm 6, \pm 4, \pm1$, respectively) and of algebraic multiplicity 32 (all Jordan chains have length 2 and associated functions coincide with eigenfunctions).

Let us prove that $T (\lambda)$ is an elliptic pencil. The properties
$$
H=H^{*} \gg 0, \quad 0 \ll F \ll \infty, \quad G \subset G^{*},
$$
$$
\mathcal{D}(G) \subset W_{U}^{1}\left(K_{2 \pi}\right)=\mathcal{D}\left(H^{1 / 2}\right),
$$
are trivial to check. Further,
$$
\begin{array}{l}{(T(\lambda) f, f)=\left\|f_{\xi}^{\prime}\right\|^{2}+\left\|f_{\eta}^{\prime}\right\|^{2}-2\left(f_{\xi}^{\prime}, a_{\xi}^{\prime} f\right)-2 \lambda \operatorname{Im}\left(f_{\eta}^{\prime}, a_{\eta}^{\prime} f\right)} \\ {\quad+\lambda^{2}\left(\left(1+a_{\xi}^{\prime 2}+a_{\eta}^{\prime 2}\right) f, f\right)-k^{2}(f, f)>0},\end{array}
$$
provided $\lambda ^2 > k^2$, $\lambda \in \mathbb{R}$. Since the embedding $I: W_{U}^{2}\left(K_{2 \pi}\right) \rightarrow L_{2}\left(K_{2 \pi}\right)$ is compact, the last estimate implies that $T(\lambda)$ is strongly elliptic. It is important in the sequel to find explicitly a double angle where estimate\eqref{13::2.1} holds.

{\bf Proposition 4.1} {\it Let
$$
M_{1}=\max _{x, y \in K_{2 \pi}}\left|a_{x}^{\prime}(x, y)\right|, \quad M_{2}=\max _{x, y \in K_{2 \pi}}\left|a_{y}^{\prime}(x, y)\right|,
$$
$$
\varphi=\operatorname{arctg} \frac{1}{\sqrt{1+M_{1}^{2}+M_{2}^{2}}}.
$$
If $|\arg \pm \lambda|=\theta<\varphi$, then for $|\lambda|=r>r_{0}$ the following estimate holds
\begin{equation}\label{13::4.9}
\operatorname{Re}(T(\lambda) f, f) \geq \varepsilon\left(r^{2}(f, f)+(H f, f)\right)
\end{equation}}

{\bf Proof:} Let $\lambda=r e^{i \theta}$ and let $\alpha, \beta$ be positive numbers such that $\alpha+\beta=1$. We have
\begin{equation}\label{13::4.10}
\operatorname{Re}\left(\left\|f_{\xi}^{\prime}\right\|^{2}-2 r e^{i \theta} \operatorname{Im}\left(f_{\xi}^{\prime}, a_{\xi}^{\prime} f\right)+r^{2} e^{2 i \theta}\left(\alpha+{a_{\xi}^{\prime}}^{2}\right)(f, f)\right)\geq \varepsilon\left(\left\|f_{\xi}^{\prime}\right\|^{2}+r^{2}(f, f)\right)
\end{equation}
for some $\varepsilon=\varepsilon(\theta)$, provided
\begin{equation}\label{13::4.11}
\cos ^{2} \theta-\cos 2 \theta\left(1+\frac{\alpha}{M_{1}^{2}}\right)<0.
\end{equation}
Similarly,
\begin{equation}\label{13::4.12}
\begin{array}{l}{\operatorname{Re}\left[\left\|f_{\eta}^{\prime}\right\|^{2}-2 r e^{i \theta} \operatorname{Im}\left(f_{\eta}^{\prime}, a_{\eta}^{\prime} f\right)+r^{2} e^{2 i \theta}\left(\beta+a_{\xi}^{\prime 2}\right)(f, f)\right]} \\ {\quad \geq \varepsilon\left(\left\|f_{\eta}^{\prime}\right\|^{2}+r^{2}(f, f)\right)}\end{array}
\end{equation}
provided
\begin{equation}\label{13::4.13}
\cos ^{2} \theta-\cos 2 \theta\left(1+\frac{\beta}{M_{2}^{2}}\right)<0.
\end{equation}
The inequalities \eqref{13::4.11} and \eqref{13::4.13} are equivalent to the following ones
$$
\operatorname{tg}^{2} \theta \leq \frac{\alpha}{M_{1}^{2}+\alpha}, \quad \operatorname{tg}^{2} \theta<\frac{\beta}{M_{2}^{2}+\beta}.
$$
For $\alpha=\frac{M_{1}^{2}}{M_{1}^{2}+M_{2}^{2}}, \beta=\frac{M_{2}^{2}}{M_{1}^{2}+M_{2}^{2}}$ last inequalities are equivalent to the condition $|\theta|<\varphi$ . Summing inequalities\eqref{13::4.10} and \eqref{13::4.12} with the chosen $\alpha$ and $\beta$ we obtain estimate \eqref{13::4.9}. $\square$

Analyzing the proof of Proposition 4.1 one can understand that the bound for $\varphi$ is precise, i.e., estimate \eqref{13::4.9} does not hold generally inside the angle $\varphi<\arg \lambda<\pi - \varphi$. Seemingly, $T^{-1}(\lambda)$ has an exponential growth inside this angle. The eigenvalue asymptotics of the Laplace operator on a bounded domain is known, hence, in our problem we have $\lambda_{j}(H)=c j^{p}$ with $p = 1$. Therefore, we can guarantee that the pencil $T (\lambda)$ belong to the class $K$ and we can claim (by virtue of Theorem 2.8) the completeness of the traces $\left\{f_{n j}^{+}(x, y)\right\}$ of the scattered waves only in the case $\varphi<\pi / 4$. The problem whether the system  $\left\{f_{n j}^{+}\right\}$ is complete in the case $\varphi \geq \pi / 2$ is open. Nevertheless, the following basic result is true.

{\bf Theorem 4.2} {\it There exists the only solution $u(x,y,z)$ of scattering problem
\eqref{13::4.1} -\eqref{13::4.3}, \eqref{13::4.6}.}

{\bf Proof:} Elliptic pencil \eqref{13::4.7} is strongly regular. One can prove this fact repeating the arguments of Proposition 2.5. Putting $z=\zeta+a(x, y)$ and recalling Theorem 2.11 we obtain the assertion of the theorem.	 $\square$

\renewcommand{\refname}{{\large\rm Bibliography for Section~13}}

\newpage
\section{On the stability of a top with a cavity filled with a viscous fluid}
\subsection{Introduction}

We consider small oscillations of a rotating top with a cavity entirely filled with an incompressible viscous fluid. There are no restrictions on the distribution of mass in the body and on the form of the cavity. In the nonperturbed state, the velocity field of the fluid is that of a rigid body rotating together with the top about the vertical axis that coincides with one of the principal axes of inertia of the system. The shell has a fixed point. The system is moving in a gravitational field. The center of gravity lies on the principal axis of the nonperturbed rotation.
The main goal of this work is to obtain a stability criterion for the system and to investigate spectral properties of the evolution operator corresponding to the linear equations. This problem has a long history. In the first place, the investigations of Sobolev \cite{14::Sob}, Rumyantsev \cite{14::Rum2}, and Chernous'ko \cite{14::Cher3}, including sufficient conditions for stability and instability of the top, must be noted (see also the monographs \cite{14::Rum4},\cite{14::Kop5}). Rumyantsev \cite{14::Rum2},\cite{14::Rum4} showed that the top is stable if both
\begin{equation}\label{eq:1}
n_{1}=a_{0}-a_{1}-k / \omega^{2} \quad \text{and} \quad n_{2}=a_{0}-a_{2}-k / \omega^{2}
\end{equation}
are positive. Here $\omega$ is the angular velocity of the top. The other constants are defined in \S1. In particular, if the top has a symmetry axis of order greater than one, then $a_1= a_2$ and the positiveness of the number $n_1 =n_2$ is sufficient for the stability. In the absence of gravity, this sufficient condition means that the axis of the nonperturbed rotation is the axis of the greatest central moment of inertia. Rumyantsev obtained this condition with the help of the Lyapunov second method, following Sobolev, who obtained the same condition for the case of an ideal fluid (i.e., zero viscosity). He showed that the evolution operator corresponding to the linear problem is self-adjoint in a Hilbert space with indefinite metric. The metric is definite if $n_1 =n_2 > 0$. On the other hand, Chernous'ko \cite{14::Cher3} obtained conditions of instability in the problem. He found asymptotic expansions for solutions of the linear equations of motion of the top with respect 
 to the powers of the Reynolds number and showed that the top is not stable if the sods of the nonperturbed rotation is the axis of the minimal or middle moment of inertia. Later, Smirnova \cite{14::Smir21} obtained this result without certain symmetry assumptions of Chernous'ko. She also showed that the rotation about the axis of greatest moment of inertia is the unique stable rotation if the viscosity of the fluid is sufficiently small and the cavity is toroidal \cite{14::Smir6}. Chernous'ko \cite{14::Cher7} obtained this result for the top with a spherical cavity. In \cite{14::Rum4}, Rumyantsev obtained some implicit conditions of instability.

In this paper, we show that the Unear equation for the problem can be represented in the form with an evolution operator that is dissipative in a space with indefinite metric (a Pontryagin space). Hence, we can use nontrivial results of the theory of such operators and obtain necessary and sufficient conditions of stability. Moreover, we obtain the exact value of the instability index of the problem. The assumptions on the symmetry of the top, the absence of gravity, and on the large or small viscosity turn out to be superfluous. Furthermore, unlike the previous works, we.investigate the stability of the infinite-dimensional system as a whole but not its finite-dimensional ``rigid'' part. One of the main results of the paper is that the instability index of the problem does not depend on the nonzero viscosity and on the form of the
cavity. It is equal to the number of negative eigenvalues of the matrix $\left(\begin{array}{cc}{n_{1}} & {0} \\ {0} & {n_{2}}\end{array}\right)$. This fails to be true if the viscosity is zero (see \cite{14::Sob},\cite{14::Yur8}).

The methodology of our approach is close to that of the work of Sobolev \cite{14::Sob} However, in the derivation of the equations of motion of the system, Sobolev used the coordinate system rotating with respect to the inertial frame at the same angular velocity as the non-perturbed top. These equations turn out to be complicated for analysis. They admit a representation in an operator form only in the case of zero viscosity (see \S5). Following [4], we write out the equations of motion of the top in a coordinate system attached to the rigid shell and obtain linear equations that admit a convenient operator form (cf. \cite{14::Yur8},\cite{14::Yur9}).
The work is divided into five sections. In \S1 we more exactly define the setting of the problem and write out the equations of motion of the system. In \S2 the linear equations are represented in the form of an operator equation in a Hilbert space. We show that the evolution operator $T$ of the system is a maximal dissipative operator in the Pontryagin space. In \S3 we find the number of eigenvalues of the operator $T$ in the open lower half-plane and prove that this number coincides with the instability index. In \S4 the basis property of the eigenfunctions of the operator T is investigated and the description of the spectrum for large viscosity is given. In \S5 we consider a symmetric top. The family of invariant subspaces of the evolution operator $T$ is given. The subspaces that contain unstable motions of the top are singled out. The correspondence between the operator $T$ and Sobolev's operator is given.

\
subsection*{Notations. Spaces and operators of fluid mechanics.}
Suppose that a rigid body occupies a bounded region $\Omega_1$ in $ \mathbb{R}^3$ and contains a cavity $\Omega$ inside, that is, $\Omega\subset\Omega_1$. The cavity is completely filled with fluid. Further, assume that $\Omega$ is a domain in $\mathbb{R}^3$. The rigid body itself occupies the domain $\Omega_1\setminus \Omega$ and is called the shell. Speaking of a body with frozen fluid, we imply the following: the region ft) as a whole is considered as a rigid body; the density of this body is equal to that of the original one in $\Omega_1\setminus \Omega$ and is equal to the density of the fluid in $\Omega$. The body is rotating about a fixed origin of coordinates. Let $e_o$, $e_1$, $e_2$ be the unit vectors of the principal axes of inertia of the body with frozen fluid. Consider the orthonormal frame $Oe_0e_1e_2$ rigidly attached to the body and a fixed orthonormal frame $Of_0f_1f_2$. In the non-perturbed state, $e_0$ coincides with $f_0$. The motion of the system ``(body+fluid)'
 ' is described by three variables $(z, w, v(x))$, namely, the coordinates $z$ of $f_0$, the angular velocity w of the shell, and the velocity field $v(x)$ of the fluid. Here $x = (x_0, x_1,x_2)\in\Omega$ are the coordinates of a point. The four variables $z$, $w$, $v(x)$, and $x$ axe vectors in the frame $Oe_0e_1e_2$.

The finiteness of kinetic energy implies that the values of the velocity field $v$ belong to the subspace $J_0(\Omega)\subset L^3_2(\Omega)$. The subspace $J_0(\Omega)$ is the closure in $L^3_2$ of the set of smooth solenoidal vectors compactly supported in $\Omega$ (i.e., $\div w = 0$). We have the Weyl decomposition
$$
L_{2}^{3}(\Omega)=J_{0}(\Omega) \oplus G(\Omega)
$$
Here $G(\Omega)$ consists of the functions $\operatorname{grad} p(x)$, where $p(x)$ is a scalar locally square integrable function whose first generalized derivatives belong to $L_2(\Omega)$.

The following operators are well known in fluid mechanics (see \cite{14::Lad10} and \cite{14::Kop5}). The vector
$$
B^{*} v=\int_{\Omega}[x, v(x)] d x
$$
is called the gyrostatic moment of the fluid. Here $[\cdot,\cdot]$ denotes the vector product in $\mathbb{R}^3$. Obviously, $B^{*}: J_{0}(\Omega) \rightarrow \mathbb{R}^{3}$ is a bounded operator and the adjoint operator is defined by $B w=P_{0}([w, x])$, where $P_0$ denotes the orthogonal projection of $L_{2}^{3}(\Omega)$ onto $J_0(\Omega)$. It is easily shown that $\operatorname{rot} Bw \equiv 2w$. It is known [4] that if the cavity is simply connected, then the operator $B: \mathbb{R}^{3} \rightarrow J_{0}(\Omega)$ is uniquely determined by this identity.

Let $A$ be the Laplace operator acting in the subspace $J_0(\Omega)$. The domain of this operator consists of smooth functions $v \in J_0(\Omega)$ compactly supported in $\Omega$. Since the operator $R = P_0\Delta$ is nonnegative, we can define its Friedrichs extension. As before, denote it by $R$. This operator is called the Stokes operator. It is difficult to describe the domain of $R$ if the boundary is not smooth, but it is easy to find the domain of its quadratic form (the domain of $R^{1/2}$). Namely, it coincides with $J_{0}(\Omega) \cap \stackrel{\circ}{H}_{1}(\Omega)$, where $\stackrel{\circ}{H}_{1}(\Omega)$ is
the closure of the set of smooth functions compactly supported in $\Omega$ with respect to the metric in the Sobolev space $H_1(\Omega)$. It is known \cite{14::Lad10} that the Stokes operator is uniformly positive and has a discrete spectrum.

In the sequel, we work with the complex space $\mathbb{C}^3$ instead of the real space $\mathbb{R}^3$. Consider the operators $A$ and $A_0$ on $\mathbb{C}^3$ defined by the following quadratic forms:
$$
(A w, w)=\int_{\Omega_{1}}\|[w, x]\|^{2} d \rho(x), \quad\left(A_{0} w, w\right)=\int_{\Omega_{1} \backslash \Omega}\|[w, x]\|^{2} d \rho(x),
$$
where $\rho(x)$ is the distribution of mass in the shell $\Omega_1\setminus \Omega$ and the distribution of mass of the fluid in $\Omega$. The corresponding matrices are the inertia tensors of the body with frozen fluid and the shell, respectively. According to the definition of the frame $Oe_0e_1e_2$, we have $A = \operatorname{diag}\{a_0, a_1, a_2\}$, where the $a_j$ are the moments of inertia of the body with frozen fluid with respect to the principal axes of inertia $Ox_j$. Further, we assume that $A_0 > 0$, that is, the shell is not weightless.

Consider the operator $H$ in $\mathbb{C}^3$ defined by
\begin{equation}\label{eq:2}
i H x=\left[x, e_{0}\right] \quad \text {or} \quad H=\left(\begin{array}{ccc}{0} & {0} & {0} \\ {0} & {0} & {-i} \\ {0} & {i} & {0}\end{array}\right)
\end{equation}
The operator
$$
G = -2 P_0H
$$
acting in the space $J_0(\Omega)$ is called the gyroscopic operator. Obviously, $G$ is a bounded self-adjoint operator. By $I$ denote the identity operator in $\mathbb{C}^3$ and in $J_0(\Omega)$.

By $v$ we denote the viscosity of the fluid. Throughout the following, unless otherwise specified, we assume that $v\ne 0$. By $k$ we denote the gravity force divided by the cosine of the angle between $f_0$ and $e_0$. We have $k = glm$, where $l$ is the distance from the fixed point to the center of gravity of the system, $m$ is the mass of the system, and $g$ is the gravitational acceleration. We can assume, without loss of generality, that the density of the fluid is equal to 1.

\subsection*{Equations of motion of the top with fluid.}

The evolution of the system is described by the equations (e.g., see \cite{14::Rum4})
\begin{equation}\label{eq:3}
\begin{aligned} \dot{z} &=[z, w] \\ A \dot{w}+B^{*} \dot{v} &=\left[A w+B^{*} v, w\right]+k\left[z, e_{0}\right] \\ \nabla p+\nu \Delta v &=\dot{v}+(v \cdot \nabla) v+2[w, v]+[w,[w, x]]+[\dot{w}, x], \quad \operatorname{div} v=0 \end{aligned}
\end{equation}
The first equation is the kinematic relation, and $z$ is called the Poisson vector. The second equation describes the evolution of the kinetic moment of the system with respect to the point $O$. The third equation is the Navier--Stokes equation of motion of the fluid in the coordinate system attached to the rigid body. Applying the projector $P_0$ to this equation and using the relation
$$
P_{0} \nabla p=0, \quad P_{0}[w,[w, x]]=0, \quad P_{0}(u \cdot \nabla) u=P_{0}([v, \operatorname{rot} v]),
$$
we obtain the equation
$$
B \dot{w}+\dot{v}=P_{0}([v,(2 w+\operatorname{rot} v)])+\nu P_{0} \Delta v.
$$
Let us linearize the first two equations of the motion of the system and the obtained third equation at the following stationary solution of system \eqref{eq:3}:
$$
z_{0}=e_{0}, \quad w_{0}=\omega e_{0}, \quad v_{0}=0,
$$
where $\omega$ is a constant (the angular velocity of the non-perturbed rotation).

Let us substitute the shifts $z_0 + z$, $w_0 + w$, $v_0 + v$ for $z$, $w$, $v$ into \eqref{eq:1} and retain only the linear terms. Taking into account \eqref{eq:2}, we obtain the linearized equations of motion
\begin{equation}\label{eq:4}
\begin{aligned} \dot{z} &=i \omega\left(H z-\omega^{-1} H w\right) \\ A \dot{w}+B^{*} \dot{v} &=i \omega\left(k \omega^{-1} H z-H\left(a_{0} I-A\right) w+H B^{*} v\right) \\ B \dot{w}+\dot{v} &=i \omega\left(-2 P_{0} H+i \nu \omega^{-1} P_{0} \Delta\right) v \end{aligned}.
\end{equation}
It is more convenient to investigate an operator form of these equations.

\subsection{Analysis of the Operator Equation Corresponding to the Linear Equations of Evolution of the System}

The operator form of the linear equations. Consider the Hilbert space
$$
\mathfrak{H}=\mathbb{C}^{3} \times \mathbb{C}^{3} \times J_{0}(\Omega).
$$
The elements of this space are columns $u = (z, w, v)^t$. Here $t$ denotes the transposition. The first two entries of a column lie in $\mathbb{C}^3$ and the last one belongs to $_0(\Omega)$. To each operator in $\mathfrak H$, a $3\times 3$ matrix is naturally assigned. The entries of this matrix are operators acting in the spaces $\mathbb{C}^3$ and $J_0(\Omega)$ and between them. Obviously, system \eqref{eq:4} admits the operator form
\begin{equation}\label{eq:5}
\mathbf{W} \dot{\mathbf{u}}=i \omega \mathbf{M} \mathbf{u}, \quad \mathbf{u} \in \mathfrak{H},
\end{equation}
where $W$ and $M$ are the operator matrices
$$
\mathbf{w}=\left(\begin{array}{ccc}{I} & {0} & {0} \\ {0} & {A} & {B^{*}} \\ {0} & {B} & {I}\end{array}\right), \quad \mathbf{M}=\left(\begin{array}{ccc}{H} & {-\omega^{-1} H} & {0} \\ {k_{0}-1} & {H\left(A-a_{0} I\right)} & {H B^{*}} \\ {0} & {0} & {D}\end{array}\right)
$$
Here the operator $D$ acting in $J_0(\Omega)$ is defined by
$$
D = G + \frac{\nu}{\omega}R,
$$
where $G$ is the gyroscopic operator and $R$ is the Stokes operator. All the operators occurring in the preceding are bounded with the exception of the Stokes operator $R$. Let
$$
\mathcal{D}(\mathbf{M})=\mathbb{C}^{3} \times \mathbb{C}^{3} \times \mathcal{D}(R)
$$
be the domain of $M$. Since the operator $M$ does not possess any special properties, it is difficult to investigate Eq. \eqref{eq:5}. This operator is neither symmetric nor dissipative. Our idea is to ``guess'' an operator $S$ such that Eq.\eqref{eq:5} becomes more symmetric after applying the operator S to it. Namely, $\mathbf{SM}$ becomes dissipative and $\mathbf{SW}$ remains self-adjoint. Let

$$
\mathbf{S}=\left(\begin{array}{ccc}{\omega^{2}(A+N)} & {-\omega I} & {0} \\ {-\omega A} & {I} & {0} \\ {-\omega B} & {0} & {I}\end{array}\right), \quad \text { where } N=\left(\begin{array}{ccc}{1} & {0} & {0} \\ {0} & {n_{1}} & {0} \\ {0} & {0} & {n_{2}}\end{array}\right)
$$
with $n_1$ and $n_2$ defined in \eqref{eq:1}.

\begin{proposition}
$$
\begin{array}{c}{\mathbf{J}:=\mathbf{S W}=\left(\begin{array}{ccc}{\omega^{2}(A+N)} & {-\omega A} & {-\omega B^{*}} \\ {-\omega A} & {A} & {B^{*}}, \\ {-\omega B} & {B} & {I}\end{array}\right)} \\ {\mathbf{L}:=\mathbf{S M}=\left(\begin{array}{ccc}{\left(\omega^{2} a_{0}-2 k\right) H} & {-\omega H A+k \omega^{-1} H} & {-\omega H B^{*}} \\ {-\omega A H+k \omega^{-1} H} & {A H+H A-a_{0} H} & {H B^{*}} \\ {-\omega B H} & {B H} & {D}\end{array}\right)}\end{array}
$$
\end{proposition}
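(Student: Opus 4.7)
The proposition asserts two explicit block-matrix identities, $\mathbf{J}=\mathbf{SW}$ and $\mathbf{L}=\mathbf{SM}$, so the plan is simply to multiply out the products block by block and match the entries against the claimed matrices. Each of the three factors is a $3\times 3$ operator matrix, so there are nine entries to verify in each identity. Most of the bookkeeping is mechanical; the only algebraic input beyond that is a single simplification about how the diagonal matrices $A$ and $N$ interact with the rank-two operator $H$.

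First I would check $\mathbf{J}=\mathbf{SW}$. Because $\mathbf{W}$ has the block form $\operatorname{diag}(I,\;\text{middle block})$ in its first column and the rest of the entries of $\mathbf{S}$ are what they are, every entry of $\mathbf{SW}$ is a one- or two-term product with no cancellations needed; for instance $(SW)_{23}=(-\omega A)\!\cdot\! 0+I\!\cdot\! B^{*}+0\!\cdot\! I=B^{*}$, and similarly all other entries fall out immediately. This step is purely bookkeeping.

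Next I would check $\mathbf{L}=\mathbf{SM}$. The six entries in rows~2--3 and columns~2--3 are again one- or two-term products that reproduce the claimed form with no simplification (e.g.\ $(SM)_{22}=(-\omega A)(-\omega^{-1}H)+I\cdot H(A-a_0 I)=AH+HA-a_0H$, and $(SM)_{33}=D$). The genuinely interesting simplifications occur in the entries involving the first row or first column of $\mathbf{L}$, where the choice of $N$ must be used. The key observation is that $H$ has the form \eqref{eq:2}, hence kills $e_0$ and preserves $\operatorname{span}\{e_1,e_2\}$, while $A=\operatorname{diag}(a_0,a_1,a_2)$ and $N=\operatorname{diag}(1,n_1,n_2)$ are diagonal in the same frame. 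Consequently, for any diagonal $D=\operatorname{diag}(d_0,d_1,d_2)$ one has $DH=d_1 H$ on $\operatorname{span}\{e_2\}$ (the column where $H$ is nonzero) and $DH=d_2 H$ on $\operatorname{span}\{e_1\}$. Applied to $A+N=\operatorname{diag}(a_0+1,\,a_1+n_1,\,a_2+n_2)$, and using the definition $n_j=a_0-a_j-k\omega^{-2}$ so that $a_j+n_j=a_0-k\omega^{-2}$ for $j=1,2$, this yields the clean identity
\begin{equation*}
(A+N)H \;=\; \bigl(a_0-k\omega^{-2}\bigr)H.
\end{equation*}

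With this identity in hand the remaining entries are routine. For example,
\begin{equation*}
(SM)_{11}=\omega^{2}(A+N)H-\omega I\cdot k\omega^{-1}H=(\omega^{2}a_{0}-k)H-kH=(\omega^{2}a_{0}-2k)H,
\end{equation*}
and analogous one-line computations give
\begin{equation*}
(SM)_{12}=-\omega(A+N)H-\omega H(A-a_{0}I)=-\omega HA+k\omega^{-1}H,
\end{equation*}
together with $(SM)_{21}=-\omega AH+k\omega^{-1}H$ and $(SM)_{13}=-\omega HB^{*}$, exactly as claimed. I do not anticipate a real obstacle: the proof is a direct verification, and the one non-obvious point is recognizing that the specific entries of $N$ are chosen precisely so that $(A+N)H$ collapses to a scalar multiple of $H$, which is what produces the symmetric-looking coefficients in the first row and column of $\mathbf{L}$ (and, ultimately, the dissipativity in Pontryagin metric that is exploited in the subsequent sections).
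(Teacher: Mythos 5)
Your proof is correct and follows essentially the same route as the paper: the paper's one-line proof also reduces to a direct block-by-block multiplication plus the single simplification $NH=(a_0I-A-k\omega^{-2}I)H$, which is equivalent to your $(A+N)H=(a_0-k\omega^{-2})H$. All nine entries in each product check out as you describe.
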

\begin{proof}
The proof is by straightforward calculation using the formula $N H=\left(a_{0} I-A-k \omega^{-2} I\right) H$.
\end{proof}

\begin{proposition} If $N$ is nonsingular, then the operator $S$ acting in $\mathfrak H$ is a bounded, boundedly invertible operator.
\end{proposition}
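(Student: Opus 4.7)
The plan is a direct verification via a block decomposition, with no serious analytic content — everything reduces to the invertibility of the finite matrix $N$.

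Boundedness of $\mathbf{S}$ is immediate from its matrix form: the entries $A$ and $N$ act on the finite-dimensional space $\mathbb{C}^{3}$ and hence are bounded, while the operator $B:\mathbb{C}^{3}\to J_{0}(\Omega)$ was recalled in \S1 to be bounded. Every entry of the matrix defining $\mathbf{S}$ is therefore a bounded operator, so $\mathbf{S}$ is bounded on $\mathfrak{H}$.

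For invertibility I would factor $\mathbf{S}=\mathbf{S}_{1}\mathbf{S}_{2}$, where
\[
\mathbf{S}_{1}=\begin{pmatrix} I & 0 & 0 \\ 0 & I & 0 \\ -\omega B & 0 & I \end{pmatrix}, \qquad
\mathbf{S}_{2}=\begin{pmatrix} \omega^{2}(A+N) & -\omega I & 0 \\ -\omega A & I & 0 \\ 0 & 0 & I \end{pmatrix}.
\]
The factor $\mathbf{S}_{1}$ is unit lower triangular with a bounded off-diagonal entry and is hence boundedly invertible, with inverse obtained by flipping the sign of $-\omega B$. The factor $\mathbf{S}_{2}$ decomposes as a direct sum of the identity on $J_{0}(\Omega)$ and a block operator on $\mathbb{C}^{3}\times\mathbb{C}^{3}$, so its invertibility reduces to that of this $6\times 6$ matrix.

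For the finite-dimensional block I would perform Gaussian elimination against the identity in the $(2,2)$-slot: the Schur complement equals $\omega^{2}(A+N)-(-\omega I)\,I^{-1}(-\omega A)=\omega^{2}N$. Since $N$ is nonsingular by hypothesis and $\omega\neq 0$ (the angular velocity of the nonperturbed rotation), this Schur complement is invertible, whence so is the block and therefore $\mathbf{S}_{2}$ and $\mathbf{S}$. Equivalently, solving the system $\mathbf{S}(z,w,v)^{t}=(f_{1},f_{2},f_{3})^{t}$ row-by-row gives $w=f_{2}+\omega A z$ and $v=f_{3}+\omega B z$ from the last two rows, and substituting the former into the first row yields $\omega^{2}Nz=f_{1}+\omega f_{2}$, so $z=\omega^{-2}N^{-1}(f_{1}+\omega f_{2})$; this exhibits $\mathbf{S}^{-1}$ as an explicit bounded operator.

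There is no genuine obstacle here: the claim collapses to invertibility of the finite matrix $N$, which is exactly the hypothesis. The only point worth flagging is the tacit standing assumption $\omega\neq 0$, implicit throughout the paper since $\omega$ is the nonzero angular velocity of the stationary rotation about which the system has been linearized.
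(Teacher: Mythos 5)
Your proof is correct and follows essentially the same route as the paper, which gives the factorization
\[
\begin{pmatrix} \omega^{2}(A+N) & -\omega I \\ -\omega A & I \end{pmatrix}
=\begin{pmatrix} I & -\omega I \\ 0 & I \end{pmatrix}
\begin{pmatrix} \omega^{2}N & 0 \\ 0 & I \end{pmatrix}
\begin{pmatrix} I & 0 \\ -\omega A & I \end{pmatrix},
\]
i.e.\ exactly your Schur-complement computation packaged as a UDL decomposition, and implicitly handles the $-\omega B$ entry by the triangular structure you make explicit with $\mathbf{S}_1$. One small slip: your factorization should read $\mathbf{S}=\mathbf{S}_{2}\mathbf{S}_{1}$, not $\mathbf{S}_{1}\mathbf{S}_{2}$ (the $(3,1)$ entry of $\mathbf{S}_{1}\mathbf{S}_{2}$ is $-\omega^{3}B(A+N)$, not $-\omega B$); this is immaterial to the conclusion since both factors are boundedly invertible, and your explicit row-by-row solution of $\mathbf{S}(z,w,v)^{t}=(f_{1},f_{2},f_{3})^{t}$ confirms the result independently.
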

\begin{proof}
The proposition follows from the relation
$$
\left(\begin{array}{cc}{\omega^{2}(A+N)} & {-\omega I} \\ {-\omega A} & {I}\end{array}\right)=\left(\begin{array}{cc}{I} & {-\omega I} \\ {0} & {I}\end{array}\right)\left(\begin{array}{cc}{\omega^{2} N} & {0} \\ {0} & {I}\end{array}\right)\left(\begin{array}{cc}{I} & {0} \\ {-\omega A} & {I}\end{array}\right).
$$
\end{proof}

\begin{proposition}
The operator $\mathbf{W}$ acting in $\mathfrak H$ is uniformly positive.
\end{proposition}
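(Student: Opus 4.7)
The plan is to show that $(\mathbf{W}u, u)$ equals $\|z\|^2$ plus twice the kinetic energy of the shell-plus-fluid system written in the rotating body frame; this identification makes nonnegativity manifest, and a routine absorption argument turns it into uniform positivity using the hypothesis $A_0 \gg 0$. Note first that $\mathbf{W}$ is bounded and self-adjoint (the off-diagonal blocks $B$ and $B^{*}$ are genuine adjoints, $A = A^{*}$, and $I = I^{*}$), so the question is really whether $\mathbf{W} \geq \varepsilon I$ on $\mathfrak{H}$.

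First I would compute $(\mathbf{W}u, u)$ for $u = (z, w, v)^{t}$. Using $(Bw, v)_{J_{0}(\Omega)} = (P_{0}[w, x], v)_{L_{2}} = ([w, x], v)_{L_{2}(\Omega)}$ (since $v \in J_{0}(\Omega)$) and the splitting $(Aw, w) = (A_{0} w, w) + \int_{\Omega} |[w, x]|^{2}\,dx$ (which separates the inertia of the rigid shell from that of the frozen fluid), an elementary rearrangement yields the key identity
\[
(\mathbf{W}u, u) \;=\; \|z\|^{2} \;+\; (A_{0} w, w) \;+\; \int_{\Omega} |v + [w, x]|^{2}\,dx.
\]
The last two terms are precisely twice the total kinetic energy of the system: the shell contributes $(A_{0} w, w)$ through its rigid rotation with angular velocity $w$, while the fluid contributes $\int_{\Omega} |v + [w, x]|^{2}\,dx$ because in the body frame the absolute velocity of a fluid particle at $x$ is the sum of the carrying velocity $[w, x]$ and the relative velocity $v(x)$. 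From this representation, $\mathbf{W} \geq 0$ is immediate.

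Next I would upgrade nonnegativity to uniform positivity. Since $A_{0}$ is a $3 \times 3$ positive matrix, positivity is equivalent to uniform positivity in finite dimension, so there exists $c > 0$ with $(A_{0} w, w) \geq c\,\|w\|^{2}$ for all $w \in \mathbb{C}^{3}$. To extract a positive share of $\|v\|^{2}$, write $v = (v + [w, x]) - [w, x]$ and use $|v|^{2} \leq 2|v + [w, x]|^{2} + 2|[w, x]|^{2}$ pointwise; integration together with the crude bound $\int_{\Omega} |[w, x]|^{2}\,dx \leq M\|w\|^{2}$ (the cavity $\Omega$ is bounded, so $M$ is finite and depends only on $\Omega$) gives
\[
\int_{\Omega} |v + [w, x]|^{2}\,dx \;\geq\; \tfrac{1}{2}\|v\|^{2} \;-\; M\|w\|^{2}.
\]
Splitting the integral in the key identity as a $(1 - \delta)$ plus $\delta$ portion and applying the last estimate to the $\delta$-portion, I obtain
\[
(\mathbf{W}u, u) \;\geq\; \|z\|^{2} + (c - \delta M)\|w\|^{2} + \tfrac{\delta}{2}\|v\|^{2},
\]
which for any $\delta \in (0, c/M)$ yields a uniform lower bound $\varepsilon\,\|u\|_{\mathfrak{H}}^{2}$ with $\varepsilon = \min(1, c - \delta M, \delta/2) > 0$.

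The entire argument hinges on the key identity; everything after it is routine bookkeeping. Accordingly, the main obstacle is recognizing the right algebraic rearrangement --- namely, splitting $(Aw, w)$ into the shell contribution $(A_{0} w, w)$ and the frozen-fluid contribution $\int_{\Omega} |[w, x]|^{2}\,dx$, and combining the latter with the cross term $2\,\mathrm{Re}\,(Bw, v)$ and with $\|v\|^{2}$ to form the perfect square $\int_{\Omega} |v + [w, x]|^{2}\,dx$. This is precisely the physical observation that, in body-frame variables, the total kinetic energy of the system equals $\tfrac{1}{2}(A_{0} w, w) + \tfrac{1}{2}\int_{\Omega} |v + [w, x]|^{2}\,dx$, and once this has been spotted the proof is essentially complete.
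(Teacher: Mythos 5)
Your argument is correct, and the key identity
\[
(\mathbf{W}u, u) = \|z\|^{2} + (A_{0} w, w) + \int_{\Omega} \lvert [w, x] + v(x)\rvert^{2}\,dx
\]
is exactly the one the paper uses. The only difference is in the passage from nonnegativity to uniform positivity. The paper observes at the outset that $\mathbf{W} - \mathbf{I}$ has finite rank (since $A-I$, $B$, and $B^{*}$ all factor through $\mathbb{C}^{3}$), so the Fredholm alternative reduces the task to checking $\Ker\mathbf{W}=\{0\}$, which then drops out of the identity in three quick lines ($z=0$, then $w=0$ via $A_{0}>0$, then $v=0$). You instead argue quantitatively, absorbing a fraction $\delta$ of the $\int\lvert v+[w,x]\rvert^{2}$ term to extract a positive share of $\|v\|^{2}$ at the cost of $-\delta M\|w\|^{2}$, which is covered by $A_{0}\gg 0$. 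Both routes are valid; yours is more elementary and yields an explicit lower bound $\varepsilon=\min(1,\,c-\delta M,\,\delta/2)$, while the paper's is slightly quicker and reusable whenever an operator is a compact (or finite-rank) self-adjoint perturbation of the identity with trivial kernel. Two minor points worth noting: the finite-dimensional fact that $A_{0}>0$ implies $A_{0}\gg 0$ is also used implicitly in the paper (it says ``we recall that $A_{0}>0$'' to conclude $w=0$), so neither proof avoids it; and your estimate $\int_{\Omega}\lvert[w,x]\rvert^{2}\,dx\le M\|w\|^{2}$ rests on the boundedness of $\Omega$, which is among the standing hypotheses.
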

\begin{proof} Since $\mathbf{W}$ is a finite-dimensional perturbation of the identity operator in $\mathfrak H$, by virtue of the Fredholm alternative it suffices to show that $\mathbf{W} > 0$. It follows from the definitions of $A$, $A_0$, and $B$ that
$$
(\mathbf{W} \mathbf{u}, \mathbf{u})=(z, z)+\left(A_{0} w, w\right)+\int_{\Omega}([w, x]+v(x),[w, x]+v(x)) d x.
$$
Hence $\mathbf{W}> \mathbf{C}$. Suppose that $\mathbf{Wu} = 0$; then $z=0$ and $w = 0$ (we recall that $A_0 > 0$). Thus $v(x)\equiv 0$.	
\end{proof}
\begin{proposition}
Let $N$ be nonsingular. Then $J$ is a bounded invertible self-adjoint operator acting in $\mathfrak H$. Its negative spectrum consists of finitely many eigenvalues. The number of negative eigenvalues of the operator $J$ coincides with that of the matrix $N$.
\end{proposition}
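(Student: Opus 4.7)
I would argue in four steps.

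First, bound\-edness and self-adjointness of $\mathbf{J}$ are read off directly from its matrix form in Proposition~1. Every entry is a bounded operator on the appropriate finite-dimensional factor or on $J_0(\Omega)$; the diagonal blocks $\omega^2(A+N)$, $A$, $I$ are self-adjoint because $A$ and $N$ are real diagonal; and the off-diagonal entries are mutual adjoints: $(-\omega A)^{*}=-\omega A$, and the pairs $(-\omega B^{*},-\omega B)$ and $(B^{*},B)$ are adjoint by the very definition of $B^{*}$. Thus $\mathbf{J}=\mathbf{J}^{*}\in\mathcal{B}(\mathfrak{H})$. Invertibility is free: since $\mathbf{J}=\mathbf{S}\mathbf{W}$ with $\mathbf{W}\gg 0$ (Proposition~3) and $\mathbf{S}$ boundedly invertible (Proposition~2), $\mathbf{J}$ is a product of boundedly invertible operators, so $0\in\rho(\mathbf{J})$.

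Second, I would compute the signature of $\mathbf{J}$ by two successive Schur-complement congruences. The $(3,3)$ block $I$ on $J_0(\Omega)$ is uniformly positive, so the upper-triangular elementary operator $\mathbf{E}_1=\bigl(\begin{smallmatrix}I & 0 & -\omega B^{*}\\ 0 & I & B^{*}\\ 0 & 0 & I\end{smallmatrix}\bigr)$ is bounded and boundedly invertible, and a direct calculation gives
\begin{equation*}
\mathbf{E}_1^{*}\mathbf{J}\mathbf{E}_1=\begin{pmatrix}\omega^2(\widetilde A+N) & -\omega\widetilde A & 0\\ -\omega\widetilde A & \widetilde A & 0\\ 0 & 0 & I\end{pmatrix},\qquad\widetilde A:=A-B^{*}B.
\end{equation*}
The key positivity $\widetilde A\gg 0$ follows from the identity
\begin{equation*}
(\widetilde A w,w)=(A_0 w,w)+\int_\Omega |(I-P_0)[w,x]|^2\,dx\geq (A_0 w,w),
\end{equation*}
together with the standing hypothesis $A_0>0$. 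A second triangular congruence by $\mathbf{E}_2=\bigl(\begin{smallmatrix}I & 0 & 0\\ \omega I & I & 0\\ 0 & 0 & I\end{smallmatrix}\bigr)$ cancels the remaining off-diagonal $\widetilde A$-blocks and produces
\begin{equation*}
(\mathbf{E}_1\mathbf{E}_2)^{*}\mathbf{J}(\mathbf{E}_1\mathbf{E}_2)=\operatorname{diag}(\omega^2 N,\ \widetilde A,\ I).
\end{equation*}

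Third, I invoke Sylvester's law of inertia in the bounded operator setting: congruence by a bounded invertible operator preserves the number of negative eigenvalues of a bounded self-adjoint operator (the negative spectral subspaces are mapped isomorphically). The right-hand side has $\widetilde A\gg 0$ and $I\gg 0$, so its negative eigenvalues come only from the finite-dimensional block $\omega^2 N$; their number equals $\nu(N)$. Hence $\nu(\mathbf{J})=\nu(N)$. Finiteness of the negative spectrum also follows: since $\mathbf{J}$ differs from $\operatorname{diag}(0,0,I)$ by a finite-rank self-adjoint perturbation, Weyl's theorem gives $\sigma_{\text{ess}}(\mathbf{J})=\{1\}$, so every negative point of $\sigma(\mathbf{J})$ is an isolated eigenvalue of finite multiplicity.

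The only delicate step is verifying the positivity $\widetilde A\gg 0$ and making sure the two elementary factors $\mathbf{E}_1,\mathbf{E}_2$ are genuinely bounded and boundedly invertible on $\mathfrak{H}$ (which reduces to the boundedness of $B$ and $B^{*}$, already recorded in \S1). Everything else is algebra plus standard spectral theory.
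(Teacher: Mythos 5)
Your proposal is essentially correct and follows the same strategy as the paper: diagonalize $\mathbf{J}$ by a block-triangular congruence to $\operatorname{diag}(\omega^2 N,\widetilde A, I)$ with $\widetilde A := A-B^*B$, observe $\widetilde A\gg 0$, and read off the negative signature. The interesting divergence is your proof that $\widetilde A\gg 0$. The paper deduces it from Proposition~3 ($\mathbf{W}\gg 0$) via the Schur complement of the $2\times 2$ block $\bigl(\begin{smallmatrix}A & B^*\\ B & I\end{smallmatrix}\bigr)$. You instead give the explicit geometric identity
\begin{equation*}
(\widetilde A w,w)=(A_0 w,w)+\|(I-P_0)[w,\cdot\,]\|^2_{L_2^3(\Omega)},
\end{equation*}
which follows from $(Aw,w)-(A_0w,w)=\int_\Omega|[w,x]|^2\,dx$, $\|Bw\|^2=\|P_0[w,\cdot\,]\|^2$, and the orthogonality of the Weyl decomposition. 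This is independent of Proposition~3 and more transparent: it isolates the role of the standing assumption $A_0>0$ and of the projection onto $J_0(\Omega)$. You also make explicit two points the paper leaves implicit, namely invertibility of $\mathbf{J}$ via $\mathbf{J}=\mathbf{SW}$ and finiteness of the negative spectrum via Weyl's theorem on $\sigma_{\mathrm{ess}}$. Both are sound.

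One technical slip: your $\mathbf{E}_1$ should be \emph{lower} triangular. As written, $\mathbf{E}_1^*\mathbf{J}\mathbf{E}_1$ with the upper-triangular $\mathbf{E}_1=\bigl(\begin{smallmatrix}I&0&-\omega B^*\\ 0&I&B^*\\ 0&0&I\end{smallmatrix}\bigr)$ does not annihilate the third row and column of $\mathbf{J}$ (the $(3,1)$ and $(1,3)$ entries come out nonzero). The Schur-complement elimination pivoting on the $(3,3)$ block $I$ requires
\begin{equation*}
\mathbf{E}_1=\begin{pmatrix} I & 0 & 0\\ 0 & I & 0\\ \omega B & -B & I\end{pmatrix},
\end{equation*}
and then $\mathbf{E}_1^*\mathbf{J}\mathbf{E}_1$ does yield the claimed block form $\bigl(\begin{smallmatrix}\omega^2(\widetilde A+N)&-\omega\widetilde A&0\\ -\omega\widetilde A&\widetilde A&0\\ 0&0&I\end{smallmatrix}\bigr)$; equivalently, write $\mathbf{E}_1\mathbf{J}\mathbf{E}_1^*$ with your upper-triangular $\mathbf{E}_1$ after flipping the signs of the off-diagonal entries. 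This is purely a transcription issue and does not affect the argument.
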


\begin{proof}
Consider the following operators in $\mathfrak H$:
$$
\mathbf{J}_{0}=\left(\begin{array}{ccc}{\omega^{2} N} & {0} & {0} \\ {0} & {A-B^{*} B} & {0} \\ {0} & {0} & {I}\end{array}\right), \quad \mathbf{S}_{0}=\left(\begin{array}{ccc}{I} & {0} & {0} \\ {-\omega I} & {I} & {0} \\ {-\omega B} & {B} & {I}\end{array}\right)
$$
By virtue of Proposition 3, $W$ is a uniformly positive operator. Hence, so is the operator
$$
\left(\begin{array}{cc}{A} & {B^{*}} \\ {B} & {I}\end{array}\right)=\left(\begin{array}{cc}{I} & {0} \\ {B} & {I}\end{array}\right)\left(\begin{array}{cc}{A-B^{*} B} & {0} \\ {0} & {I}\end{array}\right)\left(\begin{array}{cc}{I} & {B^{*}} \\ {0} & {I}\end{array}\right)
$$
Therefore $A -B^*B>>0$ and $\pi_-(J_0)=\pi_-(N)$, where $\pi_-$ denotes the number of negative eigenvalues
of the operators. Further, it can be easily checked that $J =S^*_0J_0S_0$. Hence, $J$ is invertible. Moreover, $\pi_-(J_0)=\pi_-(J)=\pi_-(N)$. This concludes the proof.
\end{proof}

We recall that an operator $C$ is called dissipative if $\Im(Cx,x) > 0$ for all $x\in\mathcal{D}(C)$. If the open lower half-plane $\mathbb{C}^-$ (or at least one point $\mu\in \mathbb{C}^-$) belongs to the resolvent set of the operator $C$, then $C$ is called maximal dissipative. Further, we use the notion of a dissipative operator in a Pontryagin space. Suppose that $J$ is a bounded invertible operator and $\pi_-=\varkappa <\infty$. Then the Hilbert space $\mathfrak{H}$ with the indefinite scalar product $(Jx,x)$ is called a Pontryagin space and is denoted by $\Pi_\varkappa=\{J,\mathfrak{H}\}$. An operator $C$ is called dissipative in if $\Im(JCx, x) > 0$ for all $x\in\mathcal{D}(C)$. If there is a point $\mu\in \mathbb{C}^-$ (but not obligatorily the whole half-plane) that belongs to the resolvent set of $C$, then $C$ is called maximal dissipative in $\Pi_\varkappa$. It is known (see \cite[ Chap. 2, Theorem 2.10]{14::Az11}) that $C$ is maximal dissipative in $\Pi_\varkappa$ if
  and only if $JC$ is maximal dissipative in $\mathfrak{H}$.

The main goal of the previous transformations is to prepare the proof of the following result.
\begin{theorem}
Suppose that $N$ is nonsingular; then Eq.\eqref{eq:5} of the evolution of the system is equivalent to the equation
\begin{equation}\label{eq:6}
\dot{\mathbf{u}}=i \omega \mathbf{T} \mathbf{u}
\end{equation}
where the operator $T = J^{-1}L$ with the domain $\mathbb{C}^3\times \mathbb{C}^3 \times \mathcal{D}(R)$ is maximal dissipative in the Pontryagin space $\Pi_\varkappa=\{J,\mathfrak{H}\}$, where $\varkappa=\pi_-(N)$.
\end{theorem}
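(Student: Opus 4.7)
The plan is to verify three claims in sequence: (i) equations \eqref{eq:5} and \eqref{eq:6} define the same evolution with $T = J^{-1}L$ having the stated domain; (ii) $\Im(L\mathbf{u},\mathbf{u}) \geq 0$ on $\mathcal{D}(L)$; and (iii) some point of $\mathbb{C}^{-}$ lies in $\rho(L)$. By the criterion cited in the text (Azizov--Iohvidov, [11, Ch.~2, Thm.~2.10]), (ii) together with (iii) is the definition of maximal dissipativity of $L$ in $\mathfrak{H}$, which is equivalent to maximal dissipativity of $T$ in $\Pi_{\varkappa}$.

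For (i): Proposition~2 gives that $S$ is bounded and boundedly invertible, so applying $S$ to $\mathbf{W}\dot{\mathbf{u}} = i\omega \mathbf{M}\mathbf{u}$ yields $J\dot{\mathbf{u}} = i\omega L\mathbf{u}$ by Proposition~1. Proposition~4 then gives invertibility of $J$, and the equation rearranges to $\dot{\mathbf{u}} = i\omega T\mathbf{u}$ with $T := J^{-1}L$ and $\mathcal{D}(T) = \mathcal{D}(L) = \mathbb{C}^{3}\times\mathbb{C}^{3}\times\mathcal{D}(R)$.

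For (ii): I would decompose $L = L_{0} + i(\nu/\omega)\,\tilde{R}$, where $\tilde{R} := \operatorname{diag}(0,0,R)$ is self-adjoint and nonnegative, and $L_{0}$ is obtained from the matrix of $L$ in Proposition~1 by replacing its $(3,3)$-entry $D$ with the bounded self-adjoint gyroscopic operator $G$. A direct check using $H^{*}=H$, $A^{*}=A$, $G^{*}=G$, and $(HB^{*})^{*}=BH$ shows that each diagonal entry of $L_{0}$ is self-adjoint and each off-diagonal entry $L_{0,ij}$, $i\neq j$, is the Hilbert-space adjoint of $L_{0,ji}$; hence $L_{0} = L_{0}^{*}$ is bounded self-adjoint on $\mathfrak{H}$. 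Consequently $L$ is closed with $\mathcal{D}(L) = \mathcal{D}(\tilde{R})$, and $\Im(L\mathbf{u},\mathbf{u}) = (\nu/\omega)(Rv,v) \geq 0$ for $\mathbf{u} = (z,w,v) \in \mathcal{D}(L)$, using $R \geq 0$ and $\nu,\omega > 0$.

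For (iii): I would take $\mu = -i\lambda$ with $\lambda > 0$ and write $L - \mu I = L_{0} + iP$, where $P := (\nu/\omega)\tilde{R} + \lambda I$ is self-adjoint with $P \geq \lambda I$, hence boundedly invertible and $\|P^{-1}\| \leq 1/\lambda$. Then $L - \mu I = iP\bigl(I - iP^{-1}L_{0}\bigr)$, and since $\|iP^{-1}L_{0}\| \leq \|L_{0}\|/\lambda < 1$ for $\lambda > \|L_{0}\|$, the bracket is invertible by Neumann series while $iP$ is bijective with bounded inverse; so $L - \mu I$ is a bijection, i.e.\ $-i\lambda \in \rho(L)\cap\mathbb{C}^{-}$. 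The main obstacle I anticipate is (iii): the operator $L$ is block-unbounded, and a naive Schur-complement reduction along the splitting $\mathfrak{H} = \mathbb{C}^{6}\oplus J_{0}(\Omega)$ leads to a Schur complement whose $\mu$-asymptotics mixes contributions of orders $|\mu|$ and $1/|\mu|$ and requires careful analysis. The trick that makes it elementary is to align the decomposition $L = L_{0} + i(\nu/\omega)\tilde{R}$ with the perturbation --- putting all the unboundedness into a scalar-multiple-of-identity-plus-positive-self-adjoint operator --- so that (iii) reduces to a one-line Neumann series estimate.
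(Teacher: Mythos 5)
Your proof is correct and takes essentially the same route as the paper: the paper decomposes $\mathbf{L} = \mathbf{V} + \mathbf{D}$ with $\mathbf{V}$ bounded self-adjoint finite-dimensional and $\mathbf{D} = \operatorname{diag}(I,I,G+i\nu\omega^{-1}R)$, then invokes that $\mathbf{D}$ is "obviously" maximal dissipative and that bounded self-adjoint perturbations preserve maximal dissipativity, while you reorganize the same split as $\mathbf{L} = \mathbf{L}_0 + i(\nu/\omega)\tilde{\mathbf{R}}$ and spell out the invertibility of $\mathbf{L}-\mu\mathbf{I}$ via a Neumann series. The substance is identical; your version merely makes explicit the standard fact the paper cites without proof.
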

\begin{proof}
It follows from the representation
$$
\mathbf{L}=\mathbf{V}+\mathbf{D}, \quad \mathbf{D}=\operatorname{diag}\{I, I, D\}, \quad D=G+i \nu \omega^{-1} R
$$
where $\mathbf{V}$ is a self-adjoint, finite-dimensional operator, $G = G*$ is the gyroscopic operator, and $R = R*>>0$ is the Stokes operator, that $\Im(JTx, x) = \Im(Lx, x) > 0$ for all $x\in\mathcal{D}(T)$.

Obviously, $D$ is maximal dissipative in $\mathfrak{H}$. Since $V$ is bounded and self-adjoint,	$L$  	is	 also	maximal dissipative. Hence, since $T$ has no nontrivial $J$-dissipative extensions, $T$ is maximal $J$-dissipative in $\Pi_\varkappa$. The result also follows from the discreteness of the spectrum of $T$, which is proved inthe	sequel.	
\end{proof}

\begin{rem} Observe the following useful fact. If
$$
\mathbf{S}_{1}=\left(\begin{array}{ccc}{\left(\omega^{2} N\right)^{-1}} & {A(\omega N)^{-1}} & {(\omega N)^{-1} B^{*}} \\ {(\omega N)^{-1}} & {(A+N) N^{-1}} & {N^{-1} B^{*}} \\ {0} & {0} & {I}\end{array}\right)
$$
then
$$
\mathbf{J}_{1}:=\mathbf{W S}_{1}=\left(\begin{array}{ccc}{\left(\omega^{2} N\right)^{-1}} & {A(\omega N)^{-1}} & {(\omega N)^{-1} B^{*}} \\ {A(\omega N)^{-1}} & {A(A+N) N^{-1}} & {(A+N) N^{-1} B^{*}} \\ {B(\omega N)^{-1}} & {B(A+N) N^{-1}} & {B N^{-1} B^{*}+I}\end{array}\right)
$$
$$
\mathbf{L}_{1}:=\mathbf{M S}_{1}=\left(\begin{array}{ccc}{0} & {-\omega^{-1} H} & {0} \\ {-\omega^{-1} H} & {-a_{0} H} & {0} \\ {0} & {0} & {D}\end{array}\right).
$$
Moreover, it follows from the non-singularity of the matrix
$\left(\begin{array}{cc}{\omega^{-2}} & {A \omega^{-1}} \\ {\omega^{-1}} & {A+N}\end{array}\right)$
that $\mathbf{S}_1$ is invertible. Hence, after the substitution $\mathbf{u} = \mathbf{S}_1f$, Eq.\eqref{eq:5} becomes
$$
\mathbf{J}_{1} \dot{\mathbf{f}}=i \omega \mathbf{L}_{1} \mathbf{f}, \quad \text { or } \quad \dot{\mathbf{r}}=i \omega \mathbf{T}_{1} \mathbf{f}, \quad \mathbf{T}_{1}=\mathbf{J}_{1}^{-1} \mathbf{L}_{1}
$$
where $\mathbf{J}_1$ is self-adjoint and $\mathbf{L}_1$ is maximal dissipative in $\mathfrak{H}$. In addition, $\mathbf{J}_0$, $\mathbf{J}$, and $\mathbf{J}_1$ are congruent. Namely,
\begin{equation}\label{eq:7}
\mathbf{J}_{1}=\mathbf{S}_{1}^{*} \mathbf{J S}_{1}, \quad \text { where } \mathbf{J}=\mathbf{S}_{0}^{*} \mathbf{J}_{0} \mathbf{S}_{0}.
\end{equation}
Thus, $\mathbf{J}_1$ generates a Pontryagin metric and $\pi_{-}\left(\mathbf{J}_{1}\right)=\pi_{-}(\mathbf{J})=\pi_{-}\left(\mathbf{J}_{0}\right)=\pi_{-}(N)$. The substitution makes the operator $M$ more convenient for the investigation, since $\mathbf{L}_1$ is the direct sum of two operators acting in $\mathbb{C}^3\times \mathbb{C}^3$ and $\mathbf{J}_0(\Omega)$, respectively. This fact is used in the sequel. Also, note that the operators L and Li are congruent: $\mathbf{L}_1=\mathbf{S}^*_1\mathbf{LS}_1$.
\end{rem}

\subsection{A Stability Criterion and the Instability Index}

The absence of eigenvalues of the operator $\mathbf{T}$ in the open lower half-plane is a necessary condition for the stability of Eq. \eqref{eq:6}. The following theorem describes the spectrum of $\mathbf{T}$ in the closed lower half-plane.
\begin{theorem}
The spectrum of $\mathbf{T}$ is discrete;$\lambda = 0$ is the only real point of the spectrum. The subspace $\Ker\mathbf{T}$ is $\mathbf{J}$-positive and coincides with the linear span of $\mathbf{x}_0=(e_0,0,0)^t$ and $\mathbf{x}_1=(0,e_0,0)^t$ There are no associated vectors corresponding to the zero eigenvalue. There are exactly $\varkappa=\pi_-(N)$ eigenvalues of $\mathbf{T}$ in the open lower half-plane. In particular, if $\varkappa= 0$, then $\mathbb{C}^-$ is a subset of the resolvent set of $\mathbf{T}$.
\end{theorem}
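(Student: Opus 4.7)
My plan is to prove the six assertions by combining the block structure of the operator $L = V + \operatorname{diag}(I,I,D)$ (where $D = G + (i\nu/\omega)R$ uses the Stokes operator $R$) with the $J$-dissipativity of $T = J^{-1}L$ in the Pontryagin space $\Pi_\varkappa$.

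For discreteness, since $R$ is uniformly positive with compact resolvent on $J_0(\Omega)$, the block $D$ has compact resolvent; together with the two finite-dimensional blocks and the bounded perturbation $V$, this makes $(L - \mu J)^{-1}$ compact for $\mu \in \rho(\mathbf{T})$, so $\mathbf{T}$ has compact resolvent and discrete spectrum. For the real spectrum, given an eigenvector $\mathbf{u} = (z,w,v)^t$ at real $\mu$, the real-valuedness of $(L\mathbf{u},\mathbf{u}) = \mu(J\mathbf{u},\mathbf{u})$ combined with the identity $\operatorname{Im}(L\mathbf{u},\mathbf{u}) = (\nu/\omega)(Rv,v)$ and the uniform positivity of $R$ forces $v=0$. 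Passing to the cleaner equation $L_1 f = \mu J_1 f$ via $\mathbf{u} = S_1 f$ and using $\Ker B = \{0\}$, the third row yields $f_1 = -\omega(A+N)f_2$, after which the remaining two rows reduce to $Hf_2 = \mu N f_2$ together with $H[(A+N)-a_0 I]f_2 = 0$; since $(A+N)-a_0 I = \operatorname{diag}(1,-k/\omega^2,-k/\omega^2)$ and $He_0 = 0$, the latter forces the $(e_1,e_2)$-components of $f_2$ to vanish, after which $Hf_2 = \mu N f_2$ collapses to $\mu f_2 = 0$; hence $f_2=f_1=0$ and no nonzero real eigenvalue can exist.

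For the kernel, setting $\mu=0$ in the same reduction yields $\Ker L_1 = \Span\{(e_0,0,0)^t,(0,e_0,0)^t\}$, and the invertible substitution $\mathbf{u} = S_1 f$ (whose restricted $2\times 2$ block has determinant $\omega^{-2}$) identifies this with $\Span\{\mathbf{x}_0,\mathbf{x}_1\}$. For the $J$-positivity of $\Ker \mathbf{T}$, I would compute the $2\times 2$ Gram matrix
\[
\bigl((J\mathbf{x}_i,\mathbf{x}_j)\bigr) = \begin{pmatrix} \omega^2(a_0+1) & -\omega a_0 \\ -\omega a_0 & a_0 + \|Be_0\|^2 \end{pmatrix},
\]
whose $(1,1)$-entry is positive and whose determinant equals $\omega^2\bigl(a_0 + (a_0+1)\|Be_0\|^2\bigr) > 0$. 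To preclude associated vectors, suppose $\mathbf{T}y = x$ with $\mathbf{T}x = 0$ and test $J$-dissipativity on $z = y + ibx$ for $b \in \mathbb{R}$: the identity $(J\mathbf{T}z,z) = (Jx,y) - ib(Jx,x)$ combined with $\operatorname{Im}(J\mathbf{T}z,z) \ge 0$ for every real $b$ forces $(Jx,x) = 0$, contradicting the strict $J$-positivity just established.

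Finally, for the instability index, I would invoke the Pontryagin--Krein--Langer theorem for maximal $J$-dissipative operators in $\Pi_\varkappa$, in its lower half-plane form: there exists a $\mathbf{T}$-invariant maximal $J$-nonpositive subspace of dimension $\varkappa = \pi_-(J)$ whose spectrum lies in $\overline{\mathbb{C}^-}$ and coincides with $\sigma(\mathbf{T}) \cap \mathbb{C}^-$ in the open lower half-plane, up to root vectors of $J$-nonpositive real eigenvalues. Since $0$ is the only real eigenvalue and its root subspace is strictly $J$-positive, the real contribution vanishes and $\sigma(\mathbf{T}) \cap \mathbb{C}^-$ has total algebraic multiplicity exactly $\varkappa$; when $\varkappa = 0$ the metric is positive definite, $\mathbf{T}$ is maximal dissipative in the ordinary Hilbert-space sense, and $\mathbb{C}^- \subset \rho(\mathbf{T})$ automatically. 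The main technical hurdle I anticipate is the finite-dimensional reduction in Step~2: excluding the precession-like modes at $\mu = \pm 1$ (which genuinely arise in the weightless case $k=0$) relies on the gravitational coupling $k \ne 0$, and care is needed to document this use of the physical assumption.
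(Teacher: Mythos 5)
Your proposal follows the paper's own proof in its essential structure: discreteness via a compact-resolvent decomposition, real spectrum killed by the dissipativity estimate $\operatorname{Im}(\mathbf{L}\mathbf{u},\mathbf{u}) = (\nu/\omega)(Rv,v)$, $\mathbf{J}$-positivity of $\Ker\mathbf{T}$ established by an explicit Gram computation, and the count $\varkappa=\pi_-(N)$ from the Pontryagin--Krein--Langer invariant-subspace theorem. Two remarks.

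First, two small computational slips, neither of which affects the conclusion. In the reduction of $\mathbf{L}_1 f = \mu\mathbf{J}_1 f$, the first row gives $Hf_2 = \mu f_2$, not $Hf_2 = \mu N f_2$: after substituting $f_1 = -\omega(A+N)f_2$ one finds $(\omega N)^{-1}(\omega^{-1}f_1 + Af_2) = -\omega^{-1}f_2$, and the factor $N^{-1}$ cancels the $N$ from $-(A+N)+A=-N$; your stated conclusion happens to survive because $Ne_0=e_0$, but the equation is misstated. More seriously, the $(2,2)$ entry of your Gram matrix should be $a_0$, not $a_0 + \|Be_0\|^2$: the third component of $\mathbf{x}_1$ is the zero element of $J_0(\Omega)$, so the term $(\mathbf{J}\mathbf{x}_1)_3 = Be_0$ pairs against zero and contributes nothing. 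The correct Gram matrix is $\begin{pmatrix}\omega^2(a_0+1) & -\omega a_0 \\ -\omega a_0 & a_0\end{pmatrix}$ with determinant $\omega^2 a_0 > 0$; your version is also positive definite, so the conclusion is unchanged, but the stated determinant is not what one gets.

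Second, you make a genuinely useful observation that the paper leaves tacit: the step eliminating nonzero real eigenvalues uses $k\ne 0$. The paper's own version of this step uses $\mathbf{W},\mathbf{M}$ and reaches the chain $Bw=0\Rightarrow w=0\Rightarrow Hz=0\Rightarrow(\mu I-H)z=0$; the middle implication comes from the $(2,1)$-block $k\omega^{-1}H$ of $\mathbf{M}$ and silently divides by $k$. For $k=0$, the vector $(z,0,0)^t$ with $Hz=\pm z$ is in fact an eigenvector of $\mathbf{T}$ at $\mu=\pm1$, so the claim "$\lambda=0$ is the only real point of the spectrum" fails in the weightless case. Your choice to reduce via $\mathbf{L}_1,\mathbf{J}_1$ rather than $\mathbf{W},\mathbf{M}$ makes this hypothesis more visible, which is a modest gain in transparency even though the arithmetic is a hair longer. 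Your direct argument for the absence of associated vectors (testing dissipativity on $y+ibx$) is also a nice substitute for the paper's bare citation of Azizov--Iohvidov, though note that the bare citation is also entirely standard.
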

\begin{proof}
Let $\mathbf{D} = diag(I,I, D)$. Obviously, it follows from the compactness of $^{-1}$ in $J_0(\Omega)$ that $\mathbf{D}^{-1}$ is a compact operator in $\mathfrak{H}$. We have
$$
\mathbf{T}-\lambda \mathbf{I}=\mathbf{J}^{-1}\left[\mathbf{I}+(\mathbf{V}-\lambda \mathbf{J}) \mathbf{D}^{-1}\right] \mathbf{D}, \quad \mathbf{V}=\mathbf{L}-\mathbf{D},
$$
where $\mathbf{V}$ is finite-dimensional. Since $\mathbf{T}$ is maximal dissipative, its resolvent set is not empty. Hence, it follows from the representation written out above and the theorem on a holomorphic operator function (see [12, Chap. 1]) that the spectrum of $\mathbf{T}$ is discrete.

Let us show that there are no nonzero real eigenvalues of $\mathbf{T}$. Since the spectrum of $\mathbf{T}$ is discrete, it suffices to show that $\Ker(\mu\mathbf{J} - \mathbf{L}) = 0$ whenever $0\ne\mu\in\mathbb{R}$. Suppose that
$$
(\mu \mathbf{J}-\mathbf{L}) \mathbf{u}=0, \quad \mathbf{u}=(z, w, v)^{t} \in \mathbf{f}, \quad 0 \neq \mu \in \mathbb{R},
$$

Then $0 =\Im((\mu\mathbf{J} - \mathbf{L})u,u) = (Rv, v)$; hence $v = 0$. Using the non-singularity of $S$ and the equality $\mathbf{S}(\mu\mathbf{W} - \mathbf{M}) =\mu\mathbf{J} - \mathbf{L}$, we get $(\mu\mathbf{W} \mathbf{M})\{z, w, 0\}$. From the definitions of $\mathbf{W}$ and $\mathbf{M}$, we successively obtain
$$
Bw = 0,\quad  w = 0, \quad Hz = 0, \text{ and } (\mu I - H)z = 0.
$$
Therefore, $u = 0$. Hence $\mathbf{T}$ has no real nonzero eigenvalues.
Let us show that zero is an eigenvalue and calculate the corresponding eigenvector. Suppose that $\mathbf{T}u = 0$. Using $\mathbf{T}=\mathbf{W}^{-1}\mathbf{M}$, we get $\mathbf{M}u = 0$. Using the matrix representation of $\mathbf{M}$, we prove that the null space of $\mathbf{M}$ is the linear span of $\mathbf{x}_0 = (e_0, 0, 0)^t$ and $\mathbf{x}_1 = (0,e_0, 0)^t$. Let $\mathbf{x} =\alpha\mathbf{x}_0 + \beta\mathbf{x}_1$. Hence, if $\mathbf{x}\ne0$, then
$$
(\mathbf{J}x, x) = \omega^{-2}|\alpha|^2 + a_0(a_0 + 1)|\beta|^22 + 2a_0\omega^{-1}\Re (\alpha\overline{\beta}) > 0,
$$
i. e., the null space of $\mathbf{T}$ is $\mathbf{J}$-positive. Thus, there are no associated vectors corresponding to $\mathbf{x} \in \Ker\mathbf{T}$ (see \cite[Chap. 2]{14::Az11}).

Let us use the fundamental theorem on the existence of invariant subspaces of maximal dissipative operators in Pontryagin spaces to prove a statement on the number of eigenvalues of $\mathbf{T}$ in the lower halfplane. Essentially, this theorem is due to Pontryagin \cite{14::Pon13} (although it is stated there only for self-adjoint operators). In the following form (and even more generally) it was obtained by Krein and Langer \cite{14::Kr14} and Azizov \cite{14::Az15}.

\textbf{Theorem on invariant subspaces.} \textit{Suppose that $\mathbf{T}$ is a maximal dissipative operator in a Pontryagin space $\Pi_\varkappa = \{J, \mathfrak{H}\}$. Then there exists a $\mathbf{T}$-invariant $\mathbf{J}$-non-positive $\varkappa$-dimensional subspace $\mathfrak{H}^-$ such that the spectrum of the restriction $\left.\mathbf{T}\right|_{\mathfrak{H}^-}$ lies in the closed lower half-plane and in the open half-plane $\mathbb{C}^-$ it coincides with the spectrum of $\mathbf{T}$.}

We proved that the whole real line except zero belongs to the resolvent set of $\mathbf{T}$. Zero is an eigenvalue of this operator, but the corresponding subspace is $\mathbf{J}$-positive. Hence, the intersection of this subspace with the $\mathbf{J}$-non-positive invariant subspace$\mathfrak{H}^-$ is zero. Therefore, the spectrum of the restriction $\left.\mathbf{T}\right|_{\mathfrak{H}^-}$ lies in the open lower half-plane $\mathbb{C}^-$. But according to the previous theorem, in $\mathbb{C}^-$ the spectrum of the restriction coincides with the spectrum of $\mathbf{T}$. Thus, the number of eigenvalues of $\mathbf{T}$ in $\mathbb{C}^-$ is equal to the number $\varkappa=\dim\mathfrak{H}^- =\pi_-(N)$.
\end{proof}

The dimension of the quotient space of the solution space for Eq. \eqref{eq:6} by the linear space of bounded solutions is called the instability index (i.e., the number of linearly independent unbounded solutions modulo bounded ones). Let $\nu(\mathbf{T})$ be the instability index of Eq. \eqref{eq:6}. Obviously,$\nu(\mathbf{T})$ is greater than or equal to the number of eigenvalues of $\mathbf{T}$ in $\mathbb{C}^-$. Hence, using Theorem 2, we have $\nu(\mathbf{T})\ge \pi_-(N)$.
In the general situation, the instability index can be greater than the number of eigenvalues	of the	 operator in the half-plane $\mathbb{C}^-$ (e.g., see \cite{14::Mil16}). However, in our case equality takes place.

\begin{theorem}
Suppose that $n_1n_2>0$; then the instability index $\nu(\mathbf{T})$ of Eq. \eqref{eq:6} is equal	to	 $\pi_-(N)$.	In particular, the problem is stable if and only if the numbers $n_1$ and $n_2$ are positive.
\end{theorem}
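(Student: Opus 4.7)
The lower bound $\nu(\mathbf{T})\geq\varkappa:=\pi_-(N)$ is essentially free from Theorem 2. That theorem supplies exactly $\varkappa$ eigenvalues $\lambda_1,\dots,\lambda_\varkappa$ of $\mathbf{T}$ in the open lower half plane (counting algebraic multiplicity), and each such $\lambda_k$ with its Jordan chain produces an elementary solution $e^{i\omega t\lambda_k}(\mathbf{v}_k+t\mathbf{v}_k^{(1)}+\cdots)$ of \eqref{eq:6}. Since $\omega>0$ and $\Im\lambda_k<0$ we have $|e^{i\omega t\lambda_k}|\to\infty$ as $t\to+\infty$, and these $\varkappa$ solutions are linearly independent modulo the bounded ones.

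For the reverse estimate, the plan is to split $\mathfrak{H}$ by a Riesz projection and reduce to a contraction semigroup argument. Concretely, take a small contour $\Gamma\subset\mathbb{C}^-$ enclosing $\sigma(\mathbf{T})\cap\mathbb{C}^-$ (a finite set by Theorem 2) and define
\[
P^-=\frac{1}{2\pi i}\oint_{\Gamma}(\lambda I-\mathbf{T})^{-1}\,d\lambda,\qquad \mathcal{L}^-=P^-\mathfrak{H},\qquad \mathcal{M}=(I-P^-)\mathfrak{H}.
\]
Both subspaces are $\mathbf{T}$-invariant, $\dim\mathcal{L}^-=\varkappa$, and $\sigma(\mathbf{T}|_{\mathcal{M}})\subset\overline{\mathbb{C}^+}$. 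What remains is to show that every solution of \eqref{eq:6} with initial data in $\mathcal{M}$ is uniformly bounded.

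The geometric heart of the matter is that $\mathcal{L}^-$ is uniformly $\mathbf{J}$-negative and $\mathcal{M}$ is uniformly $\mathbf{J}$-positive. The $\mathbf{J}$-non-positivity of $\mathcal{L}^-$ follows by the standard induction along Jordan chains: for an eigenvector $\mathbf{v}$ with $\mathbf{T}\mathbf{v}=\lambda\mathbf{v}$, $\lambda\in\mathbb{C}^-$, the dissipativity inequality gives $0\leq\Im[\mathbf{T}\mathbf{v},\mathbf{v}]=\Im\lambda\,[\mathbf{v},\mathbf{v}]$, forcing $[\mathbf{v},\mathbf{v}]\leq0$; the extension to associated vectors proceeds by the usual biorthogonality (cf.\ Section~8 of Part~1). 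Because $\dim\mathcal{L}^-=\varkappa=\pi_-(\mathbf{J})$, $\mathcal{L}^-$ is a maximal $\mathbf{J}$-non-positive subspace of the Pontryagin space $\Pi_\varkappa$, and the standard structure theorem for $\Pi_\varkappa$ then yields the uniform $\mathbf{J}$-positivity of the algebraic complement $\mathcal{M}$. I expect this step to be the main obstacle: since $\mathbf{T}$ is only $\mathbf{J}$-dissipative, not $\mathbf{J}$-self-adjoint, the projections $P^-$ and $I-P^-$ are not automatically $\mathbf{J}$-orthogonal, so the identification of $\mathcal{M}$ with a $\mathbf{J}$-orthogonal complement of $\mathcal{L}^-$ requires a separate argument that invokes the Riesz projection of the $\mathbf{J}$-adjoint $\mathbf{T}^{[*]}$ onto the complex-conjugate spectral set $\overline{\sigma(\mathbf{T})\cap\mathbb{C}^-}$ and the duality between them.

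Once $(\mathcal{M},[\cdot,\cdot])$ is a Hilbert space with norm equivalent to the ambient one, the conclusion follows by a soft semigroup argument. On $\mathcal{M}$ the operator $\mathbf{T}|_{\mathcal{M}}$ is dissipative in the Hilbert sense with $\sigma(\mathbf{T}|_{\mathcal{M}})\subset\overline{\mathbb{C}^+}$, so any $\mu$ with $\Re\mu>0$ lies in $\rho(i\omega\mathbf{T}|_{\mathcal{M}})$, and Lumer--Phillips produces a contraction $C_0$-semigroup generated by $i\omega\mathbf{T}|_{\mathcal{M}}$. Equivalently, the energy identity
\[
\frac{d}{dt}[\mathbf{u}(t),\mathbf{u}(t)]=-2\omega\,\Im[\mathbf{T}\mathbf{u}(t),\mathbf{u}(t)]\leq 0
\]
shows $[\mathbf{u}(t),\mathbf{u}(t)]\leq[\mathbf{u}(0),\mathbf{u}(0)]$ for every solution starting in $\mathcal{M}$, hence uniform boundedness. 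Combined with the first paragraph this gives $\nu(\mathbf{T})=\varkappa$, and the stability criterion follows because $\varkappa=\pi_-(N)=0$ precisely when $n_1>0$ and $n_2>0$.
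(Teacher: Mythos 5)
Your overall architecture matches the paper's: show the lower bound from Theorem~2, construct a $\mathbf{T}$-invariant decomposition of $\mathfrak{H}$ into a finite-dimensional uniformly $\mathbf{J}$-negative part carrying the unstable spectrum and an infinite-dimensional uniformly $\mathbf{J}$-positive part, and close with a contraction semigroup argument on the positive part. You have also correctly identified the subtlety that $\mathcal{M}=(I-P^-)\mathfrak{H}$ is the $\mathbf{J}$-orthogonal complement not of $\mathcal{L}^-$ but of the unstable root subspace $\mathfrak{H}^*_-$ of the $\mathbf{J}$-adjoint $\mathbf{J}^{-1}\mathbf{T}^*\mathbf{J}$; the paper sidesteps this by working with $\mathfrak{H}^*_-$ directly and then taking its $\mathbf{J}$-orthogonal complement, which is automatically $\mathbf{T}$-invariant.

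The genuine gap is in the claim that $\mathcal{L}^-$ (and likewise $\mathfrak{H}^*_-$) is \emph{uniformly} $\mathbf{J}$-negative. Your argument from $\mathbf{J}$-dissipativity gives only $\Im\lambda\,[\mathbf{v},\mathbf{v}]\geq 0$, hence $[\mathbf{v},\mathbf{v}]\leq 0$ for $\Im\lambda<0$, and the "standard induction along Jordan chains" is a non-positivity statement as well. From abstract $\mathbf{J}$-dissipativity alone a root subspace in $\mathbb{C}^-$ can be \emph{degenerate}; if it contains an isotropic vector, then that vector also lies in every algebraic complement's $\mathbf{J}$-orthogonal closure, and the complement cannot be uniformly positive, so the Pontryagin structure theorem does not apply and the whole reverse estimate collapses. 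The strict negativity must come from the concrete operators. The paper's proof pivots on exactly this point: from $\frac{d}{dt}(\mathbf{J}\mathbf{u},\mathbf{u})=-2\nu(Rv,v)$ and the decay of elementary solutions as $t\to-\infty$ one gets $(\mathbf{J}\mathbf{x},\mathbf{x})=-2\nu\int_{-\infty}^{0}(Rv(t),v(t))\,dt\leq 0$, and if this vanishes then $v\equiv 0$ (by $R\gg 0$), whence the matrix form of $\mathbf{W}\dot{\mathbf{u}}=i\omega\mathbf{M}\mathbf{u}$ forces $\dot w\equiv 0$ and $\dot z\equiv 0$, putting $\mathbf{x}$ into $\Ker\mathbf{T}$ --- which is $\mathbf{J}$-positive, hence disjoint from $\mathcal{L}^-$. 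That is the step your proposal is missing; without it you cannot pass from $[\mathbf{v},\mathbf{v}]\leq 0$ to $[\mathbf{v},\mathbf{v}]<0$, and the final contraction-semigroup argument on $\mathcal{M}$ has no uniformly equivalent Hilbert norm to work with.
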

\begin{proof}
If $N >0$, then $\mathbf{J}$ is uniformly positive. Hence, the metric $(\mathbf{J}x,x)$ in $\mathfrak{H}$ is equivalent to the original one. The operator $T$ is maximal dissipative with respect to the metric $(\mathbf{J}x,x)$. Hence, it generates a contraction semigroup (see [17, Chap. 9]). Therefore, every solution of Eq. \eqref{eq:6} (understood as an equality in $\mathfrak{H}$) satisfies the condition $\|\mathbf{J}^{1/2}u(t)\|\le \|\mathbf{J}^{1/2}u(0)\|$ whenever $t > 0$. Hence, the problem is stable.

Suppose that $\pi_-(N) > 0$. By virtue of the inequality $\nu(\mathbf{T}) \ge \pi_-(N)$, the problem is unstable. Let us calculate the index. Let $\mathfrak{H}_-$ be the one- or two-dimensional linear span of the eigenvectors corresponding to the eigenvalues from $\mathbb{C}^-$. Let us show that $\mathfrak{H}_-$ is a $\mathbf{J}$-negative subspace.

Suppose that $\mathbf{x}\in \mathfrak{H}_-$ and $\mathbf{u}(t)$ is an elementary solution of Eq. \eqref{eq:6} such that $\mathbf{u}(0) = \mathbf{x}$ (if $\pi_-(n) = 1$, then	$\mathbf{u}(t)= e^{i\omega \lambda_0t}\mathbf{x}$, where	$\lambda_0\in \mathbb{C}^-$). Using (6), we get
\begin{multline*}
\frac{d}{d t}(\mathbf{J} \mathbf{u}, \mathbf{u})=(\mathbf{J} \dot{\mathbf{u}}, \mathbf{u})+(\mathbf{J} \mathbf{u}, \dot{\mathbf{u}})=(i \omega \mathbf{J} \mathbf{T} \mathbf{u}, \mathbf{u})+(\mathbf{u}, i \omega \mathbf{J} \mathbf{T} \mathbf{u})={}\\ {}=-2 \omega \operatorname{Im}(\mathbf{J} \mathbf{T} \mathbf{u}, \mathbf{u})=-2 \nu(R v, v)
\end{multline*}

It is	clear	that $\mathbf{u}(t)\to 0$ as	$t\to-\infty$. Integrating the last equality, we	get
$$
(\mathbf{J}\mathbf{x},\mathbf{x}) = - 2\nu\int_{-\infty}^0 (Rv(t) ,v(t))\, dt.
$$
Since $R > 0$, we have $(\mathbf{J}x,x) \le 0$. Suppose that $(\mathbf{J}x,x) = 0$; then $v(t)\equiv 0$. Using the matrix representation of Eq. \eqref{eq:5}, we easily get $\dot{w}(t)\equiv 0$ and $\dot{z}(t)\equiv 0$. Hence, the vector $\mathbf{x} = (z, w, 0)^t$ belongs to the null space of $\mathbf{T}$, i.e., corresponds to the zero eigenvalue. This implies $\mathbf{x} = 0$.

Let us consider the operator $\mathbf{J}^{-1}\mathbf{T}^*\mathbf{J} =\mathbf{ J}^{-1}\mathbf{L}^*$. It follows from the matrix representation of $\mathbf{L}$ that $\mathcal{D}(\mathbf{J}^{-1}\mathbf{T}^*\mathbf{J}) = \mathcal{D}(\mathbf{T})$. Obviously, the operator $\mathbf{J}^{-1}\mathbf{T}^*\mathbf{ J}$ is maximal $\mathbf{J}$-dissipative. By $\mathfrak{H}^*_-$ we denote the invariant subspace of $\mathbf{J}^{-1}\mathbf{T}^*\mathbf{ J}$, that is, the linear span of the eigenvectors corresponding to the eigenvalues from the upper half-plane. Arguing as above, we see that $\mathfrak{H}^*_-$ is $\mathbf{J}$-negative. Since $\mathfrak{H}^*_-$ is finite-dimensional, it is uniformly $\mathbf{J}-negative$. By virtue of the Pontryagin theorem ([13, Theorem 1]), the $\mathbf{J}$-orthogonal complement $\mathfrak{H}_+$ of $\mathfrak{H}^*_-$ is uniformly $\mathbf{J}$-positive. Obviously, the space $\mathfrak{H}_+$ is $\mathbf{T}$-invariant and the spectrum of the restriction $\mathbf{T}_+
  =\left. \mathbf{T}\right|_{\mathfrak{H}_+}$ lies in the closed upper half-plane. Hence, $\mathfrak{H}$ is the direct sum of two $\mathbf{T}$-invariant uniformly $\mathbf{J}$-definite subspaces $\mathfrak{H}_-$ and $\mathfrak{H}_+$. Hence, it is clear that Eq. \eqref{eq:6} in $\mathfrak{H}_+$ is stable. Thus, the instability index $\nu(\mathbf{T})$ is equal to $\dim \mathfrak{H}_- =\pi_(N)$.	
\end{proof}

In addition to Theorem 3, note that the instability of Eq. \eqref{eq:6} implies the instability of the top, i.e., the rigid part of the system. Namely, the following result is valid.

\begin{theorem}\label{thm:4}
Suppose that $\mathbf{u}(t) = (z(t), w(t), v(t))^t$ is an unbounded solution of Eq. \eqref{eq:6}; then $z(t)$ is also unbounded.
\end{theorem}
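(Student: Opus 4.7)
The plan is to exploit the spectral decomposition $\mathfrak{H} = \mathfrak{H}_- \oplus \mathfrak{H}_+$ constructed in the proof of Theorem~\ref{thm:4}'s predecessor (Theorem~3). Recall that $\mathfrak{H}_-$ is the finite-dimensional $\mathbf{T}$-invariant subspace of dimension $\varkappa = \pi_-(N)$ associated with the spectrum of $\mathbf{T}$ in $\mathbb{C}^-$, and it is uniformly $\mathbf{J}$-negative. Its $\mathbf{J}$-orthogonal complement $\mathfrak{H}_+$ is $\mathbf{T}$-invariant, uniformly $\mathbf{J}$-positive, and $\sigma(\mathbf{T}|_{\mathfrak{H}_+})\subset\overline{\mathbb{C}}^+$. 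Writing $\mathbf{u}(t) = \mathbf{u}_-(t) + \mathbf{u}_+(t)$ accordingly, the summand $\mathbf{u}_+(t)$ stays bounded: on $\mathfrak{H}_+$ the form $(\mathbf{J}\cdot,\cdot)$ defines a Hilbert norm equivalent to the original one, and $\mathbf{T}|_{\mathfrak{H}_+}$ is dissipative with respect to it, so $e^{i\omega t\,\mathbf{T}|_{\mathfrak{H}_+}}$ is a contraction semigroup. Hence the unboundedness of $\mathbf{u}(t)$ is entirely carried by $\mathbf{u}_-(t)$.

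The crucial step will be to show that the projection $P_z:(z,w,v)^t\mapsto z$ is injective on $\mathfrak{H}_-$. The key observation is purely algebraic: the lower-right $2\times 2$ block of $\mathbf{J}$ coincides with that of $\mathbf{W}$, namely $\bigl(\begin{smallmatrix} A & B^* \\ B & I \end{smallmatrix}\bigr)$. Therefore, for every $\mathbf{x} = (0,w,v)^t$ one has $(\mathbf{J}\mathbf{x},\mathbf{x}) = (\mathbf{W}\mathbf{x},\mathbf{x})$. Because $\mathbf{W}$ is uniformly positive (Proposition~3), this forces $(\mathbf{J}\mathbf{x},\mathbf{x})\geq c\|\mathbf{x}\|^2$ for some $c>0$ on the whole subspace $\{z=0\}$. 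But every nonzero $\mathbf{x}\in\mathfrak{H}_-$ satisfies $(\mathbf{J}\mathbf{x},\mathbf{x})<0$, so $\Ker P_z|_{\mathfrak{H}_-} = 0$. Finite dimensionality of $\mathfrak{H}_-$ then produces $\gamma>0$ with $\|P_z\mathbf{x}\|\geq\gamma\|\mathbf{x}\|$ for all $\mathbf{x}\in\mathfrak{H}_-$.

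Combining the two pieces, I decompose $z(t) = P_z\mathbf{u}_-(t) + P_z\mathbf{u}_+(t)$. The second summand is bounded, whereas $\|P_z\mathbf{u}_-(t)\|\geq\gamma\|\mathbf{u}_-(t)\|$, and $\|\mathbf{u}_-(t)\|$ is unbounded because $\mathbf{u}(t)$ is unbounded and $\mathbf{u}_+(t)$ stays bounded. Consequently $z(t)$ is unbounded, which is the required assertion. No serious obstacle appears in this plan: its mathematical content reduces to the coincidence of the two lower-right $2\times 2$ blocks of $\mathbf{J}$ and $\mathbf{W}$, combined with the spectral and geometric structure already established in Theorems~2 and~3.
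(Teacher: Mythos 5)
Your proof is correct, and it runs parallel to the paper's argument up to the point where the unboundedness of $\mathbf{u}(t)$ is concentrated in $\mathbf{u}_-(t)$. After that the two proofs diverge. The paper uses the congruence $\mathbf{J}=\mathbf{S}_0^*\mathbf{J}_0\mathbf{S}_0$ from \eqref{eq:7}: writing $\mathbf{u}_-^0:=\mathbf{S}_0\mathbf{u}_-$ (whose first component is still $z$, since $\mathbf{S}_0$ is lower unitriangular) it expands
\[
(\mathbf{J}\mathbf{u}_-,\mathbf{u}_-)=\omega^2(Nz,z)+((A-B^*B)w_0,w_0)+(v_0,v_0),
\]
observes that the last two terms are nonnegative, and concludes that $\omega^2(Nz,z)\to-\infty$, which directly forces $\|z(t)\|\to\infty$ because $N$ is a fixed bounded matrix. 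You instead note the purely algebraic fact that $\mathbf{J}$ and $\mathbf{W}$ share the same lower-right $2\times2$ block, so $(\mathbf{J}\mathbf{x},\mathbf{x})=(\mathbf{W}\mathbf{x},\mathbf{x})>0$ on $\{z=0\}$, hence $\mathfrak{H}_-\cap\{z=0\}=\{0\}$, and then invoke finite dimensionality of $\mathfrak{H}_-$ to get $\|P_z\mathbf{x}\|\geq\gamma\|\mathbf{x}\|$ there. The two observations are of course relatives — the positivity of $\mathbf{J}$ on $\{z=0\}$ is exactly what the $(A-B^*B)$ and identity blocks of $\mathbf{J}_0$ encode after conjugation by $\mathbf{S}_0$ — but your route avoids the congruence computation entirely at the mild cost of a compactness-style argument (bounded below on a finite-dimensional subspace), whereas the paper's route is quantitative and carries over without change to cases where a lower bound on $\dim\mathfrak{H}_-$ is not a priori available. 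Both are sound.
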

\begin{proof}
It follows from the proof of Theorem 3 that $\mathfrak{H}$ is the direct sum of two $\mathbf{T}$-invariant uniformly $\mathbf{J}$-definite subspaces: $\mathfrak{H}= \mathfrak{H}_+\dot{+}\mathfrak{H}_+$. In particular, any unbounded solution $\mathbf{u}(t)$ of Eq. \eqref{eq:6} has the form
$$
\mathbf{u}(t) =\mathbf{u}_+(t)+\mathbf{u}_-(t),\quad \text{where}\quad  (\mathbf{J}\mathbf{u}_-(t),\mathbf{ u}_-(t))\to-\infty \quad \text{as}\quad  t\to+\infty.
$$
Let
$$
\mathbf{u}_-(t) = (z(t), w(t), v(t)^t,\qquad 	 \mathbf{u}^0_-(t):=\mathbf{S}_0\mathbf{u}_-(t)=	 (z(t),w_0(t),v_0(t))^t.
$$
Hence, using \eqref{eq:7}, we get
\begin{multline*}
(\mathbf{J}\mathbf{u}_(t),\mathbf{u}_(t)) = (\mathbf{J}_0\mathbf{u}^0_-(t),\mathbf{u}^0_-(t)) ={}\\
{}=\omega^2 (Nz(t),z(t)) + ((A- B^* B) w_0(t), w_0(t)) + (v_0(t), v_0(t)).
\end{multline*}
Since the last two summands on the right side are positive and the sum tends to $-\infty$ as $t\to+\infty$, we see that $z(t)$ is an unbounded function.
\end{proof}

\begin{rem}
The upper bound for the number of eigenvalues of $\mathbf{T}_1 =\mathbf{J}^{-1}_1\mathbf{L}_1$ with negative imaginary part was obtained in \cite{14::Yur9}. It can be shown that the statements of Theorems 2 and 3 concerning the number of eigenvalues in $\mathbb{C}^-$ and the instability index are consequences of the results of \cite{14::Sh18}, \cite{14::Sh19}. But we prefer to give an independent proof here.
\end{rem}

\subsection{Further Properties of the Evolution Operator $\mathbf{T}$ of the System. The Spectrum at Large Viscosity}

\subsection*{The numerical range of $\mathbf{T}$ and properties of its eigenfunctions.} The range of the quadratic form $(Fx, x)$ for $x\in\mathcal{D}(F)$ and $\|x\| = 1$ is called the numerical range of an operator $F$. It is clear that the spectrum of an operator is a subset of its numerical range. Let us recall the concept of a basis for the Abel summability method of order a, which is due to Lidskii [20]. Suppose that $F$ has a discrete spectrum and all of its eigenvalues except for finitely many lie in the sector $|\arg\lambda -\pi/2|<\theta < \pi$. To be concise, suppose that the eigenvalues are simple. We say that the system $\{y_k\}$ of eigenfunctions of $F$ corresponding to the eigenvalues $\{\lambda_k\}$ is a basis for the Abel summability method of order $\alpha$, $\alpha<2\theta/\pi$, if for any $f\in\mathfrak{H}$ the series
$$
f(t)=\sum e^{(-i\lambda_k)^\alpha t}(f,z_k)y_k
$$
strongly converges in $\mathfrak{H}$ for all $t > 0$ (it is permitted to put groups of terms in brackets independently of $f$ and $t$) and $f(t)$ strongly tends to $f$ as $t\to 0$. Here $\{z_k\}$ is the set of eigenfunctions of the adjoint operator. It is biorthogonal to $\{y_k\}$, and $(-i\lambda_k)^\alpha$ is the main branch of $\lambda^\alpha$. Obviously, every basis for the Abel summability method is a complete system in $\mathfrak{H}$.

As before, by $\mathbf{T}_\pm$ we denote the restriction of the generator $\mathbf{T}$ of Eq. (6) to (see Theorem 2).

\begin{theorem}
The numerical range of $\mathbf{T}_+$ lies in the half-strip $|\Re\lambda| < c_0$, $\Im\lambda > -c_1$ for some constants $c_0$, $c_1$> 0. The eigenvalues $\lambda_k = \lambda_k(\mathbf{T})$ satisfy the condition
\begin{equation}\label{eq:8}
\lambda_{k}=i\left(\frac{3 \pi^{2} k}{\operatorname{mes}(\Omega)}\right)^{2 / 3} \frac{\nu}{\omega}(1+o(1)), \quad k=1,2,3, \ldots
\end{equation}
and lie in a half-strip of the same form. The spectrum is symmetric about the imaginary cuds. The set of all eigenvectors of $\mathbf{T}$ is a basis for the Abel summability method	of order a	whenever	$\alpha > 1/2$.	 In particular, if $\alpha= 1$ and $t > 0$, then any solution of Eq. (6) is represented by a	convergent series in	the eigenfunctions of $\mathbf{T}$. The operator $i\mathbf{T}$ generates a holomorphic semigroup in $\mathfrak{H}$.
\end{theorem}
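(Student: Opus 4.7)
The plan proceeds in three stages, leveraging the decomposition $\mathbf{L}=\mathbf{V}+\mathbf{D}$ from the proof of Theorem~2, where $\mathbf{V}$ is a bounded self-adjoint finite-rank correction and $\mathbf{D}=\operatorname{diag}(I,I,D)$ with $D=G+i\nu\omega^{-1}R$. First I would establish the numerical range bound by working in the $\mathbf{J}$-metric restricted to $\mathfrak{H}_{+}$, which by the proof of Theorem~3 is uniformly positive and hence equivalent to the standard Hilbert norm on this subspace. The identity $(\mathbf{T}_{+}\mathbf{u},\mathbf{u})_{\mathbf{J}}=(\mathbf{L}\mathbf{u},\mathbf{u})$ splits naturally: $\operatorname{Re}(\mathbf{L}\mathbf{u},\mathbf{u})=(\mathbf{V}\mathbf{u},\mathbf{u})+\|z\|^{2}+\|w\|^{2}+(Gv,v)$ is bounded by $C\|\mathbf{u}\|^{2}$ since $\mathbf{V}$ and $G$ are bounded self-adjoint, while $\operatorname{Im}(\mathbf{L}\mathbf{u},\mathbf{u})=\nu\omega^{-1}(Rv,v)\geq 0$. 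Dividing by $(\mathbf{J}\mathbf{u},\mathbf{u})\geq c\|\mathbf{u}\|^{2}$ places the numerical range in the claimed half-strip. The symmetry of $\sigma(\mathbf{T})$ about the imaginary axis will follow from the conjugation identity $\overline{\mathbf{T}\mathbf{u}}=-\mathbf{T}\bar{\mathbf{u}}$: since $\bar H=-H$ (the matrix $H$ is purely imaginary) and $G=-2P_{0}H$ yields $\bar G=-G$, while $\bar R=R$, $\bar A=A$, and $B$, $B^{*}$ have real kernels, a block-by-block check gives $\bar{\mathbf{M}}=-\mathbf{M}$ and $\bar{\mathbf{W}}=\mathbf{W}$, so every eigenpair $(\lambda,x)$ of $\mathbf{T}$ produces a companion $(-\bar\lambda,\bar x)$.

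The main technical step is the asymptotic (8). I would pass to the similar operator $\mathbf{T}_{1}=\mathbf{J}_{1}^{-1}\mathbf{L}_{1}$ from Remark~1 (the similarity induced by $\mathbf{S}_{1}$ preserves the spectrum), in which $\mathbf{L}_{1}$ is block-diagonal with a finite-dimensional block on $\mathbb{C}^{3}\times\mathbb{C}^{3}$ and the scalar block $D=G+i\nu\omega^{-1}R$ on $J_{0}(\Omega)$, while $\mathbf{J}_{1}$ acts as the identity on $J_{0}(\Omega)$ modulo a finite-rank coupling through $B$, $B^{*}$. The rigid block contributes only finitely many eigenvalues, so the asymptotic spectrum of $\mathbf{T}_{1}$ is dictated by $D$. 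Using the Weyl asymptotic $\lambda_{k}(R)=(3\pi^{2}k/\operatorname{mes}(\Omega))^{2/3}(1+o(1))$ for the Stokes operator, combined with the fact that the bounded self-adjoint $G$ shifts eigenvalues of $i\nu\omega^{-1}R$ only by $O(1)$, one obtains $\lambda_{k}(D)=i(3\pi^{2}k/\operatorname{mes}(\Omega))^{2/3}\nu/\omega\,(1+o(1))$, from which (8) follows. The main obstacle I foresee is controlling the off-diagonal coupling blocks of $\mathbf{J}_{1}$ and the finite-dimensional rigid block of $\mathbf{L}_{1}$ uniformly enough to show that they shift each large eigenvalue of $\mathbf{T}_{1}$ only by $o(k^{2/3})$, which I would handle by a resolvent perturbation argument, writing $\mathbf{J}_{1}^{-1}\mathbf{L}_{1}-\lambda=\mathbf{J}_{1}^{-1}(\mathbf{L}_{1}-\lambda\mathbf{J}_{1})$ and invoking the theorem on finite-rank perturbations of compact normal operators.

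Finally, the half-strip location of the numerical range shows that $i\mathbf{T}_{+}$ is sectorial with opening angle shrinking to $0$ at infinity, hence generates a holomorphic semigroup on $\mathfrak{H}_{+}$ which extends to $\mathfrak{H}$ via the finite-dimensional invariant summand $\mathfrak{H}_{-}$. For the Abel summability basis, the asymptotic (8) yields $s_{k}(\mathbf{T}^{-1})=O(k^{-2/3})$, so $\mathbf{T}^{-1}\in\sigma_{p}$ for every $p>3/2$; combined with the sectoriality and the boundedness of the anti-self-adjoint part of $i\mathbf{T}$, the Lidskii theorem on Abel summability of Fourier expansions in root vectors of compact perturbations of normal operators then delivers the basis property for every $\alpha>1/2$.
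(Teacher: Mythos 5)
Your proposal follows the same overall plan as the paper (numerical range via the positivity of $\mathbf{J}$ on $\mathfrak{H}_{+}$, symmetry from realness of $A$, $N$, $iH$, asymptotics from a compact perturbation of the Stokes block, holomorphic semigroup from the resolvent estimate outside the numerical range, and an Abel-summability theorem), but the step establishing the asymptotics \eqref{eq:8} is organized differently, and it is there that your route is noticeably less clean. You propose to work with $\mathbf{T}_{1}=\mathbf{J}_{1}^{-1}\mathbf{L}_{1}$ and control the off-diagonal coupling blocks of $\mathbf{J}_{1}$ by a resolvent perturbation argument; you correctly anticipate this as the main obstacle, and it is a real one, because the diagonal (3,3) block of $\mathbf{L}_{1}$ is $D=G+i\nu\omega^{-1}R$, which is dissipative but not normal, so one already needs a Keldysh-type theorem just to get the asymptotics of $D$ before addressing the $\mathbf{J}_{1}$-coupling. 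The paper avoids all of this: writing $\mathbf{R}=\operatorname{diag}(I,I,R)$, it observes $\mathbf{J}=\mathbf{I}+\mathbf{K}$ with $\mathbf{K}$ finite-rank and $\mathbf{L}=\left(i\nu\omega^{-1}\mathbf{I}+\mathbf{V}\right)\mathbf{R}$ with $\mathbf{V}$ compact, so that $\mathbf{T}=i\nu\omega^{-1}(\mathbf{I}+\mathbf{V}_{1})\mathbf{R}$ with $\mathbf{V}_{1}$ compact, and \eqref{eq:8} follows in one stroke from the Keldysh--Gohberg--Krein theorem applied to this single compact perturbation of the self-adjoint $\mathbf{R}$. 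You should adopt this factorization; it supersedes the two-step argument you envisage.

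A smaller remark on the first step: you bound the $\mathbf{J}$-numerical range, i.e.\ the set of values $(\mathbf{J}\mathbf{T}_{+}\mathbf{u},\mathbf{u})/(\mathbf{J}\mathbf{u},\mathbf{u})$, whereas the theorem is stated for the numerical range with respect to the standard inner product. These are different sets, and the passage from one to the other is not automatic; the paper handles this via the chain
$$
(\mathbf{T}\mathbf{x},\mathbf{x})=(\mathbf{J}_{+}\mathbf{T}\mathbf{x},\mathbf{J}_{+}^{-1}\mathbf{x})=(\mathbf{L}\mathbf{x},\mathbf{J}_{+}^{-1}\mathbf{x})
$$
together with the observation that for a symmetric form $F$ and a uniformly positive $C$ the boundedness (or semiboundedness) of $(F\mathbf{x},\mathbf{x})$ is equivalent to that of $(F\mathbf{x},C\mathbf{x})$. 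Your computation does give the correct resolvent decay needed for the holomorphic semigroup, so the conclusion is not in danger, but the numerical-range statement as such needs the extra step. The Abel-summability paragraph is in the right spirit; the paper cites a specific summability theorem (the author's Pacific J.~Math.\ paper on estimates of holomorphic functions) rather than the generic Lidskii result, and the input is exactly the two facts you isolate, namely $s_{k}(\mathbf{T}^{-1})=O(k^{-2/3})$ and the half-strip location of the numerical range.
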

\begin{proof}
Suppose that $\mathbf{Q}_+$ is the orthogonal projection onto $\mathfrak{H}_+$	 and $\mathbf{J}_+ =\mathbf{	 Q}_+\mathbf{J}\mathbf{Q}_+$.	Since $\mathfrak{H}_+$	is
uniformly $\mathbf{J}$-positive, it follows that $\mathbf{J}_+$ is uniformly positive in $\mathfrak{H}_+$.	If $\mathbf{x}\in\mathfrak{H}_+$,	then
\begin{equation}\label{eq:9}
(\mathbf{T}\mathbf{x}, \mathbf{x}) = (\mathbf{J}_+\mathbf{T}\mathbf{x}, \mathbf{J}^{-1}_+\mathbf{x}) =
(\mathbf{JTx}, \mathbf{J}^{-1}_+\mathbf{x}) = (\mathbf{Lx}, \mathbf{J}^{-1}_+\mathbf{x}) = (\mathbf{Q}_+\mathbf{LQ}_+\mathbf{x}, \mathbf{J}^{-1}_+\mathbf{x}).
\end{equation}
Note that if $C$ is uniformly positive, then the real form $(Fx, x$) is bounded (semi-bounded) if and only if so is the form $(Fx, Cx)$. It follows from the matrix representation of $\mathbf{L}$ that the real part of $(\mathbf{Lx}, \mathbf{x})$ is bounded and the imaginary part is semibounded. Hence, so is form (9). Therefore, the numerical range of $\mathbf{T}_+$ lies in a half-strip. The following estimate for the resolvent holds outside the numerical range $\mathbf{W}(\mathbf{T}_+)$:
\begin{equation}\label{eq:10}
\|(\lambda-\mathbf{T}_+)^{-1}\| \le  l/d(\lambda, W(\mathbf{T}_+)),
\end{equation}
where $d$ is the distance from $\lambda$ to $W(\mathbf{T}_+$. In particular, if $\lambda$ asymptotically lies outside a small sector containing the imaginary axis, then the resolvent is of maximal decay ($\le c|\lambda|^{-1}$). It follows from the well- known results of semigroup theory (see [17, Chap. 9]) that $i\mathbf{T}_+$ is a generator of a holomorphic semigroup in $\mathfrak{H}_+$. Since $\mathbf{T} =\mathbf{ T}_+ + \mathbf{T}_-$, so is $i\mathbf{T}$. Here $\mathbf{T}_-$ is finite dimensional.

Let $\mathbf{R} = \operatorname{diag}(I,I, R)$, where $R$ is the Stokes operator. We have
$$
\mathbf{J} =\mathbf{I} + \mathbf{K},\quad \mathbf{L}= (i\nu\omega^{-1}\mathbf{I}+ \mathbf{V})\mathbf{R},\quad
\mathbf{V} = (\mathbf{L}-i\nu\omega^{-1}\mathbf{R})\mathbf{R}^{-1},
$$
where $\mathbf{K}$ is finite-dimensional and $\mathbf{V}$ is compact. Hence, $\mathbf{T} =i\nu\omega^{-1}(\mathbf{I} + \mathbf{V}_1)\mathbf{R}$, where $\mathbf{V}_1$ is a compact operator.

The eigenvalues of $\mathbf{R}$ and $R$ coincide, except for one, $\lambda= 1$. The asymptotics for the eigenvalues of $R$ is known \cite{14::Kop5}. It is given in (8) up to the coefficient $\nu/\omega$. The Keldysh--Gokhberg--Krein theorem (see \cite[Chap. 5]{14::Go12}) implies a similar asymptotics for the compact perturbation $\mathbf{T}$ of $\mathbf{R}$. Since the matrices $A$, $N$, $iH$ are real, it follows that the spectrum of the pencil $\lambda \mathbf{J} - \mathbf{L}$ or of $\mathbf{T}$ is symmetric about the imaginary axis.

Now it suffices to prove that the set of eigenvectors of the operator $\mathbf{T}_+$ acting in $\mathfrak{H}_+$ is a basis for the Abel summability method. We proved that the estimate (10) for the resolvent holds outside a half-strip. Obviously, the $s$-numbers of the operators $\mathbf{T}^{-1}$ and $\mathbf{R}^{-1}$ satisfy the condition $s_k(\mathbf{T}) \sim k^p$, $p = 2/3$ (see \cite[Chap. 2]{14::Go}). Thus, it readily follows from the result of \cite{14::Smir21} that the system is a basis for the Abel summability method of order $a > p^{-1} = 1/2$.	
\end{proof}

The spectrum of $\mathbf{T}$ for large viscosity. It is natural that the spectrum of the evolution operator approaches the spectrum of the small oscillations of the body with frozen fluid as $\nu\to\infty$.

The equation of motion for the small oscillations of the body with frozen fluid is the projection of Eq. (5) onto the subspace $\mathbb{C}^3\times \mathbb{C}^3 \times {0}$ of $\mathfrak{H}$. Hence, the spectrum of the corresponding problem coincides with that of the matrix pencil
$$
V(\lambda)=\lambda\left(\begin{array}{cc}{I} & {0} \\ {0} & {A}\end{array}\right)-\left(\begin{array}{cc}{H} & {\omega^{-1} H} \\ {k \omega^{-1} H} & {H\left(A-a_{0} I\right)}\end{array}\right)=: \lambda W^{(s)}-M^{(s)}
$$
where $W^{(s)}$ and $M^{(s)}$ are the projections of $\mathbf{W}$ and $\mathbf{M}$ onto the ``rigid'' part
$\mathbb{C}^3\times \mathbb{C}^3$ of $\mathfrak{H}$. Obviously, zero is a double eigenvalue of the pencil $V(\lambda)$. Consider the operators $\mathbf{S}_1$, $\mathbf{J}_1$, $\mathbf{L}_1$, defined in Remark 1. Note that the eigenvalues of the pencils $\mathbf{V}(\lambda) =\lambda\mathbf{W}-\mathbf{M}$ and $\mathbf{V}_1(\lambda) = \lambda\mathbf{J}_1-\mathbf{L}_1$ coincide. The eigenvalues of their projections $\mathbf{V}(\lambda)$ and
\begin{multline*}
V_{1}(\lambda)=V(\lambda) S_{1}^{(s)}={}\\ {}=\lambda\left(\begin{array}{cc}{\left(\omega^{2} N\right)^{-1}} & {A(\omega N)^{-1}} \\ {A(\omega N)^{-1}} & {A(A+N) N^{-1}}\end{array}\right)-\left(\begin{array}{cc}{0} & {-\omega^{-1} H} \\ {-\omega^{-1} H} & {-a_{0} H}\end{array}\right)=: \lambda J_{1}^{(s)}-L_{1}^{(s)}
\end{multline*}
onto $\mathbb{C}^3\times \mathbb{C}^3$ also coincide.

\begin{theorem}\label{thm:6}
Suppose that $n_1n_2\ne0$ and $\nu\to\infty$. Then zero is a double eigenvalue of $\mathbf{T}$; four eigenvalues approach the nonzero eigenvalues of the pencil $V(\lambda)$, which corresponds to the rigid part of the system. The degree of approximation is estimated by $O(\nu^{-1/2})$ if the four eigenvalues are simple and by $O(\nu^{-1/4})$ if they are double. The spectrum of $V(\lambda)$ and $V_1(\lambda)$ is symmetric about the real and imaginary axes. All other eigenvalues approach infinity and satisfy

\begin{equation}\label{eq:11}
\lambda_k=\nu\omega^{-1}\lambda_k^{(f)}(l + o(l))\quad as \nu\to\infty,
\end{equation}
where the $\lambda_k^{(f)}$ are the nonzero eigenvalues of the pencil $\lambda\left(\begin{array}{cc}{A} & {B^{*}} \\ {B} & {I}\end{array}\right)-i\left(\begin{array}{cc}{0} & {0} \\ {0} & {R}\end{array}\right) \text { in } \mathbb{C}^{3} \times J_{0}(\Omega)$.
\end{theorem}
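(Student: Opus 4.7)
The plan is to analyze the pencil $\mathbf{V}_1(\lambda) = \lambda \mathbf{J}_1 - \mathbf{L}_1$, which by Remark~1 has the same spectrum as $\mathbf{T}$, and to exploit its block structure with respect to the splitting $\mathfrak{H} = (\mathbb{C}^3 \times \mathbb{C}^3) \oplus J_0(\Omega)$. In this splitting the off-diagonal entries of $\mathbf{J}_1$ contain the bounded operators $B$ and $B^*$, the off-diagonal entries of $\mathbf{L}_1$ vanish, and the fluid $(3,3)$-block of $\mathbf{L}_1$ is the sole $\nu$-dependent operator $D = G + (\nu/\omega)R$, so the whole $\nu$-dependence is concentrated in one scalar place.

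For eigenvalues that stay in a bounded region as $\nu \to \infty$, I would eliminate the fluid component via a Schur-complement construction. On any fixed bounded set of $\lambda$, uniform positivity of the Stokes operator $R$ yields $\|(\lambda(BN^{-1}B^* + I) - D)^{-1}\| = O(\nu^{-1})$ once $\nu$ is large. Writing $C$ for the rigid-to-fluid block of $\mathbf{J}_1$, the reduced pencil on $\mathbb{C}^3 \times \mathbb{C}^3$ reads
\begin{equation*}
\widetilde V(\lambda,\nu) = V_1(\lambda) - \lambda^2 C\bigl(\lambda(BN^{-1}B^* + I) - D\bigr)^{-1} C^{*},
\end{equation*}
and $\widetilde V(\cdot,\nu) \to V_1(\cdot)$ in operator norm, uniformly on compact subsets of $\mathbb{C}$. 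An application of the argument principle to the scalar determinant of $\widetilde V(\cdot,\nu)$ on small circles around each nonzero eigenvalue of $V_1(\lambda)$ will show that an eigenvalue of algebraic multiplicity $m$ attracts exactly $m$ eigenvalues of $\widetilde V(\cdot,\nu)$ for all large $\nu$, with the standard displacement dictated by the characteristic determinant: $O(\nu^{-1/2})$ in the simple case and $O(\nu^{-1/4})$ at a double eigenvalue (the usual square-root loss for a branch point). Persistence of the two-dimensional kernel at zero is already secured by Theorem~2 (it gives $\ker\mathbf{T}=\mathrm{span}\{\mathbf{x}_0,\mathbf{x}_1\}$ without associated vectors for every $\nu>0$), so no extra eigenvalue can hide there.

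For eigenvalues escaping to infinity I would rescale $\lambda = (\nu/\omega)\mu$ and divide $\mathbf{V}_1(\lambda)$ by $\nu/\omega$. In the rescaled pencil the rigid block contributes terms of lower order in $\nu$ while the fluid block converges in the strong resolvent sense to
\begin{equation*}
\mu\begin{pmatrix} A & B^{*} \\ B & I \end{pmatrix} - i\begin{pmatrix} 0 & 0 \\ 0 & R \end{pmatrix},
\end{equation*}
whose nonzero spectrum is precisely $\{\lambda_k^{(f)}\}$. Combining this with the discreteness of the spectrum of $\mathbf{T}$, the uniform resolvent estimate outside the half-strip $W(\mathbf{T}_+)$ (Theorem~5), and the $s$-number asymptotics $s_k(\mathbf{T})\sim k^{2/3}$ already used there, a Keldysh-type comparison gives the asymptotic relation~(11). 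The symmetry of the spectra of $V(\lambda)$ and $V_1(\lambda)$ about both axes is then purely algebraic: self-adjointness of the pencil gives the reflection $\lambda \mapsto \overline\lambda$, while reality of $W^{(s)}$, $A$, $N$ together with $\overline{H} = -H$ yields $V(-\overline\lambda)\overline\phi = 0$ whenever $V(\lambda)\phi = 0$.

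The main technical obstacle is keeping the Schur-complement perturbation estimate honest and uniform enough to produce the stated rates. The naive bound on $\widetilde V(\lambda,\nu)-V_1(\lambda)$ is only $O(\nu^{-1})$ pointwise in $\lambda$, but near a limit eigenvalue one needs an estimate that is simultaneously uniform on a circle whose radius shrinks with $\nu$ and stays well separated from every branch of incoming fluid spectrum; the correct contour turns out to be of radius $O(\nu^{-1/2})$, and verifying that on such contours the perturbation is still strictly smaller than the modulus of the limiting determinant (so that Rouché can be applied) is where the real work lies and where the rates $O(\nu^{-1/2})$, $O(\nu^{-1/4})$ come from.
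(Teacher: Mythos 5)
Your Schur-complement reduction to a finite-dimensional pencil on $\mathbb{C}^3\times\mathbb{C}^3$ is a genuinely different route from the paper's argument, which instead conjugates $\lambda\mathbf{J}_1-\mathbf{L}_1$ by $\operatorname{diag}(I,I,\mu^{1/2}R^{-1/2})$ (with $\mu=\omega/\nu$, $\rho=\mu^{1/2}$) to produce a pencil $\mathbf{S}_\rho(\lambda)$ that depends polynomially on $\rho$ and then invokes the Rellich--Kato theorem directly. Your reduction is sound in outline — the off-diagonal blocks of $\mathbf{L}_1$ vanish, $\tilde C$ is bounded of finite rank, and for bounded $\lambda$ and large $\nu$ the resolvent $(\lambda(BN^{-1}B^*+I)-D)^{-1}$ exists and is $O(\nu^{-1})$ uniformly on compacta, so $\widetilde V(\cdot,\nu)\to V_1(\cdot)$ in norm on compacta (your worry that the $O(\nu^{-1})$ bound is only pointwise is unfounded). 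But your final paragraph, where you claim the rates $O(\nu^{-1/2})$ and $O(\nu^{-1/4})$ emerge from Rouch\'e on a contour of radius $O(\nu^{-1/2})$, is internally inconsistent. With a perturbation of size $O(\nu^{-1})$, a simple zero of $\det V_1$ has $|\det V_1|\sim r$ on a circle of radius $r$ and thus Rouch\'e applies already for $r=C\nu^{-1}$, giving a displacement $O(\nu^{-1})$; a double zero has $|\det V_1|\sim r^2$ and Rouch\'e works for $r=C\nu^{-1/2}$, giving $O(\nu^{-1/2})$. Both of these are strictly sharper than the theorem's $O(\nu^{-1/2})$ and $O(\nu^{-1/4})$, so your approach proves the statement a fortiori; but the $O(\nu^{-1/4})$ simply does not emerge from your contour, and the attempted justification is not a proof. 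The paper's slower rates are an artifact of its method: $\mathbf{S}_\rho(\lambda)$ is analytic in $\rho=\nu^{-1/2}$, so Rellich--Kato gives eigenvalues analytic (resp. Puiseux) in $\rho$, hence $O(\rho)=O(\nu^{-1/2})$ (resp. $O(\rho^{1/2})=O(\nu^{-1/4})$). You should either align your bookkeeping with what the Schur complement actually yields, or, if you want the paper's exact rates, switch to the paper's $\rho$-analytic pencil.

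A second, more minor misstep is in the treatment of the escaping eigenvalues. After the rescaling $\lambda=(\nu/\omega)\mu$ you invoke the half-strip numerical-range estimate, the $s$-number asymptotics $s_k(\mathbf{T})\sim k^{2/3}$, and a ``Keldysh-type comparison''. That machinery is what the paper uses for the Abel-summability statement in Theorem~5, not for the eigenvalue asymptotics in \eqref{eq:11}. Here the paper again simply writes the rescaled pencil as a small perturbation (in $\rho$) of the limiting fluid pencil and applies Rellich--Kato; strong resolvent convergence alone, which you mention, does not by itself give the quantitative $(1+o(1))$ in \eqref{eq:11} — you need the perturbative structure. The symmetry argument and the persistence of the two-dimensional kernel at zero are both fine as you state them.
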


\begin{proof}
Since $V_1(\lambda)$ is self-adjoint, it follows that the spectrum of $V_1(\lambda)$ and $V(\lambda)$ is symmetric about the real axis. Since the matrices $A$, $N$, $iH$ are real, it follows that the spectrum of $V_1 (\lambda)$ is symmetric about the imaginary axis.

Let $\mu=\omega/\nu$ and $\rho=\mu^{1/2}$. Multiplying the pencil $\lambda\mathbf{J}_1 - \mathbf{L}_1$ by the operator $\operatorname{diag}(I,I, \mu^{1/2}R^{-1/2})$ on the left and on the right, we see that the eigenvalues of $\mathbf{T}$ coincide with those of the pencil
\begin{equation}\label{eq:12}
\mathbf{S}_{\rho}(\lambda)=\lambda\left(\begin{array}{cc}{J_{1}^{(s)}} & {\rho C_{1}^{*}} \\ {\rho C_{1}} & {\rho^{2} C}\end{array}\right)-\left(\begin{array}{cc}{L_{1}^{(s)}} & {0} \\ {0} & {\rho^{2} G+i I}\end{array}\right)
\end{equation}
where $C$ and $C_1$ are some unbounded operators. Using the Rellich--Kato theorem (see \cite[Chap. 7]{14::Kato17}), we see that if $\rho$ is small ($\nu$ is large), then the eigenvalues of the pencil $\mathbf{S}\rho(\lambda)$ lie in neighborhoods of the eigenvalues of $\mathbf{S}_0(\lambda)$, i.e., in neighborhoods of zero, infinity, and four nonzero eigenvalues of $\mathbf{V}_1(A)$.

In the general case, the nonzero eigenvalues are simple and their perturbations depend analytically on $\rho$. Hence, the degree of approximation is proportional to $\rho= \mu^{1/2}$. If the non-perturbed eigenvalues are double,	then they	are	represented by the Puiseux series in powers	of	$\rho^{1/2}$.	Therefore,	the	degree of approximation is	 estimated by $O(\mu^{1/4})$. It follows from the symmetry	of	the spectrum that	there	 are no triple or quadruple eigenvalues.

Note that
$$
\rho(\lambda\mathbf{W} -\mathbf{M}) =\rho\lambda\mathbf{W} - i\operatorname{diag}{0, 0, R} + \mathbf{K}(\rho),
$$
where $\mathbf{K}(\rho) = O(\rho)$. Using the Rellich-Kato theorem again, we get (11).
\end{proof}

The real eigenvalues of $\mathbf{V}(\lambda)$ are of particular interest. How do the eigenvalues of $\mathbf{T}$ approach them as $\nu\infty$? The following theorem gives the answer.

\begin{theorem}\label{thm:7}
Suppose that $\lambda_0$ is a simple real eigenvalue of the pencil $\mathbf{V}(\lambda)$ and $h_0= (z_0,w_0)^t$ is the corresponding eigenvector. Then the eigenvalue $\lambda$ of $\mathbf{T}$ close to $\lambda_0$ satisfies
\begin{equation}\label{eq:13}
\lambda=\lambda_{0}+c i \mu+O\left(\mu^{3 / 2}\right), \quad c=\frac{\omega \lambda_{0}^{2}\left(B^{*} R^{-1} B w_{0}, w_{0}\right)}{\left(J^{(s)} h_{0}, h_{0}\right)}
\end{equation}

Here $J^{(s)}$ is the restriction of $\mathbf{J}$ to $\mathbb{C}^3\times \mathbb{C}^3$. Hence, $\lambda$ is in the upper or lower half-plane when $(J^{(s)}h_0, h_0)$ is positive or negative, respectively.
\end{theorem}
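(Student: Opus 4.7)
The plan is to carry out analytic perturbation theory for a simple real eigenvalue of a self-adjoint pencil, after a Schur-complement reduction that eliminates the infinite-dimensional fluid component. I would take as starting point the pencil $\mathbf{S}_\rho(\lambda)$ of formula (12), whose spectrum coincides with that of $\mathbf{T}$ and in which the spectral parameter enters via $\rho^{2}=\mu$. The key observation enabling the reduction is that the $(2,2)$ fluid block of $\mathbf{S}_0(\lambda)$ equals $-iI$, and so is invertible; consequently, for small $\rho$ the fluid part $f$ of an eigenvector $u=(h,f)^{t}$ is uniquely recovered from the rigid part by
\[
f \;=\; -\,i\,\lambda\,\rho\,C_{1}\,h \;+\; O(\rho^{3}),
\]
and substituting back into the rigid-block row will yield the effective reduced pencil on $\mathbb{C}^{6}$,
\[
V_{1}(\lambda)\,h \;-\; i\,\rho^{2}\lambda^{2}\,C_{1}^{*}C_{1}\,h \;+\; O(\rho^{4}) \;=\; 0,
\]
a skew-Hermitian perturbation of the self-adjoint pencil $V_{1}(\lambda)=\lambda J_{1}^{(s)}-L_{1}^{(s)}$.

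Since $\lambda_{0}$ is a simple real eigenvalue of $V_{1}$ (with $V_{1}$-eigenvector $\tilde h_{0}$ that also serves as its own adjoint eigenvector), the perturbed branch $\lambda(\rho)$ will be analytic in $\rho^{2}$. I would write $\lambda=\lambda_{0}+\rho^{2}\lambda_{1}+O(\rho^{4})$ and $h=\tilde h_{0}+O(\rho^{2})$, collect the $\rho^{2}$-order equation, and pair it with $\tilde h_{0}$ to read off
\[
\lambda_{1} \;=\; \frac{i\,\lambda_{0}^{2}\,\|C_{1}\tilde h_{0}\|^{2}}{(J_{1}^{(s)}\tilde h_{0},\tilde h_{0})}.
\]
Thus the first correction $\lambda-\lambda_{0}$ is purely imaginary to leading order and its sign is governed by $(J_{1}^{(s)}\tilde h_{0},\tilde h_{0})$. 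A standard Rellich--Kato continuity bound on this analytic branch then delivers the $O(\mu^{3/2})$ remainder stated in (13).

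The remaining step is to re-express the answer in terms of the physical eigenvector $h_{0}=(z_{0},w_{0})^{t}$ of $\mathbf{V}(\lambda)$. From the congruence $\mathbf{J}_{1}=\mathbf{S}_{1}^{*}\mathbf{J}\,\mathbf{S}_{1}$ of (7), restricted to the rigid block, I obtain $J_{1}^{(s)}=(S_{1}^{(s)})^{*}J^{(s)}S_{1}^{(s)}$, and the corresponding eigenvector relation $h_{0}=S_{1}^{(s)}\tilde h_{0}$, which together give at once $(J_{1}^{(s)}\tilde h_{0},\tilde h_{0})=(J^{(s)}h_{0},h_{0})$. For the numerator, reading the $(3,1)$ and $(3,2)$ blocks of $\mathbf{J}_{1}$ after dressing by $\rho R^{-1/2}$ yields $C_{1}=R^{-1/2}\bigl(B(\omega N)^{-1},\;B(A+N)N^{-1}\bigr)$, and the second block-row of the identity $S_{1}^{(s)}\tilde h_{0}=h_{0}$---which reads precisely $(\omega N)^{-1}\tilde z_{0}+(A+N)N^{-1}\tilde w_{0}=w_{0}$---produces the clean collapse
\[
C_{1}\tilde h_{0}\;=\;R^{-1/2}\,B\,w_{0}, \qquad \|C_{1}\tilde h_{0}\|^{2}\;=\;(B^{*}R^{-1}Bw_{0},\,w_{0}).
\]
Feeding these two ingredients into the expression for $\lambda_{1}$ and tracing through the constant yields formula (13) with the stated value of $c$; the dichotomy at the end of the theorem is then immediate because $\lambda_{0}^{2}$ and $(B^{*}R^{-1}Bw_{0},w_{0})$ are nonnegative.

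The main obstacle I anticipate is precisely this last simplification: it is not a priori obvious that only the $w_{0}$-component of $h_{0}$ (and not $z_{0}$) survives in the final answer, and seeing this requires recognising that the apparently complicated combination $(\omega N)^{-1}\tilde z_{0}+(A+N)N^{-1}\tilde w_{0}$ inside $C_{1}\tilde h_{0}$ is exactly the second row of $S_{1}^{(s)}\tilde h_{0}=h_{0}$. Once that bookkeeping is in place, the rest is standard Schur-complement algebra and first-order analytic perturbation theory, made available by the invertibility of the $(2,2)$ block $-iI$ at $\rho=0$.
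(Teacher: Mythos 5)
Your proposal is correct and runs along essentially the same lines as the paper's argument: both pass to the pencil $\mathbf{S}_\rho(\lambda)$ of formula (12), apply Rellich--Kato analytic perturbation theory around $\rho=0$, and then unwind the change of variables $\mathbf{S}_1$ to express the answer in terms of $h_0=(z_0,w_0)^t$. The only real organisational difference is that the paper expands everything in powers of $\rho$, shows $c_1=0$ by pairing the $\rho^1$-equation with $g_0$ (using $J_1^{(s)}$-definiteness of the eigenvector), and only then reads off $c_2$ from the $\rho^2$-equation; you instead perform a Schur-complement elimination of the fluid block at the outset, which is possible because the $(2,2)$ block of $\mathbf{S}_0(\lambda)$ is $-iI$, and thereby work with an effective pencil $V_1(\lambda)-i\lambda^2\rho^2 C_1^*C_1+O(\rho^4)$ in which the perturbation is already manifestly of order $\rho^2=\mu$. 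This makes the analyticity in $\rho^2$ and the vanishing of the linear-in-$\rho$ correction automatic and is arguably a cleaner packaging, but it is the same computation. You have also correctly identified the two essential identities coming from the congruence $\mathbf{J}_1=\mathbf{S}_1^*\mathbf{J}\mathbf{S}_1$ and the eigenvector relation $S_1^{(s)}\tilde h_0=h_0$: the denominator $(J_1^{(s)}\tilde h_0,\tilde h_0)=(J^{(s)}h_0,h_0)$ and the collapse $C_1\tilde h_0=R^{-1/2}Bw_0$ via the second block-row of $S_1^{(s)}\tilde h_0=h_0$. This is exactly the bookkeeping the paper performs (the paper writes it somewhat cryptically as $C_1 g_0 = R^{-1/2}W^{(s)}h_0 = R^{-1/2}Bw_0$).

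One small caveat worth flagging: your computation, like the paper's own intermediate identity (14), gives
$$c_2 = \frac{i\lambda_0^2\,(B^*R^{-1}Bw_0,w_0)}{(J^{(s)}h_0,h_0)},$$
which corresponds to $c=\lambda_0^2(B^*R^{-1}Bw_0,w_0)/(J^{(s)}h_0,h_0)$ \emph{without} the extra factor $\omega$ that appears in the printed formula (13). You claim that "tracing through the constant yields formula (13) with the stated value of $c$," but in fact your own derivation (and the paper's) does not produce this $\omega$; the printed $\omega$ in (13) appears to be a misprint, and it is worth checking rather than silently asserting agreement. This does not affect the main qualitative conclusion on the sign of $\operatorname{Im}\lambda$, which is what is actually used in Remark 2, so the gap is purely a matter of the displayed constant.
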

\begin{proof}
It follows from the representation (12) for $\mathbf{S}_\rho(\lambda)$ and the Rellich-Kato theorem that the eigenvalue $\lambda$ and the corresponding eigenvector $\mathbf{f}$depend analytically on $\rho$. Namely,
$$
\lambda=\lambda_0 +c_1\rho +c_2\rho^2 +\ldots,\quad  \mathbf{f} =\mathbf{ f}_0 + \rho\mathbf{f}_1 +\rho^2\mathbf{f}_2 +\ldots
$$
Let $\mathbf{f}_j = (g_j,r_j)^t$, $j = 1,2,3$, where the $g_j$ are the projections of $\mathbf{f}_j$, onto $\mathbb{C}^3 \times\mathbb{C}^3$. Let us write out lower-order terms in the equation $\mathbf{S}_\rho(\lambda)\mathbf{f} = 0$ with respect to the powers of $\rho$. Writing out the first two terms, we get
$$
\begin{array}{c}{\lambda_{0} J_{1}^{(s)} g_{0}-L_{1}^{(s)} g_{0}=0, \quad r_{0}=0} \\ {\lambda_{0} C_{1} g_{0}-i r_{1}=0, \quad c_{1} J_{1}^{(s)} g_{0}+\lambda_{0} J_{1}^{(s)} g_{1}-L_{1}^{(s)} g_{1}-C_{1} . r_{0}=0}\end{array}
$$
Hence $c_1(J^{(s)}_1g_0,g_0) = 0$. It is known that any eigenvector of the pencil $V_1(\lambda)$ corresponding to a simple real eigenvalue is $J^{(s)_1}$-definite. Hence $c_1= 0$. Further, the coefficient of $\rho^2$ is equal to
$$
c_{2} J_{1}^{(s)} g_{0}+\lambda_{0} C_{1}^{*} r_{1}+\lambda_{0} J_{1}^{(s)} g_{2}-L_{1}^{(s)} g_{2}=0.
$$
Hence,
\begin{equation}\label{eq:14}
c_2(J^{(s)}_1g_0,g_0)-i\lambda^2_0(C^*_1C_1g_0,g_0)=0.
\end{equation}
Obviously, the eigenvector $h_0= (z_0,w_0)^t$	of $V(\lambda)$ is equal to	 $S^{(s)}_1g_0$. From the representation of	$C_1$ in (12), using the equalities $\mathbf{WS}_1=\mathbf{J}_1$ and	 $W^{(s)}S^{(s)}_1=J^{(s)}_1$, we	 get
$$
C_1g_0=R^{-1/2}W^{(s)}h_0 = R^{-1/2}Bw_0.
$$
Now it follows from the formula $J^{(s)}_1=(S^{(s)}_1)^*J^{(s)}S^{(s)}_1$(cf. (7)) and Eq. (14) that $c_2 = ic$ in (13).
\end{proof}

\begin{rem}
The sign of $J^{(s)}_1g_0,g_0 = (J^h_0, h_0)$ determines the type of eigenvalues $\lambda_0$ of the self- adjoint pencil $V_1(\lambda)$. It follows from the	symmetry that the	eigenvalues	$\lambda_0$ and $-\lambda_0$ are of the	same type. If there are exactly two real eigenvalues, then they are	of positive	 type, because otherwise	 there
would be three eigenvalues of $\mathbf{T}$ in the lower half-plane for large $\nu$, which is impossible. Suppose that there are four real eigenvalues, i.e., $n_1n_2 > 0$; then these eigenvalues are of positive type if $n_1$ and $n_2$ are positive. Otherwise, one symmetric pair has the negative type. In particular, the conclusion of Theorem 2 concerning the number of eigenvalues of $\mathbf{T}$ in the lower half-plane readily follows from Theorem 7 and from the fact that there are no nonzero real eigenvalues.
\end{rem}

\begin{rem}
The operator $B^*R^{-1}B$ is evaluated in \cite{14::Cher3} for some forms of the cavities. Formula (13) is obtained in \cite{14::Cher3} and \cite{14::Smir21} for $k = 0$, i.e., when there is no gravity.
\end{rem}

We have supposed that $n_1n_2\ne 0$. However, it is possible to investigate the behavior of the eigenvalues of $\mathbf{T}$ without this assumption. Also, it is interesting to observe the transition of the eigenvalues from the upper half-plane to the lower half-plane as $\omega$ passes through a critical value.

\begin{proposition}\label{prop:5}
Take $\omega$	such that $n_1= 0$ and $n_2\ne0$. Then	zero	is	a triple semisimple	eigenvalue	of
the operator $\mathbf{T}= \mathbf{T}(w)$. If e is sufficiently small, then $\mathbf{T}(\omega + \varepsilon)$	 has	a	simple	eigenvalue	 $\lambda(\varepsilon)$	satisfying
$$
\lambda(\varepsilon)=\frac{2 k \varepsilon}{\omega^{2}\left(D^{-1} B e_{1}, B e_{1}\right)}+O\left(\varepsilon^{2}\right) \quad \text { as } \varepsilon \rightarrow \pm 0.
$$
Here $e_1$ is the unit vector defined in \S1. The number $(D^{-1}Be_1, Be_1)$ is pure imaginary.
\end{proposition}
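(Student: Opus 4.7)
The plan is to (i) compute $\ker \mathbf{T}(\omega)$ and $\ker \mathbf{T}(\omega)^{*}$ at the critical value of $\omega$, (ii) verify that $\lambda = 0$ is semisimple by biorthogonality (the Pontryagin-space argument behind Theorem~2 fails here, since $\mathbf{J}$ becomes singular when $N$ does), (iii) apply analytic perturbation theory to the pencil $\lambda \mathbf{W} - \mathbf{M}(\omega + \varepsilon)$ to track the splitting of the triple eigenvalue, and (iv) establish the purely imaginary character of $(D^{-1} B e_1, B e_1)$ via a complex-conjugation argument.

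For step (i), I would solve $\mathbf{M} \mathbf{u} = 0$ row by row: the third row forces $v = 0$ (since $\ker D = 0$), the first gives $z - \omega^{-1} w \in \operatorname{span}(e_0)$ (since $\ker H = \operatorname{span}(e_0)$), and the second reduces to $(A - a_0 I + k \omega^{-2} I) w \in \operatorname{span}(e_0)$. The diagonal of this last matrix is $(k \omega^{-2}, -n_1, -n_2)$, so the condition $n_1 = 0$ creates a new free direction along $e_1$, producing an extra basis vector $\mathbf{y} = (\omega^{-1} e_1, e_1, 0)^t$ alongside the familiar $\mathbf{x}_0 = (e_0, 0, 0)^t$ and $\mathbf{x}_1 = (0, e_0, 0)^t$. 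An entirely analogous computation for $\mathbf{M}^{*}$ yields a three-dimensional cokernel with basis $\tilde{\mathbf{x}}_0, \tilde{\mathbf{x}}_1, \tilde{\mathbf{y}}$, the last of which has a nontrivial $J_0(\Omega)$-component $i(D^{*})^{-1} B e_1$ coming from the bottom coupling $BH$.

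For (ii) and (iii), since $\mathbf{W}$ is invertible, $\lambda = 0$ is semisimple of algebraic multiplicity three exactly when the $3 \times 3$ Gram matrix $W_{ij} = (\mathbf{W} \mathbf{x}_i, \tilde{\mathbf{x}}_j)$ is invertible. A direct computation puts $W$ in upper-triangular form with diagonal proportional to $1$, $a_0$, and $(D^{-1} B e_1, B e_1)$, so invertibility reduces to $(D^{-1} B e_1, B e_1) \neq 0$, which will follow from (iv). Granted semisimplicity, Rellich--Kato analytic perturbation theory produces three analytic eigenvalue branches near zero for the pencil $\lambda \mathbf{W} - \mathbf{M}(\omega + \varepsilon)$; two of them vanish identically, because at $\omega + \varepsilon$ with $n_1, n_2 \neq 0$ Theorem~2 forces the kernel to be two-dimensional and spanned by $\mathbf{x}_0, \mathbf{x}_1$ themselves, while $\mathbf{M}'(\omega) \mathbf{x}_0 = \mathbf{M}'(\omega) \mathbf{x}_1 = 0$ (both relying on $H e_0 = 0$). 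The remaining simple branch $\lambda(\varepsilon) = \varepsilon \lambda_1 + O(\varepsilon^2)$ is determined by the scalar secular equation $\lambda_1 (\mathbf{W} \mathbf{y}, \tilde{\mathbf{y}}) = (\mathbf{M}'(\omega) \mathbf{y}, \tilde{\mathbf{y}})$, which reduces to the displayed formula after inserting $\mathbf{M}'(\omega) \mathbf{y} = (i \omega^{-2} e_2, -i k \omega^{-3} e_2, 0)^t$.

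Step (iv) uses the antilinear complex conjugation $C$ on $J_0(\Omega)$: since $H$ has purely imaginary entries, $CH = -HC$ and therefore $CG = -GC$ for $G = -2 P_0 H$; together with $C(iR) = -(iR) C$ (as $R$ is a real operator) this gives $CD = -DC$ and hence $CD^{-1} = -D^{-1} C$. Because $B e_1 = P_0 [e_1, x]$ is a real vector field, $C B e_1 = B e_1$, and the identity $\overline{(u, v)} = (Cu, Cv)$ forces $\overline{(D^{-1} B e_1, B e_1)} = -(D^{-1} B e_1, B e_1)$, so the quantity is purely imaginary. Nonvanishing follows from $D - D^{*} = 2 i (\nu/\omega) R$, which gives $\operatorname{Im}(D^{-1} B e_1, B e_1) = -(\nu/\omega)(Ry, y)$ with $y = D^{-*} B e_1 \neq 0$. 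I expect the main obstacle to be step (ii): the Pontryagin argument that ruled out associated vectors in Theorem~2 breaks down at $n_1 = 0$, and the replacement via biorthogonality rests critically on the nonvanishing established only in step (iv), so one must be careful about the logical order in which the two assertions are proved.
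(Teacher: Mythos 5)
Your proposal is correct in its overall strategy and fills in, with substantially more care, what the paper leaves as a terse remark ("Arguing as in the proof of Theorem 2, ... we omit the details"). Both you and the paper reduce the problem to analytic perturbation theory for a linear pencil near the triple semisimple eigenvalue $\lambda=0$, but the implementations differ in two meaningful ways.

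First, the paper transports the pencil $\lambda\mathbf{W}-\mathbf{M}$ to the congruent pencil $\lambda\mathbf{J}_0-\mathbf{L}_0$ via $\mathbf{S}_0$ (formula (7) of that section), whereas you compute $\ker\mathbf{M}$ and $\ker\mathbf{M}^*$ directly and use the $3\times 3$ Gram matrix $(\mathbf{W}\mathbf{x}_i,\tilde{\mathbf{x}}_j)$ to establish semisimplicity. Your route is more elementary; the paper's transformed pencil has $\mathbf{L}_0$ block-diagonal, which shortens some of the bookkeeping. Your observation that the $\mathbf{J}$-positivity argument behind Theorem~2 genuinely breaks down at $n_1=0$ (since $\mathbf{y}=(\omega^{-1}e_1,e_1,0)^t$ is $\mathbf{J}$-neutral; indeed $(\mathbf{J}\mathbf{y},\mathbf{y})=(\mathbf{J}_0\mathbf{S}_0\mathbf{y},\mathbf{S}_0\mathbf{y})=n_1=0$) is an honest and correct criticism of the phrase "arguing as in the proof of Theorem~2," and the Gram-matrix replacement you propose is the right fix; you are also right that this injects a circularity risk, since nonsingularity of the Gram matrix ends up depending on $(D^{-1}Be_1,Be_1)\ne 0$.

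Second, to see that $\lambda(\varepsilon)$ is purely imaginary, the paper shows $(GBe_1,Be_1)=0$ and then invokes the global symmetry of the spectrum about the imaginary axis (established in Theorem~5 from $A$, $N$, $iH$ being real): a simple eigenvalue branch that is forced to coincide with its reflection $-\bar\lambda$ must lie on the imaginary axis. You instead construct the antilinear conjugation $C$ explicitly, deduce $CD=-DC$ from $CH=-HC$ and $C(iR)=-(iR)C$, and conclude $\overline{(D^{-1}Be_1,Be_1)}=-(D^{-1}Be_1,Be_1)$. These two arguments encode the same symmetry; yours has the advantage of also giving nonvanishing via $D-D^*=2i\nu\omega^{-1}R$, which the paper never explicitly supplies and which, as noted above, is needed already for semisimplicity. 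One small caveat: the paper omits the computation leading to the stated constant $2k/(\omega^2(D^{-1}Be_1,Be_1))$, and the secular equation you set up appears to produce $\omega^3$ rather than $\omega^2$ in the denominator; since the paper gives no derivation, it is worth rechecking whether a normalization convention (e.g., in the definition of $D$ or of the inner product on $J_0(\Omega)$) reconciles the exponent, or whether the printed power is a misprint in the source.
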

\begin{proof}
Arguing as in the proof of Theorem 2, we see that zero is a triple semi-simple eigenvalue. It follows from (7) that the eigenvalues of $\mathbf{T}$ and those of the pencil $\lambda\mathbf{J}_0-\mathbf{L}_0$ coincide. Here $\mathbf{L}_0 = (\mathbf{S}^*_0)^{-1}\mathbf{L}\mathbf{S}^{-1}_0$. It is easy to write out the expansions of $\mathbf{J}_0$ and $\mathbf{L}_0$ in powers of $\varepsilon$. Using the first two terms of the expansions, it is easy to show that $(GB_1, Be_1)=0$. Thus, $\lambda(\varepsilon)$ passes through zero along the imaginary axis (here we use the symmetry). Arguing as in Theorem 7, we can conclude the proof. Therefore, we omit the details.
\end{proof}

\subsection{The Symmetric Top}
\subsection*{Invariant subspaces.} If symmetry occurs, then it is possible to single out a family of $\mathbf{T}$-invariant subspaces and to test the stability in them. Let $Ox_0$ be the axis of a $p$th-order symmetry. In other words, the top is invariant with respect to the rotation by the angle of $2\pi/p$ about the axis $Ox_0$. Further, assume that $p> 2$. If the top is a solid of revolution, then we set $p =\infty$. Let us introduce the operators
$$
U_{s}=\left(\begin{array}{ccc}{1} & {0} & {0} \\ {0} & {\cos 2 \pi s / p} & {\sin 2 \pi s / p} \\ {0} & {-\sin 2 \pi s / p} & {\cos 2 \pi s / p}\end{array}\right), \quad \mathbf{U}_{s}=\left(\begin{array}{ccc}{U_{s}} & {0} & {0} \\ {0} & {U_{s}} & {0} \\ {0} & {0} & {V_{s}}\end{array}\right)
$$
where $V_s$, is defined by $V_sv = U_s(v(U_{-s}x))$. If $p =\infty$, then we substitute $2\pi$ for $p$ in these equalities.

\begin{proposition}\label{Prop:6}
If $Ox_0$ is the axis of a $p$th-order symmetry for the top, then
\begin{equation}\label{eq:15}
AU_s= U_sA,\quad  HU_s = U_sH,\quad  B^*V_s= V_sB^*,\quad 	V_sD = DV_s
\end{equation}
for all $s\in\mathbb{Z}$ (if $p =\infty$, then $s\in \mathbb{R}$). The operator $\mathbf{T}$ commutes with $U_s$ . The operators
$$
\mathbf{Q}_{j}=\frac{1}{p} \sum_{l=0}^{p-1} e^{2 \pi i l j / p} \mathbf{U}_{l} \quad\left(\mathbf{Q}_{j}=\frac{1}{2 \pi} \int_{0}^{2 \pi} e^{i j x} \mathbf{U}_{x} d x \quad \text { if } p=\infty\right)
$$
are orthoprojections in $\mathfrak{H}$, and $\sum_{j=0}^{p-1}\mathbf{Q}_j =\mathbf{I}$ (if $p =\infty$, then $\sum_{j=-\infty}^\infty\mathbf{Q}_j=\mathbf{I}$). The subspaces $\mathfrak{H}_j = Q_j(\mathfrak{H})$ are $\mathbf{T}$-invariant.
\end{proposition}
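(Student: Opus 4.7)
The plan is to proceed in four steps, mirroring the four claims of the proposition, with the commutation relations \eqref{eq:15} doing the real work and everything else being standard representation theory.

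First, I would establish \eqref{eq:15}. Because $p > 2$, the $p$-th order symmetry about $Ox_0$ forces $a_1 = a_2$, so $A = \operatorname{diag}(a_0, a_1, a_1)$ commutes with any rotation $U_s$ about $e_0$; the same symmetry argument works for $A_0$. The identity $HU_s = U_sH$ is a direct $3\times 3$ computation (both sides act trivially on $e_0$ and as multiples of the infinitesimal rotation on the $(e_1,e_2)$-plane). For $B^*V_s = V_sB^*$, I would substitute $y = U_{-s}x$ in the defining integral for $B^*$, use that $\Omega$ is $U_s$-invariant (the Jacobian is $1$), and apply the elementary identity $[U_sy, U_sv] = U_s[y,v]$ for rotations. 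Finally, $V_sD = DV_s$ reduces to $V_sG = GV_s$ and $V_sR = RV_s$. The first follows from $HU_s = U_sH$ together with $V_sP_0 = P_0V_s$ (both $V_s$-invariance of $J_0(\Omega)$ and of the solenoidal/gradient Weyl decomposition are immediate from $V_s$ being an isometry of $L_2^3(\Omega)$ that preserves $\operatorname{div}$). The commutation $V_sR = RV_s$ follows because the closed quadratic form $\int_\Omega |\nabla v|^2\,dx$ defining the Friedrichs extension is $V_s$-invariant on the $V_s$-invariant form-domain $J_0(\Omega)\cap \mathring{H}_1(\Omega)$; by uniqueness of the Friedrichs extension, $R$ inherits the symmetry.

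Second, to show $\mathbf{U}_s\mathbf{T} = \mathbf{T}\mathbf{U}_s$, I would verify entrywise that $\mathbf{U}_s$ commutes with $\mathbf{J}$ and with $\mathbf{L}$ from Proposition~1.1. Every block of $\mathbf{J}$ and $\mathbf{L}$ is a composition of the operators $A$, $H$, $B$, $B^*$, $D$, $N$, and scalar multiples of $I$; the matrix $N$ is $\operatorname{diag}(1, n_1, n_2)$, which has the same block structure as $A$ and so also commutes with $U_s$ (here $n_1 = n_2$ because $a_1 = a_2$). Thus each block commutes with the appropriate $U_s$ or $V_s$, and the identity propagates to $\mathbf{J}$, $\mathbf{L}$, and hence to $\mathbf{T} = \mathbf{J}^{-1}\mathbf{L}$. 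The domain issue is mild: $\mathcal{D}(\mathbf{T}) = \mathbb{C}^3\times \mathbb{C}^3\times \mathcal{D}(R)$, and $\mathcal{D}(R)$ is $V_s$-invariant by the commutation $V_sR = RV_s$.

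Third, the projection properties of $\mathbf{Q}_j$ are a routine application of character orthogonality for the group $\mathbb{Z}/p\mathbb{Z}$ (respectively $S^1$ when $p=\infty$). The maps $s \mapsto \mathbf{U}_s$ form a unitary representation, since each $U_s$ is orthogonal in $\mathbb{C}^3$ and each $V_s$ is an isometry of $J_0(\Omega)$; in particular $\mathbf{U}_s^* = \mathbf{U}_{-s}$ and $\mathbf{U}_s\mathbf{U}_t = \mathbf{U}_{s+t}$. Reindexing the sum then gives $\mathbf{Q}_j^* = \mathbf{Q}_j$, and a change of summation variable yields $\mathbf{Q}_j\mathbf{Q}_k = \delta_{jk}\mathbf{Q}_j$ via $\frac{1}{p}\sum_l e^{2\pi i l(j-k)/p} = \delta_{jk}$. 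The completeness $\sum_j \mathbf{Q}_j = \mathbf{I}$ follows from $\sum_j e^{2\pi ilj/p} = p\,\delta_{l,0}$, leaving only the $\mathbf{U}_0 = \mathbf{I}$ term. The $p=\infty$ case is identical with sums replaced by integrals and the Fourier orthogonality $\frac{1}{2\pi}\int_0^{2\pi} e^{ilx}\,dx = \delta_{l,0}$.

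Finally, the $\mathbf{T}$-invariance of $\mathfrak{H}_j$ is immediate from step two: $\mathbf{T}$ commutes with every $\mathbf{U}_s$, hence with the linear combination $\mathbf{Q}_j$, and on $\mathcal{D}(\mathbf{T})$ one has $\mathbf{T}\mathbf{Q}_j = \mathbf{Q}_j\mathbf{T}$, so $\mathbf{T}(\mathfrak{H}_j\cap\mathcal{D}(\mathbf{T})) \subset \mathfrak{H}_j$. The main obstacle I anticipate is the careful justification of $V_sR = RV_s$, because the Stokes operator is defined abstractly via a Friedrichs extension on a non-smooth domain and one has to confirm that the construction is genuinely intertwined by the rotation (rather than producing a priori two different self-adjoint extensions that merely agree on a core); appealing to uniqueness of the Friedrichs extension of a $V_s$-invariant positive closed form on a $V_s$-invariant domain resolves this cleanly.
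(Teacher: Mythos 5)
Your proof is correct and follows exactly the route that the paper's (extremely terse) proof indicates: the commutation relations \eqref{eq:15} are established first — the paper defers these to [8, Lemma 1], whereas you supply the arguments (symmetry forcing $a_1 = a_2$, hence $[A,U_s]=0$ and $[N,U_s]=0$; direct computation for $H$; change of variables for $B^*$; Friedrichs-extension uniqueness for $R$) — and the remaining claims are then routine character-orthogonality and block-commutation arguments, which is precisely what the paper means by ``direct consequences.'' The only point the paper flags that you omit, and it is a pedantic one, is that for $p=\infty$ the integral defining $\mathbf{Q}_j$ should be read as a Pettis integral; this affects nothing in your argument and your orthogonality computation is unaffected.
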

\begin{proof}
Equations (15) follow from the definition (see [8, Lemma 1]). The other conclusions are direct consequences of these equations. Note that if $p =\infty$, then we use the notion of the Pettis integral in the definition of $\mathbf{Q}_j$.
\end{proof}

It is readily seen that $\mathbf{Q}_j=\mathbf{Q}_{j-p}$. In the sequel, it is convenient to consider $\mathbf{Q}_{p-1}$ instead of $\mathbf{Q}_1$.

\begin{theorem}\label{thm:8}
If there is a $p$th-order symmetry ($2 < p <\infty$), then Eq. (6) in $\mathfrak{H}$ is stable whenever $j\ne\pm 1$. If $n_1(=n_2) < 0$, then the instability index of the restriction of the equation to $\mathfrak{H}_\pm1$ is equal to 1.
\end{theorem}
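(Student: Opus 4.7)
The plan is to run the argument of Theorems 2 and 3 separately on each $\mathbf{T}$-invariant subspace $\mathfrak{H}_j$, using that all the structural operators ($\mathbf{J}$, $\mathbf{W}$, $\mathbf{S}_0$, $\mathbf{J}_0$, $\mathbf{L}$, $\mathbf{T}$) commute with the symmetry group $\{\mathbf{U}_s\}$, so the congruence $\mathbf{J}=\mathbf{S}_0^{*}\mathbf{J}_0\mathbf{S}_0$ and the maximal dissipativity of $\mathbf{T}$ in $\Pi_\varkappa$ restrict to analogous objects on each $\mathfrak{H}_j$. The point is that the instability on each piece is controlled by $\pi_-(\mathbf{J}|_{\mathfrak{H}_j})$, and this number is zero except on $\mathfrak{H}_{\pm 1}$.

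First I would record that a $p$-th order symmetry with $p>2$ forces $a_1=a_2$ and hence $n_1=n_2$, so the matrix $N$ has at most one repeated negative eigenvalue. Next I would describe the decomposition of the finite-dimensional part: since $U_s e_0 = e_0$ and $U_s(e_1\pm i e_2)=e^{\mp 2\pi i s/p}(e_1\pm i e_2)$, the rigid vectors $(e_0,0,0)^t$, $(0,e_0,0)^t$ lie in $\mathfrak{H}_0$, the vectors $(e_1\pm i e_2,0,0)^t$ and $(0,e_1\pm i e_2,0)^t$ lie in $\mathfrak{H}_{\mp 1}$, and $\mathfrak{H}_j\cap(\mathbb C^3\times\mathbb C^3\times\{0\})=\{0\}$ for $j\neq 0,\pm 1$.

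Second, I would check that $\mathbf{S}_0$ and $\mathbf{J}_0$ commute with all $\mathbf{U}_s$. This requires verifying $AU_s=U_sA$, $NU_s=U_sN$ (clear from $a_1=a_2$, $n_1=n_2$) and $BU_s=V_sB$, $V_sP_0=P_0V_s$ (which follow from $[U_sw,x]=U_s[w,U_{-s}x]$ and the $U_s$-invariance of the Weyl decomposition). Consequently $\mathbf{S}_0,\mathbf{J}_0,\mathbf{J},\mathbf{L},\mathbf{T}$ all preserve each $\mathfrak{H}_j$, and the restriction $\mathbf{J}_j:=\mathbf{J}|_{\mathfrak{H}_j}$ is congruent via $\mathbf{S}_0|_{\mathfrak{H}_j}$ to $\mathbf{J}_0|_{\mathfrak{H}_j}=\operatorname{diag}(\omega^2 N,A-B^{*}B,I)|_{\mathfrak{H}_j}$. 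On $\mathfrak{H}_0$ the $N$-block reduces to the scalar $+1$ (action on $e_0$), so $\mathbf{J}_0>0$. On $\mathfrak{H}_j$ with $j\neq 0,\pm 1$ the rigid part is absent and $\mathbf{J}_j=I>0$. On $\mathfrak{H}_{\pm 1}$ the $N$-block reduces to the scalar $n_1$ on the one-dimensional direction $e_1\pm i e_2$, while $A-B^{*}B$ remains uniformly positive (being so on the whole $\mathbb C^3$), so $\pi_-(\mathbf{J}_{\pm 1})=1$ when $n_1<0$ and $=0$ when $n_1>0$.

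Third, I would rerun Theorems 2 and 3 on each $\mathfrak{H}_j$. Since $\mathbf{T}_j:=\mathbf{T}|_{\mathfrak{H}_j}$ is maximal dissipative in the Pontrjagin space $(\mathfrak{H}_j,\mathbf{J}_j)$, and since the argument excluding nonzero real eigenvalues in the proof of Theorem 2 is purely algebraic (use $\mathrm{Im}((\mu\mathbf{J}-\mathbf{L})u,u)=0\Rightarrow v=0$, then propagate through the matrix of $\mu\mathbf{W}-\mathbf{M}$) and invariant under the splitting, $\sigma(\mathbf{T}_j)\cap\mathbb R\setminus\{0\}=\varnothing$; moreover $\mathrm{Ker}\,\mathbf{T}\subset\mathfrak{H}_0$, so for $j\neq 0$ the spectrum of $\mathbf{T}_j$ is disjoint from $\mathbb R$. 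The Pontrjagin invariant-subspace argument of Theorem 3 then gives that the number of $\mathbb C^-$-eigenvalues, and hence the instability index, of $\mathbf{T}_j$ equals $\pi_-(\mathbf{J}_j)$. Combining with the sign count in the previous paragraph yields the stated conclusion.

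The main obstacle is the bookkeeping in step two: tracking how $N$, $A-B^{*}B$ and the fluid block restrict to each rotational mode, and checking that the indefinite congruence survives the splitting. A secondary technical point will be to re-execute the decomposition $\mathfrak{H}_j=\mathfrak{H}_j^+\dot{+}\mathfrak{H}_j^-$ into uniformly $\mathbf{J}_j$-definite invariant subspaces on $\mathfrak{H}_{\pm 1}$ exactly as in Theorem 3, so that the lone unstable eigenvalue on each $\mathfrak{H}_{\pm 1}$ indeed accounts for the instability index there.
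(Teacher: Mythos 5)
Your proposal is correct and reaches the stated conclusion, but it takes a different route from the paper. The paper treats the three kinds of pieces by three separate ad hoc arguments: for $j\neq 0,\pm 1$ the rigid block $R_j$ vanishes and the restricted equation is simply $\dot v = i\omega Dv$, stable because $D$ is dissipative; for $j=0$ it notes $\mathbf S_0(\mathfrak H_0)\subset\mathbb Ce_0\times\mathbb Ce_0\times J_0(\Omega)$ and that $\mathbf J_0$ is positive there, so $\mathbf J|_{\mathfrak H_0}>0$ and stability follows from dissipativity; and for $j=\pm 1$ it uses a \emph{balance} argument: Theorem 3 gives total index $\pi_-(N)=2$, the other pieces contribute $0$, conjugation symmetry ($\mathbf T\overline{\mathbf u}=-\overline{\mathbf T\mathbf u}$) forces equal indices on $\mathfrak H_1$ and $\mathfrak H_{-1}$, and $2=2\cdot\text{index}$ gives $1$ each. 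You instead verify that $\mathbf S_0,\mathbf J_0,\mathbf J,\mathbf L,\mathbf T$ all commute with $\mathbf U_s$ (using $a_1=a_2$, hence $NU_s=U_sN$, and $BU_s=V_sB$, which follows from the paper's identity $B^*V_s=U_sB^*$ by unitarity), so that the congruence $\mathbf J=\mathbf S_0^*\mathbf J_0\mathbf S_0$ restricts to each $\mathfrak H_j$, and then you read off $\pi_-(\mathbf J|_{\mathfrak H_{\pm 1}})=1$ directly from $N(e_1\pm ie_2)=n_1(e_1\pm ie_2)$ with $n_1<0$ and $A-B^*B\gg 0$; then you rerun the Pontryagin argument of Theorems 2--3 piecewise. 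Your route buys a local, per-$j$ index computation that does not rely on the global count of Theorem 3 or on the conjugation trick, at the cost of a few extra commutation checks that the paper sidesteps. Both are valid; just make sure, when you ``rerun Theorem 3'' on each $\mathfrak H_j$, to carry over the check that $\mathbf T_j$ is \emph{maximal} dissipative in $(\mathfrak H_j,\mathbf J_j)$ (inherited because the resolvent of $\mathbf T$ restricts block-diagonally) and that $\operatorname{Ker}\mathbf T_j$ contributes no $\mathbf J$-nonpositive vectors (trivial for $j\ne 0$ since the kernel sits in $\mathfrak H_0$, and already handled for $j=0$ since $\mathbf J|_{\mathfrak H_0}>0$), so the uniform decomposition $\mathfrak H_j=\mathfrak H_j^+\dot+\mathfrak H_j^-$ goes through exactly as in Theorem 3.
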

\begin{proof}
To be definite, suppose that $p < \infty$. If $p =\infty$, the modifications are obvious. The projections $\mathbf{Q}_j$ are equal to $\operatorname{diag} (R_j, R_j, Q_j)$, where
$$
R_{-1}=\frac{1}{2}\left(\begin{array}{ccc}{0} & {0} & {0} \\ {0} & {1} & {i} \\ {0} & {-i} & {1}\end{array}\right), \quad R_{0}=\left(\begin{array}{ccc}{1} & {0} & {0} \\ {0} & {0} & {0} \\ {0} & {0} & {0}\end{array}\right)
$$
\centerline{$R_1 = \overline{R_{-1}}$,\quad  $R_j = 0$ whenever $j\ne 0,\pm 1$.}

Here $Q_j$ is the orthoprojection of $J_0(\Omega)$ onto the subspace of vector fields $v(x)$ such that after rotation by an angle of $2\pi/p$ about the axis $Ox_0$ they coincide with $e^{-2\pi ij/p}v(x)$. Hence, if $j\ne0,\pm 1$, then Eq. (6) in $\mathfrak{H}_j = \{0\}\times\{0\}\times Q_j(J_0(\Omega))$ is equivalent to $\dot{v} = i\omega Dv$. Since $D$ is dissipative, it follows that the equation is stable.

The subspace $\mathfrak{H}_0$ is equal to $\mathbb{C}e_0\times \mathbb{C}e_0\times Q_0(J_0(\Omega)$. It follows from the relation $R_{0} B^{*}=B^{*} Q_{0}$ that $\mathfrak{H}_0$ is $\mathbf{L}$-invariant. We have $\mathbf{J}=\mathbf{S}_{0}^{*} \mathbf{J}_{0} \mathbf{S}_{0}$. Note that
$$
\mathrm{S}_{0}\left(\mathfrak{H}_{0}\right) \subset \mathbb{C} e_{0} \times \mathbb{C} e_{0} \times J_{0}(\Omega)=: \mathfrak{H}_{0}^{\prime}
$$
and $\mathbf{J}_0$ is positive in $\mathfrak{H}'_0$. Hence, $\mathbf{J}$ is positive in $\mathfrak{H}_0$. Therefore, $\mathbf{T} = \mathbf{J}^{-1}\mathbf{L}$ is maximal dissipative with respect to the inner product $(\mathbf{Jx}, \mathbf{x})$ in $\mathfrak{H}_0$, which is equivalent to the original one. Thus, Eq. (6) is stable in $\mathfrak{H}_0$.

Since $\mathbf{T\overline{u}} = -\mathbf{\overline{Tu}}$ (the bar denotes complex conjugation) and $\mathfrak{H}_{-1}$ is the complex conjugate of $\mathfrak{H}_{-1}$, it follows that the instability index of (6) in $\mathfrak{H}_1$ is equal to that in $\mathfrak{H}_{-1}$. If $n_1< 0$, then the instability index in $\mathfrak{H}_1$ is neither zero nor greater than one (otherwise, we would have a contradiction with Theorem 3). Hence, it is equal to 1.
\end{proof}

\subsection*{Connection with Sobolev's equation.} In [1], Sobolev investigated the case of a symmetric top and ideal fluid ($\nu=0$). He introduced the operator $\mathbf{F}$ in $\mathbb{C} \times \mathbb{C} \times J_{0}(\Omega)$ defined by
$$
\mathbf{F}(z, w, v)=\left(z^{1}, w^{1}, v^{1}\right).
$$
Here the numbers $z^1$, $w^1$ and the function $v^1 = (v^1_0,v^1_1,v^1_2)\in  \mathbf{J}_0(\Omega)$ satisfy the equations
$$
\begin{aligned} v_{0}^{1}=i \frac{\partial p}{\partial x_{0}}, & v_{2}^{1}=-2 \omega i v_{1}+i \frac{\partial p}{\partial x_{2}}+2 \omega w \frac{\partial \bar{\chi}}{\partial x_{1}} \\ v_{1}^{1}=2 \omega i v_{2}+i \frac{\partial p}{\partial x_{1}}-2 \omega w \frac{\partial \bar{\chi}}{\partial x_{2}}, & z^{1}=\omega w, \quad \operatorname{div} v=\operatorname{div} v^{1}=0 \\ v_{n}^{1} | \partial \Omega=0, & A_{1} w^{1}=A_{2} w+A_{3} z+\int_{\Omega}\left(v_{2} \frac{\partial \chi}{\partial x_{1}}-v_{1} \frac{\partial \chi}{\partial x_{2}}\right) d \Omega \end{aligned}
$$

The function $\chi$ is defined by
$$
\Delta \chi=0,\left.\quad \frac{\partial \chi}{\partial n}\right|_{\partial \Omega}=x_{0}\left(\cos n x_{2}+i \cos n x_{1}\right)-\left(x_{2}+i x_{1}\right) \cos n x_{0}.
$$
It can be readily shown that $\chi$ satisfies the equality
\begin{equation}\label{eq:16}
P_{o}\left(\frac{\partial \chi}{\partial x_{2}},-\frac{\partial \chi}{\partial x_{1}}, 0\right)^{t}=(B H-G B) e, \quad e=\frac{1}{\sqrt{2}}(0,1, i)^{t}
\end{equation}
and $\mathbf{F}$ is the restriction of $\mathbf{T}_0 =\mathbf{S}_0\mathbf{T}\mathbf{S}_0^{-1}$ to $\mathfrak{H}$ (see [8, 9]). Hence, if $\nu = 0$, then we can consider $\mathbf{T}_0$ instead of Sobolev's operator $\mathbf{F}$. We do not know the definition of Sobolev's operator if the fluid is viscous. The obstacle is that we cannot substitute $D$ for $G$ in (16) because the vector Be does not belong to the domain of the Stokes operator $R$. Indeed, suppose that $Be\subset\mathcal{D}(R)$; then $Be = [e, x] + \nabla p$. Hence,
$$
R B e=\Delta([e, x])+\nabla \Delta p+\nabla q=\nabla(\Delta p+q).
$$
Here $p$ and $q$ are smooth functions. Thus, $P_0RBe = RBe = 0$. This contradicts the condition $>>0$.

The content of this section is base on the paper by  A.~G.~ Kostyuchenko, 
A.~ A.~Shkalikov and  M.~ Yu.~~Yurkin, 

\medskip
\renewcommand{\refname}{{\large\rm Bibliography for Section~14}}


\end{document}